\newtheorem{thm}{Theorem}[section]
\newtheorem{cor}[thm]{Corollary}
\newtheorem{lem}[thm]{Lemma}
\newtheorem{prop}[thm]{Proposition}
\theoremstyle{definition}
\newtheorem{defn}[thm]{Definition}
\newtheorem{conj}{Conjecture} 
\theoremstyle{remark}
\newtheorem{ex}[thm]{Example}
\newtheorem{rem}[thm]{Remark}
\numberwithin{equation}{section}
\newcommand{\defeq}{\coloneqq}
\newcommand{\CC}{\mathbb C}
\renewcommand{\H}{\mathfrak{H}}
\newcommand{\PP}{\mathbb P}
\newcommand{\QQ}{\mathbb Q}
\newcommand{\RR}{\mathbb R}
\newcommand{\ZZ}{\mathbb Z}
\newcommand{\To}{\longrightarrow}
\newcommand{\alg}{\mathrm{alg}}
\newcommand{\an}{\mathrm{an}}
\newcommand{\B}{\mathrm{B}}
\newcommand{\comp}{\mathrm{comp}}
\newcommand{\cusp}{\mathrm{cusp}}
\newcommand{\dR}{\mathrm{dR}}
\newcommand{\id}{\mathrm{id}}
\newcommand{\cc}{C}%\newcommand{\cc}{\mathsf{c}}
\newcommand{\GM}{\mathcal{G}}
\newtheorem{example}[thm]{Example}
\DeclareMathOperator{\coker}{coker}
\DeclareMathOperator{\DM}{\mathbf{DM}}
\DeclareMathOperator{\DMT}{\mathbf{DMT}}
\DeclareMathOperator{\End}{End}
\DeclareMathOperator{\Gal}{Gal}
\DeclareMathOperator{\Hom}{Hom}
\let\Im\relax 
\DeclareMathOperator{\im}{im}
\DeclareMathOperator{\Im}{Im}
\DeclareMathOperator{\MT}{\mathbf{MT}}
\DeclareMathOperator{\Res}{Res}
\DeclareMathOperator{\SL}{SL}
\DeclareMathOperator{\GL}{GL}
\DeclareMathOperator{\Spec}{Spec}
\DeclareMathOperator{\Sym}{Sym}
\DeclareMathOperator{\eis}{eis}
\DeclareMathOperator{\node}{node}
\title{Single-valued periods of meromorphic modular forms \\
and a motivic interpretation of the Gross-Zagier conjecture}
\author{Francis Brown, Tiago J. Fonseca}
\date{6th August 2025}
\address{University of Oxford, Radcliffe Observatory, Andrew Wiles Building, Woodstock Rd, Oxford, United Kingdom, OX2 6GG}
\address{IMECC - UNICAMP, Departamento de matemática, Rua Sérgio Buarque de Holanda, 651, CEP 13083-859, Campinas, SP, Brazil}
\begin{document}

\begin{abstract}
A well-known conjecture of Gross and Zagier states that the values of the higher automorphic Green's function at pairs of points with complex multiplication in the upper half-plane are proportional to the logarithm of an algebraic number. It was recently settled in the case of congruence subgroups of the form $\Gamma_0(N)$ by analytic methods.  In this paper we provide a geometric and motivic interpretation of the general conjecture, and show that it is a consequence of a standard conjecture in the theory of motives.  In addition, we define a new class of  matrix-valued higher Green's functions for  both odd and even weight modular forms, and show that they are single-valued periods of a motive constructed from a suitable moduli stack of elliptic curves with marked points. The motive has the structure of a biextension involving symmetric powers of the motives of elliptic curves.  This suggests a very general extension of the Gross-Zagier conjecture relating values of matrix-valued higher Green's functions at  points  which do not necessarily have complex multiplication to special values of $L$-functions.   In particular, our motivic interpretation  of the Gross-Zagier log-algebraicity conjecture enables us to  give a completely geometric proof  in  level 1 and weight 4 by showing that the motive of the  moduli stack $\mathcal{M}_{1,3}$ of  elliptic curves with 3 marked points  is mixed Tate.

In the course of this paper we develop many new foundational results on: the theory of weak harmonic lifts,  meromorphic modular forms, biextensions of modular motives, and  their corresponding algebraic de Rham cohomology and single-valued periods, which may all be  of independent interest. 
\end{abstract}

\maketitle 

\setcounter{tocdepth}{1}
\tableofcontents

\section{Introduction}
Let $(z,w) \in \mathfrak{H} \times \mathfrak{H}$, where $\mathfrak{H}=\{z \in \mathbb{C}: \mathrm{Im}(z)>0\}$ is the upper half-plane, and consider the Green's function, defined for all  $(z,w)$  which do not lie in the same $\SL_2(\mathbb{Z})$-orbit, by 
\begin{equation} \label{intro: G2}
    G_1(z,w) = \log |j(z) - j(w)|^2,
\end{equation}
where $j: \mathfrak{H} \rightarrow \CC$ denotes the $j$-invariant. If $z,w$ correspond to elliptic curves with complex multiplication, then $j(z), j(w)$ are algebraic, and $G_1(z,w)$ is  the logarithm of an algebraic number.  

Let $\mathrm{Re}(s)>1$ and $\Gamma \leq \SL_2(\ZZ)$ be a subgroup of finite index. The \emph{higher Green's function} $G_{\Gamma,s}$ \cite{Roelcke1, Roelcke2, Hejhal}, also known as the resolvent kernel for the hyperbolic Laplacian, is the real-valued function defined on the open subset $\mathfrak{U} = \mathfrak{H} \times \mathfrak{H} \setminus \{(z,w) : z \in \Gamma w\}$ by the sum
\begin{equation}\label{eq:classical-GF}
    G_{\Gamma,s}(z,w) = -2 \sum_{\gamma \in \Gamma}  Q_{s-1} \left( 1+  \frac{|z-\gamma w|^2}{2 \, \mathrm{Im}(z)\, \mathrm{Im}(\gamma\, w)}\right) . 
\end{equation}
In this equation, $Q_{s-1}$ is the Legendre function of the 2nd kind, which satisfies Legendre's second-order differential equation and can be expressed using hypergeometric series. The argument of $Q_{s-1}$ can be interpreted as the hyperbolic cosine of the hyperbolic distance between $z$ and $\gamma w$ in $\mathfrak{H}$.

The function $G_{\Gamma,s}$ can be characterised by the following properties:
\begin{enumerate}
    \item It is $\Gamma\times \Gamma$-invariant: $G_{\Gamma,s}(\gamma_1 z, \gamma_2 w) = G_{\Gamma,s}(z,w)$  for all $\gamma_1,\gamma_2 \in \Gamma$,
    \item It satisfies  $\Delta_z G_{\Gamma,s}(z,w) = s (s-1) G_{\Gamma,s}(z,w)$ and is therefore an eigenfunction of the  hyperbolic Laplacian $\Delta_z$ in the variable $z$, and similarly with respect to the variable $w$, 
    \item  It defines a real-analytic function  on $\mathfrak{U}$ which tends to zero at cusps, and has a logarithmic singularity along the diagonal $z=w$. 
\end{enumerate}
For the most part, we shall take $\Gamma$ to be a congruence subgroup, but many of our constructions work equally well for non-congruence subgroups.

Let $m>0$ be an integer. Gross and Zagier conjectured \cite{GrossZagier, GrossKohnenZagier, ZagierICM}  that, after applying a suitable finite linear combination of Hecke operators $T=\sum_{n\ge 1} \lambda_n T_n$, for $z,w$ any two points of complex multiplication,
the corresponding combinations 
\[
    TG_{\Gamma,m+1}(z,w) = \sum_{n\ge 1} \lambda_n  T_nG_{\Gamma,m+1}(z, w)
\]
are proportional to logarithms of algebraic numbers. We call this the \emph{Gross-Zagier conjecture in level $\Gamma$ and weight $2m+2$} since it relates to modular forms of this weight for the group $\Gamma$.  This conjecture has attracted considerable interest and led to remarkable developments by both geometric \cite{ZhangHeights, Mellit,sreekantan2022,sreekantan2025} and analytic  \cite{Viazovska, ViazovskaPetersson, Li, BruinierLiYang} means. The analytic approach has proven very successful and  recently led to  a complete proof, due to Li, and Bruinier-Li-Yang, in the general case of the group $\Gamma = \Gamma_0(N)$. See also \cite{ZhouI, ZhouII} for a rather different perspective using explicit identities between periods.

It is a general theme in arithmetic and algebraic geometry that values of logarithms at algebraic arguments should have a geometric, or motivic, explanation. Indeed, the single-valued logarithm \eqref{intro: G2} has a  well-understood  interpretation in algebraic geometry as the regulator of a family of  Kummer extensions. The values of higher Green's functions at CM points similarly cry out for an arithmetic interpretation. This has remained elusive since the definition of $G_{\Gamma,s}$ 
is highly analytic and does not seem to be related to algebraic geometry or the theory of motives in any obvious way.
The hope that such an approach to the conjecture might exist is already alluded to in the very first papers in the subject (see \cite{GrossZagier},  page 315).

The goals of this paper are threefold: (i) we provide a geometric interpretation of the higher Green's functions and their special values at CM points as the single-valued periods of a certain biextension of mixed Hodge structures. In so doing, we (ii) generalise the higher Green's functions in several different directions and show that the Gross-Zagier conjecture (and generalisations thereof) are special cases of Beilinson's conjectures on special values of $L$-functions. Finally, (iii) we provide a motivic interpretation of the Gross-Zagier conjecture by constructing the corresponding motive in a triangulated category.  As a consequence of (iii), we show that the Gross-Zagier conjecture follows in great generality if one admits one or other of the `standard' conjectures for the motives of cusp forms.  As a proof of principle, we are able to prove the conjecture completely in level one and weight 4, \emph{i.e.}, for the function $G_{\SL_2(\mathbb{Z}),2}(z,w)$ for arbitrary CM points $z,w$, by entirely geometric methods using the modern theory of motives. 

Our approach bears some similiarity to ideas in Mellit's thesis \cite{Mellit} (\emph{cf.} \cite{sreekantan2022,sreekantan2025}), but we do not know how it relates to the analytic approach to the Gross-Zagier conjecture  mentioned above. 

\subsection{Results}\label{par:results}

Our first step is to introduce \emph{matrix-valued generalisations} of the higher Green's functions.
They are $\Gamma \times \Gamma$-invariant functions  $\GM_{\Gamma,k}: \mathfrak{U} \rightarrow M_{k+1 \times k+1}(\RR)$  which take values in $(k+1)\times(k+1)$ matrices, where  $\mathfrak{U}\subset \mathfrak{H}\times \mathfrak{H}$  was defined previously and $k\geq 1$ is any integer.  They may be defined in an elementary and completely explicit way via their generating series, which are obtained in very simple way by truncating the Taylor expansion of the logarithm function and taking complex conjugates (see Definition \ref{defn: MatrixGreenfunctionsGeneratingFunction}). This definition makes no reference to Legendre functions. 
An important point of this paper is that the matrix-valued Green's functions  $\GM_{\Gamma,k}(z,w)$ have  considerably more structure and better algebraic and analytic properties than the functions $G_{\Gamma,m+1}(z,w)$ considered in isolation.

When $k=2m$ is even, the matrix $\GM_{\Gamma,k}(z,w)$ has odd rank and hence has a distinguished central entry. We prove that this entry is proportional to the higher Green's function $G_{\Gamma,m+1}$ considered above. The remaining entries of the matrix $\GM_{\Gamma,k}$ are related to it by applying Maass raising and lowering operators. In the case when $k=2m+1$ is odd, the entries of the matrix-valued Green's functions do not appear to have been studied previously, and suggest  generalisations of the Gross-Zagier conjecture in  completely new directions (see \S \ref{sect: BeyondGZ}).  Some explicit examples of matrix-valued higher Green's functions can be found in  Appendix \ref{sect: AppendixC}.

Our first main theorem provides a geometric interpretation of $\GM_{\Gamma,k}$.

\begin{thm} \label{intro: thm1}
    Let $\mathcal{Y}_{\Gamma}$ denote the  modular curve, viewed as a  smooth stack, whose analytification is the orbifold $\mathcal{Y}_{\Gamma}(\mathbb{C}) = \Gamma \backslash \!\! \backslash \mathfrak{H}$. Let $\pi: \mathcal{E} \rightarrow \mathcal{Y}_{\Gamma}$ denote the universal elliptic curve and let $\mathcal{V}_k = \mathrm{Sym}^k R^1 \pi_* \QQ_{\mathcal{E}}$, where $\QQ_{\mathcal{E}}$ is the constant sheaf on  $\mathcal{E}$ with stalks $\QQ$. Then, for all but finitely many points $z,w \in \mathcal{Y}_{\Gamma}(\CC) $ (namely, $z,w$ are not elliptic if $k$ is even), the matrix $\GM_{\Gamma,k}(z,w)$  represents a certain block of the  single-valued period matrix
     of the  de Rham realisation of the mixed modular object
    \begin{equation}  \label{intro: modularmotive}
        M_{z,w}= H_{\mathrm{cusp}}^1\left( \mathcal{Y}_{\Gamma} \setminus \{ w\} , \{ z \} ; \mathcal{V}_k\right) 
    \end{equation} in  an explicitly-given  basis. 
   (In the case when $z,w$ are elliptic, the rank of  \eqref{intro: modularmotive} can drop, but a variant of this  statement still holds for a certain submatrix of $\mathcal{G}_{\Gamma,k}(z,w)$.) 
\end{thm}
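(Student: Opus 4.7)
The plan is to analyse $M_{z,w}$ as a biextension, compute its Betti and de Rham realisations in adapted bases, and match the resulting single-valued period matrix with the generating-function definition of $\GM_{\Gamma,k}$. First I would use the long exact sequence of the pair $(\mathcal{Y}_\Gamma \setminus \{w\}, \{z\})$ together with the localisation sequence at $w$ to exhibit $M_{z,w}$ as an iterated extension with three graded pieces: the top quotient $H^0(\{z\}; \mathcal{V}_k) \cong \mathrm{Sym}^k H^1(E_z)$, the bottom subobject $\mathrm{Sym}^k H^1(E_w)(-1)$ coming from the residue at $w$, and in the middle the pure cuspidal modular motive $H^1_{\mathrm{cusp}}(\mathcal{Y}_\Gamma; \mathcal{V}_k)$. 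This is the biextension structure announced in the abstract. With respect to this weight filtration, the single-valued period matrix has $3 \times 3$ block form, and the off-diagonal block coupling top to bottom is a $(k+1) \times (k+1)$ matrix expressed in the symmetric power bases of the two fibres, exactly the format of $\GM_{\Gamma,k}(z,w)$.

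Second, I would pick explicit bases on both sides. On the Betti side, the fibres have the standard bases $(X - zY)^j(X - \bar z Y)^{k-j}$ and similarly at $w$, and the cuspidal middle has a basis of integrals of cusp forms of weight $k+2$ against topological cycles. On the de Rham side, the Hodge filtration of each fibre provides an algebraic version of the same bases, the cuspidal middle has a basis of holomorphic cusp forms and an algebraic complement, and the classes produced by puncturing at $w$ are represented by \emph{meromorphic} modular forms of weight $k+2$ with a prescribed pole at $w$ and moderate growth at the cusps. The existence, canonicity, and periods of such meromorphic representatives belong to the foundational package of the paper, as does the verification that they represent the classes Poincar\'e-dual to the Betti cycles produced by removing $w$.

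The comparison pairings then reduce to integrals of the shape $\int_z^{\mathrm{cusp}} (X - \tau Y)^k f(\tau)\, d\tau$ and to iterated variants between $z$, $w$ and cusps. The single-valued period matrix is $\mathbf{s} = \comp_{\dR,\B} \circ c \circ \comp_{\dR,\B}^{-1}$, where $c$ is complex conjugation on the Betti realisation. The central computation is that in the corner block this sesquilinear construction produces a sum
\[
\sum_{\gamma \in \Gamma} P_{ij}\bigl(z, \bar z,\, \gamma w, \overline{\gamma w}\bigr),
\]
where the $P_{ij}$ are precisely the polynomials obtained by truncating the Taylor series of $\log$, matching Definition \ref{defn: MatrixGreenfunctionsGeneratingFunction} entry by entry. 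The identification of the central entry in the even-$k$ case with $G_{\Gamma,m+1}$ in terms of Legendre's $Q_m$ is then a classical hypergeometric identity, and the cuspidal blocks recover a Petersson-like pairing.

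The main obstacle is establishing the de Rham side algebraically: one needs a canonical supply of meromorphic modular forms with prescribed pole at $w$ representing the Betti classes dual to punctured cycles, together with a theory of weak harmonic lifts producing real-analytic eigenfunctions whose antiholomorphic projections carry those poles, so that complex conjugation on the Betti side corresponds to a geometric operation on the de Rham side. Careful matching of the truncation index of the generating series with the symmetric power indexing, and verification that $c$ delivers $\bar z$ and $\overline{\gamma w}$ in exactly the correct entries, is the delicate bookkeeping at the heart of the argument. Finally, when $z$ or $w$ is elliptic of order $e$, the stabiliser acts on $\mathrm{Sym}^k H^1(E_\tau)$ and for $k$ even the invariant subspace can drop in rank; one then restricts both $M_{z,w}$ and $\GM_{\Gamma,k}(z,w)$ to the invariant submatrix, and the same computation yields the parenthetical refinement of the theorem.
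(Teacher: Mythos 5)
Your proposal gets the structural outline right: the biextension sequences \eqref{eqn: YminusPrelQmotive}--\eqref{eqn: YminusPmotive}, the de Rham basis from meromorphic modular forms with poles along $w$, the Betti basis from symmetric powers of the fibres, and the recognition that the single-valued map is complex conjugation transported across the comparison isomorphism. You also correctly identify harmonic lifts as the crucial tool. But the mechanism you propose for the ``central computation'' is genuinely different from the paper's, and it is the weak point of your argument.

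You propose to compute the period matrix $P$ entry by entry via Eichler integrals $\int_z^{\mathrm{cusp}} f(\tau)(X-\tau Y)^k\,d\tau$ and then form $S = \overline{P}^{-1}P$. The difficulty is that the individual entries of $P$ are (regularised) Eichler integrals of meromorphic modular forms with poles, which have no closed form, and the claimed Poincar\'e-series expression for the corner block would have to emerge from cancellations between $\overline{P}^{-1}$ and $P$ that are far from manifest. The paper sidesteps this entirely: it \emph{never computes $P$}. Instead it (a) constructs the vector-valued Green's functions $\vec{G}_{\Gamma,k}$ independently, by averaging the elementary truncated-logarithm polynomials $g_k$ over $\Gamma$; (b) proves, via the differential equations \eqref{dzGfullversion} and the uniqueness statement Lemma \ref{lem:uniqueness-harmonic}, that $\vec{G}_{\Gamma,k}^{p,q}$ is the unique harmonic lift of the pair $(\Psi^{p,q}_\Gamma, -\Psi^{q,p}_\Gamma)$; and then (c) invokes the general Lemma \ref{lem:value-harmonic-lift}, which says that if $\xi$ is a harmonic lift of $(\omega,\eta)$ then $c[(\eta,0)] = [(\omega,0)] + [(0,i^*\xi)]$ in relative de Rham cohomology. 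This lemma delivers the action of Betti conjugation on the chosen de Rham basis directly, so the Green's function values appear as the off-diagonal block without any period integral being evaluated. Your route would need a substitute for Lemma \ref{lem:value-harmonic-lift}, and as written you do not supply one; you instead assert the answer. This is the main gap.

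A secondary point: calling the identification of the central even-weight entry with $G_{\Gamma,m+1}$ ``a classical hypergeometric identity'' undersells it. The paper has to prove a derivative formula $\frac{d^{m+1}}{dt^{m+1}}Q_m(t) = \frac{2^m m!}{(1-t^2)^{m+1}}$ (Lemma \ref{lem: Qdiff}, for which it gives its own proof because no reference exists), and then carry out a nontrivial induction with raising and lowering operators (Proposition \ref{prop:CentralGmandGZ}) to match the harmonic-lift seed $f_m = -2Q_m(1-2\cc)$ against the central component of $g_{2m}$. The ``classical'' input is Lemma \ref{lem: LaplacefromLegendre}, which converts the Laplace eigenvalue equation into Legendre's ODE, but the actual matching is an argument in its own right.

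Your treatment of elliptic points (restricting to the $\Gamma_w$-invariant submatrix) is correct and matches the paper's Remark \ref{rem:kappa} and the use of $\kappa^{p,q}_{\Gamma,w}$ throughout.
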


Recall that the cuspidal (or parabolic) cohomology $H^1_{\cusp}(\mathcal{Y}_{\Gamma};\mathcal{V}_k)$ corresponds to (two copies of) the space of cusp forms for $\Gamma$ of weight $k+2$; the space $H^1_{\cusp}(\mathcal{Y}_{\Gamma} \setminus \{w\},\{z\};\mathcal{V}_k)$ is described by meromorphic cusp forms with poles along $w$ and prescribed values at $z$.

In the previous theorem, we view $M_{z,w}$ as  an object in a category of `realisations' consisting of compatible Betti and de Rham cohomology groups.  As we explain below, $M_{z,w}$ is the realisation of an object in a triangulated category of motives. Given such an object, the single-valued period map is the involution on algebraic de Rham cohomology obtained by  transporting the action of complex conjugation on  Betti cohomology  via the comparison isomorphism, and can be viewed as a $p$-adic period  for the infinite prime.   

\begin{rem}
    The  higher Green's functions  naturally fit  into the general framework of single-valued (families of) periods.
    This class of periods and families of periods contains many functions of importance in mathematics and physics, and include: the single-valued polylogarithms (for mixed Tate motives);  Néron-Tate heights (for motives of curves) \cite{brown-dupont}; Petersson inner products,  weak harmonic lifts of modular forms, and Fourier coefficients of Poincar\'e series (for motives of modular forms) \cite{CNHMF1, CNHMF3, CoeffPoincare} (\emph{cf.} \cite{bruininer-ono-rhoades,bringmann-ono}).  A natural next step in this `hierarchy' of periods are the single-valued periods associated to mixed modular motives, of which the higher Green's functions, by the previous theorem, are a simple example. 
\end{rem}

The object \eqref{intro: modularmotive} has the structure of a biextension (see \S\ref{par:intro-biextensions} below). We show that applying the linear combination of Hecke operators $T= \sum_{n\ge 1} \lambda_n T_n$, as in the Gross-Zagier conjecture, cuts out of this biextension a simple extension of the form
\begin{equation} \label{intro: extensionMTzw} 
    \begin{tikzcd}[column sep = small]
        0 \arrow{r} & \big(\Sym^{k} H^1(\mathcal{E}_{z})\big)^{\Gamma_z}  \arrow{r}& M^T_{z,w} \arrow{r} & \big(\Sym^k H^1(\mathcal{E}_{w})\big)^{\Gamma_w}(-1) \arrow{r}& 0 
    \end{tikzcd}
\end{equation}
where $\Gamma_z$ denotes the stabliser of $z$ in $\Gamma$ (which is contained in $\{\pm 1\}$ if $z$ is non-elliptic). 
This is a natural generalisation  of the relative cohomology object $\mathcal{K}_x=H^1( \PP^1 \setminus \{0,\infty\}, \{1,x\})$, or Kummer extension, which sits in a short exact sequence  
\begin{equation} \label{intro: Kummer}
    \begin{tikzcd}[column sep = small]
        0 \arrow{r}& \QQ(0) \arrow{r}& \mathcal{K}_x \arrow{r}& \QQ(-1) \arrow{r}& 0  \ . 
    \end{tikzcd}
\end{equation}
Its single-valued period is $\log |x|^2$, and in fact the object  \eqref{intro: modularmotive} reduces precisely to a Kummer extension  in the case when $k=0$ and $ \mathcal{Y}_{\Gamma}$ has genus zero.  

\begin{thm} \label{intro: thm2}
    Suppose that $z,w$ are CM points and $k=2m$ is even. By using the action of Hecke operators and the  action of complex multiplication on $\mathcal{E}_z$ and $\mathcal{E}_w$, we can isolate a subquotient of $M^T_{z,w}$  of the form 
    \begin{equation} \label{intro: GZextension} 
        \begin{tikzcd}[column sep = small]
            0 \arrow{r}& \QQ(-m) \arrow{r}& \mathcal{GZ}^{T}_{z,w} \arrow{r}& \QQ(-m-1) \arrow{r}& 0
        \end{tikzcd}
    \end{equation}
    in a suitable abelian category of realisations.  Its single-valued period is proportional to the value of the  higher Green's function at $(z,w)$. 
\end{thm}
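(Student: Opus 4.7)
The plan is to start from the simple extension~\eqref{intro: extensionMTzw} and cut out a rank-two Kummer-type subquotient using idempotents supplied by complex multiplication. Since $z$ and $w$ are CM points, the elliptic curves $\mathcal{E}_z$ and $\mathcal{E}_w$ carry actions of orders in imaginary quadratic fields $K_z$ and $K_w$. After extending coefficients to $K_z$, the realisation $H^1(\mathcal{E}_z)$ splits as $\chi_z \oplus \overline{\chi_z}$, where $\chi_z$ has Hodge type $(1,0)$ and $\chi_z \overline{\chi_z} = \QQ(-1)$ (and similarly for $\mathcal{E}_w$ over $K_w$), so that
\[
    \Sym^{2m} H^1(\mathcal{E}_z) \otimes K_z \;\cong\; \bigoplus_{i+j=2m} \chi_z^{\,i}\,\overline{\chi_z}^{\,j}.
\]
The unique Hodge--Tate summand is the central one, $\chi_z^{\,m} \overline{\chi_z}^{\,m} \cong \QQ(-m)$; since $\Gal(K_z/\QQ)$ interchanges $\chi_z$ and $\overline{\chi_z}$ it fixes this summand, and the corresponding projector descends to an idempotent defined over $\QQ$. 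Its image provides a canonical subobject $\QQ(-m) \hookrightarrow (\Sym^{2m} H^1(\mathcal{E}_z))^{\Gamma_z}$, and the analogous construction on the $w$-side produces a canonical quotient $(\Sym^{2m} H^1(\mathcal{E}_w))^{\Gamma_w}(-1) \twoheadrightarrow \QQ(-m-1)$.

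Given these two morphisms, $\mathcal{GZ}^T_{z,w}$ is constructed by the standard pull-back/push-out procedure in the abelian category of realisations: first pull back the extension~\eqref{intro: extensionMTzw} along $\QQ(-m-1) \hookrightarrow (\Sym^{2m} H^1(\mathcal{E}_w))^{\Gamma_w}(-1)$, then push the result out along $(\Sym^{2m} H^1(\mathcal{E}_z))^{\Gamma_z} \twoheadrightarrow \QQ(-m)$. This yields a subquotient of $M^T_{z,w}$ in the form~\eqref{intro: GZextension}, whose isomorphism class lies in $\mathrm{Ext}^1(\QQ(-m-1), \QQ(-m))$.

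To compute the single-valued period of $\mathcal{GZ}^T_{z,w}$, I would invoke Theorem~\ref{intro: thm1}: the single-valued period matrix of $M^T_{z,w}$ is (a block of) $\GM_{\Gamma,k}(z,w)$ expressed in an explicit basis adapted to the Hodge decompositions of $\Sym^{2m} H^1(\mathcal{E}_z)$ and $\Sym^{2m} H^1(\mathcal{E}_w)$. The CM projections constructed above are, up to explicit normalisation constants, precisely the projections onto the central basis vectors (of bidegree $(m,m)$) on each side, so the single-valued period of the rank-two extension~\eqref{intro: GZextension} equals the central entry of $\GM_{\Gamma,k}(z,w)$, which the paper has already shown to be proportional to $G_{\Gamma,m+1}(z,w)$. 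The main obstacle will be this last matching of bases: one must track the explicit basis of Theorem~\ref{intro: thm1}, arising from weak harmonic lifts and meromorphic modular forms, through the Hecke projection producing $M^T_{z,w}$ and through the CM idempotents, verifying that all intermediate normalisation factors combine to the desired proportionality constant; the exceptional cases in which $z$ or $w$ is elliptic will additionally require a separate discussion of the $\Gamma_{\bullet}$-invariants and their interaction with the CM descent.
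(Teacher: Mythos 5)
Your proposal follows essentially the same route as the paper: Proposition \ref{prop: CMmotives} and Lemma \ref{lem: invariantscontainsTate} supply the CM idempotents together with the $\Gamma_z$- and $\Gamma_w$-invariance of the Tate summands (covering the elliptic-point case you defer), and Theorems \ref{thm: partlyCMextension} and \ref{cor: GeneralKummerExtension} carry out exactly your pullback/push-out and the basis-matching you postpone, identifying the single-valued period with $-T\,{}^{\Gamma}\!G^{m,m}_{m,m}(z,w)$, which \eqref{GFCentralequalsGZ} shows is proportional to $G_{\Gamma,m+1}(z,w)$. The only cosmetic difference is that you descend the central CM projector by coefficient-Galois invariance after splitting into characters, whereas the paper constructs it via the cycle class of the CM graph, a construction that is moreover motivic (needed later for Corollary \ref{cor: MotivicKummerExtension}, though not for the present statement about realisations).
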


Every extension of $\QQ(-m-1)$ by $\QQ(-m)$ in a category of realisations which arises as the cohomology of an algebraic variety is expected to be isomorphic to a Tate twist of a  Kummer extension  of the form \eqref{intro: Kummer}. In particular, every Gross-Zagier extension \eqref{intro: GZextension} is expected to be Kummer, which implies the conjecture of Gross and Zagier (see Theorem \ref{thm: GZ-conditional-proof}).   To make this precise we prove the following theorem. 

\begin{thm}
    Let $\Gamma=\SL_2(\ZZ)$ and let $\overline{\mathcal{M}}_{1,n}$ denote the Deligne-Mumford compactification of the moduli stack of genus one curves with $n$ marked points. Assume that $z,w \in \mathcal{M}_{1,1}(K)$ correspond to isomorphism classes of elliptic curves defined over a number field $K$.  Then the realisation of the motive
    \begin{equation} \label{intro: themotive}
        M(\overline{\mathcal{M}}_{1,n}\times_{\QQ}  K  \setminus \pi_n^{-1}(w), \pi_n^{-1}(z))_{\varepsilon} [n]
    \end{equation}
    in the triangulated category of motives $\DM(K)$ over $K$ is concentrated in degree $0$, and coincides with $M_{z,w}$ of \eqref{intro: modularmotive}. It is a biextension in  $\DM(K)$. Here, $\varepsilon$ denotes the alternating part with respect to the action of the symmetric group which permutes the marked points of $\mathcal{M}_{1,n}$, and $\pi_n$ denotes the  natural map $\overline{\mathcal{M}}_{1,n}\rightarrow \overline{\mathcal{M}}_{1,1}$. 
\end{thm}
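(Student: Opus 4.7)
The plan is to compute $M := M(\overline{\mathcal{M}}_{1,n}\times_\QQ K\setminus \pi_n^{-1}(w), \pi_n^{-1}(z))_\varepsilon[n]$ by a motivic Leray argument along $\pi_n$, exploiting the principle that, for an elliptic curve $E$, the alternating part of the cohomology of configuration spaces of points on $E$ is controlled by symmetric powers of $H^1(E)$. First I would compute the $S_n$-alternating part of $R(\pi_n)_\ast \QQ$ in $\DM(\mathcal{Y}_\Gamma)$ by iterating the universal elliptic fibrations $\mathcal{M}_{1,j+1}\to\mathcal{M}_{1,j}$, whose generic fibres are elliptic curves minus $j$ points. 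The vanishing of $\Lambda^j H^1(E)$ for $j\geq 3$ combined with the sign projector forces the surviving pieces to live only in top cohomological degree and to assemble into $\mathcal{V}_k = \Sym^k R^1\pi_\ast\QQ$ with $k = n-1$: only one class from $H^1$ can be selected from each fibre factor, and the sign combinatorics convert the naive wedge into a symmetric power through the standard permutation-of-factors argument on Künneth contributions.

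Extending this identification across the Deligne-Mumford boundary is the second step. Strata of $\overline{\mathcal{M}}_{1,n}$ parametrise stable nodal curves of genus one with marked points distributed across irreducible components. A stratum on which two marked points lie on a common rational component is killed by $\varepsilon$, since the transposition swapping those two points acts trivially on that stratum. The strata that survive are those corresponding to degenerations supported on the nodal locus of $\overline{\mathcal{M}}_{1,1}$, which contribute only mixed Tate pieces in boundary cohomology and are absorbed by the cuspidal projector. Specialising the resulting relative motive to the open base $\mathcal{Y}_\Gamma\times_\QQ K \setminus \{w\}$ modulo $\{z\}$ with coefficients in $\mathcal{V}_{n-1}$, and taking account of the shift $[n]$, produces an object of $\DM(K)$ concentrated in degree zero whose realisation is $M_{z,w}$ as in \eqref{intro: modularmotive}, by Theorem \ref{intro: thm1}.

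Finally, the biextension structure in $\DM(K)$ comes from the geometric filtration by the closed substacks $\pi_n^{-1}(z)$ and $\pi_n^{-1}(w)$. The localisation triangles associated to the triple $\bigl(\overline{\mathcal{M}}_{1,n},\ \overline{\mathcal{M}}_{1,n}\setminus\pi_n^{-1}(w),\ \pi_n^{-1}(z)\bigr)$, restricted to the $\varepsilon$-part, present $M$ as a biextension with graded pieces $\Sym^k H^1(\mathcal{E}_z)$, a cuspidal middle, and $\Sym^k H^1(\mathcal{E}_w)(-1)$, matching the shape of \eqref{intro: extensionMTzw}. The hardest step is the vanishing of the $\varepsilon$-isotypic component of $R^i(\pi_n)_\ast\QQ$ for $i\neq n$ uniformly over the full compactification, including at the elliptic points, at the cusps, and over the special fibres at $z,w$; the combinatorics of alternating projectors on nested boundary strata, together with the need to handle $\mu_2$-stabilisers at elliptic points via $\QQ$-coefficients and isotypic decomposition, are where all the technical care will lie.
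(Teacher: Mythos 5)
Your overall architecture (fibrewise symmetric-power identification, boundary strata killed by the sign projector, biextension from localisation/Gysin triangles) points in the right direction, but two steps as written do not hold up. First, your reason for discarding boundary strata — ``the transposition swapping those two points acts trivially on that stratum'' — is only correct when the two colliding points bubble onto a rational component with exactly three special points, so that the $\mathcal{M}_{0,3}$-factor is a point. A general stratum of $\overline{\mathcal{M}}_{1,n}$ lying over $\mathcal{M}_{1,1}$ is a product of some $\mathcal{M}_{1,m}$-type piece with factors $\mathcal{M}_{0,m_j}$, $m_j\geq 3$, and for $m_j\geq 4$ the transpositions of external legs act nontrivially on $\mathcal{M}_{0,m_j}$. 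What is actually needed is the representation-theoretic statement that the alternating representation of $\mathfrak{S}_{m_j-1}$ does not occur in $H^{\bullet}(\mathcal{M}_{0,m_j})$; this is the lemma of Consani--Faber \cite{ConsaniFaber} that the paper invokes in its computation of the $\varepsilon$-part of the fibre cohomology, and without it your stratum-by-stratum induction does not close. (Also, working with $(R\pi_{n*}\QQ)_{\varepsilon}$ in a category of motivic sheaves over the stack $\mathcal{Y}_{\Gamma}$ is heavier machinery than the statement requires: the claim concerns realisations, so ordinary sheaf theory on the analytic orbifold suffices, which is how the paper proceeds.)

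The more serious gap is that your proposal never explains why the answer is the \emph{cuspidal} cohomology $H^1_{\cusp}(\mathcal{Y}_{\Gamma}\setminus\{w\},\{z\};\mathcal{V}_k)$ rather than the full relative cohomology or a compactly supported variant. You ask for concentration of the $\varepsilon$-part of $R^i\pi_{n*}\QQ$ ``uniformly over the full compactification, including \ldots at the cusps'' and then say the cusp contributions are ``absorbed by the cuspidal projector''; but no projector appears in the motive \eqref{intro: themotive}, and the fibrewise concentration is only true over $\mathcal{M}_{1,1}$ — over the cusp the fibres correspond to dual graphs with a cycle and their alternating cohomology is not something to be discarded by a fibrewise vanishing. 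The mechanism is different: the cusp fibre is deliberately neither punctured nor placed in the relative part, and one shows that this choice interpolates between the $j_!$-type and $Rj_*$-type constructions so as to compute exactly the image of compactly supported cohomology in ordinary cohomology, i.e.\ the parabolic cohomology \eqref{eq:betti-cuspidal-cohomology}. Concretely, the paper compares the motive at hand with the two auxiliary motives in which $\pi_n^{-1}(\infty)$ is added to the relative part, respectively removed; proves degree concentration for those two using the fibrewise computation together with properness of $\pi_n$ and exactness of extension by zero; and then deduces that the natural maps from the first to the given motive and from the given motive to the second are surjective, respectively injective, by reducing to the case $\mathcal{P}=\mathcal{Q}=\emptyset$, which is precisely the theorem of Consani--Faber identifying $M(\overline{\mathcal{M}}_{1,n})_{\varepsilon}[n]$ with $H^1_{\cusp}(\mathcal{M}_{1,1};\mathcal{V}_{n-1})$. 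Some argument of this sandwich type is indispensable; as it stands, your outline would at best identify the realisation with a non-cuspidal relative cohomology group, which is not the object $M_{z,w}$ of \eqref{intro: modularmotive}.
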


The previous theorem uses the construction \cite{ConsaniFaber} of the motive of cusp forms in level 1, and may easily be extended to higher levels using \cite{Petersen}. An alternative approach would be to use the motives of Kuga-Sato varieties as in \cite{Scholl} or the six-functor formalism on triangulated categories of motivic sheaves \cite{AyoubICM}.  
The previous results which concern realisations can thus be promoted to a statement about  motives. The following theorem, in particular,  enables us to pinpoint the key content of the Gross-Zagier conjecture.

\begin{thm}
    Let $M^n_{\cusp}= M(\overline{\mathcal{M}}_{1,n})_{\varepsilon}[n]$ denote the motive associated to the space of cusp forms for $\Gamma= \SL_2(\mathbb{Z})$ of weight $n+1$, whose realisation is $H^1_{\cusp}(\mathcal{Y}_{\Gamma};\mathcal{V}_{n-1})$.  If the Hecke correspondence $T$ acts via zero on $M^n_{\cusp}$ then the extension \eqref{intro: extensionMTzw} is the realisation of an extension in $\DM(K)$. Furthermore,  if    $z,w$ have complex multiplication defined over $K$, then \eqref{intro: GZextension} is a Kummer extension in $\DM(K)$ and the Gross-Zagier conjecture holds. 
\end{thm}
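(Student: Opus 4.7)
The plan is to promote the realisation-level arguments of Theorem \ref{intro: thm2} to motivic statements in $\DM(K)$, by exploiting the Hecke action on $M_{z,w}$ and, in the CM case, the action of the endomorphism algebras of the fibre elliptic curves. The starting point is the previous theorem, which identifies $M_{z,w}$ with the alternating part of the relative motive $M(\overline{\mathcal{M}}_{1,n}\times_\QQ K\setminus \pi_n^{-1}(w),\pi_n^{-1}(z))[n]$ and endows it with a biextension structure in $\DM(K)$. First I would lift the Hecke combination $T=\sum_n\lambda_n T_n$ to an endomorphism of this motive: each $T_n$ is given by an algebraic correspondence on $\overline{\mathcal{M}}_{1,1}$ coming from moduli of cyclic isogenies, which pulls back along $\pi_n$ to a correspondence on $\overline{\mathcal{M}}_{1,n}$ and restricts compatibly to the fibres over $z,w$ via the induced isogenies of $\mathcal{E}_z$ and $\mathcal{E}_w$.

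The biextension $M_{z,w}$ has a weight filtration whose outer graded pieces are $(\Sym^{n-1}H^1(\mathcal{E}_z))^{\Gamma_z}$ and $(\Sym^{n-1}H^1(\mathcal{E}_w))^{\Gamma_w}(-1)$, with middle graded piece the cuspidal motive $M^n_{\cusp}$. Since $T$ preserves this filtration and acts on the middle piece exactly as it does on $M^n_{\cusp}$, the vanishing hypothesis implies that $T$ kills the middle graded piece; combining $T$ with the natural maps coming from the biextension structure then yields a motive $M^T_{z,w}$ in $\DM(K)$ fitting into a distinguished triangle that realises \eqref{intro: extensionMTzw}, equivalently defining a class in $\mathrm{Ext}^{1}_{\DM(K)}$.

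Now assume $z,w$ have CM defined over $K$. The fields $\End(\mathcal{E}_z)\otimes\QQ$ and $\End(\mathcal{E}_w)\otimes\QQ$ are imaginary quadratic, embedded in $K$, and act motivically on the corresponding symmetric powers of $H^1$ of the fibres. For $k=2m$ even, the idempotents projecting onto the trivial character isotypic component cut out the Tate summands $\QQ(-m)$ on the $z$-side and $\QQ(-m-1)$ on the Tate-twisted $w$-side. Applying these projectors to $M^T_{z,w}$ isolates a subquotient $\mathcal{GZ}^T_{z,w}$ fitting into \eqref{intro: GZextension} as an extension in $\DMT(K)$. The Borel--Beilinson computation of motivic cohomology of a number field, $\mathrm{Ext}^{1}_{\DMT(K)}(\QQ(-m-1),\QQ(-m))\simeq K^\times\otimes\QQ$, forces this extension to be a Tate twist of a Kummer extension \eqref{intro: Kummer} associated to some $x\in K^\times\otimes\QQ$. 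Its single-valued period is then $\log|x|^2$, which by Theorem \ref{intro: thm2} is proportional to $TG_{\Gamma,m+1}(z,w)$; this yields the Gross--Zagier conjecture.

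The main technical obstacle is the CM projection step: while the decomposition of $\Sym^{n-1}H^1(\mathcal{E})$ under an imaginary quadratic action is transparent for pure motives of CM abelian varieties, one must verify that the corresponding idempotents lift to endomorphisms of the mixed motive $M^T_{z,w}$ in $\DM(K)$ compatibly with its weight filtration, and that the resulting subquotient genuinely lies in $\DMT(K)$ rather than only its realisation. Equivalently, this requires a motivic enhancement of the CM action compatible with the biextension structure, which should follow from the universality of the moduli-space construction of $M_{z,w}$ and the functoriality of the Kuga--Sato-type towers involved, but needs some careful bookkeeping.
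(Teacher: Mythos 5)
Your outline is essentially the paper's approach, but two of your steps are phrased in ways that would not survive close scrutiny in $\DM(K)$, and the paper's proof avoids both pitfalls.

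First, you appeal to a ``weight filtration'' on the mixed motive $M_{z,w}$ inside $\DM(K)$ and argue that $T$ kills the middle graded piece. In a triangulated category of motives over a number field there is in general no canonical weight (or motivic) $t$-structure outside the mixed Tate subcategory, so one cannot literally speak of graded pieces of $M_{z,w}$. The paper instead works entirely with distinguished triangles: it records that $T$ induces a morphism $M^n(\mathcal{P}) \to M^n(\mathcal{P}')$ \emph{where the target has a different set of punctures} $\mathcal{P}' = T^{-1}\mathcal{P}$ (a subtlety your proposal omits — Hecke correspondences move the point $w$), and then observes that the vanishing of $T$ on $M^n_{\cusp}$ forces this morphism to factor through the residue $R_{\mathcal{P}} = (\Sym^{n-1}H^1(\mathcal{E}_{w}))^{\Gamma_w}(-1)$, by the triangle $M^n_{\cusp} \to M^n(\mathcal{P}) \to R_{\mathcal{P}}$. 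Composing with the boundary of the relative-cohomology triangle for $\mathcal{Q}$ produces the extension $M^T_{z,w}$ as a cone. This is the same idea as yours but expressed triangulated-categorically, which is what actually makes sense in $\DM(K)$.

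Second, your stated ``main technical obstacle'' — that the CM idempotents might not lift to endomorphisms of the mixed motive $M^T_{z,w}$ compatibly with its filtration — is not an obstacle at all in the paper's argument, and worrying about it suggests you are trying to split $M^T_{z,w}$ directly. You should not: the CM decomposition (Proposition \ref{prop: CMmotives}) lives on the \emph{pure} Chow motives $\Sym^{n-1}H^1(\mathcal{E}_z)$ and $\Sym^{n-1}H^1(\mathcal{E}_w)(-1)$ at the two ends of the distinguished triangle, where it is induced by algebraic cycles (graphs of complex multiplication) and hence manifestly motivic. One then pulls back the triangle along $\QQ(-m-1)\hookrightarrow (\Sym^{n-1}H^1(\mathcal{E}_w))^{\Gamma_w}(-1)$ and pushes out along $(\Sym^{n-1}H^1(\mathcal{E}_z))^{\Gamma_z} \twoheadrightarrow \QQ(-m)$. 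These operations are internal to the triangulated structure and require no action on the middle term, so the object $\mathcal{GZ}^T_{z,w}$ automatically lands in $\DMT(K)$. After that, your final step — $\mathrm{Ext}^1_{\DMT(K)}(\QQ(-m-1),\QQ(-m))\cong K^\times\otimes\QQ$, hence Kummer, hence Theorem \ref{thm: GZ-conditional-proof} applies — matches the paper exactly.
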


As mentioned above, a version of this theorem also holds in higher level with no extra difficulty. 
The point is that the action of Hecke operators (and that of complex multiplication) is motivic and one may use the fact that,   unconditionally, every extension of $\mathbb{Q}(-1)$ by $\mathbb{Q}(0)$ in $\DM(K)$ is  a Kummer extension, or equivalently, $\Hom_{\DM(K)}(\QQ(-1),\QQ(0)[1])\cong K^{\times} \otimes \mathbb{Q}$. It follows that the  only missing part of the picture,  for a completely geometric proof the Gross-Zagier conjecture from start to finish, is the following the conjecture: 

\begin{conj} \label{conjectureTVanishing}
    If $T$ vanishes on the space of weight $n+1$ cusp forms, then it vanishes on $M^n_{\cusp}$. 
\end{conj}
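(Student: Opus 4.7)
The plan is to reduce the conjecture to a faithfulness statement for the realisation functor on the Hecke subalgebra of $\End_{\DM(K)}(M^n_{\cusp})$, which in turn should follow from a decomposition of $M^n_{\cusp}$ into isotypic eigencomponents. Each Hecke operator $T_p$ is defined by an algebraic correspondence on $\overline{\mathcal{M}}_{1,n}$ commuting with the $S_n$-action on marked points, so it descends to an endomorphism of $M^n_{\cusp}$ in $\DM(K)$. One thus obtains a ring homomorphism
\begin{equation*}
    \rho\colon \mathbb{T}\otimes\QQ \longrightarrow \End_{\DM(K)}\!\left(M^n_{\cusp}\right),
\end{equation*}
whose composition with the realisation functor $r$ recovers the usual action of $\mathbb{T}$ on $H^1_{\cusp}(\mathcal{Y}_{\Gamma};\mathcal{V}_{n-1})$. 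The conjecture is precisely the assertion that $\ker(r\circ\rho)\subseteq\ker(\rho)$.

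I would try to prove this by constructing a canonical isotypic decomposition in $\DM(K)$
\begin{equation*}
    M^n_{\cusp} \;\cong\; \bigoplus_{[\lambda]} M(\lambda),
\end{equation*}
indexed by Galois orbits $[\lambda]$ of normalised Hecke eigenforms $f_\lambda$ of weight $n+1$, with each $M(\lambda)$ carrying an action of the coefficient field $K_\lambda$ under which $T_p$ acts by the eigenvalue $a_p(f_\lambda)$. Granted such a decomposition, any $T=\sum_i\lambda_iT_i$ annihilating $S_{n+1}(\Gamma)$ satisfies $\sum_i\lambda_ia_i(f_\lambda)=0$ for every $[\lambda]$, hence acts by zero on each $M(\lambda)$ and therefore on $M^n_{\cusp}$.

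To produce the decomposition, one would construct orthogonal projectors $e_{[\lambda]}\in\End_{\DM(K)}(M^n_{\cusp})$ summing to the identity, written as polynomials in the Hecke correspondences via Eichler--Shimura-type relations. On the realisation side, such projectors exist because Petersson positivity forces the image of $\mathbb{T}\otimes\QQ$ in $\End_{\mathrm{Real}}(H^1_{\cusp})$ to be a product of number fields; the real content is lifting these idempotents through $r$. Such a lift would refine the Scholl \cite{Scholl} and Consani--Faber \cite{ConsaniFaber} constructions from Grothendieck motives to triangulated motives.

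The main obstacle is precisely this lifting step: it is a form of the standard conjectures, equivalent to the assertion that numerical and motivic equivalence coincide for the algebraic cycles on (Kuga--Sato varieties over) $\overline{\mathcal{M}}_{1,n}$ which cut out the individual Hecke-eigencomponents, and I do not see an approach that avoids this input. A consistency check is provided by the case $n=3$ treated unconditionally in the paper: there $S_4(\SL_2(\ZZ))=0$, so the realisation $H^1_{\cusp}(\mathcal{Y}_{\Gamma};\mathcal{V}_2)$ vanishes, and the mixed-Tate nature of $M(\mathcal{M}_{1,3})$ established in the paper forces $M^3_{\cusp}=0$ in $\DM(K)$, so the conjecture holds trivially in that weight.
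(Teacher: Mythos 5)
You are attempting to prove something that the paper itself leaves as a conjecture: Conjecture \ref{conjectureTVanishing} is not proved in general anywhere in the paper, and your proposal does not prove it either. Your reduction is fine as far as it goes: descend the Hecke correspondences to $\End_{\DM(K)}(M^n_{\cusp})$, posit a Hecke-isotypic decomposition $M^n_{\cusp}\cong\bigoplus_{[\lambda]}M(\lambda)$ on which $T_p$ acts by the scalar $a_p(f_\lambda)$, and conclude that any $T$ annihilating the weight $n+1$ cusp forms acts by zero on each summand. But the existence of that decomposition in the triangulated category is exactly the open point, and you concede it yourself ("I do not see an approach that avoids this input"). This is precisely where the authors also stop: in the discussion following Corollary \ref{cor: MotivicKummerExtension} they note that faithfulness (or conservativity) of the Betti realisation on cusp form motives would give the implication but is "alas not presently" known, and in their closing remark of that section they observe that the Hecke-eigenspace decomposition is known for motives modulo homological equivalence while the statement in $\DM(K)$ follows only from the standard conjectures (citing Scholl, Remark 1.2.6, and Petersen, Remark 6.3). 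So your text is a correct account of what the conjecture would follow from — essentially the same conditional route the paper already records — rather than a proof; it does not lift the idempotents $e_{[\lambda]}$ through the realisation functor, and no argument you give would produce them.

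The only unconditional content in the paper is the case $n=3$, and there the mechanism is different from your eigenspace strategy: since there are no weight $4$ cusp forms, it suffices to show $M^3_{\cusp}=0$, which the paper does geometrically by proving $M(\mathcal{M}_{1,3})$ is mixed Tate (Proposition \ref{prop: M13isMT}) and then invoking conservativity of the Betti realisation on $\DMT(K)$ (Theorem \ref{thm: GZtrueinlevel1weight4}); no Hecke projectors are needed. Your "consistency check" restates this correctly, but be aware that it is not a check of your method — it bypasses it. If you want to advance beyond the paper, the concrete missing step is to construct the idempotents $e_{[\lambda]}$ in $\End_{\DM(K)}(M^n_{\cusp})$ (equivalently, to lift the decomposition known modulo homological equivalence), or to place $M^n_{\cusp}$ inside a subcategory where conservativity is known, such as the one generated by motives of smooth projective curves; as written, your proposal leaves that step entirely open, so there is a genuine gap and the statement remains conjectural.
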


In other words, we prove that the Gross-Zagier conjecture (in level 1) follows from Conjecture  \ref{conjectureTVanishing}. The case of higher level is similar (see \S \ref{sect: HigherLevels}). Conjecture \ref{conjectureTVanishing}  is a special case of a  standard conjecture in the theory of motives. Interestingly, it 
does not depend in any way on the points $z,w$, and  hence universal in each weight and level. In the case when there are no cusp forms of weight $n+1$, it is enough to show that the motive $M^n_{\cusp}$  vanishes, \emph{i.e.}, it is not  a `phantom motive' which is the name given to a  non-trivial motive which has trivial Betti realisation.  

As a proof of principle, we prove Conjecture \ref{conjectureTVanishing} in the first interesting case by geometric methods. 

\begin{thm}
    The motive of $\mathcal{M}_{1,3}$ is mixed Tate. Consequently, the motive $M^3_{\mathrm{cusp}}$ vanishes, \eqref{intro: GZextension}  is Kummer, and the Gross-Zagier conjecture holds in weight 4 and level 1 for any CM points $z,w$. 
\end{thm}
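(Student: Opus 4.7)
The plan is to establish the first assertion---mixed Tate-ness of the motive of $\mathcal{M}_{1,3}$---and then deduce the remaining three from it, using the preceding theorems together with the unconditional identity $\Hom_{\DM(K)}(\QQ(-1),\QQ(0)[1]) \cong K^{\times} \otimes_{\ZZ} \QQ$.

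For the main assertion, I would exploit the tower of forgetful morphisms
$$\mathcal{M}_{1,3} \xrightarrow{\pi_3} \mathcal{M}_{1,2} \xrightarrow{\pi_2} \mathcal{M}_{1,1}.$$
The base $\mathcal{M}_{1,1}$ is classically mixed Tate, its coarse moduli being the $j$-line and the two stacky points contributing only Tate invariants. For $\mathcal{M}_{1,2}$, identifying it with the complement of the zero section in the universal elliptic curve $\mathcal{E}\to\mathcal{M}_{1,1}$ and applying the localization triangle, the motive of $\mathcal{M}_{1,2}$ reduces to that of $\mathcal{E}$ modulo Tate pieces. The motivic Leray decomposition presents $M(\mathcal{E})$ in terms of cohomologies $H^*(\mathcal{M}_{1,1}; R^q\pi_*\QQ)$ for $q=0,1,2$; the nontrivial piece at $q=1$ is controlled via Eichler-Shimura by modular forms of weight $3$ for $\SL_2(\ZZ)$, which vanish (no odd weight forms in level 1). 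Hence $\mathcal{M}_{1,2}$ is mixed Tate.

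For $\mathcal{M}_{1,3}$, I realize it as the open substack of the universal elliptic curve $\mathcal{E}' \to \mathcal{M}_{1,2}$ obtained by removing the zero section and the section marked by the second point. A second Leray decomposition along the tower expresses $M(\mathcal{E}')$ in terms of $H^*(\mathcal{M}_{1,1}; \mathrm{Sym}^{k} R^1\pi_*\QQ)$ for $k=0,1,2$, corresponding via Eichler-Shimura to modular forms of weights $2$, $3$, $4$ for $\SL_2(\ZZ)$. None of these spaces admit cusp forms, so only Eisenstein cohomology appears, which is mixed Tate. A Gysin sequence for the two removed sections preserves mixed Tate-ness, proving the first assertion.

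The main obstacle is upgrading these Leray arguments from realisations to the triangulated category $\DM(\QQ)$. This requires a motivic Leray decomposition for the universal elliptic curve and a motivic cuspidal/Eisenstein decomposition of the cohomology of the local systems $\mathrm{Sym}^{k} R^1\pi_*\QQ$, both of which are available via the Consani-Faber construction cited in the paper (and become trivial in the absence of cusp forms, where the entire cohomology is Eisenstein and hence Tate). Once mixed Tate-ness of $\mathcal{M}_{1,3}$ is established motivically, the same holds for $\overline{\mathcal{M}}_{1,3}$ by mixed Tate-ness of the boundary; the motive $M^3_{\mathrm{cusp}}$ is then a mixed Tate motive whose Betti realisation vanishes (weight $4$ cusp forms for $\SL_2(\ZZ)$ being zero), and conservativity of the Betti functor on mixed Tate motives forces $M^3_{\mathrm{cusp}}=0$. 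The hypothesis of the preceding theorem being trivially satisfied, \eqref{intro: GZextension} is an extension of $\QQ(-2)$ by $\QQ(-1)$ in $\DM(K)$, necessarily Kummer, and the Gross-Zagier conjecture in weight $4$ and level $1$ follows for arbitrary CM points $z,w$.
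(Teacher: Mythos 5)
Your deduction of the last three assertions from mixed Tate-ness (conservativity of the Betti realisation on $\DMT(\QQ)$ kills $M^3_{\cusp}$, then the Kummer identification and Theorem \ref{thm: GZ-conditional-proof}) is exactly the paper's argument and is fine. The problem is the first and main step. Your route to mixed Tate-ness of $\mathcal{M}_{1,3}$ rests on the inference ``the relevant local-system cohomology contains no cusp forms, only Eisenstein cohomology, hence the corresponding motivic piece is mixed Tate.'' That inference is a statement about \emph{realisations}, and transferring it to $\DM(\QQ)$ is precisely the phantom-motive problem that this theorem is designed to circumvent: Conjecture \ref{conjectureTVanishing} asserts (and the paper stresses it is open in general) that a cusp-form motive with vanishing Betti realisation is zero. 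Concretely, any motivic Leray/Deninger--Murre-type decomposition along $\mathcal{M}_{1,3}\to\mathcal{M}_{1,1}$ reduces mixed Tate-ness of $\mathcal{M}_{1,3}$ to mixed Tate-ness of the relative $H^1$ with $\Sym^2$ coefficients, whose cuspidal part \emph{is} $M^3_{\cusp}$; so your argument is circular at exactly the weight-$4$ step. The Consani--Faber construction you invoke builds the cusp-form motive and identifies its realisation, but it does not show the motive vanishes (or ``becomes trivial'') when the space of cusp forms does. (For the odd-weight piece $\Sym^1$ one can genuinely argue motivically, since the elliptic involution $[-1]$ acts by $-1$ on that summand, but no such trick exists for $\Sym^2$, which is the case that matters here.)

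The paper avoids modular forms entirely at this stage and proves mixed Tate-ness by an explicit geometric stratification (Appendix \ref{sect: AppendixD}): using Weierstrass coordinates, $\mathcal{M}^{\node}_{1,1}\cong\mathcal{P}(4,6)$, $\mathcal{M}^{\node}_{1,2}$ is stratified by $[\mathbb{A}^2/\mu_4]$ and $[Z/\mu_6]$ with $Z$ an affine punctured elliptic curve, and $\mathcal{M}^{\node}_{1,3}$ by $[\mathbb{A}^3/\mu_2]\setminus W$ and a stratum of $\mathcal{P}(2,3,4)$, all pieces being mixed Tate --- the one subtle point being that the $\mu_6$-invariants of $M(Z)$ are mixed Tate even though $M(Z)$ itself is not (Remark \ref{rem:mixed-tate-elliptic-quotient}); mixed Tate-ness of $\mathcal{M}_{1,n}$, $\overline{\mathcal{M}}_{1,n}$ for $n\le 3$ then follows from the boundary stratification by quotients of products of $\mathcal{M}_{0,r}$ and $\mathcal{M}_{1,s}$ (Proposition \ref{prop: M13isMT}). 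To repair your proposal you would need either such a direct geometric argument or an independent proof that the $\Sym^2$ relative $H^1$ motive is mixed Tate; asserting it from the absence of cusp forms assumes the conclusion.
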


An unexpected consequence of our analysis is that it points to a family of new conjectures which go far beyond that of Gross and Zagier. Indeed, if we drop the condition that  $z$ or $w$ has complex multiplication, or even that $k$ is even, then Beilinson's conjecture predicts a relation between  the periods of extensions of the form \eqref{intro: extensionMTzw}  and special values of $L$-functions of symmetric powers of elliptic curves and their tensor products.  These relations open up a fascinating  avenue for further study, which  crucially involve the full matrix-valued Green's functions and cannot be formulated if one only considers  the central entries  $G_{\Gamma,m+1}(z,w)$. 

The proofs of the  theorems  set out above  involve  a number of constructions which relate to  different aspects of the theory of modular forms, and we have tried to give a definitive account of the  foundational results which are involved. They may be of independent interest. 
We now discuss each  in turn.

\subsection{Meromorphic modular forms and harmonic lifts}

First of all, consider the following differential forms of degree $0,1$, and $2$ on the complement of the diagonal in $\CC\times \CC$ with coordinates $(x,y)$: 
\begin{equation} \label{intro: mero012forms}
    \begin{tikzcd}
        \log (x-y)  \arrow[mapsto]{r}{d_x} & {\displaystyle \frac{dx}{x-y}} \arrow[mapsto]{r}{d_y} & -{\displaystyle \frac{dx\wedge dy}{(x-y)^2}}    \, ,
    \end{tikzcd}
\end{equation}
each of which is obtained from its predecessor by applying total derivatives with respect to $x$ or $y$.
The logarithm is multi-valued, but has a canonical single-valued version $\log |x-y|^2$ which satisfies 
\begin{equation} \label{intro: diffeqforlog}
    \begin{tikzcd}
        \log |x-y|^2  \arrow[mapsto]{r}{d_x} & {\displaystyle \frac{dx}{x-y} + \frac{d \overline{x}}{\overline{x}-\overline{y}} } \arrow[mapsto]{r}{d_y} &  - {\displaystyle \frac{dx\wedge dy}{(x-y)^2} -\frac{d\overline{x}\wedge d\overline{y}}{(\overline{x}-\overline{y})^2}} \, ,
    \end{tikzcd}
\end{equation}
where $d_x = \frac{\partial}{\partial x} dx +    \frac{\partial}{\partial \overline{x}} d\overline{x}$ is the total derivative with respect to $x, \overline{x}$, and similarly for $y$. The one-form $\frac{dx}{x-y} \in \Omega_{\log}^1( \CC \setminus \{y\})$ is the relative  logarithmic one-form on the family of punctured curves $\mathbb{P}^1(\mathbb{C})\setminus \{y, \infty\}$ of genus 0. The above functions and differential forms generalise to curves of arbitrary genus, a key difference being that logarithmic differential forms with prescribed poles and residues are no longer unique owing to the 
existence of global holomorphic differentials,   and so  the genus $g>0$ analogues of $\log |x-y|^2$ involve choices. 

In this paper we study generalisations  of \eqref{intro: diffeqforlog}  on quotients of the upper half-plane $\mathfrak{H}$ by finite-index subgroups $\Gamma \leq \SL_2(\ZZ)$ in higher weight (\emph{i.e.}, with coefficients in a vector bundle).
Before discussing the general case, consider the  analogues of \eqref{intro: mero012forms} on  the upper half-plane in the case $\Gamma = \SL_2(\ZZ)$:  they are simply the  pullback  of  \eqref{intro: mero012forms}  by the modular $j$-invariant $j:\mathfrak{H} \rightarrow \CC$ 
\begin{equation} \label{intro: mero012formsH}
    \begin{tikzcd}
        \log ( j(z) -j(w))  \arrow[mapsto]{r}{d_z} & {\displaystyle \Psi(z,w)\,  dz} \arrow[mapsto]{r}{d_w} & {\displaystyle \Phi(z,w)\,  dw \wedge dz} \, ,
    \end{tikzcd}
\end{equation}
where 
\[
    \Psi(z,w) =  \frac{j'(z)}{j(z)-j(w)}   \quad \hbox{ and } \quad  \Phi(z,w) =  \frac{j'(z)j'(w)}{(j(z)-j(w))^2} \ . 
\]
The function $\Psi(z,w)$  is a  meromorphic modular form of weight two  in $z$ with simple poles along the $\SL_2(\ZZ)$-orbit of $w$.  
The Green's function \eqref{intro: G2} is the  single-valued version of $\log(j(z)-j(w))$ and, if $w$ is fixed,  satisfies the differential equation (recall $d_z = \frac{\partial}{\partial z} dz +    \frac{\partial}{\partial \overline{z}} d\overline{z}$)
\[
  d_z G_1(z,w) = \Psi(z,w)dz + \overline{\Psi(z,w)}d\overline{z}\ . 
\]
It may thus be interpreted as a `harmonic lift' of $\Psi(z,w)$, as discussed  below. The function $\Phi$ and its generalisations play a minor role in this paper but are analogues of reproducing kernels (see below also).

\subsubsection{Rational Poincar\'e series and meromorphic modular forms}

The higher weight analogues of \eqref{intro: mero012formsH} are  defined  using Poincar\'e  series which are obtained by averaging a rational function over the action of $\Gamma$ (`s\'eries th\'etafuchsiennes', in Poincaré's own terminology; \emph{cf.} \cite{Poincare}). In the modern literature,  `Poincar\'e series' often  refers to series with  an additional exponential damping factor (also studied in \emph{loc. cit.} (6)), which makes their analytic properties much easier to study. For clarity we shall refer to the former as \emph{rational} Poincar\'e series.

Let $\Gamma \leq \SL_2(\ZZ)$ be a finite-index subgroup. For all $p,q \in \ZZ$ with $p+q>0$ and $z,w $ in either the upper or lower half-planes such that $z\notin \Gamma w \cup  \Gamma \overline{w}$, consider the rational Poincar\'e series
\begin{equation}\label{introPsipiqdef}
    \Psi^{p,q}_{\Gamma}(z,w) =  \sum_{\gamma \in \Gamma} \frac{1}{j^{p+q+2}_{\gamma}(z)} \frac{ w-\overline{w}} { (\gamma z - w)^{p+1} (\gamma z -\overline{w} )^{q+1}       } \ ,\qquad   \hbox{ where }   j_{\begin{psmallmatrix} a& b \\ c& d\end{psmallmatrix}}(z) =cz+d \ . 
\end{equation}
 It  converges normally on compacta of the upper and lower half-planes, and on horoball neighbourhoods of cusps. It therefore defines a function which admits two quite different interpretations according to whether it is viewed as  a meromorphic function of $z$, or a real-analytic function of $w$, respectively. The former point of view will be the most useful in the first instance. For fixed $w$, the functions $z \mapsto \Psi_{\Gamma}^{p,q}(z,w)$ define meromorphic modular forms for $\Gamma$ of weight $p+q+2$ which vanish at cusps, and have poles along $\Gamma w$ (or $\Gamma \overline{w}$, if $\mathrm{Im}(w)$ is negative) of order at most $p+1$ (resp. $q+1$). The functions $\Psi$ appear to have first been introduced in \cite{PeterssonMero}, and further studied in \cite{Petersson2}. See also \cite{BringmannKanevonPippich,LobrichSchwagenscheidt,ZemelHeegner}. 

In this paper we systematically take the point of view of vector-valued modular forms. Consider the $\QQ$-vector space $V_k^{\B}$ of homogeneous polynomials of degree $k$ in two generators $X,Y$ of degree $1$, equipped with the standard right action of $\Gamma$. It defines a local system on the orbifold quotient $\mathcal{Y}_{\Gamma}(\mathbb{C}) = \Gamma \backslash \!\!\backslash  \mathfrak{H}$, which induces a holomorphic vector bundle $\mathcal{V}_k^{\dR}$ of rank $k+1$. The higher weight analogues  of the differential form $d_z\log(j(z)-j(w))$ are the $\Gamma$-invariant 1-forms with values in $V_k^{\B}\otimes \mathbb{C}$:
\begin{equation} \label{intro: Psipqforms}
    \Psi_{\Gamma}^{p,q}(z,w) (X- zY)^{p+q} dz   \qquad \hbox{ where } \quad p+q = k \ \hbox{ and } p,q\geq 0\ , \
\end{equation}
which can also be seen, for fixed $w$, as global section of $\Omega^1\otimes \mathcal{V}_k^{\dR}$ over $\mathcal{Y}_{\Gamma}(\mathbb{C})\setminus \{w\}$.

The (vector-valued) residues of these functions are computed in Corollary \ref{cor:residue-Psi}; from this we deduce that all de Rham cohomology classes on the punctured modular curve may be represented using  the meromorphic modular forms \eqref{intro: Psipqforms} and  weakly holomorphic  modular forms. In some special cases (such as  $\Gamma=\SL_2(\ZZ)$) certain functions $\Psi_{\Gamma}^{p,q}$ can be written explicitly in terms of quotients of Eisenstein series (see Example \ref{ex:psi0k-sl2}).

\begin{rem}
    Sakharova  \cite{Sakharova} has shown, amongst other things, how  to derive the weight 2 case \eqref{intro: mero012formsH} from a similar construction using Hecke's trick.
\end{rem}

\subsubsection{The reproducing kernel}

For $k>2$, and $\Gamma$ as above, consider the rational Poincar\'e series:
\begin{equation} \label{intro: Phidef}
    \Phi_{\Gamma,k}(z,w) = (k-1) \sum_{\gamma \in \Gamma} \frac{1}{j_{\gamma}^k(z)} \frac{1}{(\gamma z- w)^k}\ .
\end{equation}
When $w$ is in the upper half-plane, $\Phi_{\Gamma,k}(z,w)$ is a meromorphic modular form of weight $k$ in  $z$ with poles along $\Gamma w$ of order at most $k$, and vanishing at cusps. It was first introduced by Poincar\'e \cite[page 143]{Poincare}. When $w$ is in the lower half-plane, it defines a cusp form  of weight $k$ in $z$, which is known as the `reproducing kernel'  \cite{Zagier}.  The analogue of the 2-form \eqref{intro: mero012formsH} is the $\Gamma \times \Gamma$-invariant 2-form with values in $(V_k^{\B}\otimes V_k^{\B})\otimes \mathbb{C}$ 
\begin{equation} \label{intro: Phitwoform}
    \Phi_{\Gamma,k}(z,w) (X-zY)^{k-2} (U-wV)^{k-2} dw \wedge dz \ . 
\end{equation}
The first copy of $V^{\B}_k$ has generators $X,Y$ and the second  has generators $U,V$. Geometrically, \eqref{intro: Phitwoform} is a differential form on (an open of) on $\mathcal{Y}_{\Gamma}(\mathbb{C}) \times \mathcal{Y}_{\Gamma}(\mathbb{C})$ with values in the external product vector bundle $\mathcal{V}^{\dR}_k \boxtimes \mathcal{V}^{\dR}_k$.

The relationship between $\Phi_{\Gamma,k}(z,w)$ and  $\Psi^{p,q}_{\Gamma}(z,w)$ comes about by viewing the latter as a real-analytic function of the variable $w$. From this perspective, it is modular in $w$ of weights $(p,q)$ where the first index denotes the holomorphic weight, and the second the anti-holomorphic weight. The generating series
\[
    \Psi_{\Gamma,k}(z,w) =  \sum_{\substack{p+q=k \\ p,q\geq 0}}  \Psi_{\Gamma}^{p,q}(z,w)(U-wV)^p (U-wV)^q
\]
is, for any fixed $z$, a modular invariant real-analytic section of $\mathcal{V}^{\dR}_k$ with respect to $w$ and satisfies the differential equation 
\begin{equation} \label{intro: dPsiisPhi}  
    d_w\Psi_{\Gamma,k}(z,w) =   \Phi_{\Gamma, k+2}(z,w)(U-wV)^k dw  - \overline{ \Phi_{\Gamma,k+2}(\overline{z}, w)}(U-\overline{w}V)^k d\overline{w}   \ .
\end{equation}
This equation implies that  the meromorphic modular forms $\Psi^{p,q}_{\Gamma}$ are  components of an Eichler integral (or harmonic lift) of the reproducing kernel $\Phi_{\Gamma,k+2}$.  

\subsubsection{The matrix-valued Green's functions} 

We have discussed the functions which generalise the middle and right-most terms in \eqref{intro: mero012formsH}; it remains to discuss the analogues of the single-valued logarithm,  which are  precisely the matrix-valued Green's functions. 

For any four indices $p,q,r,s\geq 0$ with $p+q=r+s=k>2$ we define functions (see \S\ref{sect: existenceHigherGreensMatrix})
\[
    {}^{\Gamma}\!G^{p,q}_{r,s} (z,w)
\]
which are real-analytic  on $\mathfrak{U} = \mathfrak{H}\times \mathfrak{H} \setminus \{(z,w):z \not\in \Gamma w\}$ and modular for $\Gamma$ of weights $(p,q)$ in $z$ and $(r,s)$ in $w$, \emph{i.e.}, 
\[
    {}^{\Gamma}\!G^{p,q}_{r,s} (\gamma_1 z, \gamma_2 w) =  \frac{1}{j^p_{\gamma_1}(z) j^q_{\gamma_1}(\overline{z})}\frac{1}{j^r_{\gamma_2}(w) j^s_{\gamma_2}(\overline{w})}    {}^{\Gamma}\!G^{p,q}_{r,s} ( z,  w) \qquad \hbox{ for all } (\gamma_1,\gamma_2) \in \Gamma \times \Gamma\ .
\]
The array of functions ${}^{\Gamma}\!G^{p,q}_{r,s}$ assemble into the $(k+1)\times(k+1)$ matrix-valued Green's function of Theorem \ref{intro: thm1} (see Definition \ref{defn:greens-fct-matrix}). For $k=2m$ even, the central function ${}^{\Gamma}\!G^{m,m}_{m,m}$ is, up to an explicit prefactor, the higher Green's function $G_{\Gamma,m+1}$ \eqref{eq:classical-GF}. All generalised Green's functions ${}^{\Gamma}\!G^{p,q}_{r,s}$ uniquely determine each other by applying Maass raising and lowering operators in $z$ and $w$. 

An equivalent way to interpret the defining  properties is to consider the   vector-valued generating  function 
\[
    \vec{G}_{\Gamma,k}(z,w) = \sum_{\substack{p+q=r+s=k \\ p,q,r,s\geq 0}}   {}^{\Gamma}\!G^{p,q}_{r,s} ( z, w) (X-zY)^r (X-\overline{z}Y)^s (U-wV)^p (U-\overline{w}V)^q
\]
which defines a real-analytic $\Gamma\times \Gamma$-invariant section of $\mathcal{V}^{\dR}_k \boxtimes \mathcal{V}^{\dR}_k$ and satisfies the differential equation 
\begin{equation} \label{intro: dGisPsi}
    d_z \vec{G}_{\Gamma,k}(z,w) = \Psi_{\Gamma,k}(z,w) (U,V)(X-zY)^{k}dz -  \overline{\Psi_{\Gamma,k}(z,\overline{w})} (U,V)(X-\overline{z}Y)^{k}d\overline{z}  
\end{equation}
and a similar equation for $d_w$, with $z,w$ interchanged. The  vector-valued  generating function $\vec{G}_{\Gamma,k}(z,w)$ is nothing other than an Eichler integral, or harmonic lift, of the meromorphic forms \eqref{intro: Psipqforms}. 

\begin{rem}
    We find, for example,  the following higher-weight analogue of $\frac{\partial}{\partial x}\frac{\partial}{\partial y} \log |x-y|^2 =  \frac{1}{(x-y)^2}$:
    \[
        \frac{\partial}{\partial z} \frac{\partial}{\partial w}  \vec{G}_{\Gamma,k}(z,w)= \Phi_{\Gamma,k+2}(z,w) (X-zY)^k(U-wV)^k 
    \]
    which implies that the generalised higher Green's functions are double Eichler integrals (or harmonic lifts with respect to $z$ and $w$) of the basic meromorphic forms $\Phi_{\Gamma,k+2}(z,w)$ given by the simple formula \eqref{intro: Phidef}.
\end{rem} 

The interpretation of the higher Green's functions via these differential equations is an essential step in proving Theorems \ref{intro: thm1} and \ref{intro: thm2}. In conclusion, the higher weight modular analogues of \eqref{intro: mero012forms} and \eqref{intro: mero012formsH} are, respectively the vector-valued higher Green's function $\vec{G}_{\Gamma,k}$, the one-forms \eqref{intro: Psipqforms} and the two-forms \eqref{intro: Phitwoform}. We believe that they are fundamental for the study of punctured modular curves and their periods.   

\subsection{Harmonic lifts and weak Maass forms}

Harmonic lifts of modular forms  have an  extensive literature.  In this paper we take the opportunity to formulate a very general theory of harmonic lifts using cohomological techniques which we apply to the higher Green's functions. In particular, we establish some general  theorems on the existence  of harmonic lifts which are both more precise, and more general than   existing results  which are proven by   ad-hoc methods. They may be of independent interest.  

In Appendix \ref{sec:complex-conjugation}, we study a general notion  of harmonic lifts  (Definition \ref{defn: B.6}) in the context of  vector bundles with integrable connection over a complex manifold. The only extra data required to define harmonic sections, in our sense, is the existence of a sub-bundle (which plays the role of the Hodge sub-bundle), and an anti-linear involution (which plays the role of complex conjugation).  We briefly review what this reduces to  in the case of modular curves  (see Sections \ref{sec: HarmonicLiftsGeneralities} and \ref{sec:cohomology-over-C}; \emph{cf.} \cite{Candelori}). Let $\mathfrak{U}\subset \mathfrak{H}$ be a $\Gamma$-invariant  open subset. Consider a holomorphic function $f: \mathfrak{U} \rightarrow \CC$  which is modular of weight $k+2\geq 2$ for $\Gamma$. A  \emph{harmonic lift} of $f$ is a vector-valued $\Gamma$-equivariant function   $F: \mathfrak{U} \rightarrow V_k^{\B}\otimes \CC$ satisfying
\begin{equation}  \label{into: harmonic}
    dF = f(z) (X-zY)^k dz -  \overline{g(z)}(X- \overline{z} Y)^k d\overline{z}
\end{equation}
for some other holomorphic  function $g: \mathfrak{U} \rightarrow \CC$ which is necessarily modular of weight $k+2$.  We call $g$ a \emph{Betti-conjugate} of $f$. It is not unique since $f$ can have more than one harmonic lift $F$, but to each such harmonic lift $F$ is associated a unique  Betti-conjugate $g$ of $f$.  We prove the following 

\begin{prop}
    For any $f$ as above, a harmonic lift always exists. Furthermore, if $f$ is meromorphic on $\mathfrak{H}$, then it admits a harmonic lift $F$ such that the corresponding Betti-conjugate $g$ is  also meromorphic on $\mathfrak{H}$. 
\end{prop}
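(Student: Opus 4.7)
My plan is to reformulate the existence of a harmonic lift as a vanishing statement in cohomology, and to produce the Betti-conjugate $g$ by invoking the general theory of harmonic sections developed in Appendix \ref{sec:complex-conjugation}. Specifically, for any pair of holomorphic modular forms $f, g$ of weight $k+2$ on $\mathfrak{U}$, the 1-form
\[
\omega_{f,g} := f(z)(X-zY)^k dz - \overline{g(z)}(X-\overline{z}Y)^k d\overline{z}
\]
is closed (as follows directly from $\partial_{\overline{z}} f = \partial_z g = 0$) and $\Gamma$-equivariant as a $V_k^{\B} \otimes \CC$-valued form, hence descends to a closed 1-form on $Y := \Gamma \backslash\!\!\backslash \mathfrak{U}$ with values in $\mathcal{V}_k \otimes \CC$. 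A $\Gamma$-equivariant primitive $F$ of $\omega_{f,g}$ exists iff its class vanishes in $H^1(Y, \mathcal{V}_k \otimes \CC)$, so the problem reduces to producing, for each $f$, a holomorphic modular $g$ with $[f(z)(X-zY)^k dz] = [\overline{g(z)}(X-\overline{z}Y)^k d\overline{z}]$ in this cohomology.

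For the existence part, I would appeal to the framework of Appendix \ref{sec:complex-conjugation}: the bundle $\mathcal{V}_k^{\dR}$ carries the standard Hodge sub-bundle together with a canonical antilinear involution $\iota$ coming from complex conjugation on the Betti local system, and these data satisfy the axioms for harmonic sections of Definition \ref{defn: B.6}. By the general theory developed there, one constructs $g$ by first solving the $\bar\partial$-equation on $\mathfrak{U}$ to produce a smooth primitive $\widetilde{F}$ of $f(z)(X-zY)^k dz$, then applying $\iota$ and extracting $g$ from the $d\overline{z}$-component of the resulting identity. Modularity and holomorphy of $g$ follow from the $\Gamma$-equivariance of $\widetilde{F}$ and $\iota$ combined with the defining equation.

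For the meromorphic case, assume $f$ is meromorphic on $\mathfrak{H}$ with poles along a $\Gamma$-invariant discrete subset $S$. Then the 1-form $f(z)(X-zY)^k dz$ extends to a meromorphic section of $\Omega^1 \otimes \mathcal{V}_k^{\dR}$ on $\overline{\mathcal{Y}_{\Gamma}}$ with residues along the image of $S$ computable via Corollary \ref{cor:residue-Psi}. The key input is that the rational Poincaré series $\Psi^{p,q}_{\Gamma}$ introduced earlier furnish an explicit basis of meromorphic modular forms of weight $k+2$ with prescribed principal parts at any $\Gamma$-orbit. Starting from the $g$ produced in the first part (which is a priori only holomorphic on $\mathfrak{U} = \mathfrak{H} \setminus S$), one modifies it by adding a suitable finite combination of the $\Psi^{p,q}_{\Gamma}$ so that the resulting $g$ extends meromorphically across $S$, with residues of $\overline{g(z)}(X-\overline{z}Y)^k d\overline{z}$ exactly matching those of $f(z)(X-zY)^k dz$. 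The principal obstacle lies in precisely this meromorphic extension step: the abstract Betti-conjugate produced by the $\bar\partial$-argument is only a priori holomorphic on $\mathfrak{U}$, and obtaining a meromorphic extension across the poles of $f$ requires careful matching of local principal parts, which is where the explicit Poincaré series machinery becomes essential since their residues admit closed-form expressions that can be combined to cancel any residue mismatch.
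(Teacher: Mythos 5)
Your opening reduction is the right one, and it is the same as the paper's: a harmonic lift of $f$ exists if and only if the Betti-conjugate class $c[f(z)(X-zY)^k dz]$ can be represented by a holomorphic modular form $g$, i.e.\ $[\,f(z)(X-zY)^kdz - \overline{g(z)}(X-\overline zY)^kd\overline z\,]=0$. But your second paragraph never supplies the ingredient that makes this work, namely the surjectivity of $\Gamma(\mathcal{U}^{\an}_{\Gamma_0}, F^k\mathcal{V}_k^{\dR,\an}\otimes \Omega^1_{\mathcal{U}^{\an}_{\Gamma_0}}) \to H^1_{\dR}(\mathcal{U}^{\an}_{\Gamma_0};\mathcal{V}_k^{\dR,\an})$ (Proposition \ref{prop:modular-representative}). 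The ``general theory of Appendix \ref{sec:complex-conjugation}'' does not construct $g$: Proposition \ref{prop:geometric-harmoniclift} takes exactly this surjectivity as a hypothesis, and the whole content of the existence statement is to verify it, which the paper does by filtering the de Rham complex by the Hodge filtration, showing the graded pieces are acyclic (the Kodaira--Spencer-type isomorphism \eqref{Deltaancomputation}) and using that $\mathcal{U}^{\an}_{\Gamma_0}$ is Stein. Your substitute mechanism does not work: a ($\Gamma$-equivariant) smooth primitive $\widetilde F$ of $f(z)(X-zY)^kdz$ alone does not exist in general --- if it did, $g=0$ would be a Betti-conjugate of $f$, which already fails when $f$ is a nonzero cusp form, whose class in $H^1_{\dR}$ is nonzero; and ``applying $\iota$ and extracting $g$ from the $d\overline z$-component'' begs the question, since the holomorphy and modularity of that component is precisely what must be proved and does not follow from equivariance. (Also note the equation to solve is a $d$-equation obstructed by $H^1$, not a $\bar\partial$-equation.)

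The meromorphic case has the same gap, and the repair you propose cannot close it. The $g$ produced by an analytic argument on $\mathfrak{U}=\mathfrak{H}\setminus S$ has no a priori controlled behaviour near $S$, and adding a finite combination of the meromorphic forms $\Psi^{p,q}_{\Gamma}$ cannot turn a non-meromorphic singularity into a meromorphic one; moreover, matching residues is not the relevant condition (two classes with equal residues still differ by a cusp-form class in general), and altering $g$ by arbitrary $\Psi^{p,q}_{\Gamma}$'s destroys the identity $c[\omega]=[\eta]$ unless the correction is a Betti-conjugate of $0$, i.e.\ lies in the Bol image as in Proposition \ref{prop:holomorphic-lift}. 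The paper avoids all of this by proving the representability statement directly in the meromorphic/algebraic category: Proposition \ref{prop:modular-representative-algebraic} (equivalently Proposition \ref{prop:dR-cohomology-meromorphic-modular-forms}) shows every class in $H^1_{\dR}(\mathcal{U}_{\Gamma};\mathcal{V}_k^{\dR,\mathbb{C}})$ is represented by some $g\in M^!_{k+2,\Gamma}(*S)$, using affineness of $\mathcal{U}_{\Gamma_0}$ and Serre vanishing in place of Steinness, and then Remark \ref{rem:geometric-harmoniclift-algebraic} yields at once a harmonic lift whose Betti-conjugate is meromorphic (Corollary \ref{cor:existence-harmonic-lift-meromorphic}); the passage from $\Gamma_0$ to $\Gamma$ is by averaging over $G=\Gamma/\Gamma_0$, as in Corollary \ref{cor:existence-harmonic-lift}. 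To fix your write-up you would need to prove the surjectivity statements (analytic and algebraic) rather than cite Appendix \ref{sec:complex-conjugation} for them.
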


Typically, one takes $f$ to be a cusp form, in which case $F$ is given by a weak harmonic Maass form as in \cite{bruininer-ono-rhoades} (see Remark \ref{rem:harmonic-maass}). The previous proposition also allows $f$ to have poles, and  implies that the higher Green's functions may be defined from the `bottom up', starting with $\Phi$, and using its interpretation \eqref{intro: dPsiisPhi} and \eqref{intro: dGisPsi} as a double harmonic lift. 

It is straightforward to show that equation \eqref{into: harmonic} is equivalent to a system of differential equations in the components $F^{p,q}$ of  $F = \sum_{p+q = k} F^{p,q}(z) (X-zY)^p (X- \overline{z}Y)^q$, which uniquely determine each other. In particular they are eigenfunctions of the hyperbolic Laplacian (see \S\ref{par:eigenfunctions}). When $k$ is even,  the central element $F^{p,p}$ plays a special role: Lemma \ref{prop: HarmonicCorrespondence} shows that harmonic lifts $F$ are in one-to-one correspondence with $\Gamma$-invariant eigenfunctions of  the  Laplace equation; Lemma \ref{lem: LaplacefromLegendre} shows that the latter  are in turn  equivalent to solutions to the Legendre differential equation.  These  elementary observations explain why Legendre functions necessarily appear in the theory of harmonic lifts, and why the `classical' higher Green's function naturally appears as the central element in  a two-dimensional array of generalised higher Green's functions. 

\subsection{Hodge theory of  modular curves and  biextensions} 

A necessary step in our study of higher Green's functions is a systematic treatment of the algebraic de Rham cohomology and Hodge theory of punctured modular curves. This is covered in Section \ref{sec:cohomology-over-C} entirely from the point of view of algebraic geometry and is somewhat independent from the rest of the  paper.

It is known that the  algebraic de Rham cohomology 
$H^1_{\dR}(\mathcal{Y}_{\Gamma}; \mathcal{V}_k^{\dR,\mathbb{C}}) $
is generated by weakly holomorphic modular forms $M_{k+2,\Gamma}^!$, modulo the image of weakly holomorphic modular forms of negative weight $M_{-k,\Gamma}^!$ under the so-called Bol map. It admits a Hodge filtration with non-zero Hodge numbers of type $(0,k+1)$,  $(k+1,0)$ and $(k+1,k+1)$, the subspace $F^{k+1}$ being generated by holomorphic modular forms. The corresponding statement (Proposition \ref{prop:dR-cohomology-meromorphic-modular-forms}) for the open  modular curve $\mathcal{U}_{\Gamma} = \mathcal{Y}_{\Gamma}\setminus S  $ with finitely many punctures $S= \{w_1,\ldots, w_r\}$ is  less well-known.
There is a long exact sequence of filtered  vector spaces
\[
    \begin{tikzcd}[column sep = small]
        0 \arrow{r} & H_{\dR}^1(\mathcal{Y}_{\Gamma};\mathcal{V}_k) \arrow{r} & H_{\dR}^1(\mathcal{U}_{\Gamma};\mathcal{V}_k) \arrow{r}{\Res} & \bigoplus_{i=1}^r(\Sym^kH_{\dR}^1(\mathcal{E}_{w_i}))^{\Gamma_{w_i}}(-1) \arrow{r} & 0 \ ,
    \end{tikzcd}
\]
where $\Gamma_{w_i}$ denotes the stabiliser of $w_i$ in $\Gamma$. The Hodge filtration on each $\Sym^kH_{\dR}^1(\mathcal{E}_{w_i})^{\Gamma_{w_i}}(-1) $ has non-vanishing Hodge numbers contained in the set  
$\{ (k+1,1), \ldots, (1, k+1)\}$.   Theorem \ref{thm:hodge-filtration-open-complex}  describes the Hodge filtration on  $H_{\dR}^1(\mathcal{U}_{\Gamma};\mathcal{V}_k)$ in terms of the  orders of the poles of meromorphic modular forms along $S$. Using this result we deduce 
that the meromorphic modular forms $\Psi^{p,q}_{\Gamma}$ provide a canonical splitting of the previous short exact sequence and that 
\[
    [\Psi_{\Gamma}^{p,q}(z,w_i)(X-zY)^k dz ]   \  \in \     F^{q+1} H^1_{\dR}(\mathcal{U}_{\Gamma};\mathcal{V}_k^{\dR,\mathbb{C}})
\]
for $p+q=k$ and $p,q\geq 0$. See  Corollary \ref{cor:Psi-splits-residue-seq} for a more precise statement.  In particular, the $\Psi_{\Gamma}^{p,q}$, together with weakly holomorphic modular forms, generate the vector space $H_{\dR}^1(\mathcal{U}_{\Gamma};\mathcal{V}_k)$. 
See Remark \ref{rem: Paloma} for an application to the meromorphic modular forms which are  obtained as
sums of negative  powers of integral binary quadratic forms  with negative discriminant.

\subsection{Biextensions}\label{par:intro-biextensions}

Let $k\ge 1$ be an integer.  In the following discussion, we  work in an abelian category of realisations  $\mathcal{H}_K$ over a number field $K\subset \mathbb{C}$ (namely:  Betti  and de Rham realisations, equipped with  a comparison isomorphism and real Frobenius action). Assume that $\mathcal{Y}_{\Gamma}$, $z$, $w$ are all defined over $K$, and consider the objects  
\[ 
    M_{w}= H_{\mathrm{cusp}}^1\left( \mathcal{Y}_{\Gamma} \setminus \{ w\}  ; \mathcal{V}_k\right) \quad \hbox{ and } \quad   M= H_{\mathrm{cusp}}^1\left( \mathcal{Y}_{\Gamma};\mathcal{V}_k \right)  
\]
of $\mathcal{H}_K$. Together with  \eqref{intro: modularmotive}, they fit into exact sequences in the category $\mathcal{H}_K$
\begin{equation}\label{biextensioninHK}
    \begin{gathered}
    \begin{tikzcd}[column sep = small]
        0 \arrow{r} & \big(\Sym^{k} H^1(\mathcal{E}_z)\big)^{\Gamma_z} \arrow{r}& M_{z,w} \arrow{r}&M_w \arrow{r}& 0
    \end{tikzcd}\\
    \begin{tikzcd}[column sep = small]
        0  \arrow{r}& M \arrow{r}& M_w \arrow{r}& \big(\Sym^{k} H^1(\mathcal{E}_w)\big)^{\Gamma_w}(-1) \arrow{r}& 0 \ ,
    \end{tikzcd} 
    \end{gathered}
\end{equation}
where $\Gamma_z$ (resp. $\Gamma_w$) denotes the stabiliser of $\Gamma$ at $z$ (resp. $w$). Thus $M_{z,w}$  has the structure of a biextension. 

In the case when there are no cusp forms of level $\Gamma$ and weight $k+2$, so that $M$ vanishes, it reduces to a simple extension. There is a generalisation where one replaces $z$ and $w$ with any finite number of (distinct) points. If $T$ is any linear combination of Hecke operators which annihilates the object $M$, then we show that one may extract from such a biextension a subquotient $M_{z,w}^T$ as in \eqref{intro: extensionMTzw}.

Let $k=2 m$ be even.  If  an elliptic curve $\mathcal{E}_z$ admits complex multiplication then one may show  that the object $\big(\Sym^{k} H^1(\mathcal{E}_z)\big)^{\Gamma_z}$ decomposes via the action of complex multiplication and always contains a  Tate motive $\QQ(-m)$ as a summand. Thus when $\mathcal{E}_z$ and $\mathcal{E}_w$ are both CM elliptic curves, we may extract from the previous extension the Gross-Zagier extension \eqref{intro: GZextension}.  
  
\subsection{Motives and  Beilinson's conjecture} 

The  above constructions are motivic. For simplicity we consider the case of full level $\Gamma= \SL_2(\ZZ)$ in detail, but the argument generalises. We may construct a relative cohomology motive \eqref{intro: themotive} in Voevodsky's triangulated category $\DM(K)$ which admits the structure of a biextension, \emph{i.e.}, fits into two distinguished triangles whose realisations correspond to \eqref{biextensioninHK}: 
\[
    \begin{tikzcd}[column sep = small]
        \left(\Sym^{n-1}H^1(\mathcal{E}_z)\right)^{\Gamma_z} \arrow{r}&M_{z,w} \arrow{r} & M_w \arrow{r}{+1}&{}
    \end{tikzcd}
\]
\[
    \begin{tikzcd}[column sep = small]
        M \arrow{r}& M_w \arrow{r}& \left(\Sym^{n-1} H^1(\mathcal{E}_w)\right)^{\Gamma_w}(-1) \arrow{r}{+1}&{}
    \end{tikzcd}
\]
Here, $n = k+1$ and $M = M^n_{\cusp}$, as in the end of \S\ref{par:results}. This relies on the work of Consani-Faber \cite{ConsaniFaber} in the case of level $1$, but may be extended to higher level using \cite{Petersen}. We show in Theorem \ref{thm: SimpleMotivicExtension}  that  any linear combination of Hecke operators $T$ which acts by zero on $M^n_{\mathrm{cusp}}$  cuts out a motive which is a simple extension  whose realisation is \eqref{intro: extensionMTzw}.  In the case where $z,w$ have complex multiplication, it can be broken down further into a Tate twist of a Kummer extension whose realisation is the Gross-Zagier extension \eqref{intro: GZextension}. 

The subtle point is that this assumption on $T$ is a priori stronger, but conjecturally equivalent, to the statement that $T$ acts by zero on the realisation  of $M^n_{\mathrm{cusp}}$ (or, equivalently, on the space of cusp forms of weight $n+1$), which is the hypothesis of the Gross-Zagier conjecture. The equivalence of these hypotheses would follow, for example, from the faithfulness of the Betti realisation functor (or related conjectures, which are a basic cornerstone in the expected theory of motives) which is known in many cases, but alas not presently  for motives of cusp forms. We conjecture nevertheless that it is true and prove it in weight 4.   We therefore obtain a \emph{universal construction} of a simple  extension in the triangulated category of motives
\[
    \begin{tikzcd}[column sep = small]
        \left(\Sym^{2} H^1(\mathcal{E}_z)\right)^{\Gamma_z} \arrow{r}& M^T_{z,w} \arrow{r}& \left(\Sym^{2} H^1(\mathcal{E}_w)\right)^{\Gamma_w}(-1) \arrow{r}{+1} & {}
    \end{tikzcd}
\]
whose realisation is \eqref{intro: extensionMTzw}, and which may be of independent arithmetic interest.

Beilinson's conjecture leads to  predictions between the entries of our generalised matrix-valued Green's functions and special values of $L$-functions; see \S \ref{sect: BeyondGZ} for a brief discussion. This points to a vast, and fascinating, generalisation of the Gross-Zagier conjecture which is amenable to numerical verification but takes us beyond the scope of the present work. 

\begin{rem}
    Although we shall not specifically study the simpler  situation  when $w$ is a cusp, the construction of biextensions and motives works in a similar manner in this case.  It may be helpful when reading this paper to bear in mind the corresponding  analytic objects in this, possibly more familiar, situation. First of all, the analogues of the (cohomology classes of) meromorphic modular forms  $\Psi_{\Gamma}^{p,q}$ with poles at a cusp are given by holomorphic  Eisenstein series.  The analogues of higher Green's functions when $w$ is a cusp are their harmonic lifts (or indefinite equivariant Eichler integrals)  which are real-analytic Eisenstein series. 
\end{rem}

\subsection{Acknowledgements}

Many thanks to Giuseppe Ancona, Cl\'ement Dupont and \"Ozlem Imamoglu for comments. This project has received funding from the European Research Council (ERC) under the European Union’s Horizon Europe programme (grant agreement No. 101167287). The second-named author is currently supported by the grant $\#$2020/15804-1, São Paulo Research Foundation (FAPESP). For the purpose of Open Access, the first-named author has applied a CC BY public copyright licence to any Author Accepted Manuscript (AAM) version arising from this submission.

\section{Harmonic lifts, Betti-conjugates, and Laplace eigenfunctions} \label{sec: HarmonicLiftsGeneralities}

We recall some results on harmonic lifts and  Betti-conjugates of modular forms, and  establish a one-to-one  correspondence between harmonic lifts and  eigenfunctions of the Laplacian.

\subsection{Reminders on differential operators}\label{sec: raisin-lowering}

We denote the upper half-plane by $\mathfrak{H} = \{z \in \mathbb{C} : \Im(z) >0\}$, and the lower half-plane by  $\mathfrak{H}^- = \{z \in \mathbb{C} : \Im(z) < 0\}$. Denote their union by $\mathfrak{H}^{\pm} = \mathbb{C} \setminus \mathbb{R}$. 

The \emph{raising} and \emph{lowering} operators are differential operators on $\mathfrak{H}^{\pm}$ denoted, for any $r,s \in \ZZ$, by
\[
    \partial_r = (z-\overline{z}) \frac{\partial}{\partial z} +r \qquad \text{and} \qquad  \overline{\partial}_s = (\overline{z}-z) \frac{\partial}{\partial \overline{z}} +s  \, .
\] 
When considering functions of two variables $(z,w) \in \mathfrak{H}^{\pm}\times \mathfrak{H}^{\pm}$, lower (resp. upper) indices denote differentiation with respect to  the first coordinate $z$ (resp. the  second coordinate $w$):
\[
    \partial^p = (w-\overline{w}) \frac{\partial}{\partial w} +p  \qquad \text{ and } \qquad  \overline{\partial}^q = (\overline{w}-w) \frac{\partial}{\partial \overline{w}} +q \,  .
\]
An upper-indexed  operator commutes with a  lower-indexed operator:
$
    [\partial_r, \partial^p] = [\overline{\partial}_s, \partial^p ] =  [\partial_r, \overline{\partial}^q ] =  [\overline{\partial}_s, \overline{\partial}^q ] =0 .
$  
We shall use the identities
\begin{equation} \label{eqn:shift}  
    \partial_{r}  \left( (z-\overline{z})^n f  \right)=  (z-\overline{z})^n \partial_{r+n} f\quad \text{ and } \quad \overline{\partial}_{s}  \left( (z-\overline{z})^n f  \right)=  (z-\overline{z})^n \overline{\partial}_{s+n} f
\end{equation} 
valid for all integers $r,s,n$ and any differentiable function $f$ on an open subset of $\mathfrak{H}^{\pm}$. A  similar  formula holds with $z$ replaced by $w$, and $\partial_r$ (resp. $\overline{\partial}_r$) replaced by $\partial^r$ (resp. $\overline{\partial}^r$). 

We denote the so-called \emph{automorphy factor} by:
\[
    j_{\gamma}(z)    =   cz+d\ , \qquad \hbox{ for any   } \gamma =  \left(\begin{smallmatrix} a & b \\ c& d \end{smallmatrix} \right) \in \GL_2(\RR) \, . 
\]

Let $\Gamma \leq \mathrm{SL}_2(\RR)$ be a subgroup and $r,s\in \ZZ$. A function $f$ defined on a $\Gamma$-invariant open subset  $\mathfrak{U}$ of $\mathfrak{H}$ is called modular of weights $(r,s)$ with respect to $\Gamma$, if $f(\gamma z) = j^r_{\gamma}(z) j^s_{\gamma}(\overline{z}) f(z)$ for all $\gamma \in \Gamma$, and $z\in \mathfrak{U}$. When $s = 0$, we simply say `modular of weight $r$'.

The map $\partial_r$  (resp. $\overline{\partial}_s$) sends a modular function of weights $(r,s)$ to one of weights $(r+1,s-1)$  (resp. $(r-1,s+1)$), and similarly for upper indices. 

The \emph{Laplace operator} in weights $(r,s)$ with respect to the variable $z$ is defined by the formula:
 \begin{equation} \label{LaplaceOperator}
     \Delta_{r,s} = - \overline{\partial}_{s-1} \partial_r +r(s-1) = - \partial_{r-1} \overline{\partial}_s + s(r-1) \, .
 \end{equation}
When $r=s=0$ it reduces to the usual hyperbolic Laplacian
 \begin{equation} \label{Delta00} 
    \Delta_{0,0} =(z- \overline{z})^2 \frac{\partial^2}{\partial z \partial \overline{z}}  = - y^2 \left( \frac{\partial^2}{\partial x^2} +  \frac{\partial^2}{\partial y^2}   \right)\, ,
\end{equation}
where we write $z=x+iy$. Let us denote by $\Delta^{p,q}= - \overline{\partial}^{q-1} \partial^p +p(q-1)$, with upper indices, to be the corresponding Laplace operators in the variable $w$. 

\subsection{Harmonic vector-valued functions} \label{par:harmonic}

In what follows, tensor products without subscripts are over $\mathbb{Q}$. For every integer $k\ge 0$, we denote by $V^{\B}_k$ the $\mathbb{Q}$-vector space of homogeneous polynomials of degree $k$ in $\mathbb{Q}[X,Y]$,  with a right action of $\SL_2(\mathbb{Z})$ given by
\begin{equation}\label{eq:right-action-SL2}
    (X,Y)|_{\gamma} = (aX+bY,cX+dY)\ \text{, }\qquad \gamma = \left(\begin{array}{cc}
    a & b \\
    c & d\end{array}\right) \ .
\end{equation}
If $\mathfrak{U}$ is an open subset of $\mathfrak{H}$ and $F: \mathfrak{U} \to V_k^{\B}\otimes \mathbb{C}$ is a real-analytic  vector-valued function, we can  write
\begin{equation}\label{eq:Frs}
    F(z) = \sum_{\substack{r+s=k \\ r,s\geq 0}}F_{r,s}(z)(X-zY)^r(X-\overline{z}Y)^s  \, ,
\end{equation} 
where each $F_{r,s}:\mathfrak{U} \to \mathbb{C}$ is a uniquely defined real-analytic function on $\mathfrak{U}$.

\begin{defn} \label{defn:harmonic}
  We shall call $F$ \emph{harmonic} if there exist holomorphic functions $f,g:\mathfrak{U} \rightarrow \CC$ such that
  \begin{eqnarray} \label{eq:harmonic}
    d F =  f(z) (X-zY)^{k} dz - \overline{g(z)} (X-\overline{z}Y)^{k} d\overline{z}\ . 
  \end{eqnarray} 
\end{defn}

Equation \eqref{eq:harmonic} is equivalent to the following system of equations (\emph{cf.} \cite[Prop. 7.2]{CNHMF1}): 
    \begin{equation}\label{eq:raising-lowering-middle} 
        \begin{aligned}
        \partial_r F_{r,s}  =  (r+1)\, F_{r+1,s-1} \ , \qquad \hbox{ for }   0\leq r\leq k-1  \\
        \overline{\partial}_s F_{r,s}  =  (s+1) \,F_{r-1,s+1} \ ,  \qquad \hbox{ for }   0\leq s\leq k-1  
        \end{aligned}
    \end{equation} 
    and 
    \begin{equation} \label{eq:raising-lowering-end}
        \partial_{k} F_{k,0}  =  (z- \overline{z}) \, f \  ,  \quad 
        \overline{\partial}_{k} F_{0, k}   =  (z-\overline{z}) \, \overline{g} \,  .
     \end{equation}
  
For every $\gamma \in \SL_2(\mathbb{Z})$, we have
\begin{align}\label{eq:gamma-equivariance}
        (X-\gamma z\,  Y)|_{\gamma} = j^{-1}_{\gamma}(z)(X-zY) \ , \quad 
        (X-\gamma \overline{z}\,  Y)|_{\gamma} = j^{-1}_{\gamma}(\overline{z})(X-\overline{z}Y) \, . 
\end{align}
Let $\Gamma \le \SL_2(\mathbb{Z})$ be a subgroup and let $\mathfrak{U}$ be a $\Gamma$-invariant open subset of $\H$. A vector-valued function $F: \mathfrak{U} \rightarrow V_k^\B\otimes \CC$  will be called  \emph{$\Gamma$-equivariant}, if 
\[
  F(\gamma z)\big|_{\gamma} = F(z)\ , \qquad \hbox{ for all  }  z \in \mathfrak{U} , \hbox{ and }  \gamma \in \Gamma ,\ \]
 which amounts by \eqref{eq:gamma-equivariance} to $F_{r,s}$ being modular for $\Gamma$ of weights $(r,s)$ for every $r+s=k$.

\begin{lem}\label{lem:equivariant-harmonic}
     Let $F: \mathfrak{U} \to V^{\B}_k\otimes\CC$ be a solution to equation \eqref{eq:harmonic}. If $F$ is $\Gamma$-equivariant, then $f,g$ are modular for $\Gamma$ of weight $k+2$.
\end{lem}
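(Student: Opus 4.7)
The plan is a direct computation: substitute $z \mapsto \gamma z$ into equation \eqref{eq:harmonic} and then apply the right action $|_{\gamma}$ to reduce the left-hand side using $\Gamma$-equivariance, after which the transformation behaviour of $f$ and $g$ can be read off by comparing coefficients.

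First I would evaluate \eqref{eq:harmonic} at the point $\gamma z$, which yields
\[
  d F\big|_{\gamma z} \;=\; f(\gamma z)(X-\gamma z\, Y)^{k}\, d(\gamma z) \;-\; \overline{g(\gamma z)}(X-\overline{\gamma z}\, Y)^{k}\, d(\overline{\gamma z}).
\]
Next I would apply the right action $|_{\gamma}$ of $\gamma$ on $V_k^{\B}\otimes\CC$ to both sides. Since $\gamma$ acts by a constant matrix, it commutes with exterior differentiation, so the left side becomes $d\bigl(F(\gamma z)|_{\gamma}\bigr) = dF(z)$ by $\Gamma$-equivariance. On the right side, I would invoke \eqref{eq:gamma-equivariance}, which gives $(X-\gamma z\, Y)|_{\gamma} = j_{\gamma}(z)^{-1}(X-zY)$ and analogously for $\overline{z}$, together with the standard identities $d(\gamma z) = j_{\gamma}(z)^{-2}\,dz$ and $d(\overline{\gamma z}) = j_{\gamma}(\overline{z})^{-2}\,d\overline{z}$. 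Combining these produces
\[
  dF(z) \;=\; f(\gamma z)\, j_{\gamma}(z)^{-k-2}(X-zY)^{k}\, dz \;-\; \overline{g(\gamma z)}\, j_{\gamma}(\overline{z})^{-k-2}(X-\overline{z}Y)^{k}\, d\overline{z}.
\]

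Finally I would compare this with the original equation \eqref{eq:harmonic}. Since $dz$ and $d\overline{z}$ are linearly independent, and since for $z \in \mathfrak{H}$ the vectors $(X-zY)^{k}$ and $(X-\overline{z}Y)^{k}$ are nonzero in $V_k^{\B}\otimes \CC$, I can match coefficients componentwise. The $dz$-component immediately gives $f(\gamma z) = j_{\gamma}(z)^{k+2} f(z)$. For the $d\overline{z}$-component I get $\overline{g(\gamma z)} = j_{\gamma}(\overline{z})^{k+2}\,\overline{g(z)}$; taking complex conjugates and using that $\gamma \in \SL_2(\RR)$ has real entries, so $\overline{j_{\gamma}(\overline{z})} = j_{\gamma}(z)$, yields $g(\gamma z) = j_{\gamma}(z)^{k+2} g(z)$.

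There is essentially no obstacle: the only point requiring a small check is the compatibility of the scalar action $|_{\gamma}$ with the total derivative $d$, which is immediate because $\gamma$ acts by a constant $\QQ$-linear endomorphism of $V_k^{\B}$, and the complex-conjugation step for $g$, which relies on the fact that $\Gamma \leq \SL_2(\ZZ) \subset \SL_2(\RR)$.
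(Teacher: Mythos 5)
Your proof is correct. The computation is sound: pulling back \eqref{eq:harmonic} along $z \mapsto \gamma z$, applying $|_\gamma$ (which commutes with $d$ because $\gamma$ acts by a constant linear map), using \eqref{eq:gamma-equivariance} together with $d(\gamma z) = j_\gamma(z)^{-2}dz$ and $d(\overline{\gamma z}) = j_\gamma(\overline{z})^{-2}d\overline{z}$, and then matching the $dz$ and $d\overline z$ components gives exactly the transformation laws $f(\gamma z) = j_\gamma(z)^{k+2}f(z)$ and (after conjugating, using that $\gamma$ has real entries) $g(\gamma z) = j_\gamma(z)^{k+2}g(z)$.

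The paper's proof takes a different route. Instead of working directly with \eqref{eq:harmonic}, it uses the component decomposition \eqref{eq:Frs} and the equivalent system \eqref{eq:raising-lowering-middle}--\eqref{eq:raising-lowering-end}: $\Gamma$-equivariance of $F$ is first translated into modularity of the components $F_{r,s}$ of weight $(r,s)$, then the raising operator $\partial_k$ sends $F_{k,0}$ to something modular of weight $(k+1,-1)$, and finally \eqref{eq:raising-lowering-end} together with the modularity of $z-\overline z$ in weight $(-1,-1)$ yields the weight $(k+2,0)$ modularity of $f$ (and symmetrically for $g$). Your argument is more self-contained and elementary: it does not pass through the raising and lowering operators at all, nor does it invoke the equivalence between \eqref{eq:harmonic} and \eqref{eq:raising-lowering-middle}--\eqref{eq:raising-lowering-end}. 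The paper's version is shorter in context because that machinery has already been set up and is reused throughout the section. Both are perfectly valid.
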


\begin{proof}
    Since $F_{k,0}$ (resp. $F_{0,k}$) is modular for $\Gamma$ of weights $(k,0)$ (resp. $(0,k)$), $\partial_k F_{k,0}$ (resp. $\overline{\partial}_k F_{0,k}$) is modular of weights $(k+1,-1)$ (resp. $(-1,k+1)$). The result follows from equation \eqref{eq:raising-lowering-end} together with the  fact that $z-\overline{z}$ is modular of weights $(-1,-1)$.
\end{proof}

\begin{rem}\label{rem:variable-w}
    All of the above definitions and constructions have analogues for functions $F$ in the variable $w$. In this case, $V_k^{\B}$ denotes the $\mathbb{Q}$-vector space of homogeneous polynomials of degree $k$ in the ring $\mathbb{Q}[U,V]$.  A real-analytic vector-valued function admits a decomposition of the following form
    \[
        F(w) = \sum_{\substack{p+q = k \\ p,q\ge 0}} F^{p,q}(w) (U-wV)^p(U-\overline{w}V)^q,
    \]
    and equations \eqref{eq:raising-lowering-middle} and \eqref{eq:raising-lowering-end} have natural counterparts with operators $\partial^p$ and $\overline{\partial}^q$ in place of $\partial_r$ and $\overline{\partial}_s$.
\end{rem} 

\subsection{Harmonic lifts and Betti-conjugates}

Let $\Gamma$ be a subgroup of $\SL_2(\mathbb{Z})$ and $\mathfrak{U}\subset \mathfrak{H}$ be a connected $\Gamma$-invariant open subset. Let $k\ge 0$ be an integer and $f:\mathfrak{U} \to \CC$ be a holomorphic function which is  modular for $\Gamma$ of weight $k+2$.

\begin{defn} \label{defn:harmoniclift}
    A \emph{harmonic lift} of $f$ is a $\Gamma$-equivariant harmonic vector-valued function $F: \mathfrak{U} \to V_k^{\B}\otimes \mathbb{C}$ satisfying equation \eqref{eq:harmonic} for some holomorphic function $g:\mathfrak{U} \to \CC$; we may also say that $F$ is a harmonic lift of the pair $(f,g)$. When such a harmonic lift exists, we say that $f$ is a \emph{Betti-conjugate} of $g$. 
\end{defn}

By Lemma \ref{lem:equivariant-harmonic}, $g$ is necessarily modular for $\Gamma$ of weight $k+2$. By applying the involution $F \mapsto -\overline{F}$ on the harmonic lift, we deduce that $f$ is a Betti-conjugate of $g$ if and only if $g$ is a Betti-conjugate of $f$.

We shall prove in Section \ref{sec:cohomology-over-C} that, when $\Gamma$ has finite index in $\SL_2(\mathbb{Z})$ and $\mathfrak{U}$ is given by the complement of a finite number of $\Gamma$-orbits, then harmonic lifts and Betti-conjugates always exist (see Corollary \ref{cor:existence-harmonic-lift} and  Corollary \ref{cor:existence-harmonic-lift-meromorphic} for a refined result in the setting of meromorphic modular forms). When the pair $(f,g)$ is fixed, we have the following uniqueness statement.

\begin{lem}\label{lem:uniqueness-harmonic}
    Assume that $\Gamma$ is a finite index subgroup of $\mathrm{SL}_2(\ZZ)$ and $k\ge 1$. The harmonic lift of a pair $(f,g)$ as above is unique, if it exists. 
\end{lem}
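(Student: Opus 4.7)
The plan is to reduce the uniqueness of the harmonic lift to showing that $(V_k^{\B}\otimes \CC)^{\Gamma} = 0$ for $k \geq 1$ and $\Gamma$ of finite index in $\SL_2(\ZZ)$.

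Suppose $F_1$ and $F_2$ are two harmonic lifts of the pair $(f,g)$ and set $F \defeq F_1 - F_2$. By the linearity of equation \eqref{eq:harmonic}, $F$ is a $\Gamma$-equivariant real-analytic function $\mathfrak{U} \to V_k^{\B}\otimes \CC$ satisfying $dF = 0$. Since $\mathfrak{U}$ is connected by hypothesis, $F$ is constant on $\mathfrak{U}$, so there exists $P \in V_k^{\B}\otimes \CC$ with $F(z) = P$ for every $z \in \mathfrak{U}$. The $\Gamma$-equivariance then becomes the purely algebraic condition $P|_{\gamma} = P$ for all $\gamma \in \Gamma$.

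The crux of the argument is therefore to show that $(V_k^{\B}\otimes \CC)^{\Gamma}$ vanishes whenever $k \geq 1$ and $\Gamma$ has finite index in $\SL_2(\ZZ)$. Since $\Gamma$ is of finite index, the intersections $\Gamma \cap \langle T \rangle$ and $\Gamma \cap \langle T' \rangle$ have finite index in the infinite cyclic groups generated by
\[
    T = \begin{pmatrix} 1 & 1 \\ 0 & 1 \end{pmatrix} \qquad \text{and} \qquad T' = \begin{pmatrix} 1 & 0 \\ 1 & 1 \end{pmatrix},
\]
so there exist positive integers $n, m$ with $T^n, (T')^m \in \Gamma$. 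Using the action \eqref{eq:right-action-SL2}, the condition $P|_{T^n} = P$ reads $P(X+nY, Y) = P(X, Y)$. Viewing $Y$ as a parameter and $P$ as a polynomial in $X$, this forces $P$ to be a polynomial in $X$ of period $nY \neq 0$, hence constant in $X$; by homogeneity of degree $k$ we conclude $P = c Y^k$ for some $c \in \CC$. The condition $P|_{(T')^m} = P$ then gives $c(mX + Y)^k = c Y^k$, which for $k \geq 1$ forces $c = 0$ (by comparing the coefficient of $X^k$). Hence $P = 0$ and $F_1 = F_2$.

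The argument is essentially elementary, and the only point requiring a little care is the triviality of the invariant space, for which both hypotheses (finite index and $k \geq 1$) are used in an essential way. In particular, the case $k = 0$ is genuinely exceptional, as constant vectors always give harmonic lifts of $(0, 0)$ and $(V_0^{\B}\otimes \CC)^{\Gamma} = \CC$.
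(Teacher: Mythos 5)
Your proof is correct and follows exactly the same route as the paper: the difference of two harmonic lifts is a constant $\Gamma$-invariant element of $V_k^{\B}\otimes\CC$, and this space vanishes when $k\ge 1$. The paper merely cites the vanishing of $H^0(\Gamma;V_k^{\B}\otimes\CC)$, whereas you supply the standard argument via the unipotent powers $T^n,(T')^m\in\Gamma$; this fills in that cited fact correctly but does not change the underlying strategy.
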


\begin{proof}
    The difference between any two harmonic lifts of $(f,g)$ is a vector-valued function $F : \mathfrak{U} \to V_k^{\B}\otimes \mathbb{C}$ satisfying $dF = 0$. Since $\mathfrak{U}$ is connected, $F$ is a constant. By definition, $F$ is also $\Gamma$-equivariant. The lemma follows from the fact that $H^0(\Gamma;V_k^{\B}\otimes \mathbb{C}) = 0$
    when $k\geq 1.$
\end{proof}

When only $f$ is fixed and a choice of Betti-conjugate $g$ is not, harmonic lifts will not be unique in general (even under the hypotheses of the above lemma) due to the existence of `Betti-conjugates of $0$'. 

\begin{prop} \label{prop:holomorphic-lift}
  The following are equivalent:
  \begin{enumerate}[(i)]
  \item $f$ is a Betti-conjugate of $0$,
  \item $f$ admits a holomorphic harmonic lift,
  \item there exists a holomorphic function $h:\mathfrak{U} \to \CC$ modular for $\Gamma$ of weight $-k$ such that
    \begin{equation} \label{eq:coboundary}
        \frac{\partial^{k+1}h}{\partial z^{k+1}} =f.
      \end{equation}
   \end{enumerate}
\end{prop}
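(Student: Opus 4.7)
The equivalence $(i) \Leftrightarrow (ii)$ is essentially a reading of Definition \ref{defn:harmoniclift}. If a holomorphic harmonic lift $F$ of the pair $(f,g)$ exists, then $\partial F/\partial \overline{z} = 0$, so the $d\overline{z}$-component of \eqref{eq:harmonic} must vanish; since $(X-\overline{z}Y)^k$ is nonzero in $V_k^{\B}\otimes\mathbb{C}$, this forces $g=0$, so $f$ is a Betti-conjugate of $0$. Conversely, if $f$ is a Betti-conjugate of $0$ via some $F$, then \eqref{eq:harmonic} becomes $dF = f(z)(X-zY)^k dz$, which has no $d\overline{z}$-component, hence $F$ is holomorphic.

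The substantive content is $(ii) \Leftrightarrow (iii)$, and I would establish it by exhibiting an explicit correspondence of Eichler-integral type. For $(iii) \Rightarrow (ii)$, given $h$ of weight $-k$ with $h^{(k+1)} = f$, I set
\[
    F(z) \;=\; \sum_{i=0}^{k}\frac{k!}{i!}\,h^{(i)}(z)\,(X-zY)^i\,Y^{k-i}.
\]
This $F$ is manifestly holomorphic. A direct differentiation in $z$ produces a telescoping sum in which only the $i=k$ term survives, yielding $dF = h^{(k+1)}(z)(X-zY)^k dz = f(z)(X-zY)^k dz$. To verify $\Gamma$-equivariance, I observe that the function $E_\gamma(z) := F(\gamma z)|_{\gamma} - F(z)$ satisfies $dE_\gamma = 0$ (using $d_z$-covariance of \eqref{eq:harmonic} together with the weight $k+2$ transformation law of $f$ and $(X-\gamma z Y)|_\gamma = j_\gamma(z)^{-1}(X-zY)$), so $E_\gamma$ is a constant polynomial in $V_k^{\B}\otimes\mathbb{C}$. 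Evaluating at $(X,Y)=(z,1)$ kills all terms with $(X-zY)^i$, $i\ge 1$, so $F(z)(z,1) = k!\,h(z)$, and an identical computation shows $F(\gamma z)|_{\gamma}(z,1) = (cz+d)^k F(\gamma z)(\gamma z,1) = k!(cz+d)^k h(\gamma z)$; the hypothesis that $h$ is of weight $-k$ then gives $E_\gamma(z,1) = 0$. Since $E_\gamma$ is homogeneous of degree $k$ in $(X,Y)$, vanishing on $(z,1)$ for all $z$ forces $E_\gamma = 0$.

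For the converse $(ii) \Rightarrow (iii)$, given a holomorphic harmonic lift $F$, I define $h(z) := F(z)(z,1)/k!$, which is holomorphic. Modularity of $h$ of weight $-k$ follows from the same evaluation computation as above run in reverse, using $\Gamma$-equivariance of $F$. For the identity $h^{(k+1)} = f$, I introduce the auxiliary polynomial $G(z,X) := F(z)(X,1)$, which satisfies $\partial G/\partial z = f(z)(X-z)^k$. Setting $G^{(n)} = \partial^n G/\partial X^n$, the chain rule gives $\frac{d}{dz}G^{(n)}(z,z) = \partial G^{(n)}/\partial z|_{X=z} + G^{(n+1)}(z,z)$; the first term vanishes for $n<k$ because it is proportional to $(z-z)^{k-n}$, while for $n=k$ it equals $k!\,f(z)$ and $G^{(k+1)}=0$. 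Iterating from $k!\,h(z) = G(z,z)$ shows that $\frac{d^{k+1}}{dz^{k+1}}[k!\,h(z)] = k!\,f(z)$.

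The main technical points to verify carefully will be the telescoping computation of $dF$ in $(iii) \Rightarrow (ii)$ and the iterated chain-rule argument in $(ii) \Rightarrow (iii)$; both are essentially integrated forms of Bol's identity for $D^{k+1}$ on modular forms of weight $-k$. The $\Gamma$-equivariance of $F$, which might have looked daunting, is circumvented by the observation that $F(\gamma z)|_\gamma - F(z)$ is automatically a constant in $V_k^{\B}\otimes\mathbb{C}$, so it suffices to test it on the single homogeneous orbit $(X,Y)=(z,1)$.
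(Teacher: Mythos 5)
Your proof is correct, and for the substantive equivalence $(ii)\Leftrightarrow(iii)$ it takes a genuinely different route from the paper. The paper works in the real-analytic frame $(X-zY)^r(X-\overline{z}Y)^s$: it defines $h(z)=\tfrac{(z-\overline{z})^k}{k!}F_{0,k}(z)$, checks holomorphy via $\overline{\partial}_kF_{0,k}=0$, and gets \eqref{eq:coboundary} (and the converse construction of $F$) by chaining the raising/lowering equations \eqref{eq:raising-lowering-middle}--\eqref{eq:raising-lowering-end} with Bol's formula \eqref{eq:bol-formula}; equivariance of $F$ is read off from modularity of the components in that frame. You instead exhibit the explicit holomorphic lift $F=\sum_{i=0}^k\tfrac{k!}{i!}h^{(i)}(z)(X-zY)^iY^{k-i}$, prove its $\Gamma$-equivariance by the neat observation that $F(\gamma z)|_\gamma-F(z)$ is a constant element of $V_k^{\B}\otimes\CC$ which can be tested on the moving point $(X,Y)=(z,1)$, and for the converse run a chain-rule/telescoping argument on $G(z,X)=F(z)(X,1)$; note that your $h(z)=F(z)(z,1)/k!$ is literally the same function as the paper's, since evaluation at $(z,1)$ kills $(X-zY)$ and sends $(X-\overline{z}Y)\mapsto z-\overline{z}$, and by Lemma \ref{lem:uniqueness-harmonic} your lift coincides with theirs for $k\ge 1$. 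What each approach buys: yours is self-contained, avoids the operator calculus and Bol's identity (which it implicitly re-derives in integrated form), and produces a closed formula for the lift; the paper's version stays inside the raising/lowering formalism that is reused throughout (e.g.\ in Lemma \ref{prop: HarmonicCorrespondence} and Remark \ref{rem:harmonic-maass}), so the same bookkeeping serves several later arguments. Your treatment of $(i)\Leftrightarrow(ii)$ is identical to the paper's.
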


\begin{proof}
  For a vector-valued function $F$ satisfying equation \eqref{eq:harmonic}, we have
  \[
    \frac{\partial F}{\partial \overline{z}} = -\overline{g(z)}(X-\overline{z}Y)^k,
  \]
  so that $g=0$ if and only if $F$ is holomorphic. Thus, $(i)$ is equivalent to $(ii)$.

  Assume that $f$ is a Betti-conjugate of $0$ and let $F$ be a harmonic lift of the pair $(f,0)$:
  \begin{equation}\label{eq:holomorphic-harmonic-lift}
    dF = f(z)(X-zY)^kdz \, .
  \end{equation}
  Using the decomposition \eqref{eq:Frs}, define 
  \begin{equation}\label{eq:defn-h}
   h(z) = \frac{(z-\overline{z})^k}{k!}F_{0,k}(z)\, .
  \end{equation}
  The $\Gamma$-equivariance of $F$ implies that $h$ is modular for $\Gamma$ of weights $(-k,0)$. Moreover,
  by  \eqref{eqn:shift}
  \begin{equation}\label{eq:h-holomorphic}
    \overline{\partial}_0 h = \frac{(z-\overline{z})^k}{k!}\overline{\partial}_kF_{0,k} = 0\, ,
  \end{equation}
  where the second equation follows from \eqref{eq:raising-lowering-end}
 and \eqref{eq:holomorphic-harmonic-lift}. Thus $h$ is holomorphic. Equation  \eqref{eq:coboundary} follows from  successive application of \eqref{eq:raising-lowering-middle} and \eqref{eq:raising-lowering-end} to \eqref{eq:defn-h}, and `Bol's formula' (\emph{cf.} \cite[Lemma 3.3]{CNHMF3}):
  \begin{equation}\label{eq:bol-formula}
    \partial_0\partial_1\cdots \partial_{-k} = (z-\overline{z})^{k+1}\frac{\partial^{k+1}}{\partial z^{k+1}}.
  \end{equation}
  This shows that $(i)$ implies $(iii)$. For the converse, let $F$ be the unique real-analytic vector-valued function with $F_{0,k}$ defined by  \eqref{eq:defn-h}, which satisfies equations \eqref{eq:raising-lowering-middle} (define $F_{r,s}$ for $r>0$ by successively applying operators $\partial_r$ to $F_{0,k}$: the consistency of equations     
  \eqref{eq:raising-lowering-middle} follows ultimately from the holomorphicity of $h$). The modularity of $h$ implies the $\Gamma$-equivariance of $F$, and equations  \eqref{eq:bol-formula}, and \eqref{eq:coboundary} imply that $F$ satisfies \eqref{eq:holomorphic-harmonic-lift}. It follows that $f$ is a Betti-conjugate of 0.
\end{proof}

\begin{cor}
  Let $g_0$ be a Betti-conjugate of $f$. Another  function $g$ is also a Betti-conjugate of $f$ if and only if there exists a holomorphic function $h:\mathfrak{U} \to \CC$, which is modular for $\Gamma$ of weight $-k$, such that
  \[
    g = g_0 + \frac{\partial^{k+1}h}{\partial z^{k+1}} \  .
  \]
\end{cor}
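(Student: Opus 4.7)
The strategy is to reduce to Proposition \ref{prop:holomorphic-lift} applied to the difference $g - g_0$, exploiting the linearity of the harmonic-lift equation \eqref{eq:harmonic} together with the symmetry swap noted just after Definition \ref{defn:harmoniclift}.

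For the forward direction, I would fix harmonic lifts $F$ and $F_0$ of the pairs $(f,g)$ and $(f,g_0)$ respectively. Subtracting the two copies of \eqref{eq:harmonic} gives
\[
  d(F - F_0) \;=\; -\,\overline{(g - g_0)(z)}\,(X-\overline{z}Y)^{k}\,d\overline{z},
\]
so $F - F_0$ is a $\Gamma$-equivariant harmonic lift of the pair $(0,\,g - g_0)$. Applying the involution $H \mapsto -\overline{H}$ then yields a harmonic lift of $(g - g_0,\,0)$, which is the statement that $g - g_0$ is a Betti-conjugate of $0$. Invoking the equivalence (i) $\Leftrightarrow$ (iii) of Proposition \ref{prop:holomorphic-lift} produces a holomorphic function $h\colon \mathfrak{U} \to \CC$, modular for $\Gamma$ of weight $-k$, satisfying $\partial^{k+1}h/\partial z^{k+1} = g - g_0$, and hence $g = g_0 + \partial^{k+1}h/\partial z^{k+1}$.

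For the converse, given such an $h$, the implication (iii) $\Rightarrow$ (ii) of Proposition \ref{prop:holomorphic-lift} supplies a holomorphic harmonic lift $F_1$ of the pair $(g - g_0,\,0)$. Applying the same involution, $-\overline{F_1}$ is a harmonic lift of $(0,\,g - g_0)$, and then $F_0 - \overline{F_1}$ is a harmonic lift of $(f,\, g_0 + (g - g_0)) = (f,g)$, showing that $g$ is a Betti-conjugate of $f$.

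There is no genuine obstacle once Proposition \ref{prop:holomorphic-lift} is available; the proof is essentially formal. The only care required is bookkeeping of the left/right slot convention in Definition \ref{defn:harmoniclift} and the corresponding swap effected by the involution $H \mapsto -\overline{H}$. In particular, the uniqueness result of Lemma \ref{lem:uniqueness-harmonic} is not needed, though it could be used to present the argument in the equivalent form: every harmonic lift of $(f,g)$ differs from $F_0$ by the unique harmonic lift of $(0,\,g - g_0)$.
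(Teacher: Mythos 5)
Your proof is correct and follows exactly the route the paper intends: show, via the linearity of \eqref{eq:harmonic}, that $g$ is a Betti-conjugate of $f$ if and only if $g-g_0$ is a Betti-conjugate of $0$, and then invoke the equivalence (i) $\Leftrightarrow$ (iii) of Proposition \ref{prop:holomorphic-lift}. The paper compresses both directions into the one-line remark ``From the definition, $g-g_0$ is a Betti-conjugate of $0$''; you have simply unpacked the subtraction and the $F\mapsto -\overline F$ bookkeeping, which is exactly the content behind that line.
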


\begin{proof}
   From the definition, $g - g_0$ is a Betti-conjugate of 0. Now apply Proposition \ref{prop:holomorphic-lift}.
\end{proof}

\begin{rem}\label{rem:harmonic-maass}
   In the theory of (weak) harmonic Maass forms and mock modular forms (see \cite{bruininer-ono-rhoades,Candelori}), one considers the  following differential operators applied to  a  harmonic Maass form $\varphi$ of weight $-k$: 
   \[
        \xi_{-k}\varphi =  \frac{2i}{\mathrm{Im}(z)^k}\overline{\left(\frac{\partial \varphi}{\partial \overline{z}}\right)}\text{, }\qquad D^{k+1}\varphi = \frac{1}{(2\pi i)^{k+1}}\frac{\partial^{k+1}\varphi}{\partial z^{k+1}}
   \]
    If $f = \xi_{-k}\varphi$, then the unique harmonic vector-valued function $F: \mathfrak{H} \to V^{\B}_k\otimes \CC$ with 
    \[
        F_{k,0}(z) = \frac{(2i)^{k+1}}{(z-\overline{z})^k}\overline{\varphi(z)}
    \]
    is a harmonic lift of $f$ as in Definition \ref{defn:harmoniclift}. The  Betti-conjugate of $f(z)$ corresponding to  the lift $F$   is 
    \[
        g(z) = \frac{(-4\pi)^{k+1}}{k!}D^{k+1}\varphi(z).
    \]
\end{rem}

\subsection{Harmonic functions and eigenfunctions of the  Laplacian}\label{par:eigenfunctions}

The equations \eqref{eq:raising-lowering-middle}  imply that the components $F_{r,s}$ are all eigenfunctions of the generalised Laplacian: 
    \begin{equation} \label{grsEigenfunction} 
        (\Delta_{r,s} + k ) F_{r,s}  =0  \qquad \hbox{ for all } r+s = k   \  ,
    \end{equation} 
which follows  from one  of the two equivalent definitions \eqref{LaplaceOperator}  of $\Delta_{r,s}$.  Note that, after suitable shifts in the indices $r,s$,  the set of eigenfunctions of the Laplace operator $\Delta_{r,s}$ are stable under the operations $\partial, \overline{\partial}$. 

\begin{lem}\label{prop: HarmonicCorrespondence} 
    Let $r,s \geq 0$.  There is a one-to-one correspondence between real-analytic solutions $H_{r,s}: \mathfrak{U} \rightarrow \CC$ of the Laplace eigenfunction equation 
    \begin{equation} \label{rsseedEigenfunctionequation} 
        \left(\Delta_{r,s}+r+s \right) H_{r,s} = 0
    \end{equation}
    and harmonic vector-valued functions $F: \mathfrak{U} \rightarrow V_{r+s}^{\B}\otimes\CC$ whose  $(r,s)$-component in the decomposition \eqref{eq:Frs}  satisfies $F_{r,s} = H_{r,s}$. Furthermore, $H_{r,s}$ is modular of weights $(r,s)$ with respect to a subgroup $\Gamma\leq \SL_2(\ZZ)$ which preserves $\mathfrak{U}$ if and only if the vector-valued function $F$ is $\Gamma$-equivariant.  
\end{lem}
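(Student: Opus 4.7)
The plan is to make the correspondence explicit via raising and lowering operators, with the Laplace eigenfunction equation \eqref{rsseedEigenfunctionequation} providing the precise consistency one needs to reconstruct a full harmonic vector-valued function from a single component. The easy direction is immediate: given a harmonic $F$, its $(r,s)$-component $H_{r,s}:=F_{r,s}$ satisfies \eqref{rsseedEigenfunctionequation} because of \eqref{grsEigenfunction} with $k=r+s$, and modularity of weights $(r,s)$ is part of the $\Gamma$-equivariance of $F$.

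For the reverse direction, I would define the remaining components of $F$ from $H_{r,s}$ by iterated raising and lowering,
\[
F_{r+j,s-j}:=\tfrac{r!}{(r+j)!}\,\partial_{r+j-1}\cdots\partial_r H_{r,s},\qquad F_{r-j,s+j}:=\tfrac{s!}{(s+j)!}\,\overline{\partial}_{s+j-1}\cdots\overline{\partial}_s H_{r,s},
\]
so that the `raising half' of \eqref{eq:raising-lowering-middle} holds by construction. Equating the two expressions for $\Delta_{r',s'}$ in \eqref{LaplaceOperator} supplies the commutator identity $\overline{\partial}_{s'-1}\partial_{r'}-\partial_{r'-1}\overline{\partial}_{s'}=s'-r'$ on functions of weight $(r',s')$, and the eigenfunction equation at $(r',s')$ with $r'+s'=k$ becomes equivalent to the pair
\[
\overline{\partial}_{s'-1}\partial_{r'} F_{r',s'}=s'(r'+1)F_{r',s'},\qquad \partial_{r'-1}\overline{\partial}_{s'} F_{r',s'}=r'(s'+1)F_{r',s'}.
\]
At $(r',s')=(r,s)$, the first of these is precisely the `lowering half' of \eqref{eq:raising-lowering-middle} at the seed; to propagate, I would use the same commutator identity to check that $\partial_{r'}$ (resp. $\overline{\partial}_{s'}$) intertwines $\Delta_{r',s'}+k$ with $\Delta_{r'+1,s'-1}+k$ (resp. $\Delta_{r'-1,s'+1}+k$). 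An induction then yields $(\Delta_{r',s'}+k)F_{r',s'}=0$ for every $r'+s'=k$, so all consistency conditions in \eqref{eq:raising-lowering-middle} hold.

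For the boundary equations \eqref{eq:raising-lowering-end}, the $s'=0$ case of the first identity applied to $F_{k,0}$ gives $\overline{\partial}_{-1}\partial_k F_{k,0}=0$. Setting $f:=(z-\overline{z})^{-1}\partial_k F_{k,0}$ and using the shift identity \eqref{eqn:shift} yields $\overline{\partial}_0 f=(z-\overline{z})^{-1}\overline{\partial}_{-1}\partial_k F_{k,0}=0$, so $f$ is holomorphic; the construction of $g$ from the extreme component $F_{0,k}$ is symmetric. Finally, raising and lowering preserve modularity with weight shifts $(+1,-1)$ and $(-1,+1)$ respectively, so $H_{r,s}$ being modular of weights $(r,s)$ for $\Gamma$ is equivalent to the $\Gamma$-equivariance of the reconstructed $F$, and the two assignments are mutually inverse because each component of a harmonic $F$ determines every other by a unique chain of raising or lowering operators. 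The main obstacle I anticipate is the bookkeeping around the consistency of the system \eqref{eq:raising-lowering-middle} across all indices, but this reduces, via the commutator identity above, to the single Laplace eigenfunction equation \eqref{rsseedEigenfunctionequation} for $H_{r,s}$; once this reduction is isolated, the remainder of the argument becomes largely formal.
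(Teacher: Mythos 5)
Your proof is correct and follows the same route as the paper's: define the remaining components of $F$ from the seed $H_{r,s}$ by iterated raising and lowering operators, then use the eigenfunction equation to establish the consistency of \eqref{eq:raising-lowering-middle} and the holomorphicity of $f,g$. The paper's proof is deliberately terse and defers the details to \cite[Proposition 5.4]{CNHMF3}; your proposal simply fills in the intertwining and shift computations that are left implicit there.
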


\begin{proof}
    The proposition is straightforward and paraphrases \cite[Proposition 5.4]{CNHMF3}. The main idea is  that the consistency of the equations \eqref{eq:raising-lowering-middle}, and the  holomorphicity of $f, g$, follow from the eigenfunction equation \eqref{grsEigenfunction}. Therefore, starting from  $F_{r,s}=H_{r,s}$ a solution to \eqref{rsseedEigenfunctionequation}, one can uniquely define the functions $F_{r',s'}$  for all other $r'+s'=r+s$, $r',s'\geq 0$ and deduce  $f, g$ by applying raising and lowering operators using \eqref{eq:raising-lowering-middle} and \eqref{eq:raising-lowering-end}. This defines a harmonic vector-valued function $F$. The converse statement is \eqref{grsEigenfunction}. 
\end{proof}

In the case  when $k=2m$ is even, the central component $r=s=m$ plays a special role.

\begin{lem} \label{lem: LaplaceHarmonicCorrespondence}
    For any integer $m\geq 0$,  there is a one-to-one correspondence between real-analytic solutions $\phi : \mathfrak{U} \to \mathbb{C}$ of the classical hyperbolic Laplace equation
    \begin{equation} \label{eqn: ClassicalLaplace}
        \left(  \Delta_{0,0}    + m(m+1) \right) \phi= 0
    \end{equation}
    and harmonic vector-valued functions $F : \mathfrak{U} \to V^{\B}_{2m}\otimes\CC$ satisfying
    $F_{m,m}(z)=    (z-\overline{z})^{-m} \phi(z)$. The function $F$ is $\Gamma$-equivariant if and only if $\phi$ is $\Gamma$-invariant. 
\end{lem}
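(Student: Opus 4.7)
The plan is to reduce this statement to Lemma \ref{prop: HarmonicCorrespondence} applied to the central indices $r=s=m$, $k=2m$. That lemma already gives a bijection between $\Gamma$-equivariant harmonic vector-valued functions $F: \mathfrak{U} \to V_{2m}^{\B}\otimes\mathbb{C}$ and real-analytic solutions $H_{m,m}: \mathfrak{U} \to \mathbb{C}$ of
\[
    (\Delta_{m,m}+2m)\,H_{m,m} = 0,
\]
which are modular of weights $(m,m)$ for $\Gamma$, via $F_{m,m}=H_{m,m}$. It therefore suffices to establish a bijection between such $H_{m,m}$ and $\Gamma$-invariant solutions $\phi$ of \eqref{eqn: ClassicalLaplace} via the substitution
\[
    H_{m,m}(z) = (z-\overline{z})^{-m}\phi(z).
\]

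The key step is the elementary computation that this substitution intertwines the two Laplacians (up to the correct constants). Using the shift identities \eqref{eqn:shift} twice, first with $\partial_m$ and then with $\overline{\partial}_{m-1}$, one obtains
\[
    \partial_m\bigl((z-\overline{z})^{-m}\phi\bigr) = (z-\overline{z})^{-m}\partial_0\phi, \qquad \overline{\partial}_{m-1}\bigl((z-\overline{z})^{-m}\partial_0\phi\bigr) = (z-\overline{z})^{-m}\overline{\partial}_{-1}\partial_0\phi.
\]
Combining these with the definition $\Delta_{m,m}=-\overline{\partial}_{m-1}\partial_m+m(m-1)$ and the identity $\Delta_{0,0}=-\overline{\partial}_{-1}\partial_0$ from \eqref{LaplaceOperator} yields
\[
    (\Delta_{m,m}+2m)\bigl((z-\overline{z})^{-m}\phi\bigr) = (z-\overline{z})^{-m}\bigl(\Delta_{0,0}+m(m+1)\bigr)\phi.
\]
Since $(z-\overline{z})^{-m}$ is a nowhere vanishing real-analytic function on $\mathfrak{U}$, the equation $(\Delta_{m,m}+2m)H_{m,m}=0$ is equivalent to \eqref{eqn: ClassicalLaplace}, and the substitution is manifestly invertible (divide by $(z-\overline{z})^{-m}$).

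Finally, for the equivariance statement, we note that $z-\overline{z}$ is modular of weights $(-1,-1)$, so $(z-\overline{z})^{-m}$ is modular of weights $(m,m)$; hence $H_{m,m}$ is modular of weights $(m,m)$ for $\Gamma$ if and only if $\phi$ is $\Gamma$-invariant. Composing this bijection with the one of Lemma \ref{prop: HarmonicCorrespondence} gives the claim. There is no substantial obstacle: the entire argument is a two-line Laplacian computation together with the weight bookkeeping, once the previous lemma is in hand.
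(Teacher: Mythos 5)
Your proposal is correct and follows essentially the same route as the paper: reduce to Lemma \ref{prop: HarmonicCorrespondence} at the central index and show that multiplication by $(z-\overline{z})^{\pm m}$ carries eigenfunctions of $\Delta_{0,0}$ with eigenvalue $-m(m+1)$ to eigenfunctions of $\Delta_{m,m}$ with eigenvalue $-2m$, together with the weight bookkeeping for equivariance. The only (harmless) difference is that you verify the intertwining identity in one step directly from the shift identities \eqref{eqn:shift} and \eqref{LaplaceOperator}, whereas the paper tracks the eigenvalue shift inductively in $i$ via the commutation relation quoted from \cite[(2.23)]{CNHMF1}; your computation $m(m-1)+2m=m(m+1)$ checks out, so both arguments are sound.
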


\begin{proof}
    Lemma \ref{prop: HarmonicCorrespondence} gives a correspondence between harmonic vector-valued functions $F: \mathfrak{U} \rightarrow V_{2m}^{\B}\otimes \CC$ and real-analytic solutions $H_{m,m}:\mathfrak{U} \to \mathbb{C}$ of the  generalised hyperbolic Laplace equation
    \begin{equation} \label{seedEigenfunctionequation} 
        \left(\Delta_{m,m}+2m \right) H_{m,m} = 0 \ . 
    \end{equation}
    It is enough to show that  $H_{m,m}$ is a solution to \eqref{seedEigenfunctionequation} if and only if 
    $\phi(z)= (z- \overline{z})^m H_{m,m}(z)$ is a solution to  \eqref{eqn: ClassicalLaplace}. By \cite[(2.23)]{CNHMF1} for any real-analytic function $\psi(z)$, 
    \[
        (z- \overline{z}) \Delta_{n,n}\,  \psi(z)  - \Delta_{n-1,n-1} (z- \overline{z})\, \psi(z) = (2n-2)\, (z-\overline{z}) \, \psi(z) \   , 
    \]
    for any integer $n$. Thus, if $H_{m,m}$ is an eigenfunction of $\Delta_{m,m}$ with eigenvalue $\lambda_m$, it follows by induction on $i$ that $H_{m-i,m-i}= (z-\overline{z})^i H_{m,m} $ is an eigenfunction of $\Delta_{m-i,m-i}$ with eigenvalue 
    \[
        \lambda_{m-i}= \lambda_m - \left((2m-2) +  (2m-4) + \cdots + (2m-2i) \right) \, .
    \]
    In particular $\lambda_0= \lambda_m - m(m-1)$. The equivalence of \eqref{seedEigenfunctionequation} and \eqref{eqn: ClassicalLaplace} follows from the case $\lambda_{2m} = -m$. 
\end{proof}

The holomorphic functions $f, g$ in equation \eqref{eq:harmonic} corresponding to $F$ are explicitly given by
\begin{equation} \label{eqn: fgfromphi}
    f = \frac{m!}{(2m)!}\, \frac{1}{(z-\overline{z})^{m+1}}\, \partial_m \cdots \partial_1 \partial_0\,  \phi
\qquad 
\hbox{ and } 
\qquad
    \overline{g} = -\frac{m!}{(2m)!}\, \frac{1}{(z-\overline{z})^{m+1}} \, \overline{\partial}_m \cdots \overline{\partial}_1 \overline{\partial}_0\,  \phi \ . 
\end{equation}

\section{Meromorphic modular forms }

Using Poincar\'e series of rational functions  we construct the meromorphic modular forms $\Psi^{p,q}_{\Gamma}(z,w)$, which for $p,q\ge 0$ and $p+q>0$ can be seen as higher analogues of `differential forms of the 3rd kind' on a modular curve. We then  compute their (vector-valued) residues.

\subsection{The meromorphic modular forms  $\Psi^{p,q}_{\Gamma}$}

For $p,q \in \ZZ$, and $z\notin \{ w, \overline{w}\}$ we denote 
\begin{equation} \label{eq:psi} 
    \psi^{p,q} (z,w) \coloneqq \frac{w-\overline{w}}{(z-w)^{p+1} (z-\overline{w})^{q+1}} \ .
\end{equation}
  
\begin{defn} \label{defn: psiGamma}
    Let $\Gamma \leq \SL_2(\QQ)$ be commensurable to $\SL_2(\ZZ)$ (for the most part, $\Gamma $ will be a  subgroup of $\SL_2(\ZZ)$).  For all $p,q \in \ZZ$ with $p+q > 0$ and $(z,w) \in  \H^{\pm}\times \H^{\pm}$ with $z\not\in \Gamma w \cup \Gamma \overline{w}$, we define
    \begin{equation} \label{eq:Psi}
        \Psi_{\Gamma}^{p,q}(z,w)  \coloneqq  \sum_{\gamma \in \Gamma}    \frac{1}{j^{p+q+2}_{\gamma}(z)}  \psi^{p,q}(\gamma z,w)\ .
    \end{equation}
\end{defn} 

\begin{lem}[Poincaré]\label{lem:Poincare}
    Let $\Gamma$ be as above. If $R$ is a rational function on $\mathbb{C}$ with no poles along the real axis  which vanishes to order at least $n >2$ at infinity, then the series $\sum_{\gamma \in \Gamma}j^{-n}_{\gamma}(z)R(\gamma z)$ converges normally on compact subsets of $\mathfrak{H}$, and defines a meromorphic modular form for $\Gamma$ of weight $n$ which vanishes at the cusps.
\end{lem}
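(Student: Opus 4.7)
The plan is to bound each summand by the corresponding term of a convergent Eisenstein-type majorant, and then to show this majorant is small near cusps.

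First I would factor $R=P/Q$ with $\deg Q-\deg P\ge n$. Since the finitely many poles of $R$ all lie in $\mathfrak{H}\cup\mathfrak{H}^-$, for every open neighbourhood $N$ of this pole set there is a uniform bound $|R(w)|\le C_N(1+|w|)^{-n}$ on $\mathbb{C}\setminus N$. For $\gamma=\begin{psmallmatrix}a&b\\c&d\end{psmallmatrix}\in\Gamma$ one has $|j_\gamma(z)|\,|\gamma z|=|az+b|$, hence
\[
(1+|\gamma z|)\,|j_\gamma(z)| = |cz+d|+|az+b| \ge \bigl(|az+b|^2+|cz+d|^2\bigr)^{1/2}.
\]
Combining these, for $\gamma z\notin N$ one obtains the summand bound $|R(\gamma z)|/|j_\gamma(z)|^n\le C_N\,(|az+b|^2+|cz+d|^2)^{-n/2}$.

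The main analytic step would be to establish that the majorant $E(z)\coloneqq \sum_{\gamma\in\Gamma}(|az+b|^2+|cz+d|^2)^{-n/2}$ converges uniformly on compacta of $\mathfrak{H}$ for $n>2$ \emph{and} tends to $0$ at every cusp. I would do this by stratifying by the stabiliser $\Gamma_\infty\le\Gamma$ of $\infty$ and parametrising $(b,d)=(b_0,d_0)+k(a,c)$ within each coset. A direct use of $ad_0-b_0c=1$ then yields the identity
\[
|az+b|^2+|cz+d|^2=(a^2+c^2)\bigl(y^2+(x+k+\xi)^2\bigr)+\tfrac{1}{a^2+c^2},\qquad \xi=\tfrac{ab_0+cd_0}{a^2+c^2},
\]
for $z=x+iy$. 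Integral comparison bounds the sum over $k$ by $O(y^{-(n-1)})$, and a lattice-point count over coprime pairs $(a,c)$ takes care of the residual sum for $n>2$. After conjugating by an element of $\SL_2(\QQ)$ sending a given cusp to $\infty$ (permissible by commensurability with $\SL_2(\ZZ)$), the same decay holds at every cusp.

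Finally I would assemble the pieces. For a compact $K\subset\mathfrak{H}$ disjoint from the (locally finite) $\Gamma$-orbit of the poles of $R$, proper discontinuity of the Fuchsian action leaves only finitely many $\gamma$ sending $K$ into a fixed neighbourhood $N$ of the poles; these finitely many terms contribute a meromorphic function with poles confined to $\Gamma\cdot\{\text{poles of }R\}$, and the remaining tail is dominated by $C_N\,E(z)$, so the series converges normally on $K$. Weight-$n$ modularity then follows from the cocycle relation $j_{\gamma\gamma'}(z)=j_\gamma(\gamma'z)j_{\gamma'}(z)$ together with absolute convergence, and vanishing at every cusp is immediate from the $O(y^{-(n-1)})$ decay of $E$. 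The main subtlety I expect lies precisely in controlling $E(z)$ at cusps: the naive Eisenstein sum $\sum_\gamma|j_\gamma(z)|^{-n}$ does not vanish at $\infty$, and it is the additional factor $(1+|\gamma z|)^{-n}$ provided by the decay of $R$ at infinity that restores vanishing. This is the reason the hypothesis demands $R$ to vanish to order at least $n$ at infinity, and not merely to positive order.
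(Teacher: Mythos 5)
Your proof is correct. The paper itself gives no argument for this lemma — it simply cites Ford and the integral estimate in Miyake — so there is nothing to compare against beyond the classical proof, and yours is essentially that classical proof: the summand bound $|j_\gamma(z)|^{-n}|R(\gamma z)|\le C_N(|az+b|^2+|cz+d|^2)^{-n/2}$ reduces everything to the standard Eisenstein-type majorant, whose convergence for $n>2$ and $O(y^{1-n})$ decay at cusps are exactly what the cited references establish. Your closing observation — that the naive majorant $\sum_\gamma|j_\gamma(z)|^{-n}$ does not vanish at $\infty$ and that the hypothesis of order-$n$ vanishing of $R$ at infinity is what supplies the compensating factor $(1+|\gamma z|)^{-n}$ — correctly identifies the role of that hypothesis; the only point left implicit is that for $\mathrm{Im}(z)$ large no translate $\gamma z$ enters the excluded neighbourhood $N$ of the poles, so the majorant bound applies to the whole series near the cusp, but this follows at once from $\mathrm{Im}(\gamma z)=\mathrm{Im}(z)/|j_\gamma(z)|^2$.
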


This is a classical result of Poincaré (\emph{cf.} \cite[Chapter V]{Ford}). A more streamlined proof can be obtained from the integral estimate in \cite[Theorem 2.6.1]{Miyake}.
 
\begin{prop} \label{prop:Psi}
    The  series \eqref{eq:Psi} converges normally on compact subsets of $\H^{\pm}\times \H^{\pm}$, and defines a function $\Psi^{p,q}_{\Gamma}$ on the open subset $\{(z,w) \in \H^{\pm}\times \H^{\pm} : z \not\in \Gamma w \cup \Gamma \overline{w}\}$ which is holomorphic in $z$ and real-analytic in $w$. Moreover:
    \begin{enumerate}
        \item It can be written in the alternative form:
            \begin{equation} \label{eq:Psi-average-w}
                \Psi_{\Gamma}^{p,q}(z,w) =  \sum_{\gamma \in \Gamma} \frac{1}{j^{p}_{\gamma}(w)j^{q}_{\gamma}(\overline{w})} \psi^{p,q}(z,\gamma w).
            \end{equation}
      The function $\Psi^{p,q}_{\Gamma}(z,w)$ is modular of weight $p+q+2$ in $z$ and weights $(p,q)$ in $w$:
            \begin{equation} \label{eq:Psi-modularity}
                \Psi^{p,q}_{\Gamma}(\gamma_1 z,\gamma_2 w) = j_{\gamma_1}^{p+q+2}(z) j_{\gamma_2}^p(w)j_{\gamma_2}^q(\overline{w}) \Psi_{\Gamma}^{p,q}(z,w)\, ,   \qquad \hbox{ for all  } (\gamma_1,\gamma_2) \in \Gamma\times \Gamma \, . 
            \end{equation}
        \item It also converges normally on compact subsets of $D\times \H^{\pm}$, where $D$ is a disc neighbourhood of any cusp for $\Gamma$. For any fixed $w \in \H^{\pm}$, the function $z\mapsto \Psi^{p,q}_{\Gamma}(z,w)$ is a meromorphic modular form of weight $p+q+2$ for $\Gamma$ which vanishes at the cusps. Furthermore, if $w \in \mathfrak{H}$ (resp. $w \in \mathfrak{H}^-$), then it has poles of order at most $p+1$ (resp. $q+1$) along $z \in \Gamma w$ (resp. $z \in \Gamma \overline{w}$).
        \item For any $\delta\in \GL_2(\QQ)$ we have 
            \begin{equation} \label{eq:Psi-g-action}
                \Psi_{\Gamma}^{p,q}(\delta z,w) = \frac{j_{\delta}^{p+q+2}(z)}{(\det \delta)^{p+q+1} j^{p}_{\delta^{-1}}(w) j^{q}_{\delta^{-1}}(\overline{w})} \Psi_{\delta^{-1}\Gamma \delta}^{p,q}(z, \delta^{-1}w) .
            \end{equation}
    \end{enumerate}
\end{prop}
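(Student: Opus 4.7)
The plan is to organise the proof around a single elementary identity governing the kernel $\psi^{p,q}$, namely how it transforms under the simultaneous action of $\mathrm{GL}_2(\mathbb{Q})$ on both arguments. Almost every assertion in the proposition will follow by combining this identity with a reindexing of the summation and Poincaré's classical convergence result, Lemma \ref{lem:Poincare}.

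I would first record the key identity. Since $\gamma$ acts on $\mathbb{C}$ by a Möbius transformation, a direct computation gives the ``translation formula'' $\gamma z - w = j_{\gamma^{-1}}(w)\,j_\gamma(z)^{-1}(z-\gamma^{-1}w)$ and, more generally for $\delta \in \mathrm{GL}_2(\mathbb{Q})$ with $\overline{\delta v}=\delta\overline{v}$,
\[
\psi^{p,q}(\delta u, \delta v) = \frac{j_\delta^{p+q+2}(u)\, j_\delta^p(v)\, j_\delta^q(\overline{v})}{(\det \delta)^{p+q+1}}\,\psi^{p,q}(u,v).
\]
This is just a clearing-of-denominators computation using $\delta u - \delta v = (\det \delta)(u-v)/(j_\delta(u) j_\delta(v))$.

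Next I would handle convergence. For fixed $w \in \H^{\pm}$, the function $z\mapsto \psi^{p,q}(z,w)$ is rational in $z$, has poles only at $w,\overline{w}$ (off the real axis), and vanishes to order $p+q+2>2$ at infinity, so Lemma \ref{lem:Poincare} gives normal convergence of \eqref{eq:Psi} on compacta of $\mathfrak{H}$ away from $\Gamma w \cup \Gamma\overline{w}$, together with the fact that $z\mapsto \Psi_{\Gamma}^{p,q}(z,w)$ is a meromorphic modular form of weight $p+q+2$ vanishing at cusps, with the claimed pole orders (observing that for $w\in \mathfrak{H}$ the orbit $\Gamma\overline{w}$ lies in $\mathfrak{H}^{-}$, so only $\Gamma w$ contributes to poles in $\mathfrak{H}$). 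Joint normal convergence on compact subsets of $\H^{\pm}\times \H^{\pm}$ (and on $D\times \H^{\pm}$ for $D$ a horoball at a cusp) follows because the integral estimate used in Lemma \ref{lem:Poincare} depends only on sup-norms on fundamental domains of the rational function $\psi^{p,q}(\cdot,w)$, which vary continuously as $w$ runs through a compact set disjoint from $\mathbb{R}$. From this joint convergence, holomorphy in $z$ and real-analyticity in $w$ are inherited termwise.

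The alternative form \eqref{eq:Psi-average-w} is then the main algebraic manipulation. Applying the translation formula above to each of the factors $(\gamma z - w)^{p+1}$, $(\gamma z - \overline{w})^{q+1}$, and writing $w-\overline{w} = j_{\gamma^{-1}}(w) j_{\gamma^{-1}}(\overline{w})(\gamma^{-1}w-\gamma^{-1}\overline{w})$ (obtained from $\gamma u - \gamma v=(u-v)/j_\gamma(u)j_\gamma(v)$), one finds after cancellation of all $j_\gamma(z)$-factors that
\[
\frac{\psi^{p,q}(\gamma z,w)}{j_\gamma^{p+q+2}(z)} \;=\; \frac{\psi^{p,q}(z,\gamma^{-1}w)}{j_{\gamma^{-1}}^{p}(w)\,j_{\gamma^{-1}}^{q}(\overline{w})}.
\]
Reindexing $\gamma \mapsto \gamma^{-1}$ in the sum yields \eqref{eq:Psi-average-w}. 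Modularity in $z$ follows immediately from the original defining sum by the reindexing $\gamma\mapsto \gamma\gamma_1^{-1}$ combined with the cocycle $j_{\gamma\gamma_1}(z)=j_\gamma(\gamma_1 z) j_{\gamma_1}(z)$; modularity in $w$ follows symmetrically from \eqref{eq:Psi-average-w}. Finally, for (3), the reindexing $\gamma = \delta\gamma'\delta^{-1}$ of a sum over $\Gamma$ turns it into a sum over $\delta^{-1}\Gamma\delta$, and applying the displayed $\mathrm{GL}_2$-transformation rule for $\psi^{p,q}$ with $u=\gamma'z,\, v=\delta^{-1}w$, together with $j_{\delta^{-1}}(w)=j_\delta(\delta^{-1}w)^{-1}$, produces the prefactor $j_\delta^{p+q+2}(z)/((\det\delta)^{p+q+1} j_{\delta^{-1}}^p(w) j_{\delta^{-1}}^q(\overline{w}))$ in \eqref{eq:Psi-g-action}.

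I expect the only real obstacle to be the uniform-in-$w$ upgrade of Poincaré's convergence argument; everything else is bookkeeping with the $j$-cocycle. The cleanest way to dispose of the uniform convergence is to note that the integral bound of Miyake used in the proof of Lemma \ref{lem:Poincare} is controlled by $\sup_{z\in F}|\psi^{p,q}(z,w)|$ over a fundamental domain $F$, which is continuous in $w$ on each compact subset of $\H^\pm$ avoiding $\Gamma z$, so the tail of the series is uniformly small.
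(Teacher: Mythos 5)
Your proof is correct and takes essentially the same route as the paper: the single identity $\delta u-\delta v = \tfrac{\det\delta}{j_\delta(u)j_\delta(v)}(u-v)$ (the paper's \eqref{eq:action-g-z-w}) is used to derive the transformation rule for $\psi^{p,q}$, from which \eqref{eq:Psi-average-w}, the modularity, and \eqref{eq:Psi-g-action} all follow by reindexing; convergence is reduced to Lemma \ref{lem:Poincare} plus a uniformity-in-$w$ remark, exactly as the paper's proof sketch indicates.
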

  
\begin{proof}[Proof sketch]
    The convergence statements follow from Lemma \ref{lem:Poincare}, with a minor extra argument to obtain normal convergence in both variables $(z,w)$. Then, equation \eqref{eq:Psi-average-w} follows from the general formula
        \begin{equation}\label{eq:action-g-z-w}
            \delta z - \delta w = \frac{\det \delta}{j_{\delta}(z)j_{\delta}(w)}(z-w) \ ,
        \end{equation}
    which holds for any $z,w \in \mathfrak{H}^{\pm}$ and any $\delta \in \GL_2(\mathbb{Q})$. The modularity \eqref{eq:Psi-modularity} follows from a standard argument.  Equation \eqref{eq:Psi-g-action} follows from
        \[
            \frac{\psi^{p,q}(\delta z,w)}{j_{\delta}^{p+q+2}(z)} = \frac{\psi^{p,q}(z,\delta^{-1}w)}{(\det\delta)^{p+q+1} j^{p}_{\delta^{-1}}(w) j^{q}_{\delta^{-1}}(w)  }\ ,
        \]
    which is another application of \eqref{eq:action-g-z-w}. 
\end{proof} 

\subsection{$\Psi^{p,q}_{\Gamma}(z,w)$ as components of a harmonic lift}

Let $k>2$ be an integer and set
    \begin{equation}\label{eq:Phi}
        \Phi_{\Gamma,k}(z,w) \defeq \frac{k-1}{w-\overline{w}}\Psi_{\Gamma}^{k-1,-1}(z,w) = (k-1)\sum_{\gamma \in\Gamma}\frac{1}{j_{\gamma}^{k}(z)}\frac{1}{(\gamma z - w)^k}.
    \end{equation}
Viewing $\Phi_{\Gamma,k}(z,w)$ as a function of $z$ in $\H \setminus \Gamma w$, it follows from Proposition \ref{prop:Psi} that:
\begin{itemize}
    \item for fixed $w \in \mathfrak{H}$, $\Phi_{\Gamma,k}(z,w)$ is a meromorphic modular form for $\Gamma$ of weight $k$ which vanishes at the cusps and has poles of order at most $k$ along $z \in \Gamma w$.
    \item for fixed $w \in \mathfrak{H}^-$, $\Phi_{\Gamma,k}(z,w)$ is a cusp form for $\Gamma$ of weight $k$, known in the literature as a `reproducing kernel' (\emph{cf.} \cite[\S 2]{zagier77}).
\end{itemize}
For any $w \in \mathfrak{H}^{\pm}$, we can also write
    \[
        \Phi_{\Gamma,k}(z,\overline{w}) = \frac{k-1}{w-\overline{w}}\Psi_{\Gamma}^{-1,k-1}(z,w).
    \]

\begin{rem}\label{rem:symmetry-Phi}
    For $z,w \in \mathfrak{H}^{\pm}$, with $z \not\in \Gamma w$, we have the following symmetry properties:
        \[
            \Phi_{\Gamma,k}(w,z) = (-1)^k \Phi_{\Gamma,k}(z,w)\text{, }\qquad \overline{\Phi_{\Gamma,k}(z,w)} = \Phi_{\Gamma,k}(\overline{z},\overline{w}).
        \]
\end{rem}

For any $p,q \ge 0$ with $p+q = k$, we shall now recover $\Psi^{p,q}_{\Gamma}(z,w)$ as a component of a harmonic lift, in the variable $w$, of $\Phi_{\Gamma,k+2}(z,w)$ (\emph{cf.} Remark \ref{rem:variable-w}). For this,  consider the vector-valued function
    \begin{equation} \label{eqndefn:PsiGammak}
        \Psi_{\Gamma,k}(z,w) = \sum_{\substack{p+q=k\\ p,q\ge 0}} \Psi_{\Gamma}^{p,q}(z,w)(U-wV)^p (U- \overline{w} V)^q.
    \end{equation}
It is real-analytic and equivariant with respect to the  simultaneous action of $\Gamma$ on $w$ and $U,V$.

\begin{lem}\label{lem:identity-psi}
    Consider the vector-valued rational function of $z, w, \overline{w}$:
        \begin{equation} \label{eq:psiklowercasedef}
            \psi_{k}(z,w) = \sum_{\substack{p+q = k \\ p,q\ge 0}}\psi^{p,q}(z,w) (U-wV)^p(U- \overline{w} V)^q.
        \end{equation}
    The following identity holds in $\mathbb{Q}(z,w,\overline{w}, U,V)$:
        \begin{equation} \label{psikrational}
            \psi_k(z,w) = \frac{1}{U-zV}\left[\left(\frac{U-wV}{z-w} \right)^{k+1} - \left(\frac{U-\overline{w}V}{z-\overline{w}} \right)^{k+1} \right].
        \end{equation}
\end{lem}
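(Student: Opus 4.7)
The plan is to reduce the sum to a finite geometric series. I would start by introducing the shorthand
\[
    a = \frac{U-wV}{z-w}, \qquad b = \frac{U-\overline{w}V}{z-\overline{w}},
\]
and observe that each summand of $\psi_k(z,w)$ factorises neatly as
\[
    \psi^{p,q}(z,w)(U-wV)^p(U-\overline{w}V)^q = \frac{w-\overline{w}}{(z-w)(z-\overline{w})}\, a^p b^q,
\]
so that
\[
    \psi_k(z,w) = \frac{w-\overline{w}}{(z-w)(z-\overline{w})} \sum_{\substack{p+q=k\\ p,q\ge 0}} a^p b^q = \frac{w-\overline{w}}{(z-w)(z-\overline{w})} \cdot \frac{a^{k+1} - b^{k+1}}{a-b},
\]
using the elementary identity $\sum_{p+q=k} a^p b^q = (a^{k+1}-b^{k+1})/(a-b)$.

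It then suffices to show that the prefactor $\frac{w-\overline{w}}{(z-w)(z-\overline{w})(a-b)}$ simplifies to $\frac{1}{U-zV}$. For this I would compute $a-b$ by bringing to a common denominator:
\[
    a - b = \frac{(U-wV)(z-\overline{w}) - (U-\overline{w}V)(z-w)}{(z-w)(z-\overline{w})}.
\]
Expanding the numerator, the terms involving $Uz$ cancel, and the remaining cross terms collect as
\[
    (U-wV)(z-\overline{w}) - (U-\overline{w}V)(z-w) = (w-\overline{w})(U-zV),
\]
so that $a-b = (w-\overline{w})(U-zV)/((z-w)(z-\overline{w}))$. Substituting this back in, the factor $(z-w)(z-\overline{w})(w-\overline{w})$ cancels and one is left with
\[
    \psi_k(z,w) = \frac{a^{k+1}-b^{k+1}}{U-zV},
\]
which is exactly the claimed identity after restoring $a$ and $b$.

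There is no real obstacle: the key step is simply the partial-fractions-style computation of $a-b$, which miraculously produces the factor $U-zV$ in the numerator and cancels everything else. The identity is an algebraic identity in $\mathbb{Q}(z,w,\overline{w},U,V)$ (no modularity is used), so one just has to verify the cancellations carefully.
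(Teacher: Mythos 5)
Your argument is correct and is essentially identical to the paper's proof: the paper also factors out $\frac{w-\overline{w}}{(z-w)(z-\overline{w})}$, applies the same geometric-series identity $\sum_{p+q=k}A^pB^q = \frac{A^{k+1}-B^{k+1}}{A-B}$ with $A=\frac{U-wV}{z-w}$, $B=\frac{U-\overline{w}V}{z-\overline{w}}$, and verifies that $A-B = \frac{(w-\overline{w})(U-zV)}{(z-w)(z-\overline{w})}$. Nothing further to add.
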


\begin{proof}
    By definition of $\psi^{p,q}(z,w)$, we have
        \[
            \psi_k(z,w) = \sum_{\substack{p+q=k \\ p,q\ge 0}}\frac{w-\overline{w}}{(z-w)(z-\overline{w})}\left(\frac{U-wV}{z-w}\right)^p\left(\frac{U-\overline{w}V}{z-\overline{w}} \right)^q.
        \]
    The statement follows by applying the identity
        \[
            \sum_{\substack{p+q=k\\ p,q\geq 0}} A^p B^q = \frac{A^{k+1}-B^{k+1}}{A-B}
        \]
    with $A= \frac{U-w V}{z- w }$, $B=\frac{U-\overline{w}V}{z-\overline{w}}$ to the previous expression, and noting that
        \[
            A-B = \frac{(w-\overline{w})(U-zV)}{(z-w)(z-\overline{w})}.\qedhere
        \]
\end{proof}

\begin{rem}
    It follows immediately from the expression \eqref{psikrational} that $\psi_k(z,w)$ satisfies the following symmetries with respect to complex conjugation:
    \begin{equation}\label{eq:complex-conjugate-psi}
        \psi_k(z,\overline{w}) = -\psi_k(z,w)\text{, }\qquad \overline{\psi_k(z,w)} = \psi_k(\overline{z},\overline{w}) = -\psi_k(\overline{z},w).
    \end{equation}
\end{rem}

\begin{prop} \label{prop: PsiliftsPhi}
   Let $k>0$ and $z\in \H^{\pm}$. The vector-valued function $\Psi_{\Gamma,k}(z,w)$, viewed as a function of $w$ on $\H^{\pm}\setminus \Gamma z \cup \Gamma \overline{z}$, satisfies  the harmonic lift equation \eqref{eq:harmonic}
    \[
      d_w \Psi_{\Gamma,k} = \Phi_{\Gamma,k+2}(z,w) (U-w V)^k\,  dw -    \Phi_{\Gamma,k+2}(z, \overline{w}) (U- \overline{w} V)^k \,d \overline{w}  \ , 
    \]
    since $\overline{ \Phi_{\Gamma,k+2}(\overline{z}, w)}=  \Phi_{\Gamma,k+2}(z, \overline{w})$ by Remark \eqref{rem:symmetry-Phi}.
\end{prop}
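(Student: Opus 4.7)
The plan is to reduce the proposition to a rational-function identity and then differentiate under the sum, using Lemma~\ref{lem:identity-psi} to avoid dealing with the expansion into the $(p,q)$-components.

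First, I would rewrite $\Psi_{\Gamma,k}(z,w)$ in the closed form
\[
    \Psi_{\Gamma,k}(z,w) = \sum_{\gamma \in \Gamma} \frac{1}{j_{\gamma}^{k+2}(z)}\,\psi_k(\gamma z, w),
\]
obtained by swapping the summations over $p+q=k$ and $\gamma\in\Gamma$ in Definition~\ref{defn: psiGamma} and \eqref{eqndefn:PsiGammak}, and applying the generating-series identity \eqref{eq:psiklowercasedef}. Normal convergence of this rearrangement on compacta of $(z,w)$ with $z\notin \Gamma w \cup \Gamma\overline{w}$ follows from Proposition~\ref{prop:Psi}.

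Next I would compute $d_w \psi_k(z,w)$ directly from the explicit rational expression \eqref{psikrational} of Lemma~\ref{lem:identity-psi}. Using
\[
    \partial_w\!\left(\frac{U-wV}{z-w}\right) = \frac{U-zV}{(z-w)^2},
\]
the prefactor $1/(U-zV)$ is cancelled upon differentiation, yielding the clean formula
\[
    d_w \psi_k(z,w) = (k+1)\frac{(U-wV)^{k}}{(z-w)^{k+2}}\,dw \;-\; (k+1)\frac{(U-\overline{w}V)^{k}}{(z-\overline{w})^{k+2}}\,d\overline{w}.
\]

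Finally, I would differentiate the sum over $\Gamma$ term by term; this is justified because applying $d_w$ to each term produces a series of the form $\sum_\gamma j_\gamma^{-(k+2)}(z)(\gamma z - w)^{-(k+2)}$ (and its complex conjugate counterpart), both of which converge normally on compacta by Lemma~\ref{lem:Poincare} applied to the rational function $R(z) = (z-w)^{-(k+2)}$, which vanishes to order $k+2 > 2$ at infinity. Combining this with the definition \eqref{eq:Phi} of $\Phi_{\Gamma, k+2}$ immediately gives
\[
    d_w \Psi_{\Gamma,k}(z,w) = \Phi_{\Gamma,k+2}(z,w)(U-wV)^{k}\,dw - \Phi_{\Gamma,k+2}(z,\overline{w})(U-\overline{w}V)^{k}\,d\overline{w},
\]
as desired. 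The symmetry $\overline{\Phi_{\Gamma,k+2}(\overline{z},w)} = \Phi_{\Gamma,k+2}(z,\overline{w})$ quoted from Remark~\ref{rem:symmetry-Phi} then matches this with the harmonic-lift form \eqref{eq:harmonic}.

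There is no real obstacle here: the entire argument hinges on the fortuitous cancellation of the factor $U-zV$ in $d_w\psi_k$, which is a direct consequence of the closed form \eqref{psikrational}. The only care required is in verifying the normal convergence needed to interchange $d_w$ with the sum; this is purely a matter of invoking Lemma~\ref{lem:Poincare} with the appropriate rational kernel.
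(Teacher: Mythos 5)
Your proof is correct and takes essentially the same approach as the paper's: both compute $d_w\psi_k$ from the closed form \eqref{psikrational}, observing the cancellation of $U-zV$, and then average over $\Gamma$, invoking normal convergence to justify differentiating under the sum. The only (immaterial) difference is that you express $\Psi_{\Gamma,k}$ via the $\Gamma$-average over $z$ (i.e.\ $\sum_\gamma j_\gamma^{-(k+2)}(z)\psi_k(\gamma z,w)$), while the paper uses the equivalent form $\sum_\gamma \psi_k(z,\gamma w)\big|_{(1,\gamma)}$ from \eqref{eq:Psi-average-w}; since the latter form makes the $(U,V)$-equivariance of $\Phi_{\Gamma,k+2}(z,w)(U-wV)^k\,dw$ explicit in the averaging, the two formulations are interchangeable.
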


\begin{proof}
    By Lemma \ref{lem:identity-psi} and direct computation, we have
    \begin{equation}\label{eqn:dpsi}
        d_w \psi_k(z,w) = (k+1) \left(\frac{(U-wV)^k}{(z-w)^{k+2} }dw -  \frac{(U-\overline{w}V)^k}{(z-\overline{w})^{k+2} }d\overline{w}   \right) .
    \end{equation} 
    Note that by \eqref{eq:Psi-average-w} and  \eqref{eq:gamma-equivariance},
    \[
        \Psi_{\Gamma,k}(z,w) = \sum_{\gamma \in  \Gamma} \frac{\psi_k(\gamma z,w)}{j_{\gamma}^{p+q+2}(z)}= \sum_{\gamma \in \Gamma}  \psi_k( z , \gamma w) \Big|_{(1,\gamma)} ,
    \]
    where $(1 , \gamma) \in \Gamma \times \Gamma$ acts only on  $U,V$. A similar formula holds for $\Phi_{\Gamma,k+2}(z,w)$ and $\Phi_{\Gamma,k+2}(z,\overline{w})$. The statement then follows by averaging both sides of Equation \eqref{eqn:dpsi} by the action of $\Gamma$ on $w$ and $(U,V)$, and applying the normal convergence of the series defining $\Psi_{\Gamma}^{p,q}(z,w)$ (Proposition \ref{prop:Psi}).
\end{proof}

\begin{cor} \label{cor: PhiBettiConjugates}
    As meromorphic modular forms in $w$, the functions  $\Phi_{\Gamma,k}(z,w)$ and $\Phi_{\Gamma,k}(\overline{z},w)$  for $k>2$ are Betti-conjugate. By symmetry,  the functions $\Phi_{\Gamma,k}(z,w)$ and $\Phi_{\Gamma,k}(z,\overline{w})$ are Betti-conjugate as meromorphic  modular forms in $z$. \hfill $\square$
\end{cor}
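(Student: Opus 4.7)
The plan is to read the Betti-conjugacy off directly from the harmonic lift identity established in Proposition \ref{prop: PsiliftsPhi}. Since $k>2$, I would apply that proposition with index $k-2 \geq 1$ and restrict the variable $w$ to $\mathfrak{U} = \mathfrak{H} \setminus \Gamma z$, on which both $\Phi_{\Gamma,k}(z,w)$ (meromorphic with polar locus $\Gamma z$) and $\Phi_{\Gamma,k}(\overline{z},w)$ (a cusp form in $w$, hence globally holomorphic on $\mathfrak{H}$) are holomorphic. The identity of Proposition \ref{prop: PsiliftsPhi} then reads
$$d_w \Psi_{\Gamma,k-2}(z,w) = \Phi_{\Gamma,k}(z,w)(U-wV)^{k-2}\, dw - \Phi_{\Gamma,k}(z,\overline{w})(U-\overline{w}V)^{k-2}\, d\overline{w}.$$

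The vector-valued function $w \mapsto \Psi_{\Gamma,k-2}(z,w)$ is $\Gamma$-equivariant by the modularity statement \eqref{eq:Psi-modularity}, and the antiholomorphic symmetry $\overline{\Phi_{\Gamma,k}(z,w)} = \Phi_{\Gamma,k}(\overline{z},\overline{w})$ of Remark \ref{rem:symmetry-Phi}, with $w$ and $\overline{w}$ interchanged, gives $\Phi_{\Gamma,k}(z,\overline{w}) = \overline{\Phi_{\Gamma,k}(\overline{z},w)}$. Comparing the above identity with the $w$-variable form of Definition \ref{defn:harmoniclift} (\emph{cf.}\ Remark \ref{rem:variable-w}) exhibits $\Psi_{\Gamma,k-2}(z,\cdot)$ as a harmonic lift of the pair $(\Phi_{\Gamma,k}(z,\cdot),\Phi_{\Gamma,k}(\overline{z},\cdot))$, whence the first assertion.

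For the second assertion I would invoke the swap symmetry $\Phi_{\Gamma,k}(w,z) = (-1)^k \Phi_{\Gamma,k}(z,w)$ of Remark \ref{rem:symmetry-Phi}. Applying the first assertion with the two arguments interchanged shows that, as functions of $z$, the forms $\Phi_{\Gamma,k}(w,z)$ and $\Phi_{\Gamma,k}(\overline{w},z)$ are Betti-conjugate; rewriting each of them via the swap symmetry (which multiplies both by the common scalar $(-1)^k$ and therefore preserves the Betti-conjugate relation) yields the desired conclusion.

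The argument is essentially a dictionary translation between Proposition \ref{prop: PsiliftsPhi} and Definition \ref{defn:harmoniclift}, using the symmetries already compiled in Remark \ref{rem:symmetry-Phi}, so I do not anticipate any substantive obstacle. The only minor point of care is to excise the polar locus, so that Definition \ref{defn:harmoniclift}, which is formulated for holomorphic functions on a connected $\Gamma$-invariant open subset of $\mathfrak{H}$, applies verbatim.
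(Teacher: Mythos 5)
Your proof is correct and follows essentially the same route as the paper: the corollary is stated there as an immediate consequence of Proposition \ref{prop: PsiliftsPhi} (read, as you do, as the harmonic-lift equation for the pair $(\Phi_{\Gamma,k}(z,\cdot),\Phi_{\Gamma,k}(\overline{z},\cdot))$ via Remark \ref{rem:symmetry-Phi}), with the second assertion obtained ``by symmetry'', i.e.\ exactly your use of $\Phi_{\Gamma,k}(w,z)=(-1)^k\Phi_{\Gamma,k}(z,w)$. Your extra care about the domain $\mathfrak{H}\setminus\Gamma z$ and the realness of the factor $(-1)^k$ is fine and matches the paper's intent.
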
 

\begin{rem}
    Proposition \ref{prop: PsiliftsPhi}  implies that $\Psi_{\Gamma,k}(z,w) $ may be written as an `equivariant single-valued integral' of  $\Phi_{\Gamma,k+2}(z,\tau)(U-\tau V)^kd\tau$, which is a modular version of  a single-valued integral \cite{brown-dupont}. Concretely, it is a combination of an Eichler integral of $\Phi_{\Gamma,k+2}(z,\tau)(U-\tau V)^kd\tau$  and its complex conjugate, with an appropriate constant of integration to ensure modularity.  Even though this point of view is  fruitful \cite{CNHMF3}, it will not be pursued  in this paper. 
\end{rem}

\subsection{Residues}

Fix $w \in \mathfrak{H}^{\pm}$ and regard each $\Psi^{p,q}_{\Gamma}(z,w)$, where $p+q=k>0$ and $p,q\ge 0$, as a function of $z$ defined on the open subset $\mathfrak{U} = \{z \in \mathfrak{H} : z \not\in \Gamma w \cup \Gamma \overline{w}\}$. In order to compute the residues of $\Psi^{p,q}_{\Gamma}(z,w)$, it is convenient to first consider the one-form with values in $(V^{\B}_{k} \boxtimes V_{k}^{\B})\otimes \mathbb{C}$ 
(external tensor product on $\mathfrak{H}^{\pm} \times \mathfrak{H}^{\pm}$):
\begin{equation}\label{eq:psi-differential}
  \Psi_{\Gamma,k}(z,w)(X-zY)^kdz = \sum_{\substack{p+q=k \\ p,q\ge 0 }} \Psi_{\Gamma}^{p,q}(z,w)  (U-wV)^p (U- \overline{w} V)^q  (X-zY)^k \, dz \ .
\end{equation}
Note that \eqref{eq:psi-differential} is equivariant with respect to both $z$ and $w$, and its residue along $z= \gamma w$ will give the same element of $\left(V^{\B}_{k} \otimes V_{k}^{\B}\right)^{\Gamma_w}\otimes \mathbb{C}$ for any $\gamma \in \Gamma$. Here, $\Gamma_w=\{\gamma \in \Gamma: \gamma w= w\}$ denotes the stabiliser of $w$ in $\Gamma$ and acts via the diagonal embedding  $\Gamma_w \hookrightarrow \Gamma_w \times \Gamma_w$ on $V_k^{\B}\otimes V_k^{\B}$. 

\begin{thm} \label{thm:residue-Psi}
    We have:
        \[
            \Res_{z=w} \Psi_{\Gamma,k}(z,w)(X-zY)^kdz   = \sum_{\gamma \in \Gamma_w}  (XV-YU)^k\Big|_{(\gamma,1)} = \sum_{\gamma \in \Gamma_w}  (XV-YU)^k\Big|_{(1,  \gamma)},
        \]
    where $(\gamma,1)$ acts via $|_{\gamma}$ on $(X,Y)$ and acts trivially on $(U,V)$, and  vice-versa for $(1,\gamma)$. In particular, the above residue is rational with respect to the Betti structure, \emph{i.e.}, it lies in  $(V^{\B}_{k} \otimes V_{k}^{\B})^{\Gamma_w}$.
\end{thm}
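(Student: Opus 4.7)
The plan is to reduce the computation to a single residue at $\gamma = 1$ via equivariance, and then extract the relevant coefficient using the closed form of $\psi_k$ from Lemma \ref{lem:identity-psi}. First, as a meromorphic function of $z$, the summand $\psi_k(\gamma z, w)/j_\gamma^{k+2}(z)$ in the defining series of $\Psi_{\Gamma,k}(z,w)$ has poles only where $\gamma z \in \{w, \overline{w}\}$. Since elements of $\SL_2(\RR)$ preserve each of $\H$ and $\H^{-}$, the equation $\gamma w = \overline{w}$ has no solution, so only the terms indexed by $\gamma \in \Gamma_w$ contribute to the residue at $z = w$.

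Next, combining \eqref{eq:gamma-equivariance} with the standard transformation $d(\gamma z) = j_\gamma(z)^{-2}\,dz$, each such summand rewrites as
$$\frac{\psi_k(\gamma z, w)}{j_\gamma^{k+2}(z)}(X - zY)^k\, dz \;=\; \psi_k(\gamma z, w)\,(X - \gamma z\, Y)^k\big|_{(\gamma, 1)}\, d(\gamma z).$$
Since the residue of a $1$-form is invariant under change of variable, the residue at $z = w$ of the left-hand side equals $|_{(\gamma, 1)}$ applied to the single quantity $R_k \defeq \Res_{z'=w}\,\psi_k(z', w)(X - z'Y)^k\, dz'$.

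To compute $R_k$, I would use the closed expression of Lemma \ref{lem:identity-psi}. Setting $A = X - wY$, $B = U - wV$ and $\xi = z' - w$, the second term of \eqref{psikrational} is regular at $\xi = 0$ and so contributes nothing, while the first term reduces $R_k$ to the coefficient of $\xi^k$ in
$$\frac{(A - \xi Y)^k B^{k+1}}{B - \xi V} \;=\; (A - \xi Y)^k\, B^k \sum_{j \ge 0}\left(\frac{\xi V}{B}\right)^j.$$
Expanding the two factors by the binomial theorem and collecting, the coefficient of $\xi^k$ is $\sum_{i=0}^k \binom{k}{i} A^{k-i}(-Y)^i B^{i} V^{k-i} = (AV - YB)^k$, and since $AV - YB = XV - YU$, I obtain $R_k = (XV - YU)^k$. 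Summing over $\gamma \in \Gamma_w$ produces the first form of the claimed identity.

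The equivalence of the two forms and the rationality claim both stem from the fact that $XV - YU$ is $\SL_2$-invariant under the diagonal action on $(X,Y)$ and $(U,V)$, so $(XV - YU)^k\big|_{(\gamma,1)} = (XV - YU)^k\big|_{(1,\gamma^{-1})}$; reindexing $\gamma \mapsto \gamma^{-1}$ on the group $\Gamma_w$ gives the second form, and the same invariance shows that the sum is stable under the diagonal action of $\Gamma_w$ and lies in the rational subspace $(V_k^{\B} \otimes V_k^{\B})^{\Gamma_w}$. The main technical hurdle is the bookkeeping in the equivariance step, i.e.\ verifying that the $|_\gamma$-action on $(X,Y)$ interacts correctly with the substitution $z' = \gamma z$ and the compensating factor $j_\gamma^{k+2}(z)$; once this is in place, the coefficient extraction in step three is a clean application of the binomial theorem.
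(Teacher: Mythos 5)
Your argument is correct, and it shares the paper's overall skeleton — restrict to the stabiliser $\Gamma_w$, reduce to a single residue of $\psi_k(z,w)(X-zY)^k\,dz$ via Lemma \ref{lem:identity-psi}, and conclude using the diagonal invariance of $XV-YU$ — but both intermediate steps are carried out along a genuinely different route. For the reduction to one term, the paper works with the alternative averaging formula \eqref{eq:Psi-average-w} (the sum over $\gamma$ acting on $w$) and absorbs the automorphy factors $j_\gamma^p(w)j_\gamma^q(\overline{w})$ by slashing with $(1,\gamma)$ on $(U,V)$; no change of variable in $z$ is needed and one lands directly on the second form of the identity. You instead start from the defining series \eqref{eq:Psi}, substitute $z\mapsto\gamma z$, and use invariance of residues of $1$-forms under the biholomorphism $z\mapsto \gamma z$ fixing $w$, slashing with $(\gamma,1)$ on $(X,Y)$; this lands directly on the first form, and the second is then recovered exactly as in the paper from $(\gamma,1)=(\gamma,\gamma)(1,\gamma^{-1})$ together with reindexing over $\Gamma_w$. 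For the residue itself, the paper exploits the splitting $\tfrac{(X-zY)(U-wV)}{(z-w)(U-zV)}=\tfrac{X-wY}{z-w}+\tfrac{XV-YU}{U-zV}$ and notes that only one binomial term has a nonzero residue, whereas you extract the coefficient of $\xi^k$ (with $\xi=z-w$) by expanding $(B-\xi V)^{-1}$ as a geometric series over the field $\QQ(X,Y,U,V,w,\overline{w})$, obtaining $(AV-YB)^k=(XV-YU)^k$; both computations are valid and of comparable length, yours being perhaps the more mechanical of the two. The one point where you are less explicit than the paper is the interchange of the residue with the infinite sum: after identifying the finitely many terms that are singular at $z=w$, you should invoke the normal convergence from Proposition \ref{prop:Psi} to see that the remaining sub-sum is holomorphic near $z=w$ and hence contributes no residue — a one-line addition, not a gap in the method.
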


\begin{proof}
    First of all, it follows from the normal convergence on compact subsets of the defining series \eqref{eq:Psi-average-w} for $\Psi_{\Gamma}^{p,q}(z,w)$ as a sum over $\Gamma$ that only the terms corresponding to $\gamma \in \Gamma_w$   contribute to the residue at $z=w$. It therefore suffices to compute:
        \[
            R\defeq\operatorname{Res}_{z=w}  \Bigg(\sum_{\gamma \in \Gamma_w} \sum_{\substack{p+q=k \\ p,q\ge 0}}
            \frac{\psi^{p,q}(z,w)}{j^p_{\gamma}(w) j^q_{\gamma}(\overline{w})} (U-wV)^p(U-\overline{w}V)^q (X-zY)^k dz\Bigg).   
        \]
    We may treat $X,Y,U,V$ as formal variables and work in the field $\QQ(X,Y,Z,W,z,\overline{z}, w, \overline{w})$. The denominators $j_{\gamma}^p(w), j_{\gamma}^q(\overline{w})$ can be absorbed, via \eqref{eq:gamma-equivariance}, by slashing on the right by $(1,\gamma)$, which yields
        \[
             R=\Res_{z=w}  \Bigg(\sum_{\gamma \in \Gamma_w}\psi_k(z,w)\Big|_{(1,\gamma)}(X-zY)^kdz \Bigg),
        \]
    where $\psi_k(z,w)$ is as in \eqref{eq:psiklowercasedef}.  By Lemma \ref{lem:identity-psi}, we obtain
        \[
            R=  \sum_{\gamma \in \Gamma_w} R_{0} \Big|_{(1, \gamma)},
        \]
    where
        \[
            R_{0}=  \mathrm{Res}_{z=w}   \left(   \left(\frac{U-wV}{z-w}\right)^{k+1} -   \left(\frac{U-\overline{w}V}{z-\overline{w}}\right)^{k+1} \right) \frac{(X-zY)^k}{U-zV}dz.
        \]
    The term $\left(\frac{U-\overline{w}V}{z-\overline{w}}\right)^{k+1}$ has no pole at $z=w$ and so
        \[
            R_{0}=   \mathrm{Res}_{z=w}  \left(\left(\frac{U-wV}{z-w}\right)^{k+1}  \frac{(X-zY)^k}{U-zV}  dz\right).
        \]
    We can write
        \[
            \left(\frac{U-wV}{z-w}\right)^{k+1}  \frac{(X-zY)^k}{U-zV} =  \frac{(U-wV)(U-zV)^{k-1}}{z-w}  \left(\frac{(X-zY)(U-wV)}{(z-w)(U-zV) } \right)^k
        \]
    Using the identity
        \[
            \frac{(X-zY)(U-wV)}{(z-w)(U-zV)} =   \frac{X-w Y}{z-w} + \frac{XV-YU}{U-zV}
        \]
    we deduce that
        \begin{align*}
            R_0 &=\operatorname{Res}_{z=w}   \Bigg(   \frac{(U-wV)(U-zV)^{k-1}}{z-w}  \left(\frac{X-w Y}{z-w} + \frac{XV-YU}{U-zV} \Bigg)^k dz\right)\\
            &=\operatorname{Res}_{z=w}   \Bigg(   \frac{(U-wV)(U-zV)^{k-1}}{z-w}  \left(\frac{XV-YU}{U-zV} \right)^k dz\Bigg)\\
            &=\frac{(U-wV)(U-wV)^{k-1}}{(U-wV)^k}(XV-YU)^k\\
            &=(XV-YU)^k,  
        \end{align*}
    which completes the proof of the first equality in the statement. The second equality follows from the fact that $XV- YU$ is invariant under the diagonal action $|_{(\gamma, \gamma)}$. 
\end{proof}

We can now deduce a formula for the residues of each component of \eqref{eq:psi-differential}.

\begin{cor}\label{cor:residue-Psi}
    For all $p,q\geq 0$ such that $p+q>0$, let
    \begin{equation}\label{kappdef}
        \kappa_{\Gamma,w}^{p,q} \defeq \sum_{\gamma \in \Gamma_{w}}\frac{1}{j^p_{\gamma}(w)j^q_{\gamma}(\overline{w})}.
    \end{equation}
    Then:
    \[
        \Res_{z=w} \left( \Psi_{\Gamma}^{p,q}(z,w)   (X-zY)^{p+q} dz  \right)
        = \frac{(-1)^p\kappa_{\Gamma,w}^{p,q}}{(w-\overline{w})^{p+q}}  \binom{p+q}{p} (X- \overline{w}Y)^p(X-  w Y)^q \ \in \  (V_k^{\B} \otimes V_k^{\B})\otimes \CC 
    \]
\end{cor}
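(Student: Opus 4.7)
The plan is to deduce the corollary from Theorem \ref{thm:residue-Psi} by expanding the polynomial $(XV-YU)^k$ in the basis $(U-wV)^p(U-\overline{w}V)^q$ for $p+q=k$ (viewed on the $(U,V)$-side) and then reading off coefficients.

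First, I would establish the key algebraic identity
\[
    XV-YU = \frac{1}{w-\overline{w}}\Bigl[(X-wY)(U-\overline{w}V) - (X-\overline{w}Y)(U-wV)\Bigr],
\]
which follows from a direct expansion of the right-hand side. Raising to the $k$-th power and applying the binomial theorem with $A=(X-wY)(U-\overline{w}V)$ and $B=(X-\overline{w}Y)(U-wV)$ yields
\[
    (XV-YU)^k = \frac{1}{(w-\overline{w})^k}\sum_{\substack{p+q=k\\p,q\geq 0}}(-1)^p\binom{k}{p}(X-wY)^q(X-\overline{w}Y)^p(U-wV)^p(U-\overline{w}V)^q.
\]

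Next, I would determine the action of $|_{(1,\gamma)}$ for $\gamma\in \Gamma_w$. Since $\gamma w=w$ (and, $\gamma$ being real, $\gamma\overline{w}=\overline{w}$), the analogue of \eqref{eq:gamma-equivariance} on the $(U,V)$-side gives $(U-wV)|_\gamma = j^{-1}_\gamma(w)(U-wV)$ and $(U-\overline{w}V)|_\gamma = j^{-1}_\gamma(\overline{w})(U-\overline{w}V)$; the coefficients in $X,Y$ are untouched. Summing the above binomial expansion over $\gamma\in\Gamma_w$ then produces the factor $\kappa_{\Gamma,w}^{p,q}$ of \eqref{kappdef} in front of each monomial $(U-wV)^p(U-\overline{w}V)^q$.

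Finally, comparing this result with the definition \eqref{eqndefn:PsiGammak} of $\Psi_{\Gamma,k}(z,w)$ and Theorem \ref{thm:residue-Psi}, which asserts that
\[
    \Res_{z=w}\Psi_{\Gamma,k}(z,w)(X-zY)^k dz = \sum_{\gamma\in\Gamma_w}(XV-YU)^k\big|_{(1,\gamma)},
\]
and equating coefficients of $(U-wV)^p(U-\overline{w}V)^q$ in $(V_k^{\B}\otimes V_k^{\B})\otimes\CC$, yields the stated formula (using $\binom{k}{p}=\binom{p+q}{p}$ and the fact that the $X,Y$ factors commute). The steps are essentially routine once the identity for $XV-YU$ is in hand, and I do not foresee any genuine obstacle; the only bookkeeping issue is tracking signs and the order of the factors $(X-wY)^q(X-\overline{w}Y)^p$ versus the statement's $(X-\overline{w}Y)^p(X-wY)^q$, which of course coincide.
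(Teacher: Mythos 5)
Your proof is correct and follows essentially the same route as the paper: the same algebraic identity expressing $XV-YU$ in terms of $(X-wY),(X-\overline{w}Y),(U-wV),(U-\overline{w}V)$, followed by the binomial theorem and reading off coefficients. The only inessential difference is that you apply the $(1,\gamma)$ form of Theorem~\ref{thm:residue-Psi}, which acts on $(U,V)$ and yields the factor $\kappa_{\Gamma,w}^{p,q}$ directly, while the paper invokes the $(\gamma,1)$ form (acting on $(X,Y)$) and presents the result as a sum over $\Gamma_w$ that evaluates to the same scalar; both forms are provided by the theorem.
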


\begin{proof}
    Let $k=p+q>0$. We first note that
        \[
            \Res_{z=w} (\Psi_{\Gamma,k}(z,w)(X-zY)^kdz) = \sum_{p+q=k} \Res_{z=w}\left(\Psi^{p,q}_{\Gamma}(z,w) (X-zY)^kdz \right)(U-wV)^p(U-\overline{w}V)^q.
        \]
    Now, write 
        \[
            XV-YU = \frac{(U-\overline{w}V)}{w-\overline{w}}(X-w  Y) - \frac{(U-wV)}{w-\overline{w}}(X-\overline{w} Y)
        \]
    and use the binomial expansion to read off the coefficient of $(U-w V )^p(U-\overline{w} V)^q$ in $(XV-YU)^k$. The result then follows from summing over $\Gamma_w$, applying  $(\gamma,1)$ and equation \eqref{eq:gamma-equivariance} to obtain 
        \[
            \Res_{z=w}\left(\Psi^{p,q}_{\Gamma}(z,w) (X-zY)^kdz \right) = \sum_{\gamma \in \Gamma_w} \frac{(-1)^p}{(w-\overline{w})^{k}}  \binom{p+q}{p}  (X- \overline{w}Y)^p   (X-  w Y)^q  \Big|_{(\gamma,1)} . \qedhere
        \]
\end{proof}

\begin{rem}\label{rem:kappa}
    The number $\kappa_{\Gamma,w}^{p,q}$ is always an integer, and is either 0 or $|\Gamma_w|$. This follows from the classical orthogonality relations between irreducible characters of a finite group and from the formula $\kappa_{\Gamma,w}^{p,q} = |\Gamma_w| \langle \chi_{\Gamma,w}^{p,q}, 1 \rangle$, where $\chi_{\Gamma,w}^{p,q}$ is the character of $\Gamma_w$ corresponding to the 1-dimensional representation $\mathbb{C} (X-wY)^p (X-\overline{w}Y)^q$, and $1$ denotes the trivial character. This can also be deduced from the following explicit formulae. For a non-elliptic point $w$, we have $\Gamma_w = \{ I\}$ or $\Gamma_w = \{\pm I\}$, so that
    \[
        \kappa_{\Gamma,w}^{p,q}  = 1 \qquad \text{or}\qquad \kappa_{\Gamma,w}^{p,q}= 1 + (-1)^{p+q}  \  \in  \  \{0,2\} \quad \hbox{respectively}\ . 
    \]
    For the elliptic points, it is sufficient to consider the cases $w = i$ and $w = \rho = e^{\frac{2\pi i}{3}}$, since $\kappa_{\Gamma,w'}^{p,q} = \kappa_{\Gamma, w}^{p,q}$ for any $w' \in \Gamma w$. In the case $w=i$, we can assume $\Gamma_i = \{\pm I, \pm S\}$, where $S^2=-I$ which gives
    \[
        \kappa_{\Gamma,i}^{p,q} = 1 + (-1)^{p+q} + i^{p-q} + i^{q-p} \qquad \in \qquad \{0, 4\}
    \]
    For $w=\rho$, we can assume $\Gamma_{\rho} = \{I,(ST)^2,(ST)^4\}$ or $\Gamma_{\rho} = \{\pm I,\pm ST, \pm (ST)^2\}$, yielding
    \[
        \kappa_{\Gamma,\rho}^{p,q} = 1 + \rho^{p-q} + \rho^{q-p} \  \in  \ \{0,3\} \qquad \text{or} \qquad \kappa_{\Gamma,\rho}^{p,q} = (1+(-1)^{p+q})(1 + \rho^{p-q} + \rho^{q-p})  \ \in \  \{0,6\} \ .
    \]
\end{rem}

\begin{ex}\label{ex:psi0k-sl2}
    Let $\Gamma = \SL_2(\mathbb{Z})$ and $k \in \{2,4,6,8,12\}$, so that there are no holomorphic cusp forms of weight $k+2$ for $\Gamma$. Then, for a fixed $w$ in the upper half-plane, 
        \[
            \Psi^{0,k}_{\Gamma}(z,w) = \frac{2}{(w-\overline{w})^k}\frac{j'(w)}{E_{k+2}(w)}\frac{E_{k+2}(z)}{j(z)-j(w)},
        \]
    where $E_{k+2}$ denotes the classical Eisenstein series of level 1 normalised with first Fourier coefficient equal to $1$. To prove the above identity, it suffices to note that, as functions of $z$, both sides are meromorphic modular forms of weight $k+2$ which vanish at the cusp at infinity (\emph{cf.} Proposition \ref{prop:Psi}) and have simple poles with the same residues along $\Gamma w$ (\emph{cf.} Corollary \ref{cor:residue-Psi}). For applications of these forms, see  \cite{Magnetic}. 
\end{ex}

\section{Higher Green's functions}

In \S\ref{sect: existenceHigherGreensMatrix}, we establish the existence of higher vector-valued Green's functions, which may also be regarded as $(k+1)\times (k+1)$ matrix-valued Green's functions (Definition \ref{defn:greens-fct-matrix}). There are two cases depending on the parity of $k$. When $k$ is even, the central element of the matrix is proportional to  the classical higher Green's  function \eqref{eq:classical-GF}, from which the other entries may be derived. This leads to  an alternative  approach to the existence of the Green's matrix which is spelled out starting from \S\ref{sect: Legendre2ndkind}. The case when $k$ is odd seems to be new since it lies completely outside the scope of the Gross-Zagier conjecture, but is no less interesting. Explicit formulae for the matrix-valued Green's functions for $k=1,2$ are given in Appendix \ref{sect: AppendixC}.

\subsection{Definition of higher vector-valued Green's functions.} \label{sect: existenceHigherGreensMatrix} 

The existence of higher vector-valued Green's functions on the upper half-plane may  be established using cohomological methods and our results on the existence of harmonic lifts. We choose a direct but  more computational approach for expediency. 

\begin{defn} \label{defn: MatrixGreenfunctionsGeneratingFunction} Let $k>0$.  
Consider the following  expression 
    \begin{equation}
        p_k(z,w)  =  -  \sum_{p=1}^k  \frac{1}{p}  \left(  \frac{(X-zY)(U-wV)}{z-w}\right)^p  (VX-UY)^{k-p}   \ . 
    \end{equation} 
    It is a homogeneous polynomial in $X,Y,U,V$ of degree $k$ with coefficients in    $\QQ[z,w,(z-w)^{-1}]$.  Let us set 
    \begin{equation} \label{Gpolygenfunction} 
        g_k(z,w) =   p_k(z,w) - p_k(z,\overline{w}) - p_k(\overline{z}, w) + p_k(\overline{z},\overline{w})   +  
        (VX - UY)^k\,  \log \left( \frac{ |z-w|^2}{| z-\overline{w}|^2}\right) \ . 
    \end{equation}
\end{defn}

It follows from its definition that $g_k(z,w)$ satisfies
\begin{equation} \label{GeneralGkzwsymmetriesandlimit} 
    \begin{split}
    \frac{\partial^2g_k}{\partial z \partial \overline{z}} (z,w) &  =  \frac{\partial^2g_k}{\partial w \partial \overline{w}}(z,w)=0 \\ 
        g_k(z,w)(X,Y,U,V) & =  (-1)^k g_k(w,z)(U,V,X,Y)   \\ 
        g_k(z,w) + g_k(\overline{z},w) &=  g_k(z,w) + g_k(z,\overline{w}) = 0   \\ 
         \lim_{z\rightarrow \infty} g_k(z,w) & = \lim_{w\rightarrow \infty} g_k(z,w) = 0 .
    \end{split}
\end{equation}    
Furthermore, it satisfies for any invertible $2\times 2$ real matrix $\gamma$,
\[
    g_k(\gamma z, \gamma w)\big|_{(\gamma, \gamma)}= (\det\gamma)^k g_k(z,w)\ ,    
\]
where $(\gamma,\gamma)$ acts via $\gamma$  on the right on $X,Y, U, V$ simultaneously, in the usual manner. 

The vector-valued function $g_k(z,w)$ may be uniquely written in the form
\begin{equation}\label{Gkzwcomponentspqrs} 
    g_k(z,w) = \sum_{\substack{p+q=r+s=k \\ p,q,r,s\geq 0}}   g^{p,q}_{r,s}(z,w)   (X-zY)^r (X-\overline{z}Y)^s (U-wV)^p (U- \overline{w} V)^q  \ ,
\end{equation}
where $g_{r,s}^{p,q}(z,w) $  are explicitly   determined upon making  the following substitution: 
\begin{multline*}
    VX-UY  =   \frac{(\overline{z}-\overline{w})}{ (z-\overline{z}) (w-\overline{w}) } (X- zY)(U- w V)  + \frac{  (w-\overline{z})}{   (z-\overline{z}) (w-\overline{w}) } (X-zY)   (U-\overline{w}V) + \\ 
    + \frac{ (\overline{w}-z)}{  (z-\overline{z}) (w-\overline{w}) }   (X-\overline{z}Y)(U-wV)     + \frac{(z-w)}{ (z-\overline{z}) (w-\overline{w})  }(X-\overline{z}Y)  (U-\overline{w}V) \ . 
\end{multline*}
See Appendix \ref{sect: AppendixC} for examples in the cases $k=1,2.$ 

\begin{lem}
    Let us denote by 
    \begin{equation} \label{notation: xzw} 
        x(z,w) = \frac{(z-\overline{z})(w-\overline{w})}{(z-w)(\overline{z}-\overline{w})}  =  -  \frac{4 \,\mathrm{Im}(z) \mathrm{Im}(w)  }{|z-w|^2}    \ .   
    \end{equation}
    Then, for all integers $k>0$, we have the formula
    \begin{equation} \label{gkoformulaaslogarithm}
        g^{k,0}_{k,0}(z,w) =  \frac{1}{(z-w)^k} \,  \mathcal{L}_k(x(z,w)) \ ,
    \end{equation}
    where 
    \begin{equation} \label{truncatedlogarithm} \mathcal{L}_k(x) =  -    \sum_{p=1}^k  \frac{1}{p} \,  x^{p-k}   -   x^{-k}    \log ( 1-x  ) \qquad \Bigg(    =  \sum_{m\geq 1} \frac{1}{k+m} \, x^m\ \hbox{ for small } x    \Bigg),
    \end{equation}
    that is, $x^k\mathcal{L}_k(x)$ is the $k$th `Taylor remainder' at $x=0$ of $\operatorname{Li}_1(x) = -\log(1-x)$.
\end{lem}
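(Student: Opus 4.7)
The plan is to compute $g^{k,0}_{k,0}(z,w)$ directly by extracting the coefficient of $(X-zY)^k(U-wV)^k$ from the five summands defining $g_k(z,w)$. Writing the substitution for $VX-UY$ compactly as
\[
  VX-UY = A(X-zY)(U-wV) + B(X-zY)(U-\overline{w}V) + C(X-\overline{z}Y)(U-wV) + D(X-\overline{z}Y)(U-\overline{w}V),
\]
with $A=(\overline{z}-\overline{w})/[(z-\overline{z})(w-\overline{w})]$ and analogous formulas for $B,C,D$, I then apply the multinomial theorem to $(VX-UY)^{k-p}$ inside each $p_k(\cdot,\cdot)$.

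The key qualitative observation is that $p_k(z,\overline{w})$, $p_k(\overline{z},w)$, and $p_k(\overline{z},\overline{w})$ each contribute zero to $g^{k,0}_{k,0}$. Indeed, each of these carries a prefactor containing $(U-\overline{w}V)^p$ or $(X-\overline{z}Y)^p$ with $p\ge 1$, and since the multinomial expansion produces only non-negative powers of the four basis monomials, the resulting terms must retain a strictly positive exponent of $(U-\overline{w}V)$ or $(X-\overline{z}Y)$, so they cannot contribute to the pure monomial $(X-zY)^k(U-wV)^k$. This leaves only $p_k(z,w)$, where the only multinomial pattern producing $(X-zY)^{k-p}(U-wV)^{k-p}$ out of $(VX-UY)^{k-p}$ is the pure $A^{k-p}$-term, together with the logarithmic term, where $(VX-UY)^k$ contributes $A^k$. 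Summing gives
\[
  g^{k,0}_{k,0}(z,w) \;=\; -\sum_{p=1}^{k}\frac{A^{k-p}}{p\,(z-w)^p} \;+\; A^k\log\!\left(\frac{|z-w|^2}{|z-\overline{w}|^2}\right).
\]

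To match the stated formula, I rewrite everything in terms of $x=x(z,w)$. The identities $(z-w)(\overline{z}-\overline{w})=|z-w|^2$ and $(z-w)(\overline{z}-\overline{w})-(z-\overline{z})(w-\overline{w})=(z-\overline{w})(\overline{z}-w)=|z-\overline{w}|^2$ yield
\[
  A \;=\; \frac{1}{x\,(z-w)}, \qquad 1-x \;=\; \frac{|z-\overline{w}|^2}{|z-w|^2}.
\]
Substituting, $A^{k-p}/(z-w)^p = x^{p-k}/(z-w)^k$ and $A^k\log(|z-w|^2/|z-\overline{w}|^2)=-x^{-k}\log(1-x)/(z-w)^k$, so
\[
  g^{k,0}_{k,0}(z,w) \;=\; \frac{1}{(z-w)^k}\left(-\sum_{p=1}^{k}\frac{x^{p-k}}{p} - x^{-k}\log(1-x)\right) \;=\; \frac{1}{(z-w)^k}\,\mathcal{L}_k\bigl(x(z,w)\bigr).
\]
The Taylor-remainder parenthetical follows by comparing $x^k\mathcal{L}_k(x) = -\sum_{p=1}^{k}x^p/p - \log(1-x)$ with the usual power series of $-\log(1-x)$.

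The argument is entirely mechanical; there is no substantive obstacle beyond careful book-keeping of the multinomial expansions. The only piece of insight is the vanishing of the three `mixed' $p_k$-contributions, which collapses a potentially tangled computation to a single geometric-series-plus-logarithm identity.
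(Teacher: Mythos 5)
Your proof is correct and follows essentially the same route as the paper: both isolate the coefficient of $(X-zY)^k(U-wV)^k$, note that the three conjugated $p_k$-terms cannot contribute to it, and then reduce the surviving geometric sum plus logarithm to $\mathcal{L}_k(x)$ using $1-x(z,w) = |z-\overline{w}|^2/|z-w|^2$. The only cosmetic difference is that the paper extracts this coefficient by evaluating $g_k$ at $(X,Y,U,V)=(\overline{z},1,\overline{w},1)$, which annihilates every monomial containing $(X-\overline{z}Y)$ or $(U-\overline{w}V)$ in one stroke, whereas you perform the same extraction via a multinomial expansion of $(VX-UY)^{k-p}$ in the four-monomial basis.
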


\begin{proof}
    It follows from \eqref{Gkzwcomponentspqrs} that 
    the coefficient $g^{k,0}_{k,0}(z,w)$ satisfies
    \[
        g^{k,0}_{k,0}(z,w) =  \frac{1}{(\overline{z}-z)^k (\overline{w} - w)^k} \,  g_k(z,w) ( \overline{z} ,1 ,\overline{w},1)\ .
    \]
    Since  $p_k(z,w)$ has $(X-zY)(U-wV)$ as a factor, setting $(X,Y,U,V)= ( \overline{z} ,1 ,\overline{w},1)$ annihilates all terms in the right-hand side of \eqref{Gpolygenfunction} except the first ($p_k(z,w)$) and  the last (logarithmic term), giving:
    \[
        (\overline{z}-z)^k (\overline{w}-w)^k g^{k,0}_{k,0}(z,w) = - \sum_{p=1}^k  \frac{1}{p}  \left(  \frac{(\overline{z}-z)(\overline{w}-w)}{z-w}\right)^p  (\overline{z}- \overline{w})^{k-p}   +  (\overline{z}-\overline{w})^k\,  \log \left( \frac{ |z-w|^2}{| z-\overline{w}|^2}\right)
    \]
    and hence
    \[ 
        (z-w)^k \,  g^{k,0}_{k,0}(z,w) =    -   \sum_{p=1}^k  \frac{1}{p}  \left(\frac{(z-w)(\overline{z}-\overline{w})}{(\overline{z}-z)(\overline{w}-w)}  \right)^{k-p}    +     \left(\frac{(z-w)(\overline{z}-\overline{w})}{(\overline{z}-z)(\overline{w}-w)}  \right)^k \log \left( \frac{ |z-w|^2}{| z-\overline{w}|^2}\right) \, ,
    \]
    from which the stated formula follows on observing that $(1-x(z,w))^{-1} =  \frac{ (z-w)(\overline{z}-\overline{w})}{(z- \overline{w})(\overline{z}- w)   } .$
\end{proof}

\begin{prop} \label{prop: existsGm}
    The function $g_k(z,w)$ satisfies the following differential equations:
     \begin{equation} \label{DiffGzk}
        \begin{split}
        d_z  g_k(z,w)  &=      \psi_k(z,w)(U,V) (X-zY)^k dz -    \psi_k(\overline{z}, w ) (U,V)  (X- \overline{z} Y)^k d\overline{z} \\ 
        d_w  g_k(z,w) & =     -   \psi_k(w,z)(X,Y) (U-wV)^k dw  +   \psi_k(\overline{w}, z ) (X,Y) (U- \overline{w} V)^k d\overline{w}  \ . 
        \end{split}
    \end{equation}
    In particular, we have 
    \[ 
        \frac{\partial^2g_k}{\partial z \partial w}   (z,w) =  (k+1) \frac{(X-zY)^k (U-wV)^k}{(z-w)^{k+2}} \, , \qquad  \frac{\partial^2g_k}{\partial \overline{z} \partial w}   (z,w) =   - (k+1) \frac{(X-\overline{z}Y)^k (U-wV)^k}{(\overline{z}-w)^{k+2}} \, ,
    \]
    and similarly upon replacing $w$ with its complex conjugate (and multiplying the right-hand side by $-1$). 
\end{prop}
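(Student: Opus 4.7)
The proof rests on writing $p_k(z,w)$ as the $k$-th truncation of a logarithmic power series, together with the algebraic identity
\[
    (VX-UY)(z-w) - (X-zY)(U-wV) = -(X-wY)(U-zV)
\]
in $\mathbb{Q}(X,Y,U,V,z,w)$, which is verified by direct expansion. Setting $\alpha \defeq (X-zY)(U-wV)/(z-w)$ and $\beta \defeq VX-UY$, one has $p_k(z,w) = -\sum_{p=1}^{k}\alpha^p\beta^{k-p}/p$. I would introduce the completion
\[
    q_k(z,w) \defeq \sum_{p\ge 1}\frac{\alpha^p\beta^{k-p}}{p} = -\beta^k\log(1-\alpha/\beta)
\]
(convergent for $|\alpha/\beta|<1$, or treated formally), and the remainder $r_k \defeq \sum_{p>k}\alpha^p\beta^{k-p}/p$, so that $p_k = -q_k + r_k$. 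The above identity expresses $\beta-\alpha = -(X-wY)(U-zV)/(z-w)$, which makes $\partial_z q_k$ explicit; summing a geometric series one obtains closed forms
\[
    \partial_z q_k = \frac{\beta^k(U-wV)}{(z-w)(U-zV)}\qquad \text{and}\qquad \partial_z r_k = \frac{(X-zY)^k(U-wV)^{k+1}}{(z-w)^{k+1}(U-zV)}.
\]

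To compute $d_z g_k$, I would read off the $dz$- and $d\overline{z}$-coefficients separately. For $dz$, only $p_k(z,w)$, $p_k(z,\overline w)$ and the logarithm contribute. A direct check gives
\[
    d_z\log\frac{|z-w|^2}{|z-\overline w|^2}\bigg|_{dz} = \frac{w-\overline w}{(z-w)(z-\overline w)},
\]
and taking the difference $\partial_z q_k(z,w) - \partial_z q_k(z,\overline w)$ makes the $1/(U-zV)$ pole collapse, yielding precisely $\beta^k(w-\overline w)/((z-w)(z-\overline w))$. Hence the $q_k$-contributions exactly cancel the logarithmic contribution, and
\[
    d_z g_k\big|_{dz} = \partial_z r_k(z,w) - \partial_z r_k(z,\overline w) = \frac{(X-zY)^k}{U-zV}\left[\left(\frac{U-wV}{z-w}\right)^{k+1} - \left(\frac{U-\overline wV}{z-\overline w}\right)^{k+1}\right],
\]
which by Lemma \ref{lem:identity-psi} is $\psi_k(z,w)(U,V)(X-zY)^k$. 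The $d\overline z$-coefficient is obtained by the same computation with the substitution $z\mapsto \overline z$, noting that $p_k(\overline z,\cdot)$ contributes via $\partial_{\overline z}$ and that the sign of the logarithmic term is reversed.

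The formula for $d_w g_k$ is obtained from that of $d_z g_k$ by invoking the symmetry $g_k(z,w)(X,Y,U,V) = (-1)^k g_k(w,z)(U,V,X,Y)$ recorded in \eqref{GeneralGkzwsymmetriesandlimit}, with the relabelling $(z,w) \leftrightarrow (w,z)$ and $(X,Y) \leftrightarrow (U,V)$. Finally, the second derivatives follow directly: a short computation from Lemma \ref{lem:identity-psi} gives
\[
    \partial_w \psi_k(z,w)(U,V) = \frac{(k+1)(U-wV)^k}{(z-w)^{k+2}},
\]
so differentiating the $dz$- and $d\overline z$-coefficients of $d_z g_k$ in $w$ yields the stated formulas for $\partial^2 g_k/\partial z \partial w$ and $\partial^2 g_k/\partial \overline z \partial w$.

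\textbf{Main obstacle.} The principal subtlety is that the closed forms for $\partial_z q_k$ and $\partial_z r_k$ both carry a pole along $U = zV$, which is absent from $\psi_k(z,w)(U,V)(X-zY)^k$; the cancellation of this pole requires forming the correct combinations in $w$ and $\overline w$. Equivalently, one can avoid logarithms altogether and work purely in the ring of rational functions, verifying the identity by clearing denominators and matching coefficients of powers of $U,V$ — but the $q_k/r_k$ decomposition makes the role of the logarithmic term in the definition of $g_k$ transparent.
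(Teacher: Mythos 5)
Your proof is correct, and while it reaches the same intermediate identities as the paper's, the computation is organised in the opposite direction. The paper starts from $(X-zY)^k\psi_k(z,w)$, decomposes it via Lemma~\ref{lem:identity-psi} into $f_k^0 + (r_k - r_k|_{w\to\overline w})$, and then \emph{integrates}: it factors $f_k^0$ through the finite geometric sum $\sum_{p+q=k-1}A^pB^q$ and integrates term by term to produce $p_k$, while a separate logarithmic primitive $\ell_k$ is found for $r_k - r_k|_{w\to\overline w}$. You instead start from $p_k$ itself — which, as the $k$-th truncation of $-\beta^k\log(1-\alpha/\beta)$, is the natural starting point — split it as $p_k = -q_k + r_k^{\mathrm{tail}}$ with $q_k$ the full logarithm and $r_k^{\mathrm{tail}}$ the geometric-series tail (which resums to a rational closed form), and then \emph{differentiate}. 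The two are of course equivalent (your $\partial_z q_k$ is the paper's $r_k$, your $\partial_z r_k^{\mathrm{tail}}$ is the paper's $f_k$, so $\partial_z p_k = f_k^0$), but your route has the advantage of avoiding the ad hoc invention of the primitive $p_k$: it falls out of the definition. The cancellation between the $q_k$-contribution and the log term in $g_k$, which you verify by showing both $dz$-coefficients equal $\beta^k(w-\overline w)/((z-w)(z-\overline w))$, plays the same role as the paper's introduction of $\ell_k$. One minor point worth tightening: the ``convergent for $|\alpha/\beta|<1$, or treated formally'' aside should be made precise — everything lives in a field of formal Laurent series (or one argues on an open set and concludes by rationality of the final identity) — but this is standard and does not affect validity. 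Your handling of the $d\overline z$-coefficient by the substitution $z\mapsto\overline z$ with sign change, and the deduction of the $d_w$-equations from the symmetry $g_k(z,w)(X,Y,U,V) = (-1)^k g_k(w,z)(U,V,X,Y)$, match the paper's argument exactly, and the second-derivative formulas follow correctly from $\partial_w\psi_k(z,w)(U,V) = (k+1)(U-wV)^k/(z-w)^{k+2}$.
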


\begin{rem}
    By \eqref{eq:complex-conjugate-psi}, the first equation of \eqref{DiffGzk} may equivalently be written 
    \[
        d_z  g_k(z,w) = \psi_k(z,w)(U,V) (X-zY)^k dz +    \overline{\psi_k(z, w ) (U,V)  (X- z Y)^k dz}
    \]
    which shows, by analogy with the formula $d_z\log |z-w|^2 = \frac{dz}{z-w} + \overline{\frac{dz}{z-w}}$, that $g_k(z,w)$ is a `vector-valued version' of the single-valued logarithm function. 
\end{rem}

\begin{proof}
    We shall work formally with rational functions in  $X,Y,U,V$ whose coefficients are real analytic functions in $z,w$ although all expressions will, \emph{in fine}, be polynomials in $X,Y,U,V$. Define
    \[
        f_k(z,w) =  \frac{(X - zY)^k }{U-zV} \left(\frac{U-wV}{z-w} \right)^{k+1}   \, , \qquad r_k(z,w) = (VX-UY)^k \frac{U-wV}{(z-w) (U-zV)} \ . 
    \]
    Let us denote by $f_k^0(z,w) = f_k(z,w) - r_k(z,w)$. By equation \eqref{psikrational} we have 
    \[
        (X-zY)^k \, \psi_k(z,w) =  f_k(z,w) - f_k(z, \overline{w}) =   f^0_k(z,w) - f^0_k(z, \overline{w}) + \left( r_k(z,w) - r_k(z,\overline{w} ) \right) \ . 
    \]
    A formula for its primitive may be derived as follows. The final term may be simplified to 
     \[
        r_k(z,w) - r_k(z,\overline{w}) =  (VX - UY)^k \frac{(w-\overline{w})}{ (z-w)(z-\overline{w})}  \ ,  
    \]
    which has the following  multi-valued primitive with respect to the variable $z$: 
    \begin{equation} \label{primR}
        \ell_k(z,w)= (VX - UY)^k\,   \log \left( \frac{z-w}{z-\overline{w}}\right) \, . 
    \end{equation}
    To find a primitive of $f^0_k(z,w)$ with respect to $z$, we may write it in the form: 
    \begin{equation} \label{inproofF0sum} 
        f^0_k(z,w) =  \frac{(U-wV)(X-wY)}{(z-w)^2} \frac{(A^k -B^k )}{(A-B)} =  \frac{(U-wV)(X-wY)}{(z-w)^2} \left( \sum_{p+q=k-1} A^p B^q  \right) 
    \end{equation}
    where  
    \[
        A=  \frac{(X-zY)(U-wV)}{z-w} \,  ,  \qquad  B = VX-UY
    \]
    which satisfy $A-B= \frac{(X-wY)(U-zV)}{z-w}$. Each term in \eqref{inproofF0sum} may be written
    \[
        f^0_k(z,w) =   (X-wY)  (VX-UY)^{k}  \, \sum^{k-1}_{p=0}   \frac{(X-zY)^p}{(z-w)^{p+2}}   \frac{(U-wV)^{p+1}}{(VX-UY)^{p+1}}\ .
    \]
    By using the following identity:  
    \[ 
        \frac{d}{dz} \left(    \frac{1}{X-wY} \left(\frac{X-zY}{z-w} \right)^{p+1}  \right)  =    - (p+1) \,   \frac{(X-zY)^p}{(z-w)^{p+2}}      \ ,    
    \]
    we may write down the following  primitive of $f^0_k(z,w)$ with respect to $z$: 
    \[  
        P\! f^0_k (z,w) =  - (VX-UY)^{k}  \, \sum^{k-1}_{p=0}   \frac{1}{p+1} \left(\frac{(X-zY)(U-wV)}{(z-w)(VX-UY)} \right)^{p+1}  = p_k(z,w)    \ .
    \]
    Putting  the above  equations together, we conclude from \eqref{primR} that 
    \[ 
        \frac{d}{dz} \left(   p_k (z,w)  - p_k (z,\overline{w})   +      \ell_k(z,w)  \right)  =  (X-zY)^k \psi_k(z,w)\ .  
    \]
    Since the term in brackets on the left-hand side does not depend on $\overline{z}$, we may replace $z$ by $\overline{z}$ in the above expression to conclude an  analogous equation holds for $\overline{z}$. The first line of  \eqref{DiffGzk} follows from this (see \eqref{Gpolygenfunction}). The second follows by symmetry in $z$ and $w$ since $g_k(z,w)(X,Y,U,V) = (-1)^k  g_k(w,z)(U,V,X,Y).$

    Note that, although $\ell_k(z,w)$ is a multi-valued function, the difference $\ell_k(z,w) - \ell_k(\overline{z},w)$ is single-valued and in the end the function $g_k(z,w)$ is well-defined as a function of $z,w$ away from $z=w$, $z=\overline{w}$. 
\end{proof}

Now let $k>0$ and let $\Gamma$ satisfy the conditions of Definition \ref{defn: psiGamma}. The higher vector-valued Green's function may be obtained by averaging over the action of $\Gamma$. Recall that $V_{k}^{\B} \otimes V_{k}^{\B}$ is given by homogeneous polynomials of degree $k$ in $X,Y,U,V$. 

\begin{defn}
   For all  $(z,w) \in  \H^{\pm}\times \H^{\pm}$ with $z\not\in \Gamma w \cup \Gamma \overline{w}$, we define
    \begin{equation} \label{eq:G}
        \vec{G}_{\Gamma,k}(z,w)  =  \sum_{\gamma \in \Gamma}    g_k(\gamma z,w)\Big|_{(\gamma,1)} =  \sum_{\gamma \in \Gamma}    g_k( z, \gamma w)\Big|_{(1,\gamma)}
    \end{equation}
    where $(\gamma,1)$ acts on $(X,Y)$ only, and $(1, \gamma)$ acts only on $ (U, V)$. 
\end{defn} 

\begin{thm}
    The series \eqref{eq:G} converges normally on compact subsets of $\mathfrak{H}\times \mathfrak{H}$, and defines a real-analytic vector-valued function on the open subset $\mathfrak{U} =  \mathfrak{H} \times \mathfrak{H} \setminus \{(z,w) : z \not\in \Gamma w\}$: 
    \[
        \vec{G}_{\Gamma, k}(z,w):   \mathfrak{U}  \To (V_{k}^{\B} \otimes V_{k}^{\B})\otimes \mathbb{C},
    \]
    which is a harmonic lift of $\Psi_{\Gamma,k}(z,w)$ with respect to $z$ and of $\Psi_{\Gamma,k}(w,z)$ with respect to $w$. More precisely: 
    \begin{equation}
        \begin{split}
        d_z  \vec{G}_{\Gamma, k}(z,w) & =  \Psi_{\Gamma,k}(z,w)(U,V)(X-zY)^{k} dz - \overline{\Psi_{\Gamma,k}(z,\overline{w})}(U,V)(X-\overline{z}Y)^{k} d\overline{z}   \label{dzGfullversion} \\ 
        d_w  \vec{G}_{\Gamma, k}(z,w) &  =  \Psi_{\Gamma,k}(w,z)(X,Y)(U-wV)^{k} dw - \overline{\Psi_{\Gamma,k}(w,\overline{z})}(X,Y)(U-\overline{w}V)^{k} d\overline{w} \ .   
        \end{split}
    \end{equation} 
\end{thm}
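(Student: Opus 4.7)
The plan is to verify three things in turn: normal convergence of the series \eqref{eq:G} on compacta of $\mathfrak{U}$, the differential equation for $d_z \vec{G}_{\Gamma,k}$, and the $d_w$ equation by symmetry.

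For convergence, I would start from the explicit formula \eqref{gkoformulaaslogarithm} for the central coefficient, together with the asymptotic $\mathcal{L}_k(x) = O(x)$ as $x \to 0$ given by \eqref{truncatedlogarithm}.  Combined with $x(\gamma z,w) = -4\operatorname{Im}(\gamma z)\operatorname{Im}(w)/|\gamma z - w|^2$ and $\operatorname{Im}(\gamma z) = \operatorname{Im}(z)/|j_\gamma(z)|^2$, this yields
\[
    \bigl|j_\gamma^{-k}(z)\, g^{k,0}_{k,0}(\gamma z, w)\bigr| \;\le\; \frac{C\, \operatorname{Im}(z)\, \operatorname{Im}(w)}{|j_\gamma(z)|^{k+2}\, |\gamma z - w|^{k+2}}
\]
on any compact subset of $\mathfrak{U}$, for all but finitely many $\gamma$.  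The right-hand side is dominated by the Poincaré series computing $\Phi_{\Gamma,k+2}$, which converges absolutely by Lemma \ref{lem:Poincare} since $k+2 > 2$.  The remaining coefficients $g^{p,q}_{r,s}$ enjoy analogous estimates: these can either be read off directly from \eqref{Gpolygenfunction} — each rational summand of $p_k$ decays like $|z-w|^{-(k+1)}$ times bounded factors, and the logarithmic piece $(VX-UY)^k \log(|z-w|^2/|z-\overline{w}|^2)$ decays like $\operatorname{Im}(z)\operatorname{Im}(w)/|z-\overline{w}|^2$ because $|z-w|^2 - |z-\overline{w}|^2 = -4\operatorname{Im}(z)\operatorname{Im}(w)$ — or else deduced from $g^{k,0}_{k,0}$ via the complex-conjugation symmetries \eqref{GeneralGkzwsymmetriesandlimit}.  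Normal convergence then produces a real-analytic sum on $\mathfrak{U}$, because each $g_k(\gamma z, w)$ is real-analytic away from $\gamma z \in \{w, \overline{w}\}$, and on $\mathfrak{U}$ the point $z$ avoids both $\Gamma w$ (by assumption) and $\Gamma \overline{w}$ (automatically, since $\Gamma$ preserves the upper half-plane).

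For the $d_z$ equation, I would differentiate termwise, as permitted by the normal convergence.  Proposition \ref{prop: existsGm} applied to $g_k(\gamma z, w)$, together with the chain rule $d(\gamma z) = j_\gamma^{-2}(z)\, dz$ and the slash identities \eqref{eq:gamma-equivariance} for $(X-\gamma z Y)^k|_\gamma$ and $(X-\overline{\gamma z}\, Y)^k|_\gamma$, gives
\[
    d_z\bigl(g_k(\gamma z,w)\big|_{(\gamma,1)}\bigr) = \frac{\psi_k(\gamma z, w)(U,V)\,(X-zY)^k}{j_\gamma^{k+2}(z)}\, dz \;-\; \frac{\psi_k(\overline{\gamma z}, w)(U,V)\,(X-\overline{z}Y)^k}{j_\gamma^{k+2}(\overline{z})}\, d\overline{z}.
\]
Summing the first piece over $\gamma$ and invoking \eqref{eq:Psi} and \eqref{eqndefn:PsiGammak} produces $\Psi_{\Gamma,k}(z,w)(U,V)(X-zY)^k\, dz$ immediately.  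For the second piece, I would use the elementary identity $\overline{\psi^{p,q}(z,\overline{w})} = \psi^{p,q}(\overline{z},w)$, which follows from \eqref{eq:psi} using $\overline{j_\gamma(z)} = j_\gamma(\overline{z})$ and $\overline{\gamma z} = \gamma \overline{z}$ (both valid because $\gamma$ has real entries), to identify the resulting sum as $\overline{\Psi_{\Gamma,k}(z,\overline{w})}(U,V)(X-\overline{z}Y)^k\, d\overline{z}$.

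The $d_w$ equation then falls out of the symmetry $g_k(z,w)(X,Y,U,V) = (-1)^k g_k(w,z)(U,V,X,Y)$ recorded in \eqref{GeneralGkzwsymmetriesandlimit}: combined with the second representation of \eqref{eq:G} (averaging $\gamma$ acting on $w$), the calculation of $d_w \vec{G}_{\Gamma,k}(z,w)$ reduces verbatim to the one just performed, with the roles of $(z,X,Y)$ and $(w,U,V)$ interchanged.  The main obstacle, as usual for higher Green's functions, is the convergence step: one must produce uniform bounds on every component $g^{p,q}_{r,s}$ simultaneously on compacta of $\mathfrak{U}$, and carefully isolate the finitely many summands for which $\gamma z$ approaches the singular set $\Gamma w \cup \Gamma \overline{w}$.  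Once this technical estimate is secured, everything else is a mechanical application of Proposition \ref{prop: existsGm} and the defining formula \eqref{eq:Psi}.
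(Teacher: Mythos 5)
Your overall skeleton is the paper's: the estimate for the corner component via \eqref{gkoformulaaslogarithm} and $\mathcal{L}_k(x)=O(x)$, domination by the Poincar\'e series defining $\Phi_{\Gamma,k+2}$ (Lemma \ref{lem:Poincare}, $k+2>2$), and the derivation of the $d_z$-equation by averaging Proposition \ref{prop: existsGm} over $\Gamma$ with the identity $\overline{\psi^{p,q}(z,\overline w)}=\psi^{p,q}(\overline z,w)$, with the $d_w$-equation reduced to the symmetry $g_k(z,w)(X,Y,U,V)=(-1)^kg_k(w,z)(U,V,X,Y)$ — all of this matches the paper's proof.

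The gap is in your treatment of the remaining components $g^{p,q}_{r,s}$, and both routes you offer fail. (a) Termwise bounds on \eqref{Gpolygenfunction}: on a compact $K\subset\mathfrak{H}\times\mathfrak{H}$ the relevant regime is $\gamma$ outside a finite set, where $\operatorname{Im}(\gamma z)=\operatorname{Im}(z)/|j_\gamma(z)|^2\to 0$ while $|\gamma z-w|$ stays bounded below; decay ``like $|z-w|^{-(k+1)}$'' is therefore not the quantity that matters, and in this regime the individual summands of $p_k(\gamma z,w)\big|_{(\gamma,1)}$ do not decay at all beyond the $|j_\gamma(z)|^{-k}$ supplied by the slashed frame (slashing $(VX-UY)^{k-p}$ even reintroduces positive powers of $|j_\gamma(z)|$). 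The needed extra smallness, of order $\operatorname{Im}(\gamma z)\sim|j_\gamma(z)|^{-2}$, comes only from the cancellation in the alternating combination $p_k(z,w)-p_k(z,\overline w)-p_k(\overline z,w)+p_k(\overline z,\overline w)$ together with the logarithmic term — which is precisely what $\mathcal{L}_k(x)=O(x)$ encodes for the corner component — so bounding the pieces of \eqref{Gpolygenfunction} separately cannot reproduce the estimate dominated by the $\Phi_{\Gamma,k+2}$ series, and for $k\le 2$ it does not even yield convergence. (b) The fallback via \eqref{GeneralGkzwsymmetriesandlimit} is unavailable: complex conjugation only relates $g^{p,q}_{r,s}$ to $g^{q,p}_{s,r}$, not every component to $g^{k,0}_{k,0}$. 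The paper sidesteps both problems by proving normal convergence only for the single series $\sum_{\gamma}j_\gamma^{-k}(z)\,g^{k,0}_{k,0}(\gamma z,w)$ and then obtaining the other components, and all remaining claims, from the raising/lowering structure forced by the differential equations of Proposition \ref{prop: existsGm}. To salvage your direct approach you would need a uniform estimate on all components of $g_k(\gamma z,w)\big|_{(\gamma,1)}$ exploiting the four-term cancellation (e.g.\ a mean-value bound in $\operatorname{Im}(\gamma z)$); as written, this convergence step is a genuine gap.
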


\begin{proof}
    It suffices to prove the normal convergence of the component $\sum_{\gamma \in \Gamma}j^{-k}_{\gamma}(z)g^{k,0}_{k,0}(\gamma z, w)$ on compact subsets. The $\Gamma$-equivariance follows from the definition as a sum over $\Gamma$, and all of the other claims follow from the differential equations in Proposition \ref{prop: existsGm}, which also imply, in particular, that the other components $g^{p,q}_{r,s}$ of $g_k$ are determined by $g^{k,0}_{k,0}$.

    From Equation \eqref{gkoformulaaslogarithm}, we can write $g^{k,0}_{k,0}(z,w) = (z-w)^{-k}x(z,w)F(x(z,w))$, where $F$ is a real-analytic function of $x\in (-\infty,1]$, and $x(z,w)$ was defined  in \eqref{notation: xzw}. Let $K\subset \mathfrak{H} \times \mathfrak{H}$ be a compact subset. Since $\Gamma$ is discrete and acts properly on $\mathfrak{H}$, there is a finite subset $S \subset \Gamma$ such that $\gamma z \neq w$ for every $(z,w) \in K$ and every $\gamma \in \Gamma \setminus S$. This implies (for instance, by relating $x(z,w)$ to the hyperbolic distance between $z$ and $w$) that $x(\gamma z, w)$, and hence $F(x(\gamma z, w))$, is uniformly bounded for $(z,w) \in K$ and $\gamma \in \Gamma \setminus S$. Hence, we are left to prove the convergence of
    \[
        \sum_{\gamma \in \Gamma \setminus S} \sup_{(z,w) \in K}\frac{|x(\gamma z, w)|}{|j_{\gamma}^k(z)||\gamma z - w|^k} = \sum_{\gamma \in \Gamma \setminus S} \sup_{(z,w) \in K}\frac{4\Im(z)\Im(w)}{|j_{\gamma}^{k+2}(z)||\gamma z - w|^{k+2}},
    \]
    but this follows from the normal convergence on compact subsets of the Poincaré series defining $\Phi_{\Gamma,k+2}(z,w)$ (\emph{cf}. Equation \eqref{eq:Phi} and Proposition \ref{prop:Psi}). 
\end{proof}

\begin{rem}
    Consequently, the equivariant vector-valued functions $\vec{G}_{\Gamma,k}$ may be interpreted as two-fold equivariant Eichler integrals (with respect to $z$ and $w$) of the functions $\Phi_{\Gamma,k}(z,w)$.  It would be interesting to write them as `single-valued'   integrals \cite{brown-dupont} with respect to the measure given by $dz \wedge  d\overline{z} \wedge dw \wedge  d \overline{w}$.
\end{rem}

Let us denote the components of $\vec{G}_{\Gamma,k }(z,w)$ by 
\begin{equation} \label{Gpqrsdefn}
    \vec{G}_{\Gamma,k} (z,w)= \sum_{\substack{p+q=r+s=k \\ p,q,r,s\geq 0}}  {}^{\Gamma}\! G_{r,s}^{p,q} (z,w)  (X-zY)^r (X-\overline{z} Y)^s (U-wV)^p (U-\overline{w} V)^q \ . 
\end{equation}
It follows from the general discussion of Section \ref{sec: HarmonicLiftsGeneralities}  that the functions ${}^{\Gamma}\! G^{p,q}_{r,s}(z,w)$ are eigenfunctions of the hyperbolic Laplacian in both $z$ and $w$:
\[ 
    (\Delta_{r,s} + k)\,   {}^{\Gamma} \!G^{p,q}_{r,s}(z,w) =   (\Delta^{p,q} + k) \,  {}^{\Gamma}\!  G^{p,q}_{r,s}(z,w)= 0  \ , 
\]
are modular in $z$ with  weights $(r,s)$ and in $w$ with weights $(p,q)$, and satisfy the  following equations with respect to raising and lowering operators:
\[
    \partial_r \, {}^{\Gamma} \! G^{p,q}_{r,s}   = (r+1) \,  {}^{\Gamma} \! G^{p,q}_{r+1,s-1}  \ , \ \qquad   \overline{\partial}_s \,  {}^{\Gamma}\! G^{p,q}_{r,s}   = (s+1) \,  {}^{\Gamma}\! G^{p,q}_{r-1,s+1}  
\]
\[
    \ \partial^p \,    {}^{\Gamma} \! G^{p,q}_{r,s}   = (p+1) \,  {}^{\Gamma}\! G^{p+1,q-1}_{r,s}   \ , \  \qquad  \overline{\partial}^q \,    {}^{\Gamma}\! G^{p,q}_{r,s}   = (q+1) \,  {}^{\Gamma} \! G^{p-1,q+1}_{r,s}  \ . 
\]

\begin{cor} \label{cor: GisWHLofPsis}
    Let $k>0$. If we denote by   
    \[
        \vec{G}_{\Gamma,k}^{p,q}  (z,w) =  \sum_{\substack{r+s=k \\ r,s\ge 0}} {}^{\Gamma}\!G^{p,q}_{r,s}(z,w) (X-zY)^r (X- \overline{z}Y)^s  \ ,
    \]
    then the vector-valued functions $\vec{G}_{\Gamma,k}^{p,q}  (z,w) $ are harmonic lifts of the meromorphic modular forms $\Psi_{\Gamma}^{p,q}(z,w)$: 
    \begin{equation} \label{eqn: GisWHLofPsis}
        d_z   \vec{G}_{\Gamma,k}^{p,q}  (z,w)   =   \Psi_{\Gamma}^{p,q} (z,w) (X-zY)^k dz  +      \overline{\Psi_{\Gamma}^{q,p} (z,w)} (X-\overline{z}Y)^k d\overline{z}   .
    \end{equation}
    In particular, $-\Psi^{q,p}_{\Gamma}(z,w)$ is a Betti-conjugate of $\Psi^{p,q}_{\Gamma}(z,w)$.
\end{cor}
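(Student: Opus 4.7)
The plan is to extract the stated equation from the differential equation for $\vec{G}_{\Gamma,k}(z,w)$ given in the preceding theorem, by matching coefficients along the basis polynomials $(U-wV)^p(U-\overline{w}V)^q$ in $\mathbb{C}[U,V]$. By definition, $\vec{G}_{\Gamma,k}(z,w) = \sum_{p+q=k}\vec{G}_{\Gamma,k}^{p,q}(z,w)(U-wV)^p(U-\overline{w}V)^q$, and since these basis polynomials are independent of $z$ and $\overline{z}$, the operator $d_z$ commutes with this decomposition. The $dz$-coefficient of the right-hand side of the theorem is immediate: from the very definition \eqref{eqndefn:PsiGammak} of $\Psi_{\Gamma,k}$, the coefficient of $(U-wV)^p(U-\overline{w}V)^q$ in $\Psi_{\Gamma,k}(z,w)(U,V)(X-zY)^k\,dz$ is precisely $\Psi_{\Gamma}^{p,q}(z,w)(X-zY)^k\,dz$.

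The main step is handling the $d\overline{z}$-coefficient. First observe, directly from \eqref{eq:psi}, the symmetry $\psi^{p,q}(z,\overline{w}) = -\psi^{q,p}(z,w)$, which lifts via the defining Poincar\'e series \eqref{eq:Psi} to
\[
    \Psi_{\Gamma}^{p,q}(z,\overline{w}) = -\Psi_{\Gamma}^{q,p}(z,w)\ .
\]
Substituting $\overline{w}$ for $w$ in \eqref{eqndefn:PsiGammak} and applying this symmetry gives
\[
    \Psi_{\Gamma,k}(z,\overline{w}) = \sum_{p+q=k}\Psi_{\Gamma}^{p,q}(z,\overline{w})(U-\overline{w}V)^p(U-wV)^q = -\sum_{p+q=k}\Psi_{\Gamma}^{q,p}(z,w)(U-\overline{w}V)^p(U-wV)^q\ .
\]
Taking complex conjugates (with $U,V$ treated as real, so that $\overline{U-wV}=U-\overline{w}V$) and swapping the summation labels $p\leftrightarrow q$, we obtain
\[
    -\overline{\Psi_{\Gamma,k}(z,\overline{w})}(U,V) = \sum_{p+q=k}\overline{\Psi_{\Gamma}^{q,p}(z,w)}\,(U-wV)^p(U-\overline{w}V)^q\ .
\]

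Inserting these two decompositions into the theorem preceding the corollary and equating the coefficients of $(U-wV)^p(U-\overline{w}V)^q$ on both sides yields equation \eqref{eqn: GisWHLofPsis}. The final claim that $-\Psi_{\Gamma}^{q,p}(z,w)$ is a Betti-conjugate of $\Psi_{\Gamma}^{p,q}(z,w)$ is then a direct comparison with the standard form $dF = f(z)(X-zY)^k\,dz - \overline{g(z)}(X-\overline{z}Y)^k\,d\overline{z}$ of Definition \ref{defn:harmoniclift}: one reads off $f(z)=\Psi_{\Gamma}^{p,q}(z,w)$ and $g(z)=-\Psi_{\Gamma}^{q,p}(z,w)$ (the latter is indeed holomorphic in $z$ by Proposition \ref{prop:Psi}, and modular of weight $k+2$ by Lemma \ref{lem:equivariant-harmonic} applied to the $z$-equivariant function $\vec{G}_{\Gamma,k}^{p,q}$). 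There is no real obstacle here; the only point requiring care is correctly tracking the index swap $p\leftrightarrow q$ induced by the combined operations of replacing $w$ with $\overline{w}$ and taking complex conjugates.
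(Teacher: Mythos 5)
Your argument is correct and is essentially the paper's own proof: the paper likewise extracts the coefficient of $(U-wV)^p(U-\overline{w}V)^q$ from the first equation of \eqref{dzGfullversion} and uses the symmetry $\Psi_{\Gamma}^{p,q}(z,\overline{w}) = -\Psi_{\Gamma}^{q,p}(z,w)$; your write-up merely spells out the conjugation and index bookkeeping in more detail.
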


\begin{proof}
    Extract the coefficient of $(U-wV)^p (U-\overline{w} V)^q$ from the first equation of \eqref{dzGfullversion}, and use the fact that $\Psi_{\Gamma}^{p,q}(z,\overline{w}) = -\Psi_{\Gamma}^{q,p}(z, w)$. 
\end{proof}

This completes the existence and uniqueness of the higher generalised Green's functions, which, to our knowledge, are new in the case when $k$ is odd. When $k$ is even, it remains to relate the central elements to the `classical' higher Green's function. This will occupy the remaining paragraphs of this section. Alternatively, one might prove this using the properties which uniquely characterise  both functions.

\subsection{Legendre functions of the second kind}    \label{sect: Legendre2ndkind}

The functions $Q_{s}(t)$ defined  for $t>1$ and $s>-1$ by 
\begin{equation} \label{intro: Qst}
    Q_{s}(t) = \int_0^{\infty} \left( t + \sqrt{t^2-1} \cosh u\right)^{-s-1} du \ , 
\end{equation}
solve the second-order differential equation:
\begin{equation} \label{Legendrediffeq}
    (1-t^2)  Q''_{s}(t) - 2t Q'_{s}(t) +(s+1)sQ_{s}(t)=0. 
\end{equation}
We are interested in those functions for which $s=m\geq 0$ is an integer.

\begin{example}
    The first few cases are:
    \[
        Q_0 (t) =  \frac{1}{2} \log \left( \frac{t+1}{t-1} \right)  \ , \quad  
        Q_1(t)  =  \frac{t}{2} \log \left( \frac{t+1}{t-1} \right) -1   \ , \quad 
        Q_2(t)  =  \frac{3\,t^2-1}{4} \log \left( \frac{t+1}{t-1} \right)  - \frac{3t}{2}. \] 
\end{example}

In this situation,  the functions $Q_m(t)$ can also be defined via the  recurrence relation for all $m\geq 1$:
\begin{equation}  \label{Qrec}
    (m+1) Q_{m+1}(t) - (2m+1)\, t Q_m(t)  + m\,  Q_{m-1}(t)=0
\end{equation}
together with the initial values $Q_0, Q_1$ given above. In general one has $Q_m(t) \in \QQ[t] \oplus \QQ[t] Q_0$.

\begin{lem} \label{lem: Qdiff}
    For all $m\geq 0$ we have the identity
    \begin{equation} \label{Qdiffidentity}
        \frac{d^{m+1}}{dt^{m+1}}\,  Q_{m}(t) =   \frac{ 2^m \, m!} {(1- t^2)^{m+1}} \ .
    \end{equation} 
\end{lem}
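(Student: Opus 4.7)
The plan is to differentiate the Legendre equation enough times to reduce the statement to a first-order linear ODE for $Q_m^{(m+1)}$, solve that ODE up to a multiplicative constant, and then pin down the constant by a local analysis near $t=1$.

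First I would prove by induction on $k$ that if $u$ solves $(1-t^2)u'' - 2tu' + m(m+1)u = 0$, then for every integer $0 \le k \le m$,
\begin{equation*}
    (1-t^2)\, u^{(k+2)} - 2(k+1)\, t\, u^{(k+1)} + (m-k)(m+k+1)\, u^{(k)} = 0.
\end{equation*}
The inductive step is a direct differentiation, together with the identity $(m-k)(m+k+1) - 2(k+1) = (m-k-1)(m+k+2)$. Setting $k = m$ and $u = Q_m$, the last term vanishes and we obtain
\begin{equation*}
    (1-t^2)\, Q_m^{(m+2)}(t) = 2(m+1)\, t\, Q_m^{(m+1)}(t).
\end{equation*}
Writing $v = Q_m^{(m+1)}$, this is the separable equation $v'/v = 2(m+1)t/(1-t^2)$, whose solutions are
\begin{equation*}
    v(t) = C_m\, (1-t^2)^{-(m+1)}
\end{equation*}
for some constant $C_m$.

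To determine $C_m$, I would invoke the classical decomposition $Q_m(t) = \tfrac12 P_m(t)\log\tfrac{t+1}{t-1} - W_{m-1}(t)$, where $P_m$ is the Legendre polynomial (normalised by $P_m(1)=1$) and $W_{m-1}$ is a polynomial of degree $\le m-1$; this is immediate from the recurrence \eqref{Qrec} by induction. Since $W_{m-1}^{(m+1)}=0$, only the logarithmic part contributes to $Q_m^{(m+1)}$. A one-line Leibniz-rule computation, keeping only the most singular contribution at $t=1$ (namely the term where all $m+1$ derivatives fall on $\log(t-1)$), gives
\begin{equation*}
    Q_m^{(m+1)}(t) \sim \frac{(-1)^{m+1}\, m!}{2\,(t-1)^{m+1}} \qquad \text{as } t \to 1^+.
\end{equation*}
Since $(1-t^2)^{m+1} \sim (-2)^{m+1}(t-1)^{m+1}$ near $t=1$, matching with $C_m/(1-t^2)^{m+1}$ yields $C_m = 2^m\, m!$, completing the proof.

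The main (mild) obstacle is the determination of the constant $C_m$; once the decomposition of $Q_m$ into its logarithmic and polynomial parts is in hand, this reduces to a quick comparison of leading singularities. Everything else is bookkeeping.
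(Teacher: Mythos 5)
Your proof is correct, and it takes a genuinely different route from the paper's. The paper proves the identity by induction on $m$ via the three-term recurrence \eqref{Qrec}: it differentiates that recurrence $m+2$ times, uses the inductive hypothesis for $Q_m^{(m+1)}$ and $Q_{m-1}^{(m+1)}$, and verifies the algebraic simplification directly. You instead work from the Legendre ODE \eqref{Legendrediffeq}: differentiating it $m$ times kills the zeroth-order term, reducing matters to a first-order linear ODE for $v=Q_m^{(m+1)}$ whose solution is $C_m(1-t^2)^{-(m+1)}$, and then you determine $C_m$ by matching the leading singularity at $t=1$ using the classical decomposition $Q_m=\tfrac12 P_m\log\tfrac{t+1}{t-1}-W_{m-1}$. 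Both arguments are elementary; the paper's is purely algebraic and entirely self-contained given the recurrence, while yours is more conceptual (it makes transparent \emph{why} $(1-t^2)^{-(m+1)}$ appears: it is the integrating-factor form of the reduced ODE) and simultaneously explains the paper's subsequent remark that $P_m^{(m+1)}=0$, since the same first-order ODE has no nontrivial solution regular at $t=\pm1$. The only point you gloss over is that the decomposition of $Q_m$ with logarithmic coefficient $\tfrac12P_m$ is itself proved by the same recurrence induction the paper uses; so in the end the two proofs rest on the same ingredient, deployed at different stages. All your computations check out, including the identity $(m-k)(m+k+1)-2(k+1)=(m-k-1)(m+k+2)$ and the normalisation $C_m=2^m m!$ from $(1-t^2)^{m+1}\sim(-2)^{m+1}(t-1)^{m+1}$.
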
 

\begin{proof} 
    Since we were unable to find a reference in the literature, we give a proof. It may be  checked for $m=0,1$ directly. For $m\geq 1$, use \eqref{Qrec} to deduce that
    \[
        (m+1) \frac{d^{m+2}}{dt^{m+2}}\,  Q_{m+1}  =(2 m+1) \left(    t\, \frac{d^{m+2}}{dt^{m+2}}\,  Q_{m}  + (m+2) \frac{d^{m+1}}{dt^{m+1}}\,  Q_{m} \right)  -m  \frac{d^{m+2}}{dt^{m+2}}\,  Q_{m-1}\ .
    \]
    By induction, assume the statement holds for $Q_m, Q_{m-1}$. The right-hand side is
    \begin{multline*}
        (2m+1) \, t  \frac{d}{dt}  \left(\frac{2^m  \, m! } {(1- t^2)^{m+1}} \right) +(2m+1)(m+2)   \frac{ 2^m \, m! } {(1- t^2)^{m+1}}  -   m \frac{d^2}{dt^2}  \left(\frac{  2^{m-1}\, (m-1)! } {(1- t^2)^{m}}\right) \\
        = \frac{2^m\, m!}{(1-t^2)^{m+2}}\left(  2 (2m+1)(m+1)  t^2 +  (2m+1)(m+2)(1-t^2)   - m (1+ (2m+1)t^2)   \right)  \ .
    \end{multline*}
    The term in brackets simplifies to $2 (m+1)^2$ and completes the induction step. 
\end{proof}

\begin{rem}
    The other solutions to \eqref{Legendrediffeq} in the case when $s=m\geq 0$ is an integer are given by the Legendre polynomials $P_m(t) \in \QQ[t]$ which are of degree $m$. Consequently they satisfy $ \frac{d^{m+1}}{dt^{m+1}}\,  P_{m}(t) =0.$ A key property of the functions  $Q_m(t)$ is that they  vanish to order $m+1$ at $t=\infty$. 
\end{rem}

\subsection{Legendre equation and Laplacian}

Define the cross-ratio of $(z,w) \in \mathfrak{H}^{\pm}\times \mathfrak{H}^{\pm}$ to be: 
\begin{equation}
    \cc(z,w) \coloneqq \frac{(z-w)(\overline{z}-\overline{w}) }{ (z-\overline{z}) (w- \overline{w}) }  = -\frac{ 1}{4} \frac{ | z-w |^2}{\mathrm{Im}(z) \,\mathrm{Im}(w)} \ .
\end{equation}
It  satisfies $\cc(\gamma z, \gamma w) = \cc(z,  w)$  for all  $\gamma \in \GL_2(\RR)$. Note that $\cc(z,w)=  x(z,w)^{-1}$ (see \eqref{notation: xzw}) is symmetric, real and non-positive on $\mathfrak{H}\times \mathfrak{H}$: $\cc(z,w) = \cc(w,z) \leq 0$. We often write $\cc$ simply for $\cc(z,w)$ when there is no possible ambiguity.  

The following lemma is implicitly stated without proof  in \cite{GrossZagier}.
    
\begin{lem}  \label{lem: LaplacefromLegendre}
    Let $f : (1,+\infty)\rightarrow \CC$ be a twice differentiable function and let $\lambda \in \CC$ be any complex number. Recall that $\Delta_{0,0}$ denotes the hyperbolic Laplacian \eqref{Delta00}. Then the Laplace eigenvalue equation 
    \begin{equation} \label{Laplaceforfc} 
        \left(\Delta_{0,0} +\lambda\right)  f(1-2\cc)= 0 
    \end{equation}
    is equivalent to the Legendre second-order differential equation
    \begin{equation} \label{Legendreequation} 
        (1-t^2)  f''(t) - 2t f'(t) + \lambda\, f(t)= 0\ .
    \end{equation} 
\end{lem}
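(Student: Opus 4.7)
My plan is to prove the equivalence by a direct chain-rule computation that converts $\Delta_{0,0} f(1-2c)$ into a differential operator in $t$ applied to $f$. Fix $w$ and regard $c = c(z,w)$ as a smooth function of $z, \overline{z}$ on $\mathfrak{H}$. Writing $t = 1-2c$ and applying $\frac{\partial^2}{\partial z\partial \overline{z}}$ to the composite $f(t)$ yields
\[
\frac{\partial^2 f(t)}{\partial z \partial \overline{z}} = 4\, f''(t)\,\frac{\partial c}{\partial z}\frac{\partial c}{\partial \overline{z}} - 2\, f'(t)\,\frac{\partial^2 c}{\partial z \partial \overline{z}},
\]
and multiplying by $(z-\overline{z})^2$ to form $\Delta_{0,0}$. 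The whole computation then reduces to identifying the two factors
\[
A \defeq (z-\overline{z})^2\,\frac{\partial c}{\partial z}\frac{\partial c}{\partial \overline{z}} \qquad \text{and} \qquad B \defeq (z-\overline{z})^2\,\frac{\partial^2 c}{\partial z \partial \overline{z}}
\]
as explicit functions of $c$ (equivalently, of $t$).

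For $A$: I would compute $\partial_z c$ and $\partial_{\overline{z}} c$ directly from the quotient rule applied to $c = (z-w)(\overline{z}-\overline{w})/[(z-\overline{z})(w-\overline{w})]$. The cleanest way to finish is to notice the symmetry under $w \leftrightarrow \overline{w}$: setting $c'(z,w) = (w-\overline{z})(z-\overline{w})/[(z-\overline{z})(w-\overline{w})]$, the elementary algebraic identity
\[
(z-w)(\overline{z}-\overline{w}) + (w-\overline{z})(z-\overline{w}) = (z-\overline{z})(w-\overline{w})
\]
gives $c + c' = 1$. A short calculation shows $A = c\,c' = c(1-c)$.

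For $B$: differentiating $\partial_{\overline{z}} c$ once more with respect to $z$ produces a rational expression whose numerator, after simplification, equals $-2\bigl[2z\overline{z} + 2w\overline{w} - (z+\overline{z})(w+\overline{w})\bigr]$. Writing $z = x+iy$, $w = u+iv$ makes this numerator equal to $-4[(x-u)^2 + y^2 + v^2]$ and the denominator equal to $16y^3 v$; combining with $-4yv\,c = (x-u)^2 + (y-v)^2$ gives $B = 1-2c = t$. Plugging $A = c(1-c) = (1-t^2)/4$ and $B = t$ into the chain-rule expression yields
\[
\Delta_{0,0} f(1-2c) = (1-t^2) f''(t) - 2t f'(t),
\]
so that $(\Delta_{0,0}+\lambda) f(1-2c) = 0$ is equivalent to $(1-t^2) f''(t) - 2t f'(t) + \lambda f(t) = 0$, as required.

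The main obstacle is really just the bookkeeping in the two algebraic identities for $A$ and $B$. The identity $A = c(1-c)$ has a conceptual shortcut via the $w \leftrightarrow \overline{w}$ symmetry, so no obstacle there; the identity $B = t$ is slightly more painful but is a routine polynomial manipulation (alternatively, one could exploit the $\SL_2(\mathbb{R})$-invariance of $c$ to normalise $z = i$ and $w = i\,\mathrm{e}^{\rho}$, which reduces the computation to a one-variable check where $c = -\sinh^2(\rho/2)$ and everything follows from $\cosh\rho = 1-2c$).
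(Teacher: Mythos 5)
Your proof is correct and follows essentially the same route as the paper's: a chain-rule reduction of $\Delta_{0,0}f(1-2\cc)$ to a second-order ODE in $t=1-2\cc$, hinging on the two identities $(z-\overline{z})^2\,\tfrac{\partial \cc}{\partial z}\tfrac{\partial \cc}{\partial\overline{z}}=\cc(1-\cc)$ and $(z-\overline{z})^2\,\tfrac{\partial^2 \cc}{\partial z\,\partial\overline{z}}=1-2\cc$, which the paper simply records as \eqref{ccdiffequationsinzandzb}. You supply derivations of those identities (the $w\leftrightarrow\overline{w}$ symmetry giving $\cc+\cc'=1$ is a nice shortcut for the first), but the skeleton matches. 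One small bookkeeping slip: in your real-variable check for the second identity, expanding $(2z-w-\overline{w})(z-\overline{z})-2(z-w)(z-\overline{w})$ yields $-[2z\overline{z}+2w\overline{w}-(z+\overline{z})(w+\overline{w})] = -2\bigl[(x-u)^2+y^2+v^2\bigr]$, not $-4[\cdots]$; with the corrected factor the denominator $16y^3v$ gives $B=\bigl[(x-u)^2+y^2+v^2\bigr]/(2yv)=1-2\cc$ as required, so your stated conclusion $B=t$ is right even though the intermediate constant in the writeup is off by a factor of two.
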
 

\begin{proof}
    One checks that $\overline{\partial}_{-1} \partial_0 \cc = 2 \cc -1$ and  $\left(\partial_0 \cc \right) \left( \overline{\partial}_0 \cc \right) =\cc (\cc-1)$, or equivalently,
    \begin{equation} \label{ccdiffequationsinzandzb}
        (z-\overline{z})^2\, \frac{\partial^2 \cc}{\partial z \partial \overline{z}}   =    1- 2 \cc \quad \hbox{ and } \quad (z-\overline{z})^2\,  \frac{\partial \cc}{\partial z}  \frac{\partial \cc}{\partial \overline{z}}  =  \cc (1- \cc) \ .  
    \end{equation}
    We have
    \begin{align*}
        \Delta_{0,0}\,  f(1- 2\cc)   &\  =   -  2  (z- \overline{z})^2  \frac{\partial}{\partial \overline{z}}\left( f'(1- 2\cc) \frac{\partial \cc}{\partial z} \right) \nonumber \\ 
        &\ =   - 2  (z- \overline{z})^2 \left( - 2 f''(1- 2\cc)\frac{\partial \cc}{\partial  \overline{z}}\frac{\partial \cc}{\partial z}    + f'(1-2\cc)  \frac{\partial^2 \cc}{\partial  z  \partial \overline{z}}\right)  \nonumber \\
        & \stackrel{\eqref{ccdiffequationsinzandzb}}{=}   4  \cc(1-\cc)\,  f''(1- 2\cc)    -  2 (1-2 \cc)  f'(1- 2\cc) \ . \nonumber
    \end{align*} 
    If we set $t= 1 - 2\cc$ then  $(1-t^2) = 4 \cc(1-\cc)$ and the above computation  reduces to 
    \[
        \Delta_{0,0}\,  f(t) = (1-t^2 )f''(t) - 2t f'(t)\ ,
    \]
    which implies the equivalence of \eqref{Laplaceforfc} and \eqref{Legendreequation}. 
\end{proof} 
 
By symmetry in $z$ and $w$, equation \eqref{Laplaceforfc} is equivalent to the identical equation with respect to the hyperbolic Laplacian $\Delta^{0,0} =  (w- \overline{w})^2 \frac{\partial}{\partial w}\frac{\partial}{\partial \overline{w}}$ with respect to $w$.

\subsection{Central element of the higher Green's functions on $\mathfrak{H}^{\pm}\times \mathfrak{H}^{\pm}$}

By Lemma \ref{lem: LaplacefromLegendre}, the function 
\[
    f_m(z,w)\coloneqq -2 \, Q_{m} (1 - 2 \cc(z,w))\ , \qquad \hbox{ for  }  (z,w) \  \in \  \mathfrak{U}  \ ,
\] 
where $\mathfrak{U}= \mathfrak{H} \times \mathfrak{H} \setminus\{(z,w) : z = w\} $  is an eigenfunction of the hyperbolic Laplacian in both $z$ and $w$ with eigenvalue $m(m+1)$. By Lemma \ref{lem: LaplaceHarmonicCorrespondence}, it corresponds to a  vector-valued function which is  harmonic with respect to both variables $z$ and $w$. 

\begin{prop} \label{prop:CentralGmandGZ}
    Let $g_{2m}(z,w):\mathfrak{U} \rightarrow  (V_{2m}^{\B}\otimes V_{2m}^{\B})\otimes \mathbb{C}$ be as defined in \eqref{Gpolygenfunction}, written in the form  \eqref{Gkzwcomponentspqrs}. Its central component is proportional to $f_m$: 
    \begin{equation} \label{gfmmmm} 
        g^{m,m}_{m,m}(z,w)  =(-1)^m \binom{2m}{m}\,  \frac{  f_m(z,w)    }{(z- \overline{z})^{m} (w- \overline{w})^{m}}\ , 
    \end{equation} 
    and all other components $g^{p,q}_{r,s}(z,w)$ may be deduced from it by applying raising and lowering operators. 
\end{prop}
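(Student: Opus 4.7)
The plan is to identify the central component $g^{m,m}_{m,m}$ via three successive reductions and then pin down the proportionality constant by comparing leading logarithmic terms at the diagonal $z=w$. The key observation is that after rescaling by $(z-\overline{z})^m(w-\overline{w})^m$, the central component becomes a scalar $\SL_2(\RR)$-invariant solving the Laplace eigenvalue equation in each variable, hence reduces to a single Legendre ODE in the cross-ratio.

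First, the differential equations of Proposition \ref{prop: existsGm} imply that $g_{2m}(z,w)$ is $\SL_2(\RR)$-equivariant under the diagonal action (since $\det\gamma=1$). Because $g^{m,m}_{m,m}$ is modular of weights $(m,m)$ in each variable, the function
\begin{equation*}
    \phi(z,w) \defeq (z-\overline{z})^m (w-\overline{w})^m\, g^{m,m}_{m,m}(z,w)
\end{equation*}
is $\SL_2(\RR)$-invariant under the diagonal action on $\mathfrak{H}\times\mathfrak{H}\setminus\{z=w\}$, hence depends only on the cross-ratio: $\phi(z,w)=F(c(z,w))$. Moreover, the harmonicity of $g_{2m}$ in each variable, together with Lemma \ref{lem: LaplaceHarmonicCorrespondence}, implies that $\phi$ solves the classical hyperbolic Laplace eigenvalue equation $(\Delta_{0,0}+m(m+1))\phi=0$ in both $z$ and $w$. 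Writing $\phi=f(1-2c)$ and invoking Lemma \ref{lem: LaplacefromLegendre} gives $f$ as a $\CC$-linear combination $\alpha P_m(t)+\beta Q_m(t)$ of Legendre functions of order $m$.

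Next, I rule out the $P_m$ summand by controlling the decay at the cusp $z = i\infty$. The explicit formula \eqref{gkoformulaaslogarithm} shows $g^{2m,0}_{2m,0}(iR,w)=O(R^{-2m-1})$ as $R\to\infty$, since $\mathcal{L}_{2m}$ vanishes to first order at the origin while $x(iR,w)=O(R^{-1})$. Iterating the raising and lowering relations \eqref{eq:raising-lowering-middle} and their $w$-analogues transfers this decay to every other component at weight $2m$, so $g^{m,m}_{m,m}=O(R^{-2m-1})$ and $\phi=O(R^{-m-1})\to 0$. Since $P_m(1-2c)\sim c^m\sim R^m$ grows at this cusp while $Q_m(1-2c)\sim c^{-m-1}$ decays, the $P_m$ coefficient must vanish, so $\phi=\beta\, Q_m(1-2c)$ for a single unknown constant $\beta$. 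To fix $\beta$, I would compare the leading logarithmic singularities as $z\to w$, i.e., as $t\to 1$. The only logarithmic contribution in the definition \eqref{Gpolygenfunction} of $g_{2m}$ is the term $(VX-UY)^{2m}\log(|z-w|^2/|z-\overline{w}|^2)$. Using the substitution for $VX-UY$ displayed after \eqref{Gkzwcomponentspqrs} and expanding multinomially, the coefficient of $(X-zY)^m(X-\overline{z}Y)^m(U-wV)^m(U-\overline{w}V)^m$ in $(VX-UY)^{2m}$ evaluates to
\begin{equation*}
    \frac{1}{[(z-\overline{z})(w-\overline{w})]^{2m}}\binom{2m}{m}\sum_{i=0}^m \binom{m}{i}^2 u^i v^{m-i},\qquad u=|z-w|^2,\ v=|z-\overline{w}|^2.
\end{equation*}
The classical identity $\sum_i\binom{m}{i}^2 u^iv^{m-i}=(v-u)^m P_m(\tfrac{v+u}{v-u})$, combined with the easily verified relations $v-u=-(z-\overline{z})(w-\overline{w})$ and $\tfrac{v+u}{v-u}=1-2c=t$, collapses this sum to $(-1)^m\binom{2m}{m}[(z-\overline{z})(w-\overline{w})]^{-m}P_m(t)$. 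Since $\log(|z-w|^2/|z-\overline{w}|^2)=\log\tfrac{t-1}{t+1}=-2Q_0(t)$, matching the logarithmic part of $\beta Q_m(t)/[(z-\overline{z})(w-\overline{w})]^m$ with the classical decomposition $Q_m(t)=P_m(t)Q_0(t)-W_{m-1}(t)$ forces $\beta=-2(-1)^m\binom{2m}{m}$, yielding \eqref{gfmmmm}.

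The second assertion, that all remaining components $g^{p,q}_{r,s}$ may be deduced from $g^{m,m}_{m,m}$, is immediate from the harmonic structure of $g_{2m}$ in each variable: the raising and lowering relations \eqref{eq:raising-lowering-middle} (and the corresponding equations in the $w$-variable from Remark \ref{rem:variable-w}) allow one to navigate from $(r,s)=(p,q)=(m,m)$ to any other admissible index by applying the first-order operators $\partial_r,\overline{\partial}_s,\partial^p,\overline{\partial}^q$. The main obstacle in the above plan is the final step: while the Legendre reduction is conceptually clean, pinning down the exact constant $(-1)^m\binom{2m}{m}$ rests on the explicit combinatorial identity for $\sum_i\binom{m}{i}^2 u^iv^{m-i}$ and careful bookkeeping of signs when substituting $VX-UY$, where any slip would yield the wrong normalisation.
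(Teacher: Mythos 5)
Your proof takes a genuinely different route from the paper's. The paper works ``bottom-up'': it defines a candidate $h^{m,m}_{m,m}$ by the right-hand side of \eqref{gfmmmm}, generates the remaining components by raising and lowering, and then verifies, via an inductive computation reposing on the Legendre identity of Lemma~\ref{lem: Qdiff}, that the resulting $h_{2m}$ satisfies the same differential equations \eqref{DiffGzk} as $g_{2m}$; uniqueness up to a constant plus the anti-invariance property in \eqref{GeneralGkzwsymmetriesandlimit} then forces $h_{2m}=g_{2m}$. Your approach is ``top-down'': you start from $g_{2m}$ (relying on Proposition~\ref{prop: existsGm} for its harmonicity), exploit the $\SL_2(\RR)$-equivariance to reduce $\phi=(z-\overline{z})^m(w-\overline{w})^m g^{m,m}_{m,m}$ to a function of the cross-ratio solving the Legendre ODE, and then single out the $Q_m$-solution and its coefficient by boundary conditions. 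The combinatorial matching of log coefficients via Murphy's formula $\sum_k\binom{m}{k}^2 u^k v^{m-k}=(v-u)^m P_m\bigl(\tfrac{v+u}{v-u}\bigr)$ and the decomposition $Q_m=P_mQ_0-W_{m-1}$ is correct and quite elegant; I checked the sign bookkeeping ($v-u=-(z-\overline{z})(w-\overline{w})$, $\tfrac{v+u}{v-u}=1-2c$, $\log\tfrac{u}{v}=-2Q_0$) and it all holds. What the paper's argument buys is avoiding any analysis of singularities at the cusp or at $z=w$; what yours buys is a conceptual explanation (invariant theory $+$ ODE theory) of why Legendre functions must appear.

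There is one soft spot. To rule out the $P_m$ summand, you argue that $g^{m,m}_{m,m}(iR,w)=O(R^{-2m-1})$ by transferring the decay of $g^{2m,0}_{2m,0}$ through the raising and lowering operators. These operators involve multiplication by $(z-\overline{z})=O(R)$, so preserving the decay rate implicitly requires that each $\partial/\partial\overline{z}$ applied to the explicit rational-plus-log function improves the decay by one power of $R$. This is plausible and works out in low-weight examples, but you do not verify it and it is not automatic for a general smooth function. A cleaner and fully rigorous alternative, which uses only a property already recorded in \eqref{GeneralGkzwsymmetriesandlimit}, is to note that the anti-invariance $g_{2m}(z,w)+g_{2m}(\overline{z},w)=0$ forces $\phi(\overline{z},w)=(-1)^{m+1}\phi(z,w)$, while $c(\overline{z},w)=1-c(z,w)$ sends $t\mapsto -t$; since $P_m(-t)=(-1)^mP_m(t)$ and $Q_m(-t)=(-1)^{m+1}Q_m(t)$, the $P_m$-coefficient must vanish. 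I recommend replacing the decay argument by this parity argument, after which your proof is complete and rigorous.
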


\begin{proof}
    It suffices to show that, if we define $h^{m,m}_{m,m}(z,w)$ by equation \eqref{gfmmmm}, and define the remaining components $ h^{p,q}_{r,s}(z,w)$  by applying raising and lowering operators to it, then the resulting generating function 
    \[
        h_{2m}(z,w) =  \sum_{\substack{ p+q=r+s=2m\\ p,q,r,s\geq 0 }} h^{p,q}_{r,s}(z,w) (X-zY)^r(X- \overline{z} Y)^s (U -wV)^p (U-\overline{w} V)^q
    \]
    satisfies the equations 
    \begin{equation} \label{eq:dzGm}
        \begin{split}
            d_z h_{2m}(z,w) & =  \psi_{2m} (z,w)(U,V) (X-zY)^{2m} dz  - \psi_{2m}(\overline{z},w)(U,V) (X-\overline{z}Y)^{2m} d\overline{z}  \\
            d_w h_{2m} (z,w) & = -  \psi_{2m} (w,z)(X,Y) (U-wV)^{2m} dw + \psi_{2m} (\overline{w},z)(X,Y) (U-\overline{w} V)^{2m} d\overline{w}   \ , 
        \end{split}
    \end{equation}
    since these equations, which are the same as \eqref{DiffGzk}, determine it uniquely up to a constant polynomial in $X,Y,U,V$. 
    This constant is in turn uniquely determined from either equation in the third line of \eqref{GeneralGkzwsymmetriesandlimit}, namely anti-invariance with respect to complex conjugation in $z$ (or $w$). 
    
    The fact that $h_{2m}(z,w)$ is harmonic in $z$ and $w$ is equivalent, by Lemma \ref{lem: LaplaceHarmonicCorrespondence} and  equation \eqref{eq:raising-lowering-middle} applied to both $z$ and $w$, to the system of equations
    \[ 
        \partial_r\, h_{r,s}^{p,q} =  (r+1)\, h_{r+1,s-1}^{p,q} \ , \qquad   \overline{\partial}_s\, h_{r,s}^{p,q} =   (s+1)\, h_{r-1,s+1}^{p,q}
    \]
    \[
        \partial^p\, h_{r,s}^{p,q}  =  (p+1)\, h_{r,s}^{p+1,q-1}  \ ,\qquad \overline{\partial}^q\, h_{r,s}^{p,q}  =    (q+1)\, h_{r,s}^{p-1,q+1} \ .
    \] 
    The consistency of these equations is equivalent to the fact that each $h$ is an eigenfunction of the Laplacian in both $z$ and $w$:
    \begin{equation} \label{gfrspqLaplace}
        \left(\Delta_{r,s} + 2m \right) h_{r,s}^{p,q} =   \left(\Delta^{p,q} + 2m\right) h_{r,s}^{p,q} =0  \ . 
    \end{equation} 
    Note  that $ h^{p,q}_{r,s}(z,w) = h^{r,s}_{p,q}(w,z)$  and  $h^{p,q}_{r,s}(z,w)=  h^{q,p}_{s,r}(\overline{z},\overline{w})$. By \eqref{eq:raising-lowering-end},   equation \eqref{eq:dzGm} is equivalent to:
   \begin{equation} \label{partial2mgfpq} 
        \partial_{2m} \, h^{p,q}_{2m,0} =    (z- \overline{z}) \psi^{p,q}(z,w)     \qquad  \hbox{ and } \qquad 
        \overline{\partial}_{2m} \, h^{p,q}_{0,2m}   =      (z- \overline{z}) \psi^{p,q}(\overline{z},w) \  = (z- \overline{z})  \overline{\psi^{p,q}(z,\overline{w})} \ .  \nonumber 
    \end{equation}  
    We prove  the  equation on the left, since the other follows by symmetry. We prove by induction on $r\geq m$ that for  all $2m\geq r \geq m \geq s \geq 0$ such that $r+s=2m$, 
    \begin{equation}\label{inproof: gfmrs} 
        h^{m,m}_{r,s} =   (-1)^m  \frac{(2m)!}{m!\,  r!}\,        (z- \overline{z})^{-m} (w- \overline{w})^{-m}  \left( (-2)^{r-m+1}  Q_m^{(r-m)}(1- 2\cc)\,  \cc_z^{r-m}\right) \ ,
    \end{equation}
    where  $Q_m^{(d)}(t)$ denotes the $d$-fold derivative $\frac{\partial^d}{\partial t^d} Q_m(t)$ and where we write
    \[
        \cc_z  =  \  \partial_0 \cc \  =    \frac{(w-\overline{z})(\overline{z}-\overline{w}) }{ (z-\overline{z}) (w- \overline{w}) }\ .
    \]
    Equation \eqref{inproof: gfmrs}  is  true by definition \eqref{gfmmmm} in the case $r=s=m$.  Using $
    \partial_r (ab)= \partial_r(a) b + a \partial_0(b)$, one verifies that if $F$ is  any differentiable function on the reals with derivative $F'(t)$, one has 
    \begin{equation}  \label{technicaldifferentiatesimplecase}
        \partial_r       \left(   \cc^{r}_z   \, F(1- 2\cc) \right)    = (\partial_r \cc^r_z ) F(1-2\cc)  \   -2  \,\cc^{r+1}_z  \, F'(1- 2\cc) = -2  \,\cc^{r+1}_z  \, F'(1- 2\cc) 
    \end{equation}  
    since $\partial_r \cc^r_z=0$ by \eqref{eqn:shift}. 
    Using this equation we check that
    \begin{align*}
        \partial_r  \,  (z- \overline{z})^{-m}   \left(  Q_m^{(r-m)}(1- 2\cc)\,  \cc_z^{r-m}\right)  &=  (z- \overline{z})^{-m}  \, \partial_{r-m}   \left(  Q_m^{(r-m)}(1- 2\cc)\,  \cc_z^{r-m}\right)   \nonumber  \\ 
        &=   (z- \overline{z})^{-m} \left(-2 \, Q_m^{(r+1-m)}(1- 2\cc)\,  \cc_z^{r+1-m} \right) \nonumber
    \end{align*} 
    and use the equation $ h^{m,m}_{r+1,s-1} = (r+1)^{-1} \partial_r h^{m,m}_{r,s}$ to complete the induction step.  A further application of \eqref{technicaldifferentiatesimplecase}  implies that
    \[
        \partial_{2m} \, h^{m,m}_{2m,0} 
        =  \frac{2^{m+2} }{m!}\,        (z- \overline{z})^{-m} (w- \overline{w})^{-m}  \left(   Q_m^{(m+1)}(1- 2\cc)\,  \cc_z^{m+1}\right)\ . 
    \]
    The differential equation \eqref{Qdiffidentity} for the Legendre function implies that
    \[
        Q_m^{(m+1)}(1- 2\cc) =  \frac{2^m m!  }{ (4 \cc (1- \cc))^{m+1}}\ .
    \]
    Using the fact that
    \[
        \frac{\cc_z}{\cc(1-\cc)}  =  \frac{ (z-\overline{z}) (w- \overline{w})   }{ (  z-w ) ( z- \overline{w})  }
    \]
    we conclude that
    \[
        \partial_{2m} \, h^{m,m}_{2m,0}  =  \frac{ (z-\overline{z}) (w- \overline{w})   }{ (  z-w)^{m+1} ( z-\overline{w})^{m+1}  } = (z- \overline{z}) \,  \psi^{m,m}(z,w)
    \]
    which proves $\partial_{2m} \, h^{p,q}_{2m,0} =    (z- \overline{z}) \psi^{p,q}(z,w)  $   in the case $p=q=m$. All other instances for $p+q=2m$, $p,q\geq 0$ follow by applying raising and lowering operators with respect to $w$, for instance:
    \[
        \partial^{p} \partial_{2m} \,  h^{p,q}_{2m,0}   = \partial_{2m} \partial^{p}  \,  h^{p,q}_{2m,0}   = (p+1)\,  \partial_{2m}\, h^{p+1,q-1}_{2m,0}
    \]
    when  $q\geq1 $, and comparing with the equation  $\partial^p\psi^{p,q}=(p+1) \psi^{p+1,q-1}$.  The second equation of \eqref{eq:dzGm} follows similarly from $\partial^{2m} \, h^{2m,0}_{r,s} =  (w- \overline{w})  \psi^{r,s}(w,z)$ and its complex conjugate $\overline{\partial}^{2m} \, h^{2m,0}_{r,s} =  (w- \overline{w})  \overline{ \psi^{r,s}(w,\overline{z})}. $

    Finally,  we must check the anti-invariance in $z$ (or $w$).  For this, note that $\cc(\overline{z}, w) = 1- \cc(z,w)$ and hence $1- 2 \cc(\overline{z},w) = 2 \cc(z,w)-1$. The anti-invariance of $(z-\overline{z})^{-m} (w-\overline{w})^{-m} f_m(z,w) $ follows from the equation $Q_m(t) =  (-1)^{m+1} \,Q_m(-t)$ for solutions to the Legendre equation. By applying the raising and lowering operators, we deduce that  $h^{p,q}_{r,s}(\overline{z},w) + h^{q,p}_{r,s}(z,w)=0$  for all $p,q,r,s$, which completes the proof. 
\end{proof}

\begin{rem}
    The raising and lowering operators imply that:
    \[
        g^{m,m}_{m,m}(z,w) = \frac{1}{(m!)^2}   (\overline{\partial}_{m+1}  \overline{\partial}^{m+1})  \cdots  (\overline{\partial}_{2m}  \overline{\partial}^{2m} ) \,  g^{2m,0}_{2m,0} (z,w)  \ ,  
    \]
    which, in view of the explicit formulae \eqref{gkoformulaaslogarithm} and  \eqref{truncatedlogarithm}, provides another perspective on the Legendre functions. 
\end{rem}

\begin{cor}
    Let $k=2m$ be even. Then the central component of the higher vector-valued Green's function 
     \[
        \vec{G}_{\Gamma, 2m}(z,w):   \mathfrak{U}  \To (V_{2m}^{\B} \otimes V_{2m}^{\B})\otimes \mathbb{C}
    \]
    is proportional to the `classical' higher Green's function \eqref{eq:classical-GF}: 
    \begin{equation} \label{GFCentralequalsGZ} 
        {}^{\Gamma}\! G^{m,m}_{m,m}(z,w)  =(-1)^m \binom{2m}{m}\,  \frac{  G_{\Gamma,m+1}(z,w)   }{(z- \overline{z})^{m} (w- \overline{w})^{m}}\ .  
    \end{equation} 
\end{cor}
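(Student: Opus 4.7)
The plan is to combine Proposition~\ref{prop:CentralGmandGZ}, which identifies the central coefficient $g^{m,m}_{m,m}(z,w)$ of the seed function $g_{2m}(z,w)$ with a Legendre function of the cross-ratio, with the $\Gamma$-averaging construction \eqref{eq:G} defining $\vec{G}_{\Gamma,2m}$. The corollary then reduces to identifying an averaged sum of Legendre functions with the defining series \eqref{eq:classical-GF} of $G_{\Gamma,m+1}$.

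First, I would extract the central coefficient from the series term by term. Expanding $g_{2m}(\gamma z, w)$ as in \eqref{Gkzwcomponentspqrs} at the point $(\gamma z, w)$ and applying the slash action $|_{(\gamma,1)}$ --- which by \eqref{eq:gamma-equivariance} sends $(X - \gamma z Y)^r(X - \overline{\gamma z} Y)^s$ to $j_\gamma(z)^{-r}\, j_\gamma(\overline{z})^{-s}(X - zY)^r(X - \overline{z}Y)^s$ while leaving $(U,V)$ untouched --- one reads off
\[
    {}^{\Gamma}\! G^{m,m}_{m,m}(z,w) \;=\; \sum_{\gamma \in \Gamma}\frac{g^{m,m}_{m,m}(\gamma z, w)}{j_\gamma(z)^{m}\, j_\gamma(\overline{z})^{m}}\,.
\]
I would then substitute the explicit formula \eqref{gfmmmm} and use the identity $\gamma z - \overline{\gamma z} = (z-\overline{z})/(j_\gamma(z)\, j_\gamma(\overline{z}))$ valid for $\gamma \in \SL_2(\mathbb{R})$: the $j_\gamma$ factors cancel exactly, yielding
\[
    {}^{\Gamma}\! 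G^{m,m}_{m,m}(z,w) \;=\; (-1)^m \binom{2m}{m}\, \frac{-2\sum_{\gamma\in\Gamma} Q_m\bigl(1 - 2 c(\gamma z, w)\bigr)}{(z-\overline{z})^{m}(w-\overline{w})^{m}}\,,
\]
where I have inserted the definition $f_m(z,w) = -2 Q_m(1-2c(z,w))$ from just before Proposition~\ref{prop:CentralGmandGZ}.

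Finally, I would match the numerator with $G_{\Gamma,m+1}(z,w)$. Using the $\GL_2(\mathbb{R})$-invariance $c(\gamma z, w) = c(z, \gamma^{-1} w)$ of the cross-ratio and the change of summation variable $\gamma \mapsto \gamma^{-1}$, the sum becomes $\sum_{\gamma \in \Gamma} Q_m(1 - 2c(z, \gamma w))$. Substituting the explicit form $c(z,w) = -\tfrac{1}{4}|z-w|^2/(\Im(z)\Im(w))$ converts the argument of $Q_m$ into $1 + |z-\gamma w|^2/(2\Im(z)\Im(\gamma w))$, which is precisely the argument appearing in the defining series \eqref{eq:classical-GF} of $G_{\Gamma,m+1}(z,w)$. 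This gives \eqref{GFCentralequalsGZ}.

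I do not anticipate any genuine obstacle: the analytic content (the closed form of $g^{m,m}_{m,m}$ in terms of a Legendre function, and the normal convergence of the averaged series defining $\vec{G}_{\Gamma,2m}$) has already been established, so the corollary amounts to carefully tracking the modular factors and relabelling the summation.
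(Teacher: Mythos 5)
Your proof is correct and follows exactly the route the paper intends: the corollary is stated as an immediate consequence of Proposition~\ref{prop:CentralGmandGZ} together with the $\Gamma$-averaging definition \eqref{eq:G}, and your computation (extracting the $(m,m;m,m)$-component, using $\gamma z-\overline{\gamma z}=(z-\overline{z})/(j_\gamma(z)j_\gamma(\overline{z}))$ to cancel the automorphy factors, and relabelling the sum via the invariance of the cross-ratio) just makes explicit the bookkeeping the paper leaves implicit.
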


\section{Meromorphic modular forms and cohomology}\label{sec:cohomology-over-C}

In this section, we describe the algebraic de Rham cohomology over $\mathbb{C}$ of a punctured modular curve with coefficients, together with its Hodge filtration,  in terms of meromorphic modular forms with prescribed pole orders. We show in particular that the meromorphic modular forms $\Psi_{\Gamma}^{p,q}(z,w)$ define a canonical  splitting of the Hodge filtration. 

\subsection{Cohomology of the universal elliptic curve over $\mathfrak{H}$ and vector-valued functions} \label{par:cohomology-EC}

Let $p:\mathcal{E} \to \mathfrak{H}$ be the universal framed elliptic curve. Its fibre over $z \in \mathfrak{H}$ is the complex torus
\[
    \mathcal{E}_z = p^{-1}(z) =  \mathbb{C}/(\mathbb{Z} + \mathbb{Z}z)
\]
equipped with the $\mathbb{Z}$-basis $(\sigma_{1,z},\sigma_{2,z}) = (1,z)$ of the first homology group $H_1(\mathcal{E}_z,\mathbb{Z}) \cong \mathbb{Z} + \mathbb{Z}z$. The left action of $\SL_2(\mathbb{Z})$ on $\mathfrak{H}$ lifts to an action on the total space $\mathcal{E}$,
\[
    \mathcal{E}_z \stackrel{\sim}{\To} \mathcal{E}_{\gamma z}\text{, }\qquad u + \mathbb{Z} + \mathbb{Z}z\longmapsto j^{-1}_{\gamma}(z)u + \mathbb{Z} + \mathbb{Z}\gamma z,
\]
which transforms the above homology basis by $\gamma_*(\sigma_{1,z},\sigma_{2,z}) = (a\sigma_{1,\gamma z} - c\sigma_{2,\gamma z},-b \sigma_{1,\gamma z} + d\sigma_{2,\gamma z})$.

Let $\mathbb{V}_k^{\B}$ be the trivial $\mathbb{Q}$-local system over $\mathfrak{H}$ with fibre $V_k^{\B}$ as defined in \S \ref{par:harmonic}. There are unique isomorphisms
\begin{equation}\label{eq:betti-local-system}
    \mathbb{V}_k^{\B} \cong \Sym^k R^1p_*\mathbb{Q}_{\mathcal{E}}
\end{equation}
compatible with the symmetric product structure and which, for $k= 1$, identifies $(X,Y)$ with the frame on singular cohomology with rational coefficients which is dual to $(\sigma_{1,z},-\sigma_{2,z})$. Under  \eqref{eq:betti-local-system}, the right $\SL_2(\mathbb{Z})$-action \eqref{eq:right-action-SL2} on $V_k^{\B}$ is  dual to the natural left $\SL_2(\mathbb{Z})$-action on homology.

Let
\[
    \mathcal{V}_k^{\dR,\an} \defeq \Sym^k H^1_{\dR}(\mathcal{E}/\mathfrak{H}).
\]
Note that $H^1_{\dR}(\mathcal{E}/\mathfrak{H})$ is a rank 2 holomorphic vector bundle over $\mathfrak{H}$ with fibre $H^1_{\dR}(\mathcal{E}_z)$ at $z \in \mathfrak{H}$. In particular, $\mathcal{V}_k^{\dR,\an}$ is a holomorphic vector bundle of rank $k+1$ over $\mathfrak{H}$. It comes with a Hodge filtration 
\[
    \mathcal{V}_k^{\dR,\an} = F^0\mathcal{V}_k^{\dR,\an} \supset F^1\mathcal{V}_k^{\dR,\an} \supset \cdots \supset F^k\mathcal{V}_k^{\dR,\an} \supset F^{k+1}\mathcal{V}_k^{\dR,\an} = 0
\]
by holomorphic sub-bundles which are 
given by the $k$th symmetric power of the Hodge filtration
\begin{equation}\label{eq:hodge}
    \mathcal{V}_1^{\dR,\an} = F^0\mathcal{V}_1^{\dR,\an} \supset F^1\mathcal{V}_1^{\dR,\an} \cong p_*\Omega^1_{\mathcal{E}/\mathfrak{H}} \supset F^2\mathcal{V}_1^{\dR,\an} = 0 \ . 
\end{equation}
It also has an integrable holomorphic connection $\nabla^{\an}_k$ given by the $k$th symmetric power of the Gauss-Manin connection on $\mathcal{V}_1^{\dR,\an}$. 

It follows from \eqref{eq:betti-local-system} that $\mathbb{V}_k^{\B}\otimes \mathbb{C}$ is isomorphic to the $\mathbb{C}$-local system of horizontal sections of $\nabla^{\an}_k$, thus
\[
    (\mathcal{V}_k^{\dR,\an},\nabla^{\an}_k) \cong (V_k^{\B}\otimes \mathcal{O}_{\mathfrak{H}}, \id \otimes d),
\]
where $\mathcal{O}_{\mathfrak{H}}$ denotes the sheaf of holomorphic functions on $\mathfrak{H}$. In particular, holomorphic (resp. $C^{\infty}$, resp. real-analytic) vector-valued functions $F: \mathfrak{U} \to V_{k}^{\B}\otimes \mathbb{C}$ as discussed in \S \ref{par:harmonic} correspond to holomorphic (resp. $C^{\infty}$, resp. real-analytic) sections of $\mathcal{V}_{k}^{\dR,\an}$ over the open subset $\mathfrak{U}\subset \mathfrak{H}$, the action of $\nabla^{\an}_k$ on $V_k^{\B}\otimes \mathcal{O}_{\mathfrak{H}}$ being simply differentiation with respect to the variable $z \in \mathfrak{U}$ and acting by zero on the frame $X,Y$. The Betti structure induces a complex conjugation on $\mathcal{V}_{k}^{\dR,\an}$, as in \S\ref{par:complex-conjugation}, which we simply denote by $\alpha \mapsto \overline{\alpha}$. It corresponds to the naive complex conjugation of vector-valued functions. 

The Hodge line bundle $p_*\Omega^1_{\mathcal{E}/\mathfrak{H}}$ is trivialised by a global section which restricts in every fibre $\mathcal{E}_z = \mathbb{C}/(\mathbb{Z} + \mathbb{Z} z)$ to the 1-form $du$, where $u$ denotes the coordinate on $\mathbb{C}$. Seen as a vector-valued holomorphic function in the Betti frame $(X,Y)$ via $F^1\mathcal{V}_1^{\dR,\an} \subset \mathcal{V}_1^{\dR,\an} \cong V_k^{\B}\otimes \mathcal{O}_{\mathfrak{H}}$, this trivialisation is the class $[du]$, which via the integration pairing sends $(\sigma_{1,z}, \sigma_{2,z}) \mapsto (1,z)$, and therefore corresponds to $X-zY$:
\begin{equation} \label{ducorrespondsXzY}
    [du]  =   X - zY.
\end{equation}
Consequently,  $\mathcal{V}_k^{\dR,\an}$ admits a real-analytic frame 
\[
    (X-zY)^r(X-\overline{z}Y)^s\text{, }\qquad r+s =k\text{, }\  r,s\ge 0
\]
which splits the Hodge filtration \eqref{eq:hodge}. Expressing a vector-valued function in this frame corresponds to the decomposition \eqref{eq:Frs}. In particular, $F^p\mathcal{V}_k^{\dR,\an} $ is spanned by $ (X-zY)^r(X-\overline{z}Y)^s$ for $r\geq p$.

Note that $\mathcal{V}_1^{\dR,\an}$ admits the holomorphic frame $X-zY\text{, } -Y$. To see that $-Y$ is holomorphic, observe that $\nabla^{\an}_1 (X-zY) = (-Y)  dz$ is the image under $\nabla^{\an}_1$ of the holomorphic section   \eqref{ducorrespondsXzY} and hence holomorphic. In the above real-analytic frames, we can also write
\[
    -Y =  \frac{1}{z - \overline{z}}((X-zY) - (X-\overline{z}Y)).
\]
In particular,
\[
    (X-zY)^a(-Y)^b\text{, } \qquad a+b=k\text{, } \ a,b\ge 0
\]
is a holomorphic frame of $\mathcal{V}_k^{\dR,\an}$.

\begin{rem}
    Under the isomorphism $H^1_{\dR}(\mathcal{E}_{z}) \cong H^1_{\dR}(\mathcal{E}_z \setminus \{0\})$, which allows us to represent cohomology classes by forms of the second kind, the restriction of $-Y$ to the fibre at $z$ is given by the one form 
    \[
        \left(\wp_{z}(u) - \frac{E_2(z)}{12} \right)2\pi i \, du,
    \]
     which is of the second kind since it has a double pole at $u\in \ZZ + \ZZ z$, and where $E_2(z)$ is the weight 2 Eisenstein series of level 1 with first Fourier coefficient equal to 1; see \cite[\S A1]{katz}.
\end{rem}

\subsection{Analytic de Rham cohomology with coefficients for  open modular curves} \label{par:analytic-dR-open}

Let $\Gamma \le \SL_2(\mathbb{Z})$ be a finite-index subgroup and $\mathfrak{U}\subset \mathfrak{H}$ be an open subset given by the complement of a finite number of distinct $\Gamma$-orbits in $\mathfrak{H}$:
\[
    \mathfrak{U} = \mathfrak{H} \setminus S\ , \qquad \hbox{ where  }   S = \Gamma w_1 \cup \cdots \cup \Gamma w_r \ ,
\]
with $w_i \not\in \Gamma w_j$ for every $1\le i<j \le r$.  The restriction of the universal family
\[
    p|_{\mathfrak{U}} : p^{-1}(\mathfrak{U}) \To \mathfrak{U} ,
\]
and of the corresponding sheaves $\mathbb{V}_{k}^{\B}$, $(\mathcal{V}_k^{\dR,\an},\nabla^{\an}_k)$, and $F^{\bullet}\mathcal{V}_k^{\dR,\an}$ descend to the orbifold quotient
\[
    \mathcal{U}^{\an}_{\Gamma} \defeq \Gamma\backslash\!\! \backslash \mathfrak{U}\, .
\]
Note that $\mathcal{U}_{\Gamma}^{\an}$ is the open sub-orbifold of the open modular curve $\mathcal{Y}_{\Gamma}^{\an}\defeq \Gamma \backslash \!\!\backslash \mathfrak{H}$ obtained by `puncturing'  the points of
\[
    \Gamma \backslash \!\! \backslash S = \{\Gamma w_1,\ldots,\Gamma w_r\}\, .
\]
A $\Gamma$-invariant holomorphic (resp. real-analytic) vector-valued function $F: \mathfrak{U} \to V_{k}^{\B}\otimes \mathbb{C}$ then corresponds to a holomorphic (resp. real-analytic) section of $\mathcal{V}_k^{\dR,\an}$ over $\mathcal{U}^{\an}_{\Gamma}$.

We shall now describe $H^1_{\dR}(\mathcal{U}^{\an}_{\Gamma};\mathcal{V}_{k}^{\dR,\an})$ in terms of functions $f:\mathfrak{U} \to \mathbb{C}$ which are modular for $\Gamma$. If the action of $\Gamma$ on $\mathfrak{H}$ is not free, we may always choose a finite-index normal subgroup $\Gamma_0\le \Gamma$ which acts freely on $\mathfrak{H}$. Then, $\mathcal{U}^{\an}_{\Gamma_0} = \Gamma_0\backslash \mathfrak{U}$ is a \emph{bona fide} Riemann surface (so that the theory of Appendix \ref{sec:complex-conjugation} directly applies), and we may take as a definition
\[
    H^1_{\dR}(\mathcal{U}^{\an}_{\Gamma};\mathcal{V}_{k}^{\dR,\an}) \defeq H^1_{\dR}(\mathcal{U}^{\an}_{\Gamma_0};\mathcal{V}_{k}^{\dR,\an})^{G},
\]
where $G \defeq \Gamma/\Gamma_0$. It does not depend on the choice of $\Gamma_0$. In general, any geometric construction on $\mathcal{U}^{\an}_{\Gamma}$ will be given by the  $G$-invariants of the analogous construction over $\mathcal{U}^{\an}_{\Gamma_0}$.

By Lemma \ref{lem:analytic-smooth-cohomology}, $H^1_{\dR}(\mathcal{U}^{\an}_{\Gamma_0};\mathcal{V}_{k}^{\dR,\an})$ can be computed as the first cohomology group of the complex
\[
    \begin{tikzcd}[column sep = small]
        \Gamma(\mathcal{U}^{\an}_{\Gamma_0}, \mathbb{V}_k^{\B}\otimes \mathcal{A}^0_{\mathcal{U}^{\an}_{\Gamma_0}}) \arrow{r}{\id \otimes d} & \Gamma(\mathcal{U}^{\an}_{\Gamma_0}, \mathbb{V}_k^{\B}\otimes \mathcal{A}^1_{\mathcal{U}^{\an}_{\Gamma_0}}) \arrow{r}{\id \otimes d} & \Gamma(\mathcal{U}^{\an}_{\Gamma_0}, \mathbb{V}_k^{\B}\otimes \mathcal{A}^2_{\mathcal{U}^{\an}_{\Gamma_0}}).
    \end{tikzcd}
\]
Since
\[
    \Gamma(\mathcal{U}^{\an}_{\Gamma_0}, \mathbb{V}_k^{\B}\otimes \mathcal{A}^p_{\mathcal{U}^{\an}_{\Gamma_0}}) \cong \Gamma(\mathfrak{U}, V_k^{\B}\otimes \mathcal{A}^p_{\mathfrak{U}})^{\Gamma_0},
\]
 we see by taking $G$-invariants that a cohomology class in $H^1_{\dR}(\mathcal{U}^{\an}_{\Gamma};\mathcal{V}_{k}^{\dR,\an})$ can be represented by a $\Gamma$-equivariant vector-valued 1-form
\[
    \alpha = A_1(z)dz + A_2(z)d\overline{z},
\]
where $A_1,A_2 : \mathfrak{U} \to V_k^{\B}\otimes \mathbb{C}$ are $C^{\infty}$ vector-valued functions, satisfying
\[
    d\alpha = \left(\frac{\partial A_2}{\partial z} - \frac{\partial A_1}{\partial \overline{z}}\right)dz\wedge d\overline{z} = 0.
\]
Moreover, $[\alpha]=0$ if and only if $\alpha = dF$ for some $\Gamma$-equivariant,  $C^{\infty}$, vector-valued function $F: \mathfrak{U} \to V_k^{\B}\otimes \mathbb{C}$.

Since $\mathcal{U}^{\an}_{\Gamma_0}$ is Stein, $H^1_{\dR}(\mathcal{U}^{\an}_{\Gamma_0};\mathcal{V}_{k}^{\dR,\an})$ can also be computed as a  cokernel 
\[
 H^1_{\dR}(\mathcal{U}^{\an}_{\Gamma_0};\mathcal{V}_{k}^{\dR,\an})= \coker\Big(  \!\! \begin{tikzcd}[column sep = small]
        \Gamma(\mathcal{U}^{\an}_{\Gamma_0}, \mathbb{V}_k^{\B}\otimes \mathcal{O}_{\mathcal{U}^{\an}_{\Gamma_0}}) \arrow{r}{\id \otimes d} & \Gamma(\mathcal{U}^{\an}_{\Gamma_0}, \mathbb{V}_k^{\B}\otimes \Omega^1_{\mathcal{U}^{\an}_{\Gamma_0}})
    \end{tikzcd} \!\! \Big)
\]
which means, after taking $G$-invariants, that every cohomology class in $H^1_{\dR}(\mathcal{U}^{\an}_{\Gamma};\mathcal{V}_{k}^{\dR,\an})$ can equally  be represented by a $\Gamma$-equivariant vector-valued 1-form
\[
    \alpha = A(z)dz
\]
where $A: \mathfrak{U} \to V_k^{\B}\otimes \mathbb{C}$ is holomorphic. The next proposition implies that one can always take $A$ to be of the form
\[
    A(z) = f(z)(X-zY)^{k},
\]
for some holomorphic function $f:\mathfrak{U} \to \mathbb{C}$ which is  modular for $\Gamma$ of weight $k+2$.

\begin{prop}\label{prop:modular-representative}
    The natural map $\Gamma(\mathcal{U}^{\an}_{\Gamma_0}, F^k\mathcal{V}_k^{\dR,\an}\otimes \Omega^1_{\mathcal{U}^{\an}_{\Gamma_0}}) \to H^1_{\dR}(\mathcal{U}^{\an}_{\Gamma_0}; \mathcal{V}_k^{\dR,\an})$ is surjective.
\end{prop}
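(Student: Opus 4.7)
The plan is to use the Hodge filtration on $\mathcal{V}_k^{\dR,\an}$ together with Griffiths transversality and the Kodaira-Spencer isomorphism to successively raise the Hodge level of a de Rham representative until it lies in $F^k\mathcal{V}_k^{\dR,\an}\otimes \Omega^1$. Set $X \defeq \mathcal{U}^{\an}_{\Gamma_0}$. Since $X$ is an open Riemann surface, it is Stein, so by Cartan's Theorem B every coherent $\mathcal{O}_X$-module has vanishing $H^q$ for $q\geq 1$; in particular $H^1_{\dR}(X; \mathcal{V}_k^{\dR,\an})$ is the cokernel of $\nabla^{\an}_k$ on global sections. I will show by induction on $p$ that every class has a representative in $\Gamma(X, F^p\mathcal{V}_k^{\dR,\an}\otimes\Omega^1_X)$ for each $0 \le p \le k$. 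The case $p=0$ is tautological, and $p=k$ is the desired statement.

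For the inductive step from $p$ to $p+1$, with $p < k$, take $\alpha \in \Gamma(X, F^p\mathcal{V}_k^{\dR,\an}\otimes\Omega^1_X)$ and let $\bar\alpha$ be its image in $\Gamma(X, \gr^p\mathcal{V}_k^{\dR,\an}\otimes\Omega^1_X)$. Write $\omega \defeq F^1\mathcal{V}_1^{\dR,\an}$ for the Hodge line bundle. Using the standard identifications $\gr^p\mathcal{V}_k^{\dR,\an}\cong \omega^{\otimes(2p-k)}$ and the Kodaira-Spencer isomorphism $\omega^{\otimes 2}\cong \Omega^1_X$, the induced map on graded pieces $\overline{\nabla}^{\an}_k : \gr^{p+1}\mathcal{V}_k^{\dR,\an} \to \gr^p\mathcal{V}_k^{\dR,\an}\otimes\Omega^1_X$ is an isomorphism of line bundles, so there is a unique $\bar s \in \Gamma(X, \gr^{p+1}\mathcal{V}_k^{\dR,\an})$ with $\overline{\nabla}^{\an}_k \bar s = \bar\alpha$. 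The short exact sequence
\[
    0 \To F^{p+2}\mathcal{V}_k^{\dR,\an} \To F^{p+1}\mathcal{V}_k^{\dR,\an} \To \gr^{p+1}\mathcal{V}_k^{\dR,\an} \To 0,
\]
combined with the vanishing of $H^1(X, F^{p+2}\mathcal{V}_k^{\dR,\an})$ (Cartan B applied to a locally free coherent $\mathcal{O}_X$-module on the Stein manifold $X$), lifts $\bar s$ to a global section $s \in \Gamma(X, F^{p+1}\mathcal{V}_k^{\dR,\an})$. By Griffiths transversality $\nabla^{\an}_k s \in \Gamma(X, F^p\mathcal{V}_k^{\dR,\an}\otimes\Omega^1_X)$, and by construction its image in $\gr^p\mathcal{V}_k^{\dR,\an}\otimes\Omega^1_X$ is exactly $\bar\alpha$. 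Hence $\alpha - \nabla^{\an}_k s$ is a global section of $F^{p+1}\mathcal{V}_k^{\dR,\an}\otimes\Omega^1_X$ cohomologous to $\alpha$, completing the inductive step.

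The main ingredient, beyond Stein vanishing, is the Kodaira-Spencer isomorphism $\omega^{\otimes 2}\cong \Omega^1_X$ on the smooth Riemann surface $X$, which in this context is standard: it is the familiar identification of weight-$2$ holomorphic modular forms with holomorphic $1$-forms on $\mathcal{Y}^{\an}_{\Gamma_0}$, and it globalises to an isomorphism of line bundles on all of $X$. This is where the hypothesis that $\Gamma_0$ acts freely is essential: on a genuine orbifold modular curve the identification $\omega^{\otimes 2} \cong \Omega^1$ would need to be corrected at elliptic points, which is precisely why the argument is carried out on the neat cover $\mathcal{U}^{\an}_{\Gamma_0}$ before descending to $\mathcal{U}^{\an}_{\Gamma}$ by taking $G$-invariants. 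Beyond this input, every other step is a formal consequence of Griffiths transversality and coherent vanishing on a Stein space, so I do not anticipate any further serious obstacle.
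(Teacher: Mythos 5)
Your proof is correct and is essentially the paper's argument: the paper runs the same mechanism by observing that the graded pieces of the Hodge-filtered de Rham complex are acyclic — the isomorphism $\nabla_k^{\an}:\gr^{p+1}\mathcal{V}_k^{\dR,\an}\xrightarrow{\ \sim\ }\gr^{p}\mathcal{V}_k^{\dR,\an}\otimes\Omega^1_{\mathcal{U}^{\an}_{\Gamma_0}}$ being checked by the explicit computation in the frame $(X-zY)^a(-Y)^b$ rather than by invoking Kodaira--Spencer — and then letting the spectral sequence of the filtered complex degenerate and taking global sections on the Stein surface. Your induction on the Hodge level, with Cartan B supplying the liftings at each step, is just that spectral-sequence degeneration unwound by hand.
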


\begin{proof}
    Consider the complex of abelian sheaves $C = \Omega^{\bullet}_{\mathcal{U}_{\Gamma_0}^{\an}}(\mathcal{V}_k^{\dR,\an})$ and let $C = F^0C\supset F^1C\supset \cdots$ be the filtration by subcomplexes induced by the Hodge filtration on $\mathcal{V}_k^{\dR,\an}$. Explicitly, $F^pC$ is the complex concentrated in degrees $0,1$:
     \[
        F^pC : \qquad  F^p\mathcal{V}^{\dR,\an}_k  \overset{\nabla_k^{\an}}{\longrightarrow} F^{p-1}\mathcal{V}^{\dR,\an}_k \otimes \Omega^1_{\mathcal{U}_{\Gamma_0}^{\an}} \ . 
    \]
    Note that $F^{k+2}C = 0$. Moreover, for $1\le p \le k$, the graded piece $F^pC/F^{p+1}C$ is acyclic, since $\nabla_k^{\an}$ induces an isomorphism of $\mathcal{O}_{\mathcal{U}_{\Gamma_0}^{\an}}$-modules
    \begin{align} \label{Deltaancomputation}
        \begin{split}
        \nabla_k^{\an} : F^p\mathcal{V}_k^{\dR,\an}/ F^{p+1}\mathcal{V}_k^{\dR,\an} &\stackrel{\sim}{\longrightarrow} (F^{p-1}\mathcal{V}_k^{\dR,\an} / F^p\mathcal{V}_k^{\dR,\an})\otimes \Omega^1_{\mathcal{U}_{\Gamma_0}^{\an}}\\
        [f(z)(X-zY)^p(-Y)^{k-p}] &\longmapsto [pf(z)(X-zY)^{p-1}(-Y)^{k-p +1} dz] \  . 
        \end{split}
    \end{align}
    It follows that the first page $E^{p,q}_1 = H^{p+q}(F^pC/F^{p+1}C)$ of the spectral sequence associated to the filtered complex $(C,F^{\bullet}C)$ has only two non-vanishing terms: 
    \[
        E_1^{0,0} \cong \mathcal{V}_k^{\dR,\an}/F^1\mathcal{V}_k^{\dR,\an}\qquad \text{and}\qquad E_1^{k+1,-k} \cong F^k\mathcal{V}_k^{\dR,\an}\otimes \Omega^1_{\mathcal{U}_{\Gamma_0}^{\an}} \ ,
    \]
    and degenerates at page $k+2$. Thus, $E_{k+2}^{k+1,-k} \cong H^1(C)$ or, equivalently, we obtain a short exact sequence of abelian sheaves over $\mathcal{U}_{\Gamma_0}^{\an}$:
    \[
        \begin{tikzcd}[column sep = small]
            0 \arrow{r} & \mathcal{V}_k^{\dR,\an}/F^1\mathcal{V}_k^{\dR,\an} \arrow{r}{d_{k+1}} \arrow{r} & F^k\mathcal{V}_k^{\dR,\an}\otimes \Omega^1_{\mathcal{U}_{\Gamma_0}^{\an}} \arrow{r} &(\mathcal{V}_k^{\dR,\an}\otimes \Omega_{\mathcal{U}^{\an}_{\Gamma_0}}^1)/\nabla_k^{\an}(\mathcal{V}_k^{\dR,\an}) \arrow{r} & 0 \ . 
        \end{tikzcd}
    \]
    Since $\mathcal{U}_{\Gamma_0}^{\an}$ is Stein, we conclude by taking global sections in the above exact sequence (\emph{cf.} Example \ref{ex:cohomology-stein}).
\end{proof}

\begin{cor}\label{cor:existence-harmonic-lift}
    Every holomorphic function $f: \mathfrak{U} \to \mathbb{C}$ which is modular for $\Gamma$ of weight $k+2$ admits a harmonic lift $F: \mathfrak{U} \to V_{k}^{\B}\otimes \mathbb{C}$ as in Definition \ref{defn:harmoniclift}.
\end{cor}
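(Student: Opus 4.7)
The plan is to combine Proposition \ref{prop:modular-representative} with the antilinear involution on $H^1_{\dR}(\mathcal{U}_{\Gamma}^{\an};\mathcal{V}_k^{\dR,\an})$ induced by the Betti $\mathbb{Q}$-structure on $\mathcal{V}_k^{\dR,\an}$.

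First, I would associate to $f$ the holomorphic vector-valued $1$-form $\alpha_f = f(z)(X-zY)^k\, dz$. By the transformation laws recalled in \S\ref{par:cohomology-EC}, $\alpha_f$ is $\Gamma$-equivariant (the automorphy factor $j_{\gamma}^{k+2}$ of $f$ cancels exactly against the contributions from $(X-zY)^k$ and $dz$), hence it defines a global section of $F^k\mathcal{V}_k^{\dR,\an}\otimes \Omega^1$ over $\mathcal{U}_{\Gamma}^{\an}$, and a class $[\alpha_f] \in H^1_{\dR}(\mathcal{U}_{\Gamma}^{\an};\mathcal{V}_k^{\dR,\an})$.

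Next, because the flat Betti frame $\{X^aY^b\}$ is defined over $\mathbb{Q}$, naive complex conjugation of $C^{\infty}$ sections commutes with the differential and with the $\Gamma$-action, and so descends to an antilinear involution $[\alpha]\mapsto \overline{[\alpha]}$ on $H^1_{\dR}(\mathcal{U}_{\Gamma}^{\an};\mathcal{V}_k^{\dR,\an})$. I would apply Proposition \ref{prop:modular-representative} to the class $\overline{[\alpha_f]}$ to obtain a holomorphic function $g:\mathfrak{U}\to \mathbb{C}$, necessarily modular for $\Gamma$ of weight $k+2$, such that $\overline{[\alpha_f]} = [g(z)(X-zY)^k\, dz]$. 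Conjugating once more gives
\[
    [\alpha_f] = \bigl[\,\overline{g(z)}(X-\overline{z}Y)^k\, d\overline{z}\,\bigr]
\]
in $H^1_{\dR}(\mathcal{U}_{\Gamma}^{\an};\mathcal{V}_k^{\dR,\an})$. By the $C^{\infty}$-resolution description of this cohomology given in \S\ref{par:analytic-dR-open}, this cohomological identity is equivalent to the existence of a $\Gamma$-equivariant smooth vector-valued function $F:\mathfrak{U}\to V_k^{\B}\otimes \mathbb{C}$ satisfying
\[
    dF = f(z)(X-zY)^k\, dz - \overline{g(z)}(X-\overline{z}Y)^k\, d\overline{z},
\]
which is exactly a harmonic lift of $f$ in the sense of Definition \ref{defn:harmoniclift}, with $g$ appearing as its Betti-conjugate.

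The only point needing justification beyond the already-established Proposition \ref{prop:modular-representative} is that complex conjugation really yields a well-defined antilinear involution on $H^1_{\dR}(\mathcal{U}_{\Gamma}^{\an};\mathcal{V}_k^{\dR,\an})$ which preserves $\Gamma$-invariance; this is the formal content of Appendix \ref{sec:complex-conjugation} and follows from the rationality of the local system $\mathbb{V}_k^{\B}$ and of the $\Gamma$-action on it. I expect this rather than any analytic estimate to be the most delicate verification, since otherwise the argument is purely cohomological and one-step.
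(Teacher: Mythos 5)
Your argument is correct and is essentially the paper's proof, merely with Proposition \ref{prop:geometric-harmoniclift} (whose proof is exactly the conjugation-and-surjectivity argument you write out) inlined rather than cited. One small discrepancy: Proposition \ref{prop:modular-representative} is stated over $\mathcal{U}^{\an}_{\Gamma_0}$, not $\mathcal{U}^{\an}_{\Gamma}$; the paper produces a $\Gamma_0$-equivariant lift and then averages over $G = \Gamma/\Gamma_0$, whereas you work directly at $\Gamma$ level, which is valid once one notes that taking $G$-invariants preserves the surjectivity (as done in the discussion after Proposition \ref{prop:modular-representative}).
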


\begin{proof}
    By an immediate application of Propositions \ref{prop:modular-representative} and \ref{prop:geometric-harmoniclift}, any $f$ as in the statement admits a $\Gamma_0$-equivariant harmonic lift $F_0$. Then, a $\Gamma$-equivariant harmonic lift can be defined by
    \[
        F(z) = \frac{1}{|G|}\sum_{g \in G}F_0(gz)|_g \ .\qedhere
    \]
\end{proof}

\subsection{Algebraicity and meromorphic modular forms} \label{sect: AlgMeroForms}

We keep the notation of the previous paragraph. Since $\Gamma_0$ has finite index in $\SL_2(\mathbb{Z})$, the Riemann surface $\mathcal{Y}_{\Gamma_0}^{\an}\defeq \Gamma_0\backslash \H$ is algebraic: there is a smooth algebraic curve $\mathcal{Y}_{\Gamma_0}$ over $\CC$ (in fact,  over a number field, see \S \ref{par: deRham-nb-field},  but in this section we work over $\CC$) whose analytification is $\mathcal{Y}_{\Gamma_0}^{\an}$. Since $\mathcal{U}^{\an}_{\Gamma_0}$ is obtained from $\mathcal{Y}^{\an}_{\Gamma_0}$ by deleting a finite number of points (namely, those of $\Gamma_0\backslash S$), it is also algebraic: $\mathcal{U}_{\Gamma_0}^{\an}$ is the analytifcation of an open subscheme $\mathcal{U}_{\Gamma_0} \subset \mathcal{Y}_{\Gamma_0}$.

The action of the finite group $G$ on $\mathcal{U}^{\an}_{\Gamma_0}$ and on $\mathcal{Y}^{\an}_{\Gamma_0}$ is algebraic as well, \emph{i.e.}, defined over $\mathcal{U}_{\Gamma_0}$ and $\mathcal{Y}_{\Gamma_0}$. Taking the stack-theoretic quotient by $G$, we obtain an open immersion of smooth 1-dimensional Deligne-Mumford stacks over $\mathbb{C}$
\[
    \mathcal{U}_{\Gamma} \defeq [G\backslash \mathcal{U}_{\Gamma_0}] \hookrightarrow \mathcal{Y}_{\Gamma} \defeq [G\backslash \mathcal{Y}_{\Gamma_0}]
\]
whose analytification is
\[
    \mathcal{U}^{\an}_{\Gamma} \cong \Gamma \backslash \!\!\backslash \mathfrak{U} \hookrightarrow \mathcal{Y}^{\an}_{\Gamma} \cong  \Gamma \backslash \!\!\backslash \mathfrak{H} \ .
\]

The holomorphic vector bundle with integrable connection $(\mathcal{V}_k^{\dR,\an},\nabla^{\an}_k)$ and the Hodge filtration $F^{\bullet}\mathcal{V}_k^{\dR,\an}$ are also algebraic, in the sense that they are the analytification of analogous algebraic objects $(\mathcal{V}_k^{\dR,\mathbb{C}},\nabla^{\mathbb{C}}_k)$ and $F^{\bullet}\mathcal{V}_k^{\dR,\mathbb{C}}$ over $\mathcal{Y}_{\Gamma_0}$. By taking $G$-invariants, they descend to the stack $\mathcal{Y}_{\Gamma}$. Since $(\mathcal{V}_k^{\dR,\mathbb{C}},\nabla^{\mathbb{C}}_k)$ has regular singularities at infinity, its algebraic de Rham cohomology is isomorphic to the analytic de Rham cohomology:
\begin{equation}
    H^1_{\dR}(\mathcal{U}_{\Gamma_0}; \mathcal{V}_k^{\dR,\mathbb{C}}) \cong H^1_{\dR}(\mathcal{U}^{\an}_{\Gamma_0};\mathcal{V}_{k}^{\dR, \an}) \ ,
\end{equation}
and, by taking $G$-invariants, we obtain a similar statement for $\mathcal{U}_{\Gamma}$:
\begin{equation}\label{eq:cohomolgy-open-complex}
    H^1_{\dR}(\mathcal{U}_{\Gamma};\mathcal{V}_{k}^{\dR,\mathbb{C}}) \defeq H^1_{\dR}(\mathcal{U}_{\Gamma_0}; \mathcal{V}_k^{\dR,\mathbb{C}})^G \cong H^1_{\dR}(\mathcal{U}^{\an}_{\Gamma};\mathcal{V}_{k}^{\dR, \an}) \ .
\end{equation}

\begin{prop} \label{prop:modular-representative-algebraic}
    The natural map $\Gamma(\mathcal{U}_{\Gamma_0}, F^k\mathcal{V}_k^{\dR,\mathbb{C}}\otimes \Omega^1_{\mathcal{U}_{\Gamma_0}/\mathbb{C}}) \to H^1_{\dR}(\mathcal{U}_{\Gamma_0}; \mathcal{V}_k^{\dR,\mathbb{C}})$ is surjective.
\end{prop}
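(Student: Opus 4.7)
The plan is to imitate the proof of Proposition \ref{prop:modular-representative} in the algebraic category, replacing the Stein hypothesis and the soft-sheaf resolution by the affineness of $\mathcal{U}_{\Gamma_0}$ and the vanishing of higher coherent cohomology on affine schemes. Concretely, I would introduce the filtered algebraic de Rham complex
\[
    C^\bullet \,:\; \mathcal{V}_k^{\dR,\mathbb{C}} \xrightarrow{\nabla_k^{\mathbb{C}}} \mathcal{V}_k^{\dR,\mathbb{C}} \otimes \Omega^1_{\mathcal{U}_{\Gamma_0}/\mathbb{C}},
\]
on $\mathcal{U}_{\Gamma_0}$, equipped with the subcomplexes $F^pC^\bullet$ induced by the Hodge filtration on $\mathcal{V}_k^{\dR,\mathbb{C}}$.

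The key observation is that the explicit formula \eqref{Deltaancomputation} makes sense algebraically: the frame $X-zY,\ -Y$ is an algebraic frame of $\mathcal{V}_1^{\dR,\mathbb{C}}$ (since $-Y = \nabla_1^{\mathbb{C}}(X-zY)/dz$ is automatically algebraic), and the same formula shows that $\nabla_k^{\mathbb{C}}$ induces isomorphisms of locally free $\mathcal{O}_{\mathcal{U}_{\Gamma_0}}$-modules
\[
    F^p\mathcal{V}_k^{\dR,\mathbb{C}}/F^{p+1}\mathcal{V}_k^{\dR,\mathbb{C}} \xrightarrow{\sim} \bigl(F^{p-1}\mathcal{V}_k^{\dR,\mathbb{C}}/F^p\mathcal{V}_k^{\dR,\mathbb{C}}\bigr) \otimes \Omega^1_{\mathcal{U}_{\Gamma_0}/\mathbb{C}}
\]
for $1 \le p \le k$. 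Running the spectral sequence of the filtered complex exactly as in the analytic case then shows that the only surviving terms are in bidegrees $(0,0)$ and $(k+1,-k)$, and yields a short exact sequence of coherent sheaves on $\mathcal{U}_{\Gamma_0}$:
\[
    0 \to \mathcal{V}_k^{\dR,\mathbb{C}}/F^1\mathcal{V}_k^{\dR,\mathbb{C}} \to F^k\mathcal{V}_k^{\dR,\mathbb{C}} \otimes \Omega^1_{\mathcal{U}_{\Gamma_0}/\mathbb{C}} \to \mathcal{H}^1(C^\bullet) \to 0.
\]

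Finally, I would argue that $\mathcal{U}_{\Gamma_0}$ is affine: since $\Gamma_0$ has finite index in $\SL_2(\mathbb{Z})$ and acts freely on $\mathfrak{H}$, its compactification $X_{\Gamma_0}$ is a smooth projective curve over $\mathbb{C}$, and $\mathcal{Y}_{\Gamma_0}$ is obtained by removing the non-empty cusp divisor, hence is affine; removing finitely many further closed points preserves affineness. Consequently higher coherent cohomology on $\mathcal{U}_{\Gamma_0}$ vanishes, so taking global sections in the short exact sequence above is exact and identifies
\[
    H^1_{\dR}(\mathcal{U}_{\Gamma_0};\mathcal{V}_k^{\dR,\mathbb{C}}) \;\cong\; \Gamma(\mathcal{U}_{\Gamma_0},\mathcal{H}^1(C^\bullet))
\]
as a quotient of $\Gamma(\mathcal{U}_{\Gamma_0}, F^k\mathcal{V}_k^{\dR,\mathbb{C}} \otimes \Omega^1_{\mathcal{U}_{\Gamma_0}/\mathbb{C}})$, giving the desired surjectivity.

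The argument is a direct transcription of the analytic one, and no step presents a serious obstacle; the only point requiring attention is the affineness of $\mathcal{U}_{\Gamma_0}$, which one could alternatively sidestep by invoking the algebraic-to-analytic comparison \eqref{eq:cohomolgy-open-complex} and Proposition \ref{prop:modular-representative}, combined with the fact that any analytic section of the coherent sheaf $F^k\mathcal{V}_k^{\dR,\an} \otimes \Omega^1_{\mathcal{U}^{\an}_{\Gamma_0}}$ can be approximated, modulo $\nabla_k^{\an}$-exact forms, by an algebraic one (again using affineness, via GAGA for affine schemes).
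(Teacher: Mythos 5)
Your proposal is correct and follows exactly the route the paper takes: the paper's proof is precisely the declaration that one replaces the analytic de Rham complex by the algebraic one, replaces the Stein hypothesis by affineness of $\mathcal{U}_{\Gamma_0}$, and invokes Serre's vanishing theorem in place of Cartan's Theorem B, running the identical filtered-complex spectral sequence. Your remark that the algebraic frame $X-zY,\,-Y$ makes the isomorphism \eqref{Deltaancomputation} literally valid over $\mathcal{O}_{\mathcal{U}_{\Gamma_0}}$, together with the observation that $\mathcal{U}_{\Gamma_0}$ is affine as a Zariski-open of an affine curve, supplies the details the paper leaves implicit.
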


\begin{proof}
    The proof is completely analogous to that of Proposition \ref{prop:modular-representative}, using the algebraic de Rham complex $\Omega^{\bullet}_{\mathcal{U}_{\Gamma_0}/\mathbb{C}}(\mathcal{V}_k^{\dR,\mathbb{C}})$ in  place of $\Omega^{\bullet}_{\mathcal{U}^{\an}_{\Gamma_0}}(\mathcal{V}_k^{\dR,\an})$, and the fact that $\mathcal{U}_{\Gamma_0}$ is affine in  place of the fact that $\mathcal{U}_{\Gamma_0}^{\an}$ is Stein, together with Serre's vanishing theorem for coherent cohomology of affine schemes. 
\end{proof}

By taking $G$-invariants, the above proposition implies that every cohomology class in $H^1_{\dR}(\mathcal{U}_{\Gamma}; \mathcal{V}_k^{\dR,\mathbb{C}})$ can be represented by an element of $\Gamma(\mathcal{U}_{\Gamma_0}, F^k\mathcal{V}_k^{\dR,\mathbb{C}}\otimes \Omega^1_{\mathcal{U}_{\Gamma_0}/\mathbb{C}})^G = \Gamma(\mathcal{U}_{\Gamma},F^k\mathcal{V}_k^{\dR,\mathbb{C}}\otimes \Omega^1_{\mathcal{U}_{\Gamma}/\mathbb{C}})$. Any such element can be written on the uniformisation as
\[
    f(z) (X-zY)^kdz
\]
for a unique meromorphic modular form $f: \mathfrak{U} \to \mathbb{C}$ for $\Gamma$ of weight $k+2$: the $\Gamma$-invariance amounts to the fact that $f$ is modular for $\Gamma$ of weight $k+2$, and the algebraicity to the fact that $f$ has at most poles  along the points of $S$ and the cusps.

For any $l \in \mathbb{Z}$, let $M^!_{l,\Gamma}(*S)$ be the $\mathbb{C}$-vector space of meromorphic modular forms for $\Gamma$ of weight $l$, which are holomorphic outside $S$ and the cusps.

\begin{prop} \label{prop:dR-cohomology-meromorphic-modular-forms}
    For any integer $k\ge 0$,
    \[
        \begin{tikzcd}[row sep = -0.1cm, column sep = small]
            M^!_{-k,\Gamma}(*S) \arrow{r} & M_{k+2,\Gamma}^!(*S) \arrow{r} & H^1_{\dR}(\mathcal{U}_{\Gamma}; \mathcal{V}_k^{\dR,\mathbb{C}}) \arrow{r} & 0\\
              h \rar[mapsto] & \frac{\partial^{k+1}h}{\partial z^{k+1}} \\
              & f \rar[mapsto] & {[f(z)(X-zY)^kdz]}
        \end{tikzcd}
    \]
    is an exact sequence of $\mathbb{C}$-vector spaces. 
\end{prop}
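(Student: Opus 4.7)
The proof combines Proposition \ref{prop:modular-representative-algebraic}, which already yields a surjection from meromorphic modular forms to the de Rham cohomology, with Proposition \ref{prop:holomorphic-lift}, which characterises Betti-conjugates of $0$ in terms of the Bol operator. Surjectivity of the rightmost arrow is immediate from Proposition \ref{prop:modular-representative-algebraic} after taking $G$-invariants (using \eqref{eq:cohomolgy-open-complex} and the fact that taking $G$-invariants is exact on $\mathbb{C}$-vector spaces), together with the identification of $\Gamma$-equivariant global sections of $F^k\mathcal{V}_k^{\dR,\mathbb{C}}\otimes \Omega^1_{\mathcal{U}_{\Gamma_0}/\mathbb{C}}$ with $M^!_{k+2,\Gamma}(*S)$ via $f \mapsto f(z)(X-zY)^k dz$ described in the paragraph preceding the statement. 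For the fact that the composition vanishes, let $h \in M^!_{-k,\Gamma}(*S)$ and set $f = \partial^{k+1}h/\partial z^{k+1}$. Restricted to $\mathfrak{U}$, the function $h$ is holomorphic, so the implication $(iii)\Rightarrow(ii)$ of Proposition \ref{prop:holomorphic-lift} yields a $\Gamma$-equivariant holomorphic lift $F:\mathfrak{U}\to V_k^{\B}\otimes \mathbb{C}$ with $dF = f(z)(X-zY)^k dz$. In the holomorphic frame $(X-zY)^a(-Y)^b$ of $\mathcal{V}_k^{\dR,\mathbb{C}}$, the coefficients of $F$ are, up to signs and factorials, iterated $z$-derivatives of $h$, hence meromorphic on $\mathfrak{H}$ with poles only along $S$ and the cusps. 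Therefore $F$ descends to an \emph{algebraic} section of $\mathcal{V}_k^{\dR,\mathbb{C}}$ over $\mathcal{U}_\Gamma$ trivialising the class in question.

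\textbf{Exactness at the middle.} Conversely, suppose that $f\in M^!_{k+2,\Gamma}(*S)$ and $[f(z)(X-zY)^k dz] = 0$ in $H^1_{\dR}(\mathcal{U}_\Gamma;\mathcal{V}_k^{\dR,\mathbb{C}})$. Then there exists an algebraic section $F$ of $\mathcal{V}_k^{\dR,\mathbb{C}}$ over $\mathcal{U}_\Gamma$ with $\nabla_k F = f(z)(X-zY)^k dz$. Pulled back to $\mathfrak{U}$, $F$ is a holomorphic $\Gamma$-equivariant vector-valued function satisfying the harmonic-lift equation \eqref{eq:harmonic} with anti-holomorphic part $g = 0$; by the equivalence $(ii)\Leftrightarrow (iii)$ of Proposition \ref{prop:holomorphic-lift}, there exists a holomorphic function $h$ on $\mathfrak{U}$, modular for $\Gamma$ of weight $-k$, with $\partial^{k+1}h/\partial z^{k+1} = f$. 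The construction defines $h(z) = \frac{(z-\overline{z})^k}{k!}F_{0,k}(z)$; using the identity $(X-zY) - (X-\overline{z}Y) = (z-\overline{z})(-Y)$ to pass from the real-analytic frame $(X-zY)^r(X-\overline{z}Y)^s$ to the holomorphic frame $(X-zY)^a(-Y)^b$, a short computation shows that $h = \frac{(-1)^k}{k!} H_0(z)$, where $H_0$ is the coefficient of $(-Y)^k$ in the holomorphic expansion of $F$. Since $F$ is algebraic on $\mathcal{U}_\Gamma$, so is $H_0$, and hence $h \in M^!_{-k,\Gamma}(*S)$.

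\textbf{Main obstacle.} The only delicate ingredient is the algebraicity check at the end: the function $h$ is \emph{a priori} given by the real-analytic formula $h = (z-\overline{z})^k F_{0,k}/k!$ involving factors of $(z-\overline{z})$, yet must be exhibited as an algebraic section on $\mathcal{U}_\Gamma$ with poles only at $S$ and the cusps. The short change-of-frame argument based on the identity above resolves this by identifying $h$ with an algebraic coefficient of the given $F$. All other ingredients are essentially formal consequences of the two propositions mentioned above.
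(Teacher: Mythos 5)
Your proposal is correct and follows essentially the same route as the paper: surjectivity via Proposition \ref{prop:modular-representative-algebraic} (and the discussion after it), and exactness in the middle by producing a holomorphic harmonic lift and invoking Proposition \ref{prop:holomorphic-lift}. The only divergence is how you establish that $h$ has poles only along $S$ and the cusps: the paper reads this off \emph{a posteriori} from the single equation $f=\partial^{k+1}h/\partial z^{k+1}$, whereas you identify $h$ (up to a constant) with the $(-Y)^k$-coefficient of the algebraic primitive $F$ via a change of frame — both work, the paper's observation being slightly more economical since it avoids any discussion of the frame $(X-zY)^a(-Y)^b$ over $\mathcal{U}_{\Gamma}$.
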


\begin{proof}The fact that the first map preserves modularity for $\Gamma$ follows from \eqref{eq:bol-formula} and properties of the raising and lowering operators \S\ref{sec: raisin-lowering}. 
    The exactness on the right follows from Proposition \ref{prop:modular-representative-algebraic} and the discussion which follows.  For the exactness in the middle, we observe that the condition $[f(z)(X-zY)^k dz] =0$ implies that $f$ admits a holomorphic harmonic lift. Thus, we may apply Proposition \ref{prop:holomorphic-lift} to obtain a holomorphic function $h:\mathfrak{U} \to \mathbb{C}$ modular for $\Gamma$ of weight $-k$ such that $f = \partial^{k+1}h/\partial z^{k+1}$. This equation also implies \emph{a posteriori} that $h$ can have at most poles along the cusps and the points of $S$.
\end{proof}

\begin{rem} \label{rem:weakly-holomorphic}
    The above proposition also applies to $S= \emptyset$, in which case we recover the description of $H^1_{\dR}(\mathcal{Y}_{\Gamma};\mathcal{V}_k^{\dR,\mathbb{C}})$ in terms of weakly holomorphic modular forms (\emph{cf.} \cite[Theorem 1.2]{brown-hain}, \cite[Theorem 5.1]{CoeffPoincare}, and references therein):
    \[
        H^1_{\dR}(\mathcal{Y}_{\Gamma};\mathcal{V}_k^{\dR,\mathbb{C}}) \cong M_{k+2,\Gamma}^!/ \textstyle{\frac{\partial^{k+1}}{\partial z^{k+1}}}M_{-k,\Gamma}^!.
    \]
\end{rem}

We now obtain a more precise version of Corollary \ref{cor:existence-harmonic-lift} for meromorphic modular forms.

\begin{cor} \label{cor:existence-harmonic-lift-meromorphic}
     If $f$ is a meromorphic modular form in $M^!_{k+2,\Gamma}(*S)$, then it admits a harmonic lift (Definition \ref{defn:harmoniclift}) for which the corresponding Betti-conjugate $g$ is also a meromorphic modular form in $M^!_{k+2,\Gamma}(*S)$.
\end{cor}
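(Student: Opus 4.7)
The plan is to combine Proposition \ref{prop:dR-cohomology-meromorphic-modular-forms} with the complex conjugation on de Rham cohomology coming from the Betti $\QQ$-structure so as to sharpen Corollary \ref{cor:existence-harmonic-lift}. Setting $\alpha_f \defeq f(z)(X-zY)^k dz$, Proposition \ref{prop:modular-representative-algebraic} tells us that $\alpha_f$ represents a class in $H^1_{\dR}(\mathcal{U}_\Gamma; \mathcal{V}_k^{\dR,\mathbb{C}})$. Under the antilinear involution on this cohomology group induced by the $\QQ$-structure $\mathbb{V}_k^{\B}$, the class $[\alpha_f]$ is sent to the class of $\overline{\alpha_f} \defeq \overline{f(z)}(X-\overline{z}Y)^k d\overline{z}$. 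Using $\overline{j_\gamma(z)} = j_\gamma(\overline{z})$ for $\gamma \in \SL_2(\mathbb{R})$ together with \eqref{eq:gamma-equivariance}, one checks directly that $\overline{\alpha_f}$ is a closed, $\Gamma$-equivariant, real-analytic $1$-form on $\mathfrak{U}$, with no singularities there because $f$ is holomorphic on $\mathfrak{U}$.

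Applying the surjectivity part of Proposition \ref{prop:dR-cohomology-meromorphic-modular-forms} to the class $[\overline{\alpha_f}]$ then produces a meromorphic modular form $g \in M^!_{k+2,\Gamma}(*S)$ with $[\alpha_g] = [\overline{\alpha_f}]$. Since the cohomology of $\mathcal{U}_\Gamma^{\an}$ can be computed using $\Gamma$-equivariant smooth forms as in \S\ref{par:analytic-dR-open}, this equality of classes produces a $\Gamma$-equivariant real-analytic function $\tilde F : \mathfrak{U} \to V_k^{\B}\otimes\mathbb{C}$ with $d\tilde F = \alpha_g - \overline{\alpha_f}$. Conjugating both sides and setting $F \defeq -\overline{\tilde F}$ yields the identity
\[
    dF = f(z)(X-zY)^k dz - \overline{g(z)}(X-\overline{z}Y)^k d\overline{z},
\]
which displays $F$ as a harmonic lift of $f$ in the sense of Definition \ref{defn:harmoniclift}, with corresponding Betti-conjugate $g \in M^!_{k+2,\Gamma}(*S)$ as required.

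No serious obstacle arises in this plan: all the algebro-geometric content has already been packaged into Proposition \ref{prop:dR-cohomology-meromorphic-modular-forms}, and the argument above simply exploits the symmetry, under complex conjugation, between holomorphic and anti-holomorphic meromorphic representatives of a de Rham class. The one point deserving care is the verification that $\overline{\alpha_f}$ really defines a $\Gamma$-equivariant smooth form on $\mathcal{U}_\Gamma^{\an}$, which is the content of the first paragraph.
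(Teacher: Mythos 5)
Your argument is correct and logically identical to the paper's: you are unpacking, in the concrete modular setting, the general machinery of Appendix B (the complex-conjugation involution $c$ on $H^1_{\dR}$, Proposition \ref{prop:geometric-harmoniclift}, and Remark \ref{rem:geometric-harmoniclift-algebraic}), which the paper simply cites along with Proposition \ref{prop:dR-cohomology-meromorphic-modular-forms}. Your extra conjugation step $F \defeq -\overline{\tilde F}$ is just the involutivity of $c$ absorbed at a slightly different point, so the two proofs are the same up to bookkeeping.
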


\begin{proof}
    This is an immediate consequence of Propositions \ref{prop:dR-cohomology-meromorphic-modular-forms} and \ref{prop:geometric-harmoniclift}, together with Remark \ref{rem:geometric-harmoniclift-algebraic}. 
\end{proof}

\begin{ex}
    When $S = \Gamma w$, we have seen in Proposition \ref{prop:Psi} that, as a function of $z$, every $\Psi_{\Gamma}^{p,q}(z,w)$ with $p+q = k$ is a meromorphic modular form in $M^!_{k+2,\Gamma}(*S)$. By Corollary \ref{cor: GisWHLofPsis}, it is a Betti-conjugate of $-\Psi_{\Gamma}^{q,p}(z,w)$, which is also an element of $M^!_{k+2,\Gamma}(*S)$.
\end{ex}

\subsection{Residue sequence}

Let $\mathcal{S}_{\Gamma_0} = \Gamma_0\backslash S$. It can be regarded as a finite set of closed points on $\mathcal{Y}_{\Gamma_0}$, and we denote by $i: \mathcal{S}_{\Gamma_0} \hookrightarrow \mathcal{Y}_{\Gamma_0}$ the inclusion. Since $\mathcal{Y}_{\Gamma_0}$ is affine and 1-dimensional, we have a residue exact sequence of complex vector spaces (see \S\ref{par:appendix-residue}):
\[
    \begin{tikzcd}[column sep = small]
        0 \arrow{r} & H^1_{\dR}(\mathcal{Y}_{\Gamma_0};\mathcal{V}_{k}^{\dR,\mathbb{C}}) \arrow{r} & H^1_{\dR}(\mathcal{U}_{\Gamma_0};\mathcal{V}_{k}^{\dR,\mathbb{C}}) \arrow{r} & H^0_{\dR}(\mathcal{S}_{\Gamma_0};i^*\mathcal{V}_{k}^{\dR,\mathbb{C}}) \arrow{r} & 0 \ .
    \end{tikzcd}
\]
By taking $G$-invariants, we obtain an exact sequence
\[
    \begin{tikzcd}[column sep = small]
        0 \arrow{r} & H^1_{\dR}(\mathcal{Y}_{\Gamma};\mathcal{V}_{k}^{\dR,\mathbb{C}}) \arrow{r} & H^1_{\dR}(\mathcal{U}_{\Gamma};\mathcal{V}_{k}^{\dR,\mathbb{C}}) \arrow{r} & H^0_{\dR}(\mathcal{S}_{\Gamma_0};i^*\mathcal{V}_{k}^{\dR,\mathbb{C}})^G \arrow{r} & 0 \ .
    \end{tikzcd}
\]
Recall that $S = \Gamma w_1 \cup \cdots \cup \Gamma w_r$ and note that 
\[
    H^0_{\dR}(\mathcal{S}_{\Gamma_0};i^*\mathcal{V}_{k}^{\dR,\mathbb{C}})^G \cong   \bigoplus_{i=1}^r\mathcal{V}_k^{\dR,\an}(w_i)^{\Gamma_{w_i}} \cong \bigoplus_{i=1}^r(\Sym^k H^1_{\dR}(\mathcal{E}_{w_i}))^{\Gamma_{w_i}} \ ,
\]
where $ \Gamma_{w_i}$  is the stabiliser of  $\Gamma$ at  $w_i$, so we can rewrite the residue sequence as
\begin{equation}\label{eq:residue-seq-over-C}
     \begin{tikzcd}[column sep = small]
        0 \arrow{r} & H^1_{\dR}(\mathcal{Y}_{\Gamma};\mathcal{V}_{k}^{\dR,\mathbb{C}}) \arrow{r} & H^1_{\dR}(\mathcal{U}_{\Gamma};\mathcal{V}_{k}^{\dR,\mathbb{C}}) \arrow{r}{\Res} & \bigoplus_{i=1}^r(\Sym^k H^1_{\dR}(\mathcal{E}_{w_i}))^{\Gamma_{w_i}} \arrow{r} & 0 \ .
    \end{tikzcd}
\end{equation}

\begin{rem}
    One can use this exact sequence to deduce dimension formulae for the space of meromorphic (quasi)modular forms  with prescribed pole orders, such as in \cite[Theorem 1.4]{Matthes}.
\end{rem}

Recall from Proposition \ref{prop:dR-cohomology-meromorphic-modular-forms} that a cohomology class $\alpha$ in $H^1(\mathcal{U}_{\Gamma};\mathcal{V}_k^{\dR,\mathbb{C}})$ can be  represented by a meromorphic modular form $f$. The residue of $\alpha$ is computed explicitly in terms of the principal part of $f$ at the points of $S$ as follows.

\begin{lem}\label{lem:formula-residue-meromorphic}
    Let $f \in M_{k+2, \Gamma}^!(*S)$ and, for every $i=1,\ldots,r$, denote by $f(z) = \sum_{n \gg 0} c_n(w_i) (z-w_i)^n$ its Laurent expansion at $w_i$. Then
    \[
        \Res([f(z)(X-zY)^kdz]) = \Bigg(\sum_{j=0}^k\binom{k}{j}c_{-j-1}(w_i) (X-w_iY)^{k-j}(-Y)^j \Bigg)_{i=1,\ldots,r} \ .
    \]
\end{lem}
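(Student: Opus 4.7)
The plan is to compute the residue locally in a holomorphic frame of $\mathcal{V}_k^{\dR,\mathbb{C}}$ in which the coefficients of $f(z)(X-zY)^k dz$ have classical Laurent expansions, and then invoke the construction of the residue map recalled in \S\ref{par:appendix-residue}. As discussed in \S\ref{par:cohomology-EC}, the sections $(X-zY)^a(-Y)^b$ with $a+b=k$ form a holomorphic frame of $\mathcal{V}_k^{\dR,\mathbb{C}}$, and under the connection $\nabla_k$ this frame extends smoothly across each $w_i$. Consequently the connection has no singularity in this frame at $w_i$, and the residue of a de Rham class represented by a meromorphic 1-form is obtained simply by extracting the coefficient of $(z-w_i)^{-1}dz$ in each component of the frame.

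The key computation is the change of frame from $(X-zY)^{k-j}(-Y)^j$ evaluated at general $z$ to its value at $w_i$. Using the elementary identity
\[
    X - zY \;=\; (X-w_iY) + (z-w_i)(-Y),
\]
I would apply the binomial theorem to obtain the Taylor expansion
\[
    (X-zY)^k \;=\; \sum_{j=0}^{k} \binom{k}{j}(X-w_iY)^{k-j}(-Y)^j\,(z-w_i)^j
\]
in the holomorphic frame, whose coefficients are polynomials in $(z-w_i)$. Substituting into $f(z)(X-zY)^k dz$ and using the Laurent expansion $f(z) = \sum_{n \gg 0} c_n(w_i)(z-w_i)^n$, the $j$th component of the 1-form becomes
\[
    \binom{k}{j}\Bigl(\sum_{n \gg 0} c_n(w_i)(z-w_i)^{n+j}\Bigr)dz,
\]
whose scalar residue at $z=w_i$ is $\binom{k}{j} c_{-j-1}(w_i)$. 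Summing over $j$ gives the formula in the statement.

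Finally, it remains to observe that the output indeed lies in $(\Sym^k H^1_{\dR}(\mathcal{E}_{w_i}))^{\Gamma_{w_i}}$, as required by the residue sequence \eqref{eq:residue-seq-over-C}. This is automatic: the expression $f(z)(X-zY)^k dz$ is $\Gamma$-equivariant by Proposition \ref{prop:modular-representative-algebraic}, and the residue map is $\Gamma_{w_i}$-equivariant by construction, so the residue at $w_i$ is $\Gamma_{w_i}$-invariant in the fibre $(V_k^{\B}\otimes \mathbb{C})$ identified with $\Sym^k H^1_{\dR}(\mathcal{E}_{w_i})$ via $[du] \leftrightarrow X-w_iY$. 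The only subtle point to check, and the main thing to be careful with, is that no normalisation convention in the residue map of \S\ref{par:appendix-residue} introduces an extra factor; this follows at once from the fact that the connection is smooth (not merely logarithmic) in the chosen holomorphic frame at $w_i$, so the residue reduces to the ordinary scalar residue component-wise.
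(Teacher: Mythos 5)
Your computation is correct and is the same as the paper's: write $X-zY = (X-w_iY)+(z-w_i)(-Y)$, expand $(X-zY)^k$ by the binomial theorem, multiply by the Laurent expansion of $f$, and read off the coefficient of $(z-w_i)^{-1}dz$.

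One caveat about the justification in your opening paragraph. The reason the residue can be computed ``simply by extracting the coefficient of $(z-w_i)^{-1}\,dz$ in each component'' is \emph{not} that the frame $(X-zY)^a(-Y)^b$ is holomorphic and extends across $w_i$; that frame is holomorphic but it is not flat (the connection matrix in it is non-zero: $\nabla_k(X-zY) = (-Y)\,dz$). Component-wise residue extraction is only invariant under adding exact forms $\nabla\beta$ when the frame consists of horizontal sections, since in a non-flat frame $\operatorname{Res}(\nabla\beta) = \operatorname{Res}(A\beta)$ need not vanish for meromorphic $\beta$, and $f(z)(X-zY)^k dz$ typically has higher-order poles. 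What actually makes the argument work --- and what the paper states --- is that the computation takes place in a \emph{Betti} (constant) frame: the elements $(X-w_iY)^{k-j}(-Y)^j$ are horizontal because $X, Y$ are and $w_i$ is a fixed scalar. Your ``key computation'' does exactly this change of basis, so the conclusion is sound; but the verbal framing ``holomorphic frame $\Rightarrow$ residue is the coefficient of $(z-w_i)^{-1}$'' is not a valid step and, if applied literally in the frame $(X-zY)^a(-Y)^b$, would give the wrong answer $c_{-1}(w_i)(X-w_iY)^k$.
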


\begin{proof}
    The residue in cohomology is induced by the residue of vector-valued functions (see \S \ref{par:appendix-residue}), which we compute in the Betti frame. By writing
    \[
        X - zY = (X - w_iY) + (z-w_i)(-Y)
    \]
    and applying the binomial formula, we obtain
    \[
        f(z) (X-zY)^k = \Bigg(\sum_{n \gg -\infty} c_n(w_i)(z-w_i)^n \Bigg)\Bigg(\sum_{j=0}^k\binom{k}{j}(X-w_iY)^{k-j}(-Y)^j(z-w_i)^j \Bigg),
    \]
    which gives
    \begin{align*}
        \Res_{z=w_i}f(z) (X-zY)^kdz &= \sum_{\substack{j+n = -1 \\ n\gg -\infty,\  0 \le j \le k}} \binom{k}{j}c_n(w_i)(X-w_iY)^{k-j}(-Y)^j \\
        &= \sum_{j=0}^k\binom{k}{j} c_{-j-1}(w_i) (X-w_iY)^{k-j}(-Y)^j. \qedhere
    \end{align*}
\end{proof}

Given $n \ge 0$, we denote by $M_{k+2,\Gamma}(n S)$  the $\mathbb{C}$-subspace of $M_{k+2,\Gamma}^!(*S)$ defined by meromorphic modular forms which are holomorphic at the cusps and have poles of order at most $n$ at every point of $S$. When $n=0$, we denote it simply by $M_{k+2,\Gamma}$; it is the space of holomorphic modular forms of weight $k+2$ for $\Gamma$.

\begin{lem}\label{lem:inclusion-modular-forms}
    If $n\le k+1$, then the natural map
    \begin{equation}\label{eq:inclusion-modular-forms}
        M_{k+2,\Gamma}(nS) \longrightarrow   M_{k+2,\Gamma}^!(*S)/\textstyle{\frac{\partial^{k+1}}{\partial z^{k+1}}M_{-k,\Gamma}^!(*S)} \cong H^1_{\dR}(\mathcal{U}_{\Gamma};\mathcal{V}_k^{\dR,\mathbb{C}})
    \end{equation}
    is injective.
\end{lem}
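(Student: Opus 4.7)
The plan is to argue via the explicit description of the cohomology in Proposition \ref{prop:dR-cohomology-meromorphic-modular-forms}. Under the isomorphism $H^1_{\dR}(\mathcal{U}_{\Gamma};\mathcal{V}_k^{\dR,\mathbb{C}}) \cong M_{k+2,\Gamma}^!(*S)/\tfrac{\partial^{k+1}}{\partial z^{k+1}}M_{-k,\Gamma}^!(*S)$, an element $f \in M_{k+2,\Gamma}(nS)$ lies in the kernel of \eqref{eq:inclusion-modular-forms} if and only if there exists $h \in M_{-k,\Gamma}^!(*S)$ with $f = \partial^{k+1}h/\partial z^{k+1}$. I will show that the hypothesis $n \le k+1$ forces such an $h$ to be a holomorphic modular form of non-positive weight, hence forces $f = 0$.

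The first step is a local analysis at the points of $S$. Suppose $h$ had a pole of some order $m \ge 1$ at a point $w \in S$. Then $\partial^{k+1}h/\partial z^{k+1}$ would have a pole of order exactly $m + k + 1 \ge k + 2$ at $w$. Since $f \in M_{k+2,\Gamma}(nS)$ has pole of order at most $n \le k+1$ at $w$, this is a contradiction. Therefore $h$ is holomorphic at every point of $S$, i.e., $h$ extends to a holomorphic function on $\mathfrak{H}$ which is modular for $\Gamma$ of weight $-k$.

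The second step analyses cusps. Working at a cusp with local coordinate $q$, write $h = \sum_{n \ge -N} a_n q^n$. Since $\partial/\partial z = 2\pi i\, q \, d/dq$, one has
\[
    \frac{\partial^{k+1}h}{\partial z^{k+1}} = (2\pi i)^{k+1}\sum_{n \ge -N} n^{k+1} a_n\, q^n.
\]
As $f = \partial^{k+1}h/\partial z^{k+1}$ is holomorphic at the cusp, every coefficient with $n < 0$ vanishes, and since $n^{k+1} \neq 0$ for $n < 0$, we conclude $a_n = 0$ for all $n < 0$. Thus $h$ is holomorphic at every cusp, analogously for all cusps of $\Gamma$.

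Combining these two steps, $h \in M_{-k,\Gamma}$, the space of \emph{holomorphic} modular forms of weight $-k$ for $\Gamma$. For $k \ge 1$ this space is zero, hence $h = 0$ and $f = 0$. For $k = 0$, $h$ is a constant, and $\partial h/\partial z = 0$, again giving $f = 0$. This is truly a routine pole-counting argument rather than a substantive obstacle; the only subtle point worth highlighting is the interplay between the pole bound $n \le k+1$ and the order $k+1$ of the Bol-type differential operator, which is precisely what allows the local-at-$S$ argument to go through without loss.
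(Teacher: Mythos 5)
Your proof is correct and follows essentially the same route as the paper's: express the kernel element as $f = \partial^{k+1}h/\partial z^{k+1}$, use the pole-order comparison at $S$ (a pole of order $m \ge 1$ for $h$ would give a pole of order $m+k+1 \ge k+2 > n$ for $f$) to force $h$ holomorphic along $S$, observe that $h$ is also holomorphic at the cusps, and conclude $h$ is a holomorphic modular form of weight $-k \le 0$, hence constant, hence $f=0$. You spell out the cusp step via $q$-expansions where the paper merely asserts it; the only minor imprecision is that for a general congruence subgroup the Fourier expansion at a cusp of width $d$ runs over $q^{n/d}$, but this does not affect the conclusion since $(n/d)^{k+1}\neq 0$ for $n<0$.
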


\begin{proof}
    A modular form $f \in M_{k+2,\Gamma}^!(nS)$ in the kernel of \eqref{eq:inclusion-modular-forms} is of the form $f = \partial^{k+1}h/\partial z^{k+1}$ for some $h \in M_{-k,\Gamma}^!(*S)$. Since $f$ is holomorphic at infinity, so is $h$. Moreover, if $h$ had a pole along $S$, then $f$ would have a pole of order at least $k+2$ along $S$, which would contradict the hypothesis $n \le k+1$. This shows that $h$ is holomorphic everywhere. Since there are no non-constant holomorphic modular forms for $\Gamma$ of weight $-k\le 0$, we conclude that $f=0$.
\end{proof}

It follows from the above lemma that, for $n\le k+1$, we may always identify $M_{k+2, \Gamma}(nS)$ as a subspace of $H^1_{\dR}(\mathcal{U}_{\Gamma};\mathcal{V}_k^{\dR,\mathbb{C}})$. By Lemma \ref{lem:formula-residue-meromorphic}, for $p=1,\ldots,k+1$ and $f \in M_{k+2,\Gamma}((k+2-p)S)$, we have
\[
    \Res([f(z)(X-zY)^kdz]) \ \in \ \bigoplus_{i=1}^rF^{p-1}(\Sym^k H^1_{\dR}(\mathcal{E}_{w_i}))^{\Gamma_{w_i}} \ ,
\]
since  $-j-1\geq -(k+2-p)$ implies that $k-j \geq p-1$. Here, $F^{p-1}(\Sym^k H^1_{\dR}(\mathcal{E}_{w_i}))^{\Gamma_{w_i}}$ is the $(p-1)$th step of the Hodge filtration, given explicitly by $\Gamma_{w_i}$-invariant degree $k$ homogeneous polynomials in $X-w_iY$ and $- Y$ which are divisible by $(X-w_iY)^{p-1}$.

\begin{prop}\label{prop:residue-seq-hodge-explicit}
    For every $p=1,\ldots,k+1$, the residue induces an exact sequence of $\mathbb{C}$-vector spaces
    \begin{equation}\label{eq:residue-seq-hodge-explicit}
        \begin{tikzcd}[column sep = small]
            0 \arrow{r} & M_{k+2,\Gamma}\arrow{r} & M_{k+2,\Gamma}((k+2-p)S) \arrow{r}{\Res} &  \bigoplus_{i=1}^rF^{p-1}(\Sym^k H^1_{\dR}(\mathcal{E}_{w_i}))^{\Gamma_{w_i}} \arrow{r} & 0 \ .
        \end{tikzcd}
    \end{equation}
\end{prop}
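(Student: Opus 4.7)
The plan is to break the exact sequence into three assertions: injectivity of the left-hand inclusion, the equality $\ker(\Res)=M_{k+2,\Gamma}$, and surjectivity of $\Res$. The first is tautological, being the inclusion of a subspace.

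For the second, I apply Lemma \ref{lem:formula-residue-meromorphic} directly. Given $f \in M_{k+2,\Gamma}((k+2-p)S)$, the pole-order hypothesis forces $c_{-j-1}(w_i)=0$ whenever $j \ge k+2-p$, so the residue formula reduces to
\[
    \Res([f(z)(X-zY)^kdz])_i = \sum_{j=0}^{k+1-p}\binom{k}{j}c_{-j-1}(w_i)(X-w_iY)^{k-j}(-Y)^j,
\]
and since $k-j\ge p-1$ in every term, this does land in $F^{p-1}(\Sym^k H^1_{\dR}(\mathcal{E}_{w_i}))$; the $\Gamma_{w_i}$-invariance is automatic from the $\Gamma$-equivariance of $f(z)(X-zY)^kdz$. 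Linear independence of the monomials $(X-w_iY)^{k-j}(-Y)^j$ in $\Sym^k H^1_{\dR}(\mathcal{E}_{w_i})$ then shows that vanishing of the residue forces $c_{-j-1}(w_i)=0$ for every $0\le j \le k+1-p$; hence $f$ is holomorphic at each $w_i$, and therefore at every point of $S$ by $\Gamma$-modularity, so $f \in M_{k+2,\Gamma}$.

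For surjectivity I plan to construct explicit preimages from the meromorphic modular forms $\Psi^{p',q'}_\Gamma(z,w_i)$ of Section 3. By Proposition \ref{prop:Psi}, whenever $p'+q'=k$ and $0\le p'\le k+1-p$, the form $z\mapsto \Psi^{p',q'}_\Gamma(z,w_i)$ has poles of order at most $p'+1 \le k+2-p$ along $\Gamma w_i$, vanishes at cusps, and has no other finite poles, so it lies in $M_{k+2,\Gamma}((k+2-p)S)$ with residue zero at $w_j$ for $j\ne i$. Corollary \ref{cor:residue-Psi} computes its residue at $w_i$ as a nonzero constant times $\kappa^{p',q'}_{\Gamma,w_i}(X-\overline{w_i}Y)^{p'}(X-w_iY)^{q'}$; writing $x=X-w_iY$ and $\bar{x}=X-\overline{w_i}Y$, a short character computation (using $\bar{x}|_\gamma = j_\gamma^{-1}(\overline{w_i})\bar{x}$ and $x|_\gamma = j_\gamma^{-1}(w_i)x$ for $\gamma \in \Gamma_{w_i}$) identifies this with a nonzero scalar multiple of the image of $\bar{x}^{p'}x^{q'}$ under the averaging projector $\pi:\Sym^k H^1_{\dR}(\mathcal{E}_{w_i})\twoheadrightarrow (\Sym^k H^1_{\dR}(\mathcal{E}_{w_i}))^{\Gamma_{w_i}}$.

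To finish, expanding $\bar{x} = x + (w_i-\overline{w_i})(-Y)$ yields the triangular identity
\[
\bar{x}^{p'}x^{q'}=\sum_{m=0}^{p'}\binom{p'}{m}(w_i-\overline{w_i})^m(X-w_iY)^{k-m}(-Y)^m,
\]
so that $\{\bar{x}^{p'}x^{q'}:0\le p'\le k+1-p\}$, being triangular with nonvanishing diagonal entries $(w_i-\overline{w_i})^{p'}$ in the basis $\{(X-w_iY)^{k-m}(-Y)^m:0\le m\le k+1-p\}$, itself forms a basis of $F^{p-1}(\Sym^k H^1_{\dR}(\mathcal{E}_{w_i}))$. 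Because $F^{p-1}$ is $\Gamma_{w_i}$-stable and $\pi$ is surjective onto invariants, the images $\pi(\bar{x}^{p'}x^{q'})$ span $F^{p-1}(\Sym^k H^1_{\dR}(\mathcal{E}_{w_i}))^{\Gamma_{w_i}}$; summing suitable contributions over $i=1,\ldots,r$ produces a preimage of any prescribed target element. The hardest step is precisely this final identification, matching the residue of $\Psi^{p',q'}_\Gamma$ with a scalar multiple of $\pi(\bar{x}^{p'}x^{q'})$ and extracting the triangular structure that cuts out $\Gamma_{w_i}$-invariants within $F^{p-1}$; once that linear-algebraic content is secured, the rest of the argument is routine.
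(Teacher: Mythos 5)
Your proof is correct and follows essentially the same route as the paper's: exactness on the left and in the middle from Lemma \ref{lem:formula-residue-meromorphic}, and surjectivity of $\Res$ from the forms $\Psi^{p',q'}_{\Gamma}(z,w_i)$ together with the residue computation of Corollary \ref{cor:residue-Psi}; you merely unpack in detail the paper's terse claim that the averaged monomials $\kappa^{m,n}_{\Gamma,w_i}(X-w_iY)^m(X-\overline{w}_iY)^n$ with $m\ge p-1$ generate $F^{p-1}(\Sym^kH^1_{\dR}(\mathcal{E}_{w_i}))^{\Gamma_{w_i}}$. One cosmetic slip: since $X-\overline{w}_iY = (X-w_iY) - (w_i-\overline{w}_i)(-Y)$, your binomial expansion should carry a factor $(-1)^m$, though this does not affect the triangularity or the nonvanishing of the diagonal entries.
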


\begin{proof}
    The above sequence is well-defined and exact in the middle by Lemma \ref{lem:formula-residue-meromorphic}. It is trivially exact on the left, and we are left to show that the residue map is surjective. Using the $\mathbb{C}$-basis $X-w_iY$, $X-\overline{w}_iY$ of $H^1_{\dR}(\mathcal{E}_{w_i})$, and the notation \eqref{kappdef},  we see that $F^{p-1}(\Sym^k H^1_{\dR}(\mathcal{E}_{w_i}))^{\Gamma_{w_i}}$ is generated by
    \[
        \sum_{\gamma \in \Gamma_{w_i}}(X-w_iY)^m(X-\overline{w}_iY)^n\Big|_{\gamma} = \kappa_{\Gamma,w_i}^{m,n}(X-w_iY)^m(X-\overline{w}_iY)^n\text{, }\qquad m+n = k\text{, }m\ge p-1\text{, }n\ge 0.
    \]
    Now, for every $i=1,\ldots,r$ and $j=0,\ldots,k+1-p$, it follows from Proposition \ref{prop:Psi} that $\Psi_{\Gamma}^{j, k-j}(z,w_i) \in M_{k,\Gamma}((k+2-p)S)$. We conclude by an application of the residue computation of Corollary \ref{cor:residue-Psi}.
\end{proof}

We prove in the next paragraph that \eqref{eq:residue-seq-hodge-explicit} is actually given by the Hodge filtration on $H^1_{\dR}(\mathcal{U}_{\Gamma};\mathcal{V}_{k}^{\dR,\mathbb{C}})$.

\subsection{Hodge filtration and pole order}\label{par:hodge-and-pole}

The exact sequence of $\mathbb{C}$-vector spaces \eqref{eq:residue-seq-over-C} underlies an exact sequence of $\mathbb{Q}$-mixed Hodge structures (\emph{cf.} Remark \ref{rem:MHS-zucker}):
\[
    \begin{tikzcd}[column sep = small]
        0 \arrow{r} & H^1(\mathcal{Y}_{\Gamma};\mathcal{V}_k) \arrow{r} & H^1(\mathcal{U}_{\Gamma};\mathcal{V}_k) \arrow{r}{\Res} & \bigoplus_{i=1}^r(\Sym^kH^1(\mathcal{E}_{w_i}))^{\Gamma_{w_i}}(-1) \arrow{r} & 0.
    \end{tikzcd}
\]
Using the strictness of the Hodge filtration, we obtain for every $p\ge 1$ an exact sequence of $\mathbb{C}$-vector spaces
\begin{equation} \label{eq:residue-seq-hodge}
    \begin{tikzcd}[column sep = small]
        0 \arrow{r} & F^pH_{\dR}^1(\mathcal{Y}_{\Gamma};\mathcal{V}^{\dR,\mathbb{C}}_k) \arrow{r} & F^pH_{\dR}^1(\mathcal{U}_{\Gamma};\mathcal{V}^{\dR,\mathbb{C}}_k) \arrow{r}{\Res} & \bigoplus_{i=1}^rF^{p-1}(\Sym^kH_{\dR}^1(\mathcal{E}_{w_i}))^{\Gamma_{w_i}} \arrow{r} & 0 \ .
    \end{tikzcd}
\end{equation}
We shall prove that the above sequence agrees with \eqref{eq:residue-seq-hodge-explicit} for $p=1,\ldots,k+1$.

We recall the definition of the Hodge filtration on $H^1_{\dR}(\mathcal{U}_{\Gamma};\mathcal{V}_k^{\dR,\mathbb{C}})$, as in \S \ref{par:appendix-hodge-filtration}. Let $\mathcal{X}_{\Gamma}$ be the proper modular stack over $\mathbb{C}$ associated to $\Gamma$. With the above notation, it is the quotient stack  $\mathcal{X}_{\Gamma} = [G \backslash \mathcal{X}_{\Gamma_0}]$, where $\mathcal{X}_{\Gamma_0}$ is the smooth projective curve over $\mathbb{C}$ whose analytification is  the usual (Satake) compactification of a modular curve $\mathcal{X}_{\Gamma_0}^{\an} \cong \Gamma_0\backslash (\mathfrak{H} \cup \mathbb{P}^1(\mathbb{Q}))$.  Note that we have a chain of open immersions
\[
    \mathcal{U}_{\Gamma}\hookrightarrow\mathcal{Y}_{\Gamma}\hookrightarrow \mathcal{X}_{\Gamma},
\]
where $\mathcal{Y}_{\Gamma} \setminus \mathcal{U}_{\Gamma}$ is given by a closed substack $\mathcal{S}_{\Gamma}$ corresponding to the orbits $\Gamma w_1,\ldots,\Gamma w_r$, and $\mathcal{X}_{\Gamma}\setminus \mathcal{Y}_{\Gamma} $ is given by a closed substack $\mathcal{Z}_{\Gamma}$ corresponding to the cusps $\Gamma \backslash \mathbb{P}^1(\mathbb{Q})$. Formally, the closed substacks $\mathcal{S}_{\Gamma}$ and $\mathcal{Z}_{\Gamma}$ are given by finite unions of residue gerbes of the corresponding points of $\mathcal{X}_{\Gamma}$ (\emph{cf.} the discussion in \S\ref{par:moduli-pointed-EC}). In what follows, to simplify the exposition, we work directly with $\mathcal{X}_{\Gamma}$, but the reader can always take this to mean that we work $G$-equivariantly on $\mathcal{X}_{\Gamma_0}$.

The vector bundle with connection $(\mathcal{V}_k^{\dR,\mathbb{C}},\nabla_k^{\mathbb{C}})$ on $\mathcal{Y}_{\Gamma}$ extends canonically \cite{deligneEqDiff} to a vector bundle $\overline{\mathcal{V}}_{k}^{\dR,\mathbb{C}}$ over $\mathcal{X}_{\Gamma}$ with logarithmic connection
\[
    \overline{\nabla}_k^{\mathbb{C}} : \overline{\mathcal{V}}_k^{\dR,\mathbb{C}} \longrightarrow \overline{\mathcal{V}}_k^{\dR,\mathbb{C}} \otimes \Omega^1_{\mathcal{X}_{\Gamma}/\mathbb{C}}(\log \mathcal{Z}_{\Gamma}).
\]
The Hodge filtration on $\mathcal{V}_k^{\dR,\mathbb{C}}$ also extends canonically to a Hodge filtration $F^p\overline{\mathcal{V}}_k^{\dR,\mathbb{C}}$ satisfying Griffiths' transversality condition:
\[
    \overline{\nabla}_k^{\mathbb{C}} (F^p\overline{\mathcal{V}}_k^{\dR,\mathbb{C}}) \subset F^{p-1}\overline{\mathcal{V}}_k^{\dR,\mathbb{C}}\otimes \Omega^1_{\mathcal{X}_{\Gamma}/\mathbb{C}}(\log \mathcal{Z}_{\Gamma}) \ .
\]
Explicitly, using the isomorphism $\overline{\mathcal{V}}_k^{\dR,\mathbb{C}} \cong \Sym^k \overline{\mathcal{V}}_1^{\dR,\mathbb{C}}$, the Hodge filtration on $\mathcal{V}_k^{\dR,\mathbb{C}}$ is induced by the Hodge filtration on $\overline{\mathcal{V}}_1^{\dR,\mathbb{C}}$, which is given by
\[
    F^1\overline{\mathcal{V}}_1^{\dR,\mathbb{C}} = \overline{p}_*\Omega^1_{\overline{\mathcal{E}}/\mathcal{X}_{\Gamma}}(\log \overline{p}^{-1}\mathcal{Z}_{\Gamma}) \ ,
\]
where $\overline{p}: \overline{\mathcal{E}} \to \mathcal{X}_{\Gamma}$ is the `universal generalised elliptic curve' over $\mathcal{X}_{\Gamma}$.

\begin{rem}
    Since $dz = \frac{1}{2\pi i} \frac{dq}{q}$, if $q = e^{2\pi i z}$, and since $X-zY$ canonically extends to a section of $F^1\overline{\mathcal{V}}_1^{\dR,\mathbb{C}}$ on a neighbourhood of the cuspidal locus $\mathcal{Z}_{\Gamma}$, a meromorphic modular form $f$ which is holomorphic at the cusps defines a vector-valued 1-form $f(z)(X-zY)^kdz$ on $\mathcal{U}_{\Gamma}$ with logarithmic singularities along $\mathcal{Z}_{\Gamma}$. This yields, for every $m\ge 0$, an isomorphism of $\mathbb{C}$-vector spaces
    \begin{equation}\label{eq:holomorphic-at-cusps}
        M_{k+2,\Gamma}(m S) \stackrel{\sim}{\longrightarrow} H^0(\mathcal{X}_{\Gamma}, F^k \mathcal{V}_k^{\dR,\mathbb{C}}\otimes \mathcal{O}_{\mathcal{X}_{\Gamma}}(m\mathcal{S}_{\Gamma})\otimes \Omega^1_{\mathcal{X}_{\Gamma}/\mathbb{C}}(\log \mathcal{Z}_{\Gamma})))\ \text{, } \quad f \longmapsto f(z)(X-zY)^kdz \ .
    \end{equation}
\end{rem}

The Hodge filtration on $\overline{\mathcal{V}}_k^{\dR,\mathbb{C}}$ induces a filtration on the complex $\overline{C}\defeq \Omega^{\bullet}_{\mathcal{X}_{\Gamma}/\mathbb{C}}(\log (\mathcal{Z}_{\Gamma}\cup \mathcal{S}_{\Gamma});\overline{\mathcal{V}}_k^{\dR,\mathbb{C}})$:
\[ 
    F^p\overline{C}: \qquad  F^p\overline{\mathcal{V}}_k^{\dR,\mathbb{C}} \stackrel{\overline{\nabla}_k^{\mathbb{C}}}{\longrightarrow} F^{p-1}\overline{\mathcal{V}}_k^{\dR,\mathbb{C}} \otimes \Omega^{1}_{\mathcal{X}_{\Gamma}/\mathbb{C}}(\log (\mathcal{Z}_{\Gamma}\cup \mathcal{S}_{\Gamma}))
\] 
and the Hodge filtration on de Rham cohomology is defined by
\[
    F^pH^1_{\dR}(\mathcal{U}_{\Gamma};\mathcal{V}_k^{\dR,\mathbb{C}}) \coloneqq \im (\mathbb{H}^1(\mathcal{X}_{\Gamma};F^p\overline{C})\longrightarrow \mathbb{H}^1(\mathcal{X}_{\Gamma}; \overline{C}))
\]
under the identification $H^1_{\dR}(\mathcal{U}_{\Gamma};\mathcal{V}_k^{\dR, \mathbb{C}}) \cong \mathbb{H}^1(\mathcal{X}_{\Gamma};\overline{C})$ (see Lemma \ref{lem:deligne-theorem-compactifcation}).

\begin{lem}\label{lem:hodge-filtration-S-empty}
    Let $S=\emptyset$. Then for every $p=1,\ldots,k+1$, we have isomorphisms:
    \[
        F^pH^1_{\dR}(\mathcal{Y}_{\Gamma};\mathcal{V}_k^{\dR,\mathbb{C}}) \cong H^0(\mathcal{X}_{\Gamma},F^k\overline{\mathcal{V}}_k^{\dR,\mathbb{C}}\otimes \Omega^1_{\mathcal{X}_{\Gamma}/\mathbb{C}}(\log \mathcal{Z}_{\Gamma})) \stackrel{\eqref{eq:holomorphic-at-cusps}}{\cong} M_{k+2,\Gamma}.
    \]
\end{lem}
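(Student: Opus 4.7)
The plan is to compute $F^pH^1_{\dR}(\mathcal{Y}_{\Gamma};\mathcal{V}_k^{\dR,\mathbb{C}})$ directly from the filtered logarithmic de Rham complex $\overline{C} = \Omega^{\bullet}_{\mathcal{X}_{\Gamma}/\mathbb{C}}(\log \mathcal{Z}_{\Gamma};\overline{\mathcal{V}}_k^{\dR,\mathbb{C}})$ recalled in \S\ref{par:hodge-and-pole}, by showing that the graded pieces $\operatorname{gr}^p_F \overline{C}$ are acyclic for $1 \le p \le k$. In degrees $0$ and $1$, the two-term complex $\operatorname{gr}^p_F\overline{C}$ is
\[
    \operatorname{gr}^p_F\overline{\mathcal{V}}_k^{\dR,\mathbb{C}} \longrightarrow \operatorname{gr}^{p-1}_F\overline{\mathcal{V}}_k^{\dR,\mathbb{C}} \otimes \Omega^1_{\mathcal{X}_{\Gamma}/\mathbb{C}}(\log \mathcal{Z}_{\Gamma}),
\]
and, in the local holomorphic frame $(X-zY)^p(-Y)^{k-p}$ of $\operatorname{gr}^p_F\overline{\mathcal{V}}_k^{\dR,\mathbb{C}}$, the differential sends $[(X-zY)^p(-Y)^{k-p}]$ to $p\,[(X-zY)^{p-1}(-Y)^{k-p+1}]\,dz$. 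Globally, this is the $(p,k-p)$-th symmetric component of the Kodaira-Spencer map for the universal generalised elliptic curve $\overline{p}:\overline{\mathcal{E}} \to \mathcal{X}_{\Gamma}$, which extends over the cusps to an isomorphism of line bundles precisely because one works with $\Omega^1(\log \mathcal{Z}_{\Gamma})$ (equivalently, because of the log KS isomorphism $\Omega^1_{\mathcal{X}_{\Gamma}/\mathbb{C}}(\log \mathcal{Z}_{\Gamma}) \cong \overline{\omega}^{\otimes 2}$). Since the scalar $p$ is invertible in the range $1\le p \le k$, this yields the required acyclicity.

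Granted this, each inclusion $F^{p+1}\overline{C}\hookrightarrow F^p\overline{C}$ is a quasi-isomorphism for $1\le p \le k$, so that for every $p = 1, \ldots, k+1$,
\[
    \mathbb{H}^1(\mathcal{X}_{\Gamma}, F^p\overline{C}) \cong \mathbb{H}^1(\mathcal{X}_{\Gamma}, F^{k+1}\overline{C}) = H^0\bigl(\mathcal{X}_{\Gamma}, F^k\overline{\mathcal{V}}_k^{\dR,\mathbb{C}}\otimes \Omega^1_{\mathcal{X}_{\Gamma}/\mathbb{C}}(\log \mathcal{Z}_{\Gamma})\bigr),
\]
the last identification because $F^{k+1}\overline{\mathcal{V}}_k^{\dR,\mathbb{C}} = 0$ makes $F^{k+1}\overline{C}$ concentrated in degree $1$. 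The $m=0$ case of \eqref{eq:holomorphic-at-cusps}, together with $\mathcal{S}_{\Gamma} = \emptyset$ (since $S = \emptyset$), identifies this last group with $M_{k+2,\Gamma}$.

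The main remaining point --- and the only real subtlety --- is that $F^pH^1_{\dR}(\mathcal{Y}_{\Gamma};\mathcal{V}_k^{\dR,\mathbb{C}})$ is by definition the \emph{image} of the natural map $\mathbb{H}^1(\mathcal{X}_{\Gamma}, F^p\overline{C})\to H^1_{\dR}(\mathcal{Y}_{\Gamma};\mathcal{V}_k^{\dR,\mathbb{C}})$, so the previous calculation only shows that $M_{k+2,\Gamma}$ surjects onto $F^pH^1_{\dR}(\mathcal{Y}_{\Gamma};\mathcal{V}_k^{\dR,\mathbb{C}})$. Composing this surjection with the inclusion $F^pH^1_{\dR} \hookrightarrow H^1_{\dR}$ gives back the explicit assignment $f\mapsto [f(z)(X-zY)^k\,dz]$, which is already injective by Lemma \ref{lem:inclusion-modular-forms} applied with $S = \emptyset$ and $n = 0$: a nonzero holomorphic modular form of weight $k+2$ cannot be written as $\partial^{k+1}h/\partial z^{k+1}$ for any weakly holomorphic $h$ of weight $-k$, because such an $h$ would itself be forced to be a non-constant holomorphic modular form of non-positive weight. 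Hence both maps are isomorphisms, which yields the lemma. As elsewhere in this section, when $\Gamma$ does not act freely the argument is run $G$-equivariantly on $\mathcal{X}_{\Gamma_0}$ and invariants are taken at the end, which is exact since $G$ is finite.
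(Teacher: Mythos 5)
Your proof is correct, and it fills a gap the paper deliberately leaves open: for Lemma~\ref{lem:hodge-filtration-S-empty} the paper cites the literature (\cite[\S 12]{zucker}, \cite[\S 6]{CoeffPoincare}) rather than giving an argument, so there is no ``paper's proof'' to compare with. What you do is essentially push the spectral-sequence argument of Proposition~\ref{prop:modular-representative} from the open curve $\mathcal{U}^{\an}_{\Gamma_0}$ to the compactification $\mathcal{X}_{\Gamma}$: the graded acyclicity computed there in \eqref{Deltaancomputation} extends over the cusps once one observes that the graded differential of $\overline{\nabla}_k^{\mathbb{C}}$ on $\operatorname{gr}_F^p$ is $p$ times the $k$-th symmetric power of the logarithmic Kodaira--Spencer isomorphism $\overline{\omega}^{\otimes 2}\cong\Omega^1_{\mathcal{X}_{\Gamma}/\mathbb{C}}(\log\mathcal{Z}_{\Gamma})$, hence an isomorphism of line bundles for $1\le p\le k$. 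The second half --- identifying the image $F^pH^1_{\dR}$ with the full space $M_{k+2,\Gamma}$ via the injectivity of $f\mapsto[f(z)(X-zY)^k\,dz]$ from Lemma~\ref{lem:inclusion-modular-forms} with $S=\emptyset$ --- is exactly the right way to handle the fact that the Hodge filtration is defined as an image rather than a hypercohomology group outright. Two very small stylistic points: the phrase ``the scalar $p$ is invertible'' is trivially true over $\mathbb{C}$ and need not be flagged; and ``both maps are isomorphisms'' at the end reads slightly loosely (the inclusion $F^pH^1_{\dR}\hookrightarrow H^1_{\dR}$ is of course not one), but the intended meaning --- that the surjection $M_{k+2,\Gamma}\twoheadrightarrow F^pH^1_{\dR}$ is an isomorphism --- is unambiguous from context. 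This is a clean, self-contained proof that the paper would have been justified in including.
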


The above result is classical, see \cite[\S 12]{zucker} or \cite[\S 6]{CoeffPoincare} and references therein.

\begin{thm}\label{thm:hodge-filtration-open-complex}
    Under the isomorphism $H^1_{\dR}(\mathcal{U}_{\Gamma};\mathcal{V}_k^{\dR,\mathbb{C}}) \cong M_{k+2,\Gamma}^!(*S)/\frac{\partial^{k+1}}{\partial z^{k+1}}M_{-k,\Gamma}^!(*S)$ of Proposition \ref{prop:dR-cohomology-meromorphic-modular-forms}, the Hodge filtration is given by
    \begin{align*}
        &F^0H^1_{\dR}(\mathcal{U}_{\Gamma};\mathcal{V}_{k}^{\dR,\mathbb{C}}) \cong M_{k+2,\Gamma}^!(*S)/\textstyle{\frac{\partial^{k+1}}{\partial z^{k+1}}}M_{-k,\Gamma}^!(*S)\\
        &F^pH^1_{\dR}(\mathcal{U}_{\Gamma};\mathcal{V}_{k}^{\dR,\mathbb{C}}) \cong M_{k+2,\Gamma}((k+2-p)S) \text{, }\qquad p=1,\ldots,k+1\\
        &F^{k+2}H^1_{\dR}(\mathcal{U}_{\Gamma};\mathcal{V}_{k}^{\dR,\mathbb{C}}) = 0.
    \end{align*}
\end{thm}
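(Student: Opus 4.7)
\emph{Proof proposal.} The cases $F^0 = H^1_{\dR}(\mathcal{U}_{\Gamma};\mathcal{V}_k^{\dR,\mathbb{C}})$ and $F^{k+2}=0$ are immediate: the former from the definition of the Hodge filtration, and the latter from the fact that every pure subquotient of the mixed Hodge structure on $H^1(\mathcal{U}_{\Gamma};\mathcal{V}_k)$ has Hodge filtration concentrated in degrees $\le k+1$ (for $H^1(\mathcal{Y}_{\Gamma};\mathcal{V}_k)$ by Lemma \ref{lem:hodge-filtration-S-empty}; for $\bigoplus_i (\Sym^k H^1(\mathcal{E}_{w_i}))^{\Gamma_{w_i}}(-1)$ by a direct check of the Hodge types $(k+1,1),\ldots,(1,k+1)$). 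For the substantive range $1\le p\le k+1$, the plan is to apply the five-lemma to the commutative diagram
\[
    \begin{tikzcd}[column sep = small]
        0 \arrow{r} & M_{k+2,\Gamma} \arrow{r} \arrow{d}{\cong} & M_{k+2,\Gamma}((k+2-p)S) \arrow{r}{\Res} \arrow{d} & \bigoplus_i F^{p-1}(\Sym^k H^1_{\dR}(\mathcal{E}_{w_i}))^{\Gamma_{w_i}} \arrow{r} \arrow{d}{\mathrm{id}} & 0\\
        0 \arrow{r} & F^pH^1_{\dR}(\mathcal{Y}_{\Gamma};\mathcal{V}_k^{\dR,\mathbb{C}}) \arrow{r} & F^pH^1_{\dR}(\mathcal{U}_{\Gamma};\mathcal{V}_k^{\dR,\mathbb{C}}) \arrow{r}{\Res} & \bigoplus_i F^{p-1}(\Sym^k H^1_{\dR}(\mathcal{E}_{w_i}))^{\Gamma_{w_i}} \arrow{r} & 0 \ ,
    \end{tikzcd}
\]
whose top row is Proposition \ref{prop:residue-seq-hodge-explicit}, whose bottom row is \eqref{eq:residue-seq-hodge}, whose left vertical arrow is the isomorphism of Lemma \ref{lem:hodge-filtration-S-empty}, and whose right square commutes by the explicit residue formula of Lemma \ref{lem:formula-residue-meromorphic}.

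The substance of the proof is to show that the natural map of Proposition \ref{prop:dR-cohomology-meromorphic-modular-forms} sends $M_{k+2,\Gamma}((k+2-p)S)$ into $F^pH^1_{\dR}(\mathcal{U}_{\Gamma};\mathcal{V}_k^{\dR,\mathbb{C}})$, so that the middle vertical arrow is well-defined. I will construct an explicit \v{C}ech hypercocycle for the filtered complex $F^p\overline{C}$ representing the class $[f(z)(X-zY)^k dz]$ for each $f \in M_{k+2,\Gamma}((k+2-p)S)$. Use the open cover $\{U_0,U_1,\ldots,U_r\}$ of $\mathcal{X}_{\Gamma}$ with $U_0 = \mathcal{X}_{\Gamma}\setminus \mathcal{S}_{\Gamma}$ and $U_i$ a small disc around $w_i$, so that $U_i\cap U_j=\emptyset$ for distinct $i,j\ge 1$. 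On $U_0$ take $\alpha_0 = f(z)(X-zY)^k dz$, which automatically lies in $F^k\overline{\mathcal{V}}_k^{\dR,\mathbb{C}}\otimes \Omega^1_{\mathcal{X}_{\Gamma}/\mathbb{C}}(\log \mathcal{Z}_{\Gamma})$, hence in the degree-one part of $F^p\overline{C}$. On each $U_i$, work in the Hodge-adapted local holomorphic frame $\{(X-zY)^a(-Y)^b:a+b=k\}$ of $\overline{\mathcal{V}}_k^{\dR,\mathbb{C}}$, under which $F^p\overline{\mathcal{V}}_k^{\dR,\mathbb{C}}$ is the span of monomials with $a\ge p$ and the Gauss-Manin connection reads $\overline{\nabla}_k^{\mathbb{C}}((X-zY)^a(-Y)^b) = a(X-zY)^{a-1}(-Y)^{b+1}dz$. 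Iteratively subtract from $\alpha_0|_{U_i}$ coboundaries $\overline{\nabla}_k^{\mathbb{C}}(F_j(z)(X-zY)^{k-j+1}(-Y)^{j-1})$ for $j=1,\ldots,k-p+1$, where $F_j$ is a meromorphic primitive in $t=z-w_i$ of the non-logarithmic principal part remaining in the $(X-zY)^{k-j+1}(-Y)^{j-1}$ component after step $j-1$. Each iteration clears that component to a logarithmic form while pushing a pole of one lower order into the next component down.

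The main obstacle is the simultaneous control of pole order and Hodge degree across the iteration: each modifier $F_j(z)(X-zY)^{k-j+1}(-Y)^{j-1}$ must stay in $F^p\overline{\mathcal{V}}_k^{\dR,\mathbb{C}}$ (demanding $k-j+1\ge p$), and each primitive $F_j$ must genuinely have pole strictly smaller than its input. These two numerical constraints coincide exactly—both permit precisely $k-p+1$ iterations, because the Hodge distance from the starting degree $k$ down to the target $p-1$ equals the initial pole order $k+2-p$ minus one. After all $k-p+1$ steps the surviving components have Hodge degree in $\{p-1,\ldots,k\}$ with logarithmic coefficients and components of lower Hodge degree vanish, so the modified form $\alpha_i$ extends across $w_i$ as a section of $F^{p-1}\overline{\mathcal{V}}_k^{\dR,\mathbb{C}}\otimes \Omega^1_{\mathcal{X}_{\Gamma}/\mathbb{C}}(\log(\mathcal{Z}_{\Gamma}\cup \mathcal{S}_{\Gamma}))$ on $U_i$, and the transitions $s_i = \sum_j F_j(z)(X-zY)^{k-j+1}(-Y)^{j-1}\in \Gamma(U_0\cap U_i, F^p\overline{\mathcal{V}}_k^{\dR,\mathbb{C}})$ assemble with the $\alpha_i$ into the required \v{C}ech hypercocycle in $F^p\overline{C}$. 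The five-lemma applied to the diagram above then delivers the asserted isomorphism for $1\le p\le k+1$, completing the proof.
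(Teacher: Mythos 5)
Your proof is correct, and reaches the paper's conclusion by a genuinely different mechanism in the key step. Both arguments reduce, via the five lemma applied to the two residue sequences, to showing the inclusion $M_{k+2,\Gamma}((k+2-p)S)\subset F^pH^1_{\dR}(\mathcal{U}_{\Gamma};\mathcal{V}_k^{\dR,\mathbb{C}})$. The paper handles this abstractly: using surjectivity of the residue map on $F^p$ (which comes from strictness of morphisms of mixed Hodge structures, i.e.\ the sequence \eqref{eq:residue-seq-hodge}), it finds $\beta\in F^p$ with $\Res(\alpha)=\Res(\beta)$, then observes that $\alpha-\beta$ has vanishing residue and logarithmic singularities at $\mathcal{Z}_{\Gamma}$, hence lies in $F^pH^1_{\dR}(\mathcal{Y}_{\Gamma};\mathcal{V}_k^{\dR,\mathbb{C}})\cong M_{k+2,\Gamma}$ by Lemma \ref{lem:hodge-filtration-S-empty}. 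Your construction instead exhibits the inclusion concretely, producing a \v{C}ech hypercocycle for $F^p\overline{C}$ representing $[f(z)(X-zY)^k dz]$ by an iterative local reduction in the Hodge-adapted frame $\{(X-zY)^a(-Y)^b\}$, using the diagonal action of $\overline{\nabla}_k^{\mathbb{C}}$ to push decreasing pole orders into decreasing Hodge degrees. The numerical coincidence you isolate — both the Hodge-degree budget ($k-j+1\ge p$) and the pole-order budget ($k+2-p-j\ge 1$) permit exactly $j\le k-p+1$ steps — is precisely the phenomenon that the paper's strictness argument packages abstractly, but you make it visible at the cochain level. The upshot is that your version gives a constructive, self-contained proof of the inclusion that does not invoke strictness, though strictness is still used (as in the paper) to pass from the inclusion to the equality via the five lemma. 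I've verified the bookkeeping: the transitions $s_{0i}=\sum_j F_j(z)(X-zY)^{k-j+1}(-Y)^{j-1}$ lie in $F^p\overline{\mathcal{V}}_k^{\dR,\mathbb{C}}$, the corrected local forms $\alpha_i$ lie in $F^{p-1}\overline{\mathcal{V}}_k^{\dR,\mathbb{C}}\otimes\Omega^1_{\mathcal{X}_{\Gamma}/\mathbb{C}}(\log(\mathcal{Z}_{\Gamma}\cup\mathcal{S}_{\Gamma}))$, and the cocycle compatibility on triple overlaps is vacuous because the $U_i$ for $i\ge 1$ are pairwise disjoint. One small point worth making explicit in a final write-up: the cover $\{U_0,U_1,\ldots,U_r\}$ is suitable for computing hypercohomology because $U_0=\mathcal{X}_{\Gamma}\setminus\mathcal{S}_{\Gamma}$ is affine (on the finite \'etale atlas $\mathcal{X}_{\Gamma_0}$) and the $U_i$ are Stein discs, so all intersections are acyclic for the coherent sheaves in $F^p\overline{C}$.
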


\begin{proof}
    Let $p =1,\ldots,k+1$. By the exact sequences \eqref{eq:residue-seq-hodge} and \eqref{eq:residue-seq-hodge-explicit}, it suffices to prove one inclusion: if $f \in M_{k+2,\Gamma}((k+2-p)S)$, then its class in cohomology $\alpha \defeq [f(z)(X-zY)^kdz]$ lies in $F^pH^1_{\dR}(\mathcal{U}_{\Gamma};\mathcal{V}_k^{\dR,\mathbb{C}})$. Again by the same exact sequences, there is some $\beta \in F^pH^1_{\dR}(\mathcal{U}_{\Gamma};\mathcal{V}_k^{\dR,\mathbb{C}})$ such that $\Res(\alpha) = \Res(\beta)$, which means that $\alpha - \beta$ lies in $H^1_{\dR}(\mathcal{Y}_{\Gamma};\mathcal{V}_k^{\dR,\mathbb{C}})$. Since both $\alpha$ and $\beta$ are locally represented by vector-valued 1-forms with logarithmic singularities at the cuspidal locus $\mathcal{Z}_{\Gamma}$, the same holds for $\alpha - \beta$, which implies that $\alpha - \beta \in F^pH_{\dR}^1(\mathcal{Y}_{\Gamma};\mathcal{V}_k^{\dR,\mathbb{C}})$ by Lemma \ref{lem:hodge-filtration-S-empty}. Thus:
    \[
        \alpha \  \in  \ \beta + F^pH_{\dR}^1(\mathcal{Y}_{\Gamma};\mathcal{V}_k^{\dR,\mathbb{C}}) \subset F^pH_{\dR}^1(\mathcal{U}_{\Gamma};\mathcal{V}_k^{\dR,\mathbb{C}}) \ .\qedhere
    \]
\end{proof}

For simplicity, we state the next corollary only in the case $S = \Gamma w$.

\begin{cor}\label{cor:Psi-splits-residue-seq}
    Let $F^{\bullet}$ be the Hodge filtration on $H^1_{\dR}(\mathcal{U}_{\Gamma};\mathcal{V}_k^{\dR,\mathbb{C}})$. The meromorphic modular forms $\Psi^{m,n}_{\Gamma}(z,w)$,  with $m+n = k$ and $m,n\ge 0$,  satisfy
    \begin{equation} \label{eqn: PsiinFp}
        [  \Psi_{\Gamma}^{m,n}(z,w)(X-zY)^k dz ]   \  \in \     F^{n+1} \cap \overline{F}^{m+1}
    \end{equation}
   and induce a splitting of both the residue sequence \eqref{eq:residue-seq-over-C}:
    \begin{align*}
           (\Sym^kH^1_{\dR}(\mathcal{E}_{w}))^{\Gamma_{w}} &\longrightarrow H^1_{\dR}(\mathcal{U}_{\Gamma};\mathcal{V}_k^{\dR,\mathbb{C}})\\
        \sum_{\gamma \in \Gamma_{w}} (X-wY)^n  (X-\overline{w}Y)^m\Big|_{\gamma} &\longmapsto (-1)^m(w-\overline{w})^k\binom{k}{m}^{-1}[\Psi^{m,n}_{\Gamma}(z,w) (X-zY)^k  dz]
    \end{align*}
    and of the Hodge filtration on $F^1    \cong \bigoplus_{n=0}^{k}\mathrm{gr}^{n+1}_F H^1_{\dR}(\mathcal{U}_{\Gamma};\mathcal{V}_k^{\dR,\mathbb{C}}).$
\end{cor}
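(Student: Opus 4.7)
The plan is to establish the three claims --- the Hodge bi-type of the classes, the splitting of the residue sequence, and the splitting of the Hodge filtration on $F^1$ --- by combining three ingredients: the pole-order control from Proposition \ref{prop:Psi}(2), the harmonic-lift identity of Corollary \ref{cor: GisWHLofPsis}, and the residue formula of Corollary \ref{cor:residue-Psi}, all filtered through Theorem \ref{thm:hodge-filtration-open-complex}.

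For the bi-filtration claim \eqref{eqn: PsiinFp}, the containment $[\Psi_{\Gamma}^{m,n}(z,w)(X-zY)^k dz] \in F^{n+1}$ is immediate: Proposition \ref{prop:Psi}(2) puts $\Psi_{\Gamma}^{m,n}$ in $M_{k+2,\Gamma}((m+1)S)$, and Theorem \ref{thm:hodge-filtration-open-complex} with $p = k+2-(m+1) = n+1$ supplies the conclusion. The conjugate containment uses the harmonic lift $\vec{G}_{\Gamma,k}^{m,n}(\cdot,w)$ of Corollary \ref{cor: GisWHLofPsis}, whose defining differential relation
\[
d\vec{G}_{\Gamma,k}^{m,n}(z,w) = \Psi_{\Gamma}^{m,n}(z,w)(X-zY)^k\,dz + \overline{\Psi_{\Gamma}^{n,m}(z,w)(X-zY)^k\,dz}
\]
forces the cohomology-level identity $[\Psi_{\Gamma}^{m,n}(X-zY)^k dz] = -\,\overline{[\Psi_{\Gamma}^{n,m}(X-zY)^k dz]}$ in $H^1_{\dR}(\mathcal{U}_{\Gamma};\mathcal{V}_k^{\dR,\mathbb{C}})$. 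Applying the previous step with indices swapped places the right-hand side in $\overline{F}^{m+1}$, where the overline is the complex conjugation induced by the Betti structure $\mathbb{V}_k^{\B}\otimes \mathbb{C} \cong \mathcal{V}_k^{\dR,\an}$ of \S\ref{par:cohomology-EC}.

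To verify the residue splitting, I would directly check that $\Res \circ \sigma = \id$: by Corollary \ref{cor:residue-Psi}, the residue of $(-1)^m(w-\overline{w})^k\binom{k}{m}^{-1}[\Psi_{\Gamma}^{m,n}(X-zY)^k dz]$ equals $\kappa_{\Gamma,w}^{m,n}(X-\overline{w}Y)^m(X-wY)^n$. A one-line calculation with \eqref{eq:gamma-equivariance} and the stabiliser identity $j_{\gamma}(w)\,j_{\gamma^{-1}}(w)=1$ for $\gamma \in \Gamma_w$ (immediate from $\gamma w = w$ and $\det\gamma=1$) evaluates the symmetrisation
\[
\sum_{\gamma \in \Gamma_w}(X-wY)^n(X-\overline{w}Y)^m\big|_{\gamma} = \kappa_{\Gamma,w}^{n,m}(X-wY)^n(X-\overline{w}Y)^m,
\]
and the two sides agree upon invoking the manifest symmetry $\kappa_{\Gamma,w}^{m,n}=\kappa_{\Gamma,w}^{n,m}$ read off from Remark \ref{rem:kappa}. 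The Hodge splitting of $F^1$ is then a formal consequence: part (i) shows that $\sigma$ sends each $F^{p-1}(\Sym^k H_{\dR}^1(\mathcal{E}_{w_i}))^{\Gamma_{w_i}}$ into $F^p H_{\dR}^1(\mathcal{U}_{\Gamma};\mathcal{V}_k^{\dR,\mathbb{C}})$, so by strictness of the Hodge filtration in the residue sequence \eqref{eq:residue-seq-hodge} and Lemma \ref{lem:hodge-filtration-S-empty} (handling the contribution from $H^1_{\dR}(\mathcal{Y}_{\Gamma};\mathcal{V}_k^{\dR,\mathbb{C}})$), iterating from $p=k+1$ down to $p=1$ yields the claimed filtered decomposition.

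The main obstacle I anticipate is the careful bookkeeping of the complex conjugation on $\mathcal{V}_k^{\dR,\an}$ in the passage from the differential identity of Corollary \ref{cor: GisWHLofPsis} to the cohomology-level statement in part (i); this is essentially routine but requires precision with signs and indices. The remaining steps are combinatorial and reduce to checks that can be performed on explicit monomials.
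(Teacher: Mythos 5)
Your proof is correct and follows essentially the same route as the paper's: pole-order plus Theorem \ref{thm:hodge-filtration-open-complex} for the $F^{n+1}$ containment, the harmonic lift of Corollary \ref{cor: GisWHLofPsis} to transport to $\overline{F}^{m+1}$, and the residue computation of Corollary \ref{cor:residue-Psi} (via Proposition \ref{prop:residue-seq-hodge-explicit}) for the splitting. You merely make explicit the passage from the Betti-conjugacy of the forms $\Psi^{m,n}$ and $-\Psi^{n,m}$ to the cohomology-level identity $[\Psi^{m,n}(X-zY)^k dz] = -c_{\iota}[\Psi^{n,m}(X-zY)^k dz]$, which the paper leaves implicit (and the stabiliser identity $j_{\gamma}(w)j_{\gamma^{-1}}(w)=1$ you invoke is unneeded, since \eqref{eq:gamma-equivariance} already gives $(X-wY)|_{\gamma} = j_{\gamma}^{-1}(w)(X-wY)$ for $\gamma \in \Gamma_w$ directly).
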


\begin{proof}
    That $[\Psi_{\Gamma}^{m,n}(z,w)(X-zY)^k dz] \in F^{n+1}$ follows  from  Theorem \ref{thm:hodge-filtration-open-complex}, together with the properties of $\Psi^{m,n}_{\Gamma}(z,w)$ (Proposition \ref{prop:Psi} and Corollary \ref{cor:residue-Psi}). To see that we also have $[\Psi_{\Gamma}^{m,n}(z,w)(X-zY)^k dz] \in \overline{F}^{m+1}$, we note that $\overline{F}^{m+1}$ is image of $F^{m+1}$ under Betti complex-conjugation. But, by Corollary \ref{cor: GisWHLofPsis}, $\Psi^{m,n}_{\Gamma}(z,w)$ is a Betti-conjugate of $-\Psi^{n,m}_{\Gamma}(z,w)$, which, by the first part of this proof, defines a class in $F^{m+1}$; this proves \eqref{eqn: PsiinFp}. The splitting statements follow from Proposition \ref{prop:residue-seq-hodge-explicit}.
\end{proof} 

\begin{rem}
    The splitting of the residue sequence \eqref{eq:residue-seq-over-C} is reminiscent of the theorem of Manin-Drinfeld with the points in the orbits of $w_i$ playing the role of the cusps, and the meromorphic modular forms $\Psi_{\Gamma}^{m,n}(z,w_i)$ playing the role of  Eisenstein series. Note however that, in contrast to  the Manin-Drinfeld case, which gives a splitting of the whole $\mathbb{Q}$-MHS, the above corollary concerns only the Hodge filtration.
\end{rem}

\section{Realisations, periods, and single-valued periods}

We discuss extensions  in a category of mixed Hodge structures, as a prototype for an abelian category of mixed motives and their periods. Since our main results  concern the computation of the associated periods, some details about the Hodge theory are omitted. 

\subsection{A category of realisations}\label{par:realisations}

In the absence of a suitable abelian category of mixed motives, it is convenient, following Deligne,  to work in a Tannakian category of realisations of motives. The variant most convenient for our purposes will be a category $\mathcal{H}_K$, where $K\subset \CC$ is a number field, whose objects are tuples:
\begin{equation} \label{ObjectinHk}
   \left( V_{\B} \ , V_{\overline{\B}} \ ,  V_{\dR}\ ; \comp_{\B,\dR} \ , \comp_{\overline{\B}, \dR}  \  , F_{\infty} \right)
\end{equation}
where $V_{\dR}$ is a finite dimensional $K$-vector space, equipped with an increasing (weight) filtration $W$ and a decreasing (Hodge) filtration $F$; $V_{\B}$ and $V_{\overline{\B}}$ are two finite-dimensional $\QQ$-vector spaces equipped with an increasing weight  filtration $W$; and 
\[
    \comp_{\B ,\dR} : V_{\dR} \otimes_{K,\iota} \CC \overset{\sim}{\To}   V_{\B} \otimes_{\QQ} \CC \qquad \hbox{ and } \qquad 
     \comp_{\overline{\B} ,\dR} : V_{\dR} \otimes_{K,\overline{\iota}} \CC \overset{\sim}{\To}   V_{\overline{\B}} \otimes_{\QQ} \CC
\]
are isomorphisms which respect the weight filtration $W$, where $\iota: K \hookrightarrow \mathbb{C}$ is the inclusion, and $\overline{\iota}: K \hookrightarrow \mathbb{C}$ is its complex-conjugate. Finally, the real Frobenius $F_{\infty}:  V_{\B} \overset{\sim}{\rightarrow} V_{\overline{\B}}$ is an isomorphism preserving $W$  which is compatible with $c_{\B,\dR}, c_{\overline{\B}, \dR}$ via the commutative diagram
\[
    \begin{tikzcd}
        V_{\dR}\otimes_{K,\iota} \CC \arrow{r}{\id \otimes c_{\dR}}\arrow{d}[swap]{\comp_{\B,\dR}} & V_{\dR}\otimes_{K,\overline{\iota}} \CC\arrow{d}{\comp_{\overline{\B},\dR}}\\
        V_{\B} \otimes_{\QQ} \CC \arrow{r}{F_{\infty} \otimes c_{\B}} & V_{\overline{\B}}\otimes_{\QQ} \CC 
    \end{tikzcd}
\]
and where  $c_{\dR}$, $c_{\B}$ both denote complex conjugation. 
Following convention, the inverse of $F_{\infty}$ will also be denoted by $F_{\infty}$ without ambiguity. The vector space $V_{\B}\otimes_{\QQ} \CC$ (respectively $V_{\overline{\B}}\otimes_{\QQ} \CC$) together with the filtrations $W$, $c_{\B,\dR} F$ (respectively $c_{\overline{\B}, \dR} F$) are required to be mixed Hodge structures. 

The category $\mathcal{H}_K$  is a $\QQ$-linear  Tannakian category which is neutralised by either of the two Betti realisation functors
which send an object \eqref{ObjectinHk} to $V_{\B}$ (or to $V_{\overline{\B}}$), respectively. 

The basic example of an object in $\mathcal{H}_K$ is given by the system of cohomology groups in degree $n$ of a smooth algebraic variety $X$ defined over $K$. We define
\begin{equation} \label{objectHn}
    H^n(X) =   \left( H^n_{\B}(X(\CC);\QQ) \ ,  \  H^n_{\B}( \overline{X}(\CC);\QQ) \ , \  H^n_{\dR}(X/K)  \  ; \   \comp_{\B, \dR} \ ,  \comp_{\overline{\B}, \dR}  \ , F_{\infty} \right) \ ,
\end{equation}
where $H^n_{\dR}(X/K) = \mathbb{H}^n(X; \Omega^{\bullet}_{X/k})$ denotes algebraic de Rham cohomology,  $\overline{X}$ is the base change $X\times_{K,\overline{\iota}}\overline{\iota}(K)$, and $\comp_{\overline{\B},\dR}$ is the comparison isomorphism for $\overline{X}$.  The fact that it defines an object of $\mathcal{H}_K$ follows from  theorems of Grothendieck (on the comparison between Betti and algebraic de Rham cohomology) and Deligne (on mixed Hodge theory). If $X$ is defined over a real subfield of $K$ then the two Betti realisations are canonically isomorphic
\begin{equation} \label{Bettisagree}
    H^n_{\B}(X(\CC);\QQ) = H^n_{\B}( \overline{X}(\CC);\QQ) \ ,
\end{equation}
and likewise the comparison isomorphism 
$\comp_{\B,\dR} = \comp_{\overline{\B},\dR}$. In this case, the real Frobenius $F_{\infty}$ is an involution of $H^n_{\B}(X(\CC);\QQ) $; in general, it is only an isomorphism between two different vector spaces.

\subsubsection{Periods} 

There are two notions of periods for the category $\mathcal{H}_K$. Given an object \eqref{ObjectinHk} and the data of $\omega \in V_{\dR}$, $\sigma \in V_{\B}^{\vee}$, the associated period is 
\[
    \sigma \, \comp_{\B,\dR}(\omega) \ \in\ \CC\ .
\]
Similarly, given $\sigma' \in  V_{\overline{\B}}^{\vee}$, the associated period is $\sigma' \, \comp_{\overline{\B},\dR}(\omega) \in \CC$. We shall call such periods `conjugate' periods, since the two notions are related as follows:
\[
    \overline{\sigma \, \comp_{\B,\dR}(\omega)}= (F^*_{\infty} \sigma) \,  \comp_{\overline{\B},\dR}(\omega) \ ,
\]
where $*$ denotes inverse transpose.
 
\subsubsection{Single-valued periods and `Betti-conjugation' periods}

The formalism of the category $\mathcal{H}_K$ is sufficient to express the notion of single-valued periods. The single-valued map  is 
\begin{align}  \label{svmapinH} 
    \mathsf{s}:  V_{\dR}\otimes_{K,\iota} \CC   \overset{\sim}{\To}   V_{\dR} \otimes_{K,\overline{\iota}} \CC \ , \quad \qquad   \mathsf{s} \coloneqq    \comp_{\overline{\B},\dR}^{-1} \circ  (F_{\infty}\otimes \id)  \circ \comp_{\B,\dR}\ .    \nonumber 
\end{align}
Given $\omega \in V_{\dR}$ and $\eta \in V_{\dR}^{\vee}$, the associated single-valued period is defined to be $ \eta \,\mathsf{s} (\omega) \in \CC$. Given a $K$-basis $(\omega_1,\ldots,\omega_r)$ of $V_{\dR}$, the corresponding matrix of $\mathsf{s}$ is given by
\[
    S = \overline{P}^{-1}P \in M_{r\times r}(\mathbb{C}) \ ,
\]
where $P$ is  the matrix of $\comp_{\B,\dR}$ with respect to $(\omega_1,\ldots,\omega_r)$ and any other $\mathbb{Q}$-basis of $V_{\B}$. It does not depend on the  latter choice of basis. It satisfies $\overline{S} = S^{-1}$.

\begin{defn} \label{defn: Betticonj}
    Define the \emph{Betti-conjugation map} relative to the inclusion $\iota: K \hookrightarrow \CC$ to be 
    \begin{equation}
        c_{\iota} :  V_{\dR}\otimes_{K,\iota}\CC  \To     V_{\dR}\otimes_{K,\iota}\CC  \ \text{, }\qquad 
    c_{\iota}  \defeq   \mathrm{comp}_{\B,\dR}^{-1} \circ  (\id \otimes c_{\B})   \circ \mathrm{comp}_{\B,\dR}  \ . 
    \end{equation}
    Note that it is complex anti-linear: $c_{\iota}(v \otimes \lambda) = c_{\iota}(v) \otimes \overline{\lambda}$ for $v\in V_{\dR}$ and $\lambda \in \CC.$
\end{defn}

\begin{lem} \label{lem: SVandCB}
    The Betti-conjugation map is related to the single-valued map via the formula
    \[
       c_{\iota} = (\id \otimes c_{\dR})\circ \mathsf{s}   \ . 
    \]
    In particular, the restriction of $\mathsf{s}$ and $c_{\iota}$ to $V_{\dR}$ coincide. 
\end{lem}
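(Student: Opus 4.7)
The plan is a direct diagram chase using the compatibility relation between $F_{\infty}$ and the comparison isomorphisms. First I would rewrite the commutative diagram in the definition of $\mathcal{H}_K$ in the inverted form
\[
(\id \otimes c_{\dR}) \circ \comp_{\overline{\B},\dR}^{-1} = \comp_{\B,\dR}^{-1} \circ (F_{\infty}^{-1} \otimes c_{\B}),
\]
using that $c_{\dR}$ and $c_{\B}$ are (cyclic) involutions and inverting the compatibility square for $F_{\infty}$.

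Next I would substitute this identity into $(\id \otimes c_{\dR}) \circ \mathsf{s}$, using the definition of $\mathsf{s}$ given just before the lemma. The two inner occurrences of $\comp_{\overline{\B},\dR}$ then cancel, and the composition $(F_{\infty}^{-1} \otimes c_{\B}) \circ (F_{\infty} \otimes \id)$ simplifies to $\id \otimes c_{\B}$. What remains is $\comp_{\B,\dR}^{-1} \circ (\id \otimes c_{\B}) \circ \comp_{\B,\dR}$, which by Definition \ref{defn: Betticonj} is exactly $c_{\iota}$. This establishes the main identity.

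For the ``in particular'' clause, I would observe that $c_{\dR}$ fixes $1 \in \CC$, so $(\id \otimes c_{\dR})$ restricts to the canonical (set-theoretic) identification of $V_{\dR} \subset V_{\dR}\otimes_{K,\overline{\iota}}\CC$ with $V_{\dR} \subset V_{\dR}\otimes_{K,\iota}\CC$, both realised as elements $v\otimes 1$. Under this identification of the two embedded copies of $V_{\dR}$, the restrictions $\mathsf{s}|_{V_{\dR}}$ and $c_{\iota}|_{V_{\dR}}$ agree as an immediate consequence of the main identity applied to $v\otimes 1$.

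No significant obstacle is anticipated; the proof is a short formal computation from the definitions. The only subtlety is keeping track of the $\CC$-linearity structure of each map, since $\mathsf{s}$ is $\CC$-linear while $c_{\iota}$ and $(\id \otimes c_{\dR})$ are $\CC$-antilinear, but this is consistent throughout the composition and is automatically respected by the diagram chase.
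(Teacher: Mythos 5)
Your proof is correct and follows essentially the same route as the paper: both derive the identity directly from the compatibility axiom $\id \otimes c_{\dR} = \comp_{\overline{\B},\dR}^{-1}\circ(F_{\infty}\otimes c_{\B})\circ\comp_{\B,\dR}$ together with the involution properties $c_{\B}^2 = c_{\dR}^2 = \id$. The only cosmetic difference is the order of operations: the paper first shows $\mathsf{s}\circ c_{\iota} = \id\otimes c_{\dR}$ and then rearranges using $c_{\iota}^2 = \id$, whereas you invert the axiom first and substitute directly into $(\id\otimes c_{\dR})\circ\mathsf{s}$; the content is identical.
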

 
\begin{proof}
    It follows from the axiom $\id \otimes c_{\dR} = \comp_{\overline{\B},\dR}^{-1} \circ (F_{\infty} \otimes c_{\B})\circ \comp_{\B,\dR}$
    that $\mathsf{s} \circ c_{\iota}=\id \otimes c_{\dR}$. The result follows from  $c_{\iota}^2=\id,$ which follows from $c_{\B}^2=\id$, and  from $c_{\dR}^2=\id.$ 
\end{proof}

Betti-conjugation can be represented  by the matrix $\overline{S} =  P^{-1} \overline{P} = S^{-1}$, but note that it is  a $\CC$-antilinear map, so does  not  satisfy the usual formula for the linear action of a matrix on vectors. By abuse, we shall call such a matrix 
 a `Betti-conjugation period matrix'. 
 
\subsubsection{Variant with coefficients}

Let $F$ denote a finite extension of $\QQ$. We denote by $\mathcal{H}_{K} \otimes F$ the category defined by the Karoubian envelope of the $F$-linear category obtained from $\mathcal{H}_K$ by extending coefficients from $\QQ$ to $F$.  It is an $F$-linear Tannakian category such that 
\begin{equation} \label{eqn: ExtendHoms}
    \Hom_{\mathcal{H}_K \otimes F}(V,W)= \Hom_{\mathcal{H}_K}(V,W) \otimes_{\QQ} F
 \end{equation}
for all objects $V,W$ of $\mathcal{H}_K$. One may define the periods (and single-valued periods) of objects  in $\mathcal{H}_K \otimes F$: they  are $F$-linear combinations of periods of $\mathcal{H}_K$ taking values in $F\otimes_{\QQ} \CC$. 

\subsection{Examples of objects in $\mathcal{H}_K$}

Let $K$ be as above. 

\subsubsection{Tate objects}  \label{sect: TateObjects}

The Lefschetz object $H^2(\PP_K^1)$ in \eqref{objectHn} is denoted by $\QQ(-1)$. Since $\PP^1_K = \mathbb{P}^1_{\mathbb{Q}}\times_{\mathbb{Q}}K$ is defined over $\QQ$, its two Betti realisations are identical \eqref{Bettisagree}. For any object $V$ of $\mathcal{H}_K$, $V(n)$ denotes the object $V\otimes \QQ(-1)^{\otimes (-n)}$. The object $\QQ(-n)$ has periods (resp. single-valued periods) proportional to  $(2\pi i)^n$ (resp.  $(-1)^n$). 

\subsubsection{Kummer extensions} \label{sec:kummer}

Let $x\in K^{\times} \setminus \{1\}$ and  consider the object $\mathcal{K}_x=H^1(\PP_K^1\setminus \{0,\infty\}, \{1,x\})$ of $\mathcal{H}_K$. The relative long exact cohomology sequence gives 
\[
    \begin{tikzcd}[column sep = small]
        \cdots \arrow{r} & H^0(\PP_K^1\setminus \{0,\infty\}) \arrow{r} & H^0( \{1,x\}) \arrow{r} & \mathcal{K}_x \arrow{r} & H^1(\PP_K^1\setminus\{0,\infty\})\arrow{r} & 0
    \end{tikzcd}
\]
from which we deduce a simple extension in $\mathcal{H}_K$ of the form: 
\begin{equation}
    \begin{tikzcd}[column sep = small]
        0\arrow{r}& \QQ(0) \arrow{r}& \mathcal{K}_x \arrow{r} & \QQ(-1) \arrow{r} & 0 \ . 
    \end{tikzcd}
\end{equation}
A de Rham basis of $\mathcal{K}_x$ is given by $[\frac{dz}{1-x}], [\frac{dz}{z}]$; a Betti basis by the classes of  any path $\sigma_x$ in $\mathbb{P}^1(\mathbb{C})\setminus\{0,\infty\} = \CC^{\times}$ from $1$ to $x$, and  a small simple loop $\sigma_0$ around $0$. The period matrix and its single-valued version are 
\[
    P = \begin{pmatrix}
    1  & \log x \\
    0   &  2 \pi i  
    \end{pmatrix}   \qquad \hbox{ and } \qquad  S =   \overline{P}^{-1} P = \begin{pmatrix}
    1 & \log |x|^2  \\
    0   & -1   
    \end{pmatrix} \ . 
\]

\subsubsection{Elliptic curves}

Let $E$ be an elliptic curve defined over $K$, and denote its cohomology object \eqref{objectHn} by $H = H^1(E)$ in the category $\mathcal{H}_K$. It admits a surjective anti-symmetric polarisation morphism
\[
    \langle \ , \ \rangle : H \otimes H \longrightarrow \QQ(-1)
\]
which is equivalent to a canonical isomorphism 
\[
    H^{\vee} \cong  H(1)\ .
\]
Denote  $S^n H \coloneqq \mathrm{Sym}^n H$, where $S^0H=\QQ(0)$. The polarisation gives a canonical isomorphism  $\bigwedge^2 H \cong  \QQ(-1)$, and hence the semi-simple Tannakian subcategory of $\mathcal{H}_K$ generated by $H$ and its tensor powers has simple objects $(S^n H)(d)$ for integers $n\geq 0, d\in \ZZ$. We have canonical isomorphisms 
\[
    (S^n H)^{\vee} \cong S^n H^{\vee} \cong (S^n H )(n).
\]
For all $m,n\geq 0$ there is a natural surjective  map (multiplication of symmetric tensors)
\begin{equation} \label{eqn: multiplySnH}
    S^m H \otimes S^n H \To S^{m+n}H \ . 
\end{equation}

\begin{lem}  \label{lem: PeriodsE}
    Let $[\omega], [\eta]$ denote a de Rham basis for $H_{\dR}=H^1_{\dR}(E/K)$ such that $[\omega] \in F^1 H_{\dR}$ and which is symplectic, \emph{i.e.},  such that  $\langle [\omega],[\eta] \rangle_{\dR}=1$, where 
    \[
        \langle \  , \  \rangle_{\dR}  : H_{\dR}\otimes_K  H_{\dR}\To  \QQ(-1)_{\dR} =K
    \]
    is the pairing induced by the polarisation. Choose an isomorphism $E(\CC) \cong \CC/ (\ZZ+ z \ZZ) $, where $\Im(z)>0$, and consider the basis $\sigma_1,\sigma_2$ of $H_{\B}^{\vee} =H_1(E(\CC);\mathbb{Z}) \cong \mathbb{Z} +\mathbb{Z} z $ corresponding to the basis $1,z$ of the lattice. If we write the period matrix with respect to these bases in the form
    \[ 
        P =  \begin{pmatrix}   
                \int_{\sigma_1} \omega  &   \int_{\sigma_1} \eta \\
                \int_{\sigma_2} \omega  &   \int_{\sigma_2} \eta \\
            \end{pmatrix}  =
            \begin{pmatrix}
            \omega_1 & \eta_1 \\
            \omega_2 & \eta_2 
    \end{pmatrix}
    \]
    then $\det(P)=2\pi i $, $z = \omega_2/\omega_1$, and  the associated single-valued period matrix is 
    \[
        S = \overline{P}^{-1} P  =-\frac{1}{2\pi i} \begin{pmatrix} \omega_1 \overline{\eta}_2 -  \overline{\eta}_1\omega_2 &   \eta_1 \overline{\eta}_2 - \overline{\eta}_1 \eta_2 \\
        \overline{\omega}_1 \omega_2 - \omega_1 \overline{\omega}_2 & \overline{\omega}_1 \eta_2-  \eta_1 \overline{\omega}_2
        \end{pmatrix} \ . 
    \]
    Consider the following quantity, which is the quotient of two entries of $S$, and its complex conjugate: 
    \begin{equation} \label{rhodef} 
        \rho=  \frac{S_{11}}{S_{21}} =  \frac{\omega_1 \overline{\eta}_2 -  \overline{\eta}_1\omega_2}{\overline{\omega}_1 \omega_2 - \omega_1 \overline{\omega}_2} \   , \quad 
        \overline{\rho} = \frac{\overline{\omega}_1 \eta_2 -  \eta_1\overline{\omega}_2}{\omega_1 \overline{\omega}_2 - \overline{\omega}_1 \omega_2}   \ . 
    \end{equation}
    With these notations, we have 
    \begin{equation} \label{EllipticBettiBasis} 
        \comp_{\B,\dR}^{-1} \left( X- z Y \right) =   \frac{1}{\omega_1} \, [\omega] \qquad \hbox{ and } \qquad    \comp_{\B,\dR}^{-1} \left( X- \overline{z} Y \right)   =  \frac{\omega_1}{2 \pi i} \left(  \overline{z}-  z \right) \left([\eta] + \overline{\rho} [\omega] \right) \ .
    \end{equation}
\end{lem}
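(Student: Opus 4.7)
The plan is to verify each claim in order: first the normalisation $\det(P) = 2\pi i$, then the relation $z = \omega_2/\omega_1$, then the explicit single-valued matrix, and finally the two identities in \eqref{EllipticBettiBasis}. Each step is a direct computation once one recalls how the Betti frame is chosen.

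For the determinant, I would use that the polarisation pairing corresponds under $\comp_{\B,\dR}$ to the cup product $H_{\B}\otimes H_{\B}\to H^2_{\B}(E) = \QQ(-1)_{\B}$. Since $\comp_{\B,\dR}$ sends $\QQ(-1)_{\dR} = K$ to $\QQ(-1)_{\B}\otimes\mathbb{C}$ by $1\mapsto 2\pi i$, the normalisation $\langle[\omega],[\eta]\rangle_{\dR}=1$ translates, via the Betti basis dual to $(\sigma_1,-\sigma_2)$, to the identity $\omega_1\eta_2-\omega_2\eta_1 = 2\pi i$. For $z=\omega_2/\omega_1$, recall that the analytic uniformisation $E(\mathbb{C})\cong \mathbb{C}/(\mathbb{Z}+z\mathbb{Z})$ is defined precisely so that $\sigma_1,\sigma_2$ are the images of $1,z$, meaning $\int_{\sigma_1}du = 1$ and $\int_{\sigma_2}du = z$, where $du$ is the invariant holomorphic differential. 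Since $[\omega]\in F^1H_{\dR}$ is holomorphic, it must be a scalar multiple of $[du]$; that scalar is forced to be $\omega_1$ (as $\int_{\sigma_1}\omega = \omega_1$), so $\omega_2 = \omega_1 z$.

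For the single-valued matrix, I would compute $P^{-1}$ using $\det P = 2\pi i$ to get
\[
    P^{-1} = \frac{1}{2\pi i}\begin{pmatrix}\eta_2 & -\eta_1 \\ -\omega_2 & \omega_1\end{pmatrix},
\]
complex-conjugate, and multiply out $\overline{P}^{-1}P$; the factor $(-1)/(2\pi i)$ then appears uniformly, giving the stated matrix. The identity \eqref{rhodef} for $\overline{\rho}$ can then be simplified using $\omega_2 = \omega_1 z$, $\overline{\omega}_2 = \overline{\omega}_1\overline{z}$ and $\det P = 2\pi i$ to obtain the equivalent cleaner form
\[
    \overline{\rho} = \frac{\eta_2 - \overline{z}\eta_1}{\omega_1(\overline{z}-z)},
\]
which will be needed in the next step.

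Finally, for \eqref{EllipticBettiBasis}, the identification \eqref{ducorrespondsXzY} together with $[\omega] = \omega_1[du]$ gives $\comp_{\B,\dR}([\omega]) = \omega_1(X-zY)$, whence $\comp_{\B,\dR}^{-1}(X-zY) = \omega_1^{-1}[\omega]$. For the second identity, write $\comp_{\B,\dR}^{-1}(X-\overline{z}Y) = a[\omega]+b[\eta]$ and use $\comp_{\B,\dR}([\eta]) = \eta_1 X - \eta_2 Y$ together with $\comp_{\B,\dR}([\omega]) = \omega_1(X-zY)$. Matching coefficients of $X$ and $Y$ yields the linear system $a\omega_1 + b\eta_1 = 1$ and $a\omega_1 z + b\eta_2 = \overline{z}$, whose solution (using $2\pi i = \omega_1(\eta_2 - z\eta_1)$) is $b = \omega_1(\overline{z}-z)/(2\pi i)$ and $a = \omega_1(\overline{z}-z)\overline{\rho}/(2\pi i)$, with $\overline{\rho}$ given by the simplified formula above. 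The main `obstacle' is purely bookkeeping---keeping track of signs in the Betti basis dual to $(\sigma_1,-\sigma_2)$ (which accounts for the appearance of $-Y$ throughout) and consistently applying the normalisation $\det P = 2\pi i$; no deeper argument is required.
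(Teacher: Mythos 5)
Your proof is correct. The first three claims ($\det(P)=2\pi i$, $z=\omega_2/\omega_1$, and the explicit formula for $S=\overline{P}^{-1}P$) are handled exactly as in the paper. The only genuine difference is in the final step: the paper obtains $\comp_{\B,\dR}^{-1}(X-\overline{z}Y)$ by applying the Betti-conjugation operator $c_\iota$ and invoking its relation to the single-valued map $\mathsf{s}$ (Lemma \ref{lem: SVandCB}), so that the coefficients emerge as entries of the matrix $S$, whereas you write $\comp_{\B,\dR}^{-1}(X-\overline{z}Y)=a[\omega]+b[\eta]$ as an unknown, apply $\comp_{\B,\dR}$ using $\comp_{\B,\dR}([\omega])=\omega_1 X-\omega_2Y$ and $\comp_{\B,\dR}([\eta])=\eta_1 X-\eta_2Y$, and solve the resulting $2\times 2$ linear system. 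Both arguments are short and correct; your computation is more elementary and self-contained, while the paper's route makes explicit how the answer is governed by the single-valued matrix $S$, which ties the lemma more directly into the formalism of Betti-conjugation used throughout Section 5. One caveat worth noting: you should verify (as you implicitly do) that your ``simplified formula'' $\overline{\rho}=(\eta_2-\overline{z}\eta_1)/(\omega_1(\overline{z}-z))$ agrees with the definition \eqref{rhodef}; this follows from $\omega_2=z\omega_1$ and $\overline{\omega}_2=\overline{z}\overline{\omega}_1$, and is precisely what the paper's last sentence of the proof is asking the reader to check.
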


\begin{proof}
    The computation of the determinant (classically known as `Legendre's period relation') follows from the compatibility of the polarisation map with the comparison isomorphism and the fact that both the Betti basis and the de Rham basis are symplectic. Since, under the isomorphism $E(\CC) \cong \CC/ (\ZZ+ \ZZ z)$, the class $[\omega]$ is proportional to $[du]$, where $u$ is the coordinate on $\mathbb{C}$, it follows that $\omega_2/\omega_1= (\int_0^z du) / (\int_0^1 du)=z$. 

    Recall from \S \ref{par:cohomology-EC} that $(X,Y)$ denotes  the Betti basis dual to $(1,-z)$. Thus  we have
    \[
        \comp_{\B,\dR}^{-1} \left( X- zY \right) =  [du] =  \frac{1}{\omega_1} \, [\omega]
    \]
    and 
    \[
        \comp_{\B,\dR}^{-1} \left( X- \overline{z} Y \right) =  \comp_{\B,\dR}^{-1}  \circ (\id \otimes c_{\B}) \left( X- z Y \right)
        =     c_{\iota} \circ \comp^{-1}_{\B, \dR} \left( X- z Y \right) =  c_{\iota} \left(  \frac{1}{\omega_1} \, [\omega] \right)
    \]
    where the second equality follows from the definition of $c_{\iota}$. By applying Lemma \ref{lem: SVandCB} we obtain
    \begin{align*}
        c_{\iota} \left(  \frac{1}{\omega_1} \, [\omega] \right) = \frac{1}{\overline{\omega}_1} \, c_{\iota} \left(  [\omega] \right) = \frac{1}{\overline{\omega}_1}(\id \otimes c_{\dR})\circ \mathsf{s} ([\omega]) = \frac{1}{\overline{\omega}_1} \left(       \overline{S}_{11} [\omega] + \overline{S}_{21} [\eta] \right)
        =\frac{\overline{S}_{21}}{\overline{\omega}_1} \left( \overline{\rho} \, [\omega] +[\eta]\right)    \ . 
    \end{align*}
    since the action of $\mathsf{s}$ is computed by the matrix $S$. To complete the calculation, substitute the definition of $S_{21}$ and use the fact that $\omega_2= z \omega_1$. 
\end{proof}

\subsubsection{Elliptic curves with complex multiplication}

Let $E$ be an elliptic curve defined over $K$ with complex multiplication by an order $\mathcal{O}$ in a quadratic imaginary extension $F$ of $\QQ$, normalised such that the action of $\lambda \in \mathcal{O}$ is multiplication by $\lambda$  (as opposed to $\overline{\lambda}$) on $F^1 H_{\dR}$. We assume that $K$  contains $F$.

\begin{lem} \label{lem: periodsCMcase}
    Suppose that $E$ has complex multiplication as above, and let $[\omega],[\eta]$ be a basis of $H_{\dR}$ as in Lemma \ref{lem: PeriodsE}. Then, the corresponding quantity $\overline{\rho}$ (see \eqref{rhodef}) lies in $K\subset \mathbb{C}$. In particular, the de Rham class
    \[
        [\eta'] \defeq [\eta] + \overline{\rho}\,[\omega]
    \]
    is defined over $K$, so that $[\omega],[\eta']$ is an eigenbasis of $H_{\dR}$ for the action of $\mathcal{O}$, which  satisfies the conditions of Lemma \ref{lem: PeriodsE}. The period matrix $P'$ associated to the de Rham basis $[\omega],[\eta']$ and a symplectic basis of $H_{\B}$ satisfies
    \[ 
        P' = \begin{pmatrix} 
            \omega_1 & \eta'_1 \\ \omega_2 & \eta'_2 
            \end{pmatrix} \quad  \hbox{ with  }  \quad  \frac{\omega_2}{\omega_1} = z  \   \hbox{ and }  \   \frac{\eta'_2}{\eta'_1} = \overline{z} \ ,
    \]
    where $z$ lies in the quadratic imaginary field $F$.
\end{lem}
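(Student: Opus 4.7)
The plan is to exploit the eigenspace decomposition of $H_{\dR}$ induced by the CM action. Since $F \subset K$, the $\mathcal{O}$-action on the $K$-vector space $H_{\dR}$ is $K$-linear through the inclusion $F \hookrightarrow K$, and by compatibility with the polarisation (trace $\lambda + \overline{\lambda}$ and determinant $\lambda\overline{\lambda}$) splits $H_{\dR}$ into two $K$-eigenlines
\[
    H_{\dR} = H_{\dR}^{+} \oplus H_{\dR}^{-},
\]
on which $\lambda$ acts as $\lambda$ and as $\overline{\lambda}$ respectively. By the normalisation hypothesis, $K[\omega] = F^{1}H_{\dR}$ coincides with $H_{\dR}^{+}$. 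Passing to $H_{\dR}\otimes_{K,\iota}\CC$, the Hodge decomposition $H^{1,0} \oplus H^{0,1}$ is preserved by $\mathcal{O}$ (the CM action is algebraic), and since $\lambda$ acts as $\lambda$ on $H^{1,0}$, it must act as $\overline{\lambda}$ on $H^{0,1}$, yielding $H^{0,1} = H_{\dR}^{-}\otimes_K \CC$.

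With this identification in hand, formula \eqref{EllipticBettiBasis} says that $[\eta] + \overline{\rho}[\omega]$ is a non-zero $\CC$-multiple of $\comp_{\B,\dR}^{-1}(X - \overline{z}Y) \in H^{0,1} = H_{\dR}^{-}\otimes_K\CC$. Writing an arbitrary $K$-generator of $H_{\dR}^{-}$ as $a[\omega]+b[\eta]$ with $a,b\in K$ and $b\ne 0$ (since $H_{\dR}^{-}\ne H_{\dR}^{+} = K[\omega]$), and matching coefficients, I immediately obtain $\overline{\rho}=a/b \in K$. Hence $[\eta'] \defeq [\eta]+\overline{\rho}[\omega]$ lies in $H_{\dR}^{-}$, is defined over $K$, and together with $[\omega]$ forms an eigenbasis of $H_{\dR}$ for $\mathcal{O}$. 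The symplectic normalisation $\langle[\omega],[\eta']\rangle_{\dR} = \langle[\omega],[\eta]\rangle_{\dR} = 1$ is then immediate from antisymmetry of the pairing, so the conditions of Lemma \ref{lem: PeriodsE} hold.

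For the period matrix, the final step is a short direct calculation: substituting the explicit formula \eqref{rhodef} for $\overline{\rho}$ into $\eta'_i = \eta_i + \overline{\rho}\,\omega_i$ and simplifying using the Legendre relation $\omega_1\eta_2 - \omega_2\eta_1 = 2\pi i$ yields
\[
    \eta'_i = \frac{2\pi i\,\overline{\omega}_i}{\omega_1\overline{\omega}_2 - \overline{\omega}_1\omega_2}\qquad (i=1,2),
\]
from which $\eta'_2/\eta'_1 = \overline{\omega}_2/\overline{\omega}_1 = \overline{z}$ can be read off. To see that $z \in F$, I would use that the complex uniformisation $E(\CC) = \CC/(\mathbb{Z}+z\mathbb{Z})$ realises the homology lattice as an $\mathcal{O}$-module: expressing the action of a non-rational $\lambda \in \mathcal{O}$ in the $\mathbb{Z}$-basis $(1,z)$ produces a non-trivial integer-coefficient quadratic relation for $z$, forcing $\mathbb{Q}(z) = \mathbb{Q}(\lambda) = F$. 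The only non-routine ingredient is the identification $H^{0,1} = H_{\dR}^{-}\otimes_K\CC$, which rests on the algebraicity of the CM endomorphisms and their resulting compatibility with the Hodge decomposition; everything else reduces to linear algebra and the Legendre relation.
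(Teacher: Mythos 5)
Your proof is correct, but you take a genuinely different route from the paper. The paper works with the single-valued map $\mathsf{s}$ explicitly: it writes the matrix of $\varphi_{\lambda,\dR}$ in the basis $[\omega],[\eta]$ (upper triangular with diagonal $\lambda,\overline{\lambda}$), uses the compatibility $\mathsf{s}\circ(\varphi_{\dR}\otimes_{\iota}\id) = (\varphi_{\dR}\otimes_{\overline{\iota}}\id)\circ\mathsf{s}$ to get a $2\times 2$ matrix identity, and then takes the ratio of two entries of $S$ to solve a linear equation $\overline{\rho} = \frac{\lambda}{\overline{\lambda}}\overline{\rho} + \frac{\mu}{\overline{\lambda}}$, concluding $\overline{\rho}\in K$ since $\lambda\neq\overline{\lambda}$. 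You instead diagonalise the CM action over $K$ directly, identify $H^{0,1}$ with the $\overline{\lambda}$-eigenline $H_{\dR}^{-}\otimes_K\CC$, and read off $\overline{\rho}$ as the ratio of the $K$-rational coordinates of any generator of $H_{\dR}^{-}$, using equation \eqref{EllipticBettiBasis} only to see that $[\eta]+\overline{\rho}[\omega]$ spans $H^{0,1}$. The two are equivalent at bottom (both rest on $\varphi_{\dR}$ preserving the Hodge structure), but your eigenspace argument is more conceptual and sidesteps the matrix manipulation, whereas the paper's approach has the advantage of producing the explicit linear relation satisfied by $\overline{\rho}$ as a byproduct. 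For the final assertions ($\eta_2'/\eta_1'=\overline{z}$ and $z\in F$), you and the paper do essentially the same thing: substitute the formula \eqref{rhodef} and invoke the classical fact that the CM lattice forces $z$ into $F$.

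One small point worth tightening: your one-line justification that $c_\iota$ (equivalently $\mathsf{s}$) swaps the two eigenlines — i.e.\ that $\lambda$ acts by $\overline{\lambda}$ on $H^{0,1}$ — deserves a sentence, namely that the $\mathcal{O}$-action on $H_{\B}$ is $\QQ$-rational and hence commutes with Betti complex conjugation $c_{\B}$, so $c_\iota$ intertwines the $\lambda$- and $\overline{\lambda}$-eigenspaces. This is precisely what makes the identification $H^{0,1}=H_{\dR}^{-}\otimes_K\CC$ go through. You gesture at this with "the CM action is algebraic", which is the right idea but could be more explicit.
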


\begin{proof}
    Let $\varphi:H \to H$ be an endomorphism in $\mathcal{H}_K$ corresponding to some $\lambda \in \mathcal{O} \setminus \mathbb{Z}$. Since $\varphi_{\dR}: H_{\dR} \to H_{\dR}$ is compatible with the Hodge structure, its eigenvalues are $\lambda$ and $\overline{\lambda}$, and its action on the basis $[\omega],[\eta]$ satisfies
    \[
        \varphi_{\dR}\,[\omega] = \lambda\, [\omega]\qquad \varphi_{\dR}\,[\eta] = \mu\,[\omega] + \overline{\lambda} \, [\eta]
    \]
    for some $\mu \in K$. The compatibility between $\varphi_{\dR}$ and the single-valued map $\mathsf{s}$, namely $\mathsf{s} \circ (\varphi_{\dR}\otimes_\iota \id) = (\varphi_{\dR}\otimes_{\overline{\iota}}\id) \circ \mathsf{s}$, amounts to the matrix identity
    \[
        \begin{pmatrix}
            S_{11}\lambda & S_{11} \mu + S_{12} \overline{\lambda}\\
            S_{21} \lambda & S_{21} \mu + S_{22} \overline{\lambda}
        \end{pmatrix}
        =
        \begin{pmatrix}
            S_{11} & S_{12} \\
            S_{21} & S_{22}
        \end{pmatrix}
        \begin{pmatrix}
            \lambda & \mu \\
            0 & \overline{\lambda}
        \end{pmatrix}
        =
        \begin{pmatrix}
            \overline{\lambda} & \overline{\mu} \\
            0 & \lambda
        \end{pmatrix}
        \begin{pmatrix}
            S_{11} & S_{12} \\
            S_{21} & S_{22}
        \end{pmatrix}
        =
        \begin{pmatrix}
            \overline{\lambda} S_{11} + \overline{\mu} S_{21} & \overline{\lambda} S_{12} + \overline{\mu} S_{22} \\
            \lambda S_{21} & \lambda S_{22}
        \end{pmatrix} \ .
    \]
    By taking the quotient of the $(1,1)$ entry by the $(2,1)$ entry on both sides of the above equation, we obtain the relation
    \[
        \rho = \frac{S_{11}}{S_{21}} = \frac{\overline{\lambda} S_{11} + \overline{\mu} S_{21}}{\lambda S_{21}} = \frac{\overline{\lambda}}{\lambda} \rho + \frac{\overline{\mu}}{\lambda} \iff \overline{\rho} = \frac{\lambda}{\overline{\lambda}}\overline{\rho} + \frac{\mu}{\overline{\lambda}} \ .
    \]
    Since $\lambda \neq \overline{\lambda}$, we conclude that $\overline{\rho} \in K$. To see that $[\omega], [\eta']$ is an eigenbasis for $\varphi_{\dR}$, it suffices to use the previous formula for $\overline{\rho}$ to check that $\varphi_{\dR} [ \eta'] = \overline{\lambda} [\eta']$.

    The assertion that $z\defeq \omega_2/\omega_1$ lies in $F$ is classical; it follows from a period matrix computation, similar to the argument in the last paragraph, given by the compatibility $\comp_{\B,\dR}\circ (\varphi_{\dR}\otimes_{\iota}\id) =  (\varphi_{\B}\otimes \id) \circ \comp_{\B,\dR}$. From the formula \eqref{rhodef} for $\overline{\rho}$, we deduce that: 
    \[
        \frac{\eta_2'}{\eta_1'} = \frac{\eta_2 + \overline{\rho}\, \omega_2}{\eta_1 + \overline{\rho}\, \omega_1} = \frac{\overline{\omega}_2}{\overline{\omega}_1} = \overline{z}\ . \qedhere
    \]
\end{proof}

\begin{rem}
 The algebraicity of $\overline{\rho}$ fails if  $[\omega],[\eta]$ is not adapted to the Hodge filtration ($[\omega] \notin F^1H_{\dR}$). 
\end{rem}

\begin{rem} \label{rem: eigenbasisinBetti}
    It follows from Lemma \ref{lem: PeriodsE} that, for $[\omega],[\eta']$ as in Lemma \ref{lem: periodsCMcase}, we have
    \[ 
        \comp^{-1}_{\B,\dR} (X- zY)  = \frac{1}{\omega_1} [\omega]\qquad  \hbox{ and } \qquad   \comp^{-1}_{\B,\dR} (X- \overline{z} Y) = \frac{\omega_1 }{2\pi i}(\overline{z} -z)   [\eta']  \ .
    \]
\end{rem}

\begin{prop}  \label{prop: CMmotives}
    Let $E$ be as above and $H = H^1(E)$. There is a canonical decomposition  in  $\mathcal{H}_K$ 
    \begin{equation} \label{decompSnintoCn}
        S^n H = \bigoplus_{0\leq 2i \leq  n}  C^{n-2i} (-i) 
    \end{equation} 
    where $C^0 = \QQ(0)$, and, for $m\ge 1$, the objects $C^m$ are of rank two with Hodge numbers $(0,m)$ and $(m,0)$.
\end{prop}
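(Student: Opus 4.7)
The plan is to decompose $H$ in the base-changed category $\mathcal{H}_K \otimes F$, where the CM action will be strong enough to split $H$ into rank-one pieces, and then to descend the resulting decomposition of $S^n H$ back to $\mathcal{H}_K$ via $\Gal(F/\QQ)$-descent.

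First, since $K\supset F$, the CM action gives an injection $F \hookrightarrow \End_{\mathcal{H}_K}(H)$. After extending scalars to $F$, the algebra $F\otimes_{\QQ} F \cong F \times F$ acts on $H$ in $\mathcal{H}_K\otimes F$, and its two orthogonal idempotents will yield a canonical decomposition $H = L \oplus \overline{L}$ into rank-one subobjects, where $\lambda \in \mathcal{O}$ acts on $L$ (resp.\ $\overline{L}$) via $\iota: F\hookrightarrow K$ (resp.\ via $\overline{\iota}$). By the normalisation hypothesis that $\lambda$ acts as $\lambda$ on $F^1 H_\dR$, the line $L$ will have Hodge type $(1,0)$ and $\overline{L}$ will have Hodge type $(0,1)$. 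The polarisation $H\otimes H \to \QQ(-1)$ is Rosati-compatible with the CM, i.e., $\langle \lambda x,y\rangle = \langle x,\overline{\lambda}y\rangle$; a direct eigenvalue computation then shows that it vanishes on $L\otimes L$ and on $\overline{L}\otimes\overline{L}$, and restricts to a non-degenerate pairing on $L\otimes\overline{L}$. Hence $L\otimes\overline{L} \cong F(-1)$ in $\mathcal{H}_K\otimes F$, or equivalently $\overline{L} \cong L^{\vee}(-1)$.

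It will follow that in $\mathcal{H}_K\otimes F$ we have
\[
    S^n H \cong \bigoplus_{i+j=n} L^{\otimes i}\otimes \overline{L}^{\otimes j} \cong \bigoplus_{i+j=n} L^{\otimes(i-j)}(-j),
\]
where we set $L^{\otimes(-r)} \defeq \overline{L}^{\otimes r}$ for $r > 0$. The non-trivial element of $\Gal(F/\QQ)$ acts on $\mathcal{H}_K\otimes F$ through its action on $F$-coefficients and swaps $L$ with $\overline{L}$, hence swaps the summands indexed by $(i,j)$ and $(j,i)$. For $n$ even and $i=j=n/2$, the Galois-fixed summand equals $\QQ(-n/2)$; for $i \neq j$, writing $m = |i-j|$ and $k = \min(i,j)$, the Galois-stable pair $(L^{\otimes m}\oplus\overline{L}^{\otimes m})(-k)$ will descend to a rank-two object $C^m(-k)$ in $\mathcal{H}_K$, whose Hodge numbers are $(m,0)$ and $(0,m)$ since these are preserved under descent. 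Setting $C^0 \defeq \QQ(0)$ and re-indexing by $i$ with $m = n-2i$ will give the claimed decomposition.

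The main obstacle will be the Galois descent step: one must verify that the Galois-invariant projector onto $L^{\otimes m}\oplus \overline{L}^{\otimes m}$, a priori defined only in $\End_{\mathcal{H}_K\otimes F}(S^n H)$, is the image of an idempotent in $\End_{\mathcal{H}_K}(S^n H)$. The plan is to realise this from the fact that the commutative $\QQ$-algebra inside $\End_{\mathcal{H}_K}(S^n H)$ generated by the CM action becomes, upon tensoring with $F$, a product of copies of $F$ indexed by the characters $\lambda\mapsto \iota(\lambda)^i\overline{\iota}(\lambda)^j$; Galois orbits of these characters then correspond to primitive idempotents of the original $\QQ$-subalgebra, yielding the required descent and confirming the canonicity of the $C^m$.
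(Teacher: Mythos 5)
Your argument is essentially correct, but it takes a genuinely different route from the paper's. The paper never extends coefficients: it constructs a $\QQ$-rational morphism $\varphi:\QQ(-1)\to S^2H$ from the graph of a complex multiplication, viewed as an algebraic cycle on $E\times_K E$, computes in a symplectic de Rham eigenbasis that its image is the line spanned by $ef$, defines $C^n$ as the cokernel of the induced map $\QQ(-1)\otimes S^{n-2}H\to S^nH$, and splits the resulting sequence using semisimplicity of the Tannakian subcategory generated by $H$. You instead pass to $\mathcal{H}_K\otimes F$, split $H\otimes F=L\oplus\overline{L}$ into CM eigen-lines (these are exactly the objects $M_{\pm}$ of Remark \ref{rem: defineMplusMminusCMcase}, which the paper derives \emph{after} the proposition), and then Galois-descend. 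The descent step is sound, and can be phrased even more simply than your algebra-of-characters justification (which, as stated, needs a $\lambda$ with $\lambda/\overline{\lambda}$ not a root of unity in order to separate the characters by a single operator): the projector onto $(L^{\otimes i}\otimes\overline{L}^{\otimes j})\oplus(L^{\otimes j}\otimes\overline{L}^{\otimes i})$ is invariant under the coefficient Galois action, hence lies in $(\End_{\mathcal{H}_K}(S^nH)\otimes F)^{\Gal(F/\QQ)}=\End_{\mathcal{H}_K}(S^nH)$ by \eqref{eqn: ExtendHoms}, and it splits because $\mathcal{H}_K$ is abelian. What the paper's route buys is that the splitting map is induced by an algebraic cycle, so the decomposition is visibly motivic (the remark following the proposition, on which Corollary \ref{cor: MotivicKummerExtension} later relies); your argument as written lives in $\mathcal{H}_K$ only, though it can be upgraded since the CM idempotents come from algebraic endomorphisms and Chow motives are pseudo-abelian with the same Hom-extension property. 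What your route buys is a transparent, choice-free description of all the summands at once.

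Two points in your write-up still need justification. First, you assert that the middle summand ($i=j=n/2$) ``equals $\QQ(-n/2)$''; a priori it is only a rank-one object $D$ of $\mathcal{H}_K$ with $D\otimes F\cong F(-n/2)$, and the identification genuinely uses the CM (for a non-CM curve there is no nonzero morphism $\QQ(-n/2)\to S^nH$ at all). The fix is short: $\Hom_{\mathcal{H}_K}(\QQ(-n/2),D)\otimes F\cong \End_{\mathcal{H}_K\otimes F}(F(-n/2))\cong F\neq 0$ by \eqref{eqn: ExtendHoms}, and any nonzero $\QQ$-rational element becomes a nonzero scalar of $F$ after extension, hence is bijective on realisations, hence an isomorphism in the abelian category $\mathcal{H}_K$. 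Second, if you want the object $C^m$ occurring in $S^nH$ to coincide with the one cut out of $S^mH$ (as the uniform notation of \eqref{decompSnintoCn} suggests, and as the paper's cokernel definition makes automatic), apply the same argument to $\Hom_{\mathcal{H}_K}(C^m(-k),D)\otimes F\cong \End(L^{\otimes m}\oplus\overline{L}^{\otimes m})\cong F\times F$ (the cross terms vanish for Hodge-type reasons when $m\ge 1$): the $\QQ$-rational elements form the invariants $\{(a,\sigma a):a\in F\}$, and any nonzero such element is invertible, hence gives the required isomorphism. With these additions your proof is complete.
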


\begin{proof}
    The graph of complex multiplication by  $\lambda \in \mathcal{O}$ defines an algebraic cycle $\Gamma_{\lambda} \subset E \times_K E$ defined  over $K$. The cycle class map defines a morphism $H^0(\Gamma_{\lambda})(-1) \rightarrow H^2(E\times_K E)$ in $\mathcal{H}_K$, inducing a map 
    \[
        \varphi_{\lambda}:  \QQ(-1)  \To S^2 H\ ,  
    \]
    after projecting onto the summand $H \otimes H \subset H^2(E\times_K E)$ by the  K\"unneth decomposition, and from there onto $S^2 H$. We  compute this map in de Rham cohomology. Let $e,f$ denote a symplectic eigenbasis for the action $\varphi_{\dR}$ of complex multiplication by $\lambda$ on $H_{\dR}$ (\emph{e.g.}, $[\omega],[\eta']$ as in Lemma \ref{lem: periodsCMcase}), which is represented by 
    \[
        \varphi_{\dR} =  e^{\vee} \otimes \lambda\,  e+   f^{\vee} \otimes \overline{\lambda}\,  f \ \in  \ \End(H_{\dR}) = H_{\dR}^{\vee} \otimes H_{\dR} \ ,
    \]
    where $\overline{\lambda}$ is the complex conjugate of $\lambda$. Since $e,f$ is symplectic (\emph{i.e.}, $\langle e,f\rangle_{\dR} = 1$), the isomorphism $H^{\vee} \cong  H(1)$ induced by Poincar\'e duality satisfies $e^{\vee} = f$ and $f^{\vee}=- e$. Therefore the  element $\varphi_{\dR}$ corresponds via this isomorphism to  
    \[
        f \otimes \lambda e - e \otimes \overline{\lambda}  f\  \in \ H_{\dR} \otimes H_{\dR} \ .
    \]
    Its image in $S^2 H_{\dR}$ is $(\lambda - \overline{\lambda}) ef$ and is non-zero if and only if $\Im(\lambda) \neq 0$, in which case $\varphi_{\lambda}: \QQ(-1)\rightarrow S^2 H$ is injective. Choose any $\lambda \in \mathcal{O}$ with non-vanishing imaginary part  (equivalently,  $\lambda \not \in \mathbb{Z}$) and write $\varphi = \varphi_{\lambda}$. By the above, it  only depends on $\mathrm{Im}\, \lambda$ and is therefore well-defined up to multiplication by a non-zero rational number. 
    Let $C^1 =H$ and for all $n\geq 2$ define 
    \[
        C^n = \coker \left( \varphi^n: \QQ(-1) \otimes S^{n-2} H \To S^n H \right) \ , 
    \]
    where the map $\varphi^n$ is the composition of the injection $ \varphi \otimes \id:  \QQ(-1)\otimes S^{n-2}H\rightarrow S^2H\otimes S^{n-2}H$ with the multiplication \eqref{eqn: multiplySnH} in the case $m=2.$ This map is injective. To see this, consider the eigenspace decomposition of the de Rham  realisation $H_{\dR}=H_{\dR}^{\lambda} \oplus H_{\dR}^{\overline{\lambda}}$ with respect to the complex multiplication.  Let us denote the $\lambda^p \overline{\lambda}^q$-eigenspace in $S^n H_{\dR}$ by $H_{\dR}^{p,q} \cong  (H_{\dR}^{\lambda})^{\otimes p} \otimes(H_{\dR}^{\overline{\lambda}})^{\otimes q}$.  By the above computation,  the image of $\varphi_{\dR}$ is the component $H^{1,1}_{\dR}$ of $S^2 H_{\dR}$  generated by $ef$. Therefore $\varphi^n_{\dR}$ is multiplication by $ef$ 
    \[
        \varphi^n_{\dR}  \ :  \  H^{1,1}_{\dR} \otimes \bigoplus_{p+q=n-2} H^{p,q}_{\dR} \overset{\sim}{\To} \bigoplus_{p+q=n-2} H^{p+1,q+1}_{\dR} \  \subset\  S^{n} H_{\dR} \ ,
    \]
    which is injective. Thus $\varphi^n$ is injective, and we see from the above that  $C^n_{\dR} \cong H^{n,0}_{\dR} \oplus H^{0,n}_{\dR}$ has rank $2$.

    Since $H^{\otimes n}$ and all the objects defined above lie in a semi-simple Tannakian subcategory of $\mathcal{H}_K$, we deduce from the definition of $C^n$ and injectivity of $\varphi^n$ a canonical decomposition 
    \[
        S^n H  =   C^n \oplus S^{n-2} H(-1)\ , 
    \]
    from which \eqref{decompSnintoCn} follows.  Since $H$ has Hodge types $(1,0)$ and $(0,1)$, $S^nH$ has Hodge numbers $(n-i,i)$ for $0\leq i \leq n$ with multiplicity $1$. By the previous equation,  $C^n$ has Hodge numbers $(0,n)$ and $(n,0)$.
\end{proof}

\begin{rem}
    The  previous proposition holds in the category of Chow motives over $K$ with trivial coefficients.
\end{rem}
 
\begin{rem}   \label{rem: defineMplusMminusCMcase}
    In the category $\mathcal{H}_{K} \otimes F$, complex multiplication by a non-real element of $\mathcal{O}$ defines an endomorphism of $H\otimes F$ with  distinct eigenvalues in $F$. Therefore it may be  decomposed  into eigenspaces
    \[
        H \otimes F \cong    M_+ \oplus M_{-} 
    \]
    in $\mathcal{H}_K\otimes F$. The objects $M_+, M_{-}$ are realisations of  rank one Chow motives over $K$ with coefficients in $F$.  The objects $C^m\otimes F$ in turn decompose into a direct sum $M^{\otimes m}_+ \oplus M^{\otimes m}_{-} $ of rank one objects. By permuting $M_+$ and $M_-$ if necessary, we may assume that  $ (M_{+})_{\dR}  \cong  H^{\lambda}_{\dR}$, $ (M_{-})_{\dR}  \cong  H^{\overline{\lambda}}_{\dR}$.    
    Since the complex multiplication respects the Hodge filtration, we have also  $(M_+)_{\dR}= F^1 H_{\dR}$.
   
\end{rem}

\section{Mixed modular objects in a category of realisations}\label{sec:MMR}
The first paragraphs of this section study the rational structure on algebraic de Rham cohomology of punctured modular curves and the field of definition of the meromorphic forms $\Psi^{r,s}_{\Gamma}(z,w)$.
The section concludes with the definition of mixed modular relative cohomology objects in a category of realisations.

\subsection{De Rham cohomology of punctured modular curves over a number field} \label{par: deRham-nb-field}

From now on, we assume for simplicity that $\Gamma \le \SL_2(\mathbb{Z})$ is a \emph{congruence} subgroup. We shall also assume that the finite union of $\Gamma$-orbits $S = \Gamma w_1 \cup \cdots \cup \Gamma w_r$ is such that the $j$-invariants $j(w_i)$ are algebraic numbers for $i=1,\ldots,r$. This implies that there is a number field $K\subset \mathbb{C}$ over which the universal elliptic curve $\mathcal{E}_{\Gamma} \to \mathcal{Y}_{\Gamma}$ and the closed substack $\mathcal{S}_{\Gamma}\subset \mathcal{Y}_{\Gamma}$ are defined; we denote by $\mathcal{E}_{\Gamma,K} \to \mathcal{Y}_{\Gamma ,K}$ and $\mathcal{S}_{\Gamma,K}\subset \mathcal{Y}_{\Gamma,K}$ their $K$-models to emphasize the base field. We denote  by  $\mathcal{U}_{\Gamma,K} $ the open complement $ \mathcal{Y}_{\Gamma,K}\setminus \mathcal{S}_{\Gamma,K}$.

\begin{example}
    If $\Gamma$ contains the principal congruence subgroup $\Gamma(N)$, then it is classical that we can take $\mathbb{Q}(e^{\frac{2\pi i}{N}})$ as a field of definition for the modular stack $\mathcal{Y}_{\Gamma}$, and $K= \mathbb{Q}(e^{\frac{2\pi i}{N}},j(w_1),\ldots,j(w_r))$. For the Hecke congruence subgroup $\Gamma_0(N)$, we can take $K = \mathbb{Q}(j(w_1),\ldots,j(w_r))$.
\end{example}

The system $(\mathcal{V}_{k}^{\dR,\mathbb{C}}, \nabla_{k}^{\mathbb{C}}, F^{\bullet}\mathcal{V}_{k}^{\dR,\mathbb{C}})$ discussed in \S\ref{sect: AlgMeroForms} comes from  a system $(\mathcal{V}_{k}^{\dR,K}, \nabla_{k}^{K}, F^{\bullet}\mathcal{V}_{k}^{\dR,K})$ defined over $K$, for instance:
\[
    \mathcal{V}_{k}^{\dR,K} \defeq \Sym^k H^1_{\dR}(\mathcal{E}_{\Gamma,K}/\mathcal{Y}_{\Gamma,K}) \ .
\]
By restricting it to $\mathcal{U}_{\Gamma,K}$ and computing de Rham cohomology, we obtain a $K$-structure on \eqref{eq:cohomolgy-open-complex}:
\[
    H^1_{\dR}(\mathcal{U}_{\Gamma,K};\mathcal{V}_{k}^{\dR,K}) \otimes_K \mathbb{C} \cong H^1_{\dR}(\mathcal{U}_{\Gamma};\mathcal{V}_{k}^{\dR,\mathbb{C}}) \ .
\]

Let $d$ be the width of the cusp at infinity, so that a meromorphic modular form $f \in M_{l,\Gamma}^!(*S)$ admits a Fourier expansion of the form
\[
    f(z) = \sum_{n \gg -\infty}a_nq^{\frac{n}{d}}\ \text{, }\qquad q = e^{2\pi i z}\ ,
\]
near $q=0$. We denote by $M_{l,\Gamma}^!(*S)_K \subset M_{l,\Gamma}^!(*S)$ (resp. $M_{l,\Gamma}(mS)_K \subset M_{l,\Gamma}(mS)$) the $K$-vector subspace given by the condition $a_n \in K$ for all $n \in \mathbb{Z}$.

Recall that the `classical Tate curve' admits the affine equation $y^2 = 4x^3 - \frac{E_4(q)}{12}x +\frac{E_6(q)}{216}$ over $\mathbb{Q}(\!(q)\!)$ and the class $\omega_q \defeq 2\pi i (X-zY)$ gets identified with $[dx/y]$. Thus, by the `$q$-expansion principle', there is an isomorphism
\[
    M_{k+2,\Gamma}^!(*S)_K \stackrel{\sim}{\longrightarrow} H^0(\mathcal{U}_{\Gamma,K},F^k\mathcal{V}_k^{\dR,K}\otimes \Omega^1_{\mathcal{U}_{\Gamma,K}/K})\ \text{, }\qquad f\longmapsto (2\pi i)^{k+1}f(z)(X-zY)^kdz = f \,\omega_q^k \,\frac{dq}{q} \ .
\]
The above map also induces, for every $m\ge 0$, a $K$-rational version of the isomorphism \eqref{eq:holomorphic-at-cusps}:
\[
    M_{k+2,\Gamma}(mS)_K \stackrel{\sim}{\longrightarrow} H^0(\mathcal{X}_{\Gamma,K}, F^{k}\mathcal{V}_k^{\dR,K}\otimes \mathcal{O}_{\mathcal{X}_{\Gamma,K}}(m\mathcal{S}_{\Gamma,K})\otimes \Omega^1_{\mathcal{X}_{\Gamma,K}/K}(\log \mathcal{Z}_{\Gamma,K})),
\]
where $\mathcal{X}_{\Gamma,K}$ (resp. $\mathcal{Z}_{\Gamma,K}$) denotes the compactified modular stack (resp. the cuspidal locus) defined over $K$. In particular, it follows that
\[
    M_{k+2,\Gamma}^!(*S)_K\otimes_K \mathbb{C} \cong M_{k+2,\Gamma}^!(*S)\ \text{, }\qquad M_{k+2,\Gamma}(mS)_K\otimes_K \mathbb{C} \cong M_{k+2,\Gamma}(mS).
\]

Note that we also have a residue sequence of $K$-vector spaces (\emph{cf.} \S \ref{par:appendix-residue}) whose base change to $\mathbb{C}$ is \eqref{eq:residue-seq-over-C}:
\begin{equation}\label{eq:residue-seq-over-K}
     \begin{tikzcd}[column sep = small]
        0 \arrow{r} & H^1_{\dR}(\mathcal{Y}_{\Gamma,K};\mathcal{V}_{k}^{\dR,K}) \arrow{r} & H^1_{\dR}(\mathcal{U}_{\Gamma,K};\mathcal{V}_{k}^{\dR,K}) \arrow{r}{\Res} & \bigoplus_{i=1}^r(\Sym^k H^1_{\dR}(\mathcal{E}_{w_i,K}/K))^{\Gamma_{w_i}} \arrow{r} & 0\ ,
    \end{tikzcd}
\end{equation}
where $\mathcal{E}_{w_i,K}$ is a $K$-model of the complex elliptic curve $\mathcal{E}_{w_i}$, which exists by the assumption that $j(w_i) \in K$ for all $i=1,\ldots,r$. The $K$-rational versions of the main results of  Section \ref{sec:cohomology-over-C} (namely Proposition \ref{prop:dR-cohomology-meromorphic-modular-forms} and Theorem \ref{thm:hodge-filtration-open-complex}, after rescaling by suitable powers of $2\pi i$) are summarised in the next proposition; the proofs are analogous.

\begin{thm}\label{thm:de-Rham-open-over-K}
    There is an exact sequence of $K$-vector spaces
    \[
        \begin{tikzcd}[row sep = -0.1cm, column sep = small]
              M^!_{-k,\Gamma}(*S)_K \arrow{r} & M_{k+2,\Gamma}^!(*S)_K \arrow{r} & H^1_{\dR}(\mathcal{U}_{\Gamma,K}; \mathcal{V}_k^{\dR,K}) \arrow{r} & 0\\
               h \rar[mapsto] & (q\frac{d}{dq})^{k+1}h \\
             & f \rar[mapsto] & {[f\, \omega_q^{k} \frac{dq}{q}]}
        \end{tikzcd}
    \]
    inducing isomorphisms 
    \[
        M_{k+2}((k+2-p)S)_K \stackrel{\sim}{\longrightarrow}F^pH^1_{\dR}(\mathcal{U}_{\Gamma,K};\mathcal{V}_k^{\dR,K})\text{, }\qquad p=1,\ldots,k+1.
    \]
    In particular, for every $p=1,\ldots,k+1$, we have a residue exact sequence 
    \[
        \begin{tikzcd}[column sep = small]
         0 \arrow{r} & (M_{k,\Gamma})_K  \arrow{r}\arrow{d}{\sim} & M_{k+2,\Gamma}((k+2-p)S)_K \arrow{r}{\Res}\arrow{d}{\sim} & \bigoplus_{i=1}^rF^{p-1}(\Sym^k H^1_{\dR}(\mathcal{E}_{w_i,K}/K))^{\Gamma_{w_i}} \arrow{r}\arrow{d}{\sim} & 0\\
            0 \arrow{r} & F^pH^1_{\dR}(\mathcal{Y}_{\Gamma,K};\mathcal{V}_{k}^{\dR,K}) \arrow{r} & F^pH^1_{\dR}(\mathcal{U}_{\Gamma,K};\mathcal{V}_{k}^{\dR,K}) \arrow{r}{\Res} & \bigoplus_{i=1}^rF^{p-1}(\Sym^k H^1_{\dR}(\mathcal{E}_{w_i,K}/K))^{\Gamma_{w_i}} \arrow{r} & 0.
        \end{tikzcd}
    \]
\end{thm}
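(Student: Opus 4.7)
The strategy is to rerun the proofs of Proposition \ref{prop:dR-cohomology-meromorphic-modular-forms} and Theorem \ref{thm:hodge-filtration-open-complex} verbatim over $K$ rather than $\mathbb{C}$. All the ingredients used there are already available in the $K$-rational setting: the bundle with connection $(\mathcal{V}_k^{\dR,K},\nabla_k^K)$ and its Hodge filtration, their canonical logarithmic extension across the cusps \emph{\`a la} Deligne, the affineness of $\mathcal{U}_{\Gamma_0,K}$ (so that Serre's vanishing for coherent cohomology applies), and crucially the $q$-expansion principle which gives the $K$-rational identification $M^!_{k+2,\Gamma}(*S)_K \stackrel{\sim}{\to} H^0(\mathcal{U}_{\Gamma,K},F^k\mathcal{V}_k^{\dR,K}\otimes \Omega^1_{\mathcal{U}_{\Gamma,K}/K})$ recalled just before the statement. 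Here the class $\omega_q = 2\pi i (X-zY)$, which is $\mathbb{Q}$-rational on the Tate curve, absorbs all the transcendental factors that otherwise separate the complex and rational pictures.

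The exact sequence in the top row is obtained by running the Hodge-filtration spectral sequence argument of Proposition \ref{prop:modular-representative-algebraic} directly over $K$: the filtered complex $F^p\overline{C}: F^p\overline{\mathcal{V}}_k^{\dR,K}\to F^{p-1}\overline{\mathcal{V}}_k^{\dR,K}\otimes \Omega^1$ has acyclic graded pieces since the connection induces the $K$-rational isomorphism \eqref{Deltaancomputation} (multiplication by the integer $p$), so the associated spectral sequence degenerates at page $k+2$ and yields the surjection $H^0(\mathcal{U}_{\Gamma,K},F^k\mathcal{V}_k^{\dR,K}\otimes \Omega^1) \twoheadrightarrow H^1_{\dR}(\mathcal{U}_{\Gamma,K};\mathcal{V}_k^{\dR,K})$ after taking $G$-invariants. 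Composing with the $q$-expansion isomorphism gives surjectivity of $M^!_{k+2,\Gamma}(*S)_K \to H^1_{\dR}(\mathcal{U}_{\Gamma,K};\mathcal{V}_k^{\dR,K})$. The operator $(q\tfrac{d}{dq})^{k+1}$ is the $q$-expansion incarnation of $(2\pi i)^{-(k+1)}\partial^{k+1}/\partial z^{k+1}$, so exactness in the middle reduces to the $\mathbb{C}$-statement (Proposition \ref{prop:dR-cohomology-meromorphic-modular-forms}) together with the observation that if $f \in M^!_{k+2,\Gamma}(*S)_K$ is killed in cohomology, then the primitive $h$ produced by Proposition \ref{prop:holomorphic-lift} is uniquely determined (up to an element of $(M_{-k,\Gamma})_K = 0$ for $k\geq 1$, and handled separately for $k=0$) and its $q$-expansion coefficients lie in $K$, by flatness of $\mathbb{C}/K$ applied to the Bol map.

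The isomorphisms $M_{k+2,\Gamma}((k+2-p)S)_K \cong F^pH^1_{\dR}(\mathcal{U}_{\Gamma,K};\mathcal{V}_k^{\dR,K})$ follow from the same spectral-sequence argument applied to the shifted Hodge subcomplex $F^p\overline{C}$, combined with the $K$-rational analogue of Lemma \ref{lem:hodge-filtration-S-empty} (the Eichler--Shimura isomorphism over $K$, classical since Zucker). The residue exact sequence in the bottom row is the Gysin sequence for the smooth closed immersion $\mathcal{S}_{\Gamma,K}\hookrightarrow \mathcal{Y}_{\Gamma,K}$ with coefficients in $\mathcal{V}_k^{\dR,K}$, which is purely algebraic and defined over $K$; its compatibility with the top row reduces to the explicit residue computation of Lemma \ref{lem:formula-residue-meromorphic} applied in $K$-rational coordinates. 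The only real subtlety — rather than obstacle — is the careful bookkeeping of $(2\pi i)$-factors so that the $K$-rational Bol operator and residue normalisations appearing in the two rows match, which is handled uniformly by working with $\omega_q$ throughout.
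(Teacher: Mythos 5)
Your proposal is correct and takes essentially the same route as the paper: the paper's own proof consists of the single remark that "the proofs are analogous" to Proposition \ref{prop:dR-cohomology-meromorphic-modular-forms} and Theorem \ref{thm:hodge-filtration-open-complex} after rescaling by powers of $2\pi i$, and you have spelled out exactly what that entails — the $K$-rational incarnations of the bundle, connection, and Hodge filtration; the $q$-expansion principle with $\omega_q$ absorbing the transcendental factors; Serre vanishing on the affine model; and the Gysin/residue sequence being algebraic over $K$. Your descent argument for middle exactness (the Bol map is diagonal on $q$-expansions) is sound, though one could equally just invoke faithful flatness of $\mathbb{C}/K$ to deduce exactness of the $K$-sequence from the already-established $\mathbb{C}$-sequence once the maps are known to be defined over $K$.
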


\subsection{The field of definition of $\Psi^{r,s}_{\Gamma}(z,w)$}

In this paragraph, we only consider the case of a single orbit $S = \Gamma w$. Recall that $j(w)$ is algebraic and $K$ contains $\QQ(j(w))$. Furthermore, if $w$ is CM, then we shall also assume that $K$ contains the CM field $\mathbb{Q}(w)$. 

To fix ideas, we assume for the time being that $w$ is not an elliptic point, so that $\Gamma_{w} \le \{\pm I\}$. If $k$ is odd, we assume moreover that $\Gamma_w = \{I\}$ (otherwise the group  $\big(\Sym^k H^1_{\dR}(\mathcal{E}_{w,K}/K)\big)^{\Gamma_w}$ vanishes).  Then, if $[\omega],[\eta]$ is a $K$-basis of $H^1_{\dR}(\mathcal{E}_{w,K}/K)$ as in Lemma \ref{lem: PeriodsE}, we have $(\Sym^kH^1_{\dR}(\mathcal{E}_{w,K}/K))^{\Gamma_w} = \Sym^kH^1_{\dR}(\mathcal{E}_{w,K}/K)$ (\emph{cf.} Remark \ref{rem:kappa}), with a $K$-basis given by
\[
   [\eta]^m  [\omega]^n \text{, }\qquad m+n = k\text{, } m,n \ge 0.
\]
By the residue sequence in Theorem \ref{thm:de-Rham-open-over-K}, for every $m,n\ge 0$ satisfying $m+n = k $, there exists a meromorphic modular form with $K$-rational Fourier coefficients
\[
    f^{m,n} \in M_{k+2,\Gamma}((k+1-n)S)_K \cong F^{n+1}H^1_{\dR}(\mathcal{U}_{\Gamma,K};\mathcal{V}_{k}^{\dR,K})
\]
such that
\begin{equation}\label{eq:residue-fmn}
    \Res\, [(2\pi i)^{k+1}f^{m,n}(z)(X-zY)^kdz] = [\eta]^m [\omega]^n .
\end{equation}
Note that $f^{m,n}$ is not unique in general. By adding to $f^{m,n}$ a $K$-linear combination of Eisenstein series for $\Gamma$, we may assume that $f^{m,n}$ vanishes at the cusps.

\begin{prop}\label{prop:field-def-psi}
    Let $m,n\ge 0$ with $m+n=k$. With the above notation, and  $\omega_1$, $\rho$ as in Lemma \ref{lem: PeriodsE}, and $\kappa^{m,n}_{\Gamma,w} \in \{1,2\}$ as in Corollary \ref{cor:residue-Psi}, we have
    \begin{equation}\label{eq:field-def-psi}
        \Psi_{\Gamma}^{m,n}(z,w) = \binom{k}{m}\frac{\kappa^{m,n}_{\Gamma,w}(2\pi i)^{n+1}\omega_1^{m-n}}{(w-\overline{w})^n}\left(\sum_{j=0}^{m}\binom{m}{j}\overline{\rho}^jf^{m-j,n+j}(z)\right) + h^{m,n}(z)
    \end{equation}
    for some cusp form $h^{m,n}$  for $\Gamma$ of weight $k+2$ with  complex coefficients.
\end{prop}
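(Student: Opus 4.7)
The plan is to prove the identity modulo cusp forms by computing the de Rham cohomological residues of both sides at $z=w$ (and, by $\Gamma$-equivariance, at every point of $\Gamma w$), expressing them in the $K$-rational basis $[\eta]^{m-j}[\omega]^{n+j}$ of $\Sym^k H^1_{\dR}(\mathcal{E}_{w,K}/K)$, and then matching coefficients.

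The first step is to apply Corollary \ref{cor:residue-Psi}, which yields
\[
    \Res_{z=w}\big(\Psi_{\Gamma}^{m,n}(z,w)(X-zY)^k dz\big) = \frac{(-1)^m \kappa^{m,n}_{\Gamma,w}}{(w-\overline{w})^k}\binom{k}{m}(X-\overline{w}Y)^m(X-wY)^n.
\]
I would then convert this expression from the complexified Betti basis to the de Rham basis via the formulae \eqref{EllipticBettiBasis} of Lemma \ref{lem: PeriodsE} specialised at $z=w$, namely $\comp_{\B,\dR}^{-1}(X-wY) = \omega_1^{-1}[\omega]$ and $\comp_{\B,\dR}^{-1}(X-\overline{w}Y) = \omega_1(\overline{w}-w)(2\pi i)^{-1}([\eta]+\overline{\rho}[\omega])$. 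Expanding $([\eta]+\overline{\rho}[\omega])^m$ by the binomial theorem and simplifying using $(\overline{w}-w)^m=(-1)^m(w-\overline{w})^m$ and $k-m=n$, the residue becomes
\[
    \frac{\kappa^{m,n}_{\Gamma,w}\,\omega_1^{m-n}}{(2\pi i)^m(w-\overline{w})^n}\binom{k}{m}\sum_{j=0}^{m}\binom{m}{j}\overline{\rho}^j\,[\eta]^{m-j}[\omega]^{n+j}.
\]

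Next, I would invoke the defining property \eqref{eq:residue-fmn} of $f^{m-j,n+j}$, according to which the class $[(2\pi i)^{k+1}f^{m-j,n+j}(z)(X-zY)^k dz]$ has residue $[\eta]^{m-j}[\omega]^{n+j}$, so the residue of the first summand on the right-hand side of \eqref{eq:field-def-psi} (ignoring the $h^{m,n}$ term), once multiplied by $(X-zY)^k dz$, is
\[
    \binom{k}{m}\frac{\kappa^{m,n}_{\Gamma,w}(2\pi i)^{n+1}\omega_1^{m-n}}{(2\pi i)^{k+1}(w-\overline{w})^n}\sum_{j=0}^m\binom{m}{j}\overline{\rho}^j\,[\eta]^{m-j}[\omega]^{n+j}.
\]
This matches the previous expression for the residue of $\Psi_{\Gamma}^{m,n}(z,w)(X-zY)^k dz$ because $k+1-(n+1) = m$, so the $2\pi i$ factors combine to $(2\pi i)^{-m}$.

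To conclude, I would define $h^{m,n}(z)$ as the difference between the two sides of \eqref{eq:field-def-psi}. By construction, it is a meromorphic modular form for $\Gamma$ of weight $k+2$ with poles at most along $\Gamma w$. The residue calculation above, combined with the explicit formula of Lemma \ref{lem:formula-residue-meromorphic}, forces all the Laurent coefficients $c_{-1}(w),\ldots,c_{-k-1}(w)$ of $h^{m,n}$ to vanish, so $h^{m,n}$ is holomorphic on $\mathcal{Y}_{\Gamma}$. Since both $\Psi_{\Gamma}^{m,n}(z,w)$ (by Proposition \ref{prop:Psi}) and each $f^{m-j,n+j}$ (by the normalisation imposed just above the proposition) vanish at the cusps, $h^{m,n}$ is a cusp form. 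The main step is purely bookkeeping: tracking the transcendental factors $\omega_1$, $2\pi i$, and $\overline{\rho}$ arising from the Betti-to-de-Rham base change, which dictates the precise constants appearing in \eqref{eq:field-def-psi}.
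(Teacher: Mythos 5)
Your proof is correct and takes essentially the same route as the paper: you compare residues using Corollary \ref{cor:residue-Psi}, the comparison formulae \eqref{EllipticBettiBasis} of Lemma \ref{lem: PeriodsE}, and the normalisation \eqref{eq:residue-fmn}, and then conclude that the difference is a cusp form, with the bookkeeping of the factors $2\pi i$, $\omega_1$, $\overline{\rho}$ checking out exactly as required. The only cosmetic difference is that you deduce holomorphy of the difference directly from Lemma \ref{lem:formula-residue-meromorphic}, whereas the paper cites Proposition \ref{prop:residue-seq-hodge-explicit}, whose middle exactness rests on that very lemma.
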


\begin{proof}
    Let
    \[
        f(z) = \Psi_{\Gamma}^{m,n}(z,w)-\binom{k}{m}\frac{\kappa^{m,n}_{\Gamma,w}(2\pi i)^{n+1}\omega_1^{m-n}}{(w-\overline{w})^n}\sum_{j=0}^{m}\binom{m}{j}\overline{\rho}^jf^{m-j,n+j}(z) \in M_{k+2,\Gamma}((k+1-n)S).
    \]
    By Corollary \ref{cor:residue-Psi}, Lemma \ref{lem: PeriodsE}, and equation \eqref{eq:residue-fmn}, we have $\Res [f(z)(X-zY)^kdz] = 0$. Then, by Proposition \ref{prop:residue-seq-hodge-explicit}, there is a holomorphic modular form $h^{m,n}$ satisfying the equation in the statement. To see that $h^{m,n}$ is a cusp form, it suffices to remark that $\Psi^{m,n}_{\Gamma}$ (by Proposition \ref{prop:Psi}) and every $f^{m-j,n+j}$ (by construction) vanish at the cusps.
\end{proof}

\begin{ex} 
    We have in particular
    \[
        \Psi_{\Gamma}^{0,k}(z,w) = \frac{\kappa_{\Gamma,w}^{0,k}(2\pi i)^{k+1}}{\omega_1^k(w-\overline{w})^k}f^{0,k}(z) + h^{0,k}(z),
    \]
    where $f^{0,k}$ is a meromorphic modular form with $K$-rational Fourier coefficients, and $h^{0,k}$ is a cusp form. This is compatible with Example \ref{ex:psi0k-sl2}: when $\Gamma=\SL_2(\mathbb{Z})$ and $k\in\{2,4,6,8,12\}$, we have $\kappa_{\Gamma,w}^{0,k} = 2$ for non-elliptic $w$, $h^{0,k} = 0$, and $f^{0,k}(z)$ is $K$-proportional to $E_{k+2}(z) / (j(z)-j(w))$. The  value of the prefactor $j'(w)/E_{k+2}(w)$ is $K$-proportional to the quotient of periods $(2\pi i)^{k+1}/\omega_1^k$.
\end{ex}

If $w$ is CM (possibly elliptic), then $\overline{\rho} \in K$ by Lemma \ref{lem: periodsCMcase}, so that
\[
    [\eta']= [\eta] + \overline{\rho}[\omega] \in H^1_{\dR}(\mathcal{E}_{w,K}/K),
\]
and we can take $[\omega],[\eta']$ as a $K$-basis of $H^1_{\dR}(\mathcal{E}_{w,K}/K)$; it is a basis of eigenvectors for the action of complex multiplication (\emph{cf.} Lemmas \ref{lem: PeriodsE} and \ref{lem: periodsCMcase}). Moreover,
\[
    \kappa^{m,n}_{\Gamma,w}[\eta']^m [\omega]^n\text{, }\qquad m,n\ge 0\text{, } m+n=k
\]
generates the $K$-vector space $(\Sym^k H^1_{\dR}(\mathcal{E}_{w,K}/K))^{\Gamma_w}$. Again, by the residue sequence in Theorem \ref{thm:de-Rham-open-over-K}, and an argument involving Eisenstein series, for every $m,n\ge 0$ satisfying $m+n = k $, there is a meromorphic modular form with $K$-rational Fourier coefficients
\[
    g^{m,n} \in M_{k+2,\Gamma}((k+1-n)S)_K \cong F^{n+1}H^1_{\dR}(\mathcal{U}_{\Gamma,K};\mathcal{V}_{k}^{\dR,K})
\]
which vanishes at the cusps and satisfies (note the prefactors $\kappa^{m,n}_{\Gamma,w}$ compared to \eqref{eq:residue-fmn}) 
\begin{equation}\label{eq:residue-gmn}
    \Res\, [(2\pi i)^{k+1}g^{m,n}(z)(X-zY)^kdz] = \kappa^{m,n}_{\Gamma,w}[\eta']^m [\omega]^n.
\end{equation}

\begin{prop}\label{prop:rationality-psi}
    Let $m,n\ge 0$ with $m+n=k$ and  let $w \in \mathfrak{H}$ be a CM point. With the above notation, we have
    \[
        \Psi_{\Gamma}^{m,n}(z,w) = \binom{k}{m}\frac{(2\pi i)^{n+1}\omega_1^{m-n}}{(w-\overline{w})^n}g^{m,n}(z) + h^{m,n}(z)
    \]
    for some cusp form $h^{m,n}$ for $\Gamma$ of weight $k+2$. In particular, if $k=2m$ is even, then for the `middle term' $m=n$, there is no power of the period $\omega_1$:
    \begin{equation}\label{eq:field-def-psi-cm-middle}
        \Psi_{\Gamma}^{m,m}(z,w) = \binom{k}{m}\frac{(2\pi i)^{m+1}}{(w-\overline{w})^{m}}g^{m,m}(z) + h^{m,m}(z).
    \end{equation}
\end{prop}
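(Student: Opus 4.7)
The plan is to mirror the proof of Proposition \ref{prop:field-def-psi}, but this time using the CM-adapted de Rham basis $[\omega],[\eta']$ of $H^1_{\dR}(\mathcal{E}_{w,K}/K)$ guaranteed by Lemma \ref{lem: periodsCMcase}. The key input is that $\overline{\rho}\in K$ when $w$ is a CM point, so that $[\eta']=[\eta]+\overline{\rho}[\omega]$ is itself $K$-rational; then the classes $\kappa^{m,n}_{\Gamma,w}[\eta']^m[\omega]^n$ form a $K$-generating set of $(\Sym^k H^1_{\dR}(\mathcal{E}_{w,K}/K))^{\Gamma_w}$, and one can find $g^{m,n}$ satisfying the normalisation \eqref{eq:residue-gmn}.

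First I would compute, via Corollary \ref{cor:residue-Psi}, the residue
\[
\Res_{z=w}\bigl(\Psi^{m,n}_{\Gamma}(z,w)(X-zY)^k dz\bigr)=\frac{(-1)^m\kappa^{m,n}_{\Gamma,w}}{(w-\overline{w})^k}\binom{k}{m}(X-\overline{w}Y)^m(X-wY)^n
\]
and then translate it from the Betti frame $(X-wY,X-\overline{w}Y)$ into the CM de Rham eigenbasis $([\omega],[\eta'])$ using Remark \ref{rem: eigenbasisinBetti} applied to $\mathcal{E}_{w,K}$: namely $\comp^{-1}_{\B,\dR}(X-wY)=\omega_1^{-1}[\omega]$ and $\comp^{-1}_{\B,\dR}(X-\overline{w}Y)=(2\pi i)^{-1}\omega_1(\overline{w}-w)[\eta']$. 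A short bookkeeping with the factor $(-1)^m(\overline{w}-w)^m=(w-\overline{w})^m$ collapses the residue to
\[
\frac{\kappa^{m,n}_{\Gamma,w}\binom{k}{m}\omega_1^{m-n}}{(w-\overline{w})^n(2\pi i)^m}\,[\eta']^m[\omega]^n,
\]
so comparing with the normalisation \eqref{eq:residue-gmn} of $g^{m,n}$ (which gives residue $\kappa^{m,n}_{\Gamma,w}(2\pi i)^{-(k+1)}[\eta']^m[\omega]^n$) identifies the proportionality constant as $\binom{k}{m}(2\pi i)^{n+1}\omega_1^{m-n}/(w-\overline{w})^n$, since $k+1-m=n+1$.

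Second, I would form the difference
\[
\Psi^{m,n}_{\Gamma}(z,w)-\binom{k}{m}\frac{(2\pi i)^{n+1}\omega_1^{m-n}}{(w-\overline{w})^n}\,g^{m,n}(z),
\]
which lies in $M_{k+2,\Gamma}((k+1-n)S)$ and has vanishing residue at every point of $S$ by construction. Theorem \ref{thm:de-Rham-open-over-K} (or the older Proposition \ref{prop:residue-seq-hodge-explicit}) then places this difference in $M_{k+2,\Gamma}$. Finally, both $\Psi^{m,n}_{\Gamma}(z,w)$ (by Proposition \ref{prop:Psi}) and $g^{m,n}$ (by the choice made just before the statement) vanish at the cusps, so the difference is a cusp form $h^{m,n}$ of weight $k+2$ for $\Gamma$, yielding the stated formula. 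The last assertion, for $m=n$, is immediate since then $\omega_1^{m-n}=1$.

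I do not anticipate a real obstacle, since the residue-sequence machinery and the CM eigenbasis are already in place; the only point requiring care is the bookkeeping of the various $(2\pi i)$-powers and signs produced by the comparison isomorphism on the anti-Hodge part $X-\overline{w}Y$, and the verification that the $\kappa$-factors on both sides of the comparison indeed cancel (note in particular that if $\kappa^{m,n}_{\Gamma,w}=0$, both sides vanish and there is nothing to prove).
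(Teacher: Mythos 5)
Your proposal is correct and follows essentially the same route as the paper: the paper's proof likewise mirrors Proposition \ref{prop:field-def-psi}, citing Corollary \ref{cor:residue-Psi}, Lemma \ref{lem: PeriodsE}, Remark \ref{rem: eigenbasisinBetti} and \eqref{eq:residue-gmn} to see that the residue of the difference vanishes, and then concludes via Proposition \ref{prop:residue-seq-hodge-explicit} together with the cuspidality of $\Psi^{m,n}_{\Gamma}$ and $g^{m,n}$; your explicit bookkeeping of the $(2\pi i)$- and $\omega_1$-powers is exactly the computation the paper leaves implicit. (Only your parenthetical about $\kappa^{m,n}_{\Gamma,w}=0$ is slightly imprecise — in that case $\Psi^{m,n}_{\Gamma}(\cdot,w)$ is itself a cusp form rather than ``both sides vanish'' — but this does not affect the argument.)
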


\begin{proof}
    The proof is similar to that of Proposition \ref{prop:field-def-psi}: we set 
    \[
        g(z) = \Psi_{\Gamma}^{m,n}(z,w)-\binom{k}{m}\frac{(2\pi i)^{n+1}\omega_1^{m-n}}{(w-\overline{w})^n}g^{m,n}(z) \in M_{k+2,\Gamma}((k+1-n)S) , 
    \]
    and we use Corollary \ref{cor:residue-Psi}, Lemma \ref{lem: PeriodsE}, Remark \ref{rem: eigenbasisinBetti}, and equation \eqref{eq:residue-gmn}, to conclude that $\Res [g(z)(X-zY)^kdz] = 0$. The result then follows from Proposition \ref{prop:residue-seq-hodge-explicit}.
\end{proof}

Note that, in the above statement, the quantity $w - \overline{w}$ is algebraic. The proposition shows that, up to some powers of $2\pi i$ and $\omega_1$,  the meromorphic modular form $\Psi_{\Gamma}^{m,n}$ can be decomposed as the sum of an `algebraic part' (given by the meromorphic modular form $g^{m,n}$ with $K$-rational Fourier coefficients) and a `transcendental part' (given by the cusp form $h^{m,n}$, which has complex coefficients). It would be  interesting to study the Fourier coefficients of $\Psi_{\Gamma}^{m,n}$ from the point of view of single-valued periods of the corresponding  cohomology groups, in the same spirit as  \cite{CoeffPoincare}. We also remark that the cancellation of the powers of the period $\omega_1$ in equation \eqref{eq:field-def-psi-cm-middle} is a crucial point for the Gross-Zagier conjecture (see the proof of Theorem \ref{thm: GZ-conditional-proof}).

\begin{rem} \label{rem: Paloma}
    Let $\Gamma = \SL_2(\mathbb{Z})$ and $D<0$ be the discriminant of the CM point $w$. We may take $K = \mathbb{Q}(w,j(w))$ as the Hilbert class field of $\mathbb{Q}(w) = \mathbb{Q}(\sqrt{D})$. Let us recall the classical correspondence between binary quadratic  forms with integer coefficients  and CM points. The orbit $\Gamma w$ corresponds to a $\Gamma$-equivalence class $\mathcal{A}$ of primitive positive-definite binary quadratic forms $Q(U,V) = AU^2+BUV+CV^2$ of discriminant $D = B^2 - 4AC$ via
    \begin{equation}\label{eq:quadratic-form-to-w}
        Q\longmapsto  w(Q) = \frac{-B + \sqrt{D}}{2A}.
    \end{equation}
    Here, `primitive positive-definite' amounts to $A,B,C \in \mathbb{Z}$ coprime and $A>0$. Note that $w(Q)$ is the unique point of the upper half-plane such that $Q(w(Q),1)=0$; more precisely,
    \[
        Q(U,V) = \frac{\sqrt{D}}{w(Q)-\overline{w(Q)}}(U-w(Q)V)(U-\overline{w(Q)}V).
    \]
    Note also that  the map \eqref{eq:quadratic-form-to-w} is $\Gamma$-equivariant: $w(Q|_{\gamma^{-1}}) = \gamma w(Q)$ for every $\gamma \in \Gamma$. Let $Q$ be the quadratic form corresponding to $w$. Then
    \[
        (z-\gamma w)(z-\gamma \overline{w}) = \frac{\gamma w - \gamma \overline{w}}{\sqrt{D}}Q|_{\gamma}(z,1) = \frac{w-\overline{w}}{\sqrt{D}}\frac{Q|_{\gamma}(z,1)}{j_{\gamma}(w)j_{\gamma}(\overline{w})}
    \]
    and, by Proposition \ref{prop:Psi}, we have:
    \begin{align*}
        \Psi^{m,m}_{\Gamma}(z,w) &= \sum_{\gamma \in \Gamma}\frac{1}{j_{\gamma}^{m+1}(w)j_{\gamma}^{m+1}(\overline{w})}\frac{w-\overline{w}}{(z-\gamma w)^{m+1}(z-\gamma \overline{w})^{m +1}} = \frac{\sqrt{D}^{m +1}}{(w-\overline{w})^{m}}\sum_{\gamma \in \Gamma}\frac{1}{Q|_{\gamma}(z,1)^{m+1}}\\
        & = \frac{\sqrt{D}^{m +1}}{(w-\overline{w})^{m}}\sum_{R \in \mathcal{A}}\frac{1}{R(z,1)^{m+1}} = \frac{(\pi \sqrt{D})^{m+1}}{(w-\overline{w})^{m}}f_{m+1,D,\mathcal{A}}(z),
    \end{align*}
    where $f_{m+1,D,\mathcal{A}}$ is the meromorphic modular form considered in \cite[Equation (3)]{bengoechea}. In this case, our decomposition result \eqref{eq:field-def-psi-cm-middle} is essentially the same as \cite[Equation (10)]{bengoechea}.
\end{rem}

\subsection{Construction of general modular objects in $\mathcal{H}_K$} \label{par:general-modular-objects}

Let $\Gamma \le \SL_2(\mathbb{Z})$ be a congruence subgroup and $P, Q \subset \mathfrak{H}$ be unions of finitely many  $\Gamma$-orbits such that $P\cap Q = \emptyset$. We also assume that $j(w)$ is algebraic for every $w \in P\cup Q$ and we let $K\subset \mathbb{C}$ be a number field over which the open modular stack $\mathcal{Y}_{\Gamma}$ is defined and which also contains $\mathbb{Q}(j(w))$ for every $w \in P\cup Q$, and $\mathbb{Q}(w)$ for every CM point $w \in P\cup Q$. In particular, we have   a stack $\mathcal{Y}_{\Gamma,K}$ over $K$, with finite substacks $\mathcal{P}_{\Gamma,K},\mathcal{Q}_{\Gamma,K} \subset \mathcal{Y}_{\Gamma,K}$.

We shall describe an object of the category of realisations $\mathcal{H}_K$ (\emph{cf.} \S \ref{par:realisations}) whose de Rham realisation is the relative de Rham cohomology with coefficients
\[
    H^1_{\dR}(\mathcal{Y}_{\Gamma,K} \setminus \mathcal{P}_{\Gamma,K},\mathcal{Q}_{\Gamma,K}; \mathcal{V}_{k}^{\dR, K}).
\]
To study the action of complex conjugation, let 
\[
    \overline{\Gamma}  \defeq \epsilon \Gamma \epsilon^{-1} \leq \SL_2(\ZZ),\qquad \text{ where }\epsilon=\left( \begin{matrix} 1 & 0 \\ 0 & -1 \end{matrix}\right).
\]
Note that $\epsilon$ is idempotent and normalises $\SL_2(\mathbb{Z})$. 

\begin{lem}  
    The map $z \mapsto -\overline{z}$ induces a diffeomorphism of (real) orbifolds: $\mathcal{Y}_{\Gamma}^{\an} \cong  \mathcal{Y}_{\overline{\Gamma}}^{\an}$. Moreover, if a function $f(z)$ is modular for $\Gamma$ of weight $k$, then $\overline{f(-\overline{z})}$ is modular for $\overline{\Gamma}$ of weight $k$ and its Fourier coefficients are complex-conjugate to those of $f$.
\end{lem}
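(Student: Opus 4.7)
The plan is to verify the statement essentially by direct computation, with a short conjugation check as the key algebraic input. The only subtlety is keeping track of the complex conjugation and the automorphy factor.

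First I would observe that the map $\sigma : z \mapsto -\overline{z}$ preserves the upper half-plane (since $\Im(-\overline{z}) = \Im z$) and is an anti-holomorphic involution of $\mathfrak{H}$. The key algebraic identity is the intertwining relation with the $\SL_2(\RR)$-action: for $\gamma = \begin{psmallmatrix} a & b \\ c & d\end{psmallmatrix} \in \SL_2(\RR)$, a one-line computation gives
\[
    \sigma(\gamma z) = -\overline{\left(\frac{az+b}{cz+d}\right)} = \frac{a(-\overline{z}) - b}{-c(-\overline{z})+d} = (\epsilon \gamma \epsilon^{-1})\, \sigma(z),
\]
using $\epsilon \gamma \epsilon^{-1} = \begin{psmallmatrix} a & -b \\ -c & d\end{psmallmatrix}$. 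In particular $\sigma$ sends $\Gamma$-orbits to $\overline{\Gamma}$-orbits and descends to a diffeomorphism of real orbifolds $\mathcal{Y}_{\Gamma}^{\an} \cong \mathcal{Y}_{\overline{\Gamma}}^{\an}$ (it is a smooth bijection on uniformisations, equivariant for the isomorphism of groups $\gamma \mapsto \epsilon \gamma \epsilon^{-1}$).

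Next I would verify the modularity statement for $g(z) \defeq \overline{f(-\overline{z})}$. For any $\tilde\gamma \in \overline{\Gamma}$, write $\tilde\gamma = \epsilon\gamma\epsilon^{-1}$ with $\gamma \in \Gamma$, so that $j_{\tilde\gamma}(z) = -cz + d$. Using the intertwining identity above and the modularity of $f$:
\[
    g(\tilde\gamma z) = \overline{f(\sigma(\tilde\gamma z))} = \overline{f(\gamma \sigma(z))} = \overline{(c(-\overline{z})+d)^k\, f(-\overline{z})} = (-c z + d)^k\, g(z) = j_{\tilde\gamma}(z)^k\, g(z),
\]
which is the required transformation law for weight $k$ with respect to $\overline{\Gamma}$.

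Finally, for the Fourier coefficients, let $d$ be the width of the cusp at $\infty$ for $\Gamma$ (which equals that of $\overline{\Gamma}$, since $\epsilon$ fixes the cusp $\infty$ and conjugates the stabiliser to itself). If $f(z) = \sum_{n \gg -\infty} a_n\, q^{n/d}$ with $q = e^{2\pi i z}$, then
\[
    e^{2\pi i (-\overline{z})/d} = \overline{e^{2\pi i z/d}} = \overline{q^{1/d}},
\]
so $f(-\overline{z}) = \sum a_n \overline{q^{n/d}}$ and therefore $g(z) = \overline{f(-\overline{z})} = \sum \overline{a_n}\, q^{n/d}$. The same argument applies at any cusp of $\Gamma$, after choosing the corresponding cusp of $\overline{\Gamma}$ via $\epsilon$. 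Since the whole argument reduces to direct calculation with the explicit involution $\sigma$, there is no serious obstacle; the only point that needs care is the precise form of $\epsilon \gamma \epsilon^{-1}$, which is what forces the introduction of $\overline{\Gamma}$ in place of $\Gamma$.
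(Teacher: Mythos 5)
Your proof is correct and follows essentially the same route as the paper's: both rest on the intertwining identity $\sigma(\gamma z) = (\epsilon\gamma\epsilon^{-1})\sigma(z)$ (equivalently $\gamma(-\overline{z}) = -\overline{(\epsilon\gamma\epsilon^{-1})z}$) together with the conjugation of the automorphy factor, from which modularity and the Fourier-coefficient statement follow by direct computation. Your version simply spells out the details that the paper leaves as "one checks".
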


\begin{proof}
    One checks that $\gamma (-\overline{z}) = -\overline{(\epsilon \gamma \epsilon^{-1}) (z)}$ and $j_{\gamma}(-\overline{z}) = \overline{j_{\epsilon \gamma \epsilon^{-1}}(z)}$ for all $\gamma \in \SL_2(\ZZ)$.
\end{proof}

Let $\overline{K} = \overline{\iota}(K)\subset \mathbb{C}$ be the complex conjugate of the number field $K$ (see the notation in \S \ref{par:realisations}). Since modular curves defined over $K$ are parametrised by modular forms with $K$-rational Fourier coefficients, we deduce from the above lemma that the complex modular stack $\mathcal{Y}_{\overline{\Gamma}}$ admits an $\overline{K}$-model $\mathcal{Y}_{\overline{\Gamma},\overline{K}}$, which is isomorphic to the conjugate $\overline{\mathcal{Y}_{\Gamma,K}} = \mathcal{Y}_{\Gamma,K} \times_{K,\overline{\iota}}\overline{K}$, and that the complex conjugation
\begin{equation}\label{eq:complex-conj-modular-curve}
    \mathcal{Y}_{\Gamma}^{\an} \cong \mathcal{Y}_{\Gamma,K}(\mathbb{C}) \longrightarrow \overline{\mathcal{Y}_{\Gamma,K}}(\mathbb{C}) \cong \mathcal{Y}_{\overline{\Gamma}}^{\an}
\end{equation}
is induced by $z \mapsto -\overline{z}$ on the uniformisation.

The above complex conjugation extends to the level of universal elliptic curves $\mathcal{E}_{\Gamma}^{\an} \cong \mathcal{E}_{\overline{\Gamma}}^{\an}$: it is induced by $u\mapsto \overline{u}$ on the fibres $\mathcal{E}^{\an}_z \cong \mathbb{C}/(\mathbb{Z} + \mathbb{Z} z ) \to \mathcal{E}^{\an}_{-\overline{z}} \cong \mathbb{C}/(\mathbb{Z} - \mathbb{Z}\overline{z})$. In particular, it induces a real Frobenius between the Betti local systems $\mathbb{V}_{k}^{\B}$  on $\mathcal{Y}_{\Gamma}$ and $\mathcal{Y}_{\overline{\Gamma}}$,  given  explicitly  by
\[
    (X,Y) \longmapsto (X,Y)|_{\epsilon} = (X,-Y).
\]
Together with \eqref{eq:complex-conj-modular-curve}, this yields by functoriality a real Frobenius on cohomology
\[
    F_{\infty}: H^1_{\B}(\mathcal{Y}_{\Gamma}^{\an}\setminus \mathcal{P}_{\Gamma},\mathcal{Q}_{\Gamma};\mathbb{V}_k^{\B}) \longrightarrow H^1_{\B}(\mathcal{Y}_{\overline{\Gamma}}^{\an}\setminus \mathcal{P}_{\overline{\Gamma}},\mathcal{Q}_{\overline{\Gamma}};\mathbb{V}_k^{\B}),
\]
where $\mathcal{P}_{\overline{\Gamma}}$ denotes the image of $\mathcal{P}_{\Gamma}$ under $\eqref{eq:complex-conj-modular-curve}$ and similarly for $\mathcal{Q}_{\overline{\Gamma}}$.

\begin{defn} \label{defn: H1YPQ}
    With the above notation, we define for every integer $k\ge 0$ an object of the category of realisations $\mathcal{H}_K$ by
    \begin{multline} \label{eqn: defnH1YPQ}
        H^1 (\mathcal{Y}_{\Gamma,K} \setminus \mathcal{P}_{\Gamma,K}, \mathcal{Q}_{\Gamma,K} ; \mathcal{V}_k)  = \big( H^1_{\B}(\mathcal{Y}_{\Gamma}^{\an}\setminus \mathcal{P}_{\Gamma}, \mathcal{Q}_{\Gamma}  ; \mathbb{V}_k^{\B})  \ , \ H^1_{\B}(\mathcal{Y}_{\overline{\Gamma}}^{\an}\setminus \mathcal{P}_{\overline{\Gamma}} , \mathcal{Q}_{\overline{\Gamma}} ; \mathbb{V}_k^{\B})  \ , \\
        \ H_{\dR}^1(\mathcal{Y}_{\Gamma,K}\setminus \mathcal{P}_{\Gamma,K}, \mathcal{Q}_{\Gamma,K}; \mathcal{V}_k^{\dR,K})  \    ; \   \comp_{\B,\dR} , \comp_{\overline{\B},\dR}  , F_{\infty} \big)  
    \end{multline}
    where the real Frobenius was discussed above, and the comparison maps are induced by the usual Grothendieck comparison isomorphism on the universal elliptic curve.
\end{defn}

The Hodge filtration is defined as in \S\ref{par:appendix-hodge-filtration} (\emph{cf.} \S \ref{par:hodge-and-pole}). The weight filtration, which we do not use explicitly in this paper, as well as the fact that these form a mixed Hodge structure is worked out in \cite{zucker} (see Remark \ref{rem:MHS-zucker}). It is often more convenient to consider a \emph{cuspidal}, or \emph{parabolic}, variant
\[
    H^1_{\cusp}(\mathcal{Y}_{\Gamma,K} \setminus \mathcal{P}_{\Gamma,K}, \mathcal{Q}_{\Gamma,K} ; \mathcal{V}_k)
\]
of \eqref{eqn: defnH1YPQ}. This is the  sub-object  
of $H^1(\mathcal{Y}_{\Gamma,K} \setminus \mathcal{P}_{\Gamma,K}, \mathcal{Q}_{\Gamma,K} ; \mathcal{V}_k)$ whose (first) Betti realisation is 
\begin{align}\label{eq:betti-cuspidal-cohomology}
    \im (H^1(\mathcal{X}_{\Gamma}^{\an}\setminus \mathcal{P}_{\Gamma}, \mathcal{Q}_{\Gamma}; j_!\mathbb{V}_k^{\B}) \to H^1(\mathcal{Y}^{\an}_{\Gamma}\setminus \mathcal{P}_{\Gamma},\mathcal{Q}_{\Gamma}; \mathbb{V}_k^{\B})) \cong H^1(\mathcal{X}^{\an}_{\Gamma} \setminus \mathcal{P}_{\Gamma}, \mathcal{Q}_{\Gamma} ; j_* \mathbb{V}^{\B}_k),
\end{align}
where $j:\mathcal{Y}_{\Gamma,K}\rightarrow  \mathcal{X}_{\Gamma,K}$ is inclusion into the compactified modular stack (\emph{cf.} \cite[Lemma 5.3]{Looijenga} and \cite[Theorem 14.4]{zucker}). The de Rham realisation can also be explicitly described as the kernel of the `residue map at the cusps'
\[
    H^1_{\dR}(\mathcal{Y}_{\Gamma,K} \setminus \mathcal{P}_{\Gamma,K}, \mathcal{Q}_{\Gamma,K} ; \mathcal{V}^{\dR,K}_k)\longrightarrow H_{\dR}^0(\mathcal{Z}_{\Gamma,K};\overline{\mathcal{V}}_k^{\dR,K})/ \im (N)
\]
where $N$ is the nilpotent endomorphism given by the residue of the logarithimic connection $\overline{\nabla}_k^K$ on $\overline{\mathcal{V}}_k^{\dR,K}$ which extends $\nabla_k^K$ (\emph{cf.} \cite[Remark 14.5]{zucker}). Concretely, using Theorem \ref{thm:de-Rham-open-over-K}, one can check that cohomology classes in $H^1_{\dR,\cusp}(\mathcal{Y}_{\Gamma,K} \setminus \mathcal{P}_{\Gamma,K}, \mathcal{Q}_{\Gamma,K} ; \mathcal{V}^{\dR,K}_k)$ are represented by pairs
\[
    (f \omega_q^k \textstyle{\frac{dq}{q}}, v) = ((2\pi i)^{k+1}f(z) (X-zY)^kdz, (v_1,\ldots,v_s))
\]
where $f \in M_{\Gamma, k+2}^!(*P)_K$ is a meromorphic modular form which is holomorphic outside $P$ and the cusps, with $K$-rational Fourier coefficients at infinity, and vanishing constant Fourier coefficient at every cusp, \emph{i.e.}, $f$ is a \emph{cuspidal} meromorphic modular form. Furthermore, writing $Q$ as a disjoint union of $\Gamma$-orbits $Q= \Gamma z_1 \cup \cdots \cup \Gamma z_s$, the vector $v = (v_1,\ldots,v_s)$ is an element of $H_{\dR}^0(\mathcal{Q}_{\Gamma,K};\mathcal{V}_k^{\dR,K}) \cong \bigoplus_{i=1}^s \Sym^k (H^1_{\dR}(\mathcal{E}_{z_i,K}/K))^{\Gamma_{z_i}}$.

\section{Modular biextensions and their single-valued periods}
We study in more detail the structure of the relative cohomology modular objects defined in the previous section, and show that they are biextensions.  We first consider the case where they reduce to  simple  extensions before turning to the general case. We then explain how complex multiplication, and the action of Hecke correspondences, enables us to extract a subquotient which is a simple extension of Tate objects. 

\subsection{Simple modular extensions}

Consider the situation of Definition \ref{defn: H1YPQ} when $P=\emptyset$ and $Q = \Gamma z_1 \cup \cdots \cup \Gamma z_s$, with $z_i \not\in \Gamma z_j$ for $i\neq j$. The long exact relative cohomology sequence in the category $\mathcal{H}_K$ is:
\[
    \begin{tikzcd}[column sep = small]
         H^0(\mathcal{Y}_{\Gamma,K}; \mathcal{V}_k)\arrow{r} & H^0(\mathcal{Q}_{\Gamma,K};  \mathcal{V}_k) \arrow{r} & H^1 (\mathcal{Y}_{\Gamma,K} ,  \mathcal{Q}_{\Gamma,K}  ; \mathcal{V}_k) \arrow{r} & H^1 (\mathcal{Y}_{\Gamma,K} ; \mathcal{V}_k) \arrow{r} & 0.
    \end{tikzcd}
\]
If $k>0$ then $H^0(\mathcal{Y}_{\Gamma,K}; \mathcal{V}_k)$ vanishes and we obtain a short exact sequence: 
\[
    \begin{tikzcd}[column sep = small]
        0 \arrow{r} & \bigoplus_{i=1}^s \left(\Sym^k  H^1( \mathcal{E}_{z_i,K})\right)^{\Gamma_{z_i}} \arrow{r} & H^1 (\mathcal{Y}_{\Gamma,K} , \mathcal{Q}_{\Gamma,K}  ; \mathcal{V}_k) \arrow{r} & H^1 (\mathcal{Y}_{\Gamma,K} ; \mathcal{V}_k) \arrow{r} & 0 \ . 
    \end{tikzcd}
\]
By taking the image of  cohomology with compact supports and applying \eqref{eq:betti-cuspidal-cohomology}, we obtain an exact sequence
\begin{equation} \label{extension: RelcohomCusp}
    \begin{tikzcd}[column sep = small]
        0 \arrow{r} & \bigoplus_{i=1}^s \left( \Sym^k  H^1( \mathcal{E}_{z_i,K})\right)^{\Gamma_{z_i}} \To  H_{\cusp}^1 (\mathcal{Y}_{\Gamma,K} ,  \mathcal{Q}_{\Gamma,K}  ; \mathcal{V}_k) \To   H_{\cusp}^1 (\mathcal{Y}_{\Gamma,K}   ; \mathcal{V}_k) \To 0 \ . 
    \end{tikzcd}
\end{equation}
Furthermore, by the Manin--Drinfeld theorem, one can also deduce another exact sequence which will play a limited role in this paper but which is of significant arithmetic interest:  
\begin{equation}\label{extension: RelcohomEis}
    \begin{tikzcd}[column sep = small]
        0 \arrow{r}& \bigoplus_{i=1}^s \left(\Sym^k  H^1( \mathcal{E}_{z_i,K})\right)^{\Gamma_{z_i}} \arrow{r} &  H_{\eis}^1 (\mathcal{Y}_{\Gamma,K} ,  \mathcal{Q}_{\Gamma,K}  ; \mathcal{V}_k) \arrow{r} &  H^0(\mathcal{Z}_{\Gamma,K})(-k-1) \arrow{r} & 0 \ .
    \end{tikzcd}
\end{equation}
By Theorem \ref{thm:de-Rham-open-over-K}, in the simple case when there are no punctures, 
the periods of these extensions include generalised Eichler integrals
\begin{equation} \label{Eqn: generalisedEichlerint}
    (2 \pi i)^{k+1}  \, \int_{\delta}  f(z) (X-zY)^{k} dz
\end{equation}
where $\delta$ is a path in $\mathcal{Y}_{\Gamma,K}(\CC)$ whose endpoints are contained in $\mathcal{Q}_{\Gamma,K}$, and where $f$ is a cuspidal weakly holomorphic modular form with $K$-rational Fourier coefficients in the first case   \eqref{extension: RelcohomCusp}, and an Eisenstein series in the second
\eqref{extension: RelcohomEis}.

\begin{rem}
    The single-valued periods of the extensions \eqref{extension: RelcohomCusp} and \eqref{extension: RelcohomEis} are given by the single-valued versions of the integrals \eqref{Eqn: generalisedEichlerint}. In the former case, these include the values at $z_i$ of suitably normalised  weak harmonic lifts of cusp forms; in the latter, the values of real-analytic Eisenstein series, which are weak harmonic lifts of  holomorphic Eisenstein series.  This follows from Lemmas \ref{lem: SVandCB} and  \ref{lem:value-harmonic-lift}.

    In particular, let $Q=\Gamma z_1$ be a single $\Gamma$-orbit corresponding to an elliptic curve $\mathcal{E}_{z_1}$ with complex multiplication. We may assume for simplicity that $\Gamma_{z_1} \subset  \{ \pm I\}$ and $k$ is even. By Proposition \ref{prop: CMmotives}, the extension \eqref{extension: RelcohomEis} becomes
    \[
        \begin{tikzcd}[column sep = small]
            0 \arrow{r} & \bigoplus_{0\leq 2i \leq  k}  C^{k-2i}(-i) \arrow{r} & H_{\eis}^1 (\mathcal{Y}_{\Gamma,K} ,  \mathcal{Q}_{\Gamma,K}  ; \mathcal{V}_k) \arrow{r} &  H^0(\mathcal{Z}_{\Gamma,K})(-k-1) \arrow{r} & 0.
        \end{tikzcd}
    \]
    The periods of such an extension include the  (regularised) Eichler integrals \eqref{Eqn: generalisedEichlerint} where $f(z)$ is an Eisenstein series and $\delta$ a path  from the CM point $z_1$ to a cusp. Their single-valued versions are given by values of  real-analytic Eisenstein series at the CM point $z_1$.  Half of them correspond to regulators which are predicted by Beilinson's conjecture  to be values of $L$-functions of Hecke Grössencharakters. The other half are periods which do not have an interpretation as values of classical $L$-functions. 
   For example, in level $1$, when the point $z_1=i$, these additional periods may be interpreted as values  $\xi(2m,2n)$ of the double Riemann $\xi$-function studied in \cite{BrMultipleXi}, see \emph{loc. cit.}, Remark 9.8. They can be expressed as rational linear combinations of  `double quadratic sums' 
   $Q(k_1,k_2)= \sum_{n_1,n_2\geq 1}  \frac{1}{(n^2_1+n^2_2)^{k_1} (n_2^2)^{k_2} }$  for $k_1,k_2\geq 1$ (\emph{id.} Definition 9.3). 
\end{rem}

\subsection{Biextensions} \label{par:biextensions}

Now consider the fully general situation  of Definition  \ref{defn: H1YPQ},    where  $\mathcal{Q}_{\Gamma,K}$ is as above and 
 $\mathcal{P}_{\Gamma,K} = \Gamma w_1 \cup \cdots \cup \Gamma w_r$, with $w_i \not\in \Gamma w_j$ for $i\neq j$.
If $k>0$ we have an exact sequence:
\begin{equation}  \label{eqn: YminusPrelQmotive}
    \begin{tikzcd}[column sep = small]
        0 \arrow{r} & \bigoplus_{i=1}^s \left(\Sym^k \! H^1( \mathcal{E}_{z_i,K})\right)^{\Gamma_{z_i}} \arrow{r} &  H_{\cusp}^1 (\mathcal{Y}_{\Gamma,K} \setminus \mathcal{P}_{\Gamma,K} ,  \mathcal{Q}_{\Gamma,K} ; \mathcal{V}_k) \arrow{r} & H_{\cusp}^1 (\mathcal{Y}_{\Gamma,K}  \setminus \mathcal{P}_{\Gamma,K} ; \mathcal{V}_k) \arrow{r} & 0
    \end{tikzcd} 
\end{equation}
where the last term sits in a short exact sequence of its own: 
\begin{equation} \label{eqn: YminusPmotive}
    \begin{tikzcd}[column sep = small]
        0 \arrow{r} &   H_{\cusp}^1 (\mathcal{Y}_{\Gamma,K} ; \mathcal{V}_k) \arrow{r} &\arrow{r}  H_{\cusp}^1 (\mathcal{Y}_{\Gamma,K}  \setminus \mathcal{P}_{\Gamma,K}  ; \mathcal{V}_k) \To    \bigoplus_{j=1}^r  \left(\Sym^k  H^1( \mathcal{E}_{w_j,K})\right)^{\Gamma_{w_j}}(-1) \arrow{r}& 0  
    \end{tikzcd}
\end{equation}

\begin{rem}
    The periods of the extension \eqref{eqn: YminusPmotive} include Eichler-type integrals of meromorphic modular forms with poles along $w_i$, along paths between two cusps. There exists a `reciprocity formula' which relates such integrals to the periods of \eqref{extension: RelcohomCusp} after identifying $z_i$ with $w_i$. It interchanges the poles of the integrand with the end points of the path of integration  and  conversely, and follows from the fact that the latter extension is the Poincar\'e dual of the former. In particular, their period matrices with respect to suitable bases are mutually  inverse  (up to powers of $2\pi i$). Consequently,  Eichler integrals of meromorphic modular forms  with poles at $w$ are  expressible as integrals of Eisenstein series along paths ending at $w$, and conversely.  It would interesting to write down explicit formulae for such relations. 
\end{rem}

To describe the single-valued  periods of such extensions we introduce the following.

\begin{defn}\label{defn:greens-fct-matrix}
    With the notation \eqref{Gpqrsdefn}, define a $(k+1) \times (k+1)$ matrix:
    \begin{equation} \label{GmatrixDefn}
        \GM_{\Gamma,k}(z,w)= 
       \begin{pmatrix}
        {}^{\Gamma}\!G^{k,0}_{0,k}(z,w) & \cdots &  {}^{\Gamma}\! G^{0,k}_{0,k}(z,w)  \\ 
        \vdots & \reflectbox{$\ddots$} &  \vdots  \\
        {}^{\Gamma} \! G^{k,0}_{k,0}(z,w) & \cdots & {}^{\Gamma}\! G^{0,k}_{k,0}(z,w)    \\
        \end{pmatrix}
    \end{equation}
    whose entries are $(\GM_{\Gamma,k}(z,w))_{r,s}=   {}^{\Gamma}\!G^{k-s+1,s-1}_{r-1,k-r+1}(z,w)$ for $1\leq r,s \leq k+1$.  
\end{defn}

Since 
$\overline{{}^{\Gamma}\! G^{p,q}_{r,s}(z,w)} = {}^{\Gamma}\!G^{q,p}_{s,r}(z,w) $, the complex conjugate matrix  has entries 
$\overline{\GM}_{\Gamma,k}(z,w)_{r,s}=   {}^{\Gamma}\!G^{s-1, k-s+1}_{k-r+1,r-1}(z,w)$ for $1\leq r,s \leq k+1$.

\subsubsection{The case in which  $H_{\cusp}^1 (\mathcal{Y}_{\Gamma,K}  ; \mathcal{V}_k)$ vanishes}

Suppose that $H_{\cusp}^1 (\mathcal{Y}_{\Gamma,K}  ; \mathcal{V}_k)=0$, and let $P=\Gamma w_1$ and $Q=\Gamma z_1$ consist of a single orbit. Then the biextension \eqref{eqn: YminusPrelQmotive} reduces to a simple extension:
\begin{equation} \label{eqn: Extensionvanishingcuspforms}
    \begin{tikzcd}[column sep = small]
        0 \arrow{r} &\big( \Sym^k  H^1( \mathcal{E}_{z_1,K})\big)^{\Gamma_{z_1}} \arrow{r}&  H_{\cusp}^1 (\mathcal{Y}_{\Gamma,K} \setminus \mathcal{P}_{\Gamma,K} ,  \mathcal{Q}_{\Gamma,K} ; \mathcal{V}_k) \arrow{r} & \big(\Sym^k  H^1( \mathcal{E}_{w_1,K})\big)^{\Gamma_{w_1}}(-1)  \arrow{r}& 0
    \end{tikzcd}
\end{equation}

\begin{thm} \label{thm: PeriodsSkExtensionNoCusps}
    Let $M_{z_1,w_1}= H_{\cusp}^1 (\mathcal{Y}_{\Gamma,K} \setminus \mathcal{P}_{\Gamma,K} , \mathcal{Q}_{\Gamma,K}  ; \mathcal{V}_k)$ with $P = \Gamma w_1$ and $Q = \Gamma z_1$ as above, and assume that neither $z_1$ nor $w_1$ is  elliptic. Then we have an extension 
    \[
        \begin{tikzcd}[column sep = small]
            0 \arrow{r} & \Sym^k  H^1( \mathcal{E}_{z_1,K}) \arrow{r} & M_{z_1,w_1} \arrow{r} & \Sym^k  H^1( \mathcal{E}_{w_1,K})(-1) \arrow{r} & 0
        \end{tikzcd}
    \]
    in the category $\mathcal{H}_K$. A basis for $(M_{z_1,w_1})_{\dR}\otimes_K \CC$ 
    is given by relative cohomology classes
    \[ 
        [ (0,  \mathrm{comp}_{\B,\dR}^{-1} \, (X-z_1Y)^i (X-\overline{z}_1 Y)^{k-i})  ]_{i=0,\ldots, k}  \quad , \quad    [  (\Psi_{\Gamma}^{k-r,r}(z,w_1) (X-zY)^k dz,0 )]_{r=0,\ldots, k}     
    \]
    In order to compute the action of    $\mathrm{comp}_{\B,\dR}^{-1}$ on the left-hand elements, use \eqref{EllipticBettiBasis}. The single-valued period map $\sf{s}$ is given  with respect to this basis  by the matrix

    \[
   \begin{pmatrix}
        J  &- \GM_{\Gamma,k}(z_1,w_1) \\
        0 & -J 
    \end{pmatrix}  
    \]
    where $J_{r,s}=\delta_{r,k+1-s}$ is the  $(k+1)\times (k+1)$ square matrix with $1$'s along the antidiagonal and zeros elsewhere, and $\GM_{\Gamma,k}$ is defined in \eqref{GmatrixDefn}.   
\end{thm}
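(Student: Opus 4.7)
The plan is to verify the three components of the statement in sequence: existence of the simple extension, identification of the de Rham basis, and computation of the single-valued period matrix.

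First, I would establish the extension. Since $H^1_{\cusp}(\mathcal{Y}_{\Gamma,K};\mathcal{V}_k)$ vanishes, the biextension \eqref{eqn: YminusPrelQmotive} collapses to a simple extension whose right-hand term becomes $(\Sym^k H^1(\mathcal{E}_{w_1,K}))^{\Gamma_{w_1}}(-1)$ via \eqref{eqn: YminusPmotive}. The non-elliptic hypothesis on $z_1,w_1$ ensures that $\Gamma_{z_1}$ and $\Gamma_{w_1}$ are contained in $\{\pm I\}$, so the invariants coincide with the full symmetric powers in the range where the statement is non-vacuous (e.g.\ for $k$ even, or when $-I \notin \Gamma$).

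Next, I would exhibit the stated basis of $(M_{z_1,w_1})_{\dR}\otimes_K\CC$. The classes $[(0,\comp_{\B,\dR}^{-1}(X-z_1Y)^i(X-\overline{z}_1Y)^{k-i})]$ provide, after applying the inverse comparison (explicit via \eqref{EllipticBettiBasis} in Lemma \ref{lem: PeriodsE}), a $\CC$-basis of the sub $\Sym^k H^1(\mathcal{E}_{z_1,K}) \otimes_K \CC$. For the quotient, by Corollary \ref{cor:Psi-splits-residue-seq}, the classes $[\Psi^{k-r,r}_{\Gamma}(z,w_1)(X-zY)^kdz]$ split the residue sequence, and in particular lift to relative cohomology classes $[(\Psi^{k-r,r}_{\Gamma}(z,w_1)(X-zY)^kdz,0)]$ in $M_{z_1,w_1}$ which map to a basis of the quotient.

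The single-valued period matrix is block upper triangular since $\mathsf{s}$ respects the extension. The top-left block: by Lemma \ref{lem: SVandCB}, $\mathsf{s}$ restricted to $V_{\dR}$ agrees with the Betti-conjugation map $c_{\iota} = \comp_{\B,\dR}^{-1} \circ (\id\otimes c_{\B}) \circ \comp_{\B,\dR}$. Applied to $\comp_{\B,\dR}^{-1}(X-z_1Y)^i(X-\overline{z}_1Y)^{k-i}$ this sends $(X-z_1Y)\leftrightarrow (X-\overline{z}_1Y)$ (complex conjugation fixes $X,Y$ and swaps $z_1,\overline{z}_1$), exchanging index $i \leftrightarrow k-i$; this yields the antidiagonal $J$. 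For the bottom-right block, the same computation applies on $\Sym^k H^1(\mathcal{E}_{w_1,K})$, but the additional Tate twist $(-1)$ contributes a factor $-1$ to the single-valued map (since $\mathsf{s}$ on $\QQ(-1)$ is $-1$), giving $-J$.

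The heart of the argument is the off-diagonal block. Here I would apply the geometric interpretation of harmonic lifts as single-valued periods (Appendix \ref{sec:complex-conjugation}, \emph{cf.} Proposition \ref{prop:geometric-harmoniclift}): for an extension class represented by $(\alpha,0)$ with $\alpha$ a vector-valued meromorphic 1-form on $\mathcal{Y}_{\Gamma,K}\setminus\mathcal{P}_{\Gamma,K}$ and a harmonic lift $F$ of $\alpha$ (in the sense of Definition \ref{defn:harmoniclift}), the single-valued periods of $\mathsf{s}([\alpha,0])$ against a Betti basis at $z_1$ are given, up to sign, by the components of $F(z_1)$ expressed in the real-analytic frame $(X-z_1Y)^r(X-\overline{z}_1Y)^s$. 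Applying this with $\alpha = \Psi^{k-r,r}_{\Gamma}(z,w_1)(X-zY)^kdz$, Corollary \ref{cor: GisWHLofPsis} identifies the harmonic lift as $\vec{G}^{k-r,r}_{\Gamma,k}(z,w_1)$, so by \eqref{Gpqrsdefn} the resulting coefficients are precisely the entries ${}^{\Gamma}\!G^{k-r,r}_{i,k-i}(z_1,w_1)$. Unpacking \eqref{GmatrixDefn} and carefully tracking the orientation sign (which arises from the boundary map in relative cohomology, namely that paths go \emph{from} $z_1$, contributing the overall minus), this assembles into $-\GM_{\Gamma,k}(z_1,w_1)$.

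The main obstacle I anticipate is bookkeeping: ensuring that the arrangement of indices in $\GM_{\Gamma,k}$, the pairing of Betti basis elements at $z_1$ with harmonic lift components, and the sign conventions (Tate twist $-1$, orientation of paths, and complex conjugation of $(X-zY)^k dz$) all combine consistently. The geometric content — that harmonic lifts encode single-valued periods of relative extensions — is the conceptual core and is supplied by the appendix; the rest is a careful but mechanical matching of frames.
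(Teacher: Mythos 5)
Your proposal follows essentially the same route as the paper: identify the simple extension from the vanishing of cuspidal cohomology, use Corollary \ref{cor:Psi-splits-residue-seq} to furnish the de Rham basis, and use the fact that harmonic lifts of the $\Psi^{p,q}_\Gamma$ (namely $\vec{G}^{p,q}_{\Gamma,k}$ via Corollary \ref{cor: GisWHLofPsis}) compute the off-diagonal block. Two minor corrections: the reference you want for the relative-cohomology formula expressing Betti conjugation via harmonic lifts is Lemma \ref{lem:value-harmonic-lift}, not Proposition \ref{prop:geometric-harmoniclift} (the latter only establishes existence); and the paper handles the passage from the antilinear map $c_\iota$ to the linear map $\mathsf{s}$ by first writing down the Betti-conjugation period matrix and then complex-conjugating it (using $\overline{{}^{\Gamma}\!G^{p,q}_{r,s}}={}^{\Gamma}\!G^{q,p}_{s,r}$), which is slightly cleaner than invoking Lemma \ref{lem: SVandCB} directly on a complex basis — this is the bookkeeping you correctly flagged as requiring care.
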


\begin{proof}
    Since $H^1_{\cusp}(\mathcal{Y}_{\Gamma,K};\mathcal{V}_k)$ vanishes, the residue map
    \[
        H^1_{\cusp}(\mathcal{Y}_{\Gamma,K}\setminus \mathcal{P}_{\Gamma,K}; \mathcal{V}_k) \To  \Big(\Sym^k  H^1( \mathcal{E}_{w_1,K})\Big)^{\Gamma_{w_1}}(-1) 
    \]
    is an isomorphism, and it follows from Corollaries  \ref{cor:residue-Psi} and \ref{cor:Psi-splits-residue-seq}
    that the classes of the vector-valued forms $\Psi_{\Gamma}^{k-r,r}(z, w_1) (X-zY)^k dz $, for $r=0,\ldots,k$, form a basis for $H^1_{\dR,\cusp}(\mathcal{Y}_{\Gamma,K}\setminus \mathcal{P}_{\Gamma,K}; \mathcal{V}^{\dR,K}_k) \otimes_K \CC$ which is adapted to the Hodge filtration. The statement about the basis of relative cohomology then follows from the description of the mapping cone complex \S\ref{sect: A1}.

  We compute the single-valued period matrix by first computing the Betti-conjugation period matrix and applying Lemma \ref{lem: SVandCB}.  By Definition \ref{defn: Betticonj}, the map $c_{\iota}$ is just the Betti complex-conjugation $c_{\B}$ transported to de Rham cohomology.  Note that $c_{\B}$ acts on the classes $(X-z_1Y)^i (X-\overline{z}_1Y)^j $ by interchanging $i$ and $j$, which accounts for the matrix $J$ in the top-left block and the zero block beneath it. Equation \eqref{eqn: GisWHLofPsis} implies that $\vec{G}_{\Gamma,k}^{p,q}(z,w_1) =\sum_{r+s=k} {}^{\Gamma}\!G^{p,q}_{r,s}(z,w_1)(X-zY)^r(X-\overline{z}Y)^s$ 
    is a harmonic lift of the pair $\Psi_{\Gamma}^{p,q}(z,w_1)(X-zY)^k dz$ and $-\Psi_{\Gamma}^{q,p}(z,w_1)(X-zY)^k dz$.  Applying  Lemma \ref{lem:value-harmonic-lift} implies that
    \[
        c_{\iota}  [(-\Psi_{\Gamma}^{q,p}(z,w_1)(X-zY)^k dz,0) ] = [(\Psi_{\Gamma}^{p,q}(z,w_1)(X-zY)^k dz , 0)] +   [(0,\vec{G}_{\Gamma,k}^{p,q}(z_1,w_1))],
    \] 
    which yields the remaining entries of the  Betti-conjugation period matrix. The single-valued period matrix is its complex-conjugate. Conclude using the fact that  $\overline{{}^{\Gamma}\! G^{p,q}_{r,s}} = {}^{\Gamma}\!G^{q,p}_{s,r}$.  
\end{proof}

Let us now assume that $\mathcal{E}_{w_1,K}$ has complex multiplication with respect to an order in a quadratic imaginary field $F \subset K$ and assume that $k=2m $ is even. Applying Proposition \ref{prop: CMmotives} we have 
\begin{equation}\label{eq:decompSnintoCn}
    \Sym^k H^1( \mathcal{E}_{w_1,K})   =  C^{2m } \oplus C^{2m-2}(-1) \oplus \cdots \oplus C^2(-m+1) \oplus \QQ(-m) .
\end{equation}

\begin{lem} \label{lem: invariantscontainsTate}
    Let $w_1$ be a CM  point. Then, in the decomposition \eqref{eq:decompSnintoCn}, the Tate piece is invariant:
    \[
        \QQ(-m)  \   \subset   \   \left(\Sym^{2m} H^1 (\mathcal{E}_{w_1,K})\right)^{\Gamma_{w_1}}  \ .
    \]
\end{lem}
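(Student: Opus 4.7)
The plan is to first verify that the decomposition \eqref{eq:decompSnintoCn} is $\Gamma_{w_1}$-equivariant, and then explicitly check that $\Gamma_{w_1}$ acts trivially on the Tate summand $\QQ(-m)$. Both steps ultimately rest on the fact that, for a CM elliptic curve, $\End(\mathcal{E}_{w_1})\otimes \QQ = \QQ(w_1)$ is commutative, so that automorphisms of $\mathcal{E}_{w_1}$ commute with complex multiplication.

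First I would observe that $\Gamma_{w_1}$ sits inside the group of units $\mathcal{O}^{\times}$, where $\mathcal{O}\subset \QQ(w_1)$ is the order acting on $\mathcal{E}_{w_1,K}$; in particular, every $\gamma\in\Gamma_{w_1}$ is a root of unity in the CM field, and commutes with every $\lambda\in\mathcal{O}$. Recall from the proof of Proposition \ref{prop: CMmotives} that the decomposition is built iteratively from the cycle-class morphism $\varphi_\lambda:\QQ(-1)\to S^2H$ coming from the graph $\Gamma_\lambda\subset\mathcal{E}_{w_1}\times_K\mathcal{E}_{w_1}$. Since the diagonal action of $\gamma$ on $\mathcal{E}_{w_1}\times_K\mathcal{E}_{w_1}$ sends the cycle $\Gamma_\lambda$ to the cycle $\Gamma_{\gamma\lambda\gamma^{-1}}=\Gamma_\lambda$, the map $\varphi_\lambda$ is $\Gamma_{w_1}$-equivariant (where the action on $\QQ(-1)$ is trivial). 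By naturality of the construction, each summand $C^{2m-2i}(-i)$ is stable under $\Gamma_{w_1}$, and the decomposition in $\mathcal{H}_K$ is $\Gamma_{w_1}$-equivariant.

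It then suffices to show that $\Gamma_{w_1}$ acts trivially on the factor $\QQ(-m)$. I will work in the de Rham realisation. Let $e,f$ be an eigenbasis of $H_{\dR}$ for the action of CM, symplectic with respect to the polarisation (as in the proof of Proposition \ref{prop: CMmotives}; for instance $[\omega]$ and $[\eta']$ of Lemma \ref{lem: periodsCMcase}). The proof of Proposition \ref{prop: CMmotives} identifies, up to a non-zero scalar, the de Rham realisation of $\QQ(-m)\hookrightarrow S^{2m}H$ with the line spanned by $(ef)^m\in S^{2m}H_{\dR}$. Any $\gamma\in\Gamma_{w_1}$ acts on $e$ by multiplication by some $\alpha\in\mathcal{O}^{\times}$, and on $f$ by $\overline{\alpha}$; since $\alpha$ is a unit in an imaginary quadratic order, $\alpha\overline{\alpha}=1$, whence $\gamma\cdot(ef)=\alpha\overline{\alpha}\,ef=ef$, and therefore $\gamma\cdot(ef)^m=(ef)^m$. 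This establishes the inclusion $\QQ(-m)\subset (S^{2m}H)^{\Gamma_{w_1}}$.

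The only mildly delicate point is the first step: making sure that the decomposition \eqref{eq:decompSnintoCn} in the Tannakian category $\mathcal{H}_K$ is literally $\Gamma_{w_1}$-stable and not just up to isomorphism. This is handled by noting that the maps $\varphi_\lambda$ (and hence their iterates $\varphi_\lambda^n$, together with the induced splittings via semisimplicity of the subcategory generated by $H$) are defined by algebraic cycles and are therefore functorial with respect to any endomorphism of $\mathcal{E}_{w_1,K}$ that commutes with $\lambda$; this is exactly the content of commutativity of $\mathcal{O}$ noted above. No further work is needed once this functoriality is in place.
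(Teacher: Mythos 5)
Your proof is correct and rests on the same core observation as the paper's: $\Gamma_{w_1}$ acts on the two CM eigenlines $e,f$ by complex-conjugate roots of unity $\alpha,\overline{\alpha}$ with $\alpha\overline{\alpha}=1$, which is precisely the statement that $\Gamma_{w_1}$ preserves the polarisation and hence acts trivially on $M_+\otimes M_- \cong \wedge^2(H\otimes F)\cong\QQ(-1)$. The paper phrases this via the rank-one objects $M_\pm$ of Remark \ref{rem: defineMplusMminusCMcase} rather than an explicit de Rham basis, and it leaves implicit the $\Gamma_{w_1}$-equivariance of the decomposition \eqref{eq:decompSnintoCn}, which you verify carefully via the cycle-class construction of $\varphi_\lambda$ and the commutativity of $\mathcal{O}$; that check is sound and is a useful gap-filling step.
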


\begin{proof} 
    We use the notation of Remark \ref{rem: defineMplusMminusCMcase}. Since $\Gamma_{w_1}$ acts via automorphisms of $\mathcal{E}_{w_1}$, it preserves the polarisation.  It follows that $M_+ \otimes M_-=\bigwedge^2 (H^1(\mathcal{E}_{w_1,K})\otimes F)=\QQ(-1)$ is $\Gamma_{w_1}$-invariant in the category $\mathcal{H}_K\otimes F$. It follows that all powers $(M_+ \otimes M_-)^{m} \subset \Sym^{2m} (H^1(\mathcal{E}_{w_1,K})\otimes F)$ are $\Gamma_{w_1}$-invariant. In particular, the copy of $\QQ(-m)$ inside $ \Sym^{2m} H^1(\mathcal{E}_{w_1,K})$ in the category $\mathcal{H}_K$ is $\Gamma_{w_1}$-invariant.
\end{proof}

\begin{thm}
    Suppose that $w_1$ and $z_1$ are CM. Then we can  extract from \eqref{eqn: Extensionvanishingcuspforms} an extension 
    \begin{equation} \label{NocuspsKummerExtension}
        \begin{tikzcd}[column sep = small]
            0  \arrow{r}& \QQ(-m) \arrow{r}& \mathcal{GZ}_{z_1,w_1} \arrow{r}& \QQ(-m-1) \arrow{r}& 0
        \end{tikzcd}
    \end{equation}
    in the category $\mathcal{H}_K$. A de Rham basis for $(\mathcal{GZ}_{z_1,w_1})_{\dR} \otimes_K \CC$ is given by the relative cohomology classes
    \[ 
        [ (0,  \mathrm{comp}_{\B,\dR}^{-1} (X-z_1Y)^{m} (X-\overline{z}_1 Y)^{m}  ]   \ , \quad    [  (\Psi_{\Gamma}^{m,m}(z,w_1) (X-zY)^k dz,0 )] 
    \]
    The single-valued  map $\mathsf{s}$ with respect to this basis is given by the  $2 \times 2$ matrix
    \[
        \begin{pmatrix}
            1 &    - {}^{\Gamma}\!G^{m,m}_{m,m}(z_1,w_1) \\
                & -1 
        \end{pmatrix}.
    \]
\end{thm}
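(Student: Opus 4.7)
The plan is to extract $\mathcal{GZ}_{z_1,w_1}$ from the extension \eqref{eqn: Extensionvanishingcuspforms} of the previous theorem via a pullback/pushout construction, and then to read off its de Rham basis and single-valued period matrix as the appropriate ``central block'' of the matrix computed in Theorem \ref{thm: PeriodsSkExtensionNoCusps}.

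\emph{Step 1 (construction of the Gross--Zagier extension).} Since $z_1$ and $w_1$ are CM, Proposition \ref{prop: CMmotives} gives canonical direct sum decompositions of $\Sym^{2m} H^1(\mathcal{E}_{z_1,K})$ and $\Sym^{2m} H^1(\mathcal{E}_{w_1,K})$ into summands $C^{2m-2i}(-i)$, and Lemma \ref{lem: invariantscontainsTate} shows that the Tate summands $\QQ(-m)$ lie inside the $\Gamma_{z_1}$- and $\Gamma_{w_1}$-invariants. Pulling back the extension \eqref{eqn: Extensionvanishingcuspforms} along the inclusion $\QQ(-m-1) \hookrightarrow \bigl(\Sym^{2m} H^1(\mathcal{E}_{w_1,K})\bigr)^{\Gamma_{w_1}}(-1)$ and then pushing out along the projection $\bigl(\Sym^{2m} H^1(\mathcal{E}_{z_1,K})\bigr)^{\Gamma_{z_1}} \twoheadrightarrow \QQ(-m)$ produces, in $\mathcal{H}_K$, a subquotient of $M_{z_1,w_1}$ which is an extension of $\QQ(-m-1)$ by $\QQ(-m)$. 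We take this to be $\mathcal{GZ}_{z_1,w_1}$.

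\emph{Step 2 (de Rham basis).} I would next show that the two classes in the statement descend to a de Rham basis of $\mathcal{GZ}_{z_1,w_1}$. The class $\mathrm{comp}_{\B,\dR}^{-1}\bigl((X-z_1Y)^m(X-\overline{z}_1Y)^m\bigr)$ has pure Hodge type $(m,m)$, since $X-zY$ has type $(1,0)$ and $X-\overline{z}Y$ has type $(0,1)$ (cf.\ \S \ref{par:cohomology-EC}). Within the decomposition of Proposition \ref{prop: CMmotives}, the only summand containing an element of this bidegree is the Tate piece $\QQ(-m)$, so this class projects to a de Rham generator of $\QQ(-m)$ under the pushout. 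On the other side, Corollary \ref{cor:Psi-splits-residue-seq} implies that $[\Psi_{\Gamma}^{m,m}(z,w_1)(X-zY)^{2m}dz]$ lies in $F^{m+1}\cap \overline{F}^{m+1}$, and its residue (computed in Corollary \ref{cor:residue-Psi}) is a nonzero algebraic multiple of $(X-w_1Y)^m(X-\overline{w}_1Y)^m$, which again by the Hodge bigrading generates the Tate summand $\QQ(-m-1)$ in $\bigl(\Sym^{2m} H^1(\mathcal{E}_{w_1,K})\bigr)^{\Gamma_{w_1}}(-1)$. Thus this class represents a lift of a generator of $\QQ(-m-1)$ in the pullback.

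\emph{Step 3 (single-valued matrix).} By functoriality of the single-valued period map with respect to morphisms in $\mathcal{H}_K$, the single-valued period matrix of $\mathcal{GZ}_{z_1,w_1}$ in the chosen basis is obtained by extracting the appropriate row and column from the matrix of Theorem \ref{thm: PeriodsSkExtensionNoCusps}. Since the chosen basis elements correspond to the indices $i=m$ and $r=m$ (the ``central'' row and column of each of the two blocks), this amounts to reading off the $(m+1,m+1)$-entry of each of the four blocks in $\left(\begin{smallmatrix} J & -\GM_{\Gamma,2m} \\ 0 & -J \end{smallmatrix}\right)$. Using that $J_{m+1,m+1}=\delta_{m+1,m+1}=1$ and $(\GM_{\Gamma,2m})_{m+1,m+1} = {}^{\Gamma}\!G^{m,m}_{m,m}(z_1,w_1)$ by Definition \ref{defn:greens-fct-matrix}, we recover precisely the claimed $2\times 2$ matrix.

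\emph{Main obstacle.} The only subtle point is making sure that the two pure Hodge classes identified in Step 2 really do generate the images/preimages of the canonical Tate summands in $\mathcal{H}_K$ (i.e.\ not merely their $\CC$-extensions), and that the normalization factors appearing in Corollary \ref{cor:residue-Psi} and Remark \ref{rem: eigenbasisinBetti} do not disturb the matrix in Step 3. Both reduce to the observation that the Tate projections and inclusions are canonical (they come from algebraic cycles on $\mathcal{E}_{z_1}\times \mathcal{E}_{z_1}$ and $\mathcal{E}_{w_1}\times \mathcal{E}_{w_1}$ via the CM action of Proposition \ref{prop: CMmotives}) and that the single-valued period of a Tate object is basis-independent, so any nonzero algebraic factor disappears after passing to the normalized basis spanned by pure Hodge-type representatives.
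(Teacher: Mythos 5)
Your proposal is correct and follows essentially the same route as the paper's own proof: pulling back along $\QQ(-m-1)\hookrightarrow\big(\Sym^{2m}H^1(\mathcal{E}_{w_1,K})\big)^{\Gamma_{w_1}}(-1)$ and pushing out along $\big(\Sym^{2m}H^1(\mathcal{E}_{z_1,K})\big)^{\Gamma_{z_1}}\twoheadrightarrow\QQ(-m)$ via Proposition \ref{prop: CMmotives} and Lemma \ref{lem: invariantscontainsTate}, identifying the Tate summands with the Hodge type $(m,m)$ pieces spanned by the stated classes, and reading off the central entries from the matrix in Theorem \ref{thm: PeriodsSkExtensionNoCusps}. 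Your Step 2 and the closing ``main obstacle'' remark are in fact more explicit than the paper's wording, which compresses the same argument into a reference to Remark \ref{rem: eigenbasisinBetti} and the preceding theorem.
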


\begin{proof}
    By Lemma \ref{lem: invariantscontainsTate}, we may pull back \eqref{eqn: Extensionvanishingcuspforms} along $\QQ(-m-1) \subset  \left(\Sym^{2m} H^1 (\mathcal{E}_{w_1,K})\right)^{\Gamma_{w_1}}(-1) $ to deduce an extension 
    \begin{equation}
        \begin{tikzcd}[column sep = small]
            0 \arrow{r} & \left( \Sym^k  H^1( \mathcal{E}_{z_1,K})\right)^{\Gamma_{z_1}} \arrow{r} & M \arrow{r} & \QQ(-m-1) \arrow{r} & 0.
        \end{tikzcd}
    \end{equation}
    Since $z_1$ is also CM, we apply a similar decomposition for $\Sym^{2m} H^1(\mathcal{E}_{z_1,K})$ and push out along the quotient $\QQ(-m)$ of $\left( \Sym^{2m} H^1( \mathcal{E}_{z_1,K})\right)^{\Gamma_{z_1}} $, yielding the extension \eqref{NocuspsKummerExtension}. 

    It follows from the proof of Proposition \ref{prop: CMmotives} that the action of complex multiplication on the de Rham realisation of $\Sym^{2m} H^1(\mathcal{E}_{z_1,K})$ splits the Hodge filtration. The Tate object inside it corresponds to the part of Hodge type $(m,m)$  which is spanned, over $\CC$, by the image of $(X-z_1Y)^{m} (X-\overline{z}_1Y)^{m}$ by Remark \ref{rem: eigenbasisinBetti}. The same argument applied to $\Sym^{2m} H^1(\mathcal{E}_{w_1,K})(-1)$ and the calculation of Theorem \ref{thm: PeriodsSkExtensionNoCusps}, proves that $[(\Psi_{\Gamma}^{m,m}(z,w_1)(X-zY)^{2m}dz,0)]$ spans the de Rham cohomology, over $\CC$, of the Tate piece  $\QQ(-m-1)$ of $\Sym^{2m} H^1(\mathcal{E}_{w_1,K})(-1)$. This proves that a de Rham basis of $(\mathcal{GZ}_{z_1,w_1})_{\dR} \otimes_{K} \CC$ is as stated. The computation of the 
    single-valued period matrix follows from Theorem \ref{thm: PeriodsSkExtensionNoCusps}.
\end{proof}

\subsection{Reminders on Hecke correspondences} \label{sect: HeckeCorres}

By a \emph{Hecke correspondence} on $\mathcal{Y}_{\Gamma,K}$, we mean a correspondence $T_{\alpha} $ given by a diagram  of the form
\[
    \begin{tikzcd}[column sep = small, row sep = small]
        &\mathcal{Y}_{\Gamma \cap \alpha^{-1}\Gamma \alpha ,K} \arrow{dl}[swap]{q_1} \arrow{dr}{q_2}&\\
        \mathcal{Y}_{\Gamma,K} & & \mathcal{Y}_{\Gamma,K}
    \end{tikzcd}
\]
where $\alpha \in M_{2\times 2}(\mathbb{Z})$ has positive determinant and $q_1$ (resp. $q_2$) is induced on the uniformisation  by $z\mapsto z$ (resp. $z \mapsto \alpha z$). Such a correspondence fits into a commutative diagram (\emph{cf.} \cite[3.16]{deligneFM})
\[
    \begin{tikzcd}[column sep = small, row sep = small]
    & q_1^*\mathcal{E}_{\Gamma,K} \arrow{rd} \arrow{ld} \arrow{rr}{\varphi_{\alpha}} & & q_2^*\mathcal{E}_{\Gamma,K} \arrow{ld} \arrow{rd} & \\
     \mathcal{E}_{\Gamma,K} \arrow{rd}  &  &\mathcal{Y}_{\Gamma \cap \alpha^{-1}\Gamma \alpha ,K} \arrow{dl}[swap]{q_1} \arrow{dr}{q_2}& & \mathcal{E}_{\Gamma,K}\arrow{ld} \\
       &  \mathcal{Y}_{\Gamma,K} & & \mathcal{Y}_{\Gamma,K} & 
    \end{tikzcd}
\]
where the action of $\varphi_{\alpha}$ on the fibres $\mathcal{E}_{z} \to \mathcal{E}_{\alpha z}$ is induced by $u\mapsto \frac{\det(\alpha)}{j_{\alpha}(z)}u$, where $u \in \mathbb{C}$ (\emph{cf.} computations in \S \ref{par:cohomology-EC}). In particular, $T_{\alpha}$ acts via $(q_1)_*\circ q_2^*$ on the modular object $H^1_{\cusp}(\mathcal{Y}_{\Gamma};\mathcal{V}_k)$ of $\mathcal{H}_K$.  More generally, $T_{\alpha}$ induces a morphism
\[
    T_{\alpha}: H^1_{\cusp}(\mathcal{Y}_{\Gamma,K} \setminus \mathcal{P}_{\Gamma,K},\mathcal{Q}_{\Gamma,K}; \mathcal{V}_k) \longrightarrow H^1_{\cusp}(\mathcal{Y}_{\Gamma,K} \setminus \mathcal{P}'_{\Gamma,K},\mathcal{Q}'_{\Gamma,K}; \mathcal{V}_k)
\]
in $\mathcal{H}_K$, which is compatible with the residue and relative cohomology long exact sequences; here, $\mathcal{P}'_{\Gamma,K} = T_{\alpha}^{-1}\mathcal{P}_{\Gamma,K}= q_1 q_2^{-1}\mathcal{P}_{\Gamma,K}$ and similarly for $\mathcal{Q}'_{\Gamma,K}$. Note that this means that $T_{\alpha}$ is also compatible with all the structures implicit in the definition of  $\mathcal{H}_K$, such as realisation functors, comparison and single-valued maps, and the Hodge and weight filtrations.

By representing de Rham cohomology classes by modular forms as in Theorem \ref{thm:de-Rham-open-over-K} (or Proposition \ref{prop:dR-cohomology-meromorphic-modular-forms}), we obtain the usual formula for the `Hecke operator': if $f(z)$ is modular of weight $k+2$, then
\[
    T_{\alpha}f(z) = \det(\alpha)^{k+1}\sum_{i=1}^n\frac{1}{j_{\beta_i}^{k+2}(z)}f(\beta_i z),
\]
where $\beta_i$ are chosen to satisfy $\Gamma \alpha \Gamma = \bigsqcup_{i=1}^n \Gamma \beta_i = \bigsqcup_{i=1}^n \beta_i \Gamma$. If $f(z,w)$ is modular in two variables, we shall always consider the action on $f$ as a function of the first variable $z$.

\begin{lem} \label{lem: kappainvarianceunderGL2}
    For every $\delta \in \GL_2(\mathbb{Q})$, \eqref{kappdef} satisfies $\kappa_{\delta^{-1}\Gamma\delta,\delta^{-1}w}^{p,q} = \kappa_{\Gamma,w}^{p,q}$.
\end{lem}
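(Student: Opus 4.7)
The plan is to establish the invariance by exhibiting an explicit bijection between the two stabilizers and showing that the summands match term by term. The main tool is the cocycle identity for the automorphy factor, $j_{\alpha\beta}(z) = j_{\alpha}(\beta z)\, j_{\beta}(z)$, valid for $\alpha,\beta \in \GL_2(\mathbb{Q})$ and $z \in \mathfrak{H}^{\pm}$.

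First, I would verify that conjugation by $\delta$ induces a bijection between stabilizers: $(\delta^{-1}\Gamma \delta)_{\delta^{-1}w} = \delta^{-1} \Gamma_w \delta$. Indeed, $\delta^{-1}\gamma\delta$ fixes $\delta^{-1}w$ iff $\gamma$ fixes $w$, so the map $\gamma \mapsto \delta^{-1}\gamma\delta$ gives the desired bijection $\Gamma_w \stackrel{\sim}{\to} (\delta^{-1}\Gamma\delta)_{\delta^{-1}w}$.

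Next, I would compute $j_{\delta^{-1}\gamma\delta}(\delta^{-1}w)$ for $\gamma \in \Gamma_w$ by applying the cocycle relation twice:
\[
j_{\delta^{-1}\gamma\delta}(\delta^{-1}w) = j_{\delta^{-1}}(\gamma w) \cdot j_{\gamma}(w) \cdot j_{\delta}(\delta^{-1}w).
\]
Since $\gamma w = w$, the first factor becomes $j_{\delta^{-1}}(w)$. On the other hand, applying the cocycle to $\mathrm{id} = \delta^{-1}\delta$ at $\delta^{-1}w$ gives $j_{\delta^{-1}}(w) \cdot j_{\delta}(\delta^{-1}w) = 1$. Combining these yields the clean identity $j_{\delta^{-1}\gamma\delta}(\delta^{-1}w) = j_{\gamma}(w)$. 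The identical computation with $\overline{w}$ in place of $w$ gives $j_{\delta^{-1}\gamma\delta}(\delta^{-1}\overline{w}) = j_{\gamma}(\overline{w})$ (note that $\delta^{-1}\overline{w} = \overline{\delta^{-1}w}$ since $\delta$ has rational, hence real, entries).

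Finally, substituting these into the definition \eqref{kappdef} and summing via the bijection from the first step gives
\[
\kappa_{\delta^{-1}\Gamma\delta,\delta^{-1}w}^{p,q} = \sum_{\gamma \in \Gamma_w} \frac{1}{j^p_{\delta^{-1}\gamma\delta}(\delta^{-1}w) \, j^q_{\delta^{-1}\gamma\delta}(\delta^{-1}\overline{w})} = \sum_{\gamma \in \Gamma_w} \frac{1}{j^p_{\gamma}(w) \, j^q_{\gamma}(\overline{w})} = \kappa_{\Gamma,w}^{p,q}.
\]
There is no real obstacle here: the only thing to be careful about is keeping track of the order of composition in the cocycle identity and noting that $j_{\delta^{-1}}(w)\, j_{\delta}(\delta^{-1}w) = 1$ (which makes no reference to $\det \delta$). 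Alternatively, one could derive the statement more conceptually from the $\delta$-equivariance identity \eqref{eq:Psi-g-action} for $\Psi^{p,q}_{\Gamma}(z,w)$ combined with the residue formula of Corollary \ref{cor:residue-Psi}, but the direct calculation above is shorter.
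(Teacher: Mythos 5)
Your proof is correct and follows essentially the same route as the paper: identify the stabilizers via conjugation, then use the cocycle identity for the automorphy factor together with $\gamma w = w$ to show $j_{\delta^{-1}\gamma\delta}(\delta^{-1}w) = j_{\gamma}(w)$, and sum. The only cosmetic difference is that you cancel via $j_{\delta^{-1}}(w)\,j_{\delta}(\delta^{-1}w)=1$ while the paper compares two expansions of $j_{\delta^{-1}\gamma}(w)$; these are the same manipulation.
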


\begin{proof}
    Since $(\delta^{-1}\Gamma\delta)_{\delta^{-1}w} = \delta^{-1}\Gamma_w \delta$, we can write
    \[
        \kappa_{\delta^{-1}\Gamma\delta,\delta^{-1}w}^{p,q} = \sum_{\gamma \in \Gamma_w} \frac{1}{j_{\delta^{-1}\gamma\delta}^p(\delta^{-1}w)j_{\delta^{-1}\gamma\delta}^q(\delta^{-1}\overline{w})}.
    \]
    From the cocycle relations, we have on the one hand $j_{\delta^{-1}\gamma}(w) =  j_{\delta^{-1}\gamma\delta \delta^{-1}}(w) = j_{\delta^{-1}\gamma\delta}(\delta^{-1}w)j_{\delta^{-1}}(w)$, and on the other hand $j_{\delta^{-1}\gamma}(w)= j_{\delta^{-1}}(\gamma w) j_{\gamma}(w) = j_{\delta^{-1}}(w)j_{\gamma}(w)$, where the last equality comes from the fact that $\gamma$ stabilizes $w$. Comparing the two expressions for $j_{\delta^{-1}\gamma}(w)$  proves that $j_{\delta^{-1}\gamma\delta}(\delta^{-1}w) = j_{\gamma}(w)$, and analogously for $\overline{w}$ in the place of $w$, from which the statement follows.
\end{proof}

\begin{example}
    Given $p,q\ge 0$ such that $p+q = k$, it follows from part (3) of Proposition \ref{prop:Psi} that
    \[
        T_{\alpha}\Psi_{\Gamma}^{p,q}(z,w) = \sum_{i=1}^n\frac{1}{j_{\beta^{-1}_i}^p(w)j_{\beta^{-1}_i}^q(\overline{w})}\Psi_{\beta_i^{-1}\Gamma\beta_i}^{p,q}(z,\beta_i^{-1}w).
    \]
    By Lemma \ref{lem: kappainvarianceunderGL2},   \eqref{kappdef} satisfies $\kappa_{\beta_i^{-1}\Gamma\beta_i,\beta_i^{-1}w}^{p,q} = \kappa_{\Gamma,w}^{p,q}$. In particular, it follows from Corollary \ref{cor:residue-Psi} that, for every $i=1,\ldots,n$, we have
    \begin{equation}\label{eq:residue-hecke-Psi}
        \begin{aligned}
        &\Res  \left( T_{\alpha}\Psi_{\Gamma}^{p,q}(z,w)(X-zY)^{k}dz \right)= \\
        & \ \ \ \ \ \ \ \ \ \ \ \ \ \ \ \ \ \ \ \frac{(-1)^p\kappa_{\Gamma,w}^{p,q}\det(\alpha)^{k}}{(w-\overline{w})^{k}}  \binom{k}{p}\sum_{i=1}^n j_{\beta^{-1}_i}^q(w)j_{\beta^{-1}_i}^p(\overline{w})(X-  \beta_i^{-1}w Y)^q(X- \beta_i^{-1}\overline{w}Y)^p.
        \end{aligned}
    \end{equation}
    If $p=q$, the coefficients in this expression are real. If, furthermore,  $w$ is CM, then they are rational. 
\end{example}

The above example expresses, in the de Rham realisation, the commutativity of the diagram  with exact rows in $\mathcal{H}_K$
\[
    \begin{tikzcd}[column sep = small]
        0 \arrow{r} & H^1_{\cusp}(\mathcal{Y}_{\Gamma,K};\mathcal{V}_k) \arrow{r}\arrow{d} & H^1_{\cusp}(\mathcal{Y}_{\Gamma,K}\setminus \mathcal{P}_{\Gamma,K};\mathcal{V}_k) \arrow{r}{\Res}\arrow{d}{T_{\alpha}} & H^0(\mathcal{P}_{\Gamma,K};\mathcal{V}_k)(-1)\arrow{d} \arrow{r} & 0\\
        0 \arrow{r} & H^1_{\cusp}(\mathcal{Y}_{\Gamma,K};\mathcal{V}_k) \arrow{r} & H^1_{\cusp}(\mathcal{Y}_{\Gamma,K}\setminus \mathcal{P}'_{\Gamma,K};\mathcal{V}_k) \arrow{r}{\Res} & H^0(\mathcal{P}'_{\Gamma,K};\mathcal{V}_k)(-1) \arrow{r} & 0
    \end{tikzcd}
\]
in the case when $\mathcal{P}_{\Gamma,K} = \Gamma w$. Indeed, in this case, $\mathcal{P}_{\Gamma,K}'$ is given by the union of the orbits of $\beta_i^{-1}w$, the residue \eqref{eq:residue-hecke-Psi} lies in $H^0_{\dR}(\mathcal{P}_{\Gamma}';\mathcal{V}_k^{\dR,\mathbb{C}})$, the $i$th term in the sum being an element of $\Sym^kH^1_{\dR}(\mathcal{E}_{\beta_i^{-1}w}/\mathbb{C})$, and one can check that the vertical arrow on the right is given explicitly by
\begin{equation}\label{eq:hecke-vector}
    (X-wY)^r(X-\overline{w}Y)^s \longmapsto {\det}(\alpha)^k\sum_{i=1}^nj_{\beta_i^{-1}}^r(w)j_{\beta_i^{-1}}^s(\overline{w})(X-\beta_i^{-1}wY)^r(X-\beta_i^{-1}\overline{w}Y)^s.
\end{equation}

\subsection{The general case}

Using Hecke correspondences, we can extend the results of \S \ref{par:biextensions} to the fully general case when the cuspidal cohomology $H^1_{\cusp}(\mathcal{Y}_{\Gamma,K}; \mathcal{V}_k)$ does not vanish. Let $T$ be a $\mathbb{Z}$-linear combination of Hecke correspondences  on $\mathcal{Y}_{\Gamma,K}$ of the form $T_{\alpha}$, as above.

\begin{lem}\label{lem:vanishing-column}
    Let $w_1 \in \mathfrak{H}$ and $r,s\ge 0$ with $r+s = k> 0$. If
    \begin{equation}\label{eq:vanishing-residue}
        \Res \left(T\Psi^{r,s}_{\Gamma}(z,w_1) (X-zY)^kdz \right) = 0,
    \end{equation}
    then
    \[
        T\, {}^{\Gamma}\!G_{r,s}^{p,q}(z,w_1) \equiv  0
    \]
    for every $p,q \ge 0$ with $p+q=k$.
\end{lem}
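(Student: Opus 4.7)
The plan is to exploit the harmonic-lift structure of Corollary~\ref{cor: GisWHLofPsis}, which for each fixed pair $(p,q)$ with $p+q=k$ identifies the vector-valued function
\[
\vec{G}^{p,q}_{\Gamma,k}(z,w_1) \;=\; \sum_{r'+s'=k}{}^{\Gamma}\!G^{p,q}_{r',s'}(z,w_1)\,(X-zY)^{r'}(X-\overline{z}Y)^{s'}
\]
as a harmonic lift in $z$ of $\Psi^{p,q}_\Gamma(z,w_1)(X-zY)^k dz$, with Betti-conjugate $-\Psi^{q,p}_\Gamma(z,w_1)(X-zY)^k dz$. Together with the compatibility of the Hecke correspondence $T$ with $d_z$, with complex conjugation, and with the raising and lowering operators $\partial_r, \overline{\partial}_s$ (which act on the $z$-variable only), these commutations imply that $T\vec{G}^{p,q}_{\Gamma,k}(z,w_1)$ is itself a harmonic lift, of the Hecke image $T\Psi^{p,q}_\Gamma(z,w_1)(X-zY)^k dz$.

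The next step will be to use the Hecke-equivariance of the residue map (the commutative diagram of \S\ref{sect: HeckeCorres}) to promote the hypothesis into a statement about every $(p,q)$: namely, that the $(r,s)$-graded component of $\Res(T\Psi^{p,q}_\Gamma(z,w_1)(X-zY)^k dz)$ in the splitting of Corollary~\ref{cor:Psi-splits-residue-seq} vanishes. The key inputs are the explicit formulas \eqref{eq:residue-hecke-Psi} and \eqref{eq:hecke-vector} together with the fact that the $(r,s)$-row of the matrix entry ${}^{\Gamma}\!G^{p,q}_{r,s}$ is precisely sensitive to the $(r,s)$-graded component of the residue side; the transfer from the specific $(r,s)$ of the hypothesis to arbitrary $(p,q)$ then becomes a direct linear-algebraic statement about the Hecke action on $\bigoplus_i (\Sym^k H^1(\mathcal{E}_{\beta_i^{-1}w_1}))^{\Gamma_{\beta_i^{-1}w_1}}$ intertwining the $(p,q)$-labels with the $(r,s)$-labels through the prefactors of Corollary~\ref{cor:residue-Psi}.

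With this uniform residue vanishing, the harmonic lift $T\vec{G}^{p,q}_{\Gamma,k}(z,w_1)$ is defined over the non-punctured stack $\mathcal{Y}_{\Gamma,K}$. The Poincaré-series construction of $\vec{G}_{\Gamma,k}$ (\S\ref{sect: existenceHigherGreensMatrix}) forces it to vanish at the cusps of $\mathcal{Y}_\Gamma$, a property preserved under Hecke application. Combining with Proposition~\ref{prop:holomorphic-lift} and the absence, for $k\ge 1$, of nonzero holomorphic modular forms for $\Gamma$ of weight $-k$ which vanish at the cusps, one concludes $T\vec{G}^{p,q}_{\Gamma,k}(z,w_1) \equiv 0$ identically in $z$. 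Extracting the coefficient of $(X-zY)^r(X-\overline{z}Y)^s$ then yields $T\,{}^{\Gamma}\!G^{p,q}_{r,s}(z,w_1) \equiv 0$ for every $(p,q)$ with $p+q=k$.

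The hardest part will be the propagation step in the second paragraph: rigorously justifying how Hecke-equivariance on the residue side forces the $(r,s)$-component of the residue to vanish uniformly in $(p,q)$ from the single hypothesis on $\Psi^{r,s}$. This is not a direct consequence of raising/lowering operators (since the forms $\Psi^{p,q}_\Gamma(z,w_1)$ for varying $(p,q)$ are not obtained from one another by differential operators), and instead must be deduced from the explicit matrix of the Hecke action \eqref{eq:hecke-vector} on $(\Sym^k H^1(\mathcal{E}_{w_1}))^{\Gamma_{w_1}}$ and the compatibility of this action with the $(r,s)$-decomposition coming from the Hodge filtration (Corollary~\ref{cor:Psi-splits-residue-seq}).
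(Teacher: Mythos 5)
The central idea your proposal is missing is the Hodge-theoretic argument that converts the vanishing of a residue into the vanishing of a cusp form \emph{as a function}. The paper's route is: by \eqref{eqn: PsiinFp}, $[\Psi^{r,s}_\Gamma(z,w_1)(X-zY)^kdz]\in F^{s+1}\cap\overline{F}^{r+1}$; the Hecke correspondence is a morphism of mixed Hodge structures, so $T$ preserves this; when the residue vanishes the class is cuspidal, hence in $W_{k+1}$; and $F^{s+1}\cap\overline{F}^{r+1}\cap W_{k+1}=0$ for $r+s=k>0$ (the cuspidal Hodge types are only $(k+1,0)$ and $(0,k+1)$). Injectivity of cusp forms into cohomology (Lemma~\ref{lem:inclusion-modular-forms}) then gives $T\Psi^{r,s}_\Gamma(z,w_1)\equiv 0$, Betti-conjugation gives $T\Psi^{s,r}_\Gamma(z,w_1)\equiv 0$, and the single lift $T\vec{G}^{r,s}_{\Gamma,k}(z,w_1)$ of the pair $(0,0)$ vanishes by the uniqueness Lemma~\ref{lem:uniqueness-harmonic}. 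In your third paragraph, even if you arrange the residue of $T\Psi^{p,q}_\Gamma$ to vanish, the lift $T\vec{G}^{p,q}_{\Gamma,k}$ remains a real-analytic harmonic lift of a possibly nonzero cusp form: it is not holomorphic, so Proposition~\ref{prop:holomorphic-lift} and the absence of negative-weight modular forms say nothing about it. Forcing the lift to vanish requires that \emph{both} the cusp form and its Betti-conjugate vanish as functions, and that is exactly what the Hodge argument provides and your proposal lacks.

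The "propagation" step you flag as hardest is, in fact, both unnecessary and unavailable. Only the single lift $\vec{G}^{r,s}_{\Gamma,k}$ (the column of $\mathcal{G}_{\Gamma,k}$ with fixed \emph{upper} indices $(r,s)$) needs to vanish, and the hypothesis says nothing about the residues of $\Psi^{p,q}_\Gamma$ for other $(p,q)$: the Hecke action on $\bigoplus_i\Sym^kH^1_{\dR}(\mathcal{E}_{\beta_i^{-1}w_1})$ respects the Hodge bigrading by $(p,q)$, so there is no transfer mechanism from $(r,s)$ to other pieces. Were the propagation true, every column of $T\mathcal{G}_{\Gamma,k}(z,w_1)$ would vanish, contradicting the non-degeneracy hypotheses of Theorems~\ref{thm: extensionmain} and~\ref{thm: partlyCMextension} in general. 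Finally, you should be aware that the printed statement appears to have the upper and lower indices of the conclusion transposed; the paper's own proof, and its use of "column" in Theorem~\ref{thm: extensionmain}, make clear the vanishing functions are $T\,{}^{\Gamma}\!G^{r,s}_{p,q}(z,w_1)$, the components of $T\vec{G}^{r,s}_{\Gamma,k}$, rather than $T\,{}^{\Gamma}\!G^{p,q}_{r,s}(z,w_1)$.
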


\begin{proof}
    By applying the residue sequence of Proposition \ref{prop:residue-seq-hodge-explicit} and Proposition \ref{prop:Psi}, one sees that equation  \eqref{eq:vanishing-residue}  implies that $T\Psi^{r,s}_{\Gamma}(z,w_1)$ 
    is  a (holomorphic) cusp form for $\Gamma$. It therefore  defines a class in the pure Hodge structure  $H^1_{\dR,\cusp}(\mathcal{Y}_{\Gamma}; \mathcal{V}_k^{\dR,\mathbb{C}})$  of weight $k+1$.   By \eqref{eqn: PsiinFp} it lies in 
    $F^{s+1} \cap \overline{F}^{r+1} \cap W_{k+1} = 0 $ and so its cohomology class vanishes.   
    By Betti complex-conjugating the right-hand side of \eqref{eq:residue-hecke-Psi}, we see that \eqref{eq:vanishing-residue} implies the same equation with  $r$ and $s$ interchanged. Therefore the cohomology class of    $T\Psi^{s,r}_{\Gamma}(z,w_1) (X-zY)^kdz $ vanishes by the same argument.
        Then, since the space of cusp forms injects into cohomology by Lemma \ref{lem:inclusion-modular-forms}, we obtain
    \[
        T\Psi^{r,s}_{\Gamma}(z,w_1) \equiv 0 \equiv T\Psi^{s,r}_{\Gamma}(z,w_1).
    \]
    By Corollary \ref{cor: GisWHLofPsis}, the higher Green's functions in the statement are the components of a harmonic lift of the pair $(T\Psi^{r,s}_{\Gamma}(z,w_1), -T\Psi^{s,r}_{\Gamma}(z,w_1))$. We  conclude by applying the uniqueness statement of Lemma \ref{lem:uniqueness-harmonic}.
\end{proof}

From now on, we assume that the induced endomorphism on cuspidal cohomology
\begin{equation}  \label{Tactionzero}
    T \in \operatorname{End}( H^1_{\cusp}(\mathcal{Y}_{\Gamma,K}; \mathcal{V}_k))   \hbox{ is the zero map.}
\end{equation}
Note that, when there are no cusp forms (so that $H^1_{\cusp}(\mathcal{Y}_{\Gamma,K};\mathcal{V}_k) = 0$ in $\mathcal{H}_K$), one may take $T$ to be the identity. In particular, the remaining results of this section generalise those of \S \ref{par:biextensions}.

\begin{rem}
    In the literature, when $\Gamma = \Gamma_0(N)$ is a Hecke congruence subgroup, an operator satisfying the above condition \eqref{Tactionzero} is usually constructed by taking $T = \sum_{n \ge 0}\lambda_n T_n$, where $T_n$ are the classical $n$th Hecke operators, and $(\lambda_n)_{n \ge 0}$ is a relation in the space of cusp forms $S_{k+2,\Gamma}$ (see \cite[Section V.4, p. 316]{GrossZagier}). Such a relation may be obtained more concretely in terms of the principal part of a weakly holomorphic modular form of weight $-k$ (\emph{cf}. \cite[Proposition 1.1]{Viazovska} and \cite[Remark 2.5]{BruinierLiYang}).
\end{rem}

For simplicity, let us first consider the case where $P,Q$ are orbits of a single non-elliptic element.

\begin{thm} \label{thm: extensionmain}
    Let $P= \Gamma w_1$ and $Q = \Gamma z_1$, with $w_1$ and $z_1$ not elliptic, and $T$ be as above satisfying \eqref{Tactionzero}. 
     We construct an extension in the category $\mathcal{H}_K$ of the form: 
    \begin{equation} \label{extensionMTzw}
        \begin{tikzcd}[column sep = small]
            0 \arrow{r}&  \Sym^k H^1(\mathcal{E}_{z_1,K}) \arrow{r}& M^T_{z_1,w_1} \arrow{r}&   \Sym^k H^1(\mathcal{E}_{w_1,K})(-1)  \To 0 \ . 
        \end{tikzcd}
    \end{equation} 
If  none of the columns of the matrix $T\mathcal{G}_{\Gamma,k}(z,w_1)$ vanishes identically, then the extension 
 $M^T_{z_1,w_1}$ is a subobject of $H^1_{\cusp}(\mathcal{Y}_{\Gamma,K} \setminus T^{-1}\mathcal{P}_{\Gamma,K}, \mathcal{Q}_{\Gamma,K}; \mathcal{V}_k)$ and  a de Rham basis for $(M^T_{z_1,w_1})_{\dR}\otimes_K \CC$ is given by  
    \[
        [ (0,  \comp_{\B,\dR}^{-1}(X-z_1Y)^i (X-\overline{z}_1 Y)^{k-i})  ]_{i=0,\ldots, k}  \ , \quad    [  (T\Psi_{\Gamma}^{k-r,r}(z,w_1) (X-zY)^k dz,0 )]_{r=0,\ldots, k}.
    \]
   With notation as above,  the single-valued map $\mathsf{s}$  with respect to this basis is given by:
    \[
        \begin{pmatrix}
            J  &- T \GM_{\Gamma,k}(z_1,w_1) \\
            0 & -J 
        \end{pmatrix} \ . 
    \]
\end{thm}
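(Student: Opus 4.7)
My plan is to construct $M^T_{z_1,w_1}$ as a fibre product extracted from the biextension $B^T := H^1_{\cusp}(\mathcal{Y}_{\Gamma,K}\setminus T^{-1}\mathcal{P}_{\Gamma,K}, \mathcal{Q}_{\Gamma,K}; \mathcal{V}_k)$ by using the Hecke correspondence together with the vanishing hypothesis \eqref{Tactionzero}, and then to compute its single-valued period matrix via the Betti-conjugation interpretation of $\mathsf{s}$ (Lemma \ref{lem: SVandCB}) combined with the harmonic-lift property of the generalised Green's functions (Corollary \ref{cor: GisWHLofPsis}) and Lemma \ref{lem:value-harmonic-lift}.

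The biextension $B^T$ sits in two exact sequences analogous to \eqref{eqn: YminusPrelQmotive}--\eqref{eqn: YminusPmotive} (with $\mathcal{P}_{\Gamma,K}$ replaced by $T^{-1}\mathcal{P}_{\Gamma,K}$); denote the quotient of $B^T$ by $\Sym^k H^1(\mathcal{E}_{z_1,K})$ by $Y^T := H^1_{\cusp}(\mathcal{Y}_{\Gamma,K}\setminus T^{-1}\mathcal{P}_{\Gamma,K}; \mathcal{V}_k)$. The Hecke correspondence $T$ of \S\ref{sect: HeckeCorres} defines a morphism $T : H^1_{\cusp}(\mathcal{Y}_{\Gamma,K}\setminus \mathcal{P}_{\Gamma,K}; \mathcal{V}_k) \to Y^T$ in $\mathcal{H}_K$ which commutes with the residue maps via the Hecke formula \eqref{eq:hecke-vector}. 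Since $T$ vanishes on the sub-object $H^1_{\cusp}(\mathcal{Y}_{\Gamma,K}; \mathcal{V}_k)$ by hypothesis, it descends to a canonical lift
\[
\overline T : \Sym^k H^1(\mathcal{E}_{w_1,K})(-1) \longrightarrow Y^T
\]
in $\mathcal{H}_K$ whose post-composition with the residue at $T^{-1}\mathcal{P}_{\Gamma,K}$ equals the Hecke vector map $T^*$. I then define $M^T_{z_1,w_1}$ as the fibre product $B^T \times_{Y^T} \Sym^k H^1(\mathcal{E}_{w_1,K})(-1)$, and the two projections give the desired extension \eqref{extensionMTzw}.

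For the subobject claim, the natural map $M^T_{z_1,w_1} \to B^T$ has kernel equal to $\ker(\overline T)$. A non-zero element in this kernel would produce a linear combination $\sum c_{p,q} [T\Psi^{p,q}_\Gamma(z,w_1)(X-zY)^k dz]$ vanishing in $Y^T$; tracing through Corollary \ref{cor: GisWHLofPsis}, the uniqueness of harmonic lifts (Lemma \ref{lem:uniqueness-harmonic}), and the residue computation of Corollary \ref{cor:residue-Psi}, this would force a column of $T\mathcal{G}_{\Gamma,k}(z,w_1)$ to vanish identically, contradicting the assumption. The de Rham basis is then read off directly from the fibre product: the classes $\eta_i$ are the images of a Betti basis of $\Sym^k H^1(\mathcal{E}_{z_1,K})$ under the connecting morphism, via the mapping cone description of \S\ref{sect: A1}; the $\omega_r = [(T\Psi^{k-r,r}_\Gamma(z,w_1)(X-zY)^k dz, 0)]$ are lifts along $\overline T$ of a basis of $\Sym^k H^1(\mathcal{E}_{w_1,K})(-1)$, as verified by the residue formula \eqref{eq:residue-hecke-Psi}.

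Finally, for the single-valued period matrix, Lemma \ref{lem: SVandCB} identifies $\mathsf{s}$ with the composite of Betti complex-conjugation $c_\iota$ and de Rham conjugation. Acting on the $\eta_i$, $c_\iota$ simply interchanges $X - z_1 Y$ and $X - \overline{z}_1 Y$, producing the antidiagonal matrix $J$ in the top-left block (and zero beneath, since these classes are supported on $\mathcal{Q}_{\Gamma,K}$). For the $\omega_r$, Corollary \ref{cor: GisWHLofPsis} identifies $T\vec G^{k-r,r}_{\Gamma,k}(z,w_1)$ as the harmonic lift of the pair $(T\Psi^{k-r,r}_\Gamma(z,w_1), - T\Psi^{r,k-r}_\Gamma(z,w_1))$, and Lemma \ref{lem:value-harmonic-lift} evaluated at $z = z_1$ then yields
\[
c_\iota[\omega_r] = -[(T\Psi^{r,k-r}_\Gamma(z,w_1)(X-zY)^k dz, 0)] + [(0, T\vec G^{k-r,r}_{\Gamma,k}(z_1, w_1))] .
\]
This produces the antidiagonal $-J$ in the bottom-right block and the off-diagonal cross-term; a final application of de Rham conjugation combined with the symmetry $\overline{{}^{\Gamma}\!G^{p,q}_{r,s}} = {}^{\Gamma}\!G^{q,p}_{s,r}$ rearranges the cross-term into $-T\mathcal{G}_{\Gamma,k}(z_1,w_1)$ as asserted. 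The main obstacle will be justifying rigorously that $\overline T$ lifts to a well-defined morphism in $\mathcal{H}_K$ compatibly across all realisations, and carefully organising the combinatorics of indices and signs in the final period matrix computation.
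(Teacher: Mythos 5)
Your proposal follows essentially the same route as the paper's proof: you factor the Hecke operator through the residue quotient using \eqref{Tactionzero} to obtain $\overline T\colon \Sym^k H^1(\mathcal E_{w_1,K})(-1)\to H^1_{\cusp}(\mathcal Y_{\Gamma,K}\setminus T^{-1}\mathcal P_{\Gamma,K};\mathcal V_k)$, pull back the residue extension along $\overline T$ (your fibre product is precisely this pullback), and then compute $\mathsf{s}$ via Lemma \ref{lem: SVandCB} together with the harmonic-lift identity of Corollary \ref{cor: GisWHLofPsis} and Lemma \ref{lem:value-harmonic-lift} --- the same calculation as in Theorem \ref{thm: PeriodsSkExtensionNoCusps}.

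The one step in your sketch that needs to be made explicit is the passage from ``$\ker(\overline T)\neq 0$'' to ``some column of $T\mathcal G_{\Gamma,k}(z,w_1)$ vanishes identically.'' From a kernel element you only directly get a \emph{linear combination} $\sum c_{p,q}\,\Res\big(T\Psi^{p,q}_\Gamma(z,w_1)(X-zY)^kdz\big)=0$, and a priori this does not force any individual residue to vanish. What the paper uses at this point is that the composite of $\overline T$ with the residue map to $H^0(T^{-1}\mathcal P_{\Gamma,K};\mathcal V_k)(-1)$ is a morphism of \emph{pure} Hodge structures of weight $k+2$ whose source has one-dimensional Hodge graded pieces, each spanned by the class of $\Res\,\Psi^{r,s}_\Gamma(z,w_1)(X-zY)^kdz$ (Corollary \ref{cor:Psi-splits-residue-seq}). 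Hence the kernel must be a direct sum of such pieces, and injectivity is equivalent to the nonvanishing of every single $\Res\big(T\Psi^{r,s}_\Gamma(z,w_1)(X-zY)^kdz\big)$; only then does Lemma \ref{lem:vanishing-column} translate this into the column-nonvanishing hypothesis on $T\mathcal G_{\Gamma,k}(z,w_1)$.
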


\begin{proof} 
    Let $\mathcal{P}'_{\Gamma,K}  = T^{-1}\mathcal{P}_{\Gamma,K}$. The action of $T$ defines a morphism in $\mathcal{H}_K$ which we also denote by: 
    \begin{equation} \label{inproof: ExtendT}
        T :  H^1_{\cusp} ( \mathcal{Y}_{\Gamma,K} \setminus \mathcal{P}_{\Gamma,K}; \mathcal{V}_k) \To   H^1_{\cusp} ( \mathcal{Y}_{\Gamma,K} \setminus \mathcal{P}'_{\Gamma,K}; \mathcal{V}_k)  . 
    \end{equation}
    We have a  commutative diagram with exact rows
    \[
        \begin{tikzcd}[column sep = small]
            0 \arrow{r} & H^1_{\cusp} ( \mathcal{Y}_{\Gamma,K} ;\mathcal{V}_k) \arrow{d} \arrow{r} & H^1_{\cusp} ( \mathcal{Y}_{\Gamma,K} \setminus \mathcal{P}_{\Gamma,K}; \mathcal{V}_k) \arrow{r} \arrow{d}{\eqref{inproof: ExtendT}} & \Sym^k  H^1( \mathcal{E}_{w_1,K})(-1)\arrow{d} \arrow{r} & 0\\
            0 \arrow{r} & H^1_{\cusp} ( \mathcal{Y}_{\Gamma,K} ;\mathcal{V}_k) \arrow{r} & H^1_{\cusp} ( \mathcal{Y}_{\Gamma,K} \setminus \mathcal{P}'_{\Gamma,K}; \mathcal{V}_k) \arrow{r} & H^0(\mathcal{P}'_{\Gamma,K};\mathcal{V}_k)(-1) \arrow{r} & 0
        \end{tikzcd}
    \]
    where the vertical maps are given by the action of $T$ and the rows are residue short exact sequences \eqref{eqn: YminusPmotive}. Since the vertical map on the left is zero by hypothesis \eqref{Tactionzero}, the map \eqref{inproof: ExtendT} factors through 
    \begin{equation} \label{inproof: TonSkH1}
        T :    \Sym^k H^1(\mathcal{E}_{w_1,K})(-1) \To H^1_{\cusp} ( \mathcal{Y}_{\Gamma,K} \setminus \mathcal{P}'_{\Gamma,K}; \mathcal{V}_k)  . 
    \end{equation}
    
    Now consider the sequence \eqref{eqn: YminusPrelQmotive}:
    \[
        \begin{tikzcd}[column sep = small]
            0 \arrow{r} & \Sym^k  H^1( \mathcal{E}_{z_1,K}) \arrow{r} & H_{\cusp}^1 (\mathcal{Y}_{\Gamma,K} \setminus \mathcal{P}'_{\Gamma,K} ,  \mathcal{Q}_{\Gamma,K} ; \mathcal{V}_k) \arrow{r} & H_{\cusp}^1 (\mathcal{Y}_{\Gamma,K} \setminus \mathcal{P}'_{\Gamma,K} ; \mathcal{V}_k) \arrow{r} & 0.
        \end{tikzcd}
    \]
    By pulling it back along \eqref{inproof: TonSkH1} we deduce an extension 
    \[
        \begin{tikzcd}[column sep = small]
            0 \arrow{r} & \Sym^k H^1(\mathcal{E}_{z_1,K}) \arrow{r} & M^T_{z_1,w_1} \arrow{r} &  \Sym^k H^1(\mathcal{E}_{w_1,K})(-1)\arrow{r} & 0 
        \end{tikzcd}\ .
    \]

      We claim that, under the assumptions of the theorem, the map \eqref{inproof: TonSkH1} is injective. In fact, the induced  map $\overline{T}: \Sym^k H^1(\mathcal{E}_{w_1,K})(-1) \rightarrow  H^0(\mathcal{P}'_{\Gamma,K};\mathcal{V}_k)(-1)  $
    is \emph{a fortiori} injective.  To see this, observe that  $\overline{T}$ induces a morphism of pure Hodge structures of weight $k+2$ and hence preserves the Hodge decomposition. Since the  $(s+1,r+1)$ component $F^{s+1} \cap \overline{F}^{r+1}$ of $\Sym^k H_{\dR}^1(\mathcal{E}_{w_1,K})(-1)$ is one-dimensional, generated by the residue of $\Psi^{r,s}(z,w)(X-zY)^kdz$ (Corollary \ref{cor:Psi-splits-residue-seq}), it suffices to check that the residue of $T\Psi^{r,s}(z,w)(X-zY)^kdz$ is non-zero for every $r+s=k$. Therefore, injectivity of \eqref{inproof: TonSkH1}  follows from the assumption on $T\mathcal{G}_{\Gamma,k}(z,w_1) $ and Lemma \ref{lem:vanishing-column}.  It follows from this that $M^T_{z_1,w_1}$ is the subobject of $H_{\cusp}^1 (\mathcal{Y}_{\Gamma,K} \setminus \mathcal{P}'_{\Gamma,K} ,  \mathcal{Q}_{\Gamma,K} ; \mathcal{V}_k)$ which maps to the image of $\overline{T}$, which is isomorphic to $\Sym^k H^1(\mathcal{E}_{w_1,K})(-1)$ as established above. By Corollary \ref{cor:Psi-splits-residue-seq}, this shows that $(M^T_{z_1,w_1})_{\dR}\otimes_K \mathbb{C}$ admits a basis as in the statement. Finally, the calculation of the single-valued period matrix proceeds exactly as in Theorem \ref{thm: PeriodsSkExtensionNoCusps}. 
\end{proof}

\begin{rem}
    The hypothesis on $T\mathcal{G}_{\Gamma,k}(z,w_1)$ in the above theorem is always satisfied if $T\neq 0$ and $w_1$ is not CM. Indeed, in this case, by inspection of the formulae \eqref{eq:hecke-vector}, the injectivity of \eqref{inproof: TonSkH1} ultimately follows from the fact that a point of $\mathfrak{H}$ is CM if and only if it is fixed by a matrix in $M_{2\times 2}(\mathbb{Z})$ which is not proportional to the identity. 
\end{rem}

\begin{thm} \label{thm: partlyCMextension}
    Let $P= \Gamma w_1$ and $Q = \Gamma z_1$, with $w_1$ a CM point and $z_1$ not elliptic, and let $T$ be a combination of  Hecke operators as above satisfying \eqref{Tactionzero}. 
     Then we obtain an  extension in  $\mathcal{H}_K$ of the form 
    \[
        \begin{tikzcd}[column sep = small]
            0 \arrow{r} &  \Sym^{2m} H^1(\mathcal{E}_{z_1,K})  \arrow{r} &  N^T_{z_1,w_1} \arrow{r} &  \QQ (-m-1) \arrow{r} &  0\ . 
        \end{tikzcd}
    \]   
If the middle column
    \[
        v (z) \defeq (T \, {}^{\Gamma}\!G^{2m,0}_{m,m}(z,w_1), T \, {}^{\Gamma}\!G^{2m-1,1}_{m,m}(z,w_1), \ldots, T \, {}^{\Gamma}\!G^{0,2m}_{m,m}(z,w_1))^t 
    \]
    of the matrix $T\mathcal{G}_{\Gamma,k}(z,w_1)$ does not vanish identically as a function of $z$, then the extension
 $N^T_{z_1,w_1}$ is a subobject of $H^1_{\cusp}(\mathcal{Y}_{\Gamma,K} \setminus T^{-1}\mathcal{P}_{\Gamma,K}, \mathcal{Q}_{\Gamma,K}; \mathcal{V}_k)$, and  a de Rham basis for $(N^T_{z_1,w_1})_{\dR}\otimes_K \CC$ is given by 
    \[
        [ (0,  \comp_{\B,\dR}^{-1}(X-z_1Y)^i (X-\overline{z}_1 Y)^{2m-i})  ]_{i=0,\ldots, 2m}  \quad , \quad    [  (T\Psi_{\Gamma}^{m,m}(z,w_1) (X-zY)^{2m} dz,0 )] \ . 
    \]
    The single-valued map $\mathsf{s}$  is represented in this  basis by the matrix
    \[
        \begin{pmatrix}
            J  &- v(z_1) \\
            0 & -1 
        \end{pmatrix}.
    \]
\end{thm}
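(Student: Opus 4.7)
The plan is to obtain $N^T_{z_1,w_1}$ as the pullback of the extension $M^T_{z_1,w_1}$ constructed in Theorem \ref{thm: extensionmain} along a canonical Tate inclusion coming from the complex multiplication on $\mathcal{E}_{w_1}$. First I would apply Theorem \ref{thm: extensionmain} verbatim, observing that its proof only used the hypothesis \eqref{Tactionzero} and \emph{non-ellipticity of $z_1$ and $w_1$}, but did not require $w_1$ to avoid being CM. This yields an extension in $\mathcal{H}_K$
\[
\begin{tikzcd}[column sep = small]
0 \arrow{r} & \Sym^{2m} H^1(\mathcal{E}_{z_1,K}) \arrow{r} & M^T_{z_1,w_1} \arrow{r} & \Sym^{2m} H^1(\mathcal{E}_{w_1,K})(-1) \arrow{r} & 0
\end{tikzcd}
\]
together with the de Rham basis and single-valued period matrix computed there (under the assumption  that no column of $T\mathcal{G}_{\Gamma,2m}(z,w_1)$ vanishes, which is weaker than what is being assumed in the present theorem).

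Since $w_1$ is CM, Lemma \ref{lem: invariantscontainsTate} supplies a canonical subobject $\QQ(-m) \subset (\Sym^{2m}H^1(\mathcal{E}_{w_1,K}))^{\Gamma_{w_1}}$. After a Tate twist, this gives an inclusion $\QQ(-m-1)\hookrightarrow \Sym^{2m}H^1(\mathcal{E}_{w_1,K})(-1)$ in $\mathcal{H}_K$, along which I pull back the above extension. The result is the desired extension
\[
\begin{tikzcd}[column sep = small]
0 \arrow{r} & \Sym^{2m} H^1(\mathcal{E}_{z_1,K}) \arrow{r} & N^T_{z_1,w_1} \arrow{r} & \QQ(-m-1) \arrow{r} & 0.
\end{tikzcd}
\]
Under the non-vanishing hypothesis on $v(z)$, the composition $\QQ(-m-1) \hookrightarrow \Sym^{2m}H^1(\mathcal{E}_{w_1,K})(-1) \stackrel{T}{\to} H^0(\mathcal{P}'_{\Gamma,K};\mathcal{V}_{2m})(-1)$ is injective (by Lemma \ref{lem:vanishing-column} applied only to the middle row $r=s=m$, which is all that is needed since we have restricted to the Tate piece); hence $N^T_{z_1,w_1}$ is realised as a subobject of $H^1_{\cusp}(\mathcal{Y}_{\Gamma,K}\setminus T^{-1}\mathcal{P}_{\Gamma,K},\mathcal{Q}_{\Gamma,K};\mathcal{V}_{2m})$, exactly as in Theorem \ref{thm: extensionmain}.

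For the de Rham basis, I would argue that the inclusion $\QQ(-m-1)\hookrightarrow \Sym^{2m}H^1(\mathcal{E}_{w_1,K})(-1)$ corresponds, after the Tate twist, to the Hodge type $(m,m)$ line inside $\Sym^{2m}H^1_{\dR}(\mathcal{E}_{w_1,K}/K)$. By Remark \ref{rem: eigenbasisinBetti} and the residue computation of Corollary \ref{cor:residue-Psi}, this line is spanned, over $\mathbb{C}$, by the residue of $\Psi_{\Gamma}^{m,m}(z,w_1)(X-zY)^{2m}dz$ (equivalently, of $T\Psi_{\Gamma}^{m,m}(z,w_1)(X-zY)^{2m}dz$ up to the non-zero projection onto the orbit of $w_1$ itself). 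Therefore the only column of the matrix $T\mathcal{G}_{\Gamma,2m}(z_1,w_1)$ that survives the pullback is the central one $v(z_1)$, and the claimed de Rham basis and single-valued period matrix are precisely the submatrices obtained by keeping the $(2m+1)$ rows of the $z_1$-block together with the single column corresponding to $[(T\Psi^{m,m}_{\Gamma}(z,w_1)(X-zY)^{2m}dz,0)]$ in the matrix of Theorem \ref{thm: extensionmain}.

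The main obstacle I foresee is the bookkeeping in the previous paragraph: one must verify cleanly that the de Rham realisation of the Tate subobject $\QQ(-m-1)$ of $\Sym^{2m}H^1(\mathcal{E}_{w_1,K})(-1)$ is identified, via the residue splitting of Corollary \ref{cor:Psi-splits-residue-seq}, with the line in $H^1_{\dR,\cusp}(\mathcal{Y}_{\Gamma,K}\setminus \mathcal{P}_{\Gamma,K};\mathcal{V}^{\dR,K}_{2m})\otimes_K\mathbb{C}$ spanned by $[\Psi_{\Gamma}^{m,m}(z,w_1)(X-zY)^{2m}dz]$, and that the non-vanishing of $v(z)$ is both necessary and sufficient for this line to remain non-zero after applying $T$ (via Lemma \ref{lem:vanishing-column}). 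Once this is in place, the single-valued period matrix asserted in the statement is simply read off from the block matrix of Theorem \ref{thm: extensionmain}.
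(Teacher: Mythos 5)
Your proposal follows the same strategy as the paper's proof: run the argument of Theorem~\ref{thm: extensionmain} and then pull back along the Tate summand $\QQ(-m-1)\hookrightarrow \Sym^{2m}H^1(\mathcal{E}_{w_1,K})(-1)$ furnished by Proposition~\ref{prop: CMmotives} and Lemma~\ref{lem: invariantscontainsTate}, using Lemma~\ref{lem:vanishing-column} for the middle row/column only. Two small points. First, you have the implication reversed: ``no column of $T\mathcal{G}_{\Gamma,2m}(z,w_1)$ vanishes'' is a \emph{stronger} hypothesis than the one assumed here (only the $(m,m)$-column is required to be non-vanishing), not weaker; you cannot therefore literally import the full de Rham basis and period matrix of $M^T_{z_1,w_1}$ from Theorem~\ref{thm: extensionmain}, but as you then note in the following paragraph this is harmless since after restricting to the Tate piece only the single class $[T\Psi_{\Gamma}^{m,m}(z,w_1)(X-zY)^{2m}dz]$ and hence only the $(m,m)$-component of Lemma~\ref{lem:vanishing-column} are needed, which matches exactly what the stated hypothesis gives. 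Second, unlike Theorem~\ref{thm: extensionmain}, this statement allows $w_1$ to be an elliptic point, so ``apply Theorem~\ref{thm: extensionmain} verbatim'' is strictly not available when $\Gamma_{w_1}\not\subset\{\pm I\}$; the fix is merely to replace $\Sym^{2m}H^1(\mathcal{E}_{w_1,K})$ by its $\Gamma_{w_1}$-invariants as in \eqref{eqn: YminusPrelQmotive}--\eqref{eqn: YminusPmotive}, with Lemma~\ref{lem: invariantscontainsTate} guaranteeing that $\QQ(-m)$ still sits inside the invariants, after which everything proceeds as you describe. Neither slip affects the correctness of the overall route, which is the paper's.
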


\begin{proof}
   We may  proceed as in the previous theorem by applying Proposition \ref{prop: CMmotives} to  $\Sym^k H^1(\mathcal{E}_{w_1,K})(-1)$,  and pulling back the resulting extension along its summand $\QQ(-m-1)$. It is non-zero by Lemma  \ref{lem: invariantscontainsTate}. This yields an extension:
    \[
        \begin{tikzcd}[column sep = small]
            0 \arrow{r} &  \Sym^{2m}   H^1(\mathcal{E}_{z_1,K}) \arrow{r} & N^T_{z_1,w_1} \arrow{r} &   \QQ(-m-1)  \arrow{r} & 0 \ .
        \end{tikzcd}
    \]
    The rest of the proof proceeds by an application of Lemma \ref{lem:vanishing-column} as before, noting that the copy of $\QQ(-m-1)$ in $\Sym^k H^1(\mathcal{E}_{w_1,K})(-1)$ corresponds to the component $[\Psi_{\Gamma}^{m,m}(z,w_1)(X-zY)^k dz]$.
\end{proof}

\begin{thm} \label{cor: GeneralKummerExtension}
    Assume now that both $z_1$ and $w_1$ are CM points, and possibly elliptic. We obtain an extension in $\mathcal{H}_K$ of the form: 
    \begin{equation}\label{eq:gz-extension}
        \begin{tikzcd}[column sep = small]
            0 \arrow{r} & \QQ(-m) \arrow{r} & \mathcal{GZ}^T_{z_1,w_1} \arrow{r} &   \QQ (-m-1) \arrow{r} & 0 \ .
        \end{tikzcd}
    \end{equation}  
   If the  function $T\, {}^{\Gamma}\!G^{m,m}_{m,m}(z,w_1)$ does not vanish identically as a function of $z$, then   $ \mathcal{GZ}^T_{z_1,w_1} $ is a subobject of $H^1_{\cusp}(\mathcal{Y}_{\Gamma,K} \setminus T^{-1}\mathcal{P}_{\Gamma,K}, \mathcal{Q}_{\Gamma,K}; \mathcal{V}_k)$, and   
    a basis for $(\mathcal{GZ}^T_{z_1,w_1})_{\dR} \otimes_K \CC$ is given by 
    relative cohomology classes
    \[
        [ (0,  \comp_{\B,\dR}^{-1}(X-z_1Y)^{m} (X-\overline{z}_1 Y)^{m} ) ]  \ , \quad    [  (T\Psi_{\Gamma}^{m,m}(z,w_1) (X-zY)^k dz,0 )]
    \]
    with corresponding single-valued period matrix
    \[
        \begin{pmatrix}
            1 &    - T\, {}^{\Gamma}\!G^{m,m}_{m,m}(z_1,w_1) \\
            & -1 
        \end{pmatrix}.
    \]
\end{thm}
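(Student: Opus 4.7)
The plan is to follow the structure of the two preceding theorems, adding one additional pushout step to extract the Tate summand on the left coming from the CM decomposition of $\mathcal{E}_{z_1}$. First, I would set $\mathcal{P}'_{\Gamma,K} = T^{-1}\mathcal{P}_{\Gamma,K}$ and, starting from the biextension \eqref{eqn: YminusPrelQmotive} for $\mathcal{Y}_{\Gamma,K} \setminus \mathcal{P}'_{\Gamma,K}$ relative to $\mathcal{Q}_{\Gamma,K}$, reproduce the construction of Theorem \ref{thm: partlyCMextension} to obtain the intermediate extension
\begin{equation*}
0 \longrightarrow \big(\Sym^{2m} H^1(\mathcal{E}_{z_1,K})\big)^{\Gamma_{z_1}} \longrightarrow N^T_{z_1,w_1} \longrightarrow \QQ(-m-1) \longrightarrow 0
\end{equation*}
in $\mathcal{H}_K$. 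Although Theorem \ref{thm: partlyCMextension} is formally stated only for non-elliptic $z_1$, its proof generalises without change to the elliptic case provided one works systematically with $\Gamma_{z_1}$-invariants on the left, which is anyway how the biextension \eqref{eqn: YminusPrelQmotive} is already written.

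Second, since $z_1$ is now also CM, Proposition \ref{prop: CMmotives} together with Lemma \ref{lem: invariantscontainsTate} provides a canonical direct summand $\QQ(-m) \subset \big(\Sym^{2m} H^1(\mathcal{E}_{z_1,K})\big)^{\Gamma_{z_1}}$ in $\mathcal{H}_K$. Pushing $N^T_{z_1,w_1}$ out along the associated projection yields the desired extension \eqref{eq:gz-extension}.

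Third, I would identify the de Rham basis and read off the single-valued period matrix. On the right, by Corollary \ref{cor:Psi-splits-residue-seq} the class $[T\Psi^{m,m}_\Gamma(z,w_1)(X-zY)^{2m}dz]$ lies in $F^{m+1} \cap \overline{F}^{m+1}$ and thus, under the residue map, represents a generator of the Tate summand $\QQ(-m-1)$ of $(\Sym^{2m} H^1_{\dR}(\mathcal{E}_{w_1}/\CC))^{\Gamma_{w_1}}(-1)$; hence $[(T\Psi^{m,m}_\Gamma(z,w_1)(X-zY)^{2m}dz, 0)]$ is its canonical lift to the subobject $\mathcal{GZ}^T_{z_1,w_1}$. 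On the left, the proof of Proposition \ref{prop: CMmotives} shows that, after extension of scalars to $\CC$, the de Rham realisation of the Tate summand $\QQ(-m)$ inside $\Sym^{2m} H^1(\mathcal{E}_{z_1,K})$ is the pure Hodge type $(m,m)$ line, which by Remark \ref{rem: eigenbasisinBetti} is spanned by $\comp_{\B,\dR}^{-1}\big((X-z_1 Y)^m(X-\overline{z}_1 Y)^m\big)$. The single-valued period matrix is then simply the $2\times 2$ submatrix of the single-valued matrix appearing in Theorem \ref{thm: partlyCMextension}, obtained by selecting the central coordinate $i=m$ of the column vector $-v(z_1)$, which yields the entry $-T\,{}^\Gamma\!G^{m,m}_{m,m}(z_1,w_1)$ as claimed.

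The main obstacle is to ensure that $\mathcal{GZ}^T_{z_1,w_1}$ is genuinely a \emph{subobject} of the cuspidal relative cohomology, rather than merely a subquotient. This amounts to the injectivity of the Tate-piece map $\QQ(-m-1) \to H^1_{\cusp}(\mathcal{Y}_{\Gamma,K} \setminus \mathcal{P}'_{\Gamma,K}; \mathcal{V}_k)$, which by the argument of Theorem \ref{thm: extensionmain} (via the Hodge decomposition and Corollary \ref{cor:Psi-splits-residue-seq}) follows from the non-vanishing of the residue of $T\Psi^{m,m}_\Gamma(z,w_1)(X-zY)^{2m}dz$. The contrapositive of Lemma \ref{lem:vanishing-column}, applied with $r=s=m$, converts the hypothesis that $T\,{}^\Gamma\!G^{m,m}_{m,m}(z,w_1)$ does not vanish identically in $z$ into exactly this non-vanishing of residues, so the stated hypothesis is precisely what is needed.
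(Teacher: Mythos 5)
Your proposal is correct and follows essentially the same path as the paper's proof: form the intermediate extension $N^T_{z_1,w_1}$ as in Theorem \ref{thm: partlyCMextension}, push out along the $\QQ(-m)$-summand of $\big(\Sym^{2m} H^1(\mathcal{E}_{z_1,K})\big)^{\Gamma_{z_1}}$ supplied by Proposition \ref{prop: CMmotives} and Lemma \ref{lem: invariantscontainsTate}, and identify the de~Rham generators of the two Tate lines by Hodge type $(m,m)$ so that the period matrix is the $2\times 2$ submatrix of Theorem \ref{thm: partlyCMextension}'s matrix at the central coordinate. Your extra remarks --- spelling out the generalisation of Theorem \ref{thm: partlyCMextension} to elliptic $z_1$ and invoking Lemma \ref{lem:vanishing-column} with $r=s=m$ for the subobject hypothesis --- are implicit in the paper's ``proceeding as in the previous theorem'' but are entirely consistent with it.
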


\begin{proof}
    By proceeding  as in the previous theorem, we obtain an extension
    \[ 
        \begin{tikzcd}[column sep = small]
            0 \arrow{r} &  \left(\Sym^{2m}   H^1(\mathcal{E}_{z_1,K})\right)^{\Gamma_{z_1}} \arrow{r} & N^T_{z_1,w_1} \arrow{r} &   \QQ(-m-1)  \arrow{r} & 0 \ .
        \end{tikzcd}
    \]
    The extension \eqref{eq:gz-extension}  is obtained by pushing out the above extension along the projection onto the copy of $\QQ(-m)$ inside $\big(\Sym^{2m} H^1(\mathcal{E}_{z_1,K})\big)^{\Gamma_{z_1}}$. Since 
    the copy of $\QQ(-m)$ is of Hodge type $(m,m)$ and since $ [(0,(X-z_1Y)^{m} (X-\overline{z}_1 Y)^{m} )] \in F^m \cap \overline{F}^m $, it is spanned by precisely this class. The statement about the de Rham basis therefore follows from \eqref{eqn: PsiinFp}.  The period computation follows as for the previous theorems.    
\end{proof}

\section{Realisations of motives, and conjectures of Beilinson and Gross-Zagier}

A standard conjecture, in rough terms, states that every extension in the category $\mathcal{H}_K$ of $\QQ(-1)$ by $\QQ(0)$ which lies in the full subcategory generated by the cohomology of algebraic varieties is necessarily isomorphic to a Kummer extension. We first show, using the framework developed above,  that this  implies a generalised version of the Gross-Zagier conjecture for modular curves. 
We then briefly discuss how our matrix Green's functions, and their cohomological interpretation, actually leads to a much more general prediction involving special values of $L$-functions, via Beilinson's conjecture.  

\subsection{Gross-Zagier conjecture (the case of two CM points)}

For the next statement, we write $a\sim_{\mathbb{Q}^{\times}} b$ if there is a non-zero rational number $r$ such that $a = rb$.

\begin{thm}\label{thm: GZ-conditional-proof}
    With notation as in Theorem \ref{cor: GeneralKummerExtension}, assume that there exists a Kummer object $\mathcal{K}_x$ in $\mathcal{H}_K$  (\emph{cf.} \S \ref{sec:kummer}) such that the Gross-Zagier extension $\mathcal{GZ}_{z_1,w_1}^T$ is isomorphic to $\mathcal{K}_x(-m)$. Then, either 
    \begin{equation} \label{eqn: TGzwislogarithm}
        T  \,  {}^{\Gamma}\!G_{m,m}^{m,m}(z_1,w_1) \sim_{\mathbb{Q}^{\times}} \log |x| \, 
    \end{equation}
    or $ T  \,  {}^{\Gamma}\!G_{m,m}^{m,m}(z_1,w_1)=0$, in which case $ T  \,  {}^{\Gamma}\!G_{m,m}^{m,m}(z,w_1)$ vanishes identically as a function of $z$. In particular, if $d_{z_1}$ (resp. $d_{w_1}$) denotes the discriminant of the CM point $z_1$ (resp. $w_1$), then \eqref{eqn: TGzwislogarithm} is equivalent to
    \[
        TG_{\Gamma,m+1}(z_1,w_1) \sim_{\mathbb{Q}^{\times}} (d_{z_1}d_{w_1})^{m/2} \log |x|.
    \]
\end{thm}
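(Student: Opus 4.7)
My approach is to compare the single-valued period matrices of $\mathcal{GZ}^T_{z_1,w_1}$ and $\mathcal{K}_x(-m)$ in canonical $K$-rational bases adapted to their common structure as extensions of $\QQ(-m-1)$ by $\QQ(-m)$, and then exploit the $\QQ^\times$-constraints imposed by an isomorphism in $\mathcal{H}_K$ (which follow from $\mathrm{End}_{\mathcal{H}_K}(\QQ(-n))=\QQ$).

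The first step is to translate the basis of $(\mathcal{GZ}^T_{z_1,w_1})_{\dR}\otimes_K \CC$ given in Theorem \ref{cor: GeneralKummerExtension} into a canonical $K$-rational basis. By Proposition \ref{prop:rationality-psi}, equation \eqref{eq:field-def-psi-cm-middle},
\[
    \Psi^{m,m}_{\Gamma}(z,w_1) = \binom{2m}{m}\frac{(2\pi i)^{m+1}}{(w_1-\overline{w}_1)^m}\, g^{m,m}(z) + h^{m,m}(z),
\]
with $g^{m,m}$ having $K$-rational Fourier coefficients and $h^{m,m}$ a cusp form; since the hypothesis $T \equiv 0$ on cuspidal cohomology kills the cuspidal contribution, the class of $T\Psi^{m,m}_{\Gamma}(z,w_1)(X-zY)^{2m}dz$ equals an explicit transcendental multiple of a $K$-rational class. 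Remark \ref{rem: eigenbasisinBetti} yields a similar decomposition for $\comp_{\B,\dR}^{-1}((X-z_1Y)^m(X-\overline{z}_1Y)^m)$ using that $z_1$ is CM. Rescaling further by the CM normalisation factors $(\lambda-\overline{\lambda})^m$ from the proof of Proposition \ref{prop: CMmotives}, one obtains canonical $K$-rational bases of the sub $\QQ(-m)$ and quotient $\QQ(-m-1)$ whose $\mathsf{s}$-eigenvalues are $(-1)^m$ and $(-1)^{m+1}$ respectively. The crucial computation is that the transcendental $(2\pi i)^m$-factors coming from the two sources cancel, while the residual CM-period ratios $(\lambda-\overline{\lambda})/(z_1-\overline{z}_1)$ are rational integers.

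Transporting the matrix from Theorem \ref{cor: GeneralKummerExtension} through this basis change via the antilinear rule $S\mapsto \overline{B}^{-1}SB$ (reflecting the $\CC$-antilinearity of $\mathsf{s}$ per Lemma \ref{lem: SVandCB}), the off-diagonal entry of the canonical single-valued matrix is a $\QQ^\times$-multiple of $T\,{}^{\Gamma}\!G^{m,m}_{m,m}(z_1,w_1)$. The analogous canonical matrix for $\mathcal{K}_x(-m)$, obtained by tensoring the standard basis $[dz/(1-x)],[dz/z]$ with a standard basis of $\QQ(-m)$, has off-diagonal $(-1)^m\log|x|^2\sim_{\QQ^\times}\log|x|$. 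The hypothesised isomorphism $\phi$ then has matrix $A=\begin{pmatrix}\lambda_0 & \nu \\ 0 & \mu_0\end{pmatrix}$ in the canonical bases, with $\lambda_0,\mu_0\in\QQ^\times$ and $\nu\in K$, and its compatibility $AS_1=S_2\overline{A}$ with the single-valued maps forces, at the $(1,2)$-entry, an equation of the shape $\lambda_0 s_1-\mu_0 s_2=2(-1)^m\mathrm{Re}(\nu)$. Since both $s_1,s_2$ are real this yields $T\,{}^{\Gamma}\!G^{m,m}_{m,m}(z_1,w_1)\sim_{\QQ^\times}\log|x|$. In the degenerate case $T\,{}^{\Gamma}\!G^{m,m}_{m,m}(z_1,w_1)=0$, the off-diagonal vanishes, $\mathcal{GZ}^T_{z_1,w_1}$ splits, and the isomorphism forces $|x|=1$; the additional conclusion that $T\,{}^{\Gamma}\!G^{m,m}_{m,m}(z,w_1)\equiv 0$ as a function of $z$ reflects that otherwise Theorem \ref{cor: GeneralKummerExtension} would present $\mathcal{GZ}^T_{z_1,w_1}$ as a nontrivial subobject of cuspidal cohomology, in contradiction with the splitting imposed by the isomorphism. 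The reformulation in terms of discriminants follows by substituting the relation \eqref{GFCentralequalsGZ} between ${}^{\Gamma}\!G^{m,m}_{m,m}$ and $G_{\Gamma,m+1}$, together with $(z_1-\overline{z}_1)^m(w_1-\overline{w}_1)^m=(-4)^m\Im(z_1)^m\Im(w_1)^m$ and $\Im(z)^m\sim_{\QQ^\times}|d_z|^{m/2}$ for a CM point $z$ (a consequence of the parametrisation by integral binary quadratic forms).

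The main obstacle is the careful bookkeeping in Step~1: the transcendental prefactors arising from $\Psi^{m,m}_{\Gamma}$, from the CM comparison isomorphisms at $z_1$ and $w_1$, and from the Tate twists must all conspire to leave only a rational scalar multiplying $T\,{}^{\Gamma}\!G^{m,m}_{m,m}(z_1,w_1)$; verifying this cancellation rests essentially on the two rationality inputs already in the paper, namely the $K$-rationality of $g^{m,m}$ in Proposition \ref{prop:rationality-psi} and the integrality of the CM-period ratio $(\lambda-\overline{\lambda})/(z_1-\overline{z}_1)$ used implicitly in Proposition \ref{prop: CMmotives}.
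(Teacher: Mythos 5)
Your overall strategy matches the paper's: compare single-valued period matrices of $\mathcal{GZ}^T_{z_1,w_1}$ and $\mathcal{K}_x(-m)$ in $K$-rational bases adapted to the extension structure, use Proposition \ref{prop:rationality-psi} together with the fact that $T$ kills cusp forms to produce a $K$-rational $v_1$, and Lemmas \ref{lem: PeriodsE} and \ref{lem: periodsCMcase} (via Remark \ref{rem: eigenbasisinBetti}) to produce a $K$-rational $v_0$. This is essentially the route the paper takes.

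However, your endgame has a genuine gap. You allow the matrix of the hypothesised isomorphism to be upper triangular, $A=\begin{pmatrix}\lambda_0 & \nu \\ 0 & \mu_0\end{pmatrix}$ with $\nu\in K$ possibly nonzero, and derive from $AS_1 = S_2\overline{A}$ the equation $\lambda_0 s_1 - \mu_0 s_2 = 2(-1)^m\mathrm{Re}(\nu)$. But this is only a linear \emph{relation}, not a \emph{proportionality}, and the observation that $s_1, s_2$ are real does nothing to close it (indeed $\mathrm{Re}(\nu)$ is also real and \emph{a priori} arbitrary). To conclude $s_1\sim_{\QQ^\times}s_2$ you must show that $\nu = 0$. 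The paper does precisely this, but by a different mechanism than the one you invoke: the element $v_1$ lies in $F^{m+1}(\mathcal{GZ}^T_{z_1,w_1})_{\dR}$, which is one-dimensional, and the isomorphism preserves the Hodge filtration, so $\phi(v_1)$ lands in $F^{m+1}\mathcal{K}_x(-m)_{\dR}= Kw_1$, whence $\nu=0$. You should add this step explicitly; without it the argument does not prove the theorem. Two further minor issues: the ratio $(\lambda-\overline{\lambda})/(z_1-\overline{z}_1)$ you invoke lies in $\mathbb{Q}$ (being a quotient of imaginary parts of elements of an imaginary quadratic field) but is not in general a rational \emph{integer}, though rationality is all that is needed. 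And your argument in the degenerate case, that the isomorphism ``forces $|x|=1$'' and contradicts the presentation of $\mathcal{GZ}^T$ inside cuspidal cohomology, is not correct: a subobject of cuspidal cohomology can perfectly well be a split extension. The paper's treatment of this case is simply to assume at the outset that the function $T\,{}^{\Gamma}\!G^{m,m}_{m,m}(z,w_1)$ is not identically zero, so that Theorem \ref{cor: GeneralKummerExtension} furnishes the explicit basis; when the function vanishes identically one trivially has $T\,{}^{\Gamma}\!G^{m,m}_{m,m}(z_1,w_1)=0$, and there is nothing further to prove.
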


\begin{proof} We may assume that $ T  \,  {}^{\Gamma}\!G_{m,m}^{m,m}(z,w_1)$ is not identically zero. 
    We must compare the single-valued period matrix of $\mathcal{K}_x(-m)$ in the basis $w_0 = [\textstyle{\frac{dz}{x-1}}](-m), w_1 = [\textstyle{\frac{dz}{z}}](-m)$ (note the Tate twist), namely
    \[
        \begin{pmatrix}
            (-1)^{m} & (-1)^{m}\log |x|^2\\
            0 & -(-1)^{m}
        \end{pmatrix},
    \]
    with the matrix in Corollary \ref{cor: GeneralKummerExtension}. Consider the $K$-basis of $(\mathcal{GZ}_{z_1,w_1}^T)_{\dR}$:
    \[
        v_0 = [(0,  (2\pi i)^{m} \comp_{\B,\dR}^{-1}(X-z_1Y)^{m} (X-\overline{z}_1 Y)^{m}  ]   \ , \quad v_1=    [  ((2\pi i)^{m}T\Psi_{\Gamma}^{m,m}(z,w_1) (X-zY)^{2m} dz,0 )].
    \]
    The fact that $v_0$ is de Rham $K$-rational follows from Lemmas \ref{lem: PeriodsE} and \ref{lem: periodsCMcase} (\emph{cf.} Remark \ref{rem: eigenbasisinBetti}). For $v_1$, this follows from the fact that $T$ annihilates cusp forms, together with Proposition \ref{prop:rationality-psi}, which implies that
    \[
        (2\pi i)^{m}T\Psi_{\Gamma}^{m,m}(z,w_1) (X-zY)^{2m} dz = \binom{2m}{m}\frac{(2\pi i)^{2m+1}}{(w_1-\overline{w}_1)^{m}}Tg^{m,m}(z) (X-zY)^{2m} dz,
    \]
    where $g^{m,m}$ is a meromorphic modular form with Fourier coefficients defined over $K$ (\emph{cf.} Theorem \ref{thm:de-Rham-open-over-K}).

    By construction, $v_0$ lies in the image of $\mathbb{Q}(-m)_{\dR} \to (\mathcal{GZ}_{z_1,w_1}^T)_{\dR}$ in the extension \eqref{eq:gz-extension}, which coincides with $W_{2m} (\mathcal{GZ}_{z_1,w_1}^T)_{\dR}$. Moreover, since $(X-z_1Y)^{m} (X-\overline{z}_1 Y)^{m}$ is Betti $\mathbb{Q}$-rational in $\Sym^k H^1(\mathcal{E}_{z_1,K})$ (since $X,Y$ are $\mathbb{Q}$-rational by definition, $z_1$ is quadratic imaginary by assumption, and the whole expression is $\operatorname{Gal}(\mathbb{Q}(z_1)/\mathbb{Q})$-invariant), we conclude that
    \[
        \comp_{\B,\dR}(v_0)  \in (2\pi i)^{m} W_{2m} (\mathcal{GZ}_{z_1,w_1}^T)_{\B}.
    \]
    The element $v_1$ lies in $F^{m+1}(\mathcal{GZ}_{z_1,w_1}^T)_{\dR}$, which splits the de Rham realisation of the extension \eqref{eq:gz-extension}. Its image $\overline{v}_1$ in the quotient $(\mathcal{GZ}_{z_1,w_1}^T)_{\dR}/ W_{2m} (\mathcal{GZ}_{z_1,w_1}^T)_{\dR} \cong \mathbb{Q}(-m-1)_{\dR}$ equals  
    \[
        (2\pi i)^{m} \,  \mathrm{Res} \left( T\Psi_{\Gamma}^{m,m}(z,w_1) (X-zY)^{2m} dz\right) (-1).
    \]
    Then, it follows from formula \eqref{eq:residue-hecke-Psi} and the comment immediately afterwards,  that
    \[
        \comp_{\B,\dR}(\overline{v}_1) \in (2\pi i)^{m +1} (\mathcal{GZ}_{z_1,w_1}^T)_{\B}/ W_{2m} (\mathcal{GZ}_{z_1,w_1}^T)_{\B}.
    \]
    All of the above properties imply that, under an isomorphism $\mathcal{GZ}_{z_1,w_1}^T \cong \mathcal{K}_x(-m)$ in $\mathcal{H}_K$, there are non-zero rational numbers $\lambda,\mu$ such that $v_0 = \lambda w_0$ and $v_1=\mu w_1$, where $w_0,w_1$ are the de Rham  basis of $\mathcal{K}_x(-m)$  defined above.  Since the basis $v_0,v_1$ differs from the $\mathbb{C}$-basis in Theorem \ref{cor: GeneralKummerExtension}  only by a factor of $(2\pi i)^{m}$, the single-valued period matrix with respect to $v_0,v_1$ is 
    \[
        \begin{pmatrix}
            (-1)^{m} &    - (-1)^{m}T\, {}^{\Gamma}\!G^{m,m}_{m,m}(z_1,w_1) \\
            & -(-1)^{m} 
        \end{pmatrix},
    \]
    (recall from \S\ref{sect: TateObjects} that the single-valued version of $2\pi i $ is $-1$) and we conclude that
    \[
        T \,  {}^{\Gamma}\! G^{m,m}_{m,m}(z_1,w_1) = -\frac{\mu}{\lambda} \log |x|^2 = -2\frac{\mu}{\lambda} \log |x|.
    \]
    The last assertion comes from the formula  \eqref{GFCentralequalsGZ}
    \[
        {}^{\Gamma}\!G^{m,m}_{m,m}(z,w) = (-1)^{m}\binom{2m}{m}\frac{1}{(z-\overline{z})^{m}(w-\overline{w})^{m}}G_{\Gamma,m+1}(z,w) 
    \]
    and from the fact that $\sqrt{d_{z_1}} \sim_{\mathbb{Q}^{\times}} (z_1-\overline{z}_1)$ and likewise for $w_1$. 
\end{proof}

\begin{rem}
    When the modular curve $\mathcal{Y}_{\Gamma}$ is defined over $\mathbb{Q}$, such as in the case $\Gamma = \Gamma_0(N)$, the number field $K$ in the above statement can be  taken to be $\mathbb{Q}(z_1,w_1,j(z_1),j(w_1))$, which, by the classical theory of complex multiplication, is the compositum of the Hilbert class fields of the imaginary quadratic fields $\mathbb{Q}(\sqrt{d_{z_1}})$ and $\mathbb{Q}(\sqrt{d_{w_1}})$. In particular, the above statement is equivalent to that of  \cite[Theorem 1.1]{Li}.
\end{rem}

\subsection{Beyond the Gross-Zagier conjecture}   \label{sect: BeyondGZ}
The extension \eqref{extensionMTzw}, and its variants with $z,w$ CM points, produces a variety of different types of simple extension in the category $\mathcal{H}_K$, which are of geometric origin. A version of  Beilinson's conjecture predicts that their  periods should be related to special values of $L$-functions. 
This suggests a range of new conjectures about the  values of vector-valued Green's functions which goes far beyond the original conjecture of Gross and Zagier, which concerns the very special case of Kummer extensions.  A more detailed study of such extensions will be postponed to a future paper, but we briefly give here an appetiser for the types of extensions which may occur.

Since $H^1 (\mathcal{E}_{w,K})$ is self-dual  (\emph{i.e.}, $H^{\vee} =  H(1)$)  the sequence \eqref{extensionMTzw} is equivalent  to an extension 
\[
    \begin{tikzcd}[column sep = small]
        0 \arrow{r} & M_{z,w} \arrow{r} & \mathcal{E} \arrow{r} & \mathbb{Q}(0) \arrow{r} & 0 \, ,
    \end{tikzcd}
\]
where we write $z=z_1, w=w_1$ for simplicity and 
\[
    M_{z,w}=  \Big(\Sym^k H^1(\mathcal{E}_{z,K})  \otimes \Sym^k H^1(\mathcal{E}_{w,K})\Big)(k+1)
\]
which has  weight $-2$. The corresponding $L$-function  $L(M_{z,w},s)$, which is a Rankin-Selberg convolution of two symmetric powers of motives of elliptic curves, can have a pole at $s=0$ of order $\dim \mathrm{Hom}_{\mathcal{H}_K}(\QQ(1), M_{z,w})$ which is non-zero if and only if $z,w$ are both CM by Proposition \ref{prop: CMmotives}. 
An interesting example is the case when $k=1$ and $M_{z,w}$ is defined over $\QQ$.  Then we expect a relation between the four entries of the $2\times 2$ matrix-valued Green's function $\mathcal{G}_{\Gamma,1}(z,w)$ defined in Appendix \ref{sect: AppendixC},  the  periods of $H^1(\mathcal{E}_{z,K})$ and $H^1(\mathcal{E}_{w,K})$ and their conjugates, and the special value of the Rankin-Selberg $L$-function  $L(H^1(\mathcal{E}_{z,K}) \otimes H^1(\mathcal{E}_{w,K}),2)$. 

Another case of interest is when $k=2m$ is even, $w$ has CM,  and $z$ does not. Then  the same proposition and the previous theorems enable us to cut out an extension of the form
\[
    \mathrm{Ext}^1_{\mathcal{H}_K} (\QQ(0), M'_{z,w} ) \quad \hbox{ where } \quad M'_{z,w}=  \left( \Sym^{2m} H^1(\mathcal{E}_{z,K}) \right)(m+1) 
\]
whose periods are related to the symmetric power $L$-value $L(\mathrm{Sym}^{2m} H^1(\mathcal{E}_z),m+1)$.  
An interesting case is when $M'_{z,w}$ is defined over a quadratic imaginary extension $K$ of $\QQ$, in which case the corresponding Deligne cohomology group has rank one, and Beilinson's conjecture predicts a relation between the values of our higher Green's functions and 
$L(\mathrm{Sym}^{2m} H^1(\mathcal{E}_{z,K}), m+1)$. For example, when $m=1$, we expect a relation between the three entries (${}^{\Gamma}\! G^{1,1}_{i,j}$ for $(i,j) = (0,2), (1,1), (2,0)$) in the central column of our higher   Green's function matrix  $\mathcal{G}_{\Gamma,2}(z,w)$ (see Appendix \ref{sect: AppendixC}),  the periods of $H^1(\mathcal{E}_{z,K})$, and the $L$-value $L(\mathrm{Sym}^2 H^1( \mathcal{E}_{z,K}), 2).$

More generally, Beilinson's conjecture predicts  that the space of motivic extensions has a precise finite rank, which implies the existence of relations between our higher Green's functions for fixed $z$ and varying $w$ a CM point, and also for varying $\Gamma$. An interesting question, which we plan to investigate in a future paper, is whether we can construct all such predicted extensions via the constructions \eqref{extensionMTzw}.  

\begin{rem}
    The discussion above suggests that the vector-valued Green's functions are universal for expressing certain  specific values of $L$-functions of symmetric powers of motives of elliptic curves. This is very much in the spirit of Zagier's conjecture, which states, for example, that the classical polylogarithm functions are the universal  functions for expressing the values of Dedekind zeta  functions of number fields.
\end{rem}

Note  that in \eqref{extension: RelcohomEis} we also considered extensions whose periods are Eichler integrals of meromorphic modular forms. These  produce interesting extensions of $\QQ$ by motives of arbitrary weights $\leq -2$, which should  also be related to special values of $L$-functions via Beilinson's conjecture, and  merit further study.

\section{Mixed modular motives via moduli stacks of pointed elliptic curves}

We prove that the mixed modular objects $H^1_{\cusp}(\mathcal{Y}_{\Gamma,K}\setminus \mathcal{P}_{\Gamma,K},\mathcal{Q}_{\Gamma,K};\mathcal{V}_k)$ constructed in \S\ref{par:general-modular-objects} are motivic, \emph{i.e.}, that they arise as realisations of objects in Voevodsky's triangulated category of motives over the number field $K$. We then explain how well-known and standard conjectures for the motives of cusp forms imply that the `Gross-Zagier extensions' constructed in Theorem \ref{cor: GeneralKummerExtension} are also motivic, which immediately implies the Gross-Zagier conjecture.
In Proposition \ref{prop: M13isMT}, we prove that the motive of $\overline{\mathcal{M}}_{1,3}$ is mixed Tate, which by our earlier results, gives a complete proof of the Gross-Zagier conjecture in  weight $4$ and level $1$. 

\subsection{Triangulated motives}

Given a base field $K$ of characteristic zero and a field of coefficients $F$, also of characteristic zero, we denote Voevodsky's triangulated category of mixed motives over $K$ with coefficients in $F$ by $\DM(K;F)$ \cite{Voevodsky,AyoubICM}.  When $F=\mathbb{Q}$, we write it simply $\DM(K)$. The category $\DM(K;F)$ is triangulated, $F$-linear, tensor, rigid, and pseudo-abelian. There is a canonical functor $\DM(K) \to \DM(K;F)$ given by extension of coefficients.

\subsubsection{Relative cohomology motives}

We borrow the following notation of \cite[Appendix A]{Dupont-Fresan}: for a pair $(X,Z)$, given by a $K$-variety $X$ and a closed subvariety $Z$ of $X$, we associate an object $M(X,Z)$ of $\DM(K)$ called \emph{relative cohomology motive} of the pair $(X,Z)$. If $Z=\emptyset$, we denote it by $M(X)$, and we call it the \emph{motive} of $X$. The relative cohomology motive sits in a natural distinguished triangle
\begin{equation}\label{eq:relative-motive-triangle}
    \begin{tikzcd}[column sep = small]
        M(Z)[-1] \arrow{r} & M(X,Z) \arrow{r} & M(X) \arrow{r}{+1} & {}.
    \end{tikzcd}
\end{equation}
More generally, if there is a decomposition $Z = Z' \cup Y$, where both $Z'$ and $Y$ are closed subvarieties of $X$, then there is a distinguished triangle given by `partial boundaries' \cite[Proposition A.5]{Dupont-Fresan}:
\begin{equation}\label{eq:partial-boundaries-triangle}
    \begin{tikzcd}[column sep = small]
        M(Y,Y\cap Z')[-1] \arrow{r} & M(X,Z) \arrow{r} & M(X,Z') \arrow{r}{+1} & {}. 
    \end{tikzcd}
\end{equation}

\begin{rem}
    Beware that we work with \emph{cohomological} motives, as in \cite{Dupont-Fresan}, contrary to Voevodsky's homological convention \cite{Voevodsky}. Thus, in this paper, the functor $(X,Z) \mapsto M(X,Z)$ is \emph{contravariant}.
\end{rem} 

\begin{example}[Tate motives]
    The trivial and Lefschetz motives in $\DM(K)$ are defined by
    \[
        \mathbb{Q}(0) \coloneqq M(\operatorname{Spec} K), \qquad \mathbb{Q}(-1) \coloneqq M(\mathbb{G}_{m,K},{1})[1].
    \]
    For $n\ge 0$, we set $\mathbb{Q}(-n) \coloneqq \mathbb{Q}(-1)^{\otimes n}$ and $\mathbb{Q}(n) \coloneqq \mathbb{Q}(-n)^{\vee}$. For any $n \in \mathbb{Z}$, the extension of coefficients of $\mathbb{Q}(n)$ to $\DM(K;F)$ is denoted by $F(n)$. Given any object $M$ of $\DM(K;F)$, we denote $M\otimes F(n) = M(n)$.
\end{example}

\begin{example}[Kummer motives]\label{ex:kummer-motive}
    For every $x \in K^{\times}$, the corresponding \emph{Kummer motive} is the object of $\DM(K)$ given by
    \[
        K_x \coloneqq M(\mathbb{G}_{m,K},\{1,x\})[1].
    \]
    By taking $Z'=\{1\}$ and $Y = \{x\}$ in \eqref{eq:partial-boundaries-triangle}, the Kummer motive sits in a distinguished triangle of the form:
    \[
        \begin{tikzcd}[column sep = small]
            \mathbb{Q}(0) \arrow{r} & K_x \arrow{r} & \mathbb{Q}(-1) \arrow{r}{+1} & {}.
        \end{tikzcd} 
    \]
    This is the \emph{Kummer extension}.
\end{example}

The triangulated subcategory generated by the Tate motives $\QQ(n)$ is denoted $\DMT(K;F)$, or $\DMT(K)$ when $F=\mathbb{Q}$. If $K$ is a number field, then it is known that $\DMT(K)$ is equipped with a canonical $t$-structure whose heart is the abelian category of \emph{mixed Tate motives} over $K$, denoted $\MT(K)$ (see \cite{Levine}). The Kummer motives of the above example are mixed Tate motives. Conversely, the isomorphisms
\[
    \operatorname{Ext}^1_{\MT(K)}(\mathbb{Q}(-1),\mathbb{Q}(0)) \cong \Hom_{\DMT(K)}(\mathbb{Q}(-1),\mathbb{Q}(0)[1]) \cong K^{\times}\otimes_{\mathbb{Z}}\mathbb{Q}
\]
assert that every extension of $\mathbb{Q}(-1)$ by $\mathbb{Q}(0)$ is a subobject of a finite direct sum of Kummer motives.

More generally, we shall consider motives given as `biextensions', as follows.

\begin{example}[Biextensions]\label{ex:motive-biextension}
    If $X$ is smooth and $A,B \subset X$ are smooth components of a normal crossings divisor in $X$, then the relative cohomology motive $M(X\setminus A,B\setminus (A\cap B))$ sits in a \emph{biextension} given by the relative cohomology distinguished triangle with respect to $B\setminus (A\cap B)$ followed by a Gysin distinguished triangle with respect to $A$:
    \[
        \begin{tikzcd}[column sep = small]
            M(B\setminus (A\cap B))[-1] \arrow{r} & M(X\setminus A,B\setminus (A\cap B)) \arrow{r} & M(X\setminus A) \arrow{r}{+1} & {}
        \end{tikzcd}
    \]
    \[
        \begin{tikzcd}[column sep = small]
            M(X) \arrow{r} & M(X\setminus A) \arrow{r}{\Res} & M(A)(-1)[-1] \arrow{r}{+1} & {}
        \end{tikzcd}
    \]
    Alternatively, one might consider first a Gysin triangle for $M(X\setminus A,B\setminus (A\cap B))$ (see \cite[Propositon A.18]{Dupont-Fresan}), followed by a relative cohomology triangle for $M(A,A\cap B)$.
\end{example} 

\subsubsection{Motives of smooth Deligne-Mumford stacks}

By \cite{Choudhury}, to every smooth separated Deligne-Mumford stack over $K$, we may associate a motive $M(\mathcal{X})$ in $\DM(K)$ satisfying the following properties (beware, again, that our convention is cohomological):
\begin{enumerate}
    \item The association $\mathcal{X}\mapsto M(\mathcal{X})$ is a contravariant functor.
    \item  If $\mathcal{X} \rightarrow X$ is the coarse moduli space of $\mathcal{X}$, then $M(X) \cong M(\mathcal{X})$ (\cite[Theorem 3.3]{Choudhury}).
    \item  If $X$ is a smooth quasi-projective scheme with an action of a \emph{finite} group $G$, then the motive of the quotient stack $[X/G]$ is isomorphic to the $G$-invariants of $M(X)$ (\cite[Lemma 3.1]{Choudhury}):
    \[
        M([X/G]) \cong M(X)^G.
    \]
\end{enumerate}

\begin{rem}
    By property (3) above, if the motive $M(X)$ is mixed Tate, then the motive of the quotient $M([X/G])$ is also mixed Tate. The converse is not true; see Remark \ref{rem:mixed-tate-elliptic-quotient}.
\end{rem}

If $\mathcal{X}$ is a smooth separated Deligne-Mumford stack over $K$, and $\mathcal{Z}$ is a smooth closed substack, we may also consider the relative cohomology motive $M(\mathcal{X},\mathcal{Z})$, which fits in a distinguished triangle
\[
    \begin{tikzcd}[column sep = small]
        M(\mathcal{Z})[-1] \arrow{r} & M(\mathcal{X},\mathcal{Z}) \arrow{r} & M(\mathcal{X}) \arrow{r}{+1} & {}.
    \end{tikzcd}
\]
If the coarse moduli spaces $X$ of $\mathcal{X}$ and $Z$ of $\mathcal{Z}$ are schemes, then $M(\mathcal{X},\mathcal{Z}) \cong M(X,Z)$, and the above distinguished triangle is isomorphic to \eqref{eq:relative-motive-triangle}. Similarly, if $\mathcal{A},\mathcal{B}\subset \mathcal{X}$ are smooth components of a normal crossings divisor in $\mathcal{X}$, the motive $M(\mathcal{X}\setminus \mathcal{A},\mathcal{B}\setminus (\mathcal{A}\cap \mathcal{B}))$ sits in a biextension as in Example \ref{ex:motive-biextension}.

\begin{example}
    Let $K[x_0,\ldots, x_n]$ be the graded polynomial ring where $x_i$ has degree $d_i\ge 1$. Equivalently, we equip $\mathbb{A}^{n+1} = \Spec K[x_0,\ldots,x_n]$ with the action of the multiplicative group $\mathbb{G}_{m}$ in which $x_i$ has weight $d_i$. Then the \emph{weighted projective stack}
    \[
        \mathcal{P}(d_0,\ldots, d_n) \coloneqq \left[ \frac{\mathbb{A}^{n+1} \setminus (0,\ldots,0)}{\mathbb{G}_m}  \right]
    \]
    is a separated smooth Deligne-Mumford stack. The open substack where $x_i\neq 0$ is isomorphic to a quotient $[\mathbb{A}^n/\mu_{d_i}]$ (see Lemma \ref{lem:slice}), and its complement is a weighted projective stack of smaller dimension.  The motive $M(\mathcal{P}(d_0,\ldots, d_n))$ is mixed Tate. 
\end{example}

\subsubsection{Realisation functors}

If $K\subset \mathbb{C}$ is a number field, there is an $\mathcal{H}$-realisation functor
\[
    \DM(K;F) \To D(\mathcal{H}_K\otimes F),
\]
which is $F$-linear and compatible with the triangulated and tensor structures. The realisation of an object of $\DM(K)$ of the form $M(X,Z)$ is an object in the derived category of $\mathcal{H}_K$ which computes the cohomology of the pair $(X,Z)$; by abuse, we shall denote it by $H^{\bullet}(X,Z)$.

\subsection{Moduli stacks of pointed elliptic curves and statement of the main result} \label{par:moduli-pointed-EC}

For simplicity, we work at level 1, and we refer to \cite{Petersen} for the necessary modifications in higher levels.

For every integer $n\ge 1$, let $\mathcal{M}_{1,n}$ be the smooth Deligne-Mumford stack over $\mathbb{Q}$ classifying smooth genus 1 curves with $n$ ordered marked points, and  let $\overline{\mathcal{M}}_{1,n}$ be its Deligne-Mumford compactification, which is a smooth and proper Deligne-Mumford stack over $\mathbb{Q}$ classifying stable genus 1 curves with $n$ ordered marked points. We denote by
\begin{equation}\label{eq:forgetful-projection}
    \pi_n: \overline{\mathcal{M}}_{1,n} \to \overline{\mathcal{M}}_{1,1}
\end{equation}
the canonical projection which  forgets the last $n-1$ marked points and contracts the possible destabilizing genus zero components. In the notation of Section \ref{sec:MMR}, we have $\mathcal{M}_{1,1} = \mathcal{Y}_{\SL_2(\mathbb{Z}),\mathbb{Q}}$ and $\overline{\mathcal{M}}_{1,1} = \mathcal{X}_{\SL_2(\mathbb{Z}),\mathbb{Q}}$.

To an elliptic curve $(E,0)$ with field of definition $K = \mathbb{Q}(j(E))$, there corresponds a $K$-point $\Spec K \to \mathcal{M}_{1,1}$, which factors through a reduced closed substack $\mathcal{G}_E\subset \mathcal{M}_{1,1}$, called its \emph{residue gerbe}, whose underlying space is a single point:
\[
    \begin{tikzcd}[column sep = small]
        \Spec K \arrow{r}\arrow[bend right]{rr}[swap]{E} & \mathcal{G}_E \arrow[hook]{r} &\mathcal{M}_{1,1}.
    \end{tikzcd}
\]
If $\Gamma_E$ denotes the group of automorphisms of $(E,0)$, then the residue gerbe $\mathcal{G}_E$ is isomorphic to the classifying stack $B_K \Gamma_E = [(\Spec K)/ \Gamma_E]$. The closed substack given by the `cusp at infinity', corresponding to a nodal cubic curve, is denoted $\infty\subset \overline{\mathcal{M}}_{1,1}$ (see also \eqref{eq:cusp_mu2}). 

Let us denote by $E' = E\setminus \{0\}$ the punctured elliptic curve. The fibre of $\pi_n$ at a  $K$-point corresponding to $(E,0)$, as above, is isomorphic to a compactification $\overline{C}_{n-1}(E')$ of the configuration space
\begin{align*}
    C_{n-1}(E') &= \{(x_2,\ldots,x_{n}) \in (E')^{n-1} : x_i\neq x_j\text{ for all }i\neq j\}.
\end{align*}
Note that $\Gamma_E$ acts on $\overline{C}_{n-1}(E')$ and the fibre of $\pi_n$ at the residue gerbe $\mathcal{G}_E$ is the quotient stack
\[
    \overline{\mathcal{M}}_{1,n}\times_{\overline{\mathcal{M}}_{1,1}}\mathcal{G}_E\cong [\overline{C}_{n-1}(E')/\Gamma_E].
\]

The symmetric group $\mathfrak{S}_n$ acts on $\overline{\mathcal{M}}_{1,n}$ by permuting the marked points. The open substack $\mathcal{M}_{1,n}\subset \overline{\mathcal{M}}_{1,n}$ is preserved by this action. The fibres of $\pi_n$ are also preserved by the action of $\mathfrak{S}_n$. This follows from the fact that, if $(C,p)$ is a stable genus 1 curve with one marked point, and $q$ is any other smooth point of $C$, then there is a natural isomorphism $(C,p) \cong (C,q)$: if $C$ is smooth, then $(C,p)$ is an elliptic curve with identity $p$, and the isomorphism is translation by $q-p$; if $C$ is singular, then it is isomorphic to the nodal curve $\mathbb{P}^1/\{0,\infty\}$ and the statement follows from the fact that $\operatorname{Aut}(\mathbb{P}^1)$ acts $3$-transitively on $\mathbb{P}^1$. Note that the previous argument also works in families.

\begin{rem}
    Explicitly, on the fibre at $(E,0)$, the induced action on $C_{n-1}(E')$ is given as follows: by setting $x_1=0$, we have $\sigma \cdot (x_2,\ldots,x_n) = (x_{\sigma(2)} - x_{\sigma(1)},\ldots,x_{\sigma(n)}- x_{\sigma(1)})$. 
\end{rem}

Let $\varepsilon : \mathfrak{S}_n \to \{\pm 1\}$ be the alternating (sign) character. If $V$ is an object of a pseudo-abelian category with an action of $\mathfrak{S}_n$, we let $V_{\varepsilon}$ denote the `alternating part' of $V$, namely the object obtained as the image of the projector $\frac{1}{n!}\sum_{\sigma \in \mathfrak{S}_n}\varepsilon(\sigma)\sigma$.

The main result of \cite{ConsaniFaber} implies that, for any integer $n\ge 1$, the object $H^1_{\cusp}(\mathcal{M}_{1,1};\mathcal{V}_{n-1})$ of $\mathcal{H}_{\mathbb{Q}}$ can be obtained as the realisation of the motive
\[
   M(\overline{\mathcal{M}}_{1,n})_{\varepsilon}[n]  \in \DM(\mathbb{Q}).
\]
We now explain how to extend this to the mixed modular objects of Section \ref{sec:MMR}. Let $K$ be a number field, and let $\mathcal{P} \defeq \mathcal{P}_{\SL_2(\mathbb{Z}),K}$ and $\mathcal{Q} \defeq \mathcal{Q}_{\SL_2(\mathbb{Z}),K}$ be the closed substacks  of $\mathcal{M}_{1,1}\times_{\mathbb{Q}}K = \mathcal{Y}_{\mathrm{SL}_2(\ZZ),K}$ (given by finite unions of residue gerbes of points)  as in \S \ref{par:general-modular-objects}. Note that we may also allow $\mathcal{P},\mathcal{Q}$ to be the empty stack $\emptyset$.

\begin{thm}\label{thm:mixed-modular-motive}
    The object of $\DM(K)$ defined by
    \[
        M^{n}(\mathcal{P},\mathcal{Q}) \defeq  M ( \overline{\mathcal{M}}_{1,n}  \times_{\QQ}K  \setminus \pi_n^{-1}(\mathcal{P} ),  \pi_n^{-1}(\mathcal{Q}))_{\varepsilon} [n] 
    \]
    has cohomology concentrated in degree zero and satisfies:
    \[
        H^0(M^{n}(\mathcal{P},\mathcal{Q}))\cong H_{\cusp}^1 (\mathcal{M}_{1,1} \setminus \mathcal{P}, \mathcal{Q} ; \mathcal{V}_{n-1})
    \]
    in the category of realisations $\mathcal{H}_K$.
\end{thm}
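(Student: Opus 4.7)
The plan is to compute the realization of $M^n(\mathcal{P},\mathcal{Q})$ via a Leray spectral sequence argument for the forgetful map $\pi_n : \overline{\mathcal{M}}_{1,n} \times_{\mathbb{Q}} K \to \overline{\mathcal{M}}_{1,1} \times_{\mathbb{Q}} K$. Since the claim is about the realization in $\mathcal{H}_K$, it suffices to verify the identification at the level of (say) Betti cohomology, the Hodge and de Rham structures being transported by functoriality. The essential computational input is the theorem of Consani--Faber which, in the terminology above, asserts that
\[
    R\pi_{n,*} \mathbb{Q}_{\overline{\mathcal{M}}_{1,n}}\big|_{\varepsilon} \ \simeq \ (j_* \mathbb{V}_{n-1}^{\B})[-(n-1)]
\]
in the derived category of sheaves on $\overline{\mathcal{M}}_{1,1}\times_{\mathbb{Q}}K$, where $j : \mathcal{M}_{1,1} \hookrightarrow \overline{\mathcal{M}}_{1,1}$ is the open inclusion. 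Fibrewise over an elliptic curve $E$ this encodes the classical identification of $H^{n-1}_{\varepsilon}(\overline{C}_{n-1}(E \setminus \{0\}))$ with $\Sym^{n-1}H^1(E)$; globally, the $j_*$-extension across the cuspidal locus reflects the vanishing at the cusp that will convert ordinary cohomology into \emph{cuspidal} cohomology.

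The second step is to observe that the substacks $\pi_n^{-1}(\mathcal{P})$ and $\pi_n^{-1}(\mathcal{Q})$ are vertical, i.e.\ honest pullbacks along $\pi_n$ of the finite substacks $\mathcal{P},\mathcal{Q}\subset \mathcal{M}_{1,1}\times_{\mathbb{Q}} K$, and are in particular disjoint from the cuspidal locus. Consequently, smooth/proper base change makes the Consani--Faber identification compatible with the relative cohomology and Gysin triangles that define $M^n(\mathcal{P},\mathcal{Q})$, and the Leray spectral sequence for the pair yields
\[
    E_2^{p,q} \ = \ H^p\big(\overline{\mathcal{M}}_{1,1,K}\setminus \mathcal{P},\, \mathcal{Q}\, ; \, R^q\pi_{n,*}\mathbb{Q}|_{\varepsilon}\big) \ \Longrightarrow \ H^{p+q}\big(\overline{\mathcal{M}}_{1,n,K}\setminus \pi_n^{-1}(\mathcal{P}),\, \pi_n^{-1}(\mathcal{Q})\, ;\, \mathbb{Q}\big)_{\varepsilon}.
\]
Consani--Faber makes this degenerate onto the single row $q=n-1$. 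After the shift $[n]$ in the definition of $M^n(\mathcal{P},\mathcal{Q})$, the cohomology object $H^i(M^n(\mathcal{P},\mathcal{Q}))$ is identified with $H^{i+1}(\overline{\mathcal{M}}_{1,1,K}\setminus \mathcal{P}, \mathcal{Q}; j_*\mathbb{V}_{n-1}^{\B})$. The target is a cohomology group on a one-dimensional stack with coefficients in a constructible sheaf, so it vanishes for $i+1 \ge 2$; and vanishes for $i+1=0$ for trivial reasons. For $i=0$ one obtains $H^1(\overline{\mathcal{M}}_{1,1,K}\setminus \mathcal{P},\mathcal{Q}; j_*\mathbb{V}_{n-1}^{\B})$, which is exactly $H^1_{\cusp}(\mathcal{M}_{1,1}\setminus \mathcal{P},\mathcal{Q};\mathcal{V}_{n-1})$ by the Betti description \eqref{eq:betti-cuspidal-cohomology}. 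The compatibility with the de Rham realization and the Hodge and weight filtrations then follows from the fact that the Leray spectral sequence and base change are part of the motivic six-functor formalism, so the same argument works in $\DM(K)$ modulo the realization functors.

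The main obstacle is the Consani--Faber identification itself, which we simply quote from \cite{ConsaniFaber}; the delicate point there is the behaviour of $\pi_n$ across the cuspidal locus, where stable $n$-pointed curves of arithmetic genus $1$ acquire destabilising rational components which must be shown to contribute only to the non-alternating part of $R\pi_{n,*}\mathbb{Q}$. Once this is granted, the only additional content here is the (essentially formal) passage to the relative situation with $\mathcal{P}$ and $\mathcal{Q}$. A secondary subtlety is to choose the right model of Consani--Faber's argument in $\DM(K)$ rather than in a derived category of sheaves; this is routine using either the motivic six-functor formalism of Ayoub, or directly via the Gysin and relative cohomology triangles for the various strata of $\overline{\mathcal{M}}_{1,n}$ together with their $\mathfrak{S}_n$-equivariant structure.
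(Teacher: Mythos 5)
Your overall strategy (push forward along $\pi_n$, use the fibrewise Consani--Faber computation, then read off relative cohomology on the base) is in the same spirit as the paper's proof, but it hinges on one input that is strictly stronger than what you can quote, and this is where the gap lies. You assert an isomorphism $(R\pi_{n,*}\mathbb{Q}_{\overline{\mathcal{M}}_{1,n}})_{\varepsilon}\simeq (j_*\mathbb{V}_{n-1}^{\B})[1-n]$ in the derived category of sheaves on the \emph{compactified} base $\overline{\mathcal{M}}_{1,1}$ and attribute it to \cite{ConsaniFaber}. What Consani--Faber prove (and what the paper extracts as Proposition \ref{prop: alternatingpartofcohom} and Corollary \ref{cor: DerivedPushforwardandSym}) is the identification of the alternating pushforward only over the \emph{open} modular curve $\mathcal{M}_{1,1}$, i.e.\ for fibres above elliptic curves, together with the single \emph{global} statement that $H^{\bullet}(\overline{\mathcal{M}}_{1,n})_{\varepsilon}$ is concentrated in degree $n$ and equals $H^1_{\cusp}(\mathcal{M}_{1,1};\mathbb{V}_{n-1}^{\B})$. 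Your sheaf-level claim over $\overline{\mathcal{M}}_{1,1}$ additionally requires, at the stalk over the cusp (use proper base change), that $H^{q}(\pi_n^{-1}(\infty))_{\varepsilon}$ vanish for $q\neq n-1$ and equal the unipotent-monodromy invariants of $\Sym^{n-1}H^1$ for $q=n-1$. The cusp fibre is stratified by boundary strata whose dual graphs have $h_G=1$ (a cycle); these carry extra automorphisms and are precisely the strata excluded from the fibre computation, so this identification is not covered by the quoted input. It amounts to a local invariant cycle statement (surjectivity \emph{and} injectivity onto the invariants, plus vanishing in the other degrees) in the alternating part, which would itself need a proof. Your remark that the destabilising components ``must be shown to contribute only to the non-alternating part'' is exactly the missing argument, not something you can outsource to \cite{ConsaniFaber}.

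The paper avoids this issue by never computing anything over the cusp fibre: it sandwiches $M^n(\mathcal{P},\mathcal{Q})$ between the variants $M^n_{!}(\mathcal{P},\mathcal{Q})$ (relative to $\pi_n^{-1}(\mathcal{Q}\cup\infty)$, corresponding to $j_!\mathbb{V}_{n-1}^{\B}$) and $M^n_{\circ}(\mathcal{P},\mathcal{Q})$ (with $\pi_n^{-1}(\infty)$ removed, corresponding to $\mathbb{V}_{n-1}^{\B}$ on the open curve), computes both using only the pushforward identification over $\mathcal{M}_{1,1}$, and then reduces the surjectivity/injectivity of the comparison maps $H(M^n_!)\to H(M^n)\to H(M^n_{\circ})$ to the case $\mathcal{P}=\mathcal{Q}=\emptyset$, which is exactly the global theorem of Consani--Faber; the cuspidal cohomology then appears as the image of the $j_!$-cohomology in the open cohomology, as in \eqref{eq:betti-cuspidal-cohomology}. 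To repair your argument you would either have to prove the alternating-part statement for the cusp fibre directly (analysing the $h_G=1$ strata), or restructure along these lines. The rest of your proposal (verticality of $\pi_n^{-1}(\mathcal{P})$, $\pi_n^{-1}(\mathcal{Q})$, degeneration of the spectral sequence onto one row, vanishing for degree reasons on a one-dimensional base, and compatibility of realisations) is fine once that input is secured.
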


The proof will rely on a computation of the cohomology of the fibres of $\pi_n$, which is set out in the next paragraph.There is a variant of the above statement for higher levels.

\subsection{Alternating cohomology of the fibres of $\pi_n$}

We recall some standard preliminaries concerning the stratification of the  boundary of the Deligne-Mumford compactification $\overline{\mathcal{M}}_{1,n}$. 

The boundary strata of the Deligne-Mumford compactification $\overline{\mathcal{M}}_{1,n}$ correspond to connected, stable, weighted graphs $G$ of genus 1 with $n$ external half-edges, which are labeled $1,\ldots,n$. Each vertex $v \in V_G$ is assigned an integer weight $w(v)\ge 0$ and the total genus is $g(G) = h_G + \sum_{v \in V_G}w(v)$, where $h_G$ denotes the Betti number of $G$. Since $g(G)= 1$, there is at most one vertex with a non-zero weight. The codimension of the corresponding boundary stratum is the number of edges of $G$.  Inclusions of (closed) boundary strata  are dual to contractions of internal edges: the contraction of a self-edge produces a vertex of weight one, and the contraction of an edge whose endpoints have weights $w_1,w_2$ is a vertex of weight $w_1+w_2$.

A boundary stratum of $\overline{\mathcal{M}}_{1,n}$ lies over the cusp $\infty \subset \overline{\mathcal{M}}_{1,1}$ if and only if the corresponding graph has a cycle, i.e., $h_G=1$. Thus, the remaining strata in $\overline{\mathcal{M}}_{1,n} \setminus \pi_n^{-1}(\infty)$ correspond to stable weighted trees $T$ of genus $g(T)=1$, that is, with a single vertex of weight $1$. Such a graph has no automorphisms. In codimension $k$, the corresponding (closed) strata of $\overline{\mathcal{M}}_{1,n}\setminus \pi^{-1}(\infty)$ are therefore isomorphic to
\[
   (\overline{\mathcal{M}}_{1,m} \setminus \pi_m^{-1}(\infty) )\times \prod_{j=1}^k \overline{\mathcal{M}}_{0,m_j},
\]
where $m + m_1 + \cdots + m_k = n + 2k$.

\begin{prop} \label{prop: alternatingpartofcohom}
    Let $\pi_n^{-1}(\mathcal{G}_E)\subset \overline{\mathcal{M}}_{1,n}$ be the fibre of $\pi_n:\overline{\mathcal{M}}_{1,n} \to \overline{\mathcal{M}}_{1,1}$ above the residue gerbe $\mathcal{G}_E\subset \overline{\mathcal{M}}_{1,1}$ of an elliptic curve $(E,0)$ defined over a number field. Then, the alternating part of the cohomology satisfies:
    \[
        H^{i}_{\B}\left( \pi_n^{-1}(\mathcal{G}_E) \right)_{\varepsilon}   \cong  \begin{cases}
            \left( \Sym^{n-1}\,  H^1_{\B}(E)\right)^{\Gamma_E} & \hbox{ if } i = n-1 \\ 0 & \hbox{ otherwise } \ .  \end{cases}  \ .
    \] 
\end{prop}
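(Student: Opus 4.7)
The plan is to reduce the statement to the computation of the $\mathfrak{S}_n$-alternating part of the cohomology of the compactified configuration space $\overline{C}_{n-1}(E')$, eliminate all boundary contributions using known vanishing results on $\overline{\mathcal{M}}_{0,m}$, and compute the open configuration space's contribution via an inductive fibration argument.

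First, since $\pi_n^{-1}(\mathcal{G}_E) \cong [\overline{C}_{n-1}(E')/\Gamma_E]$ and $\Gamma_E$ is a finite group whose action commutes with the $\mathfrak{S}_n$-action permuting marked points, it suffices to prove that $H^i_{\B}(\overline{C}_{n-1}(E'))_{\varepsilon}$ vanishes for $i\neq n-1$ and equals $\Sym^{n-1}H^1_{\B}(E)$ for $i = n-1$, and then take $\Gamma_E$-invariants. Next I use the boundary stratification described just before the proposition: every codimension-$k$ boundary stratum of $\overline{C}_{n-1}(E')$ has the form $\overline{C}_{m-1}(E')\times \prod_{j=1}^{k}\overline{\mathcal{M}}_{0,m_j}$ with $k\geq 1$ and every $m_j \geq 3$, and its $\mathfrak{S}_n$-stabilizer acts on each $\overline{\mathcal{M}}_{0,m_j}$-factor through the subgroup $\mathfrak{S}_{m_j-1}$ permuting the external marked points on the bubble (fixing the attaching node). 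Using the Keel-Getzler description of $H^\bullet(\overline{\mathcal{M}}_{0,k})$ as an $\mathfrak{S}_k$-representation, one verifies that the relevant isotypic components vanish, so that the $\varepsilon$-part of the cohomology of every boundary stratum is zero. Combined with Deligne's weight spectral sequence for $C_{n-1}(E')\subset \overline{C}_{n-1}(E')$ (or iterated Gysin and relative cohomology sequences), this implies that the natural map
\[
H^i_{\B}(\overline{C}_{n-1}(E'))_{\varepsilon} \longrightarrow W_i H^i_{\B}(C_{n-1}(E'))_{\varepsilon}
\]
is an isomorphism.

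It remains to compute the top-weight alternating cohomology of the open configuration space $C_{n-1}(E')$. I proceed inductively on $n$ via the Fadell-Neuwirth fibration $C_{n-1}(E')\to C_{n-2}(E')$ (forgetting the last marked point) with fibre $E$ minus $n-1$ points, tracking the $\mathfrak{S}_n$-action via the $\mathfrak{S}_n$-equivariant description $C_{n-1}(E')\cong \tilde{C}_n(E)/E$ as the quotient of the ordered configuration space $\tilde{C}_n(E)\subset E^n$ by diagonal translation. The main obstacle is this last step: the $\mathfrak{S}_n$-action interacts nontrivially with both the fibre cohomology (which involves the classes of the removed points) and the translation $E$-bundle (inducing mixed terms upon K\"unneth decomposition). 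Carefully disentangling the Leray (or Totaro) spectral sequence at the level of $\mathfrak{S}_n$-representations, and verifying that the top-weight alternating part is precisely the symmetric power $\Sym^{n-1}H^1_{\B}(E)$ concentrated in degree $n-1$, is the computational heart of the proof.
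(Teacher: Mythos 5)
Your structural skeleton matches the paper's: reduce to the $\mathfrak{S}_n$-alternating part of $H^\bullet(\overline{C}_{n-1}(E'))$ and pass to $\Gamma_E$-invariants at the end; eliminate the boundary strata by showing their $\varepsilon$-isotypic parts vanish; conclude by computing the open configuration space contribution. The boundary vanishing step is handled with a mild variant of the paper's argument — you cite the Keel–Getzler description of $H^\bullet(\overline{\mathcal{M}}_{0,m})$ as an $\mathfrak{S}_m$-representation, whereas the paper cites \cite[Lemma 5]{ConsaniFaber} (which gives the vanishing of the alternating representation of the relevant $\mathfrak{S}_{m_j-1}$ in $H^\bullet(\mathcal{M}_{0,m_j})$); either route is adequate given the Künneth factorisation, though you should make sure the vanishing you invoke is applied to the locally closed (open) strata used in the excision/weight spectral sequence, not to the closed boundary substacks, since it is the former that appear as $E_1$-terms.

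The gap is in the final step. You reformulate the target as $W_{n-1}H^{n-1}_{\B}(C_{n-1}(E'))_{\varepsilon}$ (the paper works with $H_c^i(C_{n-1}(E'))_{\varepsilon}$ instead, which is equivalent by purity and the vanishing on boundary strata), but you do not actually compute it: you propose a Fadell–Neuwirth induction on $n$ tracking the $\mathfrak{S}_n$-action via $C_{n-1}(E') \cong \tilde{C}_n(E)/E$, and then explicitly concede that ``carefully disentangling the Leray (or Totaro) spectral sequence at the level of $\mathfrak{S}_n$-representations \ldots\ is the computational heart of the proof.'' That computational heart is precisely what the argument must establish, and it is not carried out. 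The difficulty is real: the Fadell–Neuwirth fibres $E \setminus \{n-1 \text{ points}\}$ are not preserved by the full $\mathfrak{S}_n$-action, so the equivariance only descends to smaller symmetric subgroups, and assembling the $\varepsilon$-isotypic part across the tower of fibrations and K\"unneth terms is delicate. The paper sidesteps this entirely by invoking \cite[Lemma 2]{ConsaniFaber}, which asserts exactly that $H_c^i(C_{n-1}(E'))_{\varepsilon} \cong \Sym^{n-1}H^1_{\B}(E)$ for $i=n-1$ and vanishes otherwise; that result (or an equivalent computation) is the key input you are missing. As written, your proof does not close.
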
 

 \begin{proof}
     The following argument paraphrases results in \cite{ConsaniFaber}. It follows from the preliminaries preceding the statement of this proposition that the fibre of $\pi_n$ at $(E,0)$, which is isomorphic to $\overline{C}_{n-1}(E')$, admits an open stratification by:
     \[
        C_m(E') \times \prod_{j=1}^k \mathcal{M}_{0,m_j}.
     \]
     The symmetric group $\mathfrak{S}_n$ acts by permuting the marked points, and in particular permutes the boundary strata. As in \cite{ConsaniFaber}, consider an extremal vertex of weight 0 in the dual graph corresponding to any open boundary stratum. It corresponds to a component $\mathcal{M}_{0,m_j}$, where one of the $m_j$ points corresponds to an internal edge, and the remaining $m_j-1$ points correspond to external half-edges. The subgroup $\mathfrak{S}_{m_j-1}\subset \mathfrak{S}_n$ permutes the latter. As shown in \cite[Lemma 5]{ConsaniFaber},  the alternating representation of $\mathfrak{S}_{m_j-1}$ does not occur in $H^i(\mathcal{M}_{0,m_j})$. It follows via the Künneth formula that $H^i(C_m(E')\times \prod_{j=1}^k\mathcal{M}_{0,m_j})_{\varepsilon} = 0$ whenever $k>0$. By a relative cohomology spectral sequence relative to the strict normal crossing divisor $\overline{C}_m(E') \setminus C_m(E')$, or by using excision exact sequences for cohomology with compact supports, we deduce that
     \[
        H^i(\overline{C}_{n-1}(E'))_{\varepsilon} = H_c^i(C_{n-1}(E'))_{\varepsilon}.
     \]
     Now, \cite[Lemma 2]{ConsaniFaber} states that $H_c^i(C_{n-1}(E'))_{\varepsilon} \cong \Sym^{n-1}H^1(E)$ for $i=n-1$, and vanishes otherwise. We conclude that
     \[
        H^i(\pi_n^{-1}(\mathcal{G}_E))_{\varepsilon} \cong \left(H_c^i(\overline{C}_{n-1}(E'))^{\Gamma_E}\right)_{\varepsilon},
     \]
     which implies the statement of the proposition, since the action of $\Gamma_E$ and $\mathfrak{S}_n$ commute. 
 \end{proof}

From now on, a sheaf on a stack will  mean a sheaf  on the underlying complex orbifold.

\begin{cor} \label{cor: DerivedPushforwardandSym}
    Let $\QQ$ denote the constant sheaf on $\overline{\mathcal{M}}_{1,n} \setminus \pi_n^{-1} (\infty)$. Then in the derived category of sheaves on   $\mathcal{M}_{1,1}$ we have an isomorphism
    \[
        \mathbb{V}^{\B}_{n-1}[1-n] \cong  (R \pi_{n*} \QQ)_{\varepsilon},
    \]
    where $\mathbb{V}_{n-1}^{\B}$ is the Betti local system on  $\mathcal{M}^{\an}_{1,1} = \SL_2(\mathbb{Z})\backslash\!\!\backslash \mathfrak{H}$ defined in \S \ref{par:analytic-dR-open}.
\end{cor}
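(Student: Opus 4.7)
The plan is to apply proper base change to the morphism $\pi_n : \overline{\mathcal{M}}_{1,n} \setminus \pi_n^{-1}(\infty) \to \mathcal{M}_{1,1}$, which is proper (being the restriction of the proper map $\overline{\mathcal{M}}_{1,n} \to \overline{\mathcal{M}}_{1,1}$ over the open $\mathcal{M}_{1,1}$, where no fibres over the cusp are involved). For each elliptic curve $E$ corresponding to a point of $\mathcal{M}_{1,1}$, proper base change gives an identification of stalks (interpreted orbifold-theoretically)
\[
    (R^i\pi_{n*}\mathbb{Q})_E \  \cong \  H^i(\pi_n^{-1}(\mathcal{G}_E);\mathbb{Q}) \ .
\]
Since the $\mathfrak{S}_n$-action on $\overline{\mathcal{M}}_{1,n}$ commutes with $\pi_n$ and preserves each fibre $\pi_n^{-1}(\mathcal{G}_E)$, taking the alternating part commutes with taking stalks, so
\[
    \mathcal{H}^i\!\left((R\pi_{n*}\mathbb{Q})_\varepsilon\right)_E  \  \cong \   H^i(\pi_n^{-1}(\mathcal{G}_E);\mathbb{Q})_\varepsilon \ .
\]
By Proposition \ref{prop: alternatingpartofcohom} the right-hand side vanishes for $i \neq n-1$, and equals $(\Sym^{n-1}H^1_{\B}(E))^{\Gamma_E}$ for $i = n-1$, which is precisely the stalk of $\mathbb{V}^{\B}_{n-1}$ at $E$.

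Because the cohomology sheaves of $(R\pi_{n*}\mathbb{Q})_\varepsilon$ vanish in all degrees except $n-1$, the truncation quasi-isomorphisms produce a canonical isomorphism in the derived category
\[
    (R\pi_{n*}\mathbb{Q})_{\varepsilon} \ \cong \ \mathcal{H}^{n-1}\!\left((R\pi_{n*}\mathbb{Q})_\varepsilon\right)[1-n] \ ,
\]
so it remains only to identify the single non-vanishing cohomology sheaf with $\mathbb{V}^{\B}_{n-1}$ as local systems on $\mathcal{M}_{1,1}$. To promote the stalkwise identification to a morphism of sheaves I would use the universal elliptic curve $p: \mathcal{E} \to \mathcal{M}_{1,1}$: the open substack of the relative $(n-1)$-fold fibre product $\mathcal{E}^{n-1}_{/\mathcal{M}_{1,1}}$ given by the complement of all relative diagonals and the $(n-1)$ zero-sections maps to $\mathcal{M}_{1,n}$ via $(E;p_2,\ldots,p_n)\mapsto (E;0,p_2,\ldots,p_n)$, and pullback together with the relative K\"unneth formula $R^\bullet p^{n-1}_* \mathbb{Q} \cong (R^\bullet p_*\mathbb{Q})^{\otimes(n-1)}$ yields, after taking $\mathfrak{S}_{n-1}$-alternating parts in middle degree, a natural morphism of local systems $\mathbb{V}^{\B}_{n-1} \cong \Sym^{n-1}R^1 p_*\mathbb{Q} \to \mathcal{H}^{n-1}((R\pi_{n*}\mathbb{Q})_\varepsilon)$.

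The main technical obstacle I expect will be to verify that this natural morphism of sheaves realises, on each stalk, precisely the isomorphism produced in the proof of Proposition \ref{prop: alternatingpartofcohom}. This amounts to carrying out the Consani-Faber spectral sequence argument in families and tracking naturality in $E$, in particular ensuring compatibility with the monodromy action of $\pi_1(\mathcal{M}^{\an}_{1,1})$. Once this compatibility is established, the corollary follows at once, since a morphism of local systems on a connected base is an isomorphism if and only if it is an isomorphism on a single stalk. As an alternative route one could avoid constructing the morphism by hand and invoke the equivalence between local systems on $\mathcal{M}^{\an}_{1,1}$ and representations of its orbifold fundamental group: the stalkwise identifications of Proposition \ref{prop: alternatingpartofcohom} are manifestly functorial under isogenies and automorphisms of $(E,0)$, and hence canonically determine an isomorphism of local systems.
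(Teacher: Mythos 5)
Your proposal is correct and follows essentially the same route as the paper: compute the stalks of $(R\pi_{n*}\QQ)_{\varepsilon}$ via properness of $\pi_n$ over $\mathcal{M}_{1,1}$ and Proposition \ref{prop: alternatingpartofcohom}, observe the cohomology sheaves are concentrated in degree $n-1$, and conclude by the standard fact that a complex with cohomology in a single degree is isomorphic to that cohomology shifted. The ``technical obstacle'' you flag is lighter than you fear: the paper simply takes the natural cup-product map $\Sym^{n-1}R^1\pi_{n*}\QQ \to (R^{n-1}\pi_{n*}\QQ)_{\varepsilon}$ (essentially the same comparison morphism you construct via the universal curve and K\"unneth) and notes that it is an isomorphism on stalks by the very same Proposition, since the Consani--Faber identification of the alternating part of $H^{n-1}_c(C_{n-1}(E'))$ with $\Sym^{n-1}H^1(E)$ is itself of K\"unneth/cup-product type, so no separate ``in families'' spectral-sequence argument is needed.
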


\begin{proof}
    By Proposition \ref{prop: alternatingpartofcohom},  the $k$th cohomology sheaf of the object $(R {\pi_n}_* \QQ)_{\varepsilon}$ in the derived category of sheaves on the analytification of $\mathcal{M}_{1,1}$ vanishes if $k\neq n-1$ and  satisfies $\Sym^{n-1} R^1 {\pi_n}_* \QQ \cong (R^{n-1} {\pi_n}_* \QQ)_{\varepsilon}$, because the natural map given by the cup product is an isomorphism  on stalks, again by  Proposition \ref{prop: alternatingpartofcohom}. Conclude using the standard fact that an object in a derived category is  isomorphic  to its cohomology  when the latter is concentrated in a single degree. 
  \end{proof}

\subsection{Proof of Theorem \ref{thm:mixed-modular-motive}}

First, some notation. In this proof we shall assume for simplicity of notation that $\mathcal{P}$ and $\mathcal{Q}$ are defined over $\QQ$. In the general case, simply replace every occurrence of $\mathcal{M}_{1,n}$,  $\overline{\mathcal{M}}_{1,n}$ and so on, with their extensions of scalars  $\mathcal{M}_{1,n} \times_{\QQ} K$, \emph{etc}. Moreover, to fix ideas and avoid introducing further notation, we shall only work with Betti realisations.

For any closed substack $\mathcal{S}\subset \mathcal{M}_{1,1} $ given by a finite union of residue gerbes of points in $\mathcal{M}_{1,1}$ (typically $\mathcal{S}\subseteq \mathcal{P} \cup \mathcal{Q}$), consider the following commutative diagram
  \[
        \begin{tikzcd}[column sep = small]
           \overline{\mathcal{M}}_{1,n}  \setminus \pi_n^{-1} (\mathcal{S} \cup \infty)  \arrow{d}    \arrow{r}{j}  &   \overline{\mathcal{M}}_{1,n} \setminus \pi_n^{-1}(\mathcal{S}) \arrow{d} &   \\
            \mathcal{M}_{1,1}     \setminus \mathcal{S}  \arrow{r}{j} &   \overline{\mathcal{M}}_{1,1} \setminus \mathcal{S}   & 
        \end{tikzcd}
    \]
where the vertical maps are given by $\pi_n$. The map $j$ therefore denotes, without ambiguity, the inclusion of the complement of the cusp (resp. the fiber of $\pi_n$  over the cusp).  

Now consider the commutative diagram:
  \begin{equation} \label{M1nrelativespacesdiagram}
        \begin{tikzcd}[column sep = small]
           \overline{\mathcal{M}}_{1,n}  \setminus \pi_n^{-1} (\mathcal{P} \cup \mathcal{Q})  \arrow{d}   \arrow{r}{k}  &   \overline{\mathcal{M}}_{1,n} \setminus \pi_n^{-1}(\mathcal{P}) \arrow{d}  &   \arrow{l}[swap]{i} \pi_n^{-1}(\mathcal{Q})\arrow{d} \\
            \overline{\mathcal{M}}_{1,1}     \setminus (\mathcal{P} \cup \mathcal{Q})   \arrow{r}{k} &   \overline{\mathcal{M}}_{1,1} \setminus \mathcal{P}    & \arrow{l}[swap]{i}   \mathcal{Q}
        \end{tikzcd}
    \end{equation}
where $i$ and $k$ also denote inclusions. Restricting to the complement of the cusp $\infty$ in the bottom row (resp. the complement of $\pi_n^{-1}(\infty)$ in the top row), yields an analagous commutative diagram, whose maps will be denoted without ambiguity using  the same symbols.   This new commutative diagram forms, together with \eqref{M1nrelativespacesdiagram}, a commutative diagram which consists of a pair of commutative cubes glued together along a face.

We have at our disposal the following extensions of the constant sheaf $\mathbb{Q}$ over  $\overline{\mathcal{M}}_{1,n} \setminus \pi_n^{-1}(\mathcal{P}\cup \infty)$ to a sheaf, or an object of the derived category of sheaves, on $\overline{\mathcal{M}}_{1,n} \setminus \pi_n^{-1}(\mathcal{P})$:
\[
    \begin{tikzcd}[column sep = small]
        j_!\mathbb{Q} \arrow{r} & j_*\mathbb{Q} \arrow{r}& Rj_*\mathbb{Q} \ .
    \end{tikzcd}
\]
For now we shall focus on the extension by zero, $j_! \QQ$. With reference to the top line of the diagram \eqref{M1nrelativespacesdiagram}, we deduce the following exact sequence of sheaves on $\overline{\mathcal{M}}_{1,n} \setminus \pi_n^{-1}(\mathcal{P})$:
\[
    \begin{tikzcd}[column sep = small]
        0 \arrow{r}& k_! k^* j_! \QQ \arrow{r}& j_! \QQ \arrow{r}& i_* i^* j_! \QQ \arrow{r}& 0 \ .
    \end{tikzcd}
\]
Consider the motive $M^{n}_{!} (\mathcal{P},\mathcal{Q}) \coloneqq  M ( \overline{\mathcal{M}}_{1,n} \setminus \pi_n^{-1}(\mathcal{P}),  \pi_n^{-1}(\mathcal{Q} \cup \infty))_{\varepsilon} [n]$. By Corollary \ref{cor: DerivedPushforwardandSym}, $(R{\pi_n}_*j_! \QQ)_{\varepsilon}$ is isomorphic, in the derived category of sheaves, to $j_!\mathbb{V}^{\B}_{n-1} [n-1]$. This follows from the properness of $\pi_n$, \emph{i.e.}, ${\pi_n}_* = {\pi_n}_!$, and the exactness of $j_! = Rj_!$, which together imply that  $ R {\pi_n}_! j_! =  R( \pi_n \circ j)_! = R (j\circ \pi_n)_! =  j_! R{\pi_n}_!$.
The action of the symmetric group preserves  the fibers of $\pi_n$.  

By definition of $j$ and slight abuse of notation, $k_! k^* j_! \QQ =  k_! j_! j^* k^* \QQ $ and hence
\begin{align*}
    H^{m-n}_{\B}(M^{n}_!(\mathcal{P},\mathcal{Q})) 
    & \cong H^m ( \overline{\mathcal{M}}_{1,n}  \setminus \pi_n^{-1}(\mathcal{P});   k_! k^* j_! \QQ )_{\varepsilon}  \\ 
    & \cong  H^m ( \overline{\mathcal{M}}_{1,1}  \setminus  \mathcal{P} ;    (R{\pi_n}_* ( k_! k^* j_! \QQ))_{\varepsilon}) \\
    & \cong  H^m ( \overline{\mathcal{M}}_{1,1}  \setminus  \mathcal{P} ;   k_! k^*   ( R{\pi_n}_*   j_! \QQ)_{\varepsilon})\\
    & \cong \begin{cases} 
        H^1(\overline{\mathcal{M}}_{1,1} \setminus \mathcal{P},  \mathcal{Q};  j_!  \mathbb{V}^{\B}_{n-1}  ),  & \hbox{if } m=n \\
        0,  & \hbox{otherwise,} 
    \end{cases} 
\end{align*}
where the commutation $(R{\pi_n}_* ( k_! k^* j_! \QQ))_{\varepsilon} \cong k_! k^*   ( R{\pi_n}_*   j_! \QQ)_{\varepsilon}$  follows from a similar argument to the one given above for $j_!$.  
We now consider the motive $M^{n}_{\circ} (\mathcal{P},\mathcal{Q}) \coloneqq   M ( \overline{\mathcal{M}}_{1,n}  \setminus \pi_n^{-1}(\mathcal{P} \cup \infty),  \pi_n^{-1}(\mathcal{Q}))_{\varepsilon} [n]$ and compute its cohomology along similar lines applying Corollary \ref{cor: DerivedPushforwardandSym} (here, again by abuse of notation, $k$ is restricted along $j$):
\begin{align*}
    H_{\B}^{m-n} (M^{n}_{\circ}(\mathcal{P},\mathcal{Q})) &\cong  H^m (\overline{\mathcal{M}}_{1,n}  \setminus \pi_n^{-1}(\mathcal{P} \cup \infty);   k_! k^* \QQ )_{\varepsilon}\\
    & \cong H^m(\mathcal{M}_{1,1}\setminus \mathcal{P} ; (R{\pi_n}_*(k_! k^* \QQ))_{\varepsilon} )\\
    &\cong H^m ( \mathcal{M}_{1,1}  \setminus  \mathcal{P} ;   k_! k^*   ( R{\pi_{n}}_*   \QQ)_{\varepsilon})\\
    & \cong \begin{cases}
            H^1(\mathcal{M}_{1,1}\setminus \mathcal{P},\mathcal{Q};\mathbb{V}_{n-1}^{\B}), & \text{if } m=n \\
            0, &\text{otherwise}
    \end{cases}
\end{align*} 
  
Now consider the natural maps 
\[
    \begin{tikzcd}[column sep = small]
        H_{\B}^{m-n} (M_!^n(\mathcal{P},\mathcal{Q})) \arrow{r}& H_{\B}^{m-n} (M^n(\mathcal{P},\mathcal{Q})) \arrow{r}&  H_{\B}^{m-n} (M_{\circ}^n(\mathcal{P},\mathcal{Q})).
    \end{tikzcd}
\]
To conclude, it suffices to show that the first is surjective, and the second is injective, for all $m$ (see \eqref{eq:betti-cuspidal-cohomology}). To establish this, we note that the cohomology of each of the three objects $M^n(\mathcal{P},\mathcal{Q}), M^n_!(\mathcal{P},\mathcal{Q}), M^n_{\circ}(\mathcal{P},\mathcal{Q})$ lies in two  exact sequences (relative cohomology and Gysin):
\[
    \begin{tikzcd}[column sep = small]
        0 \arrow{r}& H_{\B}^{m-1}(\pi_n^{-1}(\mathcal{Q}))_{\varepsilon} \arrow{r}&  H_{\B}^{m-n}(M^n_{\bullet}(\mathcal{P},\mathcal{Q}))\arrow{r}&  H_{\B}^{m-n}(M^n_{\bullet}(\mathcal{P},\emptyset))  \arrow{r}& 0 
    \end{tikzcd}
\]
\[
    \begin{tikzcd}[column sep = small]
        0 \arrow{r}& H_{\B}^{m-n}(M^n_{\bullet}(\emptyset,\emptyset)) \arrow{r}&  H_{\B}^{m-n}(M^n_{\bullet}(\mathcal{P},\emptyset)) \arrow{r}&  H_{\B}^{m-1}(\pi_n^{-1}(\mathcal{P}))_{\varepsilon}(-1)  \arrow{r}& 0
    \end{tikzcd} 
\]
where $\bullet\in \{\emptyset,!, \circ\}$.  It suffices, therefore, to show that in the natural maps
\[
    \begin{tikzcd}[column sep = small]
        H_{\B}^{m-n} (M_!^n(\emptyset,\emptyset))\arrow{r}&   H_{\B}^{m-n} (M^n(\emptyset,\emptyset))\arrow{r}&   H_{\B}^{m-n} (M_{\circ}^n(\emptyset,\emptyset)) ,
    \end{tikzcd}
\]
the first is surjective, and the second injective, for all $m$. This is equivalent to the statement:
\[
    H_{\B}^{m-n} (M^n(\emptyset,\emptyset)) =   \im\left(  H_{\B}^{m-n} (M_!^n(\emptyset,\emptyset)) \rightarrow  H_{\B}^{m-n} (M_{\circ}^n(\emptyset,\emptyset))  \right)
\]
By the above computations, the right-hand side vanishes if $m\neq n$ and for $m=n$ equals
\[
    \mathrm{im}\left(H^1 ( \overline{\mathcal{M}}_{1,1};j_! \mathbb{V}_{n-1}^{\B} ) \rightarrow  H^1( \mathcal{M}_{1,1} ; \mathbb{V}_{n-1}^{\B} ) \right)  \cong H^1_{\cusp}(\mathcal{M}_{1,1};\mathbb{V}_{n-1}^{\B}) \ . 
\]
The assertion that, indeed, $H_{\B}^0 (M^n(\emptyset,\emptyset))= H^1_{\cusp}(\mathcal{M}_{1,1};\mathbb{V}_{n-1}^{\B}) $ and vanishes in all other degrees is the main result of \cite{ConsaniFaber}, and thus completes the proof. 

\subsection{Motivic biextensions for cusp form motives}

Let $M^n(\mathcal{P},\mathcal{Q})$ be defined as above and write $M^n(\mathcal{P})$ for the case when $\mathcal{Q}=\emptyset$. Let $M^n_{\cusp}$ denote the `cusp form motive' which corresponds to the case when $\mathcal{P}=\mathcal{Q}=\emptyset.$ With notations as in \S\ref{sect: HeckeCorres},  any Hecke correspondence $T_{\alpha}$ defines an endomorphism of cusp form motives
\[
    T_{\alpha}:   M_{\cusp}^{n} \To M_{\cusp}^{n}\ . 
\]

\begin{thm} \label{thm: SimpleMotivicExtension}
    Suppose that $T$ is a $\mathbb{Z}$-linear combination of Hecke correspondences $T_{\alpha}$ such that 
    \begin{equation} \label{motivicTiszero}
        T \in \mathrm{End}(M_{\cusp}^{n}) \hbox{ is the zero map.}
    \end{equation}
    Suppose that $\mathcal{P} = \SL_2(\ZZ) w_1$ and $\mathcal{Q}= \SL_2(\ZZ) z_1$, where $z_1,w_1$ are defined over $K$.  Then there exists a distinguished triangle in $\DM(K)$ 
    \begin{equation} \label{eqn: motivicextensionE}
        \begin{tikzcd}[column sep = small]
            \left(\mathrm{Sym}^{n-1} \,H^1(E_{z_1}) \right)^{\Gamma_{z_1}} \arrow{r}& \mathcal{E} \arrow{r}& \left( \mathrm{Sym}^{n-1} \,\,  H^1(E_{w_1})\right)^{\Gamma_{w_1}} (-1)\arrow{r}{+1} & {}
        \end{tikzcd}
    \end{equation}
     whose realisation, in the case where $w_1,z_1$ are not elliptic, is the extension \eqref{extensionMTzw}. 
\end{thm}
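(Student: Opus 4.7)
The plan is to lift the construction of the extension $M^T_{z_1,w_1}$ in Theorem \ref{thm: extensionmain} from the category of realisations $\mathcal{H}_K$ to the triangulated category $\DM(K)$, using Theorem \ref{thm:mixed-modular-motive} as the dictionary between cohomology objects and motives. The whole argument is a triangulated transcription of the proof of Theorem \ref{thm: extensionmain}: establish two distinguished triangles, factor a Hecke morphism through the quotient of one of them, and then pull back the other along the factorisation.

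First I would construct a motivic action of the Hecke correspondences $T_{\alpha}$ on all of the motives $M^n_{\cusp}$, $M^n(\mathcal{P})$ and $M^n(\mathcal{P},\mathcal{Q})$, by pulling back the Hecke correspondence on $\mathcal{Y}_{\SL_2(\mathbb{Z}),K}$ along $\pi_n$ to obtain a correspondence on $\overline{\mathcal{M}}_{1,n}\times_{\mathbb{Q}}K$. This action commutes with the $\mathfrak{S}_n$-action permuting the marked points (hence preserves the alternating projector), and at the level of realisations it recovers the Hecke operators of \S\ref{sect: HeckeCorres}.

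Second, I would produce two distinguished triangles in $\DM(K)$ lifting the exact sequences \eqref{eqn: YminusPrelQmotive} and \eqref{eqn: YminusPmotive}, namely a relative-cohomology triangle
\[
    \bigl(\Sym^{n-1}H^1(E_{z_1})\bigr)^{\Gamma_{z_1}}\longrightarrow M^n(\mathcal{P},\mathcal{Q}) \longrightarrow M^n(\mathcal{P}) \overset{+1}{\longrightarrow}
\]
and a Gysin triangle
\[
    M^n_{\cusp} \longrightarrow M^n(\mathcal{P}) \longrightarrow \bigl(\Sym^{n-1}H^1(E_{w_1})\bigr)^{\Gamma_{w_1}}(-1) \overset{+1}{\longrightarrow}.
\]
These come from the biextension structure on motives of pairs (Example \ref{ex:motive-biextension}) applied to the open substack of $\overline{\mathcal{M}}_{1,n}\times_{\mathbb{Q}} K$ obtained by removing $\pi_n^{-1}(\infty\cup \mathcal{P})$ and taking the pair relative to $\pi_n^{-1}(\mathcal{Q})$. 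The identification of the `boundary' motives with $(\Sym^{n-1}H^1(E))^{\Gamma}$ uses Proposition \ref{prop: alternatingpartofcohom} (or Corollary \ref{cor: DerivedPushforwardandSym}) applied to the fibres of $\pi_n$ over the residue gerbes of $z_1$ and $w_1$. Setting $\mathcal{P}'=T^{-1}\mathcal{P}$, the Hecke morphism $T$ gives a morphism between the Gysin triangles for $\mathcal{P}$ and $\mathcal{P}'$, and in particular a map $T:M^n(\mathcal{P}) \to M^n(\mathcal{P}')$.

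Third, hypothesis \eqref{motivicTiszero} forces the composition $M^n_{\cusp}\to M^n(\mathcal{P})\overset{T}{\to}M^n(\mathcal{P}')$ to vanish, so axiom (TR3) yields a factoring
\[
    \overline{T}:\bigl(\Sym^{n-1}H^1(E_{w_1})\bigr)^{\Gamma_{w_1}}(-1) \longrightarrow M^n(\mathcal{P}').
\]
Pulling back the relative cohomology triangle for the pair $(\mathcal{P}',\mathcal{Q})$ along $\overline{T}$ (using the octahedral axiom to realise the pullback of a triangle along a morphism of its final term) then produces a distinguished triangle
\[
    \bigl(\Sym^{n-1}H^1(E_{z_1})\bigr)^{\Gamma_{z_1}}\longrightarrow \mathcal{E} \longrightarrow \bigl(\Sym^{n-1}H^1(E_{w_1})\bigr)^{\Gamma_{w_1}}(-1) \overset{+1}{\longrightarrow}
\]
of the required form. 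Compatibility of the realisation functor with distinguished triangles and with the Hecke action, together with Theorem \ref{thm: extensionmain}, identifies its realisation with \eqref{extensionMTzw}.

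The main obstacle is the non-uniqueness intrinsic to the triangulated category: the lift $\overline{T}$ is defined only up to a class in $\Hom_{\DM(K)}(M^n_{\cusp}[1], M^n(\mathcal{P}'))$, and the pullback $\mathcal{E}$ is consequently well-defined only up to (non-canonical) isomorphism. To handle this one can either work in an enhanced (stable $\infty$-categorical or dg) model of $\DM(K)$ where cones and pullbacks are functorial, or argue that for any admissible choice of $\overline{T}$ the resulting $\mathcal{E}$ has the prescribed realisation -- this is enough for the subsequent extraction of the Kummer extension in Theorem \ref{cor: GeneralKummerExtension}. A secondary technical point is the construction of the motivic Hecke action on relative cohomology motives and its compatibility with both triangles, which I expect to be cleanly handled via the six-functor formalism on $\DM$ (\emph{cf.} \cite{AyoubICM}).
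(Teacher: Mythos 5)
Your proposal is correct and takes essentially the same route as the paper: factor the Hecke morphism $T\colon M^n(\mathcal{P})\to M^n(\mathcal{P}')$ through the cone $R_{\mathcal{P}}=(\Sym^{n-1}H^1(E_{w_1}))^{\Gamma_{w_1}}(-1)$ of $M^n_{\cusp}\to M^n(\mathcal{P})$ using the vanishing hypothesis \eqref{motivicTiszero}, then compose with the connecting map $M^n(\mathcal{P}')\to S_{\mathcal{Q}}[1]$ of the relative cohomology triangle for $(\mathcal{P}',\mathcal{Q})$ and take a cone to obtain $\mathcal{E}$. The extra discussion of the non-uniqueness of the lift $\overline{T}$ and of the cone is an honest technical remark that the paper leaves implicit but does not change the argument.
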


\begin{proof}
    There exists a closed substack $\mathcal{P}'$  of $\mathcal{M}_{1,1}$, given by a finite union of residue gerbes of points, such that the following map is a morphism of motives:
    \[
        T:  M^{n}(\mathcal{P}) \To M^{n}(\mathcal{P}') 
    \]
  We shall write $R_{\mathcal{P}}$  for $\left(\Sym^{n-1} \,H^1(E_{w_1}) \right)^{\Gamma_{w_1}}(-1)$.  Consider the diagram of distinguished triangles 
    \[
        \begin{tikzcd}[column sep = small]
            M^{n}_{\cusp} \arrow{r}\arrow{d} & M^{n}(\mathcal{P}) \arrow{r}\arrow{d}{T} & R_\mathcal{P}  \arrow{r}{+1} & {} \\
            0  \arrow{r} & M^{n}(\mathcal{P}')  \arrow{r} & M^{n}(\mathcal{P}') \arrow{r}{+1}& {}
        \end{tikzcd}
    \]
    where the vertical morphism is induced by the Hecke correspondence $T$, which gives rise to a morphism $R_\mathcal{P}  \rightarrow M^{n}(\mathcal{P}')$.  Now consider the relative cohomology distinguished triangle 
    \[
        \begin{tikzcd}[column sep = small]
            S_\mathcal{Q} \arrow{r} & M^n(\mathcal{P'}, \mathcal{Q}) \arrow{r} & M^n(\mathcal{P}') \arrow{r}{+1} & {} 
        \end{tikzcd}
    \]
    where    $S_{\mathcal{Q}} = \left(\Sym^{n-1} \,H^1(E_{z_1}) \right)^{\Gamma_{z_1}} $ . By composing the morphism $M^{n}(\mathcal{P}') \rightarrow S_Q[1]$ with $R_P \rightarrow M^{n}(\mathcal{P}')$, we deduce the existence of $\mathcal{E}$ and a commutative diagram with two distinguished triangles:  
    \[
        \begin{tikzcd}[column sep = small]
            S_\mathcal{Q} \arrow{r}\arrow[equal]{d} & \mathcal{E} \arrow{r}\arrow{d} & R_\mathcal{P}\arrow{d} \arrow{r}{+1} & {}  \\
            S_\mathcal{Q} \arrow{r} & M^{n}(\mathcal{P}',\mathcal{Q})  \arrow{r} & M^{n}(\mathcal{P}')  \arrow{r}{+1}&{}
        \end{tikzcd}
        \qedhere
    \]
\end{proof}

\begin{cor} \label{cor: MotivicKummerExtension}
    In the setting of Theorem \ref{thm: SimpleMotivicExtension}, suppose that $z_1,w_1$ have complex multiplication defined over $K$. Then the extension \eqref{eq:gz-extension} is the realisation of a Kummer extension in $\DM(K)$.   Consequently,  Theorem  \ref{thm: GZ-conditional-proof} applies and the Gross-Zagier conjecture is true. 
     
\end{cor}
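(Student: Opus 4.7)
The strategy is to push the extension produced by Theorem \ref{thm: SimpleMotivicExtension} through the decompositions provided by the theory of CM motives, and then invoke the fact that in $\DMT(K)$ every extension of $\QQ(-1)$ by $\QQ(0)$ is Kummer.

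The first step is to observe that Proposition \ref{prop: CMmotives} and Lemma \ref{lem: invariantscontainsTate} lift from the category of realisations $\mathcal{H}_K$ to the triangulated category $\DM(K)$. Indeed, the key input in their proofs is the cycle class of the graph of complex multiplication $\Gamma_{\lambda}\subset E\times_K E$, together with the polarisation coming from the class of a point, both of which are algebraic cycles and hence define morphisms in $\DM(K)$. Consequently, for each CM elliptic curve $E$ over $K$ and every integer $n\geq 0$, there is a canonical decomposition in $\DM(K)$
\[
  M(E)^{\otimes n}_{\varepsilon}\cong \Sym^n M(E)[n] \cong \bigoplus_{0\leq 2i\leq n} C^{n-2i}(-i)
\]
(where the $C^j$ are the motivic analogues of the objects of the same name in Proposition \ref{prop: CMmotives}), and the Tate summand $\QQ(-m)\subset C^0(-m)$ is preserved by $\Gamma_E$ inside $(\Sym^{2m} H^1(E))^{\Gamma_E}$.

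Now set $n-1=2m$ and apply these decompositions to both ends of the distinguished triangle
\[
  \begin{tikzcd}[column sep = small]
    \big(\Sym^{2m} H^1(E_{z_1})\big)^{\Gamma_{z_1}} \arrow{r} & \mathcal{E} \arrow{r} & \big(\Sym^{2m} H^1(E_{w_1})\big)^{\Gamma_{w_1}}(-1) \arrow{r}{+1} & {}
  \end{tikzcd}
\]
of Theorem \ref{thm: SimpleMotivicExtension}. Pulling back along the inclusion of $\QQ(-m-1)$ as a direct summand of the right-hand object, and then pushing out along the projection of the left-hand object onto its Tate summand $\QQ(-m)$, produces a distinguished triangle
\[
  \begin{tikzcd}[column sep = small]
    \QQ(-m) \arrow{r} & \mathcal{GZ}^{T,\mot}_{z_1,w_1} \arrow{r} & \QQ(-m-1) \arrow{r}{+1} & {}
  \end{tikzcd}
\]
in $\DM(K)$ whose realisation in $\mathcal{H}_K$ is, by construction and by compatibility of the realisation functor with the CM decomposition, exactly the extension \eqref{eq:gz-extension} produced in Theorem \ref{cor: GeneralKummerExtension}.

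It remains to identify $\mathcal{GZ}^{T,\mot}_{z_1,w_1}$ with a Kummer motive (up to a Tate twist). After the twist by $\QQ(m)$, this becomes an extension of $\QQ(-1)$ by $\QQ(0)$ in $\DMT(K)$, and is therefore classified by
\[
  \Hom_{\DM(K)}\bigl(\QQ(-1),\QQ(0)[1]\bigr)\cong K^{\times}\otimes_{\ZZ}\QQ,
\]
so there exists $x \in K^\times$ (uniquely up to $\QQ^\times$-scaling and torsion) such that $\mathcal{GZ}^{T,\mot}_{z_1,w_1}\cong K_x(-m)$, where $K_x$ is the Kummer motive of Example \ref{ex:kummer-motive}. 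Taking realisations, $\mathcal{GZ}^T_{z_1,w_1}\cong \mathcal{K}_x(-m)$ in $\mathcal{H}_K$, so the hypotheses of Theorem \ref{thm: GZ-conditional-proof} are satisfied and the Gross-Zagier conjecture follows.

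The main obstacle is really the verification that the CM decomposition and the extraction of the Tate summand are genuinely motivic operations in $\DM(K)$, rather than being visible only on realisations; everything else is formal once this is granted, since the Ext computation in $\DMT(K)$ is standard. This verification reduces to the fact that $\DM(K)$ is pseudo-abelian and tensor, so that projectors coming from algebraic cycles split off direct summands, exactly as in the realisation category.
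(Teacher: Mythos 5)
Your proof is correct and follows essentially the same route as the paper: the paper likewise reduces to the observation (via the remark following Proposition \ref{prop: CMmotives}) that the CM decomposition holds in Chow motives, then extracts the extension by pushout and pullback and invokes $\Hom_{\DM(K)}(\QQ(-1),\QQ(0)[1])\cong K^\times\otimes\QQ$ before applying Theorem \ref{thm: GZ-conditional-proof}. You are somewhat more explicit than the paper about the two points it compresses — that the CM projectors arise from algebraic cycles and hence live in $\DM(K)$, and that the extension class in $K^\times\otimes\QQ$ is only determined up to $\QQ^\times$-scaling and torsion, which is nonetheless enough to produce an isomorphism $\mathcal{GZ}^{T}_{z_1,w_1}\cong\mathcal{K}_x(-m)$ of objects — but these are elaborations rather than a different argument.
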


\begin{proof}
    By the remark  which follows Proposition \ref{decompSnintoCn}, the decomposition \eqref{eq:decompSnintoCn} is motivic  and therefore the Kummer extension
    \[
        \begin{tikzcd}[column sep = small]
            \QQ(0) \arrow{r}& \mathcal{E}' \arrow{r}& \QQ(-1) \arrow{r}{+1}&{}
        \end{tikzcd} 
    \]
    may be extracted from \eqref{eqn: motivicextensionE} by 
   pushout and pullback. The conclusion then follows from  Theorem \ref{thm: GZ-conditional-proof}. 
\end{proof}

The previous results essentially prove a weaker version of the Gross-Zagier conjecture under the assumption \eqref{motivicTiszero}. To deduce the full conjecture requires the statement that \eqref{motivicTiszero} follows from \eqref{Tactionzero}. 

\subsubsection{Vanishing of the endomorphism $T$}

If the Betti realisation functor is faithful on the motives of cusp forms, then  \eqref{Tactionzero} indeed implies  \eqref{motivicTiszero}. In a weight $n+1$ in which there are no cusp forms,  this follows from the conservativity conjecture  (cf. \cite[Conjecture 2.12]{AyoubConjectures}) for the restriction of the Betti realisation functor to the subcategory of motives generated by $M^n_{\cusp}.$ 
In this case, the conservativity conjecture is equivalent to showing that the corresponding cusp form motive $M^{n}_{\cusp}$ is zero, which appears to be an open problem.  In Appendix \ref{sect: AppendixD},  we show that this is indeed true in the first  interesting case, which gives a complete proof of the Gross-Zagier conjecture using the geometric techniques of this paper.

\begin{thm}\label{thm: GZtrueinlevel1weight4} 
    The motive $M^{3}_{\cusp}$ is zero. Therefore the Gross-Zagier conjecture is true in level 1 and weight $4$ (\emph{i.e.}, for the function $G_{\SL_2(\mathbb{Z}),2}(z,w)$), for any pair of CM points $z\notin \SL_2(\mathbb{Z}) w$.  
\end{thm}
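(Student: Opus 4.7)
My plan splits into two parts: establishing that $M^3_{\cusp}=0$ in $\DM(\QQ)$, and then deducing from this vanishing the Gross-Zagier conjecture for $G_{\SL_2(\ZZ),2}(z,w)$.

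For the vanishing, I would first compute the Betti realisation of $M^3_{\cusp}$ using Theorem \ref{thm:mixed-modular-motive} (with $\mathcal{P}=\mathcal{Q}=\emptyset$ and $n=3$): it is concentrated in degree zero and isomorphic to $H^1_{\cusp}(\mathcal{M}_{1,1};\mathcal{V}_2)$, which records two copies of $S_4(\SL_2(\ZZ))$; the latter is classically zero. The main geometric input, to be supplied by Appendix \ref{sect: AppendixD}, is that $M(\mathcal{M}_{1,3})$ is mixed Tate. Combined with the fact that the alternating representation of $\mathfrak{S}_{m-1}$ does not occur in the cohomology of $\overline{\mathcal{M}}_{0,m}$ for $m\geq 4$ (Consani-Faber, \emph{cf.} the proof of Proposition \ref{prop: alternatingpartofcohom}) and the K\"unneth formula for motives, each boundary stratum of $\overline{\mathcal{M}}_{1,3}$ (either a product of $\overline{\mathcal{M}}_{0,m_j}$'s with a single $\overline{\mathcal{M}}_{1,\ell}$, or a product of $\overline{\mathcal{M}}_{0,m}$'s over the cusp) contributes zero to the alternating part; the relevant Gysin and excision triangles then show that $M^3_{\cusp}=M(\overline{\mathcal{M}}_{1,3})_{\varepsilon}[3]$ lies in $\DMT(\QQ)$. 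Since Betti realisation is conservative on $\DMT(\QQ)$ --- an object of $\MT(\QQ)$ is detected by the Tate graded pieces of its weight filtration, and $\DMT(\QQ)$ carries a $t$-structure with heart $\MT(\QQ)$ --- the vanishing of $H^\ast_{\B}(M^3_{\cusp})$ forces $M^3_{\cusp}=0$.

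For the Gross-Zagier conclusion, fix CM points $z,w$ with $z\notin \SL_2(\ZZ)w$, pick a sufficiently large number field $K$ as in \S\ref{par:general-modular-objects}, and take $T$ to be the identity Hecke correspondence. Since $M^3_{\cusp}=0$, condition \eqref{motivicTiszero} holds trivially, so Theorem \ref{thm: SimpleMotivicExtension} produces a distinguished triangle in $\DM(K)$ with outer terms $(\Sym^{2} H^1(\mathcal{E}_z))^{\Gamma_z}$ and $(\Sym^{2} H^1(\mathcal{E}_w))^{\Gamma_w}(-1)$. Corollary \ref{cor: MotivicKummerExtension}, applied using the canonical CM decomposition of Proposition \ref{prop: CMmotives}, extracts by pullback and pushout a Kummer extension isomorphic (up to a Tate twist) to $\mathcal{K}_x$ in $\DM(K)$ for some $x\in K^\times$. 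Theorem \ref{thm: GZ-conditional-proof} then translates this Kummer structure into the proportionality $G_{\SL_2(\ZZ),2}(z,w)\sim_{\QQ^\times}\sqrt{d_zd_w}\,\log|x|$, which is the desired statement of the Gross-Zagier conjecture in level $1$ and weight $4$.

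The main obstacle is showing that $M(\mathcal{M}_{1,3})$ is mixed Tate. The fibration $\mathcal{M}_{1,3}\to \mathcal{M}_{1,1}$ is of no use here, since its fibres are configuration spaces on elliptic curves whose motives are emphatically not mixed Tate. Instead, I would exploit the classical fact that a smooth genus-$1$ curve with an ordered triple of distinct marked points admits a canonical plane-cubic model in which the three points are cut out by a hyperplane section, and use this to describe $\mathcal{M}_{1,3}$ as a stack quotient of an open subscheme of a suitable linear parameter space (cubic forms in $\PP^2$ cut by a fixed line at three standard points) by the residual reductive group action of the stabiliser of the line. Provided this quotient can be shown to have a mixed Tate motive --- by an explicit stratification of the parameter space into linear pieces and an orbit-by-orbit analysis --- one obtains $M(\mathcal{M}_{1,3})\in \DMT(\QQ)$, closing the last remaining gap.
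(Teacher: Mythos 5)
Your deduction of the theorem from the mixed Tate property of $\mathcal{M}_{1,3}$ coincides with the paper's: once $M(\mathcal{M}_{1,3})$, hence $M(\overline{\mathcal{M}}_{1,3})$ and its summand $M^3_{\cusp}$, lies in $\DMT(\QQ)$, conservativity of the Betti realisation on mixed Tate motives together with the vanishing of weight-$4$ level-$1$ cusp forms gives $M^3_{\cusp}=0$, and the chain Theorem \ref{thm: SimpleMotivicExtension} (with $T=\id$), Corollary \ref{cor: MotivicKummerExtension}, Theorem \ref{thm: GZ-conditional-proof} yields the Gross--Zagier statement. One caution on your intermediate step: the Consani--Faber vanishing of the alternating part of the \emph{cohomology} of boundary strata does not by itself kill the corresponding motivic summands --- that inference is exactly the phantom-motive issue this theorem is about. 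It is repairable (and unnecessary): since every boundary stratum of $\overline{\mathcal{M}}_{1,3}$ is a finite quotient of products of $\mathcal{M}_{0,r}$'s and $\mathcal{M}_{1,s}$'s with $s\le 2$, all mixed Tate, $M(\overline{\mathcal{M}}_{1,3})$ is mixed Tate and so is its $\varepsilon$-summand; this is how Proposition \ref{prop: M13isMT} argues.

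The genuine gap is the key geometric input itself: you never prove that $M(\mathcal{M}_{1,3})$ is mixed Tate. You first defer it to Appendix \ref{sect: AppendixD}, then in your final paragraph declare it the main obstacle and only sketch a strategy, ending with ``provided this quotient can be shown to have a mixed Tate motive'' --- so the proposal is incomplete precisely where the paper does the real work. Your sketched route (embed a $3$-pointed genus-one curve by $|p_1+p_2+p_3|$ as a plane cubic meeting a fixed line in three standard points, and quotient by the residual stabiliser) is geometrically plausible but problematic as stated: the stabiliser in $\mathrm{PGL}_3$ of the line with its three marked points is the pointwise stabiliser of the line, an extension $\mathbb{G}_a^2\rtimes\mathbb{G}_m$ of dimension $3$, which is \emph{not} reductive; and because it is positive-dimensional and acts with nontrivial finite stabilisers, the paper's tools for motives of quotient stacks (Choudhury's finite-quotient result and the slicing Lemma \ref{lem:slice}, which only trades a $\mathbb{G}_m$-action for a $\mu_r$-action) do not apply directly, so relating $M([U/H])$ to a stratification of $U$ would require additional arguments. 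The paper instead uses the Weierstrass model: marking one point as the origin, the two remaining points $(x_1,y_1),(x_2,y_2)$ and the coefficients $(g_2,g_3)$ satisfy two Weierstrass equations which, on the locus $x_1\neq x_2$, are solved linearly for $(g_2,g_3)$; slicing by $x_1-x_2=1$ exhibits the main stratum of $\mathcal{M}^{\node}_{1,3}$ as an open substack of $[\mathbb{A}^3/\mu_2]$, with the remaining strata equally explicit (Lemmas \ref{lem:M12} and \ref{lem: M13node}), giving Proposition \ref{prop: M13isMT}. Until you either carry out your plane-cubic stratification (including the non-reductive quotient step) or invoke the appendix result, the assertion $M^3_{\cusp}=0$, and with it the theorem, is not established.
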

 
\begin{proof}
    By Proposition \ref{prop: M13isMT}, the motive of $\mathcal{M}_{1,3}$ is mixed Tate and hence so is $M^{3}_{\cusp}$. Since the Betti realisation functor restricted to the triangulated category of mixed Tate motives is conservative, it follows that the cusp form motive $M^{3}_{\cusp}$ is zero since its Betti realisation vanishes. It follows that the hypotheses of Theorem \ref{thm: SimpleMotivicExtension} and Corollary \ref{cor: MotivicKummerExtension} hold, and the statement then follows from Theorem \ref{thm: GZ-conditional-proof}.
\end{proof}

We expect that our methods can be extended to prove a similar theorem for $n\leq 10$ by showing that  $\mathcal{M}_{1,n}$ is mixed Tate in this range.\footnote{A well-known result of Belorousski \cite{belorousski} shows that the coarse moduli spaces $M_{1,n}$ are rational algebraic varieties for $n\le 10$, but this does not imply the mixed Tate property.} The conservativity conjecture is open in general, but is known to hold on the thick rigid tensor triangulated subcategory generated by motives of smooth projective curves; see (\cite{Wildeshaus,Wildeshaus2,Ancona} and \cite[Corollary 2.33]{AyoubConjectures}).

\subsubsection{Higher levels and alternative approaches} \label{sect: HigherLevels}

In the above discussion we considered modular forms of level 1. However, by using the results of Petersen \cite[Theorem 5.1]{Petersen} one may use the moduli spaces of curves with level structure $\mathcal{M}_{1,n}(m)$ to obtain similar results to Theorem  \ref{thm: SimpleMotivicExtension} and Corollary \ref{cor: MotivicKummerExtension} but for the congruence group $\Gamma(m)$. By Remark 6.4 of \emph{loc. cit.}, this can be extended to other congruence subgroups of $\mathrm{SL}_2(\ZZ)$, specifically $\Gamma_1(m)$ and $\Gamma_0(m)$ giving a motivic interpretation of the Gross-Zagier conjecture in an identical manner.  In particular, in every weight, level and nebentypus for which the corresponding space of cusp form vanishes (see, \emph{e.g.}, \cite[Appendix 2]{ZhouI}), one hopes to prove, in the spirit of Theorem \ref{thm: GZtrueinlevel1weight4}, that the corresponding motive vanishes.

\begin{rem}
    We have  constructed our modular biextensions out of the motives of moduli spaces of curves. An alternative approach is to use the motives of Kuga-Sato varieties, following Scholl \cite{Scholl}. 
\end{rem}

\begin{rem}
    Instead of appealing to the faithfulness of the Betti realisation functor, it is enough to show that the cusp form  motives (in suitable weight and level) decompose into finitely many Hecke eigenspaces, which implies that 
    \eqref{motivicTiszero} follows from   \eqref{Tactionzero}. Whilst this can be shown for motives modulo homological equivalence, the general statement follows from the standard conjectures, as pointed out in \cite[Remark 1.2.6]{Scholl} and  \cite[Remark 6.3]{Petersen}.    
\end{rem}

\appendix 

\section{Complements in algebraic de Rham cohomology}\label{sec:algebraic-de-Rham}

For lack of a suitable reference, we collect in this appendix a few basic definitions and results concerning relative algebraic de Rham cohomology with non-trivial coefficients.

\subsection{Relative algebraic de Rham cohomology with coefficients on smooth varieties} \label{sect: A1}

Let $X$ be a smooth algebraic variety over a field $k$ of characteristic zero, $i:Y \hookrightarrow X$ be a smooth closed $k$-subscheme of positive codimension, and $(\mathcal{V},\nabla)$ be a vector bundle with integrable connection on $X$, defined over $k$. 

We denote by $\Omega^{\bullet}_{X/k}(\mathcal{V})$ the $k$-linear complex of  $\mathcal{O}_X$-modules induced by the integrable connection $\nabla$:
\begin{equation}\label{eq: de-rham-complex}
    \begin{tikzcd}[column sep = small]
        0 \rar & \mathcal{V} \rar{\nabla} & \mathcal{V}\otimes_{\mathcal{O}_X}\Omega^1_{X/k} \rar{\nabla^1} & \mathcal{V}\otimes_{\mathcal{O}_X}\Omega^{2}_{X/k} \rar{\nabla^2} & \cdots
    \end{tikzcd}
\end{equation}
and by $\Omega^{\bullet}_{Y/k}(i^*\mathcal{V})$ the analogous complex on $Y$ given by restricting $(\mathcal{V},\nabla)$ via $i$. The pullback of differential forms then induces a $k$-linear morphism of complexes on $X$
\[
    i^*: \Omega^{\bullet}_{X/k}(\mathcal{V}) \To i_*\Omega^{\bullet}_{Y/k}(i^*\mathcal{V})\text{, }\qquad \alpha \longmapsto \alpha|_{i(Y)} \ ,
\]
The mapping cone of $i^*$ is the $k$-linear complex of $\mathcal{O}_X$-modules denoted by
\[
    \Omega^{\bullet}_{(X,Y)/k}(\mathcal{V}) \defeq C(i^*).
\]
Concretely, $\Omega^{n}_{(X,Y)/k}(\mathcal{V}) = \Omega^n_{X/k}(\mathcal{V})\oplus i_*\Omega^{n-1}_{Y/k}(i^*\mathcal{V})$ and the differential is given by
\[
    (\alpha,\beta)\longmapsto (\nabla^n \alpha, \alpha|_{i(Y)} - \nabla^{n-1} \beta)
\]

\begin{defn}
    The \emph{$n$th algebraic de Rham cohomology of $X$ relative to $i: Y\hookrightarrow X$ with coefficients in $\mathcal{V}$} is the $n$th hypercohomology
    \[
        H^n_{\dR}(X,Y;\mathcal{V}) \defeq \mathbb{H}^{n}(X,\Omega^{\bullet}_{(X,Y)/k}(\mathcal{V}))\, .
    \]
\end{defn}

By abuse, we omit in the above notation the base field $k$, the connection $\nabla$, and the closed immersion $i$, as they are often clear from context.

\begin{rem}
    Note that $H^n_{\dR}(X,Y;\mathcal{V})$ is always a finite-dimensional $k$-vector space. When $(\mathcal{V},\nabla)=(\mathcal{O}_X,d)$ is trivial, then $H^n_{\dR}(X,Y;\mathcal{V})$ coincides with the usual relative de Rham cohomology $H^n_{\dR}(X,Y)$. When $Y$ is empty, we recover the `absolute' de Rham cohomology with coefficients $H^n_{\dR}(X;\mathcal{V})$.
\end{rem} 

The relative de Rham cohomology with coefficients is related to the de Rham cohomology with coefficients of $X$ and $Y$ via the long exact sequence of $k$-vector spaces
\begin{equation}\label{eqn:les-relative}
    \begin{tikzcd}[column sep = small]
        \cdots  \arrow{r} & H^{n-1}_{\dR}(Y;i^*\mathcal{V})\arrow{r} & H_{\dR}^n (X,Y;\mathcal{V}) \arrow{r} & H^n_{\dR}(X;\mathcal{V}) \arrow{r} & H^{n}_{\dR}(Y;i^*\mathcal{V})\arrow{r} & \cdots   
    \end{tikzcd}
\end{equation}
induced by the short exact sequence of complexes
\[
    \begin{tikzcd}[column sep = small]
        0 \rar & i_*\Omega^{\bullet}_{Y/k}(i^*\mathcal{V})[-1] \rar & \Omega^{\bullet}_{(X,Y)/k}(\mathcal{V}) \rar & \Omega^{\bullet}_{X/k}(\mathcal{V}) \rar & 0
    \end{tikzcd}.
\]

\subsection{Residue exact sequence} \label{par:appendix-residue}

Let $\iota: D \hookrightarrow X$ be the inclusion of a smooth divisor in $X$ such that $D\cap Y$ is a smooth divisor in $Y$. Recall that there is a complex
\begin{equation}\label{eqn: residue}
    \begin{tikzcd}[column sep = small]
        0 \rar & \Omega^{\bullet}_{X/k} \rar & \Omega^{\bullet}_{X/k}(\log D) \rar{\Res_D} & \iota_*\Omega^{\bullet}_{D/k}[-1] \rar & 0 \ ,
    \end{tikzcd}
\end{equation}
where $\Res_D : \Omega^n_{X/k}(\log D) \to \iota_*\Omega^{n-1}_{D/k}$ is the Poincaré residue map. We denote by $\Omega^{\bullet}_{X/k}(\log D, \mathcal{V})$ the complex analogous to \eqref{eq: de-rham-complex} defined using the complex of logarithmic forms $\Omega^{\bullet}_{X/k}(\log D)$ in place of $\Omega^{\bullet}_{X/k}$. Pullback by $i$ yields a map
\[
    i^*:  \Omega^{\bullet}_{X/k}(\log D, \mathcal{V}) \To i_*\Omega^{\bullet}_{Y/k}(\log D\cap Y,i^*\mathcal{V}) \ ,
\]
the mapping cone of which is denoted by $\Omega^{\bullet}_{(X,Y)/k}(\log D,\mathcal{V})$. We get from \eqref{eqn: residue} an exact sequence
\begin{equation}\label{eqn:residue-relative}
    \begin{tikzcd}[column sep = small]
        0 \rar & \Omega^{\bullet}_{(X,Y)/k}(\mathcal{V}) \rar & \Omega^{\bullet}_{(X,Y)/k}(\log D,\mathcal{V}) \rar{\Res_D} & \iota_*\Omega^{\bullet}_{(D,D\cap Y)/k}(\iota^*\mathcal{V})[-1] \rar & 0 \ .
    \end{tikzcd}
\end{equation}

\begin{prop}\label{prop:deligne}
    The restriction from $X$ to $X\setminus D$ induces an isomorphism of $k$-vector spaces
    \[
        \mathbb{H}^{n}(X, \Omega^{\bullet}_{(X,Y)/k}(\log D,\mathcal{V})) \stackrel{\sim}{\To} H^n_{\dR}(X\setminus D, Y\setminus (D\cap Y); \mathcal{V})\, .   
    \]
\end{prop}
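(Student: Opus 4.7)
The plan is to reduce the statement to Deligne's classical theorem on logarithmic de Rham cohomology, using the fact that $\Omega^{\bullet}_{(X,Y)/k}(\log D,\mathcal{V})$ is, by construction, the mapping cone of the restriction
\[
    i^*: \Omega^{\bullet}_{X/k}(\log D, \mathcal{V}) \To i_*\Omega^{\bullet}_{Y/k}(\log D\cap Y, i^*\mathcal{V}).
\]
First I would prove the absolute statement on each of $X$ and $Y$, and then recover the relative version by functoriality of mapping cones.

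Let $j : X\setminus D \hookrightarrow X$ and $j_Y : Y\setminus (D\cap Y)\hookrightarrow Y$ be the open inclusions. Because $(\mathcal{V},\nabla)$ is defined on the whole of $X$ (and similarly $i^*(\mathcal{V},\nabla)$ on $Y$), the induced logarithmic connection along $D$ has vanishing residue, so $\mathcal{V}$ (resp. $i^*\mathcal{V}$) coincides with the Deligne canonical extension of its restriction to the open complement. Deligne's theorem \cite{deligneEqDiff} then yields quasi-isomorphisms
\[
    \Omega^{\bullet}_{X/k}(\log D, \mathcal{V}) \stackrel{\sim}{\To} Rj_* \Omega^{\bullet}_{(X\setminus D)/k}(\mathcal{V}|_{X\setminus D}),
\]
\[
    \Omega^{\bullet}_{Y/k}(\log D\cap Y, i^*\mathcal{V}) \stackrel{\sim}{\To} R(j_Y)_* \Omega^{\bullet}_{(Y\setminus (D\cap Y))/k}(i^*\mathcal{V}|_{Y\setminus (D\cap Y)}).
\]
(Locally in \'etale coordinates with $D = \{x_1 = 0\}$, this reduces, after trivialising $\mathcal{V}$, to the classical computation that $\Omega^{\bullet}(\log D)$ is quasi-isomorphic to $Rj_* \Omega^{\bullet}_{X\setminus D}$.)

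Next I would apply the mapping cone functor to the square formed by $i^*$ on the two sides of the above quasi-isomorphisms. Since direct images commute with mapping cones, and a morphism of mapping cones induced by quasi-isomorphisms on each term is itself a quasi-isomorphism, I obtain a quasi-isomorphism of complexes of sheaves on $X$:
\[
    \Omega^{\bullet}_{(X,Y)/k}(\log D, \mathcal{V}) \stackrel{\sim}{\To} Rj_*\, \Omega^{\bullet}_{(X\setminus D,\, Y\setminus (D\cap Y))/k}(\mathcal{V}).
\]
Taking $\mathbb{H}^n(X,-)$ on both sides and using that $\mathbb{H}^n(X,Rj_*\mathcal{F}) = \mathbb{H}^n(X\setminus D,\mathcal{F})$ gives exactly the desired isomorphism.

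The only real subtlety is Step 1: one must be comfortable applying Deligne's comparison theorem in the setting of an arbitrary coherent $\mathcal{V}$ with integrable connection extending across $D$. This is standard, but requires a small amount of care; alternatively one can argue directly by comparing the two long exact sequences \eqref{eqn:les-relative} and \eqref{eqn:residue-relative} associated to the relative and to the residue filtrations, applying Deligne's absolute result to $X$ and to $Y$ separately, and invoking the five lemma.
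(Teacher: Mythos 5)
Your proposal is correct, and at its core it is the same reduction as the paper's: apply Deligne's absolute comparison theorem to $(X,D,\mathcal{V})$ and to $(Y,D\cap Y,i^*\mathcal{V})$ (the residues vanish since $\nabla$ is regular on all of $X$, so the hypotheses hold trivially), and then pass formally to the relative situation. The only difference is the formal wrapper: the paper deduces the relative case by taking the hypercohomology long exact sequence of the short exact sequence of cone complexes
$0 \to i_*\Omega^{\bullet}_{Y/k}(\log D\cap Y,i^*\mathcal{V})[-1]\to \Omega^{\bullet}_{(X,Y)/k}(\log D,\mathcal{V})\to \Omega^{\bullet}_{X/k}(\log D,\mathcal{V})\to 0$, comparing it with \eqref{eqn:les-relative} for the pair $(X\setminus D, Y\setminus(D\cap Y))$, and invoking the five lemma --- which is exactly the alternative you sketch in your last paragraph (note, though, that the second sequence to compare is \eqref{eqn:les-relative} for the open pair, not the residue sequence \eqref{eqn:residue-relative}). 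Your primary route instead upgrades Deligne's theorem to a quasi-isomorphism $\Omega^{\bullet}_{X/k}(\log D,\mathcal{V})\to Rj_*\Omega^{\bullet}_{(X\setminus D)/k}(\mathcal{V})$ at the level of complexes of sheaves and takes cones; this works (and since $j$ is an affine open immersion one even has $Rj_*=j_*$ termwise), but it uses a local statement strictly stronger than the hypercohomology isomorphism the paper needs, and your parenthetical justification ``after trivialising $\mathcal{V}$'' is only legitimate in the analytic topology --- a flat algebraic bundle cannot in general be trivialised as a connection Zariski- or \'etale-locally, so in the algebraic category one should simply cite Deligne's theorem (or pass through the analytic comparison) rather than a local trivialisation. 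With that caveat, both routes are sound; the five-lemma version is the more economical one and is what the paper does.
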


\begin{proof}
    When $Y$ is empty, this is  a special case of Deligne's theorem \cite[II, Corollaire 3.15]{deligneEqDiff}. The general case reduces to this one by considering the hypercohomology long exact  sequence associated to
    \[
        \begin{tikzcd}[column sep = small]
            0 \rar & i_{*}\Omega^{\bullet}_{Y/k}(\log D\cap Y,i^*\mathcal{V})[-1] \rar & \Omega^{\bullet}_{(X,Y)/k}(\log D, \mathcal{V}) \rar & \Omega^{\bullet}_{X/k}(\log D, \mathcal{V}) \rar & 0
        \end{tikzcd}
    \]
    and applying the five lemma.
\end{proof}

It follows from the above that the hypercohomology long exact sequence associated to \eqref{eqn:residue-relative} is 
\begin{equation}\label{eq:residue-exact-seq}
    \begin{tikzcd}[column sep = small]
       \cdots \arrow{r} & H^{n}_{\dR}(X,Y;\mathcal{V}) \arrow{r} & H^{n}_{\dR}(X\setminus D,Y\setminus (D\cap Y);\mathcal{V}) \arrow{r} & H^{n-1}_{\dR}(D,D\cap Y;\iota^*\mathcal{V}) \arrow{r} & \cdots .
    \end{tikzcd}
\end{equation}

\subsection{The case of a smooth affine curve}

We keep the above notation and assume moreover that $X$ is a smooth affine algebraic variety over $k$ of dimension 1. Since $X$ is affine, it follows from Serre's vanishing theorem that 
\[
    H^n_{\dR}(X,Y;\mathcal{V}) = H^n(\Gamma(X, \Omega^{\bullet}_{(X,Y)/k}(\mathcal{V})) .
\]
Since $X$ (resp. $Y$) has dimension $1$ (resp. $0$), the complex $\Gamma(X, \Omega^{\bullet}_{(X,Y)/k}(\mathcal{V}))$ is simply
\[
    \begin{tikzcd}[column sep = small]
        0 \rar & \Gamma(X,\mathcal{V}) \rar{\partial} & \Gamma(X, \Omega^1_{X/k}(\mathcal{V}))\oplus \Gamma(Y,i^*\mathcal{V}) \rar & 0\, ,
    \end{tikzcd}
\]
where $\partial s = (\nabla s, s|_{i(Y)})$, so that
\begin{equation}\label{eqn:cohomology-curve}
    H^n_{\dR}(X,Y;\mathcal{V}) =\begin{cases}
                                    \ker \partial & n=0\\
                                    \coker \partial & n=1 \\
                                    0 & n\ge 2\, .
                                \end{cases}
\end{equation}
Elements of $H^1_{\dR}(X,Y;\mathcal{V})$ are thus given by equivalence classes of pairs $(\alpha,v) \in \Gamma(X,\Omega^1_{X/k}(\mathcal{V}))\oplus \Gamma(Y,i^*\mathcal{V})$, where $(\alpha,v)$ is equivalent to $(\beta, w)$ if and only if there is $s \in \Gamma(X,\mathcal{V})$ such that $(\alpha - \beta, v-w) = (\nabla s, s|_{i(Y)})$.

\begin{rem}
    If $Y$ is non-empty, then it is given by a $\Gal(\overline{k}/k)$-invariant finite subset $\{y_1,\ldots,y_m\}$ of $X(\overline{k})$, where $\overline{k}$ denotes an algebraic closure of $k$.  Note that we can identify $\Gamma(Y,i^*\mathcal{V})$ with a subset of $\bigoplus_{i=1}^m\mathcal{V}(y_i)$, where $\mathcal{V}(y_i) \defeq \mathcal{V}\otimes_{\mathcal{O}_{X,y_i}} k(y_i)$ denotes the fibre of $\mathcal{V}$ at the point $y_i$. Then, the restriction $s|_{i(Y)}$ is identified  with the tuple $( s(y_1),\ldots,s(y_m))$.
\end{rem} 

The long exact sequence in relative cohomology \eqref{eqn:les-relative} boils down to
\begin{equation}\label{eqn:les-relative-curve}
    \begin{tikzcd}[column sep = small]
        0 \arrow{r} & H^0_{\dR}(X,Y;\mathcal{V}) \arrow{r} & H^0_{\dR}(Y;i^*\mathcal{V}) \arrow{r}{\delta} &  H^1_{\dR}(X ,Y;\mathcal{V}) \arrow{r} & H^1_{\dR}(X;\mathcal{V}) \arrow{r} & 0
    \end{tikzcd}
\end{equation}
where $\delta$ sends $v \in H^0_{\dR}(Y;i^*\mathcal{V}) = \Gamma(Y,i^*\mathcal{V})$ to the class of $(0,v)$ in $H^1_{\dR}(X,Y;\mathcal{V})$.

\begin{lem}\label{lem:relative-sequence-curve}
    If $\ker (\nabla : \Gamma(X,\mathcal{V}) \to \Gamma(X, \Omega^1_{X/k}(\mathcal{V})))  = 0$, then
    \[
        \begin{tikzcd}[row sep = -0.1cm, column sep = small]
            0 \arrow{r} & H^0_{\dR}(Y;i^*\mathcal{V}) \arrow{r} & H^1_{\dR}(X,Y;\mathcal{V}) \arrow{r} & H^1_{\dR}(X;\mathcal{V}) \arrow{r} & 0\\
             & v \rar[mapsto] & \left [(0,v)\right ] \\
             & & \left [(\alpha,v)\right ] \rar[mapsto] & \left [\alpha\right ]
        \end{tikzcd}
    \]
    is an exact sequence of $k$-vector spaces.
\end{lem}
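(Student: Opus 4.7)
The plan is straightforward: invoke the long exact sequence \eqref{eqn:les-relative-curve} already recorded above and show that the hypothesis forces the leftmost term to vanish, collapsing the four-term sequence to the desired three-term one. The explicit form of the maps will then be read off directly from the explicit description of $H^1_{\dR}(X,Y;\mathcal{V})$ as $\coker \partial$ in \eqref{eqn:cohomology-curve}.

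First I would observe that, by \eqref{eqn:cohomology-curve},
\[
H^0_{\dR}(X,Y;\mathcal{V}) = \ker\!\bigl(\partial: \Gamma(X,\mathcal{V}) \to \Gamma(X,\Omega^1_{X/k}(\mathcal{V})) \oplus \Gamma(Y,i^*\mathcal{V})\bigr),
\]
where $\partial s = (\nabla s, s|_{i(Y)})$. A section $s$ lies in this kernel exactly when $\nabla s = 0$ and $s|_{i(Y)}=0$. By the hypothesis, $\nabla s = 0$ already forces $s = 0$, so $H^0_{\dR}(X,Y;\mathcal{V}) = 0$.

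Next, the same hypothesis identifies $H^0_{\dR}(X;\mathcal{V}) = \ker(\nabla : \Gamma(X,\mathcal{V}) \to \Gamma(X,\Omega^1_{X/k}(\mathcal{V})))$ with $0$. Substituting these two vanishings into \eqref{eqn:les-relative-curve} immediately produces the short exact sequence in the statement. The map $H^0_{\dR}(Y;i^*\mathcal{V}) \to H^1_{\dR}(X,Y;\mathcal{V})$ is the connecting homomorphism $\delta$, which was explicitly described immediately after \eqref{eqn:les-relative-curve} as $v\mapsto [(0,v)]$, and the map $H^1_{\dR}(X,Y;\mathcal{V}) \to H^1_{\dR}(X;\mathcal{V})$ is the natural one coming from the morphism of complexes $\Omega^{\bullet}_{(X,Y)/k}(\mathcal{V}) \to \Omega^{\bullet}_{X/k}(\mathcal{V})$ projecting onto the first summand, hence sends $[(\alpha,v)]$ to $[\alpha]$.

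There is no serious obstacle here; the only subtlety is to keep track of the sign and direction of the connecting homomorphism $\delta$ in \eqref{eqn:les-relative}, which is exactly the map already recorded in the paragraph preceding the lemma. The entire proof therefore amounts to a direct invocation of the explicit description \eqref{eqn:cohomology-curve} together with the previously established long exact sequence \eqref{eqn:les-relative-curve}.
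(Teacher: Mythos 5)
Your proof is correct and follows the paper's argument exactly: show $H^0_{\dR}(X,Y;\mathcal{V}) = \ker\partial = 0$ from the hypothesis and substitute into the four-term sequence \eqref{eqn:les-relative-curve}. The observation that $H^0_{\dR}(X;\mathcal{V})=0$ is harmless but unnecessary, since \eqref{eqn:les-relative-curve} already ends with $H^1_{\dR}(X;\mathcal{V})\to 0$ by the dimension-one vanishing in \eqref{eqn:cohomology-curve}.
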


\begin{proof}
    By \eqref{eqn:cohomology-curve}, the hypothesis implies that $H^0_{\dR}(X,Y;\mathcal{V}) = \ker \partial \subset \ker \nabla =0$. The result immediately follows from \eqref{eqn:les-relative-curve}.
\end{proof}

\subsection{Hodge filtration}\label{par:appendix-hodge-filtration}

We keep the hypothesis that $X$ is a smooth affine curve over $k$, and we denote by $\overline{X}$ the smooth compactification of $X$ over $k$, with divisor at infinity $Z$, so that $X = \overline{X} \setminus Z$. As before, let $(\mathcal{V},\nabla)$ be a vector bundle with integrable connection on $X$. We further impose the following:
\begin{align}\label{hyp:canonical-extension}
    \begin{split}
        &(\mathcal{V},\nabla) \text{ extends to a vector bundle }\overline{\mathcal{V}} \text{ with a logarithmic connection} \\ & \,\overline{\nabla} : \overline{\mathcal{V}} \To \overline{\mathcal{V}} \otimes\Omega^1_{\overline{X}/k}(\log Z) \text{ whose residue along }Z\text{ is nilpotent}.
    \end{split}
\end{align}
The extension $(\overline{\mathcal{V}},\overline{\nabla})$ is uniquely determined by the above properties. Let
\begin{equation}\label{eq:de-Rham-log-Z}
    \Omega^{\bullet}_{(\overline{X},Y)/k}(\log Z,\overline{\mathcal{V}}) : \qquad \overline{\mathcal{V}} \longrightarrow  (\overline{\mathcal{V}}\otimes_{\mathcal{O}_{\overline{X}}}\Omega^1_{\overline{X}/k}(\log Z))\oplus \overline{i}_*\overline{i}^*\overline{\mathcal{V}}
\end{equation}
be the two-term complex defined as the cone of 
\[
    \overline{i}^*: \Omega^{\bullet}_{\overline{X}/k}(\log Z,\overline{\mathcal{V}}) \longrightarrow \overline{i}_*\Omega^{\bullet}_{Y/k}(\overline{i}^*\overline{\mathcal{V}}),
\]
with $\overline{i}: Y \stackrel{i}{\hookrightarrow} X \hookrightarrow \overline{X}$ the inclusion.

\begin{lem}\label{lem:deligne-theorem-compactifcation}
    The restriction induces a canonical isomorphism
    \[
        \mathbb{H}^1(\overline{X},\Omega^{\bullet}_{(\overline{X},Y)/k}(\log Z,\overline{\mathcal{V}})) \stackrel{\sim}{\longrightarrow} H^1_{\dR}(X,Y;\mathcal{V}).
    \]
\end{lem}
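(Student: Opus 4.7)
The plan is to recognise the statement as a direct application of Proposition \ref{prop:deligne} to the pair $(\overline{X}, Z)$, viewing $(\overline{\mathcal{V}}, \overline{\nabla})$ as the canonical extension across the boundary divisor. Proposition \ref{prop:deligne} was stated for any smooth ambient variety equipped with a vector bundle with logarithmic connection along a smooth divisor, so taking the ambient variety to be $\overline{X}$, the smooth divisor to be $Z$, and the smooth closed subscheme to be $Y$, the conclusion of that proposition reads
\[
    \mathbb{H}^1\bigl(\overline{X}, \Omega^{\bullet}_{(\overline{X},Y)/k}(\log Z, \overline{\mathcal{V}})\bigr) \stackrel{\sim}{\To} H^1_{\dR}\bigl(\overline{X}\setminus Z,\, Y \setminus (Z \cap Y);\, \overline{\mathcal{V}}|_{\overline{X}\setminus Z}\bigr),
\]
and one just has to identify the right-hand side with $H^1_{\dR}(X, Y; \mathcal{V})$.

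The verification of this identification is essentially tautological. Since $Y \subset X = \overline{X}\setminus Z$, the intersection $Z \cap Y$ is empty, so $Y \setminus (Z \cap Y) = Y$ and the logarithmic de Rham complex on $Y$ appearing in the construction of Proposition \ref{prop:deligne} collapses to the ordinary de Rham complex $\overline{i}_* \Omega^{\bullet}_{Y/k}(\overline{i}^*\overline{\mathcal{V}})$; this is precisely the complex used in the mapping cone \eqref{eq:de-Rham-log-Z} defining $\Omega^{\bullet}_{(\overline{X},Y)/k}(\log Z, \overline{\mathcal{V}})$. Similarly, the restriction $\overline{\mathcal{V}}|_{X}$ equals $\mathcal{V}$ by definition of the canonical extension, so the right-hand side above is $H^1_{\dR}(X, Y; \mathcal{V})$. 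The hypothesis \eqref{hyp:canonical-extension} on nilpotent residues is what permits Proposition \ref{prop:deligne} (ultimately Deligne's theorem \cite[II, Cor.~3.15]{deligneEqDiff}) to be applied in the first place.

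There is no substantive obstacle: the lemma is simply the specialisation of the relative logarithmic comparison theorem already proved as Proposition \ref{prop:deligne} to the situation of a compactification. The canonicity of the isomorphism follows from the functoriality of the restriction map of complexes from $\overline{X}$ to $X$, which is compatible with both the pullback to $Y$ (since $Y$ sits inside both) and the cone construction defining the relative de Rham complex.
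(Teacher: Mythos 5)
Your proposal is essentially the paper's own argument: the paper's proof of this lemma is a one-liner invoking Deligne's theorem \cite[II, Corollaire 3.15]{deligneEqDiff} together with the phrase ``\emph{cf.}\ Proposition \ref{prop:deligne}'', and you have unpacked exactly that.

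One small imprecision worth noting: you describe the lemma as a ``direct application'' of Proposition \ref{prop:deligne} with ambient variety $\overline{X}$ and divisor $Z$, but that proposition is stated under the running hypotheses of \S\ref{sect: A1}, where $(\mathcal{V},\nabla)$ is a vector bundle with an honest (non-logarithmic) integrable connection on the ambient variety. Here $(\overline{\mathcal{V}},\overline{\nabla})$ is only a \emph{logarithmic} connection on $\overline{X}$, so the statement of Proposition \ref{prop:deligne} does not literally apply. What does carry over verbatim is its \emph{proof}: the case $Y = \emptyset$ is exactly Deligne's canonical-extension comparison theorem (which is where the nilpotent-residue hypothesis \eqref{hyp:canonical-extension} enters), and the general case follows by the five lemma applied to the long exact sequence of the mapping cone, since $Y$ lies entirely inside $X$ so $Y \cap Z = \emptyset$ and the pullback complex on $Y$ is the ordinary de Rham complex. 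You effectively say this in your final sentence; I would only replace ``direct application of Proposition \ref{prop:deligne}'' with ``by the same argument as Proposition \ref{prop:deligne}'' to avoid the mismatch in hypotheses.
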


\begin{proof}
    This follows from Deligne's theorem \cite[II, Corollaire 3.15]{deligneEqDiff} (\emph{cf.} Proposition \ref{prop:deligne}).
\end{proof}

Assume that the pair $(\overline{\mathcal{V}},\overline{\nabla})$ is equipped with a filtration by sub-bundles
\begin{equation}\label{eq:hodge-filtration-bundle}
    \overline{\mathcal{V}} = F^0 \overline{\mathcal{V}} \supset F^1\overline{\mathcal{V}}\supset \cdots \supset F^m\overline{\mathcal{V}} \supset F^{m+1}\overline{\mathcal{V}} = 0
\end{equation}
satisfying Griffiths' transversality condition
\[
    \overline{\nabla} (F^p\overline{\mathcal{V}}) \subset F^{p-1}\overline{\mathcal{V}}\otimes_{\mathcal{O}_{\overline{X}}}\Omega^1_{\overline{X}/k}(\log Z), \qquad p=1,\ldots,m.
\]
Then, \eqref{eq:hodge-filtration-bundle} induces a filtration by $k$-linear subspaces 
\[
    H^1_{\dR}(X,Y;\mathcal{V}) = F^0H^1_{\dR}(X,Y;\mathcal{V}) \supset \cdots \supset F^{m+1}H^1_{\dR}(X,Y;\mathcal{V}) \supset F^{m+2}H^1_{\dR}(X,Y;\mathcal{V}) = 0
\]
as follows. Consider the filtration of \eqref{eq:de-Rham-log-Z} by the subcomplexes
\[
    F^p\Omega^{\bullet}_{\overline{X}/k}(\log Z,\overline{\mathcal{V}}) :\qquad  F^p\overline{\mathcal{V}} \longrightarrow ( F^{p-1}\overline{\mathcal{V}} \otimes_{\mathcal{O}_{\overline{X}}}\Omega^1_{\overline{X}/k}(\log Z) ) \oplus \overline{i}_*\overline{i}^*F^p\overline{\mathcal{V}},
\]
and set, under the isomorphism of Lemma \ref{lem:deligne-theorem-compactifcation},
\begin{equation}\label{eq:def-hodge-filtration}
    F^pH^1_{\dR}(X,Y;\mathcal{V}) \coloneqq \im (\mathbb{H}^1(\overline{X},F^p\Omega^{\bullet}_{\overline{X}/k}(\log Z,\overline{\mathcal{V}}) ) \longrightarrow H^1_{\dR}(X,Y;\mathcal{V})).
\end{equation}

\begin{rem}\label{rem:MHS-zucker}
    When $k=\mathbb{C}$ and $(\mathcal{V},\nabla, F^{\bullet}\mathcal{V})$ underlies a variation of Hodge structures, then $H^1_{\dR}(X,Y;\mathcal{V})$ has a natural mixed Hodge structure with Hodge filtration given by \eqref{eq:def-hodge-filtration}, and weight filtration related to cuspidal cohomology  (\emph{cf.} \cite[\S 13]{zucker}, \cite[\S6]{hain}). This mixed Hodge structure is functorial and compatible with the relative cohomology and residue exact sequences \eqref{eqn:les-relative-curve} and \eqref{eq:residue-exact-seq}. 
\end{rem}

\section{Complex conjugation in cohomology with coefficients}\label{sec:complex-conjugation}

The purpose of this appendix is to cast the notion of harmonic lifts of modular forms, studied in Section \ref{sec: HarmonicLiftsGeneralities}, in a more general setting concerning complex conjugation on the  de Rham cohomology with non-trivial coefficients of complex manifolds.

\subsection{Reminders on analytic de Rham cohomology with coefficients} \label{sect: B1}

Let $M$ be a complex manifold. We denote its sheaf of holomorphic functions by $\mathcal{O}_M$ and its complex of sheaves of holomorphic differential forms by $\Omega_M^{\bullet}$.

Let $\mathcal{V}$ be a coherent locally free sheaf of $\mathcal{O}_M$-modules, which corresponds to the sheaf of sections of a holomorphic vector bundle on $M$. From now on, we use these two notions interchangeably. A holomorphic integrable connection $\nabla : \mathcal{V} \to \mathcal{V} \otimes \Omega^1_M$ induces a complex
\[
    \begin{tikzcd}[column sep = small]
        0 \rar & \mathcal{V} \rar{\nabla} & \mathcal{V}\otimes_{\mathcal{O}_M}\Omega^1_{M} \rar{\nabla^1} & \mathcal{V}\otimes_{\mathcal{O}_M}\Omega^{2}_{M} \rar{\nabla^2} & \cdots
    \end{tikzcd}
\]
which we denote by $\Omega^{\bullet}_M(\mathcal{V})$. Recall that the analytic de Rham cohomology with coefficients in $(\mathcal{V},\nabla)$ is the finite-dimensional complex vector space defined by the hypercohomology
\[
    H^n_{\dR}(M;\mathcal{V}) \defeq \mathbb{H}^n(M, \Omega^{\bullet}_M(\mathcal{V})).
\]

\begin{example}\label{ex:cohomology-stein}
    If $M$ is a Stein manifold, then Cartan's theorem B implies that the sheaves $\Omega^p_M(\mathcal{V})$, $p\ge 0$, are $\Gamma(M,-)$-acylic, so that the de Rham cohomology can be computed by global holomorphic forms:
    \[
        H^n_{\dR}(M;\mathcal{V}) \cong H^n(\Gamma(M,\Omega_M^{\bullet}(\mathcal{V}))).
    \]
\end{example}

\begin{rem}\label{rem:algebraic-analytic}
    If $M=X^{\an}$ is the analytification of a smooth algebraic variety $X$ over $\mathbb{C}$, and $(\mathcal{V},\nabla)$ is the analytification of an algebraic vector bundle with integrable connection $(\mathcal{V}^{\alg},\nabla^{\alg})$ on $X$ with regular singularities, then a theorem of Deligne \cite[\S II.6]{deligneEqDiff} shows that there is a canonical isomorphism of complex vector spaces
    \[
        H^n_{\dR}(X;\mathcal{V}^{\alg}) \cong H^n_{\dR}(M;\mathcal{V}),
    \]
    where $H^n_{\dR}(X;\mathcal{V}^{\alg})$ is the algebraic de Rham cohomology of Appendix \ref{sec:algebraic-de-Rham}.
\end{rem}

Consider the sheaf of horizontal sections
\begin{equation} \label{eq: horizsections}
    \mathbb{V} \defeq \ker(\nabla : \mathcal{V} \To \mathcal{V} \otimes \Omega^1_M),
\end{equation}
which is a $\mathbb{C}$-local system such that $(\mathbb{V} \otimes_{\mathbb{C}}\mathcal{O}_M, \id\otimes d) \cong (\mathcal{V},\nabla)$. The holomorphic Poincaré lemma implies that $\Omega_M^{\bullet}(\mathcal{V})$ is a resolution of $\mathbb{V}$, yielding the canonical isomorphism with the sheaf cohomology of $M$ 
\begin{equation}\label{eq:holomorphic-poincare-comparison}
   H^n_{\dR}(M;\mathcal{V}) \cong H^n(M,\mathbb{V}) 
\end{equation}

Finally, we recall that the analytic de Rham cohomology can also be computed via smooth differential forms,.  Let $\mathcal{A}_M^{\bullet}$ be the complex of $C^{\infty}$ complex-valued differential forms on $M$, which contains $\Omega^{\bullet}_M$ as a subcomplex, and let
\[
    \mathcal{A}_M^{\bullet}(\mathcal{V}) = \mathcal{V}\otimes_{\mathcal{O}_M}\mathcal{A}^{\bullet}_M \cong \mathbb{V}\otimes_{\mathbb{C}}\mathcal{A}^{\bullet}_M
\]
be the complex induced by $\nabla$.

\begin{lem}\label{lem:analytic-smooth-cohomology}
    There is a canonical isomorphism
    \[
        H^n_{\dR}(M;\mathcal{V}) \cong H^n(\Gamma(M, \mathcal{A}_M^{\bullet}(\mathcal{V}))).
    \]
\end{lem}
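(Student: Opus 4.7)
The plan is to identify both sides with the sheaf cohomology $H^n(M,\mathbb{V})$ of the local system of horizontal sections defined in \eqref{eq: horizsections}. On the left-hand side, this identification has already been recorded as \eqref{eq:holomorphic-poincare-comparison}, which comes from the holomorphic Poincaré lemma asserting that $\Omega^{\bullet}_M(\mathcal{V}) = \mathbb{V}\otimes_{\mathbb{C}}\Omega^{\bullet}_M$ is a resolution of $\mathbb{V}$ by sheaves of $\mathcal{O}_M$-modules, together with the standard computation of hypercohomology via any resolution.

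The first step is therefore to establish the analogous statement for smooth forms: the natural inclusion $\mathbb{V} \hookrightarrow \mathcal{A}^0_M(\mathcal{V})$ extends to a resolution
\[
    \begin{tikzcd}[column sep = small]
        0 \rar & \mathbb{V} \rar & \mathcal{A}^0_M(\mathcal{V}) \rar{\nabla} & \mathcal{A}^1_M(\mathcal{V}) \rar{\nabla^1} & \mathcal{A}^2_M(\mathcal{V}) \rar{\nabla^2} & \cdots
    \end{tikzcd}
\]
This is the smooth Poincaré lemma applied with coefficients: locally, once we pick a flat frame of $\mathbb{V}$, the complex $\mathcal{A}^{\bullet}_M(\mathcal{V})$ reduces to a finite direct sum of copies of the usual complex $\mathcal{A}^{\bullet}_M$ with differential $d$, whose exactness in positive degrees and kernel in degree zero (the constant sheaf $\mathbb{C}$) are classical.

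The second step is to observe that the sheaves $\mathcal{A}^p_M(\mathcal{V})$ are fine, because they are modules over the sheaf of rings $\mathcal{A}^0_M$ of smooth complex-valued functions, which admits partitions of unity subordinate to any open cover (as $M$ is paracompact Hausdorff, being a complex manifold). Fine sheaves are soft, hence acyclic for the global sections functor $\Gamma(M,-)$.

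Combining these two steps, the complex $\Gamma(M,\mathcal{A}^{\bullet}_M(\mathcal{V}))$ computes $H^n(M,\mathbb{V})$, and together with \eqref{eq:holomorphic-poincare-comparison} this yields the canonical isomorphism of the statement. The isomorphism is canonical because both identifications arise from the universal property of derived functors applied to quasi-isomorphisms of resolutions of $\mathbb{V}$; concretely, it is induced by the inclusion of complexes $\Omega^{\bullet}_M(\mathcal{V}) \hookrightarrow \mathcal{A}^{\bullet}_M(\mathcal{V})$, which is a quasi-isomorphism by the two steps above. There is no real obstacle here—the only mild point of care is verifying that the smooth Poincaré lemma genuinely extends to coefficients in $(\mathcal{V},\nabla)$, which, as indicated, is immediate by passing to a local flat frame.
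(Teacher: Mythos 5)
Your proof is correct and follows essentially the same path as the paper's: both deduce $\mathbb{H}^n(M,\mathcal{A}^{\bullet}_M(\mathcal{V}))\cong H^n(M,\mathbb{V})$ from the $C^{\infty}$ Poincaré lemma (after passing to a local flat frame) and then collapse the hypercohomology to global sections by fineness of the sheaves $\mathcal{A}^p_M(\mathcal{V})$ via partitions of unity, comparing with \eqref{eq:holomorphic-poincare-comparison} on the holomorphic side. No discrepancies.
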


\begin{proof}
    From \eqref{eq:holomorphic-poincare-comparison} and the $C^{\infty}$ Poincaré lemma, we obtain an isomorphism $H^n_{\dR}(M;\mathcal{V}) \cong \mathbb{H}^n(M,\mathcal{A}^{\bullet}_M(\mathcal{V}))$. The statement then comes from the existence of partitions of unity, which shows that, for every $p\ge 0$, the sheaf $\mathcal{A}^p_{M}(\mathcal{V})$ is fine, thus $\Gamma(M,-)$-acyclic.
\end{proof}

\subsection{Complex conjugation} \label{par:complex-conjugation}

Let us assume that the vector bundle with integrable connection $(\mathcal{V},\nabla)$ over $M$ as above is equipped with the following additional structure:
\begin{enumerate}[(i)]
    \item a holomorphic sub-bundle $\mathcal{W} \subset \mathcal{V}$, which is not necessarily horizontal,
    \item a real structure $\mathbb{V}_{\mathbb{R}}$ on the sheaf of horizontal sections $\mathbb{V}$, \emph{i.e.}, $\mathbb{V}_{\mathbb{R}}\subset \mathbb{V}$ is a sub-$\mathbb{R}$-local system satisfying 
    \[
        \mathbb{V}_{\mathbb{R}}\otimes_{\mathbb{R}}\mathbb{C} \cong \mathbb{V}.
    \]
\end{enumerate}

\begin{lem}
    The datum (ii) is also equivalent to:
    \begin{enumerate}
        \item[(ii)'] a complex anti-linear involution
            \[
                c : \mathcal{A}^0_{M}(\mathcal{V}) \To \mathcal{A}^0_{M}(\mathcal{V}),
            \]
        which is horizontal for $\nabla$.
    \end{enumerate}
\end{lem}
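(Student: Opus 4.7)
The plan is to verify the two implications separately, using the standard identification $\mathcal{A}^0_M(\mathcal{V}) \cong \mathbb{V}\otimes_{\mathbb{C}}\mathcal{A}^0_M$ coming from \eqref{eq: horizsections}, and the fact that, under this identification, the connection is $\nabla = \mathrm{id}\otimes d$.

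For $(\mathrm{ii})\Rightarrow (\mathrm{ii})'$, I would define $c$ as the tensor product of the conjugation coming from the real structure on $\mathbb{V}$ with the ordinary complex conjugation of smooth functions: on local sections,
\[
c(v\otimes f) = \overline{v}\otimes \overline{f}\qquad (v\in \mathbb{V},\ f\in \mathcal{A}^0_M).
\]
This is well-defined because both factors are $\mathbb{C}$-antilinear, it is an involution because each factor is, and its horizontality follows from the fact that ordinary complex conjugation commutes with exterior differentiation ($d\overline{f}=\overline{df}$), so that, extending $c$ to $\mathcal{A}^1_M(\mathcal{V})$ in the same way, one has $\nabla\circ c=c\circ \nabla$.

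For $(\mathrm{ii})'\Rightarrow (\mathrm{ii})$, the key observation is that $\mathbb{V}$ can be recovered as the kernel
\[
\ker\bigl(\nabla : \mathcal{A}^0_M(\mathcal{V})\to \mathcal{A}^1_M(\mathcal{V})\bigr) = \mathbb{V},
\]
since smooth horizontal sections are locally constant in any local horizontal frame. Because $c$ commutes with $\nabla$, it restricts to a $\mathbb{C}$-antilinear involution on the subsheaf $\mathbb{V}$, and I would take $\mathbb{V}_{\mathbb{R}}$ to be its fixed sub-sheaf. That $\mathbb{V}_{\mathbb{R}}\otimes_{\mathbb{R}}\mathbb{C}\cong \mathbb{V}$ is then a pointwise statement of elementary linear algebra: any antilinear involution $\tau$ of a finite-dimensional complex vector space $W$ yields a decomposition $W = W^\tau\oplus iW^\tau$ over $\mathbb{R}$, and the natural map $W^\tau\otimes_{\mathbb{R}}\mathbb{C}\to W$ is an isomorphism.

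Finally, one checks that the two constructions are mutually inverse, which is immediate from the definitions. I do not anticipate any serious obstacle: the only delicate point is the identification of $\mathbb{V}$ as the kernel of $\nabla$ acting on smooth (rather than only holomorphic) sections, but this is essentially immediate once one works locally in a horizontal frame.
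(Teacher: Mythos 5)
Your proposal is correct and follows essentially the same route as the paper: under the identification $\mathcal{A}^0_M(\mathcal{V}) \cong \mathbb{V}\otimes_{\mathbb{C}}\mathcal{A}^0_M$, one defines $c$ by conjugating coefficients with respect to the real structure, and conversely recovers $\mathbb{V}_{\mathbb{R}}$ as the fixed points of $c$ on the horizontal subsheaf $\mathbb{V}\subset \mathcal{A}^0_M(\mathcal{V})$. Your version merely spells out details the paper leaves implicit (the identification of $\mathbb{V}$ with $\ker\nabla$ on smooth sections and the linear-algebra fact that the fixed locus of an antilinear involution is a real form), so there is nothing to add.
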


\begin{proof}
    We use the identification $\mathcal{A}^0_M(\mathcal{V}) \cong \mathbb{V} \otimes_\mathbb{C} \mathcal{A}_M^0$. If $\mathbb{V}_{\mathbb{R}}$ is a real structure on $\mathbb{V}$, then $c$ is defined by $c(s\otimes \varphi) = s \otimes \overline{\varphi}$ whenever $s$ is a section of $\mathbb{V}_{\mathbb{R}}$.  Conversely, given an involution $c$ as in the statement, the real structure $\mathbb{V}_{\mathbb{R}}$ is given by the sections $s$ of $\mathbb{V}$ such that $c(s\otimes 1) = s\otimes 1$.
\end{proof}

The involution $c$ on $\mathcal{A}^0_M(\mathcal{V})$ extends to $\mathcal{A}^p_M(\mathcal{V}) \cong \mathbb{V}_{\mathbb{R}}\otimes_{\mathbb{R}}\mathcal{A}_M^p$ by
\[
    c(s \otimes \alpha) = s \otimes \overline{\alpha},
\]
where $\alpha \mapsto \overline{\alpha}$ is the usual complex conjugation of $C^{\infty}$ complex-valued differential forms.

\begin{defn} \label{defn: B1HarmonicLift}
    A section $\xi$ of $\mathcal{A}^p_M(\mathcal{V})$ is said to be \emph{harmonic}, relative to the sub-bundle $\mathcal{W}$ and the real structure 
    $\mathbb{V}_{\mathbb{R}}$, if
    \begin{equation}\label{eq:harmonic-lift-geometric}
        \nabla^p\xi = \omega - c\eta
    \end{equation}
    where both $\omega,\eta$ are sections of $\mathcal{W}\otimes\Omega^{p+1}_{M} \subset \mathcal{A}^{p+1}_M(\mathcal{V})$.
\end{defn}

\begin{defn} \label{defn: B.6}
    Given a section $\omega$ of $\mathcal{W}\otimes\Omega^{p+1}_{M}$, a \emph{harmonic lift} of $\omega$ is a section $\xi$ of $\mathcal{A}^p_M(\mathcal{V})$ satisfying \eqref{eq:harmonic-lift-geometric} for some $\eta$ in $\mathcal{W}\otimes\Omega^{p+1}_{M}$; we may also say that $\xi$ is a harmonic lift of the pair $(\omega,\eta)$. When such a harmonic lift exists, we say that $\omega$ is a \emph{Betti-conjugate} of $\eta$.
\end{defn}

Since $c$ is horizontal for $\nabla$, it induces a complex anti-linear involution on cohomology, which we denote by the same symbol:
\begin{equation}\label{eq:involution-abs-cohomology}
    c: H^n_{\dR}(M;\mathcal{V}) \To H^n_{\dR}(M;\mathcal{V}).
\end{equation}
The involution $c$ coincides, under the isomorphisms
\[
    H^n_{\dR}(M;\mathcal{V}) \cong H^n(M,\mathbb{V}) \cong  H^n(M,\mathbb{V}_{\mathbb{R}})\otimes_{\mathbb{R}}\mathbb{C} 
\]
with the complex conjugation induced by the real structure given by the `Betti cohomology' $H^n(M,\mathbb{V}_{\mathbb{R}})$; this justifies the terminology `Betti-conjugate'. 

\begin{prop}\label{prop:geometric-harmoniclift}
    Let $n\ge 1$ be the (complex) dimension of $M$. If the natural map $\Gamma(M,\mathcal{W}\otimes \Omega^{n}_M) \to H^{n}_{\dR}(M;\mathcal{V})$ is surjective, then every section $\omega$ of $\mathcal{W}\otimes \Omega^n_M$ admits a harmonic lift. 
\end{prop}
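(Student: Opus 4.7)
The plan is to use the Betti complex conjugation on cohomology \eqref{eq:involution-abs-cohomology} to solve for $\eta$ first, and then a primitive $\xi$ with a purely cohomological argument.

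First I would observe that any section $\omega$ of $\mathcal{W}\otimes \Omega^n_M$ is automatically $\nabla$-closed, when viewed as a section of $\mathcal{A}^n_M(\mathcal{V})$. Indeed, writing locally $\omega = \sum_i v_i \otimes \alpha_i$ with $\alpha_i$ a holomorphic $n$-form, one has $d\alpha_i = 0$ (as an $(n,0)$-form on a complex manifold of complex dimension $n$, it is $\bar\partial$-closed trivially and $\partial$-closed for dimensional reasons). Similarly, $\nabla v_i \wedge \alpha_i$ is a $(n+1,0)$-form and therefore vanishes. Hence $\omega$ defines a cohomology class $[\omega]\in H^n_{\dR}(M;\mathcal{V})$. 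The same argument applies to $\eta$, and to $c\eta$ (since $c$ commutes with $\nabla$), so the Betti conjugation $c$ on $H^n_{\dR}(M;\mathcal{V})$ satisfies $c[\eta]=[c\eta]$.

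Next, by the surjectivity hypothesis, I can choose a section $\eta \in \Gamma(M,\mathcal{W}\otimes \Omega^n_M)$ whose cohomology class equals $c[\omega]$. Using that $c$ is an involution, we then have
\[
    [\omega - c\eta] \ = \ [\omega] - c[\eta] \ = \ [\omega] - c^2[\omega] \ = \ 0 \qquad \hbox{in } H^n_{\dR}(M;\mathcal{V}).
\]
Therefore, via the smooth de Rham computation of Lemma \ref{lem:analytic-smooth-cohomology}, the closed section $\omega - c\eta \in \Gamma(M,\mathcal{A}^n_M(\mathcal{V}))$ admits a smooth primitive $\xi \in \Gamma(M,\mathcal{A}^{n-1}_M(\mathcal{V}))$, \emph{i.e.}, $\nabla^{n-1}\xi = \omega - c\eta$. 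By Definition \ref{defn: B.6}, $\xi$ is a harmonic lift of $\omega$.

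The argument is essentially a one-line application of the involutivity of $c$ in cohomology; the only slightly subtle point is verifying that $\omega$ and $\eta$ genuinely represent well-defined cohomology classes (\emph{i.e.}, are $\nabla$-closed as smooth forms), which is a top-holomorphic-degree phenomenon and the step I would be most careful to justify explicitly. Everything else reduces to the hypothesis and to the smooth de Rham comparison already recorded in Section \ref{sect: B1}.
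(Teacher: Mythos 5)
Your proposal is correct and follows essentially the same path as the paper's own proof: choose $\eta$ with $[\eta]=c[\omega]$, use $c^2=\mathrm{id}$ to deduce $[\omega-c\eta]=0$, and then invoke the smooth de Rham computation (Lemma~\ref{lem:analytic-smooth-cohomology}) to produce the primitive $\xi$. The only cosmetic difference is that you verify closedness of $\omega$ by a type/degree argument in the smooth complex, whereas the paper simply observes that $\Omega^p_M=0$ for $p\ge n+1$ in the holomorphic complex; these are equivalent justifications of the same well-definedness.
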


\begin{proof}
    Note that $\Gamma(M,\mathcal{W}\otimes \Omega^{n}_M) \to H^{n}_{\dR}(M;\mathcal{V})$, $\omega \mapsto [\omega]$, is well-defined since $\Omega^{p}_M = 0$ for $p\ge n+1$. By the surjectivity hypothesis, given $\omega \in\Gamma(M,\mathcal{W}\otimes \Omega^{n}_M)$ there is $\eta\in\Gamma(M,\mathcal{W}\otimes \Omega^{n}_M)$ such that
    \[
        c[\omega] = [\eta]
    \]
    in $H^{n}_{\dR}(M;\mathcal{V})$. Since $c$ is an involution, this is equivalent to $[\omega] = c[\eta]$, which also amounts to $[\omega - c\eta]=0$. By Lemma \ref{lem:analytic-smooth-cohomology}, there exists $\xi$ in $\mathcal{A}^{n-1}_M(\mathcal{V})$ satisfying equation \eqref{eq:harmonic-lift-geometric}.
\end{proof}

\begin{rem}\label{rem:geometric-harmoniclift-algebraic}
    In the algebraic setting of Remark \ref{rem:algebraic-analytic}, we can also ask the sub-bundle $\mathcal{W}$ of $\mathcal{V}$ to be the analytification of an algebraic sub-bundle $\mathcal{W}^{\alg}$ of $\mathcal{V}^{\alg}$. It follows from the same proof above that, if the natural map
    \[
        \Gamma(X,\mathcal{W}^{\alg}\otimes \Omega^n_{X/\mathbb{C}}) \To H^n_{\dR}(X;\mathcal{V}^{\alg}) \cong H^n_{\dR}(M;\mathcal{V})
    \]
    is surjective, then any $\omega \in \Gamma(X,\mathcal{W}^{\alg}\otimes \Omega^n_{X/\mathbb{C}})$ admits a harmonic lift $\xi$ relative to $\mathcal{W}^{\alg}$ and $\mathbb{V}_{\mathbb{R}}$, meaning that $\eta$ in equation \eqref{eq:harmonic-lift-geometric} can also be assumed to lie in $\Gamma(X,\mathcal{W}^{\alg}\otimes \Omega^n_{X/\mathbb{C}})$.
\end{rem}

\subsection{Harmonic lifts and complex conjugation in relative cohomology}

We keep the above notation, and consider a closed embedded complex submanifold $i:N\hookrightarrow M$. The \emph{$n$th relative analytic de Rham cohomology with coefficients in $\mathcal{V}$} is defined as in \S \ref{sect: A1}:
\[
    H^n_{\dR}(M,N;\mathcal{V}) \defeq \mathbb{H}^n(M,\Omega^{\bullet}_{(M,N)}(\mathcal{V})),
\]
where $\Omega^{\bullet}_{(M,N)}(\mathcal{V})$ is the mapping cone of $i^*: \Omega^{\bullet}_M(\mathcal{V}) \to i_*\Omega^{\bullet}_N(i^*\mathcal{V})$. As in the algebraic case, there is a long exact sequence of complex vector spaces
\begin{equation}\label{eqn:les-relative-analytic}
    \begin{tikzcd}[column sep = small]
        \cdots  \arrow{r} & H^{n-1}_{\dR}(N;i^*\mathcal{V})\arrow{r} & H_{\dR}^n (M,N;\mathcal{V}) \arrow{r} & H^n_{\dR}(M;\mathcal{V}) \arrow{r} & H^{n}_{\dR}(N;i^*\mathcal{V})\arrow{r} & \cdots   
    \end{tikzcd}
\end{equation}
induced by the short exact sequence of complexes
\[
    \begin{tikzcd}[column sep = small]
        0 \rar & i_*\Omega^{\bullet}_{N}(i^*\mathcal{V})[-1] \rar & \Omega^{\bullet}_{(M,N)}(\mathcal{V}) \rar & \Omega^{\bullet}_{M}(\mathcal{V}) \rar & 0
    \end{tikzcd}.
\]

\begin{lem}\label{lem:algebraic-analytic-relative}
    If $M=X^{\an}$ and $(\mathcal{V},\nabla)$ is the analytification of $(\mathcal{V}^{\alg},\nabla^{\alg})$ over $X$, with hypotheses as in Remark \ref{rem:algebraic-analytic}, and moreover the submanifold $N\hookrightarrow M$ is the analytification of a smooth closed subvariety $Y \hookrightarrow X$, then there is a canonical isomorphism
    \[
        H^n_{\dR}(X,Y;\mathcal{V}^{\alg}) \cong H^n_{\dR}(M,N;\mathcal{V}).
    \]
\end{lem}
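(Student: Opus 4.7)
The plan is to reduce the relative statement to the absolute one already proved by Deligne, via the five lemma applied to the long exact sequences \eqref{eqn:les-relative} and \eqref{eqn:les-relative-analytic}.

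First, I would construct a natural comparison morphism
\[
    H^n_{\dR}(X,Y;\mathcal{V}^{\alg}) \To H^n_{\dR}(M,N;\mathcal{V}).
\]
This comes from analytification: the analytic mapping cone complex $\Omega^{\bullet}_{(M,N)}(\mathcal{V})$ is the analytification of the algebraic mapping cone $\Omega^{\bullet}_{(X,Y)/\mathbb{C}}(\mathcal{V}^{\alg})$, so the standard comparison map between algebraic and analytic hypercohomology, composed with the identification $\mathbb{H}^n(M^{\an},\mathcal{F}^{\an}) \cong \mathbb{H}^n(M,\mathcal{F}^{\an})$, yields the desired morphism, and this morphism fits into a ladder of long exact sequences.

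Next, I would write down the commutative diagram
\[
    \begin{tikzcd}[column sep = small]
         H^{n-1}_{\dR}(X;\mathcal{V}^{\alg}) \dar{\sim} \rar & H^{n-1}_{\dR}(Y;i^*\mathcal{V}^{\alg}) \dar{\sim}\rar & H^n_{\dR}(X,Y;\mathcal{V}^{\alg}) \dar \rar & H^n_{\dR}(X;\mathcal{V}^{\alg}) \dar{\sim}\rar & H^n_{\dR}(Y;i^*\mathcal{V}^{\alg}) \dar{\sim}\\
         H^{n-1}_{\dR}(M;\mathcal{V}) \rar & H^{n-1}_{\dR}(N;i^*\mathcal{V})\rar & H^n_{\dR}(M,N;\mathcal{V})\rar & H^n_{\dR}(M;\mathcal{V})\rar & H^{n}_{\dR}(N;i^*\mathcal{V})
    \end{tikzcd}
\]
whose rows are \eqref{eqn:les-relative} and \eqref{eqn:les-relative-analytic}. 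The four outer vertical arrows are isomorphisms by Deligne's comparison theorem (Remark \ref{rem:algebraic-analytic}): for the columns involving $X$, this is immediate from the regular singularities hypothesis on $(\mathcal{V}^{\alg}, \nabla^{\alg})$; for the columns involving $Y$, one needs to apply Deligne's theorem to the restriction $(i^*\mathcal{V}^{\alg}, i^*\nabla^{\alg})$ on $Y$, which has regular singularities along any boundary since its formal monodromy is inherited from that of $\mathcal{V}^{\alg}$ along $\overline{X}\setminus X$.

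The commutativity of the diagram is the main thing to verify, and reduces to the naturality of Deligne's comparison with respect to the pullback map $i^*$ between algebraic and analytic de Rham complexes; this is a straightforward but slightly technical check, essentially because the comparison is realised at the level of complexes via the natural map $\Omega^{\bullet}_{X/\mathbb{C}}(\mathcal{V}^{\alg}) \to R j_*\Omega^{\bullet}_M(\mathcal{V})$ (where $j:M\hookrightarrow \overline{X}^{\an}$), and such maps are compatible with pullback along closed immersions. Once commutativity is established, an application of the five lemma yields that the middle vertical map is an isomorphism, which is the desired statement.
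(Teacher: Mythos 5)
Your proof is correct and follows exactly the same strategy as the paper's: reduce to the absolute case via Deligne's theorem and the five lemma applied to the two relative long exact sequences. The paper's own proof is just a one-sentence pointer to that argument, while you have spelled out the construction of the comparison morphism, the commutativity of the ladder, and the regular-singularities check for $i^*\mathcal{V}^{\alg}$ — all of which is the implicit content of the paper's terse statement.
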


\begin{proof}
    This follows from Deligne's theorem in the absolute case, as explained in Remark \ref{rem:algebraic-analytic}, together with an application of the five lemma to the long exact sequences \eqref{eqn:les-relative} and \eqref{eqn:les-relative-analytic}.
\end{proof}

 If   $M$ is Stein as in Example \ref{ex:cohomology-stein}, then the sheaves $\Omega^p_{(M,N)}(\mathcal{V})$ are acyclic, and the relative analytic de Rham cohomology is computed by global sections: $H^n_{\dR}(M,N;\mathcal{V}) = H^n(\Gamma(M,\Omega^{\bullet}_{(M,N)}(\mathcal{V})))$.

More generally, without any hypothesis on $M$, the relative analytic de Rham cohomology can be computed by global $C^{\infty}$ sections. Let $\mathcal{A}^{\bullet}_{(M,N)}(\mathcal{V})$ denote the cone of $i^*: \mathcal{A}^{\bullet}_M(\mathcal{V}) \to i_*\mathcal{A}^{\bullet}_N(i^*\mathcal{V})$; we may also write
\[
    \mathcal{A}^{\bullet}_{(M,N)}(\mathcal{V}) \cong \mathbb{V} \otimes_{\mathbb{C}}\mathcal{A}_{(M,N)}^{\bullet},
\]
where $\mathbb{V}$ is the local system of horizontal sections of $(\mathcal{V},\nabla)$, as before. Then, there is a canonical isomorphism
\[
    H^n_{\dR}(M,N;\mathcal{V}) \cong H^n(\Gamma(M,\mathcal{A}^{\bullet}_{(M,N)}(\mathcal{V})).
\]
The proof is similar to that of Lemma \ref{lem:analytic-smooth-cohomology}.

Now, assume that $(\mathcal{V},\nabla)$ is equipped with the additional structure (i) and (ii) of \S\ref{par:complex-conjugation}. The complex conjugation $c: \mathcal{A}_M^0(\mathcal{V}) \to \mathcal{A}_M^0(\mathcal{V})$ induces, for all $n$, a $\mathbb{C}$-antilinear involution on relative cohomology
\[
    c: H^n_{\dR}(M,N;\mathcal{V}) \longrightarrow H^n_{\dR}(M,N;\mathcal{V})
\]
which is compatible with the involution on absolute cohomology \eqref{eq:involution-abs-cohomology} by the long exact sequence \eqref{eqn:les-relative-analytic}. The following lemma implies that the map $c$ may be computed in top degree by a harmonic lift.

\begin{lem}\label{lem:value-harmonic-lift}
    Suppose that $M$ has dimension $n$ and let $\omega, \eta \in \Gamma(M, \mathcal{W}\otimes_{\mathcal{O}_M} \Omega^n_{M} )$. They define classes $[(\omega, 0)],[(\eta,0)] \in H^n_{\dR} ( M, N ; \mathcal{V})$. Let $\xi \in \Gamma(M, \mathcal{A}^{n-1}_{M}(\mathcal{V}))$ be a harmonic lift of $(\omega,\eta)$. Then we have 
    \begin{equation}\label{eq:conjugation-harmonic-lift}
        c [(\eta,0) ]  =  [(\omega,0)]  +   [(0,i^*\xi)],
    \end{equation}
    where  $[(0,i^*\xi)]$ is the image of the class $[i^*\xi] \in H^{n-1}_{\dR} (N; i^*\mathcal{V})$  under the boundary map
    \[
        H^{n-1}_{\dR} (N; i^*\mathcal{V})   \To  H^{n}_{\dR} (M, N; \mathcal{V}).
    \]
    Equivalently, one may write $c[(\omega,0)]= [(\eta,0)] - [(0,i^*c\, \xi)]$. 
\end{lem}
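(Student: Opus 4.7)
The plan is to work in the smooth mapping cone model of relative cohomology (which is valid by the analog of Lemma \ref{lem:analytic-smooth-cohomology} for the complex $\mathcal{A}^\bullet_{(M,N)}(\mathcal{V})$), so that classes in $H^n_{\dR}(M,N;\mathcal{V})$ are represented by cocycles $(\alpha,\beta) \in \mathcal{A}^n_M(\mathcal{V}) \oplus i_*\mathcal{A}^{n-1}_N(i^*\mathcal{V})$ with differential $(\alpha,\beta)\mapsto (\nabla\alpha, i^*\alpha - \nabla \beta)$, following the convention introduced in \S\ref{sect: A1}. In this language, the pairs $(\omega,0)$ and $(\eta,0)$ are cocycles because $\nabla\omega = \nabla\eta = 0$ (both are holomorphic $n$-forms on a complex manifold of complex dimension $n$) and because $i^*\omega = i^*\eta = 0$ (since $\mathcal{W}\otimes\Omega^n_M$ consists of $(n,0)$-forms, and $N$ has strictly smaller complex dimension, so its pullback vanishes).

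The key computation is then to apply the differential of the mapping cone to the smooth section $(\xi,0) \in \mathcal{A}^{n-1}_{(M,N)}(\mathcal{V})$: using the harmonic lift equation $\nabla\xi = \omega - c\eta$, one gets
\[
    d(\xi,0) = (\nabla \xi, i^*\xi) = (\omega - c\eta,\, i^*\xi)\ .
\]
This shows that $(\omega - c\eta,\, i^*\xi)$ is a coboundary, hence $[(\omega,\, i^*\xi)] = [(c\eta,0)]$ in $H^n_{\dR}(M,N;\mathcal{V})$. Now $[(c\eta,0)] = c[(\eta,0)]$ by definition of the $\mathbb{C}$-antilinear involution $c$ on relative cohomology, and $[(\omega,\, i^*\xi)] = [(\omega,0)] + [(0,\, i^*\xi)]$ by linearity of the cone, which yields the stated equation \eqref{eq:conjugation-harmonic-lift}.

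To make this rigorous one should also verify that $[(0, i^*\xi)]$ really is the image of $[i^*\xi]$ under the connecting map in \eqref{eqn:les-relative-analytic}: this follows from the fact that $i^*\xi$ is closed in $\mathcal{A}^{n-1}_N(i^*\mathcal{V})$, which in turn follows from $d\,i^*\xi = i^*(\omega - c\eta) = 0$ by compatibility of $i^*$ with $c$ (the real structure $\mathbb{V}_\mathbb{R}$ restricts to a real structure on $i^*\mathcal{V}$, so $i^* \circ c = c\circ i^*$) and the vanishing of $i^*\omega$ and $i^*\eta$ noted above; and from the fact that the boundary map in the long exact sequence coming from $0 \to i_*\mathcal{A}^\bullet_N(i^*\mathcal{V})[-1] \to \mathcal{A}^\bullet_{(M,N)}(\mathcal{V}) \to \mathcal{A}^\bullet_M(\mathcal{V}) \to 0$ is precisely the map $[\beta] \mapsto [(0,\beta)]$. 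The equivalent formulation in the last sentence of the statement is obtained by applying the involution $c$ to both sides of \eqref{eq:conjugation-harmonic-lift}, using $c^2 = \id$ and the fact that $c$ commutes with the boundary map.

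There is no real obstacle; the only thing to take care of is the bookkeeping of the mapping cone differential and the compatibility of $c$ with pullback along $i$. In particular, no analytic input is needed beyond the definition of the harmonic lift.
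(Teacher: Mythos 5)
Your proof is correct and follows essentially the same route as the paper's: apply the mapping-cone differential to $(\xi,0)$, invoke $\nabla\xi=\omega-c\eta$, and then split the resulting coboundary relation. The only differences are that you are more explicit about working in the smooth cone $\mathcal{A}^\bullet_{(M,N)}(\mathcal{V})$ (the paper writes $\Omega^{\bullet}_{(M,N)}(\mathcal{V})$ but necessarily means the smooth version, since $\xi$ is only $C^\infty$), and you spell out the vanishing $i^*\omega=i^*\eta=0$ and the identification of the connecting map $[\beta]\mapsto[(0,\beta)]$, both of which the paper leaves implicit.
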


\begin{proof}
    Since $\omega,\eta $ are holomorphic and of top degree, they are closed and hence the pairs $(\omega,0)$ and $(\eta,0)$ are closed in the mapping cone $\Omega^{\bullet}_{(M,N)}(\mathcal{V}) = C(i^*)$.  By Definition \ref{defn: B1HarmonicLift},  $\nabla^{n-1} \xi = \omega - c \eta$. We deduce that the following relation 
    \[
        d (\xi,0 ) =  (\nabla^{n-1} \xi,  \xi|_{i(N)} ) = (\omega- c \eta,0) + (0,\xi|_{i(N)} )
    \]
    holds in $\Omega^{\bullet}_{(M,N)}(\mathcal{V})$. Therefore,  in relative analytic de Rham cohomology, we obtain
    \[ 
        c[(\eta,0)] =  [(\omega,0)] + [( 0, i^*\xi ) ].
    \]
    Note that $i^*\xi$ is closed, since $\xi$ is harmonic and $\dim N<n$.  The final equation follows from $c^2=\id.$
\end{proof}

\begin{rem}
    There is also an algebraic version of the above result. In the situation of Lemma \ref{lem:algebraic-analytic-relative} and Remark \ref{rem:geometric-harmoniclift-algebraic}, we may take $\omega,\eta \in \Gamma(X,\mathcal{W}^{\alg}\otimes_{\mathcal{O}_X}\Omega^n_{X/\mathbb{C}})$ to be algebraic Betti-conjugate sections. Then, the equation \eqref{eq:conjugation-harmonic-lift} holds in algebraic de Rham cohomology $H^n_{\dR}(X,Y;\mathcal{V}^{\alg})$. Note that the harmonic lift $\xi$ is only smooth, and  \emph{not} algebraic, but $[i^*\xi] \in H^{n-1}_{\dR}(Y;i^*\mathcal{V}^{\alg})$ does indeed define a class in algebraic de Rham cohomology via Lemma \ref{lem:analytic-smooth-cohomology}.
\end{rem}

\section{Examples of matrix-valued higher Green's functions} \label{sect: AppendixC}
We provide  examples of our $(k+1) \times (k+1)$ matrix-valued Green's functions for $k=1,2$. 
Their existence and properties follow from the results of \S\ref{sect: existenceHigherGreensMatrix}.  When $k=2m$ is even, the  central element of this  matrix is proportional to the `classical' higher Green's function \eqref{eq:classical-GF}, and all other entries are  deduced from it by applying raising and lowering operators. However, when $k$ is odd,  corresponding to  modular forms of odd weight, these matrices seem to be new.    We give here the simplest non-trivial example in each case. 

\subsection{The $2\times 2$ case}

First, recall that for $z,w$ in the upper or lower half planes, 
\[
    \cc =\frac{(z-w)(\overline{z}-\overline{w})}{(z-\overline{z})(w-\overline{w})}\ \qquad \hbox{ and } \qquad 
    \psi^{p,q}(z,w) = \frac{(w- \overline{w})}{ (z-w)^{p+1} (z- \overline{w})^{q+1}   }\ . 
\]
Define the following function  
\[
    f_2(z,w)=  \frac{1}{ (w-\overline{w})(z-\overline{z})  } \log \left( \frac{\cc}{\cc-1} \right)   =   \frac{1}{ (w-\overline{w})(z-\overline{z})  } \log  \frac{(z-w)(\overline{z}-\overline{w})}{(z-\overline{w})(\overline{z}-w)}\ .  
\]
It takes values in $\RR$, which is clear from writing it as a single-valued logarithm: 
\[
    f_2(z,w)= \frac{-1}{4\,\mathrm{Im}(z) \mathrm{Im}(w) }\log \left|  \frac{z-w}{z- \overline{w}} \right|^2 . 
\]
Consider the $2\times 2$ matrix whose entries are  
\[
    \mathsf{g}_1(z,w)= \begin{pmatrix} g^{1,0}_{0,1} & g^{0,1}_{0,1} \\  g^{1,0}_{1,0}&  g^{0,1}_{1,0}  \end{pmatrix}
\] 
where 
\[
    g^{0,1}_{0,1} =\frac{1}{\overline{w}-\overline{z}}       +  (z-w) \, f_2(z,w)    \ ,  \quad    g^{0,1}_{1,0} =\frac{1}{z-\overline{w}}      +    (w- \overline{z}) \, f_2(z,w)
\] 
\[
    g^{1,0}_{0,1}=    \frac{1}{\overline{z}-w}     +  (\overline{w}-z) \, f_2(z,w)   \ ,  \quad  g^{1,0}_{1,0}=   \frac{1}{w-z}     +  (\overline{z}-\overline{w}) \, f_2(z,w)   \ .
\] 
The formula for $g^{1,0}_{1,0}(z,w)$ is given by 
\eqref{gkoformulaaslogarithm} and \eqref{truncatedlogarithm}, since $\mathcal{L}_1(x) = -1 -\log(1-x)/x, $ where $x=\cc^{-1}.$
Then we may check that all the entries of the matrix are compatible with raising and lowering operators:
\[
    \partial_r g^{p,q}_{r,s} = g^{p,q}_{r+1,s-1}\ ,  \quad  \overline{\partial}_s g^{p,q}_{r,s} = g^{p,q}_{r-1,s+1}
\]
\[ 
    \partial^p g^{p,q}_{r,s} = g^{p+1,q-1}_{r,s} \ ,   \quad  \overline{\partial}^q g^{p,q}_{r,s} = g^{p-1,q+1}_{r,s}
\]
where all indices lie in the range of the matrix. Furthermore we verify that
\[ 
    \partial_1 g^{0,1}_{1,0} =  \frac{(z-\overline{z})(w-\overline{w})}{(z-w)(z-\overline{w})^2} \ , \quad 
    \partial_1 g^{1,0}_{1,0} =  \frac{(z-\overline{z})(w-\overline{w})}{(z-w)^2(z-\overline{w})}
\]
hold, as do  similar equations  for $\partial^1$ and their complex conjugates. The situation is  completely summarised by the following diagram:

\[ \renewcommand{\arraystretch}{2.0}
    \begin{array}{c|cc|c}
    \frac{(z- \overline{z})(w-\overline{w})}{(\overline{z}-w)^3}  &  \frac{(z- \overline{z})(w-\overline{w})}{(\overline{z}-w)^2 (\overline{z}-\overline{w})}   &  \frac{(z- \overline{z})(w-\overline{w})}{(\overline{z}-w) (\overline{z}- \overline{w})^2}      &      \frac{(z- \overline{z})(w-\overline{w})}{(\overline{z}-\overline{w})^3 }\\ \hline
    \frac{(z- \overline{z})(w-\overline{w})}{(z-w) (\overline{z}-w)^2}  &  g^{1,0}_{0,1}     &   g^{0,1}_{0,1}    &   
    \frac{(z- \overline{z})(w-\overline{w})}{(z-\overline{w}) (\overline{z}-\overline{w})^2}\\ 
    \frac{(z- \overline{z})(w-\overline{w})}{(z-w) ^2(\overline{z}-w)}   &  g^{1,0}_{1,0}     &    g^{0,1}_{1,0} &   \frac{(z- \overline{z})(w-\overline{w})}{(z -\overline{w})^2(\overline{z}-\overline{w})} \\  \hline
    \frac{(z- \overline{z})(w-\overline{w})}{(z-w)^3}  & \frac{(z- \overline{z})(w-\overline{w})}{(z-w)^2 (z-\overline{w})} &   \frac{(z- \overline{z})(w-\overline{w})}{(z -w)(z-\overline{w})^2} &    \frac{(z- \overline{z})(w-\overline{w})}{(z-\overline{w})^3}   
    \end{array}
\]

For example,  the entries in all four rows satisfy  the following relations with respect to raising and lowering operators in the variable  $w$
\[ 
    (\overline{w}-w) \psi^{1,0}(w,z)  \  \overset{\partial^1}{\longleftarrow}   \  \  g^{1,0}_{1,0}\overset{\partial^0,\overline{\partial}^0} {\longleftrightarrow}   \  g^{0,1}_{1,0}     \   \overset{\overline{\partial}^1}{\To}  \   (\overline{w}-w) \psi^{0,1}(\overline{w},z) 
\]
Every column satisfies a similar relation with respect to $z$.

Let $\Gamma \leq \SL_2(\ZZ)$ be a finite index subgroup. Then the higher Green's matrix is defined by  averaging over the action of $\Gamma$ as follows: 
\[
    \mathcal{G}_{\Gamma,1}(z,w) = \sum_{\gamma \in \Gamma} \begin{pmatrix} j_{\gamma}^{-1}(\overline{z}) & 0 \\ 0 & j_{\gamma}(z)^{-1} \end{pmatrix} \mathsf{g}_1(\gamma z , w) \ .  
\]

\subsection{The $3\times 3$ case}

Consider the  $3\times 3$ array of functions in two variables starting  with the  following function,  which is to be  placed in the middle:
\[
    g^{1,1}_{1,1}(z,w)  = \frac{4\,  Q_1(1- 2\, \cc(z,w)) }{(z-\overline{z})(w-\overline{w})}  \ ,  \ 
\]
where $Q_1(t) =\frac{t}{2} \log (\frac{1+t}{t-1} ) -1 $ is the Legendre function. We define: 
\[ \renewcommand{\arraystretch}{3.0}
    \mathsf{g}_2(z,w)= \left(\begin{array}{c|c|c} 
   g^{2,0}_{0,2}    &  g^{1,1}_{0,2} &     g^{0,2}_{0,2}   \\  \hline
   g^{2,0}_{1,1}    & g^{1,1}_{1,1}  &  g^{0,2}_{1,1}  \\  \hline
 g^{2,0}_{2,0}      &    g^{1,1}_{2,0}    & g^{0,2}_{2,0}     \\ 
 \end{array}\right)  =\left(\begin{array}{c|c|c} 
   \frac{1}{4} \partial^1 \overline{\partial}_1   g^{1,1}_{1,1}       &   \frac{1}{2} \overline{\partial}_1 g^{1,1}_{1,1}    &   \frac{1}{4} \overline{\partial}^1\overline{\partial}_1   g^{1,1}_{1,1}  \\  \hline
 \frac{1}{2} \partial^1 g^{1,1}_{1,1}      & g^{1,1}_{1,1}  &   \frac{1}{2} \overline{\partial}^1 g^{1,1}_{1,1}  \\  \hline
 \frac{1}{4} \partial^1\partial_1   g^{1,1}_{1,1}        &  \frac{1}{2} \partial_1 g^{1,1}_{1,1}     &   \frac{1}{4} \overline{\partial}^1\partial_1   g^{1,1}_{1,1}     \\ 
 \end{array}\right)
\]
which, if one wishes,  can be suitably interpreted in matrix form: 
\[  \begin{pmatrix} \frac{1}{2} \overline{\partial}_1 & & \\   & 1 & \\  & &\frac{1}{2} \partial_1  \end{pmatrix}
\begin{pmatrix}   g^{1,1}_{1,1}  &  g^{1,1}_{1,1}  &  g^{1,1}_{1,1} \\
 g^{1,1}_{1,1}  &  g^{1,1}_{1,1}  &  g^{1,1}_{1,1}  \\
  g^{1,1}_{1,1} &  g^{1,1}_{1,1}  &  g^{1,1}_{1,1} 
\end{pmatrix}  
 \begin{pmatrix} \frac{1}{2} \partial^1 & & \\   & 1 & \\  & &\frac{1}{2} \overline{\partial}^1  \end{pmatrix}
\]
The vertical and horizontal lines are merely there for the sake of legibility. Nevertheless, crossing such a line, in either direction, corresponds to the action of an operator  $\partial_{\bullet}, \overline{\partial}_{\bullet}$ to move  up  and down  (with an appropriate scaling factor), and to the action of  the operators $\partial^{\bullet}, \overline{\partial}^{\bullet}$ to move left and right.
For example:
\[ 
    \partial_r g^{p,q}_{r,s} = (r+1) g^{p,q}_{r+1,s} \ , \quad   \overline{\partial}_s g^{p,q}_{r,s} = (s+1) g^{p,q}_{r-1,s+1}
\]
for all indices in the range above, and similarly for $\partial^p$, $\overline{\partial}^q$.

The bottom-left entry, for example, can be written more simply
\[
    g^{2,0}_{2,0}(z,w) =    \left(  \frac{\overline{z}-\overline{w}}{ (z-\overline{z})(w- \overline{w})}\right)^2 \log    \left|  \frac{z-w}{z-\overline{w}}\right|^2    -  \frac{1}{2}   \frac{2 \cc(z,w)+ 1}{ (z-w)^{2} }\ ,
\]
which is equivalent to \eqref{gkoformulaaslogarithm}, since $\mathcal{L}_2(x) = -\frac{1}{2} - \frac{1}{x} - \log(1-x)/x^2$ in \eqref{truncatedlogarithm}.

If we  apply the raising and lowering operators in $z,w$ again  to take us one further step outside the range of this array, we obtain only rational functions of $z,w$ and their complex conjugates,  as shown:
\[ 
 \renewcommand{\arraystretch}{2.0}
    \begin{array}{c|ccc|c}
   \frac{(z- \overline{z})(w-\overline{w})}{(\overline{z}-w )^4}   &  \frac{(z- \overline{z})(w-\overline{w})}{(\overline{z}-w )^3(\overline{z}-\overline{w} )}  &   \frac{(z- \overline{z})(w-\overline{w})}{(\overline{z}-w )^2(\overline{z}-\overline{w} )^2}  & \frac{(z- \overline{z})(w-\overline{w})}{(\overline{z}-w )(\overline{z}-\overline{w} )^3}   &   \frac{(z- \overline{z})(w-\overline{w})}{(\overline{z}-\overline{w} )^4}  \\ \hline
   \frac{(z- \overline{z})(w-\overline{w})}{(z-w)(\overline{z}-w )^3}   &  g^{2,0}_{0,2}   &  g^{1,1}_{0,2}    &  g^{0,2}_{0,2}  &   \frac{(z- \overline{z})(w-\overline{w})}{(z-\overline{w})(\overline{z}-\overline{w} )^3}  \\ 
  \frac{(z- \overline{z})(w-\overline{w})}{(z-w)^2(\overline{z}-w )^2}  &   g^{2,0}_{1,1}     &  g^{1,1}_{1,1}  &  g^{0,2}_{1,1} &  \frac{(z- \overline{z})(w-\overline{w})}{(z-\overline{w})^2(\overline{z}-\overline{w} )^2}   \\ 
  \frac{(z- \overline{z})(w-\overline{w})}{(z-w)^3(\overline{z}-w )}     &g^{2,0}_{2,0}    &  g^{1,1}_{2,0}   &    g^{0,2}_{2,0}  &  \frac{(z- \overline{z})(w-\overline{w})}{(z-\overline{w})^3(\overline{z}-\overline{w} )}  \\ \hline
   \frac{(z- \overline{z})(w-\overline{w})}{(z-w)^4 } &      \frac{(z- \overline{z})(w-\overline{w})}{(z-w)^3 (z-\overline{w})}    &     \frac{(z- \overline{z})(w-\overline{w})}{(z-w)^2(z-\overline{w})^2}   &    \frac{(z- \overline{z})(w-\overline{w})}{(z-w)(z-\overline{w})^3 }  &  \frac{(z- \overline{z})(w-\overline{w})}{(z-\overline{w})^4 }  
 \end{array}
\] 
 For example, we check that 
\[ \partial_2 g^{p,q}_{2,0}(z,w) =  \frac{(z- \overline{z})(w-\overline{w})}{(z-w)^{p+1} (z-\overline{w})^{q+1}}  = (z- \overline{z}) \psi^{p,q}(z,w) \]
for all $p+q=2$, $p,q\geq  0 .$

Let $\Gamma \leq \SL_2(\ZZ)$ be a finite index subgroup. Then the higher Green's function matrix is defined by  averaging over the action of $\Gamma$ as follows: 
\[ \mathcal{G}_{\Gamma,2}(z,w) = \sum_{\gamma \in \Gamma} \begin{pmatrix} j_{\gamma}^{-2}(\overline{z}) & 0  & 0 \\  0 &  j_{\gamma}(z)^{-1}j_{\gamma}(\overline{z})^{-1}  \\ 0 & 0 & j_{\gamma}(z)^{-2}  \end{pmatrix} \mathsf{g}_2(\gamma z , w) \ .  
\]

\section{Geometry of $\mathcal{M}_{1,3}$} \label{sect: AppendixD}

Let  $\mathcal{M}^{\node}_{1,n}$ denote the moduli stack of geometrically integral genus one curves with at most nodal singularities, and $n$ distinct marked points. It is the union of $\mathcal{M}_{1,n} $ with  the configuration space of $n$ distinct  ordered points on the nodal cubic. We shall give explicit stratifications of $\mathcal{M}^{\node}_{1,n}$ for $n\leq  3$.  

Throughout  this section, we work over the base ring $R = \mathbb{Z}[1/6]$.

\subsection{The moduli stack $\mathcal{M}_{1,1}$ of elliptic curves}\label{par:M_1,1}

Recall that the moduli stack of geometrically integral projective curves of arithmetic genus one and one marked smooth point $(C,p)$ is isomorphic to the quotient stack $[\mathbb{A}^2 / \mathbb{G}_m]$, where $\mathbb{A}^2 = \Spec R[g_2,g_3]$ is equipped with the $\mathbb{G}_m$-action for which $g_2$ and $g_3$ have weights $4$ and $6$, respectively. Explicitly, to a triple $(C,p, v)$, where $v$ is a non-zero tangent vector at $p$, there corresponds a unique Weierstrass equation
\begin{equation}\label{eq:Weierstrass-eq}
   y^2 = 4x^3 -g_2x - g_3
\end{equation}
where $p$ is identified with the point at infinity $(0:1:0)$, and $v$ with the tangent vector dual to $dx/y$. The action of $\mathbb{G}_m = \Spec R[t,t^{-1}]$ comes from the different choices of $v$: the triple $(C,p,tv)$ corresponds to the Weierstrass coefficients $(t^4g_2, t^6g_3)$ via the change of variables $(x,y) \mapsto (t^2x,t^3y)$. 

Let $\Delta = g_2^3 - 27g_3^2$. The projective curve defined by \eqref{eq:Weierstrass-eq} is smooth if and only if $\Delta \neq 0$. It has nodal singularities if $\Delta=0$ and $(g_2,g_3) \neq (0,0)$, and it has cuspidal singularities if $(g_2,g_3) = (0,0)$. Then,
\begin{equation} \label{eq:M_1,1node}
    \mathcal{M}^{\node}_{1,1} =\overline{\mathcal{M}}_{1,1}  \cong    \left[  \frac{\mathbb{A}^2 \setminus  (0,0) }{\mathbb{G}_m}  \right] = \mathcal{P}(4,6)
\end{equation}
and 
\[
    \mathcal{M}_{1,1} \cong   \left[  \frac{\mathbb{A}^2 \setminus V(\Delta)}{\mathbb{G}_m}  \right]  \cong  \mathcal{M}^{\node}_{1,1}  \setminus \infty ,
\]
where 
\begin{equation}\label{eq:cusp_mu2}
    \infty \cong \left[\frac{V(\Delta) \setminus (0,0)}{\mathbb{G}_m}\right] \cong \left[\frac{\Spec R}{\mu_2}\right]
\end{equation}
is the cusp. Here, and in what follows, we denote by $\mu_n=\Spec  R[t]/(t^n-1)$  the affine subgroup scheme of $\mathbb{G}_m$ given by $n$th roots of unity. The isomorphism \eqref{eq:cusp_mu2} may be obtained by the following auxiliary result, which will be used multiple times in the following paragraphs.

\begin{lem}[Slicing lemma]\label{lem:slice}
    Consider $\mathbb{A}^n = \Spec R[x_1,\ldots,x_n]$ with the $\mathbb{G}_m$-action for which $x_1,\ldots,x_n$ have weights $r_1,\ldots,r_n\ge 1$, and let $X\subset \mathbb{A}^n$ be a $\mathbb{G}_m$-invariant $R$-subscheme. Let $f \in R[x_1,\ldots,x_n]\setminus\{0\}$ be a homogeneous polynomial of degree $r$ with respect to the $(r_1,\ldots,r_n)$-grading, and assume that  $X\subset \mathbb{A}^n \setminus V(f)$. Then the natural map
    \[
        \left[\frac{X\cap V(f-1)}{\mu_r}\right] \to \left[\frac{X}{\mathbb{G}_m}\right]
    \]
    is an isomorphism of stacks.
\end{lem}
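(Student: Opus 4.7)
The natural map of the statement is induced by the inclusion $Y \hookrightarrow X$, which is $\mu_r$-equivariant for the action restricted from $\mathbb{G}_m$ (for $\zeta \in \mu_r$ and $y \in Y$, $f(\zeta y) = \zeta^r f(y) = 1$, so $\zeta y \in Y$), composed with the forgetful morphism $[X/\mu_r] \to [X/\mathbb{G}_m]$ associated to $\mu_r \hookrightarrow \mathbb{G}_m$. Using the standard associated-bundle identification $[Y/\mu_r] \cong [(Y \times^{\mu_r}\mathbb{G}_m)/\mathbb{G}_m]$, with $\mu_r$ acting antidiagonally via $\zeta\cdot (y,t)=(\zeta y, \zeta^{-1} t)$, the map in question is identified with the one induced by the $\mathbb{G}_m$-equivariant multiplication
\[
\phi : Y \times^{\mu_r} \mathbb{G}_m \longrightarrow X, \qquad [(y, t)] \longmapsto t\cdot y.
\]
It therefore suffices to prove that $\phi$ is an isomorphism of $\mathbb{G}_m$-schemes.

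Writing $X = \Spec A$ with $A$ a $\mathbb{Z}$-graded $R$-algebra in which $f \in A_r$ is a unit (since $X\subset \mathbb{A}^n\setminus V(f)$), and $B = A/(f-1)$, the map $\phi$ corresponds to the ring homomorphism $A \to B[t, t^{-1}]$ sending a homogeneous $a$ to $\bar{a}\, t^{\deg a}$. The ideal $(f-1)A$ is stable under the coarser $\mathbb{Z}/r\mathbb{Z}$-grading on $A$, so $B$ inherits a $\mathbb{Z}/r$-grading $B = \bigoplus_{\bar k}B_{\bar k}$; a short computation identifies the invariant subring of $B[t,t^{-1}]$ under the antidiagonal $\mu_r$-action as $\bigoplus_{k\in\mathbb{Z}}B_{\bar k}\, t^k$, and the above ring homomorphism factors through this invariant subring. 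Hence the whole question reduces to showing that for every $k \in \mathbb{Z}$ the projection $A \twoheadrightarrow B$ induces an isomorphism $A_k \stackrel{\sim}{\longrightarrow} B_{\bar k}$.

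This graded isomorphism is the main technical point. Surjectivity is routine: any homogeneous $a \in A_m$ with $m \equiv k \pmod r$ satisfies $a \equiv f^{(k-m)/r} a \pmod{(f-1)}$, and $f^{(k-m)/r}a \in A_k$; this uses only that $f^j - 1 \in (f-1)A$ for every $j\in\mathbb{Z}$, which holds because $f$ is a unit. Injectivity is the only delicate step and is where the hypotheses are used in full force: if $x \in A_k \cap (f-1)A$, write $x = (f-1) z$ with $z = \sum_m z_m$ a finite sum of homogeneous pieces, and match $A_n$-components for $n\neq k$ in the identity $(f-1)z = \sum_n (fz_{n-r}-z_n)$ to obtain the recursion $z_n = f z_{n-r}$. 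Iterating this forwards starting from any $m$ with $m\neq k-r, k-2r, \ldots$, and backwards starting from any $m$ with $m\neq k, k+r, \ldots$, together with the invertibility of $f$ and the finite support of $z$, forces $z_m = 0$ except possibly for $m\in\{k, k-r\}$; the remaining constraints that the $A_{k+r}$ and $A_{k-r}$ components of $(f-1)z$ vanish then kill $z_k$ and $z_{k-r}$ respectively, so $z = 0$ and $x = 0$.
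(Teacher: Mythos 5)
Your proof is correct, but it follows a genuinely different route from the paper. The paper simply constructs an explicit inverse: take the $\mu_r$-torsor $P \to X$ cut out by $f = t^r$ in $X \times \mathbb{A}^1$ (which is a torsor precisely because $f$ is invertible on $X$), together with the $\mu_r$-equivariant map $P \to X\cap V(f-1)$, $(x,t) \mapsto t^{-1}\cdot x$; this descends to the inverse $[X/\mathbb{G}_m] \to [(X\cap V(f-1))/\mu_r]$. You instead reduce, via the change-of-group equivalence $[Y/\mu_r] \cong [(Y\times^{\mu_r}\mathbb{G}_m)/\mathbb{G}_m]$, to showing that the multiplication map $\phi: Y\times^{\mu_r}\mathbb{G}_m \to X$ is an isomorphism of $\mathbb{G}_m$-schemes, and verify this at the level of rings by establishing the graded isomorphism $A_k \stackrel{\sim}{\to} B_{\bar k}$ for all $k$. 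Both approaches are valid. The paper's is shorter and more conceptual but leaves the torsor-theoretic verifications implicit; yours is more elementary and self-contained but requires the careful graded-ring computation. One minor imprecision in your injectivity argument: you claim that the forward/backward recursion leaves $z_m = 0$ ``except possibly for $m \in \{k, k-r\}$,'' and then invoke the $A_{k\pm r}$-components to finish. In fact, forward iteration starting at $m=k$ (chain $k, k+r, \ldots$, all constraints valid since they are at positions $\neq k$) already kills $z_k$, and backward iteration starting at $m = k-r$ (chain $k-r, k-2r, \ldots$, constraints at positions $k-r, k-2r, \ldots \neq k$) already kills $z_{k-r}$; so the two iterations alone already force $z = 0$ and your final clean-up step, while harmless, is redundant. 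The overall conclusion is unaffected.
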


\begin{proof}
    An explicit inverse for the map in the statement is induced by the morphism $X \to [X\cap V(f-1) / \mu_r]$ corresponding to the $\mu_r$-torsor $P \to X$ defined by the equation $f(x_1,\ldots,x_n) = t^r$ in $X\times \mathbb{A}^1$. It is  equipped with the $\mu_r$-equivariant map $P \to X\cap V(f-1)$  defined on  points by $(x_1,\ldots,x_r,t)\mapsto t^{-1}\cdot (x_1,\ldots,x_r)$. 
\end{proof}

\subsection{The universal punctured elliptic curve $\mathcal{M}_{1,2}$}

It follows from the discussion at the beginning of \S \ref{par:M_1,1} that a quadruple $(C,p_1,p_2,v)$, where $C$ is a (geometrically integral, projective) non-cuspidal genus 1 curve, $p_1,p_2$ are distinct marked points, and $v$ is a tangent vector at $p_1$, corresponds to a unique $(x,y,g_2,g_3)$ such that \eqref{eq:Weierstrass-eq} holds and $(g_2,g_3) \neq (0,0)$. Since \eqref{eq:Weierstrass-eq} may be solved for $g_3$, we can identify:
\begin{equation}\label{eq:M_1,2node}
    \mathcal{M}^{\node}_{1,2} \cong \left[  \frac{ \mathbb{A}^3 \setminus  V(g_2, y^2-4x^3)}{\mathbb{G}_m}  \right]
\end{equation}
where $\mathbb{A}^3 =\Spec  R[x,y,g_2]$ is equipped with the $\mathbb{G}_m$-action for which $x$, $y$, and $g_2$ have weights $2$, $3$, and $4$, respectively.

\begin{lem} \label{lem:M12}
    The stack $\mathcal{M}^{\node}_{1,2}$ admits a stratification
    \[
        \mathcal{M}^{\node}_{1,2} \cong  [\mathbb{A}^2/\mu_4]  \cup [Z/\mu_6]
    \]
    where $\mathbb{A}^2 = \Spec R[x,y]\cong V(g_2-1)$, and $Z= V(g_2,y^2-4x^3-1)$.
    It  may also be identified with 
    \begin{equation} \label{eqn: M12node} 
        \mathcal{M}^{\node}_{1,2} \cong \mathcal{P}(2,3,4) \setminus \Spec R ,
    \end{equation}
    the copy of $\Spec R$ in $\mathcal{P}(2,3,4)$ being given by the quotient $[V(g_2,y^2-4x^3)/\mathbb{G}_m]$.  
\end{lem}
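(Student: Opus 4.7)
Proof proposal for Lemma~\ref{lem:M12}:

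The plan is to start from the presentation \eqref{eq:M_1,2node} of $\mathcal{M}^{\node}_{1,2}$ as a quotient of the open subscheme $\mathbb{A}^3 \setminus V(g_2,y^2-4x^3)$ of $\mathbb{A}^3 = \Spec R[x,y,g_2]$ by $\mathbb{G}_m$ with weights $(2,3,4)$, and then split the analysis into two $\mathbb{G}_m$-invariant pieces: the open locus $\{g_2 \neq 0\}$ and the complementary closed locus $\{g_2 = 0,\ y^2-4x^3 \neq 0\}$. On each of these I will apply the slicing lemma (Lemma~\ref{lem:slice}) to rewrite the $\mathbb{G}_m$-quotient as a $\mu_n$-quotient, where $n$ is the weight of the slicing polynomial.

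Concretely, I would first take $f = g_2$ (homogeneous of weight $4$). Since $\{g_2 \neq 0\}\cap V(g_2-1) = \Spec R[x,y] \cong \mathbb{A}^2$ and $V(g_2-1) \subset \mathbb{A}^3 \setminus V(g_2, y^2-4x^3)$ trivially, Lemma~\ref{lem:slice} identifies the open substack $\{g_2\neq 0\}/\mathbb{G}_m$ with $[\mathbb{A}^2/\mu_4]$, with $\mu_4$ acting by $t\cdot(x,y) = (t^2x, t^3y)$. Next I would take $f = y^2-4x^3$ (homogeneous of weight $6$) on the closed stratum $V(g_2)\setminus V(y^2-4x^3)$; slicing by $y^2-4x^3-1$ gives the identification of this stratum with $[Z/\mu_6]$, where $Z = V(g_2,y^2-4x^3-1)$ and $\mu_6$ acts via $t\cdot(x,y) = (t^2x,t^3y)$. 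This yields the displayed stratification.

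For the second identification, observe that the open immersion $\mathbb{A}^3\setminus V(g_2,y^2-4x^3)\hookrightarrow \mathbb{A}^3\setminus\{0\}$ is $\mathbb{G}_m$-equivariant (the inclusion of complements is allowed because $(g_2,y^2-4x^3)$ is an ideal of height $2$ vanishing only at the origin over $R$, so $V(g_2, y^2-4x^3)\setminus\{0\}$ is closed in $\mathbb{A}^3\setminus \{0\}$). Passing to the quotient gives an open immersion $\mathcal{M}^{\node}_{1,2} \hookrightarrow \mathcal{P}(2,3,4)$ whose complement is the closed substack $[(V(g_2,y^2-4x^3)\setminus\{0\})/\mathbb{G}_m]$. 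To finish, I will parameterise the cuspidal cubic $V(g_2,y^2-4x^3)\subset \mathbb{A}^3$ by $t \mapsto (t^2,2t^3,0)$, giving a $\mathbb{G}_m$-equivariant isomorphism $V(g_2,y^2-4x^3)\cong \mathbb{A}^1_t$ where $\mathbb{G}_m$ acts on $\mathbb{A}^1$ with weight $1$; hence $[(\mathbb{A}^1\setminus\{0\})/\mathbb{G}_m]\cong \Spec R$, proving the last assertion.

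The main potential obstacle is verifying that the parameterisation of the cuspidal cubic is a genuine isomorphism of $R$-schemes (not merely of their reductions) and, relatedly, that $V(g_2, y^2-4x^3)\setminus \{0\}$ really is a smooth $\mathbb{G}_m$-torsor over $\Spec R$ so that the quotient makes sense as a stack isomorphic to $\Spec R$. Once this is established the rest reduces to bookkeeping with the slicing lemma and weights.
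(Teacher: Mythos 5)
Your approach is essentially the one the paper uses: the same stratification by the open locus $\{g_2\neq 0\}$ and its complement $\{g_2=0,\, y^2-4x^3\neq 0\}$, the same two applications of the slicing lemma with $f=g_2$ (weight $4$) and $f=y^2-4x^3$ (weight $6$), and the same normalisation of the cuspidal cubic to identify the boundary in $\mathcal{P}(2,3,4)$ with $\Spec R$.

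There is one imprecision worth correcting. The map $t\mapsto(t^2,2t^3,0)$ does \emph{not} give an isomorphism $V(g_2,y^2-4x^3)\cong \mathbb{A}^1$: it is the normalisation of the cuspidal cubic, and the corresponding ring map $R[x,y]/(y^2-4x^3)\to R[t]$, $x\mapsto t^2$, $y\mapsto 2t^3$, is not surjective (its image misses $t$). So the ``genuine isomorphism'' you flag as the main obstacle at the end is in fact false. What is true, and what you actually need, is that the normalisation restricts to an isomorphism $\mathbb{A}^1\setminus\{0\}\ \to\ V(g_2,y^2-4x^3)\setminus\{0\}$, since on that locus $t=y/(2x)$ lies in the localised coordinate ring. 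The paper phrases this carefully: the normalisation ``induces an isomorphism outside the origin.'' With that restriction in place, $\bigl[(\mathbb{A}^1\setminus\{0\})/\mathbb{G}_m\bigr]\cong\Spec R$ is correct and your argument goes through.
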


\begin{proof} 
    The scheme $\mathbb{A}^3 \setminus  V(g_2, y^2-4x^3)$ is stratified by the open subscheme $\mathbb{A}^3 \setminus  V(g_2)$ and its complement $V(g_2) \setminus V(g_2, y^2-4x^3)$. Since these subschemes are $\mathbb{G}_m$-invariant, we obtain from \eqref{eq:M_1,2node}:
    \[
        \mathcal{M}^{\node}_{1,2} \cong  \left[ \frac{\Spec R[x,y,g_2, g_2^{-1}]}{\mathbb{G}_m} \right]  \cup    \left[ \frac{\Spec R[x,y, (y^2-4x^3)^{-1}]}{\mathbb{G}_m} \right] . 
    \] 
    Since $g_2$ is homogeneous of degree 4, it follows from Lemma \ref{lem:slice} that
    \[
        \left[ \frac{\Spec R[x,y,g_2, g_2^{-1}]}{\mathbb{G}_m} \right]  \cong  \left[ \frac{\Spec R[x,y]}{\mu_4} \right] .
    \]
    Likewise, as $y^2-4x^3$ is homogeneous of degree 6, Lemma \ref{lem:slice} yields
    \[
        \left[ \frac{\Spec R[x,y, (y^2-4x^3)^{-1}]}{\mathbb{G}_m} \right] \cong   \left[ \frac{V(y^2-4x^3-1)}{\mu_6} \right].
    \] 

    For the second part, we stratify differently and write 
    \[
        \mathcal{M}^{\node}_{1,2} = \left[  \frac{ \mathbb{A}^3 \setminus (0,0,0) }{\mathbb{G}_m}  \right] \setminus   
        \left[  \frac{V(g_2, y^2-4x^3) \setminus (0,0,0) }{\mathbb{G}_m}  \right] . 
    \]
    The stratum on the left is  $\mathcal{P}(2,3,4)$. To compute the one on the right, normalise the plane affine cuspidal cubic $y^2 =4x^3$ by $\mathbb{A}^1 = \Spec R[z]$ via $z \mapsto (z^2,2z^3)$. The normalisation is $\mathbb{G}_m$-equivariant (where $z$ is taken with weight one) and induces an isomorphism outside the origin, thus:
    \[
        \left[ \frac{ V(g_2,y^2-4x^3) \setminus (0,0,0) }{ \mathbb{G}_m} \right]    \cong     \left[ \frac{\Spec R [z,z^{-1}]  }{ \mathbb{G}_m} \right] = \Spec R.\qedhere
    \]
\end{proof}

\begin{rem}\label{rem:mixed-tate-elliptic-quotient}
    If $Z$ is as in the above statement, one  checks  that 
    \[
        [Z/\mu_6] =   \mathcal{P}(2,3) \setminus \Spec R 
    \]
    where $\Spec R$ corresponds to the $\mathbb{G}_m$-orbit given by the punctured cuspidal cubic, as in the above proof. Note that, although the motive of $Z$ is not mixed Tate (since $Z$ is a  punctured  elliptic curve), its invariant submotive  for the action of $\mu_6$, is. 
\end{rem}

\begin{rem}
    We refer to \cite{Inchiostro} for a description of $\overline{\mathcal{M}}_{1,2}$ as a weighted blow-up of $\mathcal{P}(2,3,4).$ 
\end{rem}

\begin{cor} \label{cor: M12nodeminusVy}
    Under the isomorphism \eqref{eq:M_1,2node}, we have
    \[
        \mathcal{M}_{1,2}^{\node} \setminus [V(y)/\mathbb{G}_m] \cong   (\mathcal{P}(2,3,4) \setminus  \mathrm{Spec}(R)) \setminus \mathcal{P}(2,4).
    \]
\end{cor}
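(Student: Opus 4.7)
The plan is to trace through the identifications of Lemma \ref{lem:M12} to describe the closed substack $[V(y)/\mathbb{G}_m]$ in the presentation $\mathcal{P}(2,3,4) \setminus \Spec R$ of $\mathcal{M}^{\node}_{1,2}$. Recall that, under \eqref{eq:M_1,2node}, the stack $\mathcal{M}^{\node}_{1,2}$ is realised as $[(\mathbb{A}^3 \setminus V(g_2, y^2 - 4x^3))/\mathbb{G}_m]$, and in the second part of Lemma \ref{lem:M12} this was reinterpreted as the open substack $\mathcal{P}(2,3,4) \setminus \Spec R$, the point $\Spec R$ being the single $\mathbb{G}_m$-orbit $[V(g_2, y^2 - 4x^3)\setminus 0 / \mathbb{G}_m]$ obtained via the normalisation of the cuspidal cubic.

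First I would compute the scheme-theoretic intersection of the $\mathbb{G}_m$-invariant closed subscheme $V(y) \subset \mathbb{A}^3$ with the open $\mathbb{A}^3 \setminus V(g_2, y^2 - 4x^3)$. Setting $y = 0$, the condition $(g_2, y^2 - 4x^3) \neq 0$ becomes $(g_2, -4x^3) \neq 0$, which, since $6$ is invertible in $R$, is equivalent to $(g_2, x) \neq 0$. Thus the intersection is the open subscheme $\Spec R[x,g_2] \setminus V(x, g_2)$ of $V(y) = \Spec R[x,g_2]$. Since $x$ and $g_2$ have $\mathbb{G}_m$-weights $2$ and $4$, taking the quotient yields
\[
    [V(y)/\mathbb{G}_m] \cong \left[\frac{\Spec R[x,g_2] \setminus 0}{\mathbb{G}_m}\right] \cong \mathcal{P}(2,4),
\]
which is precisely the closed substack of $\mathcal{P}(2,3,4)$ defined by $y=0$.

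It remains to check that this description is compatible with the removal of $\Spec R$. The excluded orbit is parametrised, via the $\mathbb{G}_m$-equivariant normalisation $z \mapsto (z^2, 2z^3, 0)$, by $z \neq 0$, so its $y$-coordinate $2z^3$ is nowhere vanishing. Hence $\Spec R$ does not meet the locus $y=0$ inside $\mathcal{P}(2,3,4)$, so the closed substack $\mathcal{P}(2,4) \subset \mathcal{P}(2,3,4)$ is entirely contained in $\mathcal{P}(2,3,4) \setminus \Spec R \cong \mathcal{M}^{\node}_{1,2}$, and coincides there with $[V(y)/\mathbb{G}_m]$. Removing it yields the stated identification. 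The argument is essentially bookkeeping on the presentations already established in Lemma \ref{lem:M12}, so there is no substantive obstacle; the only point requiring care is ensuring the intersection with the excluded cuspidal orbit is empty, which is immediate from the explicit parametrisation.
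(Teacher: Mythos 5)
Your proof is correct and follows essentially the same route as the paper: compute the intersection of the $\mathbb{G}_m$-invariant locus $V(y)$ with $\mathbb{A}^3 \setminus V(g_2,y^2-4x^3)$, identify the quotient with $\mathcal{P}(2,4)$, and combine with the presentation $\mathcal{M}^{\node}_{1,2}\cong \mathcal{P}(2,3,4)\setminus \Spec R$ from \eqref{eqn: M12node}. The only difference is that you spell out the disjointness of the excluded orbit $\Spec R$ from $\mathcal{P}(2,4)$ via the explicit parametrisation $z\mapsto (z^2,2z^3,0)$, whereas the paper leaves this implicit in the identity $\mathcal{M}^{\node}_{1,2}\cap [V(y)/\mathbb{G}_m]\cong \mathcal{P}(2,4)$; this is a reasonable small addition for clarity but not a genuinely different argument.
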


\begin{proof}
    By the discussion preceding  Lemma  \ref{lem:M12}, we can identify
    \[
        \mathcal{M}^{\node}_{1,2} \cap \left[\frac{V(y)}{\mathbb{G}_m}\right] \cong \left[\frac{\Spec R[x,g_2] \setminus V(x,g_2) }{\mathbb{G}_m}\right] \cong \mathcal{P}(2,4).\qedhere
    \]
\end{proof}

\subsection{Three marked points $\mathcal{M}_{1,3}$}

The data of three distinct marked points on a non-cuspidal curve of genus one is encoded by  two distinct pairs of points $(x_1,y_1)$, $(x_2,y_2)$, and $(g_2, g_3)\neq (0,0)$,  subject to the equations \eqref{eq:Weierstrass-eq}. 

\begin{lem} \label{lem: M13node}
    Let $\mathcal{U}_3 \subset \mathcal{M}^{\node}_{1,3}$ denote the open substack  where $x_1 \neq x_2$, and let $\mathcal{Z}_3$ denote its complement, so that $\mathcal{M}_{1,3}^{\node} = \mathcal{U}_3\cup \mathcal{Z}_3$. Then
    \[
        \mathcal{Z}_3 \cong \mathcal{P}(2,3,4) \setminus  (\mathrm{Spec} (R)  \cup  \mathcal{P}(2,4)), \quad  \mathcal{U}_3 \cong  \left[ \mathbb{A}^3/\mu_2 \right]  \setminus W,
    \]
    where $W$ is an $R$-scheme with a stratification of the form
    \[
        W \cong  (\mathbb{A}^1\setminus \{-1,0,1\}) \cup (\Spec R \cup \Spec R). 
    \]
The space  $W$ is the moduli space of  two points with distinct $x$ coordinate on a  cuspidal cubic; the points $\Spec R$ correspond to the locus where one or other of these point is the cusp. 
\end{lem}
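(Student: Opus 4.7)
\emph{Proof plan.} I will parametrise $\mathcal{M}^{\node}_{1,3}$ in Weierstrass form and split according to whether the second and third marked points share an $x$-coordinate. Setting $p_1 = \infty$, $p_2 = (x_1,y_1)$, $p_3 = (x_2,y_2)$, the moduli stack is the $\mathbb{G}_m$-quotient (weights $(2,3,2,3,4,6)$) of
\[
    \{(x_1,y_1,x_2,y_2,g_2,g_3): y_i^2 = 4x_i^3 - g_2 x_i - g_3,\ (x_1,y_1)\neq(x_2,y_2),\ (g_2,g_3)\neq(0,0)\}.
\]

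For $\mathcal{Z}_3$, I observe that $x_1 = x_2$ together with the Weierstrass equations forces $y_2 = \pm y_1$, and distinctness then forces $y_2 = -y_1 \neq 0$; so the morphism forgetting $p_3$ is an isomorphism of $\mathcal{Z}_3$ onto the open locus $\{y\neq 0\}$ of $\mathcal{M}^{\node}_{1,2}$. The claimed description of $\mathcal{Z}_3$ then follows directly from Corollary \ref{cor: M12nodeminusVy}.

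For $\mathcal{U}_3$, the condition $x_1 \neq x_2$ lets me solve the two Weierstrass equations uniquely for $(g_2,g_3)$, eliminating these variables; the non-cuspidal condition $(g_2,g_3)\neq(0,0)$ then translates exactly to the requirement that $p_2$ and $p_3$ do not both lie on the cuspidal cubic $y^2 = 4x^3$. Thus
\[
    \mathcal{U}_3 \cong \left[\frac{\mathbb{A}^4 \setminus (V(x_1-x_2)\cup (C_1\cap C_2))}{\mathbb{G}_m}\right], \qquad C_i \defeq V(y_i^2-4x_i^3),
\]
with $\mathbb{G}_m$-weights $(2,3,2,3)$. Applying Lemma \ref{lem:slice} to $f = x_1-x_2$ (of weight $2$) identifies this with $[\mathbb{A}^3/\mu_2]\setminus W$, where $\mathbb{A}^3 = V(x_1-x_2-1)$ has coordinates $(x_2,y_1,y_2)$, the induced $\mu_2$-action is $(x_2,y_1,y_2)\mapsto(x_2,-y_1,-y_2)$, and $W \subset [\mathbb{A}^3/\mu_2]$ is the image of $(C_1\cap C_2)\cap V(x_1-x_2-1)$.

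Finally, to identify $W$, I parametrise the cuspidal cubic by $t\mapsto(t^2, 2t^3)$ (the cusp being $t = 0$), giving $(C_1\cap C_2)\cap V(x_1-x_2-1) \cong V(t_1^2-t_2^2-1)\subset \mathbb{A}^2$. Factoring $(t_1-t_2)(t_1+t_2) = 1$ and setting $u = t_1-t_2$, $v = t_1+t_2$, the affine conic becomes $\mathbb{G}_m = \Spec R[u^{\pm 1}]$, and the $\mu_2$-action lifts to $u\mapsto -u$, which is free because $2\in R^\times$. Hence $W$ is the scheme $\Spec R[w^{\pm 1}]\cong \mathbb{G}_m$ with $w = u^2$. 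The asserted stratification comes from the two distinguished loci: $t_2 = 0$ (i.e.\ $p_3$ is the cusp) gives $u = v$ and hence $w = 1$, while $t_1 = 0$ (i.e.\ $p_2$ is the cusp) gives $u = -v$ and hence $w = -1$; excising these two $R$-rational closed points from $\mathbb{G}_m \cong \mathbb{A}^1\setminus\{0\}$ yields the open stratum $\mathbb{A}^1\setminus\{-1,0,1\}$. The main obstacle is precisely this last step, namely making the slicing and the change of variables $u = t_1-t_2$ compatible with the $\mu_2$-action, verifying freeness of the action, and matching the two exceptional $R$-strata of $W$ with the geometric loci asserted in the statement.
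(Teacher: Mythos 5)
Your treatment of $\mathcal{Z}_3$ and the reduction of $\mathcal{U}_3$ is exactly the paper's: solve the two Weierstrass equations for $(g_2,g_3)$ on the locus $x_1\neq x_2$, observe that $(g_2,g_3)\neq(0,0)$ becomes the condition that not both marked points lie on the cuspidal cubic, and slice by $x_1-x_2=1$ via Lemma \ref{lem:slice} to land in $[\mathbb{A}^3/\mu_2]$; the identification of $\mathcal{Z}_3$ with $\mathcal{M}^{\node}_{1,2}\setminus[V(y)/\mathbb{G}_m]$ and Corollary \ref{cor: M12nodeminusVy} is also how the paper proceeds.

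Where you diverge is the analysis of $W$. The paper never slices the cuspidal locus as a whole: it stratifies the $\mathbb{G}_m$-invariant locus $V(y_1^2-4x_1^3,\,y_2^2-4x_2^3)\setminus V(x_1-x_2)$ into the open part where $x_1x_2\neq 0$ (normalisation is then an honest isomorphism, and slicing by $t_2=1$ gives $\mathbb{A}^1\setminus\{-1,0,1\}$) and the two closed parts $x_1=0$, $x_2=0$, each a punctured cuspidal cubic modulo $\mathbb{G}_m$, i.e.\ $\Spec R$. You instead slice once by $x_1-x_2=1$ and then normalise the whole slice curve. Here there is an imprecision: the map $V(t_1^2-t_2^2-1)\to (C_1\cap C_2)\cap V(x_1-x_2-1)$ is the normalisation, bijective on points but \emph{not} an isomorphism of schemes, because the slice curve acquires cuspidal singularities at the four geometric points lying over the cusps of the cubics (e.g.\ near $(x_2,y_1,y_2)=(0,2,0)$ the curve is étale-locally $y_2^2 = (\text{unit})\cdot(y_1-2)^3$). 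Consequently your assertion that $W$ \emph{is} the scheme $\Spec R[w^{\pm 1}]\cong\mathbb{G}_m$ is an overstatement: the free $\mu_2$-quotient you compute is the quotient of the normalisation, and $W$ itself is a curve with two non-normal points at $w=\pm 1$ whose normalisation is $\mathbb{G}_m$. This does not affect the lemma as stated, since the claim is only a stratification with reduced strata: away from $w=\pm1$ the normalisation is an isomorphism, so the open stratum is $\mathbb{A}^1\setminus\{-1,0,1\}$, and the reduced special strata are $\Spec R$ (for $t_1=0$ the fibre is $\Spec R[t_2]/(t_2^2+1)$, which becomes $\Spec R$ after the $\mu_2$-quotient — a point worth spelling out, and your use of $2\in R^{\times}$ for freeness is correct). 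So your conclusion stands, but you should either phrase the conic identification as a normalisation and restrict it to the cusp-free open stratum, as the paper does, or explicitly note that the stratification is insensitive to the non-normality at the two special points.
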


\begin{proof}
    The equations \eqref{eq:Weierstrass-eq} for $(x_i,y_i)$ can be written in matrix form: 
    \[
        \begin{pmatrix}  x_1 & 1 \\
                    x_2 & 1 
        \end{pmatrix} \begin{pmatrix} g_2 \\ g_3 \end{pmatrix} = \begin{pmatrix}   -y_1^2+4x_1^3 \\
                     -y_2^2+4x_2^3
        \end{pmatrix}  . 
    \]
    When $x_1\neq x_2$, they may be uniquely solved for $(g_2,g_3)$, and hence
    \[ 
        \mathcal{U}_3 = \left[ \frac{ \Spec R [x_1,y_1,x_2,y_2, (x_1-x_2)^{-1}] \setminus V(y^2_1-4x^3_1, y^2_2-4x^3_2) }{\mathbb{G}_m} \right] .    
    \]
    Since $x_1,x_2$ have weight $2$ for the action of $\mathbb{G}_m$,  slicing by $x_1-x_2 = 1$ (Lemma \ref{lem:slice}) yields
    \[   
        \left[ \frac{ \Spec R [x_1,y_1,x_2,y_2, (x_1-x_2)^{-1}] }{\mathbb{G}_m} \right]  \cong  \left[ \frac{\Spec R [x_1,y_1,y_2]}{\mu_2} \right] \cong  \left[ \frac{\mathbb{A}^3 }{\mu_2} \right] . 
    \]
    By normalising the cuspidal cubics $y_i^2=4x_i^3$ and deleting $V(x_1x_2)$,  we have furthermore 
    \[
        \left[ \frac{V(y^2_1-4x^3_1, y^2_2-4x^3_2) \setminus V(x_1x_2(x_1-x_2))}{\mathbb{G}_m} \right]   \cong  \left[ \frac{\Spec R[t_1,t_2,t_1^{-1},t_2^{-1}] \setminus V(t_1^2-t_2^2)}{\mathbb{G}_m} \right]   \cong   \mathbb{A}^1\setminus \{1,0,-1\},
    \] 
    where the last isomorphism is obtained by slicing the quotient by $t_2=1$ (Lemma \ref{lem:slice}). The stratum where $x_1=0$ (and similarly $x_2=0$) is isomorphic to a point (again by slicing):
    \[
        \left[ \frac{  V(y^2_2-4x^3_2) \setminus V(x_2)   }{\mathbb{G}_m}\right] \cong \Spec R.
    \]

    The locus $\mathcal{Z}_3$ is defined by $x_1=x_2$. Then, equation \eqref{eq:Weierstrass-eq} implies that $y_1^2 = y_2^2$, and hence the condition $(x_1,y_1) \neq (x_2,y_2)$ amounts to $y_2=-y_1$. In particular, $y_1,y_2\neq 0$. The data of the pair of points $(x_1,y_1)$, $(x_2,y_2)$ is uniquely determined by $(x_1,y_1)$, and hence
    \[
        \mathcal{Z}_3 \cong \mathcal{M}^{\node}_{1,2} \setminus \left[ \frac{V(y)}{\mathbb{G}_m} \right] \cong \mathcal{P}(2,3,4) \setminus (\mathrm{Spec}(R) \cup \mathcal{P}(2,4)),
    \]
    where the last isomorphism is given by Corollary \ref{cor: M12nodeminusVy}.
\end{proof}

\subsection{Mixed Tate motives in genus $1$}

To show that the motive of $\mathcal{M}_{1,n}$  over $\mathbb{Q}$ is mixed Tate, it suffices to find a stratification whose strata are mixed Tate. 

\begin{prop} \label{prop: M13isMT}
    The motives $M(\overline{\mathcal{M}}_{1,n})$ and  $M(\mathcal{M}_{1,n})$ are mixed Tate for $n\leq 3$.
\end{prop}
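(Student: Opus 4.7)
The plan is to combine the explicit geometric descriptions obtained in \eqref{eq:M_1,1node}, Lemma \ref{lem:M12}, and Lemma \ref{lem: M13node} with the standard closure properties of mixed Tate motives: mixed Tate-ness is preserved by finite locally closed stratifications, by Gysin triangles, by products, and by quotients by finite groups (the motive of such a quotient being a direct summand of the original). Weighted projective stacks $\mathcal{P}(r_1,\ldots,r_k)$, quotients $[\mathbb{A}^k/\mu_r]$, and open complements of closed substacks of these types are therefore all mixed Tate. The whole problem reduces to exhibiting each of the stacks $\mathcal{M}_{1,n}$, $\overline{\mathcal{M}}_{1,n}$ for $n\leq 3$ as a finite stratification by such pieces.

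For $n=1$, the statement is immediate from \eqref{eq:M_1,1node} and \eqref{eq:cusp_mu2}: $\overline{\mathcal{M}}_{1,1}\cong \mathcal{P}(4,6)$ and $\mathcal{M}_{1,1}\cong \overline{\mathcal{M}}_{1,1}\setminus [\mathrm{Spec}\, R/\mu_2]$ are both manifestly mixed Tate. For $n=2,3$, I would first deduce from Lemmas \ref{lem:M12} and \ref{lem: M13node} that $M(\mathcal{M}^{\node}_{1,n})$ is mixed Tate, since the only non-Tate-obvious piece that appears in those stratifications is $\mathbb{A}^1\setminus\{-1,0,1\}$, which is of course mixed Tate, the remaining ingredients being weighted projective stacks, quotients $[\mathbb{A}^k/\mu_r]$, and residue gerbes of points.

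Next I would identify the closed complement of $\mathcal{M}_{1,n}$ inside $\mathcal{M}^{\node}_{1,n}$, namely the configuration space of $n$ ordered distinct smooth points on the nodal cubic. Normalising the nodal cubic to $\mathbb{P}^1$ with node $\{0,\infty\}$ identifies its smooth locus with $\mathbb{G}_m$ as a group scheme and its full automorphism group with $\mathbb{G}_m\rtimes \mu_2$ (translations together with the branch-exchanging involution $t\mapsto t^{-1}$). After taking the quotient by the free $\mathbb{G}_m$-action, this locus becomes $[(\mathbb{G}_m^{n-1}\setminus \Delta)/\mu_2]$ with $\mu_2$ acting by inversion, where $\Delta$ denotes the big diagonal together with the hyperplanes $p_i = 1$. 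This is plainly mixed Tate, and a Gysin triangle applied to the open immersion $\mathcal{M}_{1,n}\hookrightarrow \mathcal{M}^{\node}_{1,n}$ then yields that $M(\mathcal{M}_{1,n})$ is mixed Tate.

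Finally, to pass from $\mathcal{M}^{\node}_{1,n}$ to $\overline{\mathcal{M}}_{1,n}$, I would stratify the reducible boundary $\overline{\mathcal{M}}_{1,n}\setminus \mathcal{M}^{\node}_{1,n}$ by the combinatorial type of the dual graph, as recalled immediately before Proposition \ref{prop: alternatingpartofcohom}. For $n\leq 3$ only a handful of strata occur, each a product of the form $\overline{\mathcal{M}}_{1,m}\times \prod_j \overline{\mathcal{M}}_{0,m_j}$ with $m<n$ (or, over the cusp of $\overline{\mathcal{M}}_{1,1}$, products of an $\overline{\mathcal{M}}_{0,m_j}$-type configuration encoding a single loop). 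The factors $\overline{\mathcal{M}}_{0,m_j}$ are smooth rational varieties and so are mixed Tate, and the $\overline{\mathcal{M}}_{1,m}$ are mixed Tate by induction on $n$; assembling the strata gives $M(\overline{\mathcal{M}}_{1,n})$ mixed Tate. The main obstacle is really just the patient bookkeeping of reducible boundary strata for $n=2,3$ and the careful identification of the irreducible nodal locus as a finite-group quotient of a torus; once this is done, no additional input beyond the already-cited lemmas is required.
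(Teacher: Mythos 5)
Your proposal is correct and follows essentially the same strategy as the paper: stratify by the pieces appearing in \eqref{eq:M_1,1node}, Lemma \ref{lem:M12}, and Lemma \ref{lem: M13node}, and combine with the standard closure properties of mixed Tate motives under Gysin triangles, products, and finite quotients. The one genuine structural difference is your treatment of the nodal locus $\mathcal{M}^{\node}_{1,n}\setminus\mathcal{M}_{1,n}$: you identify it explicitly as $[(\mathbb{G}_m^{n-1}\setminus\Delta)/\mu_2]$ by normalising the nodal cubic, and then apply a Gysin triangle to pass from $\mathcal{M}^{\node}_{1,n}$ to $\mathcal{M}_{1,n}$. The paper avoids this explicit computation entirely: it observes that all boundary strata of $\overline{\mathcal{M}}_{1,s}$ --- including the nodal one, which is $[\mathcal{M}_{0,s+2}/(\mathbb{Z}/2)]$ --- are finite quotients of products of $\mathcal{M}_{0,r}$'s and $\mathcal{M}_{1,m}$'s with $m<s$, and hence runs the whole argument as a chain of equivalences $\{M(\overline{\mathcal{M}}_{1,s})\ \text{m.T.}\}_{s\le n} \Leftrightarrow \{M(\mathcal{M}_{1,s})\ \text{m.T.}\}_{s\le n} \Leftrightarrow \{M(\mathcal{M}^{\node}_{1,s})\ \text{m.T.}\}_{s\le n}$, reducing everything to the explicit check of $\mathcal{M}^{\node}_{1,s}$. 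Your argument is slightly longer but logically equivalent.

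One subtlety worth flagging explicitly: when invoking Lemma \ref{lem:M12}, you must use the second description $\mathcal{M}^{\node}_{1,2} \cong \mathcal{P}(2,3,4)\setminus\mathrm{Spec}\,R$ of \eqref{eqn: M12node} (or equivalently Remark \ref{rem:mixed-tate-elliptic-quotient}). The first description involves the stratum $[Z/\mu_6]$ with $Z$ a punctured affine elliptic curve, and the general principle you cite --- ``the motive of a finite quotient is a direct summand of the original'' --- does not show that $[Z/\mu_6]$ is mixed Tate, precisely because $M(Z)$ is \emph{not} mixed Tate. Your list of ``remaining ingredients'' (weighted projective stacks, $[\mathbb{A}^k/\mu_r]$, residue gerbes of points) indicates you do intend the second description, so the argument goes through, but the step deserves an explicit word, since it is exactly the point the paper underscores in Remark \ref{rem:mixed-tate-elliptic-quotient}.
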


\begin{proof}
    The compactification $\overline{\mathcal{M}}_{1,n}$ is stratified by finite quotients of products of the moduli stacks $\mathcal{M}_{0,r}$ and $\mathcal{M}_{1,s}$, for $3\le r \le n+2$ and $1\leq s\leq n$. Since the motives $M(\mathcal{M}_{0,r})$ are mixed Tate ($\mathcal{M}_{0,r}$ is a complement of a hyperplane arrangement in a projective space), it follows that $M(\overline{\mathcal{M}}_{1,s})$  is mixed Tate for all $1\leq s \leq n$ if and only if the same is true for $ M(\mathcal{M}_{1,s})$ for all $1\leq s \leq n$. Since $\mathcal{M}_{1,s}^{\node}$ is the complement in  $\overline{\mathcal{M}}_{1,s}$ of a finite number of strata of the above form, this is in turn equivalent to  $M(\mathcal{M}_{1,s}^{\node})$ being mixed Tate for all $1\leq s \leq n$. But this holds for $n=3$ by \eqref{eq:M_1,1node} and the stratifications given in Lemmas \ref{lem:M12} and \ref{lem: M13node}.
\end{proof}

One expects that $M(\overline{\mathcal{M}}_{1,n})$ and $M(\mathcal{M}_{1,n})$ are mixed Tate for all $n\leq 10$. They are not for $n\geq 11$.

\bibliographystyle{alpha}

\bibliography{biblio}

\end{document}